\documentclass[11pt]{preprint}
\usepackage{difftrees} 
\usepackage[full]{textcomp}
\usepackage[osf]{newtxtext} 
\usepackage[cal=boondoxo]{mathalfa}
\usepackage{colortbl}

\usepackage{tikz-cd} 
\usetikzlibrary{cd} 
\usepackage[all,cmtip]{xy}
\usepackage{comment}

\usepackage{amssymb}
\usepackage{mathtools}
\usepackage{hyperref}
\usepackage{breakurl}
\usepackage{mhenvs}
\usepackage{mhequ} 
\usepackage{mhsymb}
\usepackage{booktabs}
\usepackage{tikz}
\usepackage{tcolorbox}
\usepackage{mathrsfs}
\usepackage[utf8]{inputenc}
\usepackage{longtable}
\usepackage{wrapfig}
\usepackage{rotating} 
\usepackage{subcaption}
\usepackage{mathrsfs}
\usepackage{epsfig}
\usepackage{microtype}
\usepackage{comment}
\usepackage{wasysym}
\usepackage{centernot}
\usepackage{enumitem}
\usepackage{bm}
\usepackage{stackrel}
\usepackage{graphicx}
\usepackage{axodraw}
\usepackage{xspace}
\usepackage{subcaption} 
\usepackage{epsfig} 
\usepackage{axodraw} 
\usepackage{xspace} 
\usepackage[toc,page]{appendix} 
\usepackage[all,cmtip]{xy} 
\usepackage{relsize} 
\usepackage{shuffle} 

\usepackage{stmaryrd}

\makeatletter
\newcommand{\globalcolor}[1]{%
	\color{#1}\global\let\default@color\current@color
}
\makeatother

\usetikzlibrary{calc}
\usetikzlibrary{decorations}
\usetikzlibrary{positioning}
\usetikzlibrary{shapes}
\usetikzlibrary{external}

\definecolor{blush}{rgb}{0.87, 0.36, 0.51}
\definecolor{brightcerulean}{rgb}{0.11, 0.67, 0.84}
\definecolor{greenryb}{rgb}{0.4, 0.69, 0.2}

\newif\ifdark
\darkfalse

\ifdark
\definecolor{darkred}{rgb}{0.9,0.2,0.2}
\definecolor{darkblue}{rgb}{0.7,0.3,1}
\definecolor{darkgreen}{rgb}{0.1,0.9,0.1}
\definecolor{franck}{rgb}{0,0.8,1}
\definecolor{pagebackground}{rgb}{.15,.21,.18}
\definecolor{pageforeground}{rgb}{.84,.84,.85}
\pagecolor{pagebackground}
\AtBeginDocument{\globalcolor{pageforeground}}
\definecolor{symbols}{rgb}{0,0.7,1}
\colorlet{connection}{red!80!black}
\colorlet{boxcolor}{blue!50}

\else

\definecolor{darkred}{rgb}{0.7,0.1,0.1}
\definecolor{darkblue}{rgb}{0.4,0.1,0.8}
\definecolor{darkgreen}{rgb}{0.1,0.7,0.1}
\definecolor{franck}{rgb}{0,0,1}
\definecolor{pagebackground}{rgb}{1,1,1}
\definecolor{pageforeground}{rgb}{0,0,0}
\colorlet{symbols}{blue!90!black}
\colorlet{connection}{red!30!black}
\colorlet{boxcolor}{blue!50!black}

\fi

\def\slash{\leavevmode\unskip\kern0.18em/\penalty\exhyphenpenalty\kern0.18em}
\def\dash{\leavevmode\unskip\kern0.18em--\penalty\exhyphenpenalty\kern0.18em}

\DeclareMathAlphabet{\mathbbm}{U}{bbm}{m}{n}

\DeclareFontFamily{U}{BOONDOX-calo}{\skewchar\font=45 }
\DeclareFontShape{U}{BOONDOX-calo}{m}{n}{
	<-> s*[1.05] BOONDOX-r-calo}{}
\DeclareFontShape{U}{BOONDOX-calo}{b}{n}{
	<-> s*[1.05] BOONDOX-b-calo}{}
\DeclareMathAlphabet{\mcb}{U}{BOONDOX-calo}{m}{n}
\SetMathAlphabet{\mcb}{bold}{U}{BOONDOX-calo}{b}{n}

\setlist{noitemsep,topsep=4pt,leftmargin=1.5em}

\DeclareMathAlphabet{\mathbbm}{U}{bbm}{m}{n}

\DeclareMathAlphabet{\mcb}{U}{BOONDOX-calo}{m}{n}
\SetMathAlphabet{\mcb}{bold}{U}{BOONDOX-calo}{b}{n}
\DeclareFontFamily{U}{mathx}{\hyphenchar\font45}
\DeclareFontShape{U}{mathx}{m}{n}{
	<5> <6> <7> <8> <9> <10>
	<10.95> <12> <14.4> <17.28> <20.74> <24.88>
	mathx10
}{}
\DeclareSymbolFont{mathx}{U}{mathx}{m}{n}
\DeclareMathSymbol{\bigtimes}{1}{mathx}{"91}

\def\s{\mathfrak{s}}

\def\reg{\mathord{\mathrm{reg}}}

\providecommand{\figures}{false}
{ \ifthenelse{\equal{\figures}{false}} {#1}{\[ {\rm Figure \ missing !} \]} }{}
\def\id{\mathrm{id}}

\def\reg{\mathop{\mathrm{reg}}}

\def\CA{\mathcal{A}}

\def\CB{\mathcal{B}}

\def\CT{\mathcal{T}}

\tikzstyle{tinydots}=[dash pattern=on \pgflinewidth off \pgflinewidth]
\tikzstyle{superdense}=[dash pattern=on 4pt off 1pt]

\newcommand{\mcM}{\mathcal{M}}

\newcommand{\mcA}{\mathcal{A}}
\newcommand{\mcV}{\mathcal{V}}

\newcommand{\mcC}{\mathcal{C}}
\newcommand{\mcB}{\mathcal{B}}

\newcommand{\mcU}{\mathcal{U}}
\newcommand{\mcL}{\mathcal{L}}
\newcommand{\mcI}{\mathcal{I}}

\newcommand{\mcY}{\mathcal{Y}}
\newcommand{\mcN}{\mathcal{N}}
\newcommand{\mcK}{\mathcal{K}}
\newcommand{\mcD}{\mathcal{D}}

\newcommand{\mcP}{\mathcal{P}}

\newcommand{\mcX}{\mathcal{X}}

\newcommand{\mcZ}{\mathcal{Z}}
\newcommand{\mcT}{\mathcal{T}}

\newcommand{\beq}{\begin{equation}}
	\newcommand{\eeq}{\end{equation}}

\usepackage{empheq}

\newcommand{\T}{\mathbf{T}}

\def\Labn{\mathfrak{n}}

\def\${|\!|\!|}

\newcounter{theorem}
\newtheorem{defi}[theorem]{Definition}

\newtheorem{ex}[theorem]{Example}

\newenvironment{DIFnomarkup}{}{} 

\newtheorem{assumption}{Assumption}

\theorembodyfont{\rmfamily}

\newcommand{\rrightarrow}{{\to\hskip -4.9mm\raise 1pt\hbox{$\to$}}}

\newfont{\indic}{bbmss12}

\def\PPi{\boldsymbol{\Pi}}

\def\Nabla_#1{\nabla_{\!#1}}

\makeatletter
\pgfdeclareshape{crosscircle}
{
	\inheritsavedanchors[from=circle] 
	\inheritanchorborder[from=circle]
	\inheritanchor[from=circle]{north}
	\inheritanchor[from=circle]{north west}
	\inheritanchor[from=circle]{north east}
	\inheritanchor[from=circle]{center}
	\inheritanchor[from=circle]{west}
	\inheritanchor[from=circle]{east}
	\inheritanchor[from=circle]{mid}
	\inheritanchor[from=circle]{mid west}
	\inheritanchor[from=circle]{mid east}
	\inheritanchor[from=circle]{base}
	\inheritanchor[from=circle]{base west}
	\inheritanchor[from=circle]{base east}
	\inheritanchor[from=circle]{south}
	\inheritanchor[from=circle]{south west}
	\inheritanchor[from=circle]{south east}
	\inheritbackgroundpath[from=circle]
	\foregroundpath{
		\centerpoint%
		\pgf@xc=\pgf@x%
		\pgf@yc=\pgf@y%
		\pgfutil@tempdima=\radius%
		\pgfmathsetlength{\pgf@xb}{\pgfkeysvalueof{/pgf/outer xsep}}%
		\pgfmathsetlength{\pgf@yb}{\pgfkeysvalueof{/pgf/outer ysep}}%
		\ifdim\pgf@xb<\pgf@yb%
		\advance\pgfutil@tempdima by-\pgf@yb%
		\else%
		\advance\pgfutil@tempdima by-\pgf@xb%
		\fi%
		\pgfpathmoveto{\pgfpointadd{\pgfqpoint{\pgf@xc}{\pgf@yc}}{\pgfqpoint{-0.707107\pgfutil@tempdima}{-0.707107\pgfutil@tempdima}}}
		\pgfpathlineto{\pgfpointadd{\pgfqpoint{\pgf@xc}{\pgf@yc}}{\pgfqpoint{0.707107\pgfutil@tempdima}{0.707107\pgfutil@tempdima}}}
		\pgfpathmoveto{\pgfpointadd{\pgfqpoint{\pgf@xc}{\pgf@yc}}{\pgfqpoint{-0.707107\pgfutil@tempdima}{0.707107\pgfutil@tempdima}}}
		\pgfpathlineto{\pgfpointadd{\pgfqpoint{\pgf@xc}{\pgf@yc}}{\pgfqpoint{0.707107\pgfutil@tempdima}{-0.707107\pgfutil@tempdima}}}
	}
}
\makeatother

\def\symbol#1{\textcolor{symbols}{#1}}

\def\decorate#1#2{
	\ifnum#2>0
	\foreach \count in {1,...,#2}{
		let
		\p1 = (sourcenode.center),
		\p2 = (sourcenode.east),
		\n1 = {\x2-\x1},
		\n2 = {1mm},
		\n3 = {(1.3+0.6*(\count-1))*\n1},
		\n4 = {0.7*\n1}
		in 
		node[rectangle,fill=symbols,rotate=30,inner sep=0pt,minimum width=0.2*\n2,minimum height=\n2] at ($(sourcenode.center) + (\n3,\n4)$) {}
	}
	\fi
	\ifnum#1>0
	\foreach \count in {1,...,#1}{
		let
		\p1 = (sourcenode.center),
		\p2 = (sourcenode.east),
		\n1 = {\x2-\x1},
		\n2 = {1mm},
		\n3 = {(1.3+0.6*(\count-1))*\n1},
		\n4 = {0.7*\n1}
		in 
		node[rectangle,fill=symbols,rotate=-30,inner sep=0pt,minimum width=0.2*\n2,minimum height=\n2] at ($(sourcenode.center) + (-\n3,\n4)$) {}
	}
	\fi
}

\tikzset{
	dectriangle/.style 2 args={
		triangle,
		alias=sourcenode,
		append after command={\decorate{#1}{#2}}
	},
	dectriangle/.default={0}{0},
}
\definecolor{colzA}{RGB}{220,50,47}   
\definecolor{colzB}{RGB}{38,139,210}  
\definecolor{colzC}{RGB}{133,153,0}   
\definecolor{colzD}{RGB}{203,75,22}   
\definecolor{colzE}{RGB}{108,113,196} 
\definecolor{colzF}{RGB}{181,137,0}   
\definecolor{colzG}{RGB}{42,161,152}  
\definecolor{colzH}{RGB}{0,128,128}   
\definecolor{colzI}{RGB}{163,206,39}  
\definecolor{colzJ}{RGB}{255,105,180} 
\definecolor{colzK}{RGB}{120,120,120} 
\definecolor{colzL}{RGB}{38,71,178}   
\definecolor{colzM}{RGB}{189,55,255}  

\tikzset{
	ann/.style={circle, draw=colzA, line width=1.2pt, fill=white, inner sep=0pt, minimum size=3mm},
	ful/.style={circle, draw=pageforeground, fill=colzA, inner sep=0pt, minimum size=3mm},
	ann1/.style={circle, draw=colzA, line width=1.2pt, fill=white, inner sep=0pt, minimum size=3mm},
	ful1/.style={circle, draw=pageforeground, fill=colzA, inner sep=0pt, minimum size=3mm},
	ann2/.style={circle, draw=colzB, line width=1.2pt, fill=white, inner sep=0pt, minimum size=3mm},
	ful2/.style={circle, draw=pageforeground, fill=colzB, inner sep=0pt, minimum size=3mm},
	ann3/.style={circle, draw=colzC, line width=1.2pt, fill=white, inner sep=0pt, minimum size=3mm},
	ful3/.style={circle, draw=pageforeground, fill=colzC, inner sep=0pt, minimum size=3mm},
	ann4/.style={circle, draw=colzD, line width=1.2pt, fill=white, inner sep=0pt, minimum size=3mm},
	ful4/.style={circle, draw=pageforeground, fill=colzD, inner sep=0pt, minimum size=3mm},
	ann5/.style={circle, draw=colzE, line width=1.2pt, fill=white, inner sep=0pt, minimum size=3mm},
	ful5/.style={circle, draw=pageforeground, fill=colzE, inner sep=0pt, minimum size=3mm},
	ann6/.style={circle, draw=colzF, line width=1.2pt, fill=white, inner sep=0pt, minimum size=3mm},
	ful6/.style={circle, draw=pageforeground, fill=colzF, inner sep=0pt, minimum size=3mm},
	ann7/.style={circle, draw=colzG, line width=1.2pt, fill=white, inner sep=0pt, minimum size=3mm},
	ful7/.style={circle, draw=pageforeground, fill=colzG, inner sep=0pt, minimum size=3mm},
	ann8/.style={circle, draw=colzH, line width=1.2pt, fill=white, inner sep=0pt, minimum size=3mm},
	ful8/.style={circle, draw=pageforeground, fill=colzH, inner sep=0pt, minimum size=3mm},
	ann9/.style={circle, draw=colzI, line width=1.2pt, fill=white, inner sep=0pt, minimum size=3mm},
	ful9/.style={circle, draw=pageforeground, fill=colzI, inner sep=0pt, minimum size=3mm},
	ann10/.style={circle, draw=colzJ, line width=1.2pt, fill=white, inner sep=0pt, minimum size=3mm},
	ful10/.style={circle, draw=pageforeground, fill=colzJ, inner sep=0pt, minimum size=3mm},
	ann11/.style={circle, draw=colzK, line width=1.2pt, fill=white, inner sep=0pt, minimum size=3mm},
	ful11/.style={circle, draw=pageforeground, fill=colzK, inner sep=0pt, minimum size=3mm},
	ann12/.style={circle, draw=colzL, line width=1.2pt, fill=white, inner sep=0pt, minimum size=3mm},
	ful12/.style={circle, draw=pageforeground, fill=colzL, inner sep=0pt, minimum size=3mm},
	ann13/.style={circle, draw=colzM, line width=1.2pt, fill=white, inner sep=0pt, minimum size=3mm},
	ful13/.style={circle, draw=pageforeground, fill=colzM, inner sep=0pt, minimum size=3mm}
}
\tikzset{
	ring/.style={
		circle,
		draw=pageforeground,     
		fill=white,              
		line width=0.4pt,        
		inner sep=0pt,
		minimum size=3mm
	},
	ring1/.style={circle, draw=red!80!black,   fill=white, line width=0.4pt, inner sep=0pt, minimum size=3mm},
	ring2/.style={circle, draw=darkgreen!70!black, fill=white, line width=0.4pt, inner sep=0pt, minimum size=3mm},
	ring3/.style={circle, draw=brown!60!black, fill=white, line width=0.4pt, inner sep=0pt, minimum size=3mm},
	ring4/.style={circle, draw=purple!60!black, fill=white, line width=0.4pt, inner sep=0pt, minimum size=3mm},
	ring5/.style={circle, draw=blue!60!black,   fill=white, line width=0.4pt, inner sep=0pt, minimum size=3mm},
	ring6/.style={circle, draw=darkblue!60!black, fill=white, line width=0.4pt, inner sep=0pt, minimum size=3mm}
}

\tikzset{
	ann/.style={ring, draw=colzA, line width=1.2pt},
	ful/.style={circle, draw=pageforeground, fill=colzA, inner sep=0pt, minimum size=3mm},
	ann1/.style={ring, draw=colzB, line width=1.2pt},
	ful1/.style={circle, draw=pageforeground, fill=colzB, inner sep=0pt, minimum size=3mm},
	ann2/.style={ring, draw=colzC, line width=1.2pt},
	ful2/.style={circle, draw=pageforeground, fill=colzC, inner sep=0pt, minimum size=3mm},
	ann3/.style={ring, draw=colzD, line width=1.2pt},
	ful3/.style={circle, draw=pageforeground, fill=colzD, inner sep=0pt, minimum size=3mm},
	ann4/.style={ring, draw=colzE, line width=1.2pt},
	ful4/.style={circle, draw=pageforeground, fill=colzE, inner sep=0pt, minimum size=3mm},
	ann5/.style={ring, draw=colzF, line width=1.2pt},
	ful5/.style={circle, draw=pageforeground, fill=colzF, inner sep=0pt, minimum size=3mm},
	ann6/.style={ring, draw=colzG, line width=1.2pt},
	ful6/.style={circle, draw=pageforeground, fill=colzG, inner sep=0pt, minimum size=3mm},
	ann7/.style={ring, draw=colzH, line width=1.2pt},
	ful7/.style={circle, draw=pageforeground, fill=colzH, inner sep=0pt, minimum size=3mm},
	ann8/.style={ring, draw=colzI, line width=1.2pt},
	ful8/.style={circle, draw=pageforeground, fill=colzI, inner sep=0pt, minimum size=3mm},
	ann9/.style={ring, draw=colzJ, line width=1.2pt},
	ful9/.style={circle, draw=pageforeground, fill=colzJ, inner sep=0pt, minimum size=3mm},
	ann10/.style={ring, draw=colzK, line width=1.2pt},
	ful10/.style={circle, draw=pageforeground, fill=colzK, inner sep=0pt, minimum size=3mm},
	ann11/.style={ring, draw=colzL, line width=1.2pt},
	ful11/.style={circle, draw=pageforeground, fill=colzL, inner sep=0pt, minimum size=3mm},
	ann12/.style={ring, draw=colzM, line width=1.2pt},
	ful12/.style={circle, draw=pageforeground, fill=colzM, inner sep=0pt, minimum size=3mm}
}
\tikzset{
	cross/.style={path picture={ 
			\draw[symbols]
			(path picture bounding box.south east) -- (path picture bounding box.north west) (path picture bounding box.south west) -- (path picture bounding box.north east);
	}},
	root/.style={circle,fill=green!50!black,inner sep=0pt, minimum size=1.2mm},
	dot/.style={circle,fill=pageforeground,inner sep=0pt, minimum size=1mm},
	dotred/.style={circle,fill=pageforeground!50!pagebackground,inner sep=0pt, minimum size=2mm},
	var/.style={circle,fill=pageforeground!10!pagebackground,draw=pageforeground,inner sep=0pt, minimum size=3mm},
	var2/.style={circle,fill=darkgreen,draw=pageforeground,inner sep=0pt, minimum size=3mm},
	var3/.style={circle,fill=brown,draw=pageforeground,inner sep=0pt, minimum size=3mm},
	var4/.style={circle,fill=purple,draw=pageforeground,inner sep=0pt, minimum size=3mm},
	var5/.style={circle,fill=blue,draw=pageforeground,inner sep=0pt, minimum size=3mm},
	var6/.style={circle,fill=darkblue,draw=pageforeground,inner sep=0pt, minimum size=3mm},
	kernel/.style={semithick,draw=green,shorten >=2pt,shorten <=2pt},
	kernels/.style={snake=zigzag,shorten >=2pt,shorten <=2pt,segment amplitude=1pt,segment length=4pt,line before snake=2pt,line after snake=5pt,},
	rho/.style={densely dashed,semithick,shorten >=2pt,shorten <=2pt},
	testfcn/.style={dotted,semithick,shorten >=2pt,shorten <=2pt},
	renorm/.style={shape=circle,fill=pagebackground,inner sep=1pt},
	labl/.style={shape=rectangle,fill=pagebackground,inner sep=1pt},
	xic/.style={very thin,circle,draw=symbols,fill=symbols,inner sep=0pt,minimum size=1.2mm},
	g/.style={very thin,rectangle,draw=symbols,fill=symbols!10!pagebackground,inner sep=0pt,minimum width=2.5mm,minimum height=1.2mm},
	xi/.style={very thin,circle,draw=symbols,fill=symbols!10!pagebackground,inner sep=0pt,minimum size=1.2mm},
	xies/.style={very thin,rectangle,fill=green!50!black!25,draw=symbols,inner sep=0pt,minimum size=1.1mm},
	xiesf/.style={very thin,rectangle,fill=green!50!black,draw=symbols,inner sep=0pt,minimum size=1.1mm},
	xix/.style={very thin,crosscircle,fill=symbols!10!pagebackground,draw=symbols,inner sep=0pt,minimum size=1.2mm},
	X/.style={very thin,cross,rectangle,fill=pagebackground,draw=symbols,inner sep=0pt,minimum size=1.2mm},
	xib/.style={thin,circle,fill=symbols!10!pagebackground,draw=symbols,inner sep=0pt,minimum size=1.6mm},
	xie/.style={thin,circle,fill=green!50!black,draw=symbols,inner sep=0pt,minimum size=1.6mm},
	xid/.style={thin,circle,fill=symbols,draw=symbols,inner sep=0pt,minimum size=1.6mm},
	xibx/.style={thin,crosscircle,fill=symbols!10!pagebackground,draw=symbols,inner sep=0pt,minimum size=1.6mm},
	kernels2/.style={very thick,draw=connection,segment length=12pt},
	keps/.style={thin,draw=symbols,->},
	kepspr/.style={thick,draw=connection,->},
	krho/.style={thin,draw=symbols,superdense,->},
	krhopr/.style={thick,draw=connection,superdense},
	triangle/.style = { regular polygon, regular polygon sides=3},
	not/.style={thin,circle,draw=connection,fill=connection,inner sep=0pt,minimum size=0.5mm},
	diff/.style = {very thin,draw=symbols,triangle,fill=red!50!black,inner sep=0pt,minimum size=1.6mm},
	diff1/.style = {very thin,dectriangle={1}{0},fill=red!50!black,draw=symbols,inner sep=0pt,minimum size=1.6mm},
	diff2/.style = {very thin,dectriangle={1}{1},fill=red!50!black,draw=symbols,inner sep=0pt,minimum size=1.6mm},
	diffmini/.style = {very thin,rectangle,fill=black,draw=black,inner sep=0pt,minimum size=0.75mm},
	kernelsmod/.style={very thick,draw=connection,segment length=12pt},
	rec/.style = {very thin,rectangle,fill=black,draw=black,inner sep=0pt,minimum size=2mm},
	cerc/.style={very thin,circle,draw=black,fill=symbols,inner sep=0pt,minimum size=2mm},
	stars/.style={very thin,star,star points=6,star point ratio=0.5, draw=black,fill=red,inner sep=0pt,minimum size=0.7mm},
	>=stealth,
}
\tikzset{
	root/.style={circle,fill=black!50,inner sep=0pt, minimum size=3mm},
	circ/.style={circle,fill=white,draw=black,very thin,inner sep=.5pt, minimum size=1.2mm},
	round1/.style={fill=white,outer sep = 0,inner sep=2pt,rounded corners=1mm,draw,text=black,thin,minimum size=1.2mm},
	circ1/.style={circle,fill=red!10,draw=red,very thin,inner sep=.5pt, minimum size=1.2mm},
	rect/.style={fill=white,outer sep = 0,inner sep=2pt,rectangle,draw,text=black,thin,minimum size=1.2mm},
	rect1/.style={fill=white,outer sep = 0,inner sep=2pt,rectangle,draw,text=black,thin,minimum size=1.2mm},
	round2/.style={fill=red!10,outer sep = 0,inner sep=2pt,rounded corners=1mm,draw,text=black,thin,minimum size=1.2mm},
	round3/.style={fill=blue!10,outer sep = 0,inner sep=2pt,rounded corners=1mm,draw,text=black,thin,minimum size=1.2mm}, 
	rect2/.style={fill=black!10,outer sep = 0,inner sep=2pt,rectangle,draw,text=black,thin,minimum size=1.2mm},
	dot/.style={circle,fill=black,inner sep=0pt, minimum size=1.2mm},
	dotred/.style={circle,fill=black!50,inner sep=0pt, minimum size=2mm},
	var/.style={circle,fill=black!10,draw=black,inner sep=0pt, minimum size=3mm},
	kernel/.style={semithick,draw=darkgreen},
	diag/.style={thin,shorten >=4pt,shorten <=4pt},
	kernel1/.style={thick},
	kernels/.style={snake=zigzag,shorten >=2pt,shorten <=2pt,segment amplitude=1pt,segment length=4pt,line before snake=2pt,line after snake=5pt},
	kernels1/.style={snake=zigzag,segment amplitude=0.5pt,segment length=2pt},
	rho1/.style={densely dotted,semithick},
	rho/.style={densely dashed,semithick,shorten >=2pt,shorten <=2pt},
	testfcn/.style={dotted,semithick,shorten >=2pt,shorten <=2pt},
	visible/.style={draw, circle, fill, inner sep=0.25ex},
	renorm/.style={shape=circle,fill=white,inner sep=1pt},
	labl/.style={shape=rectangle,fill=white,inner sep=1pt},
	xic/.style={very thin,circle,fill=symbols,draw=black,inner sep=0pt,minimum size=1.2mm},
	xi/.style={very thin,circle,fill=blue!10,draw=black,inner sep=0pt,minimum size=1.2mm},
	xib/.style={very thin,circle,fill=blue!10,draw=black,inner sep=0pt,minimum size=1.6mm},
	xie/.style={very thin,circle,fill=green!50!black,draw=black,inner sep=0pt,minimum size=1mm},
	xid/.style={very thin,circle,fill=symbols,draw=black,inner sep=0pt,minimum size=1.6mm},
	edgetype/.style={very thin,circle,draw=black,inner sep=0pt,minimum size=5mm},
	nodetype/.style={very thick,circle,draw=black,inner sep=0pt,minimum size=5mm},
	kernels2/.style={very thick,draw=connection,segment length=12pt},
	clean/.style={thin,circle,fill=black,inner sep=0pt,minimum size=1mm},	not/.style={thin,circle,fill=symbols,draw=connection,fill=connection,inner sep=0pt,minimum size=0.8mm},
	>=stealth,
}

\makeatletter
\def\DeclareSymbol#1#2#3{%
	\expandafter\gdef\csname MH@symb@#1\endcsname{\tikzsetnextfilename{symbol#1}%
		\tikz[baseline=#2,scale=0.15,draw=symbols,line join=round]{#3}}%
	\expandafter\gdef\csname MH@symb@#1s\endcsname{\scalebox{0.75}{\tikzsetnextfilename{symbol#1}%
			\tikz[baseline=#2,scale=0.15,draw=symbols,line join=round]{#3}}}%
	\expandafter\gdef\csname MH@symb@#1ss\endcsname{\scalebox{0.65}{\tikzsetnextfilename{symbol#1}%
			\tikz[baseline=#2,scale=0.15,draw=symbols,line join=round]{#3}}}%
}
\def\<#1>{\ifthenelse{\boolean{mmode}}{\mathchoice{\csname MH@symb@#1\endcsname}{\csname MH@symb@#1\endcsname}{\csname MH@symb@#1s\endcsname}{\csname MH@symb@#1ss\endcsname}}{\csname MH@symb@#1\endcsname}}
\makeatother

\DeclareSymbol{Xi22}{0.5}{\draw (0,0) node[xi] {} -- (-1,1) node[not] {} -- (0,2) node[xi] {};} 
\DeclareSymbol{Xi2}{-2}{\draw (-1,-0.25) node[xi] {} -- (0,1) node[xi] {};} 
\DeclareSymbol{Xi2b}{-2}{\draw (-1,-0.25) node[xic] {} -- (0,1) node[xic] {};} 
\DeclareSymbol{Xi2g}{-2}{\draw (-1,-0.25) node[xies] {} -- (0,1) node[xi] {};} 
\DeclareSymbol{Xi2g2}{-2}{\draw (-1,-0.25) node[xi] {} -- (0,1) node[xies] {};} 
\DeclareSymbol{cXi2}{-2}{\draw (0,-0.25) node[xi] {} -- (-1,1) node[xic] {};}
\DeclareSymbol{Xi3}{0}{\draw (0,0) node[xi] {} -- (-1,1) node[xi] {} -- (0,2) node[xi] {};}
\DeclareSymbol{XiIIXi}{0}{\draw (0,0) node[xi] {} -- (-1,1); \draw[kernels2] (-1,1) node[not] {} -- (0,2) node[xi] {};}

\DeclareSymbol{Xi4}{2}{\draw (0,0) node[xi] {} -- (-1,1) node[xi] {} -- (0,2) node[xi] {} -- (-1,3) node[xi] {};}
\DeclareSymbol{Xi4_1}{2}{\draw (0,0) node[xic] {} -- (-1,1) node[xic] {} -- (0,2) node[xi] {} -- (-1,3) node[xi] {};}
\DeclareSymbol{Xi4_2}{2}{\draw (0,0) node[xic] {} -- (-1,1) node[xi] {} -- (0,2) node[xi] {} -- (-1,3) node[xic] {};}
\DeclareSymbol{Xi2X}{-2}{\draw (0,-0.25) node[xi] {} -- (-1,1) node[xix] {};}
\DeclareSymbol{XXi2}{-2}{\draw (0,-0.25) node[xix] {} -- (-1,1) node[xi] {};}
\DeclareSymbol{IIXi}{0}{\draw (0,-0.25) node[not] {} -- (-1,1) node[xi] {} -- (0,2) node[xi] {};}
\DeclareSymbol{IXi^2}{-1}{\draw (-1,1) node[xi] {} -- (0,0) node[not] {} -- (1,1) node[xi] {};}
\DeclareSymbol{IIXi^2}{-4}{\draw (0,-1.5) node[not] {} -- (0,0);
	\draw[kernels2] (-1,1) node[xi] {} -- (0,0) node[not] {} -- (1,1) node[xi] {};}
\DeclareSymbol{XiX}{-2.8}{\node[xibx] {};}
\DeclareSymbol{tauX}{-2.8}{ \node[X] {};}
\DeclareSymbol{Xi}{-2.8}{\node[xib] {};}

\DeclareSymbol{IXiX}{-1}{\draw (0,-0.25) node[not] {} -- (-1,1) node[xix] {};}
\DeclareSymbol{IXi3}{2}{\draw (0,-0.25) node[not] {} -- (-1,1) node[xi] {} -- (0,2) node[xi] {} -- (-1,3) node[xi] {};}
\DeclareSymbol{IXi}{-2}{\draw (0,-0.25) node[not] {} -- (-1,1) node[xi] {};}
\DeclareSymbol{XiI}{-2}{\draw (0,-0.25) node[xi] {} -- (-1,1) node[not] {};}

\DeclareSymbol{Xi4b}{0}{\draw(0,1.5) node[xi] {} -- (0,0); \draw (-1,1) node[xi] {} -- (0,0) node[xi] {} -- (1,1) node[xi] {};}
\DeclareSymbol{Xi4b'}{0}{\draw(0,1.5) node[xi] {} -- (0,-0.2); \draw (-1,1) node[xi] {} -- (0,-0.2) node[not] {} -- (1,1) node[xi] {};}
\DeclareSymbol{Xi4c}{0}{\draw (0,1) -- (0.8,2.2) node[xi] {};\draw (0,-0.25) node[xi] {} -- (0,1) node[xi] {} -- (-0.8,2.2) node[xi] {};}
\DeclareSymbol{Xi4d}{-4.5}{\draw (0,-1.5) node[not] {} -- (0,0); \draw (-1,1) node[xi] {} -- (0,0) node[xi] {} -- (1,1) node[xi] {};}
\DeclareSymbol{Xi4e}{0}{\draw (0,2) node[xi] {} -- (-1,1) node[xi] {} -- (0,0) node[xi] {} -- (1,1) node[xi] {};}
\DeclareSymbol{Xi4e'}{0}{\draw (0,2) node[xi] {} -- (-1,1) node[xi] {} -- (0,-0.2) node[not] {} -- (1,1) node[xi] {};}

\DeclareSymbol{Xitwo}
{0}{\draw[kernels2] (0,0) node[not] {} -- (-1,1) node[not] {}
	-- (-2,2) node[not]{} -- (-3,3) node[xi]  {};
	\draw[kernels2] (0,0) -- (1,1) node[xi] {};
	\draw[kernels2] (-1,1) -- (0,2) node[xi] {};
	\draw[kernels2] (-2,2) -- (-1,3) node[xi] {};}

\DeclareSymbol{IXitwo}
{0}{\draw (-.7,1.2) node[xi] {} -- (0,-0.2) -- (.7,1.2) node[xi] {};}
\DeclareSymbol{I1Xitwo}
{0}{\draw[kernels2] (0,0) node[not] {} -- (-1,1) node[xi] {};
	\draw[kernels2] (0,0) -- (1,1) node[xi] {};}

\DeclareSymbol{I1Xitwobis}
{0}{\draw[kernels2] (0,0) node[not] {} -- (-1,1) node[xies] {};
	\draw[kernels2] (0,0) -- (1,1) node[xies] {};}

\DeclareSymbol{I1Xitwog}
{0}{\draw[kernels2] (0,0) node[not] {} -- (-1,1) node[xies] {};
	\draw[kernels2] (0,0) -- (1,1) node[xi] {};}

\DeclareSymbol{cI1Xitwo}
{0}{\draw[kernels2] (0,0) node[not] {} -- (-1,1) node[xic] {};
	\draw[kernels2] (0,0) -- (1,1) node[xi] {};}

\DeclareSymbol{I1IXi3}{0}{\draw (0,0) node[xi] {} -- (-1,1) ; 
	\draw[kernels2] (-1,1) node[not] {} -- (0,2) node[xi] {};
	\draw[kernels2] (-1,1) node[not] {} -- (-2,2) node[xi] {};}

\DeclareSymbol{I1Xi3c}{-1}{\draw[kernels2](0,1.5) node[xi] {} -- (0,0) node[not] {}; \draw (-1,1) node[xi] {} -- (0,0) ; \draw[kernels2] (0,0) -- (1,1) node[xi] {};}

\DeclareSymbol{I1Xi3cbis}{-1}{\draw[kernels2](0,1.5) node[xies] {} -- (0,0) node[not] {}; \draw (-1,1) node[xies] {} -- (0,0) ; \draw[kernels2] (0,0) -- (1,1) node[xies] {};}

\DeclareSymbol{I1IXi3b}{0}{\draw[kernels2] (0,0) node[not] {} -- (-1,1) ; \draw[kernels2] (0,0)   -- (1,1) node[xi] {} ;
	\draw (-1,1) node[xi] {} -- (0,2) node[xi] {};
}

\DeclareSymbol{I1IXi3c}{0}{\draw[kernels2] (0,0) node[not] {} -- (-1,1) ; \draw[kernels2] (0,0)   -- (1,1) node[xi] {} ;
	\draw[kernels2] (-1,1) node[not] {} -- (0,2) node[xi] {};
	\draw[kernels2] (-1,1) node[not] {} -- (-2,2) node[xi] {};}

\DeclareSymbol{I1IXi3cbis}{0}{\draw[kernels2] (0,0) node[not] {} -- (-1,1) ; \draw[kernels2] (0,0)   -- (1,1) node[xies] {} ;
	\draw[kernels2] (-1,1) node[not] {} -- (0,2) node[xies] {};
	\draw[kernels2] (-1,1) node[not] {} -- (-2,2) node[xies] {};}

\DeclareSymbol{I1Xi}{0}{\draw[kernels2] (0,0) node[not] {} -- (-1,1)  node[xi] {} ;}

\DeclareSymbol{I1Xi4a}{2}{\draw[kernels2] (0,0) node[not] {} -- (-1,1) ; \draw[kernels2] (0,0) node[not] {} -- (1,1) node[xi] {} ;
	\draw (-1,1) node[xi] {} -- (0,2) node[xi] {} -- (-1,3) node[xi] {};}

\DeclareSymbol{cI1Xi4a}{2}{\draw[kernels2] (0,0) node[not] {} -- (-1,1) ; \draw[kernels2] (0,0) node[not] {} -- (1,1) node[xic] {} ;
	\draw (-1,1) node[xic] {} -- (0,2) node[xi] {} -- (-1,3) node[xi] {};}

\DeclareSymbol{I1Xi4b}{2}{\draw (0,0) node[xi] {} -- (-1,1) node[xi] {} -- (0,2) ; \draw[kernels2] (0,2) node[not] {} -- (-1,3) node[xi] {};\draw[kernels2] (0,2)  -- (1,3) node[xi] {};
}

\DeclareSymbol{cI1Xi4b}{2}{\draw (0,0) node[xic] {} -- (-1,1) node[xic] {} -- (0,2) ; \draw[kernels2] (0,2) node[not] {} -- (-1,3) node[xi] {};\draw[kernels2] (0,2)  -- (1,3) node[xi] {};
}

\DeclareSymbol{I1Xi4c}{2}{\draw (0,0) node[xi] {} -- (-1,1) node[not] {}; \draw[kernels2] (-1,1) -- (0,2) ; 
	\draw[kernels2] (-1,1) -- (-2,2) node[xi] {} ;
	\draw (0,2) node[xi] {} -- (-1,3) node[xi] {};}

\DeclareSymbol{cI1Xi4c}{2}{\draw (0,0) node[xic] {} -- (-1,1) node[not] {}; \draw[kernels2] (-1,1) -- (0,2) ; 
	\draw[kernels2] (-1,1) -- (-2,2) node[xic] {} ;
	\draw (0,2) node[xi] {} -- (-1,3) node[xi] {};}

\DeclareSymbol{I1Xi4ab}{2}{\draw[kernels2] (0,0) node[not] {} -- (-1,1) ; \draw[kernels2] (0,0) node[not] {} -- (1,1) node[xi] {};\draw (-1,1) node[xi] {} -- (0,2) ; \draw[kernels2] (0,2) node[not] {} -- (-1,3) node[xi] {};\draw[kernels2] (0,2)  -- (1,3) node[xi] {}; }

\DeclareSymbol{cI1Xi4ab}{2}{\draw[kernels2] (0,0) node[not] {} -- (-1,1) ; \draw[kernels2] (0,0) node[not] {} -- (1,1) node[xic] {};\draw (-1,1) node[xic] {} -- (0,2) ; \draw[kernels2] (0,2) node[not] {} -- (-1,3) node[xi] {};\draw[kernels2] (0,2)  -- (1,3) node[xi] {}; }

\DeclareSymbol{I1Xi4bc}{2}{\draw (0,0) node[xi] {} -- (-1,1) node[not] {}; \draw[kernels2] (-1,1) -- (0,2) ; 
	\draw[kernels2] (-1,1) -- (-2,2) node[xi] {} ; \draw[kernels2] (0,2) node[not] {} -- (-1,3) node[xi] {};\draw[kernels2] (0,2)  -- (1,3) node[xi] {};
}

\DeclareSymbol{cI1Xi4bc}{2}{\draw (0,0) node[xic] {} -- (-1,1) node[not] {}; \draw[kernels2] (-1,1) -- (0,2) ; 
	\draw[kernels2] (-1,1) -- (-2,2) node[xic] {} ; \draw[kernels2] (0,2) node[not] {} -- (-1,3) node[xi] {};\draw[kernels2] (0,2)  -- (1,3) node[xi] {};
}

\DeclareSymbol{I1Xi4abcc1}{2}{\draw[kernels2] (0,0) node[not] {} -- (-1,1) node[not] {}
	-- (-2,2) node[not]{} -- (-3,3) node[xic]  {};
	\draw[kernels2] (0,0) -- (1,1) node[xic] {};
	\draw[kernels2] (-1,1) -- (0,2) node[xi] {};
	\draw[kernels2] (-2,2) -- (-1,3) node[xi] {};
}

\DeclareSymbol{I1Xi4abcc1b}{2}{\draw[kernels2] (0,0) node[not] {} -- (-1,1) node[not] {}
	-- (-2,2) node[not]{} -- (-3,3) node[xi]  {};
	\draw[kernels2] (0,0) -- (1,1) node[xic] {};
	\draw[kernels2] (-1,1) -- (0,2) node[xic] {};
	\draw[kernels2] (-2,2) -- (-1,3) node[xi] {};
}

\DeclareSymbol{I1Xi4abcc2}{2}{\draw[kernels2] (0,0) node[not] {} -- (-1,1) node[not] {}
	-- (-2,2) node[not]{} -- (-3,3) node[xic]  {};
	\draw[kernels2] (0,0) -- (1,1) node[xi] {};
	\draw[kernels2] (-1,1) -- (0,2) node[xi] {};
	\draw[kernels2] (-2,2) -- (-1,3) node[xic] {};
}

\DeclareSymbol{I1Xi4ac}{2}{\draw[kernels2] (0,0) node[not] {} -- (-1,1) ; \draw[kernels2] (0,0) node[not] {} -- (1,1) node[xi] {}; 
	\draw[kernels2] (-1,1) node[not] {} -- (0,2) ; 
	\draw[kernels2] (-1,1) -- (-2,2) node[xi] {} ;
	\draw (0,2) node[xi] {} -- (-1,3) node[xi] {};}

\DeclareSymbol{cI1Xi4ac}{2}{\draw[kernels2] (0,0) node[not] {} -- (-1,1) ; \draw[kernels2] (0,0) node[not] {} -- (1,1) node[xic] {}; 
	\draw[kernels2] (-1,1) node[not] {} -- (0,2) ; 
	\draw[kernels2] (-1,1) -- (-2,2) node[xic] {} ;
	\draw (0,2) node[xi] {} -- (-1,3) node[xi] {};}

\DeclareSymbol{I1Xi4acc1}{2}{\draw[kernels2] (0,0) node[not] {} -- (-1,1) ; \draw[kernels2] (0,0) node[not] {} -- (1,1) node[xic] {}; 
	\draw[kernels2] (-1,1) node[not] {} -- (0,2) ; 
	\draw[kernels2] (-1,1) -- (-2,2) node[xi] {} ;
	\draw (0,2) node[xic] {} -- (-1,3) node[xi] {};}

\DeclareSymbol{I1Xi4acc2}{2}{\draw[kernels2] (0,0) node[not] {} -- (-1,1) ; \draw[kernels2] (0,0) node[not] {} -- (1,1) node[xic] {}; 
	\draw[kernels2] (-1,1) node[not] {} -- (0,2) ; 
	\draw[kernels2] (-1,1) -- (-2,2) node[xi] {} ;
	\draw (0,2) node[xi] {} -- (-1,3) node[xic] {};}

\DeclareSymbol{2I1Xi4}{2}{\draw[kernels2] (0,0) node[not] {} -- (-1,1) node[not] {};
	\draw[kernels2] (0,0) -- (1,1) node[not] {};
	\draw[kernels2] (-1,1) -- (-1.5,2.5) node[xi] {};
	\draw[kernels2] (-1,1) -- (-0.5,2.5) node[xi] {};
	\draw[kernels2] (1,1) -- (0.5,2.5) node[xi] {};
	\draw[kernels2] (1,1) -- (1.5,2.5) node[xi] {};
}

\DeclareSymbol{2I1Xi4dis}{2}{\draw[kernels2] (0,0) node[not] {} -- (-1,1) node[not] {};
	\draw[kernels2] (0,0) -- (1,1) node[not] {};
	\draw[kernels2] (-1,1) -- (-1.5,2.5) node[xies] {};
	\draw[kernels2] (-1,1) -- (-0.5,2.5) node[xies] {};
	\draw[kernels2] (1,1) -- (0.5,2.5) node[xies] {};
	\draw[kernels2] (1,1) -- (1.5,2.5) node[xies] {};
}

\DeclareSymbol{2I1Xi4c1}{2}{\draw[kernels2] (0,0) node[not] {} -- (-1,1) node[not] {};
	\draw[kernels2] (0,0) -- (1,1) node[not] {};
	\draw[kernels2] (-1,1) -- (-1.5,2.5) node[xic] {};
	\draw[kernels2] (-1,1) -- (-0.5,2.5) node[xi] {};
	\draw[kernels2] (1,1) -- (0.5,2.5) node[xic] {};
	\draw[kernels2] (1,1) -- (1.5,2.5) node[xi] {};
}

\DeclareSymbol{2I1Xi4c2}{2}{\draw[kernels2] (0,0) node[not] {} -- (-1,1) node[not] {};
	\draw[kernels2] (0,0) -- (1,1) node[not] {};
	\draw[kernels2] (-1,1) -- (-1.5,2.5) node[xic] {};
	\draw[kernels2] (-1,1) -- (-0.5,2.5) node[xic] {};
	\draw[kernels2] (1,1) -- (0.5,2.5) node[xi] {};
	\draw[kernels2] (1,1) -- (1.5,2.5) node[xi] {};
}

\DeclareSymbol{2I1Xi4b}{2}{\draw[kernels2] (0,0) node[not] {} -- (-1,1) ;
	\draw[kernels2] (0,0) -- (1,1);
	\draw (-1,1) node[xi] {} -- (-1,2.5) node[xi] {};
	\draw (1,1)  node[xi] {} -- (1,2.5) node[xi] {};
}

\DeclareSymbol{2I1Xi4bb}{2}{\draw[kernels2] (0,0) node[not] {} -- (-1,1) ;
	\draw[kernels2] (0,0) -- (1,1);
	\draw (-1,1) node[xi] {} -- (-1,2.5) node[xiesf] {};
	\draw (1,1)  node[xi] {} -- (1,2.5) node[xic] {};
}

\DeclareSymbol{2I1Xi4c}{2}{\draw[kernels2] (0,0) node[not] {} -- (-1,1);
	\draw[kernels2] (0,0) -- (1,1) node[not] {};
	\draw (-1,1)  node[xi] {} -- (-1,2.5) node[xi] {};
	\draw[kernels2] (1,1) -- (0.4,2.5) node[xi] {};
	\draw[kernels2] (1,1) -- (1.6,2.5) node[xi] {};
}

\DeclareSymbol{2I1Xi4cc1}{2}{\draw[kernels2] (0,0) node[not] {} -- (-1,1);
	\draw[kernels2] (0,0) -- (1,1) node[not] {};
	\draw (-1,1)  node[xic] {} -- (-1,2.5) node[xi] {};
	\draw[kernels2] (1,1) -- (0.4,2.5) node[xic] {};
	\draw[kernels2] (1,1) -- (1.6,2.5) node[xi] {};
}

\DeclareSymbol{2I1Xi4cc2}{2}{\draw[kernels2] (0,0) node[not] {} -- (-1,1);
	\draw[kernels2] (0,0) -- (1,1) node[not] {};
	\draw (-1,1)  node[xic] {} -- (-1,2.5) node[xic] {};
	\draw[kernels2] (1,1) -- (0.4,2.5) node[xi] {};
	\draw[kernels2] (1,1) -- (1.6,2.5) node[xi] {};
}

\DeclareSymbol{Xi4ba}{0}{\draw(-0.5,1.5) node[xi] {} -- (0,0); \draw (-1.5,1) node[xi] {} -- (0,0) node[not] {}; \draw[kernels2] (0,0) -- (1.5,1) node[xi] {};
	\draw[kernels2] (0,0) -- (0.5,1.5) node[xi] {} ;}

\DeclareSymbol{Xi4badis}{0}{\draw(-0.5,1.5) node[xies] {} -- (0,0); \draw (-1.5,1) node[xies] {} -- (0,0) node[not] {}; \draw[kernels2] (0,0) -- (1.5,1) node[xies] {};
	\draw[kernels2] (0,0) -- (0.5,1.5) node[xies] {} ;}

\DeclareSymbol{Xi4ba1}{0}{\draw(-0.5,1.5) node[xi] {} -- (0,0); \draw (-1.5,1) node[xi] {} -- (0,0) node[not] {}; \draw[kernels2] (0,0) -- (1.5,1) node[xic] {};
	\draw[kernels2] (0,0) -- (0.5,1.5) node[xic] {} ;}

\DeclareSymbol{Xi4ba1b}{0}{\draw(-0.5,1.5) node[xic] {} -- (0,0); \draw (-1.5,1) node[xic] {} -- (0,0) node[not] {}; \draw[kernels2] (0,0) -- (1.5,1) node[xi] {};
	\draw[kernels2] (0,0) -- (0.5,1.5) node[xi] {} ;}

\DeclareSymbol{Xi4ba1bdiff}{0}{\draw(-0.5,1.5) node[xic] {} -- (0,0); \draw (-1.5,1) node[xic] {} -- (0,0) node[not] {}; \draw (0,0) -- (1.5,1) node[xi] {};
	\draw (0,0) -- (0.5,1.5) node[xi] {};
	\draw(0,0) node[diff] {};}

\DeclareSymbol{Xi4ba1bb}{0}{\draw(-0.5,1.5) node[xic] {} -- (0,0); \draw (-1.5,1) node[xiesf] {} -- (0,0) node[not] {}; \draw[kernels2] (0,0) -- (1.5,1) node[xi] {};
	\draw[kernels2] (0,0) -- (0.5,1.5) node[xi] {} ;}

\DeclareSymbol{Xi4ba2}{0}{\draw(-0.5,1.5) node[xi] {} -- (0,0); \draw (-1.5,1) node[xic] {} -- (0,0) node[not] {}; \draw[kernels2] (0,0) -- (1.5,1) node[xi] {};
	\draw[kernels2] (0,0) -- (0.5,1.5) node[xic] {} ;}

\DeclareSymbol{Xi4ba2b}{0}{\draw(-0.5,1.5) node[xi] {} -- (0,0); \draw (-1.5,1) node[xic] {} -- (0,0) node[not] {}; \draw[kernels2] (0,0) -- (1.5,1) node[xi] {};
	\draw[kernels2] (0,0) -- (0.5,1.5) node[xiesf] {} ;}

\DeclareSymbol{Xi4ca}{0}{\draw (0,1) -- (-1,2.2) node[xi] {};\draw (0,-0.25) node[xi] {} -- (0,1) ; \draw[kernels2] (0,1) node[not] {} -- (1,2.2) node[xi] {};
	\draw[kernels2] (0,1) {} -- (0,2.7) node[xi] {};
}

\DeclareSymbol{Xi4cb}{0}{\draw (-1,1) -- (-2,2) node[xi] {};\draw[kernels2] (0,0)  -- (-1,1) node[xi] {} ; \draw[kernels2] (0,0) node[not] {} -- (1,1) node[xi] {} ; 
	\draw (-1,1) node[xi] {} -- (0,2) node[xi] {};}

\DeclareSymbol{Xi4cbb}{0}{\draw (-1,1) -- (-2,2) node[xiesf] {};\draw[kernels2] (0,0)  -- (-1,1) node[xi] {} ; \draw[kernels2] (0,0) node[not] {} -- (1,1) node[xi] {} ; 
	\draw (-1,1) node[xi] {} -- (0,2) node[xic] {};}

\DeclareSymbol{Xi4cbc1}{0}{\draw (-1,1) -- (-2,2) node[xic] {};\draw[kernels2] (0,0)  -- (-1,1) node[xic] {} ; \draw[kernels2] (0,0) node[not] {} -- (1,1) node[xi] {} ; 
	\draw (-1,1) node[xic] {} -- (0,2) node[xi] {};}

\DeclareSymbol{Xi4cbc2}{0}{\draw (-1,1) -- (-2,2) node[xi] {};\draw[kernels2] (0,0)  -- (-1,1) node[xi] {} ; \draw[kernels2] (0,0) node[not] {} -- (1,1) node[xic] {} ; 
	\draw (-1,1) node[xic] {} -- (0,2) node[xi] {};}

\DeclareSymbol{Xi4cab}{0}{\draw (-1,1) -- (-2,2) node[xi] {};\draw[kernels2] (0,0)  -- (-1,1); \draw[kernels2] (0,0) node[not] {} -- (1,1) node[xi] {} ; 
	\draw[kernels2] (-1,1)  {} -- (0,2) node[xi] {};
	\draw[kernels2] (-1,1) node[not] {} -- (-1,2.5) node[xi] {};
}

\DeclareSymbol{Xi4cabdis}{0}{\draw (-1,1) -- (-2,2) node[xies] {};\draw[kernels2] (0,0)  -- (-1,1); \draw[kernels2] (0,0) node[not] {} -- (1,1) node[xies] {} ; 
	\draw[kernels2] (-1,1)  {} -- (0,2) node[xies] {};
	\draw[kernels2] (-1,1) node[not] {} -- (-1,2.5) node[xies] {};
}

\DeclareSymbol{Xi4cabc1}{0}{\draw (-1,1) -- (-2,2) node[xi] {};\draw[kernels2] (0,0)  -- (-1,1); \draw[kernels2] (0,0) node[not] {} -- (1,1) node[xic] {} ; 
	\draw[kernels2] (-1,1)  {} -- (0,2) node[xic] {};
	\draw[kernels2] (-1,1) node[not] {} -- (-1,2.5) node[xi] {};
}

\DeclareSymbol{Xi4cabc2}{0}{\draw (-1,1) -- (-2,2) node[xic] {};\draw[kernels2] (0,0)  -- (-1,1); \draw[kernels2] (0,0) node[not] {} -- (1,1) node[xic] {} ; 
	\draw[kernels2] (-1,1)  {} -- (0,2) node[xi] {};
	\draw[kernels2] (-1,1) node[not] {} -- (-1,2.5) node[xi] {};
}

\DeclareSymbol{Xi4ea}{1.5}{\draw (-1,2.5) node[xi] {} -- (-1,1) node[xi] {} -- (0,0); 
	\draw[kernels2] (0,0)  -- (1,1) node[xi] {};
	\draw[kernels2] (0,0) node[not] {} -- (0,1.5) node[xi] {}; }

\DeclareSymbol{Xi4eac1}{1.5}{\draw (-1,2.5) node[xic] {} -- (-1,1) node[xi] {} -- (0,0); 
	\draw[kernels2] (0,0)  -- (1,1) node[xic] {};
	\draw[kernels2] (0,0) node[not] {} -- (0,1.5) node[xi] {}; }

\DeclareSymbol{Xi4eac1b}{1.5}{\draw (-1,2.5) node[xic] {} -- (-1,1) node[xi] {} -- (0,0); 
	\draw[kernels2] (0,0)  -- (1,1) node[xiesf] {};
	\draw[kernels2] (0,0) node[not] {} -- (0,1.5) node[xi] {}; }

\DeclareSymbol{Xi4eac2}{1.5}{\draw (-1,2.5) node[xic] {} -- (-1,1) node[xic] {} -- (0,0); 
	\draw[kernels2] (0,0)  -- (1,1) node[xi] {};
	\draw[kernels2] (0,0) node[not] {} -- (0,1.5) node[xi] {}; }

\DeclareSymbol{Xi4eact1}{1.5}{\draw (-1,2.5) node[xic] {} -- (-1,1) node[xi] {} -- (0,0); 
	\draw (0,0)  -- (1,1) node[xic] {};
	\draw[rho] (0,0) node[not] {} -- (0,1.5) node[xi] {}; }

\DeclareSymbol{Xi4eact2}{1.5}{\draw[rho] (-1,2.5) node[xic] {} -- (-1,1) node[xi] {} -- (0,0); 
	\draw (0,0)  -- (1,1) node[xic] {};
	\draw (0,0) node[not] {} -- (0,1.5) node[xi] {}; }

\DeclareSymbol{Xi4eabis}{1.5}{\draw (-1,2.5) node[xi] {} -- (-1,1) ; \draw[kernels2] (-1,1) node[xi] {} -- (0,0); 
	\draw (0,0)  -- (1,1) node[xi] {};
	\draw[kernels2] (0,0) node[not] {} -- (0,1.5) node[xi] {}; }

\DeclareSymbol{Xi4eabisc1}{1.5}{\draw (-1,2.5) node[xic] {} -- (-1,1) ; \draw[kernels2] (-1,1) node[xi] {} -- (0,0); 
	\draw (0,0)  -- (1,1) node[xi] {};
	\draw[kernels2] (0,0) node[not] {} -- (0,1.5) node[xic] {}; }

\DeclareSymbol{Xi4eabisc1b}{1.5}{\draw (-1,2.5) node[xic] {} -- (-1,1) ; \draw[kernels2] (-1,1) node[xi] {} -- (0,0); 
	\draw (0,0)  -- (1,1) node[xi] {};
	\draw[kernels2] (0,0) node[not] {} -- (0,1.5) node[xiesf] {}; }

\DeclareSymbol{Xi4eabisc1bis}{1.5}{\draw (-1,2.5) node[xi] {} -- (-1,1) ; \draw[kernels2] (-1,1) node[xi] {} -- (0,0); 
	\draw (0,0)  -- (1,1) node[xi] {};
	\draw[kernels2] (0,0) node[not] {} -- (0,1.5) node[xi] {};
	\draw (-2,1) node[] {\tiny{$i$}};
	\draw (-2,2.5) node[] {\tiny{$\ell$}};
	\draw (2,1) node[] {\tiny{$k$}};
	\draw (0,2.5) node[] {\tiny{$j$}};
}

\DeclareSymbol{Xi4eabisc1tris}{1.5}{\draw (-1,2.5) node[xi] {} -- (-1,1) ; \draw[kernels2] (-1,1) node[xi] {} -- (0,0); 
	\draw (0,0)  -- (1,1) node[xi] {};
	\draw[kernels2] (0,0) node[not] {} -- (0,1.5) node[xi] {};
	\draw (-2,1) node[] {\tiny{i}};
	\draw (-2,2.5) node[] {\tiny{j}};
	\draw (2,1) node[] {\tiny{j}};
	\draw (0,2.5) node[] {\tiny{i}};
}

\DeclareSymbol{Xi4eabisc1quater}{1.5}{\draw (-1,2.5) node[xic] {} -- (-1,1) ; \draw[kernels2] (-1,1) node[xi] {} -- (0,0); 
	\draw (0,0)  -- (1,1) node[xic] {};
	\draw[kernels2] (0,0) node[not] {} -- (0,1.5) node[xi] {};
}

\DeclareSymbol{Xi4eabisc2}{1.5}{\draw (-1,2.5) node[xic] {} -- (-1,1) ; \draw[kernels2] (-1,1) node[xi] {} -- (0,0); 
	\draw (0,0)  -- (1,1) node[xic] {};
	\draw[kernels2] (0,0) node[not] {} -- (0,1.5) node[xi] {}; }

\DeclareSymbol{Xi4eabisc2l}{1.5}{\draw (-1,2.5) node[xiesf] {} -- (-1,1) ; \draw[kernels2] (-1,1) node[xi] {} -- (0,0); 
	\draw (0,0)  -- (1,1) node[xic] {};
	\draw[kernels2] (0,0) node[not] {} -- (0,1.5) node[xi] {}; }

\DeclareSymbol{Xi4eabisc2r}{1.5}{\draw (-1,2.5) node[xic] {} -- (-1,1) ; \draw[kernels2] (-1,1) node[xi] {} -- (0,0); 
	\draw (0,0)  -- (1,1) node[xiesf] {};
	\draw[kernels2] (0,0) node[not] {} -- (0,1.5) node[xi] {}; }

\DeclareSymbol{Xi4eabisc3}{1.5}{\draw (-1,2.5) node[xic] {} -- (-1,1) ; \draw[kernels2] (-1,1) node[xic] {} -- (0,0); 
	\draw (0,0)  -- (1,1) node[xi] {};
	\draw[kernels2] (0,0) node[not] {} -- (0,1.5) node[xi] {}; }

\DeclareSymbol{Xi4eb}{0}{
	\draw[kernels2] (0,2) node[xi] {} -- (-1,1) ; \draw[kernels2] (-2,2)  node[xi] {} -- (-1,1) ; \draw (-1,1)  node[not] {} -- (0,0); 
	\draw (0,0) node[xi] {}  -- (1,1) node[xi] {};
}

\DeclareSymbol{Xi4eab}{1.5}{\draw[kernels2] (-1,2.5) node[xi] {} -- (-1,1) ; \draw[kernels2] (-2,2)  node[xi] {} -- (-1,1) ; \draw (-1,1)  node[not] {} -- (0,0); 
	\draw[kernels2] (0,0)  -- (1,1) node[xi] {};
	\draw[kernels2] (0,0) node[not] {} -- (0,1.5) node[xi] {}; 
}

\DeclareSymbol{Xi4eabdis}{1.5}{\draw[kernels2] (-1,2.5) node[xies] {} -- (-1,1) ; \draw[kernels2] (-2,2)  node[xies] {} -- (-1,1) ; \draw (-1,1)  node[not] {} -- (0,0); 
	\draw[kernels2] (0,0)  -- (1,1) node[xies] {};
	\draw[kernels2] (0,0) node[not] {} -- (0,1.5) node[xies] {}; 
}

\DeclareSymbol{Xi4eabc1}{1.5}{\draw[kernels2] (-1,2.5) node[xic] {} -- (-1,1) ; \draw[kernels2] (-2,2)  node[xi] {} -- (-1,1) ; \draw (-1,1)  node[not] {} -- (0,0); 
	\draw[kernels2] (0,0)  -- (1,1) node[xic] {};
	\draw[kernels2] (0,0) node[not] {} -- (0,1.5) node[xi] {}; 
}

\DeclareSymbol{Xi4eabc2}{1.5}{\draw[kernels2] (-1,2.5) node[xi] {} -- (-1,1) ; \draw[kernels2] (-2,2)  node[xi] {} -- (-1,1) ; \draw (-1,1)  node[not] {} -- (0,0); 
	\draw[kernels2] (0,0)  -- (1,1) node[xic] {};
	\draw[kernels2] (0,0) node[not] {} -- (0,1.5) node[xic] {}; 
}

\DeclareSymbol{Xi4eabbis}{1.5}{\draw[kernels2] (-1,2.5) node[xi] {} -- (-1,1) ; \draw[kernels2] (-2,2)  node[xi] {} -- (-1,1) ; \draw[kernels2] (-1,1)  node[not] {} -- (0,0); 
	\draw (0,0)  -- (1,1) node[xi] {};
	\draw[kernels2] (0,0) node[not] {} -- (0,1.5) node[xi] {}; 
}

\DeclareSymbol{Xi4eabbisc1}{1.5}{\draw[kernels2] (-1,2.5) node[xic] {} -- (-1,1) ; \draw[kernels2] (-2,2)  node[xi] {} -- (-1,1) ; \draw[kernels2] (-1,1)  node[not] {} -- (0,0); 
	\draw (0,0)  -- (1,1) node[xic] {};
	\draw[kernels2] (0,0) node[not] {} -- (0,1.5) node[xi] {}; 
}

\DeclareSymbol{Xi4eabbisc1perm}{1.5}{\draw[kernels2] (-1,2.5) node[xi] {} -- (-1,1) ; \draw[kernels2] (-2,2)  node[xic] {} -- (-1,1) ; \draw[kernels2] (-1,1)  node[not] {} -- (0,0); 
	\draw (0,0)  -- (1,1) node[xic] {};
	\draw[kernels2] (0,0) node[not] {} -- (0,1.5) node[xi] {}; 
}

\DeclareSymbol{Xi4eabbisc2}{1.5}{\draw[kernels2] (-1,2.5) node[xi] {} -- (-1,1) ; \draw[kernels2] (-2,2)  node[xi] {} -- (-1,1) ; \draw[kernels2] (-1,1)  node[not] {} -- (0,0); 
	\draw (0,0)  -- (1,1) node[xic] {};
	\draw[kernels2] (0,0) node[not] {} -- (0,1.5) node[xic] {}; 
}

\DeclareSymbol{Xi2cbis}{0}{\draw[kernels2] (0,1) -- (0.8,2.2) node[xi] {};\draw[kernels2] (0,-0.25) node[not] {} -- (0,1); \draw[kernels2] (0,1) node[not] {} -- (-0.8,2.2) node[xi] {};}

\DeclareSymbol{Xi2cbis1}{0}{\draw (0,1) -- (-0.8,2.2) node[xi] {};\draw[kernels2] (0,-0.25) node[not] {} -- (0,1) node[xi] {}; }

\DeclareSymbol{Xi2Xbis}{-2}{\draw[kernels2] (0,-0.25)  -- (-1,1) ; \draw (-1,1) node[xix] {};
	\draw[kernels2] (0,-0.25) node[not] {} -- (1,1) node[xi] {};}

\DeclareSymbol{XXi2bis}{-2}{\draw[kernels2] (0,-0.25) -- (-1,1) node[xi] {};
	\draw[kernels2] (0,-0.25) node[X] {} -- (1,1) node[xi] {};}

\DeclareSymbol{I1XiIXi}{0}{\draw[kernels2] (0,-0.25) -- (1,1) node[xi] {};
	\draw (0,-0.25) node[not] {} -- (-1,1) node[xi] {};}

\DeclareSymbol{I1XiIXib}{0}{\draw  (0,-0.25) node[xi] {} -- (0,1) node[not] {};
	\draw[kernels2] (0,1) -- (0,2.25) ; \draw (0,2.25) node[xi]{}; }

\DeclareSymbol{I1XiIXic}{0}{
	\draw[kernels2] (0,0) -- (1,1) node[xi] {} ; 
	\draw[kernels2] (0,0) node[not] {}  -- (-1,1) node[not] {} -- (0,2) node[xi] {};
}

\DeclareSymbol{thin}{1.4}{\draw[pagebackground] (-0.3,0) -- (0.3,0); \draw  (0,0) -- (0,2);}
\DeclareSymbol{thin2}{1.4}{\draw[pagebackground] (-0.3,0) -- (0.3,0); \draw[tinydots]  (0,0) -- (0,2);}

\DeclareSymbol{thick}{1.4}{\draw[pagebackground] (-0.3,0) -- (0.3,0); \draw[kernels2]  (0,0) -- (0,2);}

\DeclareSymbol{thick2}{1.4}{\draw[pagebackground] (-0.3,0) -- (0.3,0); \draw[kernels2,tinydots]  (0,0) -- (0,2);}

\DeclareSymbol{Xi4ind}{2}{\draw (0,0) node[xi,label={[label distance=-0.2em]right: \scriptsize  $ i $}]  { } -- (-1,1) node[xi,label={[label distance=-0.2em]left: \scriptsize  $ j $}] {} -- (0,2) node[xi,label={[label distance=-0.2em]right: \scriptsize  $ k $}] {} -- (-1,3) node[xi,label={[label distance=-0.2em]left: \scriptsize  $ \ell $}] {};}

\DeclareSymbol{Xi4c1}{2}{\draw (0,0) node[xic] {} -- (-1,1) node[xi] {} -- (0,2) node[xic] {} -- (-1,3) node[xi] {};} 
\DeclareSymbol{IXi2ex}{0}{\draw (0,-0.25) node[xie] {} -- (-1,1) node[xi] {} ; \draw (0,-0.25)-- (1,1) node[xi] {};}
\DeclareSymbol{IXi2ex1}{0}{\draw (0,-0.25) node[xie] {} -- (-1,1) node[xi] {} -- (0,2) node[xi] {};}

\DeclareSymbol{Xi4b1}{0}{\draw(0,1.5) node[xic] {} -- (0,0); \draw (-1,1) node[xic] {} -- (0,0) node[xi] {} -- (1,1) node[xi] {};}

\DeclareSymbol{Xi4ec1}{0}{\draw (0,2) node[xi] {} -- (-1,1) node[xic] {} -- (0,0) node[xic] {} -- (1,1) node[xi] {};}
\DeclareSymbol{Xi4ec2}{0}{\draw (0,2) node[xic] {} -- (-1,1) node[xi] {} -- (0,0) node[xic] {} -- (1,1) node[xi] {};}
\DeclareSymbol{Xi4ec3}{0}{\draw (0,2) node[xic] {} -- (-1,1) node[xic] {} -- (0,0) node[xi] {} -- (1,1) node[xi] {};}

\DeclareSymbol{I1Xi4ac1}{2}{\draw[kernels2] (0,0) node[not] {} -- (-1,1) ; \draw[kernels2] (0,0) node[not] {} -- (1,1) node[xic] {} ;
	\draw (-1,1) node[xi] {} -- (0,2) node[xic] {} -- (-1,3) node[xi] {};}

\DeclareSymbol{I1Xi4ac2}{2}{\draw[kernels2] (0,0) node[not] {} -- (-1,1) ; \draw[kernels2] (0,0) node[not] {} -- (1,1) node[xic] {} ;
	\draw (-1,1) node[xi] {} -- (0,2) node[xi] {} -- (-1,3) node[xic] {};}

\DeclareSymbol{I1Xi4bp}{2}{\draw (0,0) node[not] {} -- (-1,1) node[xi] {} -- (0,2) ; \draw[kernels2] (0,2) node[not] {} -- (-1,3) node[xi] {};\draw[kernels2] (0,2)  -- (1,3) node[xi] {};
}

\DeclareSymbol{I1Xi4bc1}{2}{\draw (0,0) node[xic] {} -- (-1,1) node[xi] {} -- (0,2) ; \draw[kernels2] (0,2) node[not] {} -- (-1,3) node[xi] {};\draw[kernels2] (0,2)  -- (1,3) node[xic] {};
}

\DeclareSymbol{I1Xi4bc2}{2}{\draw (0,0) node[xic] {} -- (-1,1) node[xi] {} -- (0,2) ; \draw[kernels2] (0,2) node[not] {} -- (-1,3) node[xic] {};\draw[kernels2] (0,2)  -- (1,3) node[xi] {};
}

\DeclareSymbol{I1Xi4cp}{2}{\draw (0,0) node[not] {} -- (-1,1) node[not] {}; \draw[kernels2] (-1,1) -- (0,2) ; 
	\draw[kernels2] (-1,1) -- (-2,2) node[xi] {} ;
	\draw (0,2) node[xi] {} -- (-1,3) node[xi] {};}

\DeclareSymbol{I1Xi4cc1}{2}{\draw (0,0) node[xic] {} -- (-1,1) node[not] {}; \draw[kernels2] (-1,1) -- (0,2) ; 
	\draw[kernels2] (-1,1) -- (-2,2) node[xi] {} ;
	\draw (0,2) node[xic] {} -- (-1,3) node[xi] {};}

\DeclareSymbol{I1Xi4cc2}{2}{\draw (0,0) node[xic] {} -- (-1,1) node[not] {}; \draw[kernels2] (-1,1) -- (0,2) ; 
	\draw[kernels2] (-1,1) -- (-2,2) node[xi] {} ;
	\draw (0,2) node[xi] {} -- (-1,3) node[xic] {};}

\DeclareSymbol{I1Xi4abc1}{2}{\draw[kernels2] (0,0) node[not] {} -- (-1,1) ; \draw[kernels2] (0,0) node[not] {} -- (1,1) node[xic] {};\draw (-1,1) node[xi] {} -- (0,2) ; \draw[kernels2] (0,2) node[not] {} -- (-1,3) node[xic] {};\draw[kernels2] (0,2)  -- (1,3) node[xi] {}; }

\DeclareSymbol{I1Xi4abc2}{2}{\draw[kernels2] (0,0) node[not] {} -- (-1,1) ; \draw[kernels2] (0,0) node[not] {} -- (1,1) node[xic] {};\draw (-1,1) node[xi] {} -- (0,2) ; \draw[kernels2] (0,2) node[not] {} -- (-1,3) node[xi] {};\draw[kernels2] (0,2)  -- (1,3) node[xic] {}; }

\DeclareSymbol{R1}{0}{\draw (-1,1) node[xi] {} -- (0,0) node[not] {};
	\draw[kernels2] (0,1.5) node[xic] {} -- (0,0) -- (1,1) node[xic] {};}
\DeclareSymbol{R2}{0}{\draw (-1,1) node[xic] {} -- (0,0) node[not] {};
	\draw[kernels2] (0,1.5)  {} -- (0,0) -- (1,1)  {};
	\draw (0,1.5) node[xi] {};
	\draw (1,1) node[xic] {};
}
\DeclareSymbol{R3}{1}{\draw[kernels2] (-1,1.5)  {} -- (0,0) node[not] {} -- (1,1.5);
	\draw (-1,1.5) node[xi] {};
	\draw[kernels2] (0,3) {} -- (1,1.5) -- (2,3)  {};
	\draw  (0,3) node[xic] {} ;
	\draw (2,3) node[xic] {};}
\DeclareSymbol{R4}{1}{\draw[kernels2] (-1,1.5) node[xic] {} -- (0,0) node[not] {} -- (1,1.5);
	\draw[kernels2] (0,3) {} -- (1,1.5) -- (2,3) node[xic] {};
	\draw (0,3) node[xi] {};}

\DeclareSymbol{I1Xi4bcp}{2}{\draw (0,0) node[not] {} -- (-1,1) node[not] {}; \draw[kernels2] (-1,1) -- (0,2) ; 
	\draw[kernels2] (-1,1) -- (-2,2) node[xi] {} ; \draw[kernels2] (0,2) node[not] {} -- (-1,3) node[xi] {};\draw[kernels2] (0,2)  -- (1,3) node[xi] {};
}

\DeclareSymbol{I1Xi4bcc1}{2}{\draw (0,0) node[xic] {} -- (-1,1) node[not] {}; \draw[kernels2] (-1,1) -- (0,2) ; 
	\draw[kernels2] (-1,1) -- (-2,2) node[xi] {} ; \draw[kernels2] (0,2) node[not] {} -- (-1,3) node[xi] {};\draw[kernels2] (0,2)  -- (1,3) node[xic] {};
}

\DeclareSymbol{I1Xi4bcc2}{2}{\draw (0,0) node[xic] {} -- (-1,1) node[not] {}; \draw[kernels2] (-1,1) -- (0,2) ; 
	\draw[kernels2] (-1,1) -- (-2,2) node[xi] {} ; \draw[kernels2] (0,2) node[not] {} -- (-1,3) node[xic] {};\draw[kernels2] (0,2)  -- (1,3) node[xi] {};
} 

\DeclareSymbol{2I1Xi4bc1}{2}{\draw[kernels2] (0,0) node[not] {} -- (-1,1) ;
	\draw[kernels2] (0,0) -- (1,1);
	\draw (-1,1) node[xic] {} -- (-1,2.5) node[xi] {};
	\draw (1,1)  node[xic] {} -- (1,2.5) node[xi] {};
}

\DeclareSymbol{2I1Xi4bc2}{2}{\draw[kernels2] (0,0) node[not] {} -- (-1,1) ;
	\draw[kernels2] (0,0) -- (1,1);
	\draw (-1,1) node[xi] {} -- (-1,2.5) node[xic] {};
	\draw (1,1)  node[xic] {} -- (1,2.5) node[xi] {};
}

\DeclareSymbol{diff2I1Xi4bc2}{2}{\draw (0,0) node[diff] {} -- (-1,1) ;
	\draw (0,0) -- (1,1);
	\draw (-1,1) node[xi] {} -- (-1,2.5) node[xic] {};
	\draw (1,1)  node[xic] {} -- (1,2.5) node[xi] {};
}

\DeclareSymbol{2I1Xi4bc3}{2}{\draw[kernels2] (0,0) node[not] {} -- (-1,1) ;
	\draw[kernels2] (0,0) -- (1,1);
	\draw (-1,1) node[xic] {} -- (-1,2.5) node[xic] {};
	\draw (1,1)  node[xi] {} -- (1,2.5) node[xi] {};
}

\DeclareSymbol{Xi41}{0}{\draw (0,1) -- (0.8,2.2) node[xic] {};\draw (0,-0.25) node[xi] {} -- (0,1) node[xi] {} -- (-0.8,2.2) node[xic] {};} 

\DeclareSymbol{Xi42}{0}{\draw (0,1) -- (0.8,2.2) node[xi] {};\draw (0,-0.25) node[xic] {} -- (0,1) node[xi] {} -- (-0.8,2.2) node[xic] {};}

\DeclareSymbol{Xi4ca1}{0}{\draw (0,1) -- (-1,2.2) node[xic] {};\draw (0,-0.25) node[xi] {} -- (0,1) ; \draw[kernels2] (0,1) node[not] {} -- (1,2.2) node[xic] {};
	\draw[kernels2] (0,1) {} -- (0,2.7) node[xi] {};
}

\DeclareSymbol{Xi4ca2}{0}{\draw (0,1) -- (-1,2.2) node[xi] {};\draw (0,-0.25) node[xi] {} -- (0,1) ; \draw[kernels2] (0,1) node[not] {} -- (1,2.2) node[xic] {};
	\draw[kernels2] (0,1) {} -- (0,2.7) node[xic] {};
}

\DeclareSymbol{Xi4cap}{0}{\draw (0,1) -- (-1,2.2) node[xi] {};\draw (0,-0.25) node[not] {} -- (0,1) ; \draw[kernels2] (0,1) node[not] {} -- (1,2.2) node[xi] {};
	\draw[kernels2] (0,1) {} -- (0,2.7) node[xi] {};
}

\DeclareSymbol{Xi3a}{0}{
	\draw (-1,1)  node[xi] {} -- (0,0); 
	\draw (0,0) node[xi] {}  -- (1,1) node[xi] {};
}

\DeclareSymbol{Xi4ebc1}{0}{
	\draw[kernels2] (0,2) node[xi] {} -- (-1,1) ; \draw[kernels2] (-2,2)  node[xic] {} -- (-1,1) ; \draw (-1,1)  node[not] {} -- (0,0); 
	\draw (0,0) node[xic] {}  -- (1,1) node[xi] {};
}

\DeclareSymbol{Xi4ebc2}{0}{
	\draw[kernels2] (0,2) node[xi] {} -- (-1,1) ; \draw[kernels2] (-2,2)  node[xi] {} -- (-1,1) ; \draw (-1,1)  node[not] {} -- (0,0); 
	\draw (0,0) node[xic] {}  -- (1,1) node[xic] {};
}

\DeclareSymbol{Xi2cbispex}{0}{\draw[kernels2] (0,1) -- (0.8,2.2) node[xi] {};\draw (0,-0.25) node[xie] {} -- (0,1); \draw[kernels2] (0,1) node[not] {} -- (-0.8,2.2) node[xi] {};}

\DeclareSymbol{Xi2cbis1p}{0}{\draw (0,1) -- (-0.8,2.2) node[xi] {};\draw (0,-0.25) node[not] {} -- (0,1) node[xi] {}; }

\DeclareSymbol{Xi2Xp}{-2}{\draw (0,-0.25) node[not] {} -- (-1,1) node[xix] {};} 

\DeclareSymbol{I1XiIXib}{0}{\draw  (0,-0.25) node[xi] {} -- (0,1) node[not] {};
	\draw[kernels2] (0,1) -- (0,2.25) ; \draw (0,2.25) node[xi]{}; }

\DeclareSymbol{IXi2b}{0}{\draw  (0,-0.25) node[xi] {} -- (0,1) node[not] {};
	\draw (0,1) -- (0,2.25) ; \draw (0,2.25) node[xi]{}; }

\DeclareSymbol{IXi2bex}{0}{\draw  (0,-0.25) node[xi] {} -- (0,1) node[xie] {};
	\draw (0,1) -- (0,2.25) ; \draw (0,2.25) node[xi]{}; }

\def\1{\mathbf{\symbol{1}}}

\def\one{\mathbf{1}}
\def\eps{\varepsilon}

\DeclareSymbol{diff}{0}{
	\draw (0,0.5) node[diff] {};
}

\DeclareSymbol{diff1}{0}{
	\draw (0,0.5) node[diff1] {};
}

\DeclareSymbol{diff2}{0}{
	\draw (0,0.5) node[diff2] {};
}

\DeclareSymbol{geo}{0}{
	\draw (0,0) node[diff] {};
	\draw (0.3,0) node[diff] {};
}

\DeclareSymbol{generic}{0}{
	\draw (0,0.6) node[xi] {};
}

\DeclareSymbol{g}{0}{
	\draw (0,0.6) node[g] {};
}

\DeclareSymbol{Ito}{0}{
	\draw (0,0.6) node[xies] {};
}

\DeclareSymbol{Itob}{0}{
	\draw (0,0.6) node[xiesf] {};
}

\DeclareSymbol{greycirc}{0}{
	\draw (0,0.3) node[xi] {};
}

\DeclareSymbol{not}{0}{
	\draw (0,0.6) node[not] {};
	\draw[tinydots] (0,0.6) circle (0.8);
}

\DeclareSymbol{genericb}{0}{
	\draw (0,0.6) node[xic] {};
}

\DeclareSymbol{bluecirc}{0}{
	\draw (0,0.3) node[xic] {};
}

\DeclareSymbol{genericxix}{0}{
	\draw (0,0.6) node[xix] {};
}

\DeclareSymbol{genericX}{0}{
	\draw (0,0.6) node[X] {};
}

\DeclareSymbol{diffIto}{1}{
	\draw  (0,2.5) -- (0,0) ;
	\draw (0,-0.1) node[diff] {};
	\draw (0,2.5) node[xies] {};
}
\DeclareSymbol{Itodiff}{2}{
	\draw(0,2.9) -- (0,-0.2);
	\draw (0,2.9) node[diff] {};
	\draw (0,-0.1) node[xies] {};
}

\DeclareSymbol{diffgeneric}{1}{
	\draw  (0,2.5) -- (0,0) ;
	\draw (0,-0.1) node[diff] {};
	\draw (0,2.5) node[xi] {};
}

\DeclareSymbol{genericdiff}{2}{
	\draw(0,2.9) -- (0,-0.2);
	\draw (0,2.9) node[diff] {};
	\draw (0,-0.1) node[xi] {};
}

\DeclareSymbol{diffdot}{2}{
	\draw  (0,3) -- (0,-0.1) ;
	\draw (0,3) node[not] {};
	\draw (0,-0.1) node[diff] {};
}

\DeclareSymbol{diffdotmini}{0}{
	\draw  (0,0) -- (0,1.2) ;
	\draw (0,1.2) node[not] {};
	\draw (0,0) node[diffmini] {};
}

\DeclareSymbol{dotdiff}{2}{
	\draw[kernelsmod]  (0,3) -- (0,-0.1) ;
	\draw (0,3) node[diff] {};
	\draw (0,-0.1) node[not] {};
}

\DeclareSymbol{dotdiff1}{2}{
	\draw[kernelsmod]  (0,3) -- (0,-0.1) ;
	\draw (0,3) node[diff1] {};
	\draw (0,-0.1) node[not] {};
}

\DeclareSymbol{dotdiff1mini}{0}{
	\draw[kernelsmod]  (0,1.2) -- (0,0) ;
	\draw (0,1.2) node[diffmini] {};
	\draw (0,0) node[not] {};
}

\DeclareSymbol{dotdiff2}{2}{
	\draw (0,3) -- (0,-0.1) ;
	\draw (0,3) node[diff] {};
	\draw (0,-0.1) node[not] {};
}

\DeclareSymbol{dotdiff2mini}{0}{
	\draw (0,1.2) -- (0,0) ;
	\draw (0,1.2) node[diffmini] {};
	\draw (0,0) node[not] {};
}

\DeclareSymbol{dotdiffstraight}{0}{
	\draw  (0,3) -- (0,-0.1) ;
	\draw (0,3) node[diff] {};
	\draw (0,-0.1) node[not] {};
}

\DeclareSymbol{arbre1}{0}{
	\draw  (0,0) -- (1.5,1.5) ;
	\draw (1.5,1.5) node[not] {};
	\draw (0,0) node[not] {};
}

\DeclareSymbol{arbre2}{0}{
	\draw  (0,0) -- (1.5,1.5) ;
	\draw[kernelsmod] (0,0) -- (-1.5,1.5);
	\draw (1.5,1.5) node[not] {};
	\draw (0,0) node[not] {};
	\draw (-1.5,1.5) node[xi] {};
}

\DeclareSymbol{arbre3}{0}{
	\draw  (0,0) -- (1.5,1.5) ;
	\draw[kernelsmod] (1.5,1.5) -- (0,3);
	\draw (0,0) node[not] {};
	\draw (1.5,1.5) node[not] {};
	\draw (0,3) node[xi] {};
}

\DeclareSymbol{treeeval}{0}{
	\draw (0,0) -- (1,1);
	\draw (0,0) node[xi] {};
	\draw (1.25,1.25) node[xi] {};
	\draw (-0.6,0.6) node[]{\tiny{$i$}};
	\draw (0.65,1.85) node[]{\tiny{$j$}};
}

\DeclareSymbol{testeval}{0}{
	\draw (0,0) -- (1,1);
	\draw (0,0) -- (-1,1);
	\draw (0,0) node[xi] {};
	\draw (1.25,1.25) node[xi] {};
	\draw (-1.25,1.25) node[xi] {};
	\draw (-0.6,-0.6) node[]{\tiny{$i$}};
	\draw (0.65,1.85) node[]{\tiny{$j$}};
	\draw (-1.95,1.85) node[]{\tiny{$k$}};
}

\DeclareSymbol{treeeval2}{0}{
	\draw[kernelsmod] (-0.25,-1) -- (1,0.5) ;
	\draw[kernelsmod] (1,0.5) -- (-0.25,2);
	\draw (1,0.5) node[diff2] {};
	\draw (-0.25,-1) node[not] {};
	\draw (-0.25,2) node[xi] {};
	\draw (-0.6,1.2) node[]{\tiny{1}};
}

\DeclareSymbol{arbreact}{1}{
	\draw (0,0) node[not] {};
	\draw[kernelsmod] (0,0) -- (1,1);
	\draw[kernelsmod] (0,0) -- (-1,1);
	\draw (-1,1) node[xic] {};
	\draw  (0,2) -- (1,1) ;
	\draw (0,2) node[xic] {};
	\draw (1,1) node[xi] {};
}

\DeclareSymbol{arbreact1}{0}{
	\draw (0,-1.5) -- (0,0);
	\draw[kernelsmod] (0,0) -- (1,1);
	\draw[kernelsmod] (0,0) -- (-1,1);
	\draw  (0,2) -- (1,1) ;
	\draw (0,-1.5) node[diff] {};
	\draw (0,0) node[not] {};
	\draw (-1,1) node[xic] {};
	\draw (0,2) node[xic] {};
	\draw (1,1) node[xi] {};
}

\DeclareSymbol{arbreact2}{0}{
	\draw (0,-0.75) -- (-1,0.5); 
	\draw (0,-0.75) -- (1,0.5);
	\draw (0,1.5) -- (1,0.5);
	\draw (0,1.5) node[xic] {};
	\draw (1,0.5) node[xi] {};
	\draw (-1,0.5) node[xic] {};
	\draw (0,-0.75) node[diff] {};
}

\DeclareSymbol{arbreact3}{0}{
	\draw[kernelsmod] (0,-0.75) -- (-1,0.5); 
	\draw[kernelsmod] (0,-0.75) -- (1,0.5);
	\draw (0,1.75) -- (1,0.5);
	\draw (2,1.75) -- (1,0.5);
	\draw (0,1.75) node[xic] {};
	\draw (1,0.5) node[diff] {};
	\draw (-1,0.5) node[xic] {};
	\draw (2,1.75) node[xi] {};
	\draw (0,-0.75) node[not] {};
}

\DeclareSymbol{pre_im_I1Xitwo}{0}{
	\draw[kernels2] (0,-0.3) node[not] {} -- (-0.6,0.7) ;
	\draw[kernels2] (0,-0.3) -- (0.6,0.7);
	\draw (0,0.9) node[g] {};
}

\DeclareSymbol{pre_im_cI1Xi4ab}{2}{
	\draw[kernels2] (0,-1) node[not] {} -- (-0.6,0) ;
	\draw[kernels2] (0,-1) -- (0.6,0);
	\draw (0,0.2) node[g] {};
	\draw (0,0.6) -- (0,1.5);
	\draw[kernels2] (0,1.5) node[not] {} -- (-0.6,2.5) ;
	\draw[kernels2] (0,1.5) -- (0.6,2.5);
	\draw (0,2.7) node[g] {};
}

\DeclareSymbol{pre_im_I1Xi4acc2}{0}{
	\draw[kernels2] (-1,-0.5) node[not] {} -- (-1.6,0.5) ;
	\draw[kernels2] (-1,-0.5) -- (-0.4,0.5);
	\draw[kernels2] (-1,-0.5) -- (0.2,-1.5) node[not] {} ;
	\draw (-1,1.1) -- (-1,2);
	\draw[kernels2] (0.2,-1.5) -- (0.2,2);
	\draw (-1,0.7) node[g] {};
	\draw (-0.3,2.2) node[g] {};
}

\DeclareSymbol{pre_im_I1Xi4abcc2}{2}{
	\draw[kernels2] (0,-1) node[not] {} -- (-1,0) node[not] {};
	\draw[kernels2] (-1,1.2) node[not] {} -- (-1,0);
	\draw[kernels2] (-1,1.2) -- (-1.5,2.5);
	\draw[kernels2] (-1,1.2) -- (-0.5,2.5);
	\draw[kernels2] (-1,0) -- (0.7,2.5);
	\draw[kernels2] (0,-1) -- (1.5,2.5);
	\draw (-1,2.7) node[g] {};
	\draw (1,2.7) node[g] {};
}

\DeclareSymbol{pre_im_2I1Xi4c1}{2}{
	\draw[kernels2] (0,-0.5) node[not] {} -- (-1,0.5) node[not] {};
	\draw[kernels2] (0,-0.5) -- (1,0.5) node[not] {};
	\draw[kernels2] (-1,0.5) node[not] {}-- (-1.7,2);
	\draw[kernels2]  (-1,2) -- (1,0.5);
	\draw[kernels2] (-1,0.5) -- (1,2);
	\draw[kernels2] (1,0.5) -- (1.7,2);
	\draw (-1.2,2.2) node[g] {};
	\draw (1.2,2.2) node[g] {};
}

\DeclareSymbol{pre_im_Xi4eabisc2}{2}{
	\draw[kernels2] (1.2,-0.5) node[not] {} -- (-0.7,0.8) ;
	\draw[kernels2] (1.2,-0.5) -- (0.4,0.8);
	\draw (0,1.4)  -- (0,2.2);
	\draw (1.2,2.2) -- (1.2,-0.6);
	\draw (0,1) node[g] {};
	\draw (0.6,2.4) node[g] {};
}

\DeclareSymbol{pre_im_Xi4eabisc22}{2}{
	\draw (1.2,-0.5) node[not] {} -- (-0.7,0.8) ;
	\draw[kernels2] (1.2,-0.5) -- (0.4,0.8);
	\draw (0,1.4)  -- (0,2.2);
	\draw[kernels2] (1.2,2.2) -- (1.2,-0.6);
	\draw (0,1) node[g] {};
	\draw (0.6,2.4) node[g] {};
}

\DeclareSymbol{pre_im_Xi4eabisc222}{2}{
	\draw[kernels2] (0.4,-0.5) node[not] {} -- (-0.6,1) ;
	\draw[kernels2] (1.2,0) -- (0.3,1);
	\draw (0,1.1)  -- (0,2.5);
	\draw[kernels2] (1.2,2.5) -- (1.2,0) node[not] {} -- (0.4,-0.6);
	\draw (0,1.2) node[g] {};
	\draw (0.6,2.5) node[g] {};
}

\DeclareSymbol{pre_im_Xi4eabc2}{2}{
	\draw (0,-0.5) node[not] {} -- (-1,0.5) node[not] {};
	\draw[kernels2] (-1,0.5) -- (-1.5,2);
	\draw[kernels2] (0,-0.5)  -- (0.7,2);
	\draw[kernels2] (-1,0.5) -- (-0.5,2);
	\draw[kernels2] (0,-0.5) -- (1.5,2);
	\draw (-1,2.2) node[g] {};
	\draw (1,2.2) node[g] {};
}

\DeclareSymbol{pre_im_Xi4eabbisc2}{2}{
	\draw[kernels2] (0,-0.5) node[not] {} -- (-1,0.5) node[not] {};
	\draw[kernels2] (-1,0.5) -- (-1.5,2);
	\draw[kernels2] (0,-0.5)  -- (0.7,2);
	\draw[kernels2] (-1,0.5) -- (-0.5,2);
	\draw (0,-0.5) -- (1.5,2);
	\draw (-1,2.2) node[g] {};
	\draw (1,2.2) node[g] {};
}

\DeclareSymbol{pre_im_I1Xi4abcc1}{2}{
	\draw[kernels2] (0,-1) node[not] {} -- (-1,0) node[not] {};
	\draw[kernels2] (0,1.1) node[not] {} -- (-1,0);
	\draw[kernels2] (-1,0) -- (-1.5,2.5);
	\draw[kernels2] (0,1.1) node[not] {} -- (-0.5,2.5);
	\draw[kernels2] (0,1.1) -- (0.5,2.5);
	\draw[kernels2] (0,-1) -- (1.5,2.5);
	\draw (-1,2.7) node[g] {};
	\draw (1,2.7) node[g] {};
}

\DeclareSymbol{pre_im_Xi4eabc1}{2}{
	\draw (0,-0.5) node[not] {} -- (-1,0.5) node[not] {};
	\draw[kernels2] (-1,0.5) -- (-1.5,2);
	\draw[kernels2] (0,-0.5)  -- (-0.5,2);
	\draw[kernels2] (-1,0.5) -- (0.8,2);
	\draw[kernels2] (0,-0.5) -- (1.5,2);
	\draw (-1,2.2) node[g] {};
	\draw (1,2.2) node[g] {};
}

\DeclareSymbol{pre_im_Xi4ba1b}{2}{
	\draw[kernels2] (0,0) node[not] {}  -- (1.8,1.5);
	\draw[kernels2] (0,0) -- (0.8,1.5);
	\draw (0,-0.1) -- (-1.8,1.5);
	\draw (0,-0.1) -- (-0.8,1.5);
	\draw (-1,1.7) node[g] {};
	\draw (1,1.7) node[g] {};
}

\DeclareSymbol{pre_im_Xi4ba2}{2}{
	\draw (0,-0.1) node[not] {}  -- (1.8,1.5);
	\draw[kernels2] (0,0) -- (0.8,1.5);
	\draw (0,-0.1) -- (-1.8,1.5);
	\draw[kernels2] (0,0) -- (-0.8,1.5);
	\draw (-1,1.7) node[g] {};
	\draw (1,1.7) node[g] {};
}

\DeclareSymbol{pre_im_Xi4cabc2}{2}{
	\draw[kernels2] (0,-0.5) node[not] {} -- (-1,0.5) node[not] {};
	\draw[kernels2] (-1,0.5) -- (-1.5,2);
	\draw (-1,0.5)  -- (0.7,2);
	\draw[kernels2] (-1,0.5) -- (-0.5,2);
	\draw[kernels2] (0,-0.5) -- (1.5,2);
	\draw (-1,2.2) node[g] {};
	\draw (1,2.2) node[g] {};
}

\DeclareSymbol{pre_im_Xi4cabc1}{2}{
	\draw[kernels2] (0,-0.5) node[not] {} -- (-1,0.5) node[not] {};
	\draw (-1,0.5) -- (-1.5,2);
	\draw[kernels2] (-1,0.5)  -- (0.7,2);
	\draw[kernels2] (-1,0.5) -- (-0.5,2);
	\draw[kernels2] (0,-0.5) -- (1.5,2);
	\draw (-1,2.2) node[g] {};
	\draw (1,2.2) node[g] {};
}

\DeclareSymbol{pre_im_Xi4eabbisc1}{2}{
	\draw[kernels2] (0,-0.5) node[not] {} -- (-1,0.5) node[not] {};
	\draw[kernels2] (-1,0.5) -- (-1.5,2);
	\draw[kernels2] (0,-0.5)  -- (-0.5,2);
	\draw[kernels2] (-1,0.5) -- (0.8,2);
	\draw (0,-0.5) -- (1.5,2);
	\draw (-1,2.2) node[g] {};
	\draw (1,2.2) node[g] {};
}

\DeclareSymbol{pre_im_1}{0}{
	\draw[kernels2] (0,-0.5) node[not] {} -- (-0.6,0.5) ;
	\draw[kernels2] (0,-0.5) -- (0.6,0.5);
	\draw (0,1.1)  -- (-0.55,2);
	\draw (0,1.1)  -- (0.55,2);
	\draw (0,0.7) node[g] {};
	\draw (0,2.2) node[g] {};
}

\DeclareSymbol{disconnect}{0}{
	\draw[kernels2] (0,-0.5) node[not] {} -- (-0.6,0.5) ;
	\draw[kernels2] (0,-0.5) -- (0.6,0.5);
	\draw (-0.55,1.1)  -- (-0.55,2.3);
	\draw (0.55,2.3) -- (0.55,1.5) -- (1.2,1.5) -- (1.2,3.5) -- (0.55,3.5) -- (0.55,2.7);
	\draw (0,0.7) node[g] {};
	\draw (0,2.5) node[g] {};
}

\DeclareSymbol{pre_im_2}{2}{\draw[kernels2] (0,0) node[not] {} -- (-1,1) node[not] {};
	\draw[kernels2] (0,0) -- (1,1) node[not] {};
	\draw[kernels2] (-1,1) -- (-1.5,2.5);
	\draw[kernels2] (-1,1) -- (-0.5,2.5);
	\draw[kernels2] (1,1) -- (0.5,2.5);
	\draw[kernels2] (1,1) -- (1.5,2.5);
	\draw (-1,2.7) node[g] {};
	\draw (1,2.7) node[g] {};
}

\DeclareSymbol{CX_rec}{0}{
	\draw [black] (-0.3,1) to (-0.3,-0.3);
	\draw [black] (0.3,1) to (0.3,-0.3);
	\draw [black] (-0.3,1) to (-0.3,2.3);
	\draw [black] (0.3,1) to (0.3,2.3);
	\draw (0,1) node[rec] {};
}

\DeclareSymbol{CX_cerc}{0}{
	\draw [black] (0,1) to (0,-0.3);
	\draw (0,1) node[cerc] {};
}

\DeclareMathAlphabet{\mathpzc}{OT1}{pzc}{m}{it}

\let\eps\varepsilon

\def\eqref#1{(\ref{#1})}

\makeatletter 
\newcommand*{\bigcdot}{}
\DeclareRobustCommand*{\bigcdot}{%
	\mathbin{\mathpalette\bigcdot@{}}%
}
\newcommand*{\bigcdot@scalefactor}{.5}
\newcommand*{\bigcdot@widthfactor}{1.15}
\newcommand*{\bigcdot@}[2]{%
	\sbox0{$#1\vcenter{}$}
	\sbox2{$#1\cdot\m@th$}%
	\hbox to \bigcdot@widthfactor\wd2{%
		\hfil
		\raise\ht0\hbox{%
			\scalebox{\bigcdot@scalefactor}{%
				\lower\ht0\hbox{$#1\bullet\m@th$}%
			}%
		}%
		\hfil
	}%
}
\makeatother

\def\act{\bigcdot}

\tcbset
{colframe=boxcolor,colback=symbols!7!pagebackground,coltext=pageforeground,
	fonttitle=\bfseries,nobeforeafter,center title,size=fbox,boxsep=1.5pt,
	top=0mm,bottom=0mm,boxsep=0mm,tcbox raise base}

\def\two{{\<generic>\kern0.05em\<genericb>}}
\def\twoI{{\<Ito>\kern0.05em\<Itob>}}

\def\mail#1{\burlalt{#1}{mailto:#1}}

\usepackage{thmtools} 

\declaretheorem[style=definition]{example}

\makeatletter
\newcommand*{\defeq}{\mathrel{\rlap{%
			\raisebox{0.3ex}{$\m@th\cdot$}}%
		\raisebox{-0.3ex}{$\m@th\cdot$}}%
	=}
\makeatother

\renewcommand{\lg}{\langle}
\newcommand{\rg}{\rangle}
\newcommand{\bfN}{\mathbf{N}}
\newcommand{\bfR}{\mathbf{R}}
\newcommand{\scrL}{\mathscr{L}}

\newcommand{\scrT}{\mathscr{T}}
\newcommand{\norme}[1]{\left\lVert #1 \right\lVert}
\newcommand{\Linv}{\scrL^{-1}}
\newcommand{\bbrack}[1]{ \llbracket #1 \rrbracket }
\newcommand{\ddd}{ \text{d} }
\newcommand{\bfC}{{\bf C}}

\newcommand{\bbE}{\mathbb{E}}

\begin{document}
	
	\title{A general paracontrolled ansatz for singular SPDEs}
	\author{Yvain Bruned, Nicolas Moench}
	\institute{ 
		Universite de Lorraine, CNRS, IECL, F-54000 Nancy, France
		\\
		Email:\ \begin{minipage}[t]{\linewidth}
			\mail{yvain.bruned@univ-lorraine.fr}
			\\  \mail{nicolas.moench@univ-lorraine.fr}
	\end{minipage}}
	
	\maketitle
	\begin{abstract}
		We provide a general ansatz for the paracontrolled approach introduced by Gubinelli, Imkeller, Perkowski for treating singular SPDEs. The ansatz proposed covers a large class of equations.
		It is described via decorated trees that encode its coefficients and its stochastic data. The main novelty is the recursive definition of the paracontrolled stochastic iterated integrals that involve a well-chosen combinatorial set of words. The paralinearisation is performed via a lift of iterated paraproducts to modelled distributions and  the reconstruction theorem coming from Regularity Structures. We also provide a fixed point argument and the renormalised equation with this ansatz when one uses the preparation map formalism that encompasses the BPHZ renormsalition. The last main result is the convergence of the renormalised  stochastic data via the BPHZ renormalisation  via an adaptation of the spectral gap approach. One obtains in the end a general solution theory for parabolic singular SPDEs via the paracontrolled approach.
	\end{abstract}
	
	\tableofcontents
		
	\section{Introduction}
	
	The local well-posedness of singular stochastic partial differential equations (SPDEs) has been covered in great generality thanks to the theory of Regularity Structures invented by Martin Hairer \cite{Hai14}. It has been completed via several additional works \cite{BHZ,CH16,BCCH} where a systematic renormalised local ansatz of the solutions of these singular dynamics is provided (see \cite{FrizHai,BH26} for introductions to Regularity Structures). 
	The ansatz extends B-series arising in numerical analysis for describing Runge-Kutta schemes. The main idea is to proceed with an expansion of the solution with iterated integrals. 
	In the present paper, we consider equations of the form
 $\bfR_+\times\bfR^d$
	\begin{equation}\label{eq_gPAM}
		\scrL u = \sum_{\ell=1}^{\ell_0} F_\ell(v)\xi_\ell, \quad (t,x) \in \R_+ \times \bfR^{d},
	\end{equation}
	where $\scrL=\partial_t-\mathcal{L}$  is a differential operator. One has $ v= (\partial^k u)_{k \in A_- \sqcup A_+} $ with $A_-$ and $A_+$ being two finite sets of $ \N^{d+1} $. For $k \in A_-$, $\partial^k u$ is a distribution and for $ n \in \N^{d+1} $, $\partial^n u$ is a function. We assume that  $F_\ell$ is a smooth nonlinearity with respect to $(\partial^k u)_{k \in A_+}$ and polynomial in the $(\partial^k u)_{k \in A_-}$. The terms $\xi_1,\dots,\xi_{\ell_0}$ are space-time  noises with  $\xi_i \in  C^{\alpha_i}$ and $\alpha_i \in \R$. One can then rewrite \eqref{eq_gPAM} in a mild form
	\begin{equation*}
		\label{mild_form}
		u = K * \left(  \sum_{\ell=1}^{\ell_0} F_\ell(v)\xi_\ell \right),
	\end{equation*}
	where $ K$ is the kernel associated with $ \scrL^{-1} $ and we assume that convolution with $K$ yields a gain of $\beta$ in the Hölder regularity measured with the scaling $ \s = (\beta,1,\dots,1) $. In practice, the rough noises $\xi_i$ are replaced by regularised versions  $\xi_{\ell}^\eps$ depending on a small parameter $\eps >0$. The regularised noises are given by
	\begin{equation*}
		\xi_{\ell}^\eps= \rho_{\eps} * \xi_{\ell}, \quad \rho_{\eps}(t,x) = \eps^{- \beta - d} \rho( \eps^{-\beta} t , \eps^{-1} x )
	\end{equation*}
	where $\rho$ is a mollifier, a smooth and compactly supported function. 
Now iterating \eqref{mild_form} with these smooth noises, one expects the following ansatz for the solution $u_{\eps}$
	\begin{equation*}
		u_{\eps} = \text{Sum of stochastic iterated integrals} + \text{Remainder}.
	\end{equation*}
	The strategy of Regularity Structures is to provide a local ansatz which requires localising the iterated integrals around a base point producing new monomials. The most recent approach developed by Pawel Duch in \cite{Duc21} introduces an extra cut-off on the differential operator by replacing the kernel $K$ by a family of kernels $ (K_{\mu})_{\mu \geq 0} $. This approach is inspired by the Polchinski flow \cite{P84}. A discrete version based on the renormalisation group had been proposed before in \cite{K16}. In both approaches (Regularity Structures and flow) one can easily compute the ansatz of $F_{\ell}(v_{\eps})$
since small parameters are available for performing the linearisation. One can find general ansatz based on decorated trees in \cite{BCCH} for Regularity Structures and in \cite{BM26} for the flow approach.
	
	At the same time as Regularity Structures, Gubinelli, Imkeller and Perkowski introduced in \cite{GIP} a paracontrolled approach where the main idea is to decompose a distributional product using paraproducts that allows to identify the part in the product which is ill-defined. It uses ideas from paradifferential calculus introduced by Bony in  \cite{Bon81} and controlled rough paths in \cite{Gubinelli2004}.  The idea is to provide a global ansatz without a small parameter. This method was been used with great success both for solution theories and discretisation on models with short ansatz like the $\Phi^4_3$ model (see \cite{Phi43Paracontrol,MW17,MWX17,ZZ18,GH19,JP23,BDFT23}), the KPZ equation (see \cite{KPZReloaded}), 2D stochastic Yang-Mills \cite{BC23}, Mean field equations \cite{Meanfield}. 
	Other notable successes of this approach include a variational method for studying the $ \Phi_3^4 $ measure in \cite{BG20}, as well as dispersive singular SPDEs (see \cite{GKO24,BDNY24}). It is also connected to the ansatz used for the random averaging operators in \cite{DNY24} and random tensors in \cite{DNY22}.

	Going to a higher order in the expansion requires the linearisation of $F_\ell(v_{\eps})$. A toolkit was provided in \cite{BailleulBernicotHighOrder} following the line of works \cite{HeatKernelSPDE,SpacetimeParaproducts}. This framework was subsequently applied to quasilinear singular SPDEs in \cite{ParacontrolforQuasilinear}. However, no general ansatz with a scope comparable to the one in \cite{BCCH} for Regularity Structures had been provided yet in the paracontrolled approach. The aim of this paper is to fill this gap in the literature by providing such an ansatz. The general paracontrolled ansatz takes the form
	\begin{equation}\label{eq_ansatz0}
	u= \sum_{|\CI(\tau)| < \gamma } \P(u_\tau, \, Z_{\mcI(\tau)} ) + u^\#,
\end{equation}
	where $\P$ denotes the paraproduct, the reference distributions $Z_{\mathcal{I}(\tau)}$ are indexed by decorated trees $\tau \in \CT$ . Here,  $ \mathcal{I}(\tau) $ is obtained by grafting $\tau$ to a new root via an edge.
	One has $ Z_{\CI(\tau)} \in C^{|\CI(\tau)|}  $ and  $u^\#$ is a smooth remainder belonging to $C^{\gamma}$. The degree $|\tau|$ (see \eqref{degree_tree}) is computed from the regularity of the noises and the gain of regularity provided by the convolution with the kernel $K$ which is $\beta$. Consequently, one has $ |\CI(\tau) | = \beta + |\tau| $.
	The coefficients $u_{\tau}$ are given by $u_\tau = \frac{\Upsilon_F[\tau]}{S(\tau)}(v)$ where $ \Upsilon_F[\tau] $  are the elementary differentials and $S(\tau)$ are the symmetry factors associated with the tree $\tau$.   
	We assume that the set of decorated trees $ \tau $ such that $ |\CI(\tau)| \leq \gamma $ (We denote this set by $\CT_{< \gamma}$) with $u_{\tau} \neq 0$ is finite, which is a consequence of the equation being subcritical. It is guaranteed via conditions involving the $F_{\ell}$, $\alpha_i$ and $\beta$ that we describe in Subsection \ref{subsec_subcriticality}.
The stochastic data $Z_\tau$ are defined inductively from the various operations given below:
	\begin{itemize}
		\item Pointwise multiplication by $x \mapsto x^k, k \in \N^{d+1} $ and the noises $\xi_{\ell}$.
		\item Space-time convolution with the kernel $K$.
		\item Iterated paraproducts of the form
		\begin{equation*}
		\partial^k \P_\ell\big(Z_{\tau_1},\dots,Z_{\tau_n}\big)
		\end{equation*}
		where the subscript  $\ell \in \N^{d+1}$ is associated with some monomial weights inside the definition of the paraproduct. In particular, one has $ \P = \P_0 $. The $\tau_i$ are combinatorial objects strictly smaller than $\tau$, which provides the inductive structure. 
	\end{itemize}
	
	 This iterative construction of the stochastic data does not make explicit use of the resonant products that usually appear in paracontrolled calculus. However, when $\tau$ is a product of trees, the associated quantity $Z_\tau$ can be interpreted as a generalised resonant product,  obtained by subtracting 
	  its irregular components, given by paraproducts involving lower-order pieces $Z_\mu$ to the corresponding product.

	We construct the $Z_{\tau}$ in the smooth setting replacing $ \xi_{\ell} $ by $ \xi_{\ell}^{\eps} $.
	Given a preparation map $ R : \CT \rightarrow \CT $ introduced in \cite{BR18}, one can renormalise the $Z_{\tau}$ by considering 
	\begin{equation*}
		\widehat{Z}_{\tau} = \widehat{Z}^{\circ}_{R \tau}
	\end{equation*}
	where $\widehat{Z}^{\circ}$ is multiplicative (in the sense given by Lemma \ref{lem_PsiTau_multiplicatif}) and contains further renormalisation. This is similar to the renormalisation of models in Regularity Structures via preparation maps. We denote by $R^{*}$ the adjoint of $R$ and we set
	\begin{equation*}
		R F_{\ell} = \Upsilon_F[R^* \Xi_{\ell}]
	\end{equation*}
	where $ \Xi_{\ell} \in \CT $ is the symbol associated to the noise $\xi_{\ell}$.  This renormalised non-linearity was first defined in \cite{BCCH} and in \cite{RenomrmalisedSpde}, one uses preparation maps.
		We are now able to state the first main theorem of the paper
	\begin{theorem} \label{main_theorem}
		The solution $ u_{\eps}$ of the regularised equation
		\begin{equation}
			\label{regularied}
				\scrL u_{\eps} = \sum_{\ell=1}^{\ell_0} R F_\ell(v_{\eps})\xi_\ell^{\eps}, \quad u(0) = u_0
		\end{equation}
		with $ u_0 \in C^{\gamma}(\bfR^d)$ and $\gamma = \beta - \kappa $, $\kappa >0$,  
		is described by the paracontrolled expansion 
		\begin{equation}
		\label{renormalised_ansatz}	u= \sum_{|\CI(\tau)| < \gamma} \P (u_\tau, \widehat{Z}_{\mcI(\tau)}) + u^\#.
		\end{equation}
			For any initial condition $u_0 \in  C^\gamma(\bfR^d)$ and stochastic data $\widehat{\mcZ}_{< \gamma}=(\widehat{Z}_\tau)_{\tau\in\mcT_{<\gamma}}$, there exists a small time $T>0$ such that  \eqref{regularied} has a unique solution. Furthermore the solution $(u,u^\#)$ depends continuously on $u_0$ and $\widehat{\mcZ}_{< \gamma}$. 
		\end{theorem}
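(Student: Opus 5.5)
The proof splits into two parts. The first is an algebraic identification: the ansatz \eqref{renormalised_ansatz} is consistent with the renormalised equation \eqref{regularied}. The second is an analytic fixed point for the pair $(u,u^\#)$, with the coefficients $u_\tau = \frac{\Upsilon_F[\tau]}{S(\tau)}(v)$ slaved to $u$.

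\textbf{Setting up the fixed point.} Given stochastic data $\widehat{\mcZ}_{<\gamma}$, we define the paracontrolled space $\mathcal{D}^\gamma_T$ of pairs $(u,u^\#)$ satisfying two conditions:
\begin{itemize}
\item $u^\# \in C^\gamma$ on $[0,T]$;
\item $u$ has the form \eqref{renormalised_ansatz}, with $u_\tau$ given by the elementary differentials evaluated at $v$.
\end{itemize}
The norm combines $\|u^\#\|_{C^\gamma}$ with the Hölder norms of the $u_\tau$ at order $\gamma - |\CI(\tau)|$. The map $\Phi$ sends $u$ to $K*(e^{t\mathcal L}u_0) + K*\sum_\ell RF_\ell(v)\xi^\eps_\ell$, re-expanded in paracontrolled form.

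\textbf{The key analytic step: paralinearisation.} We must show that each product $RF_\ell(v)\xi_\ell$ admits an expansion
\begin{equation*}
RF_\ell(v)\xi_\ell = \sum_{\tau} \P\big(u_\tau, \widehat Z_{\tau}\big) + \text{remainder},
\end{equation*}
where the sum runs over trees $\tau$ with $\CI(\tau)$ of degree below $\gamma$, and the remainder lies in $C^{\gamma-\beta}$.

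To do this, we lift the iterated paraproducts defining the $\widehat Z_\tau$ to modelled distributions. The lifted paracontrolled objects are then composed with the smooth nonlinearity $F_\ell$ (polynomial in the $A_-$ derivatives), and the product is identified through the reconstruction theorem. The condition $|\tau|+\beta>0$ on the remaining terms, together with subcriticality (finiteness of $\mcT_{<\gamma}$), closes the expansion.

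We then apply Schauder estimates for $K$ with the gain $\beta$, together with commutator estimates $K*\P(a,b)-\P(a,K*b)\in C^{\gamma}$. This yields $\Phi(u)$ in the form \eqref{renormalised_ansatz}, matching the terms $\P(u_\tau,\widehat Z_{\CI(\tau)})$ with $\widehat Z_{\CI(\tau)}=K*\widehat Z_\tau$ up to smooth corrections. These corrections are absorbed into $u^\#$.

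\textbf{Renormalisation.} The identification with the renormalised equation uses the preparation map. Since $\widehat Z_\tau=\widehat Z^\circ_{R\tau}$ and $\widehat Z^\circ$ is multiplicative (Lemma \ref{lem_PsiTau_multiplicatif}), the coefficient of the noise in the reconstructed product is $\Upsilon_F[R^*\Xi_\ell](v)=RF_\ell(v)$. This uses the morphism property of $\Upsilon_F$ with respect to $R^*$, as in \cite{BCCH,BR18}. So a fixed point of $\Phi$ built with $\widehat{\mcZ}$ solves \eqref{regularied}.

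\textbf{Contraction, uniqueness and continuity.} The remaining steps are standard:
\begin{itemize}
\item Contraction on a ball of $\mathcal{D}^\gamma_T$ for small $T$ follows from the factor $T^\theta$ gained in Schauder estimates by losing a bit of regularity, which uses $\kappa>0$.
\item All bounds are locally Lipschitz in $(u_0,\widehat{\mcZ}_{<\gamma})$, since the paraproduct and reconstruction estimates are multilinear in the data.
\item Uniqueness and continuous dependence then follow from the Banach fixed point theorem.
\end{itemize}

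\textbf{Main obstacle.} I expect the hardest step to be the paralinearisation. One has to show that iterated paraproducts $\partial^k\P_\ell(Z_{\tau_1},\dots,Z_{\tau_n})$ reproduce exactly the Taylor-type terms from expanding $F_\ell(v)$, with remainders at the correct regularity. I would first do this for a single paraproduct via the lift to modelled distributions and check the combinatorics (symmetry factors $S(\tau)$) on trees with two branches. Only then would I induct on the number of edges.
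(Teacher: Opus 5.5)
\textbf{The fixed point does not close as set up.} Your $\mathcal{D}^\gamma_T$ requires the coefficients in front of $\widehat Z_{\mcI(\tau)}$ to be the elementary differentials of $u$ itself. That is a nonlinear constraint, and $\Phi$ does not preserve it. The expansion of $P_tu_0+\Linv\big(\sum_\ell F_\ell(v)\cdot\xi_\ell\big)$ from Proposition~\ref{prop_integration} carries the coefficients $u_\tau$ of the \emph{input} $u$. Membership of $\Phi(u)$ in $\mathcal{D}^\gamma_T$ would instead require the coefficients of $\Phi(u)$. The mismatch $\P\big(u_\tau(u)-u_\tau(\Phi(u)),\widehat Z_{\mcI(\tau)}\big)$ is in general only of regularity $|\mcI(\tau)|<\gamma$; for gPAM, $\P(f(u)-f(\Phi(u)),Z_{\mcI(\Xi)})$ is only in $C^{\alpha}$. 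So $\Phi$ does not map your ball into itself, and the Banach argument cannot be run there. The paper flags exactly this obstruction at the start of Section~\ref{section_fixedpoint} and parametrises the ansatz by its remainder instead. Proposition~\ref{prop_AppGamma} builds $u=\Gamma\phi$ with $u=\sum\P(u_\tau,Z^{>a}_{\mcI(\tau)})+\phi$ by an inner contraction (the trick of \cite{AllezChouk}). It relies on the high-frequency parts $Z^{>a}_{\mcI(\tau)}$ being small, via the slightly better regularity $|\cdot|'$ in \eqref{eq_normeZtau_smallfactor}. It proceeds by induction over integer levels, since $u_\tau$ involves $\partial^j_*u$. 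The outer contraction then runs on $\phi\in C^{\gamma_0}_T$ alone, for $\phi\mapsto\boldsymbol{\Psi}^\#(\Gamma\phi)+\sum\P(u_\tau,Z^{\le a}_{\mcI(\tau)})+P_tu_0$. The other possible repair, making the coefficients independent unknowns of a linear paracontrolled system, is not what you wrote and would need a separate coherence argument at the fixed point.

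Two further steps fail as stated.

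First, the coefficients cannot be $\Upsilon_F[\tau](v)/S(\tau)$. For $k\in A_-$ the $\partial^ku$ are distributions (for $\Phi^4_3$ already $u$ is), and $\Upsilon_F[\tau]$ is polynomial in them, so these are in general products of distributions. The paper evaluates at $v_*=(\partial^k_*u)$ from \eqref{eqdef_partialStarU}, which are bounded functions (Corollary~\ref{cor_welldefinition_starpartialu}). It recovers $F_\ell(v)\xi_\ell$ only through the correctors $\overline Z_{\mcI_n(\tau)}$ (Propositions~\ref{lem_SumNegativeCorrector} and~\ref{prop_nonlinearitxiell_generalized}).

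Second, $\widehat Z_{\mcI(\tau)}$ is not $K*\widehat Z_\tau$ up to smooth terms. Admissibility \eqref{eq_ZtauAdmissible} gives $\PPi^{\mcZ}(\Psi(\mcI(\tau)))=K*\PPi^{\mcZ}(\Psi(\tau))$, so the difference is a sum of paraproduct commutators of the same degree $|\mcI(\tau)|$. Already $Z_{\mcI(\Xi\mcI(\Xi))}-K*Z_{\Xi\mcI(\Xi)}=K*\P(Z_{\mcI(\Xi)},\xi)-\P(Z_{\mcI(\Xi)},K*\xi)$. Likewise, $K*\P(u_{\tau\Xi_\ell},Z_{\tau\Xi_\ell})-\P(u_{\tau\Xi_\ell},K*Z_{\tau\Xi_\ell})$ only has the regularity of $u_{\tau\Xi_\ell}$ plus $|\mcI(\tau\Xi_\ell)|$. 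For $\tau\Xi_\ell=\Xi$ in gPAM this is $2\alpha$, and $\mcI(\Xi\mcI(\Xi))$, of degree $2\alpha$, enters the ansatz as soon as $\gamma>2\alpha$. So your commutator bound in $C^\gamma$ is false; these commutators must be resolved into the iterated paraproducts that define $\widehat Z_{\mcI(\tau)}$. The paper avoids commutator lemmas altogether. It lifts $F_\ell(v)\cdot\xi_\ell$ to $\boldsymbol{\Phi}_\ell=\sum u_{\tau\Xi_\ell}\Psi(\tau\Xi_\ell)$ and applies multilevel Schauder estimates. It then returns to paraproducts via Corollary~\ref{cor_repmodelledditribParap} and $Y^{\mcZ}_{\mcI(\Psi(\tau))}=Z_{\mcI(\tau)}$ (Proposition~\ref{prop_Ztau=YPsitau_integration}).
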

	
	Let us stress the main ideas behind the proof of Theorem \ref{main_theorem}. The first step is to check that the paracontrolled ansatz \eqref{renormalised_ansatz} yields a paracontrolled ansatz for $f(u)$ which is called paralinearisation.
To do so, we use the theory of Regularity Structures. The idea is to lift $u$ into a modelled distribution that we denote by $ \bm{u} $ given by
	\begin{equation}\bm{ u} = \sum_{|\mcI(\tau)|<\gamma } u_\tau \, \Psi(\mcI(\tau))  + \sum_{ |n|\leq \gamma } \frac{\partial_*^{n} u}{n!} X^{n}
		\end{equation}
		where the $\partial_*^{n} u$ are the coefficients in front of the monomials $X^n$. The map $\Psi$ given in Definition \ref{def_Psitau} sends decorated trees to decorated words of the form
		\begin{equation*}
			\langle \tau_1,..., \tau_n \rangle_{\ell}
		\end{equation*}
		where the letters $\tau_i$ are decorated trees. It is similar in spirit to the Hairer-Kelly map given in \cite[Lemma 4.9]{HK15} that allows us to move from non-geometric to geometric rough paths. In our context, one does not have a shuffle morphism.
		One can then define a pre-model $\PPi^{\mathcal{Z}} $ by
		\begin{equation*}
			\PPi^{\mathcal{Z}}	\langle \tau_1,..., \tau_n \rangle_{\ell} = \P_\ell\big(Z_{\tau_1},\dots,Z_{\tau_n}\big).
		\end{equation*}
		where the superscript $ \mathcal{Z} $ stresses the dependence on the $Z_{\tau_i}$ in the definition of $\PPi^{\mathcal{Z}}$.
		We extend this definition to words with an extra decoration encoding derivatives of paraproducts:
		\begin{equation*}
			\PPi^{\mathcal{Z}} 	\langle \tau_1,..., \tau_n \rangle_{\ell}^k = \partial^k \P_\ell\big(Z_{\tau_1},\dots,Z_{\tau_n}\big).
		\end{equation*}
		One can also extend this definition to encode the convolution with the kernel $K$. 
		$$
		{\PPi}^{\mathcal{Z}}\Big(\mcI_n \big( \lg w \rg_\ell X^k \big)\Big)= (\partial^n K) * {\PPi}\left(\lg w \rg_\ell X^k\right).
		$$
		Then, the model is given by
		\begin{equation*}
			\Pi_x = \left( \PPi \otimes g_x^{-1}  \right) \Delta, \quad g_x  = (\PPi \CA_{*} \cdot)(x)
		\end{equation*}
where $ \Delta $ is a deconcatenation type coproduct,  $\CA_{*}$ is a  kind of twisted antipode on words with the two decorations (superscript and subscript). This construction has been implemented in \cite{LocalExpansionsParacSystems} which extends the works \cite{Hos20, Hos21, Hos24}. The interpretation involving the map $\CA_{*}$ is new and is given in Proposition \ref{def_prop_antipode}. One can connect this pre-model to the one for decorated trees which we denote by $\PPi$. Indeed, from Corollary \ref{PPi_Psi} one has
\begin{equation*}
	\PPi \tau = \PPi^{\mathcal{Z}} \Psi(\tau)
\end{equation*} 
If we denote by ${\bm R}$ the reconstruction operator associated with the previous regularity structure on decorated words and by $ f_{\gamma}(\bm u)$ the modelled distribution of order $\gamma$ obtained by composing $f$ with $\bm u$ we have
\begin{equation*}
f(u)= 	{\bm R}{f_{\gamma}(\bm u)}.
\end{equation*}
From Corollary \ref{cor_repmodelledditribParap}, one can rewrite the right hand side of this identity with a paracontrolled ansatz similar to \ref{renormalised_ansatz}. This rewriting is close to the main theorems given in \cite{BailleulHoshinoRS1,PCandRS2} that associate a paracontrolled distribution to a modelled distribution.  The main difference between their works and Corollary \ref{cor_repmodelledditribParap} is the stochastic data used for the paracontrolled distribution.
Our general paralinearisation formula is given in Theorem \ref{prop_paralinearization}. Then, from there, one can compute the paracontrolled ansatz  for the multiplication with the noises $ \xi_{\ell}$ in Proposition \ref{prop_nonlinearitxiell_generalized} and we finish via an integration step given in Proposition \ref{prop_integration}.

The second main result of this paper is the convergence of the renormalised stochastic data $\widehat{Z}_{\tau}^m$. Here, we use the BPHZ preparation map $(R^m)_{m \in \mathbb{N}}$ defined in \eqref{BPHZ_ch}. The $m$ stresses the fact that we regularised via $\varrho_{1/m}$ where $\varrho $ is a mollifier. 
	As the proof of the general paracontrolled ansatz relies on a well-chosen regularity structure on words, it is natural to use the spectral gap approach first proposed for multi-indices in \cite{LOTP} and then extended to decorated trees in \cite{HS23,Hos25,BH23}. This avoids to
 implementing directly the BPHZ algorithm \cite{BP57,KH69,WZ69} as was done for Regularity Structures in \cite{CH16}.  
 
\begin{theorem}\label{main_convergence}
We suppose that the noises $ \xi_\ell $, for $ \ell \in \{1,...,\ell_0\} $ belongs to $ C^{-\frac{|\mathfrak{s}|}{2} - s_\ell} $ with $ s_{\ell} \geq - \frac{|\mathfrak{s}|}{2} $ and $\s$ the scaling given in \eqref{scaling}. The noises are also centered, space invariant and that they satisfy the spectral gap Assumption \ref{assumption_spectral gap}. Then, for any mollifier $\varrho$ and the BPHZ preparation map $R^m$, the stochastic data $ (\widehat{Z}_{\tau}^m)_{\tau} $ converges in $L^q$ for any $ 1 \leq q < + \infty $. The limit is independent of the mollifier $\varrho$.
	\end{theorem}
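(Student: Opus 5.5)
The strategy is to reduce convergence of $(\widehat{Z}^m_\tau)_\tau$ to convergence of a BPHZ-renormalised model on the regularity structure of decorated words, and to obtain the latter from the spectral gap approach of \cite{LOTP,HS23,Hos25,BH23}. The bridge is Corollary \ref{PPi_Psi}, $\PPi\tau = \PPi^{\mathcal{Z}}\Psi(\tau)$, together with the multiplicativity of $\widehat{Z}^\circ$ (Lemma \ref{lem_PsiTau_multiplicatif}). Together they show that each $\widehat{Z}^m_\tau$ is a finite linear combination of iterated paraproducts $\partial^k\P_\ell(\widehat{Z}^m_{\tau_1},\dots,\widehat{Z}^m_{\tau_n})$, convolutions with $K$, and one genuinely new "resonant" piece. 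This new piece is the renormalised model $\widehat{\Pi}^m_x$ applied to a word of negative degree, tested at scale $\lambda$ around the base point $x$.

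Since paraproducts and $K*$ are continuous on Hölder–Besov spaces in the relevant regularity ranges, convergence of $\widehat{Z}^m_\tau$ in $L^q(\Omega; C^{|\tau|-\kappa})$ follows by induction on the size of $\tau$ once the model bounds
\begin{equation*}
\mathbb{E}\big[|\widehat{\Pi}^m_x w(\varphi^\lambda_x)|^q\big]^{1/q}\lesssim \lambda^{|w|},\qquad \mathbb{E}\big[|(\widehat{\Pi}^m_x-\widehat{\Pi}^{m'}_x) w(\varphi^\lambda_x)|^q\big]^{1/q}\lesssim \delta_{m,m'}\lambda^{|w|-\theta}
\end{equation*}
hold with $\delta_{m,m'}\to0$. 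These bounds are then upgraded to Hölder norms by a Kolmogorov-type argument, since all moments are available.

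The model bounds are proved word by word, by induction on a well-founded order: number of letters, then the degree $|w|$. We follow the spectral gap recipe in four steps.

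\begin{itemize}
\item[(i)] For $|w|<0$, the BPHZ choice of $R^m$ in \eqref{BPHZ_ch} forces $\mathbb{E}[\widehat{\Pi}^m_x w(\varphi)]=0$ after the appropriate Taylor subtraction. By stationarity this expectation does not depend on $x$.
\item[(ii)] We bound the Malliavin derivative $\delta\widehat{\Pi}^m_x w$ in the direction $\delta\xi_\ell$. By Leibniz and the recursive structure, $\delta\widehat{\Pi}_x$ solves the same hierarchy with one letter replaced by $\delta\xi$. We control it through a "derivative model" $\delta\Pi_x$, whose degrees are shifted by the Cameron–Martin regularity $s_\ell$.
\item[(iii)] We run the three-point argument: reconstruction (or Schauder) for $\delta\Pi_x$, combined with the already established bounds on smaller words and the algebraic relation $\Pi_y = \Pi_x\Gamma_{xy}$ induced by the coproduct $\Delta$ and the twisted antipode $\CA_*$ of Proposition \ref{def_prop_antipode}. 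This yields the correct scaling $\lambda^{|w|}$ for $\delta\Pi_x w$ in $L^q(\Omega; H^{-s_\ell})$.
\item[(iv)] Assumption \ref{assumption_spectral gap}, in its $q$-moment form, combined with (i) gives the $L^q$ bound on $\widehat{\Pi}_x w$ itself.
\end{itemize}

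Running the same argument on differences $m,m'$ gives the Cauchy property. Independence of the limit from $\varrho$ follows by applying it to two mollifiers interpolated, e.g. $\varrho_{1/m}$ versus $\tilde\varrho_{1/m}$. The hypothesis $s_\ell\ge -|\s|/2$ guarantees that the derivative model has the positivity needed in (iii).

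The main obstacle is (iii) for words coming from iterated paraproducts $\langle\tau_1,\dots,\tau_n\rangle_\ell^k$. These are not local products, so the derivative $\delta$ hits the inner $Z_{\tau_i}$ inside the Littlewood–Paley cut-offs. The reexpansion $\Gamma_{xy}$ on words is also not a shuffle morphism. My first attempt would be to express $\P_\ell$ as a smooth nonlocal kernel with $x$-weights, and to check that it satisfies the kernel assumptions of the integration/Schauder lemma in the word regularity structure. Then $\delta$ passes through as for $\mcI_n$, and the coproduct identity for $\Delta$ replaces the usual tree algebra. If that fails, I would fall back to bounding $\delta\P_\ell$ directly via paraproduct estimates in Besov spaces with the Cameron–Martin norm, using the induction hypothesis on the $\tau_i$.
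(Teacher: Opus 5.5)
There is a genuine gap. It sits exactly where you locate the main obstacle, and the framework around it is also misaimed.

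\textbf{The framework.} The model on words has no stochastic content of its own and nothing to renormalise. By Proposition \ref{prop_modelwords_besov} (that is, \cite[Theorem 1]{LocalExpansionsParacSystems}) it is a deterministic, continuous function of its letters $Z_a$. What needs probabilistic control are the letters $\widehat{Z}^m_\tau$ with $|\tau|<0$. So the induction must run over trees, as in the paper's scheme $\textbf{cv}(\dot\mcT_{\leq j})\to\textbf{cv}(\mcT_{\leq j})\to\textbf{cv}(\mcT^+_{\leq j},\dot\mcT^+_{\leq j})\to\cdots$, not over words ordered by length.

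Your step (i) is unfounded. Condition \eqref{BPHZ_ch} constrains $\widehat{\PPi}^m\tau$ for trees; it does not in general make $\mathbb{E}[\widehat{Z}^m_\tau]$ vanish, let alone the expectation of a recentred multi-letter word. What it gives is $\mathbb{E}[\widehat{Z}^m_\tau]=\mathbb{E}[F(\widehat{Z}^m_\sigma)]$, with $F$ a sum of iterated paraproducts of strictly smaller letters, and this converges by induction and stationarity.

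Likewise, a negative-degree $\widehat{Z}^m_\tau$ is not ``paraproducts and $K*$ of smaller letters plus a tested local quantity''. Continuity of paraproducts does not turn local model bounds into a $C^{|\tau|}$ bound. That transfer is Theorem \ref{thm_regularityZtau}, through the $Y$-remainders and the generalised Littlewood--Paley blocks of Lemma \ref{lem_sumcrochets}. A consistent model-level route would instead import the BPHZ theorem for the tree model, identify it with $\PPi^{\widehat{\mcZ}^m}\Psi$ as in Corollary \ref{PPi_Psi}, and transfer it through Theorem \ref{thm_regularityZtau}. That is not what you propose, and it replaces the proof by a citation.

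\textbf{The missing step.} The decisive estimate is a bound on $\delta\widehat{Z}^m_\tau$ in $C^{|\tau|}$ for $|\tau|<0$. This is your step (iii), and you leave it open with two tentative fallbacks. Differentiating through paraproducts is not the difficulty: they are deterministic and multilinear, so $\delta$ distributes by Leibniz. The difficulty is to recognise the result as an object of regularity exactly $|\tau|$ built only from data already controlled.

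The missing idea is the identity of Theorem \ref{prop_alg_identity_convergence}, $\delta\widehat{Z}_\tau=Y^{\widehat{\mcZ}}_{j_*\Psi(D_\Xi\tau)}$. It combines two facts:
\begin{itemize}
\item[(a)] $\delta Z_\tau=Z_{j_*(D_\Xi\tau)}$, proved by Leibniz through the recursive definition of $\widehat{Z}$, using $RD_\Xi=D_\Xi R$ and $Rj_*=j_*R$;
\item[(b)] $\widehat{Z}_{j_*\tau}=Y^{\widehat{\mcZ}}_{j_*\Psi(\tau)}$, which rests on the $j_*$ bookkeeping of Lemmas \ref{lem_PiDXiTau} and \ref{lem_g_jstarPsi}.
\end{itemize}
The letters that appear are smaller trees and derivative trees $D_\Xi\sigma$. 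The derivative trees are measured in $B_2$ with the gain $|\mathfrak{s}|/2$. They are already controlled at the step $\textbf{cv}(\dot\mcT_{\leq j})$ by the Besov reconstruction argument of Lemma \ref{cor_reconstruction_convergence}, with no renormalisation involved. The map $j_*$ returns them to the H\"older scale at no cost, since $|D_\Xi\tau|_2-\frac{|\mathfrak{s}|}{2}=|\tau|$.

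Theorem \ref{thm_regularityZtau} then gives $\|\delta\widehat{Z}_\tau\|_{C^{|\tau|}}\lesssim\|D_\Xi\tau\|^*$. The spectral gap is applied directly to the blocks $\Delta_i\widehat{Z}^m_\tau$ (Lemma \ref{lem_spectralgap_argument}), with no three-point argument and no derivative model on words. Without this identity, or a worked-out substitute, your steps (ii)--(iii) remain a plan rather than a proof.
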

	
	Let us explain how one proves such a result via the spectral gap method. In this framework, one controls the moments of $ \widehat{Z}_{\tau}^{m} $ via the expectation of $ \widehat{Z}_{\tau}^{m}  $ and the moments of $ \delta \widehat{Z}_{\tau}^{m} $ where $\delta$ is the Malliavin derivative. Indeed, from the spectral gap inequality 
	$$
	\mathbb{E}\Big[|\widehat{Z}^m_\tau|^q\Big] \lesssim  \Big| \bbE[ \widehat{Z}^m_\tau]\Big|^q  + \mathbb{E}\Big[|\delta \widehat{Z}^m_\tau|^q\Big].
	$$
	 Choosing the BPHZ scheme  allows us to show the convergence of the first expectation via an inductive argument. The second term is more involved. We summarise the steps below.

	 One introduces new symbols for the noises $ \delta \xi_{\ell} $ whose regularity will be first considered in some Besov spaces to take into account the integrability.
		Via a Besov embedding, one is able to see these objects in some Hölder space. We denote the map of this embedding by $j$:
		\begin{equs} 
			j : B^{\alpha}_p(\bfR^{d+1}) \hookrightarrow C^{\alpha-\frac{|\mathfrak{s}|}{p}}(\bfR^{d+1}).
		\end{equs} 
		 See \eqref{besov} for a precise definition of the Besov spaces. 
		One generalises this idea to $\widehat{Z}^m_{D_{\Xi} \tau}$ where the derivative $ D_{\Xi} $ changes one noise of type $ \xi_{\ell} $ into $ \delta \xi_{\ell} $. This process is constructed into the following spaces:
		\begin{equs}
			\widehat{Z}^m_{D_{\Xi} \tau}  \in B^{|D_{\Xi} \tau|}_{p}(\bfR^{d+1}) \hookrightarrow C^{|D_{\Xi}\tau|-\frac{|\mathfrak{s}|}{p}}(\bfR^{d+1})
			\end{equs}
			where here, we have made an abuse of notation as normally $D_{\Xi} \tau$ is a linear combination of decorated trees. One has to consider that the identity above holds for each decorated tree appearing in this linear combination. One important remark is that the embedding forgets the extra regularity provided by the Malliavin derivative in the sense that one has
			\begin{equs}
				|D_{\Xi}\tau|-\frac{|\mathfrak{s}|}{p} = |\tau|
				\end{equs} 
		 In Proposition \ref{prop_modelwords_besov}, one has a regularity-integrability structure on words built out of these stochastic processes. It relies on an extension of \cite[Theorem 1]{LocalExpansionsParacSystems} to Besov spaces performed in Appendix \ref{Appendix_B}. Then, an inductive bound on the Besov norm of $ \widehat{Z}^m_{\tau}  $ is established in Lemma \ref{cor_reconstruction_convergence}, which relies on Lemma \ref{lem_reconstructionarg_Ztau}. This bound is crucial for constructing $ \delta \widehat{Z}^m_{\tau} $ inductively. One important step towards this result, is
		the key algebraic identity in Theorem \ref{prop_alg_identity_convergence}, given for any decorated tree $ \tau $ by
		\begin{equation} \label{main_Y_delta}
		\delta \widehat{Z}_\tau^m = Y^{\widehat{\mcZ}^m
			}_{j_*\Psi(D_\Xi\tau)}
		\end{equation}
where $j_*$ performs algebraically the Besov embedding on the words by introducing new letters and on decorated trees by changing the noise type nodes. The stochastic data $Y^{\widehat{\mathcal{Z}}^m}$ is constructed from the $ \widehat{Z}^m_{\tau} $ viewed as stochastic processes in the suitable Hölder space.  The proof of \eqref{main_Y_delta} relies on an intermediate object denoted by $ Z^{R_{\varepsilon}}_{j^* (D_{\Xi}\tau)} $ which treats the new noises $  \delta \xi_{\ell}$ as if they were the old ones $ \xi_{\ell} $.  One shows in Proposition \ref{lem_deltaZ_Zj*} that
\begin{equation} \label{id_2}
\delta Z_\tau^{R_{\varepsilon}} = 	Z^{R_{\varepsilon}}_{j^* (D_{\Xi}\tau)}.
\end{equation}
Then, one gets in Proposition \ref{lem_Zj*tau=YjstarPsi}  the other identity which allows to conclude for Theorem \ref{prop_alg_identity_convergence}
\begin{equation} \label{id_1}
	Z_{j_*\tau}^{R_{\varepsilon}} =  Y_{j_*\Psi(\tau)}^{\mcZ}.
\end{equation}
The identities \eqref{id_2} and \eqref{id_1} are purely algebraic whereas \eqref{main_Y_delta} provides a way to get the correct analytical bounds. Lemma \ref{lem_estimate_deltaZtau} uses the right hand side of \eqref{main_Y_delta} to get the correct analytical bounds that depends inductively on Lemma \ref{cor_reconstruction_convergence}.

\begin{remark}
	The main algebraic identities of the various spectral gap proofs \cite{LOTP,HS23,BH23} rely on the same algebraic map $ d \Gamma $ first described on decorated trees in \cite{BN23}. This unification has been established in \cite{BM26a}. The map $d \Gamma$ is used for shortening some Taylor expansions used for the recentering of the model.  The identity \eqref{main_Y_delta} seems close in spirit to the previous map but at the same time is rather different and not easy to infer from the definition of $d \Gamma$.
	\end{remark}

	\begin{remark} For scalar-valued equations, one can replace decorated trees via multi-indices as it was initiated for Regularity Structures in \cite{OSSW,LOT}. The main inductive arguments of the construction of the $Z_{\tau}$ seem at a first sight robust to this new combinatorial setting.
		\end{remark}
	
	\begin{remark}
		The use of Regularity Structures limits the scope of the ansatz to the singular SPDEs where one observes a clear gain of regularity when convolving with the kernel $K$ and therefore it excludes stochastic dispersive equations. In the latter case, one mainly considers polynomial non-linearities where such a paralinearisation is not needed. One can have good hope that the structure of this ansatz and its renormalisation could be translated into this context.
		\end{remark}
		
		\begin{remark} In Theorem \ref{main_theorem}, one can work with rougher initial data $u_0$ as in \cite{BCCH}. This will requiring to work with weighted spaces. We refrain from providing such optimal exponents as the main point of the paper is to provide a systematic way to write a paracontrolled ansatz that works for a large class of singular SPDEs.
			\end{remark}
			
			\begin{remark}
				Let us stress that paracontrolled calculus is very helpful for obtaining global existence results for many singular SPDEs such as 2D stochastic Navier-Stokes \cite{HR24}, 2D generalized parabolic Anderson model \cite{SZZ26}. One can hope to extend these results to the full-subcritical regime with the help of this new general ansatz.
				\end{remark}

Let us briefly summarise the content of this paper by presenting the various sections. In Section \ref{Sec::2}, we introduce the space-time paraproducts coming from \cite{SpacetimeParaproducts} but in the flat case. Then, we recall the basics of Regularity Structures following the presentation of \cite{BH26}. We define generic notations for the coaction/coproducts used for the recentering of some pre-model. In the sequel, we introduce two specific regularity structures: one on decorated trees in Subsection \ref{subsection_DecoratedTrees} coming from \cite{Hai14,BHZ} and one on words in Subsection \ref{subsect_theRSIteratedParap} coming from \cite{BM26}.
We also recall the subcriticality conditions coming from \cite{BHZ} after defining elementary differentials in Subsection \ref{subsec_subcriticality}. The main result of the section is Proposition \ref{def_prop_antipode} that provides a recursive map $\CA^*$ for encoding the $\partial_*^k$ operator on paraproducts. 	In Section~\ref{section_fromRStoPS}, we prove that, for any regularity structure $\scrT$ satisfying mild assumptions and for any model  ${ (\PPi,g)}$ over it, one has the paracontrolled representation (see Definition \ref{def_Ytau})
	$$
 Y_{\tau} = \widetilde{ \PPi}(\tau) - \, \sum_{k\geq 0} \, \sum_{\sigma\prec \tau} \, \frac{1}{k!} \, \P_k\Big( \widetilde{ g}\big(\downarrow^k\hspace{-2pt} \tau/\sigma\big) , \enskip Y_\sigma  \Big),
	$$
	where $(\widetilde{\PPi},\widetilde{g})$ is a modification of the model in which polynomials are recentered, (introduced in Subsection \ref{subsect_modifmodel}). The derivation $ \downarrow^k $ is defined in Subsection	\ref{section_derivations}, $ \tau / \sigma $ is associated with the recentering coaction $\Delta$ given in Subsection \ref{subsection_basicsRS} and $\sigma \prec \tau$ is defined in \eqref{thm_regularityZtau}. We also connect $ \widetilde{\PPi} $ to $\widetilde{\PPi}^{\mathcal{Y}}$, the pre-model on words whose letters belong to the basis of $T$ and $T^{+}$ (see \eqref{eq_PiTauPiPsiTau})
	\begin{equation*}
		\widetilde{\PPi}(\tau) = 	\widetilde{\PPi}^\mcY\big(\Psi(\tau)\big)
	\end{equation*}
	where the map $\Psi$ is given in Definition \ref{def_Psitau}.
	 In Theorem \ref{thm_regularityZtau}, we show that the distributions $Y_\tau$ are in $C^{|\tau|}$ with a continuity estimate with respect to $(\PPi,g)$. Its proves relies on algebraic identities that allow one to rewrite the main analytical objects using the model on words and the map $\Psi$ (see Proposition \ref{prop_gDntaustarderivative} and Lemma \ref{lem_sumcrochets}). Corollary \ref{cor_repmodelledditribParap} which is  crucial for the sequel,
	 provides a paracontrolled representation for any modelled distribution based on the $Y_\tau$.

	In Section~\ref{section_constructionZtau}, we construct the collection of reference distributions $Z_\tau$ by induction in \eqref{eq_defZtau_Naif} and \eqref{eq_defZtau_T+} using the maps $ \widetilde{\PPi} $, $ \widetilde{g} $ and $\Psi$. We define the notion of admissible stochastic data in Definition \ref{admissible} which is similar to the admissibility notion for pre-models in Regularity Structures. We then state the main theorem of the section, Theorem \ref{prop_extensionthm_Ztau} that extends a family of singular stochastic data given on decorated trees with negative degree to a family of admissible stochastic data on all decorated trees with positive degree. As a consequence, one gets Corollary \ref{PPi_Psi} that connects the admissible pre-model on decorated trees to the admissible pre-model on decorated words via the map $\Psi$.
	 In Propositions \ref{prop_Ztau=YPsitau_integration} and  \ref{prop_Ztau=YPsitau_mult}, we connect $Z_{\tau}$ to $ Y_{\tau}^{\mathcal{Z}} $ via the map $\Psi$. This allows us to apply Theorem \ref{thm_regularityZtau} to prove Theorem \ref{prop_extensionthm_Ztau}. We finish the section by introducing the corresponding multiplicative collection $\mcZ^\circ=(Z^\circ_\tau)_{\tau\in\mcT}$ associated with $\mcZ=(Z^\circ_\tau)_{\tau\in\mcT}$. The main result for this new stochastic data is Proposition \ref{lem_SumNegativeCorrector} which gives an explicit expression for the sum of correctors involving $Z$ and $Z^{\circ}$. 
	 
In Section~\ref{section_ParacAnsatz}, we define the paracontrolled ansatz and introduce, in Definition~\ref{def_PAMparacSystem}, the stronger notion of a paracontrolled system associated with \eqref{eq_gPAM}. This notion captures the full paracontrolled structure of a distribution satisfying the ansatz. In Proposition \ref{prop_pamansatz_to_pamsystem}, we show that these two notions are in fact equivalent, using a paralinearisation formula established in Theorem \ref{prop_paralinearization}. The paracontrolled structure provided by the notion of a paracontrolled system will play an important role in computing the right-hand side of \eqref{eq_gPAM} using our ansatz. We show that the resulting expression again admits the same paracontrolled form, which allows us to perform a fixed-point argument to solve the equation. A definition of the singular products is given in Subsection \ref{subsect_productmap}, it relies on the correctors $\bfC$ introduced in \cite{LocalExpansionsParacSystems}. The integration step is carried out in Subsection \ref{subsection_integration} and relies on Hairer's multilevel Schauder estimates. We finish the section by presenting two known examples gPAM and $ \Phi^4_3 $ in Subsection\ref{examples} to illustrate our generalised ansatz.

The fixed-point procedure is briefly described in Section~\ref{section_fixedpoint} for smooth initial data. We start with a notion of solution in Definition \ref{def_solution_EqPAM} that involves both $u$ and the smooth remainder $ u^\#$. Then, one constructs a  map $\Gamma : u^\# \mapsto u$ as a perturbation of the identity in Proposition \ref{prop_AppGamma}, using the same trick as in \cite[Lemma 4.12]{AllezChouk}. We conduct the final argument for the fixed-point in Proposition \ref{fixed_point_prop}.

	 In Section~\ref{section_renormalized_equation}, we show how to compute the renormalised equation within this framework.  We first consider a renormalisation map $M$ that is compatible with the coaction $ \Delta$
	(see \eqref{eq_RenormMapCoprodcuct1}) and then we move to preparation maps (see Definition \ref{def_preparation_map}) The main results of the section are Propositions \ref{prop_eqrenormalised_0}
	and \ref{renormalised_preparation_pde}.

	In Section \ref{Sec::8}, we show the convergence of the stochastic data $\widehat{Z}^m_{\tau}$ (Theorem \ref{main_convergence}). We start by implementing the  regularity-integrability structures of \cite{Hos25,BH23} at the level of the words in Proposition \ref{prop_modelwords_besov}. This allows us to revisit the regularity of the main analytical objects of the solution theory in Besov spaces.  Then, we state the main theorem for the convergence, Theorem \ref{prop_alg_identity_convergence}, which provides a key identity between $ \delta Z $ and $Y$. Its proof relies on two algebraic identities, Propositions \ref{lem_deltaZ_Zj*} and \ref{lem_Zj*tau=YjstarPsi}.  We conclude with the proof of Theorem \ref{main_convergence} which uses the previous identity and a reconstruction argument given in Lemma \ref{cor_reconstruction_convergence}. 
		We finish the paper with several appendices. Appendix \ref{appendix_A} that explains how the main results of \cite{LocalExpansionsParacSystems} generalise to paraproducts built from more general Littlewood-Paley projectors. 
		Appendix \ref{Appendix_B} proves Proposition \ref{prop_modelwords_besov} that constructs a regularity-integrability structure over iterated paraproducts.
		Appendix \ref{Appendix_C} provides a symbolic index.
	
	 \subsection*{Acknowledgements}
	
	{\small
		Y.B. and N.M. gratefully acknowledge funding support from the European Research Council (ERC) through the ERC Starting Grant Low Regularity Dynamics via Decorated Trees (LoRDeT), grant agreement No.\ 101075208. Views and opinions expressed are however those of the author(s) only and do not necessarily reflect those of the European Union or the European Research Council. Neither the European Union nor the granting authority can be held responsible for them. }

	
	

	\section{Notations and prerequisites}
	\label{Sec::2}
	
	\subsection{Analytical setting} \label{subsection_analyticalsetting}
	
	We let $(e_i)_{0\leq i\leq d}$ be the canonical basis of $\bfR^{d+1}$, and we call multi-indices elements $k=(k^0,\dots,k^d)\in \bfN^{d+1}$. We work with the following scaling by setting $$|k|= \sum_{i=0}^{d}\mathfrak{s}_i k^i, \quad \mathfrak{s} = (\beta,1,...,1), \quad |\mathfrak{s}| = \beta + d. $$ For $n\in\bfN^{d+1}$ and multi-index $k\in\bfN^{d+1}$, set
	\begin{equation}
	\label{scaling}
	\mcP_n(k)\defeq \Big\{(k_1,\dots,k_n)\in (\bfN^{d+1})^n,\quad \sum_{j=1}^nk_j = k      \Big\}.
	\end{equation} 
	For any ${\bm k} \in \mcP_n(k)$, we set the multinomial coefficient 
	$ \binom{k}{\bm k } = \frac{k!}{k_1!\cdots k_n!}$.
	Define the inverse operator $\Linv$ setting for any distribution $v$ on $\bfR_+\times\bfR^d$
	$$
	(\Linv v)(t,x) = \int_0^t (P_{t-s} v_s)(x) \ddd s,
	$$
	where $(P_t)_{t\geq 0}$ is the semi-group generated by $\mcL$.
		We denote by $\overline K$ the kernel of the inverse operator $\Linv$.  We define for any $\ell\in \bfN^{d+1}$ the inverse operator with monomial weight $\scrL_\ell^{-1}$ by setting
	\begin{equation}
		\scrL_\ell^{-1} u (x) \defeq \int_{\bfR^d\times \bfR_+} \overline K(x-y)(x-y)^\ell u(y) \ddd y.
	\end{equation}
	We make the same decomposition as in \cite[Section 5]{Hai14} for the kernel of $\Linv$ by setting $\overline K=K+R$, where $K$ satisfies Assumptions 5.1 and 5.4 from \cite{Hai14} and where $R$ is smooth.
	
	\subsection{Space-time paraproducts}\label{subsect_spacetime_paraproduct}
	We now introduce the space-time paraproduct we are going to use in our analysis. It is very close to the one defined in \cite{SpacetimeParaproducts}, except we are here in a flat setting, so that we define it via fourier analysis instead of the heat flow. We set once and for all a Littlewood-Paley decomposition $(\tilde{\Delta}_i)_{i\geq -1}$ on $\bfR^d$. It is associated with a dyadic partition of the unity $(\rho, \chi)$ with $\rho$ supported on the annulus $\{ 1/2<|z|<2 \}$, such that for $ i \in \N $
	\begin{equation*}
		\tilde{\Delta}_{\tiny{-1}} = \mathscr{F}^{-1}( \chi \mathscr{F} \cdot ), \quad \tilde{\Delta}_{i} = \mathscr{F}^{-1}( \rho_i \mathscr{F} \cdot ), \quad \rho_i = \rho(2^{-i} \cdot).
	\end{equation*}
	where $ \mathscr{F} $ is the Fourier transform.
	Let $ \tilde{\Delta}_{<j} \defeq \sum_{i\leq j-1}\tilde{\Delta}_i$ and write $\tilde{K}_i$ for the kernel of the $i^{th}$ Littlewood-Paley projector $\tilde{\Delta}_i$. One has for a distribution $ u $
	\begin{equation*}
		(\tilde{\Delta}_i u)(y) = \int_{\R^d} \tilde{K}_i(y-x) u(x) dx.
	\end{equation*}
	Let $B$ be some integer that will be chosen large enough later in Subsection \ref{subsec_subcriticality}, and set some $\phi\in C^\infty(\bfR,\bfR)$ that is supported in $[1/2,2]$, with unit mass and such that
	\begin{equation}
		\int_\bfR \phi(x)x^k \, \ddd x = 0  \quad \text{for } k\in\bbrack{1,B}.
	\end{equation}
	We also define for any $i\geq0$ the smooth functions $$\phi_i \defeq \frac{1}{2^{i+1}} \phi(\frac{\cdot}{2^{i+1}}), \quad \psi_i \defeq  \phi_i-\phi_{i-1}.$$
	We then set the space-time kernel 
	\begin{equation}
		K_{<i}(t,x) = \tilde{K}_{<i}(x) \, \phi_i(t).
	\end{equation}  
	We also set $K_i = K_{<i+1} - K_{<i}$ and $\Delta_{i}$ the operator associated with this kernel.
We set for any $\alpha\in\bfR$ the associated Hölder-Besov space $C^\alpha$ and write $\norme{\cdot}_\alpha$ for its norm given by
	$$
	\norme{u}_\alpha = \sup_{i\geq-1}2^{-i\alpha} \norme{\Delta_i u}_{L^\infty}.
	$$ 
	We also let $ \Delta_{<j} \defeq \sum_{i\leq j-1}\Delta_i$. Define for any multi-indice $\ell \in \bfN^{d+1}$ the Littlewood-Paley projector with polynomial weight $\ell$ by 
	\begin{equation}\label{eqdef_littlewoodprojectorL}
		\Delta_i^\ell h (y)= \int_{\bfR^{d+1}} K_i(y-x)(x-y)^\ell u(y)\, \ddd x.
	\end{equation}
	with the convention $   \Delta_i^{0} = \Delta_i $. 
	We define the polynomial weight paraproduct of two distributions $h_1$ and $h_2$ by 
	$$
	\P_{{ \ell}}(h_1,\, h_2) \defeq \sum_{i\geq -1} \Delta^{\ell}_{<i-1} h_1 \; \Delta_i h_2,
	$$
	For any distributions $h_1,\dots,h_n$ and ${ \bm \ell }=(\ell_1,\dots \ell_{n-1})\in (\bfN^{d})^{n-1}$, we set the iterated paraproduct with weight ${\bm \ell}$ 
	$$
	\P_{{\bm \ell}}(h_1,\dots,h_n) =  \P_{\ell_{n-1}}\big( \P_{{\bm \ell}_{<n-1}}(h_1,\dots,h_{n-1}), \, h_n\big),
	$$
	where $ {\bm \ell}_{< n-1} = (\ell_1,\dots \ell_{n-2}) $.
	For $\ell\in\bfN^{d+1}$, we define
	\begin{equation}\label{eqdef_ParapPolWeight2}
		\P_{\ell}\big(h_1,\dots,h_n\big) = \sum_{{\bm \ell}\in \mcP_{n-1}(\ell)} \binom{\ell}{\bm \ell}  \P_{\bm \ell}\big(h_1,\dots,h_n\big).
	\end{equation}
The previous formula corresponds to split $\ell $ into $ n-1 $ polynomial weights that will appear in the iteration of the paraproduct.

	

subsection{Basics on Regularity Structures  }\label{subsection_basicsRS}
We follow the formalism introduced in \cite{BH26}.	A regularity structure is a pair $\scrT=(T,T^+)$ consisting of a graded vector space $T$ and a graded algebra $T^+$, equipped with linear maps 
	\begin{equation*}
		\Delta : T \rightarrow T \otimes T^+,
		\quad
		\Delta^{\!+}  : T^+ \rightarrow T^+\otimes T^+,
	\end{equation*}
	making $T^+$ a graded Hopf algebra and $T$ a co-module over $T^+$. We suppose that each regularity structure $(T,T^+)$ we use comes with a basis $\mcB$ (resp. $ \mcB^+ $) of $T$ (resp. $T^+$) consisting of homogeneous elements. We denote by $\vert \cdot \vert$ the degree map defined on $T$ and $T^+$ taking values in $\R$. The homogenous subspace of $T$ (resp. $T^+$) of degree $\alpha$ is denoted by $T_{\alpha}$ (resp. $T_{\alpha}^+$). For any $\tau,\sigma\in\mcB$ we define $\tau/\sigma\in T^+$ by requiring
	$$\Delta \tau = \sum_{\sigma} \sigma \otimes  \tau/\sigma$$
	and write $\sigma<\tau$ when $\tau/\sigma$ is non-zero. We define in the same way $\tau/\sigma\in T^+$ for $\tau,\sigma\in \mcB^+$ and the notation $\sigma<\tau$ using this time the coproduct $\Delta^{\!+}$. The difference between $\Delta$ and $ \Delta^{\!+} $ will be clear from the context. In the same fashion we introduce for $\tau\in \mcB$ and $\sigma\in\mcB^+$ the element $\sigma \backslash \tau\in T$ by the equation
	\begin{equation}\label{eq_sigmabackslashtau}\Delta \tau = \sum_{\sigma} \sigma\backslash \tau \otimes \sigma,
	\end{equation}
	and likewise we define for $\tau,\sigma\in\mcB^+$ an element $\sigma\backslash \tau\in T^+$ by using $\Delta^{\!+}$ instead of $\Delta$. 
	We suppose that $\mathscr{T}$ contains the polynomial regularity structure $T_X$ whose basis is given by 
		 $$ \CB_X = \{  X^k, \, k \in \N^{d+1} \}$$
	where $ X^{k} = \prod_{i=0}^d X_{e_i}^{k_i} $. In the sequel, we will use the short hand notation $ X_{e_i} = X_i $.
	Then, one has
	\begin{equation*}
		\Delta X_i = \Delta^{\!+} X_i = X_i \otimes \one + \one \otimes X_i.
		\end{equation*}
		and extended multiplicatively which gives
		\begin{equation*}
			\Delta X^{k} = \Delta^{\!+} X^{k} = 
			\sum_{\ell \in \N^{d+1}} \binom{k}{\ell}  X^{\ell} \otimes X^{k-\ell}.
		\end{equation*}
	Model over a regularity structure will be written as a couple of maps ${ (\PPi,g)}$, with ${\ \PPi}:T\rightarrow \mcD'(\bfR^{d+1})$ and ${\ g}:T^+\rightarrow L^\infty(\bfR^{d+1})$. \label{model}
	To define the size $\llparenthesis \PPi , g\rrparenthesis$ of a model $ (\PPi,g)$ we first let for $\tau\in T_{|\tau|}$ and $\nu\in T_{|\nu|}^+$
	\begin{equs} 
			\Vert\tau\Vert_{ (\PPi,g)} &\defeq  \sup_{x\in{\bfR^{d +1}}}\sup_{\phi \in \mathscr{B}}\sup_{\lambda\in(0,1]} \lambda^{-\vert\tau\vert} \big|\langle { \Pi}_x\tau, \phi_x^\lambda \rangle\big|,   \\
			\Vert\nu\Vert_{ (\PPi,g)} & \defeq  \sup_{x,y\in{\bfR^{d + 1}}}\frac{|{ \gamma}_{yx}(\nu)|}{|y-x|^{|\nu|}},
 \end{equs}
 where $ \mathscr{V} $ is  a suitable space of test functions  $ \varphi $ which are rescaled around a point $x$, $ \varphi_x^{\lambda} = \lambda^{-\beta-d} \varphi(\lambda^{-\beta}t, \lambda^{-1}x) $.
  The maps $  \Pi_x $ and $\gamma_{xy}$ are defined as
 \begin{equation*}
 	\Pi_x = \left( \PPi \otimes g_x \CA_+  \right) \Delta, \quad \gamma_{xy} = \left( g_x  \otimes g_y \CA_+ \right) \Delta^{\!+}
 \end{equation*}
 where $ \CA_+ $ is the antipode for the Hopf algebra $T^+$. Here, we use the notations of \cite{BH26}. In the notations of \cite{Hai14}, one uses a map $f_x$ which is equal to $g_x \CA_+$.  
 Then recursively for $\tau\in \mcB \sqcup \mcB^+ $, one has
	\begin{equation*} \begin{split}
			\Vert\tau\Vert_{ (\PPi,g)}^* \defeq \max \Big( \Vert\tau\Vert_{ (\PPi,g)},\, \max_{\sigma<\tau} \Vert\tau/\sigma\Vert_{ (\PPi,g)} \Vert\sigma\Vert_{ (\PPi,g)}^*\Big)   
	\end{split} \end{equation*}
	We then set
	\[
	\llparenthesis \PPi,g\rrparenthesis \defeq \max_{\tau\in\mcB, \mu\in\mcB^+}\Big(\Vert\tau\Vert_{ (\PPi,g)}^*  \,,\, \Vert\mu\Vert_{ (\PPi,g)}^*  \Big).
	\]
	For any regularity structure $\mathscr{T}$ and model $({ \PPi},{ g})$ on it, we set for any $\gamma\in\bfR$, a modelled distribution ${\bm f}\in\mcD^\gamma(T, { g})$ is a map ${\bm f} : {\bfR^{d+1}} \rightarrow \bigoplus_{r'<\gamma}T_{r'}$ such that
\begin{align*}
	&\max_{\beta<\gamma}\sup_{x\in{\bfR^{d+1}} } \norme{{\bm f}(x)}_{\beta} <+\infty,
	\quad \max_{\beta<\gamma}\sup_{x,y\in{\bfR^{d+1}}} \frac{\big\Vert {\bm f}(y) - {\Gamma}_{yx}({\bm f}(x)) \big\Vert_{\beta}}{|y-x|^{\gamma-\beta}} < +\infty.
\end{align*}
where $\Gamma_{yx} = (\id\otimes \gamma_{yx})\Delta$.
	We now state the reconstruction theorem.
For any regularity structure $\scrT$, for any model $(\PPi,g)$ over it,
and regularity exponent $\gamma\in\bfR$. There exists a linear continuous operator \label{reconstrcution_op}
$$
{\bm R} : \mcD^\gamma(T,{ g})  \to \mcD'(\bfR^{d+1})
$$
satisfying the property
$$ 
\big|\langle {\bm R} {\bm f} - { \Pi}_x{\bm f}(x) , \varphi_x^\lambda \rangle \big|  \lesssim \norme{\bm f}_{\mcD^\gamma}\llparenthesis \PPi,g\rrparenthesis\lambda^\gamma
$$ 
uniformly in ${\bm f}\in\mcD^\gamma$, in $\varphi\in \mcV $ and in $\lambda\in (0,1)$. Such an operator is unique when the exponent $\gamma$
is positive.
	In the sequel, we will work with two regularity structures that we introduce in the next subsections.

	

	\subsection{Decorated trees and their Hopf algebraic structure}	\label{subsection_DecoratedTrees}		
	
	A decorated tree $T_{\frak e}^{\frak n, \frak f}=(\tau,\frak n, \frak e,\frak f)$ consists in a non-planar rooted tree $\tau$ with nodes set $N_\tau$ and edges set $E_\tau$, equipped with maps $\frak{n},\frak{f} : N_\tau\to \bfN^{d+1} \times \bbrack{1,\ell_0} $ and $\frak e : E_\tau \to \bfN^{d+1}$. Where $\frak{n}$ represents monomials at each nodes, the decoration $\frak{e}$ represent the derivatives of  the integration kernels, and where $\frak{f}$ represents a noise. We will often only write $\tau$ for the whole decorated tree $T_{\frak e}^{\frak n, \frak f}$. 
	We also write $\mcB$ for the set of decorated trees, and $\mcB_0$ for the set of decorated trees with no noise at the root. A general decorated tree has a unique decomposition 
	$$
	\tau = X^k\Xi_\ell \prod_{j=1}^m \mcI_{b_j}(\tau_j),
	$$
	where $ k $ corresponds to the monomial decoration at the root $ \rho $ of $\tau$, $\ell$ is the noise decoration at $\rho$, the decorated trees $\tau_j$ are connected to the root $\rho$ via edges decorated by $b_j$. If $ \ell= 1 $, we say that  the noise decoration is trivial and the symbol $\Xi_{\ell}$ can be omitted. The tree of the form $\Xi_{1}$ is also identified with the empty tree denoted by $\one$, the unit for the tree product.
		If $\tau$ writes as $\tau = X^k\Xi_l \prod_{j=1}^m \mcI_{k_j}(\tau_j)^{b_j}$, where the couples $(\tau_j,a_j)$ are distinct, the symmetry factor of a tree $\tau$ is the integer defined recursively by \label{symmetry_factor}
	$$
	S(\tau)=k! \prod_{j=1}^m S(\tau_j)^{b_j}b_j!.
	$$
	We now assign a degree to any decorated tree by setting for any $\tau = X^k\Xi_\ell \prod_{j=1}^m \mcI_{b_j}(\tau_j)$
	\begin{equation} \label{degree_tree}
	|\tau| = |k|+\alpha_\ell + \sum_{j=1}^m (\beta-|b_j| + |\tau_j|).
\end{equation}
	where the $\beta$ corresponds to the gain of regularity from the convolution with the kernel $K$.
	We denote by  $\mathcal{T}$ the vector space generated by the set of decorated trees and $\mathcal{T}^+$ the vector space
	$$
	\mathcal{T}^+ = \text{Span} \{  X^k\prod_{j=1}^n \mcI^+_{b_j}
	(\tau_j) ,\quad  k\geq0, \, \tau_1,\dots,\tau_n \in\mcT , \, |b_j|<|\tau_j|+2    \}.
	$$
\label{I_+}	We have replaced $ \mcI $ by $ \mcI^{+} $ to stress that there is no noise decoration at the root of these decorated trees. The notation $ \prod_{i}^n $ is the tree product. It merges the roots of two  decorated trees and add their root decorations.
	If the decorated trees have a non-trivial noise decorations at their roots is set to be zero. 
		We now give the precise definitions of the maps $ \Delta $ and $ \Delta^{\!+} $.
	First, we set for $\tau\in\mcT$ and $b\in\N^{d+1}$ 
	\begin{equs}\label{eq_CoproductOfMcItau}
	\begin{aligned}	\Delta \mcI_b(\tau) & = ( \mcI_b \otimes \id ) \Delta \tau + \sum_{|n|<|\tau|+2-|b|} \frac{X^n}{n !}  \otimes \mcI_{b+n}^{+}(\tau)
		\\
		\Delta^{\!+} \mcI_b^+(\tau) & = ( \mcI_b^+ \otimes \id ) \Delta \tau + \sum_{|n|<|\tau|+2-|b|} \frac{X^n}{n !}  \otimes \mcI_{b+n}^{+}(\tau)
		\end{aligned}
	\end{equs}
	and for $\ell\in\bbrack{1;\ell_0}$ and $ i \in \bbrack{0;d}$
	$$
	\Delta \Xi_\ell = 1\otimes \Xi_\ell, \quad \Delta X_i = \Delta^{\! +}X_i  = X_i \otimes \one + \one \otimes X_i. 
	$$We extend by multiplicativity these maps. One also define the dual Grossman-Larson product on $T$ setting for $\mu = X^k\prod_{j=1}^n\mcI_{b_j}(\mu_j)$
	\label{star_product}
	$$
	\mu \star \tau = \uparrow^k_{N_\tau} \Big(\prod_{j=1}^n\mcI_{b_j}(\mu_j) \curvearrowright \tau \Big),
	$$
	where $\prod_{j=1}^n \mcI_{b_j}(\mu_j) \curvearrowright \tau$ denotes the sum of the trees obtained by grafting simultaneously each $\mu_j$ to some node of $\tau$ via an edge decorated by $ b_j $ and one changes decorations. We define the grafting of one tree below: 
	\begin{equation*}
\CI_{a}(\sigma) \curvearrowright \tau	= 	\sigma \curvearrowright_a \tau \defeq \sum_{v\in N_{\tau}}\sum_{m\in\N^{d+1}} {\Labn_v \choose m} \,\sigma  \curvearrowright^v_{a-m}(\uparrow_v^{-m} \tau),
	\end{equation*}
	where $\Labn_v\in\N^{d+1}$ is the polynomial decoration at the node $v$, {\color{black} the binomial coefficient ${\Labn_v \choose m}$ equals $\prod_{0\leq i\leq d}{(\Labn_v)_i \choose m_i}$}, and $\curvearrowright^v_{a-m}$ grafts $ \sigma $ onto $\tau$ at the node $ v $ with an edge decorated by $a -m$. The above sum is finite due to the binomial coefficient $ {\Labn_v \choose m} $ which is equal to zero if $m$ is greater than $ \Labn_v $, by convention. The operator $\uparrow_v^{-m}$ adds $ -m $ to the polynomial decoration at the node $v$.
	One has
	 $$\uparrow^k_{N_\tau} = \sum_{\sum_{v\in N_\tau}k_v=k} {k \choose (k_v)_{v \in N_{\tau}}} \prod_{v\in N_\tau}\uparrow_v^{k_v}.$$ When in particular $\mu\in T^+$, we have the duality formula
	$$
	\lg \mu \star \tau , \nu     \rg = 	\lg \mu\otimes\tau , \Delta\nu     \rg,
	$$
	where the bracket $\lg\, \cdot, \cdot\,\rg$ from the left hand side  is the scalar product on $T$ such that two distinct decorated trees are orthogonal and such that $\lg\tau,\tau\rg =S(\tau)$. This scalar product is extended to $T\otimes T^+$ by setting $\lg a\otimes b,c\otimes d\rg=\lg a,c\rg\lg b, d\rg$.
	The naive model $(\PPi, {g})$ is given by
	\begin{equation} \label{def_PPi_trees}
		\begin{aligned}
\PPi \Xi_{\ell} & = \xi_{\ell},	\quad  (\PPi X^k )(x) = x^{k}, \\	\PPi \CI_n(\tau) & = D^{n} K * \PPi \tau, \quad  \quad \PPi \tau_1 \tau_2 = \PPi \tau_1 \PPi \tau_2,
\end{aligned}
	\end{equation}
	and 
	\begin{equation} \label{def_g}
		\begin{aligned}
			g_x(\CI^+_k(\tau)) = \sum_{\sigma \leq \tau}  g_x(\tau / \sigma) (\partial^k K * \Pi_x \tau)(x)
		\end{aligned}
	\end{equation}
	which is given in \cite[Section 3.3]{BH26}. The map $\Pi_x$ is also given recursively as
	\begin{equation}
	\label{def_Pi_trees}
	\begin{aligned}
		(\Pi_x \Xi_{\ell})(y) & = \xi_{\ell}(y),	\quad  (\Pi_x X^k )(y) = (y-x)^{k}, \quad  \Pi_x \tau_1 \tau_2 = \Pi_x \tau_1 \Pi_x \tau_2, \\	(\Pi_x \CI_n(\tau))(y) & = (D^{n} K * \Pi_x \tau)(y) - \sum_{|\CI_{n+k}(\tau)| > 0}   \frac{(y-x)^{k}}{k!}(\partial^{n+k} K * \Pi_x \tau)(x). 
	\end{aligned}
	\end{equation}
	

	\subsection{Non-linearities, elementary differentials and subcriticality}
	
	\label{subsec_subcriticality}

	We introduce a set of indeterminates $\mcX=(\mcX_{k})_{k\in\bfN^{d+1}}$ and write $\mcC$ for the set of smooth functions  that depends on a finite number of the indeterminates from $\mcX$. These indeterminates $\mcX$ are meant to represent $u$ and its derivatives.
	For $F\in\mcC$ and $k\in \bfN^{d+1}$, we write $D^kF$ for the partial derivative of $F$ with respect to the variable $\mcX_k$. We also set
	$$
	\partial^k F (\mcX) = \sum_{j\in\bfN^{d+1}}  \mcX_{j+k} \,  (D^{j} F)(\mcX).
	$$
	We consider non-linearities that are polynomials in the $\mcX_k$ for $ k \in A_{-} $ and non-polynomials in $ A_+ $ where $A_-, A_+ \subset \N^{d+1}$. Recalling that the right hand side of \eqref{eq_gPAM}, is a sum of terms of the form $F_\ell(v)\xi_\ell$, we set for any $\ell\in\bbrack{1,\ell_0 } $
	$$
	\Upsilon_F[\Xi_\ell]( \mcX ) = F_\ell(\mcX_0,\mcX_{e_1},\dots,\mcX_{e_d}).
	$$
	For a general decorated tree $\tau = \Xi_\ell X^k \prod_{j=1}^n \mcI_{a_j}(\tau_j)$, we set
	\begin{equation}\label{eq_defUpsilon}
	\Upsilon_F[\tau] = \prod_{j=1}^n \Upsilon_F[\tau_j] \,  \partial^k D^{a_1}\dots D^{a_n} \Upsilon_F[\Xi_\ell].
\end{equation}
In order to be able solve \eqref{eq_gPAM}, we need to make some subcriticality assumption on the non-linearity of its right hand side. Following \cite{BCCH}, we introduce $\mcL_-=\{\Xi_1,\dots,\Xi_{\ell_0} \}$ the set of noises and $\mcL_+ = \{  \mcX_k \}$ the set of dummy variables representing $u$ and its derivative at first order. We suppose that there exists some application $\reg : \mcL_- \sqcup \mcL_+ \to \bfR$ satisfying the following properties

\begin{itemize}
\item For any $\ell\in\bbrack{1;\ell_0}$, we have $\reg(\Xi_\ell)< 0$ and $\reg(\Xi_\ell)<|\Xi_\ell|$.
\item We have $\reg(\mcX_0)>0$.
\item For any $\ell\in\bbrack{1;\ell_0}$ and corresponding non linearity $F_\ell( (\partial^k u)_{k \in A_+} )\prod_{k\in A_-} (\partial^{k}u)^{p_{k,\ell}}$, we have
\begin{equation} \label{eq_subcriticality}
	\reg(\mcX_0)< \beta  + \reg(\Xi_\ell) + \sum_{k\in A_-} p_{k,\ell} \reg(\mcX_k).
\end{equation}	

\end{itemize}
As the number of noise types $\bbrack{1;\ell_0}$ is finite, there exists a constant $\kappa>0$ such that for any $\ell \in\bbrack{1;l_0}$
\begin{equation}  \label{eq_subcriticality_kappa}
	 \beta  + \reg(\Xi_\ell) + \sum_{k\in A_-} p_{k, \ell} \reg(\mcX_k) -	\reg(\mcX_0) > \kappa.
\end{equation}
We let $\gamma_0 = \beta-\kappa $ and $\gamma'_0 = \beta-\kappa/2$. In the sequel, we use the shorthand $\alpha_0 = \reg(\mcX_0)$.

For any $\ell\in \bbrack{1;\ell_0}$, we set 
$${r}_{\Xi_\ell} = \max\{ |k|\in\bfN, \quad p_{k,\ell}\ne 0   \}, $$
we also set 
$
\gamma_\ell = \gamma-{r}_{\Xi_\ell}+ |\mcI(\Xi_\ell)|.
$
The integer $B$ from the definition of the space-time paraprodct $\P$ in Subsection \ref{subsect_spacetime_paraproduct} is chosen such that 
$$
B >  \max\{ \gamma_\ell, \enskip \ell\in\bbrack{1,\ell_0}   \}.
$$
The following consequence of subcriticality will be useful later on for the  definition of the singular product $F_\ell(v)\xi_\ell$.

\begin{lemma}\label{lem_ineqgammal}
For any $\ell\in\bbrack{1;\ell_0}$, we have 
$$
\gamma_\ell >\kappa + \beta .
$$
\end{lemma}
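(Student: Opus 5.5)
The inequality is a bookkeeping consequence of the subcriticality bound \eqref{eq_subcriticality_kappa}. The plan is to unpack $\gamma_\ell = \gamma - r_{\Xi_\ell} + |\mcI(\Xi_\ell)|$. The degree formula \eqref{degree_tree} gives $|\mcI(\Xi_\ell)| = \beta + \alpha_\ell$, where $\alpha_\ell = |\Xi_\ell|$. The claim then becomes
\begin{equation*}
\gamma + \alpha_\ell - r_{\Xi_\ell} > \kappa .
\end{equation*}
Here $\gamma$ is the working exponent, with $\gamma = \gamma_0 = \beta - \kappa$ (or $\gamma'_0$, which only helps). Substituting, we must show $\beta + \alpha_\ell - r_{\Xi_\ell} > 2\kappa$; with $\gamma'_0$ the target weakens to $\beta + \alpha_\ell - r_{\Xi_\ell} > \tfrac32\kappa$. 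If the intended comparison is only $\beta + \alpha_\ell - r_{\Xi_\ell} > \kappa$ after absorbing $\gamma$, we adjust accordingly. The essential point is to compare $r_{\Xi_\ell}$ with $\beta + \alpha_\ell$.

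First, I will bound the regularity of the distributional derivatives appearing polynomially. For $k \in A_-$ with $p_{k,\ell} \neq 0$, the natural choice is $\reg(\mcX_k) = \alpha_0 - |k|$, since $\partial^k u$ loses $|k|$ derivatives, and $\alpha_0 < \beta$ up to $\kappa$. The largest such $|k|$ is $r_{\Xi_\ell}$, so $\sum_{k} p_{k,\ell}\reg(\mcX_k) \le \reg(\mcX_{k^*}) = \alpha_0 - r_{\Xi_\ell}$ for the maximizing $k^*$. For this step I will use the convention that the $\mcX_k$ with $k \in A_-$ have negative regularity, so that every additional factor only decreases the sum.

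Next, I plug this into \eqref{eq_subcriticality_kappa} together with $\reg(\Xi_\ell) < |\Xi_\ell| = \alpha_\ell$. This gives
\begin{equation*}
\kappa < \beta + \reg(\Xi_\ell) + \alpha_0 - r_{\Xi_\ell} - \alpha_0 < \beta + \alpha_\ell - r_{\Xi_\ell},
\end{equation*}
so that $\beta + \alpha_\ell - r_{\Xi_\ell} > \kappa$. Adding $\gamma$ and regrouping then yields $\gamma_\ell = \gamma + |\mcI(\Xi_\ell)| - r_{\Xi_\ell} > \gamma + \kappa$. Since $\gamma \ge \beta - \kappa$ is positive and close to $\beta$, I will match this to the stated form $\gamma_\ell > \kappa + \beta$, possibly using the slack between $\reg(\Xi_\ell)$ and $|\Xi_\ell|$ (a strict inequality that can be quantified by $\kappa$ after shrinking $\kappa$).

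The main obstacle is exactly this last bookkeeping. One must pin down which $\gamma$ is meant and the precise relation between $\reg(\mcX_k)$ and $\alpha_0 - |k|$, so that the constants add up to $\kappa + \beta$ rather than $\kappa + \gamma$. If needed, I will use the freedom to decrease $\kappa$ in \eqref{eq_subcriticality_kappa}, together with the strict inequality $\reg(\Xi_\ell) < |\Xi_\ell|$, to recover the missing margin. I will also handle separately the trivial case where no $p_{k,\ell}$ is nonzero ($r_{\Xi_\ell} = 0$); there the bound reduces to $\gamma + \beta + \alpha_\ell > \kappa + \beta$, which follows from \eqref{eq_subcriticality_kappa} directly.
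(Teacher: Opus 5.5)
Your core computation is the paper's: since $\reg(\mcX_k)=\alpha_0-|k|<0$ for $k\in A_-$, one has $\sum_{k\in A_-}p_{k,\ell}\reg(\mcX_k)\le\alpha_0-r_{\Xi_\ell}$. You correctly use $\le$ here, and the empty case is included since $0\le\alpha_0$. Inserting this and $\reg(\Xi_\ell)<|\Xi_\ell|$ into \eqref{eq_subcriticality_kappa} gives $\beta+|\Xi_\ell|-r_{\Xi_\ell}>\kappa$.

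With $\gamma=\gamma_0=\beta-\kappa$, however, this only yields $\gamma_\ell=\gamma_0+\beta+|\Xi_\ell|-r_{\Xi_\ell}>\beta$, a full $\kappa$ short. Your proposal stops there with two hedged remedies, so as it stands it does not prove the lemma.

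The remedy that works is a choice of $\kappa$. Because \eqref{eq_subcriticality} is strict and there are finitely many $\ell$, one may (and in general must) take $2\kappa<\min_\ell\big(\beta+\reg(\Xi_\ell)+\sum_{k}p_{k,\ell}\reg(\mcX_k)-\alpha_0\big)$. The same computation then gives $\beta+|\Xi_\ell|-r_{\Xi_\ell}>2\kappa$, i.e.\ $\gamma_0+|\Xi_\ell|-r_{\Xi_\ell}>\kappa$, which is exactly $\gamma_\ell>\beta+\kappa$.

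Your other remedy, the slack $s=|\Xi_\ell|-\reg(\Xi_\ell)>0$, does not close the gap by itself, because $s$ is unrelated to $\kappa$. Take a single factor $\partial^k u$ with $p_{k,\ell}=1$, a subcriticality margin of $1.2\kappa$ (allowed by \eqref{eq_subcriticality_kappa}), and $s=0.5\kappa$. Then $\beta+|\Xi_\ell|-r_{\Xi_\ell}=1.7\kappa<2\kappa$, and the inequality fails. For the same reason your case $r_{\Xi_\ell}=0$ is not ``direct'': there \eqref{eq_subcriticality_kappa} only gives $\beta+|\Xi_\ell|>\kappa+\alpha_0$, and you still need $\alpha_0\ge\kappa$ or the above choice of $\kappa$.

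The paper's proof has the same $\kappa$-deficit, hidden in its first line. There \eqref{eq_subcriticality_kappa} is ``rewritten'' with $\gamma_0$ in place of $\beta$ while keeping $\kappa$ on the right-hand side, which is legitimate only under the choice of $\kappa$ just described. So you located the delicate point correctly; what is missing is to commit to that choice of $\kappa$ and state it.
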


\begin{proof}
	The subcriticality condition from \eqref{eq_subcriticality_kappa} rewrites as 
	$$
	\gamma_0 + |\Xi_\ell| + \sum_{k\in A_-} p_{k,\ell}(\alpha_0-|k|)- \alpha_0>\kappa.
	$$
	As $\alpha_0-|k|<0$ for any $k\in A_-$, we have 
	$$\sum_{k\in A_-} p_{k,\ell}(\alpha_0-|k|)<\alpha_0-r_{\Xi_\ell}.$$
Then the subcriticality inequality above rewrites indeed as $\gamma_0 + |\Xi_\ell|  -r_{\Xi_\ell}>\kappa.$
\end{proof}

The next lemma is also a consequence of subcriticality and will be used in the definition of our paracontrolled ansatz.

\begin{lemma}\label{lem_elementary differential_form}
	Under the subcriticality assumption, for any integer $n\geq 1$ and tree $\tau$ such that $|\mcI(\tau)|<n$, the elementary differential attached to $\tau$ has the form
	$$
	\Upsilon_F[\tau](\mcX)  = F_\tau\left((\mcX_k)_{|k|<n}\right).
	$$
	for some function $F_\tau$ that is polynomial on its negative arguments.
\end{lemma}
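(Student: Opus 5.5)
Proof proposal. The argument is an induction on the number of edges of $\tau$, following the recursive definition \eqref{eq_defUpsilon}. We first note which indeterminates can appear at all. $\Upsilon_F[\Xi_\ell]$ depends only on $\mcX_k$ with $k\in A_-\sqcup A_+$, and it is polynomial in those with $k\in A_-$. The operation $D^{a}$ removes dependence on a variable or lowers a polynomial degree, so it keeps the polynomial structure in the negative arguments. The operation $\partial^k$ introduces new variables $\mcX_{j+k}$, each appearing linearly, multiplied by $D^jF$. So the only real content is bounding which indices $\mcX_m$ appear, namely showing $|m|<n$, and checking that the functions multiplying the new "negative" variables (those with $|m|>\alpha_0$, i.e.\ where $\partial^m u$ is a distribution) are themselves polynomial in them.

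Write $\tau = \Xi_\ell X^k\prod_{j}\mcI_{a_j}(\tau_j)$. Since $|\mcI(\tau)| = \beta+|\tau|$ and each subtree contributes $\beta-|a_j|+|\tau_j|$ to $|\tau|$, each factor $\Upsilon_F[\tau_j]$ satisfies
\[
|\mcI(\tau_j)| = |\tau| - (\text{other contributions}) + |a_j| + \ldots .
\]
Here subcriticality \eqref{eq_subcriticality} is used: it ensures that adding the noise and the remaining branches does not increase the degree beyond what is needed. So I would prove by induction the stronger statement $|\mcI(\tau_j)|\le |\mcI(\tau)|+|a_j|$ up to the positive gap $\kappa$ from \eqref{eq_subcriticality_kappa}, at least for the trees with $\Upsilon_F[\tau]\neq 0$. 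Only those trees matter: if some $D^{a_j}$ hits a variable that is absent, the elementary differential vanishes, and the claim is trivial.

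The key steps, in order, are as follows.

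First, for the root factor $\partial^kD^{a_1}\cdots D^{a_m}\Upsilon_F[\Xi_\ell]$, the variables that appear are $\mcX_{j+k}$ with $j\in A_-\sqcup A_+$. We must show $|j|+|k|<n$. Non-vanishing forces $a_i\in A_-\sqcup A_+$. The degree satisfies $|\tau|\ge |k|+\alpha_\ell+\sum_i(\beta-|a_i|+|\tau_i|)$, and subcriticality (via Lemma \ref{lem_ineqgammal}, i.e.\ $\gamma_\ell>\kappa+\beta$, which controls $r_{\Xi_\ell}$ against $|\mcI(\Xi_\ell)|$) gives $|j|\le r_{\Xi_\ell}<|\mcI(\Xi_\ell)|+\ldots$. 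Combining these yields $|j+k|\le |\mcI(\tau)|<n$.

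Second, for the subtree factors we apply the induction hypothesis to each $\tau_j$, after checking that $|\mcI(\tau_j)|<n$. The degrees of the other branches are bounded below using $\reg$, and the branch $\mcI_{a_j}$ with $a_j\in A_-$ is compensated by the negative regularity $\reg(\mcX_{a_j})$.

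The main obstacle is this degree bookkeeping. Branches of negative degree can make $|\mcI(\tau_j)|$ exceed $|\mcI(\tau)|$, so a naive comparison fails. I would handle it by proving, jointly in the induction, the lower bound $|\tau|\ge \reg$-type estimates $|\mcI(\sigma)|\ge \alpha_0$ for every non-vanishing subtree $\sigma$. This is exactly the statement that $\reg$ is a valid regularity assignment in the sense of \cite{BCCH}, and it follows inductively from \eqref{eq_subcriticality}. Finally, polynomiality in the negative arguments is preserved under products and under $\partial^k$, $D^a$, which concludes the proof.
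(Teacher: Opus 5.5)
Your route is valid but genuinely different from the paper's. The paper argues by contradiction. If $D_p\Upsilon_F[\tau_0]\neq0$ with $|p|\ge n>|\mcI(\tau_0)|$, the pre-Lie morphism property (Lemma~\ref{lem_ElemDiffMorphism}) shows that $\mcI_p(\tau_0)\star\tau_0$ again has a non-vanishing elementary differential depending on $\mcX_p$, and its degree is strictly smaller. Iterating produces infinitely many non-vanishing trees of degree below $|\tau_0|$, which contradicts the finiteness that the paper takes as the meaning of subcriticality. You instead run a structural induction directly on the inequality \eqref{eq_subcriticality}. This is more elementary and self-contained, and it gives the sharper conclusion that $\Upsilon_F[\tau]$ only involves variables $\mcX_m$ with $|m|<|\mcI(\tau)|$. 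The lower bound you isolate at the end is the standard ingredient behind the finiteness statement that the paper uses as a black box.

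Two of your intermediate steps do not work as written, and you should replace them by consequences of that lower bound. Use the convention $\reg(\mcX_m)=\alpha_0-|m|$, which is implicit in the proof of Lemma~\ref{lem_ineqgammal}.

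\textbf{Lower bound.} First prove that $\Upsilon_F[\sigma]\neq0$ implies $|\mcI(\sigma)|>\alpha_0$. Non-vanishing forces every $a_i\in A_-\sqcup A_+$, with at most $p_{a,\ell}$ of the $a_i$ equal to a given $a\in A_-$. So by induction
\[
|\mcI(\sigma)|>\beta+\reg(\Xi_\ell)+\sum_{a\in A_-}p_{a,\ell}\reg(\mcX_a)>\alpha_0 .
\]

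\textbf{Branches.} The inequality you need is not $|\mcI(\tau_j)|\le|\mcI(\tau)|+|a_j|$, which does not give $|\mcI(\tau_j)|<n$. It is $|\mcI(\tau_j)|<|\mcI(\tau)|$. To get it, write $-|a_j|=\reg(\mcX_{a_j})-\alpha_0$ and apply the lower bound to the other branches. This yields $|\mcI(\tau)|-|\mcI(\tau_j)|>\beta+\reg(\Xi_\ell)+\sum_i\reg(\mcX_{a_i})-\alpha_0>0$.

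\textbf{Root.} Your appeal to $r_{\Xi_\ell}$ and Lemma~\ref{lem_ineqgammal} only concerns the tree $\Xi_\ell$ itself and ignores $A_+$. The missing observation is the following. Suppose $D^{a_1}\cdots D^{a_m}\Upsilon_F[\Xi_\ell]$ still depends on $\mcX_j$ with $j\in A_-$. Then at most $p_{j,\ell}-1$ of the $a_i$ equal $j$, which frees one factor $\reg(\mcX_j)$ in the subcriticality sum. This gives $|\mcI(\tau)|-|k|>\alpha_0-\reg(\mcX_j)=|j|$. For $j\in A_+$ one gets instead $|\mcI(\tau)|-|k|>\alpha_0>|j|$. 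Either way $|j+k|<|\mcI(\tau)|<n$, and the induction closes.
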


\begin{proof}
It is already clear from the definition that $\Upsilon_F[\tau]$ is polynomial on tis negative arguments, it remains to check that it depends only on the $\mcX_k$ with $|k|$ small enough. We prove it by contradiction by supposing that there exist some tree $\tau_0$ with $|\mcI(\tau_0)|<n$ and non-trivial dependence on $\mcX_p$ for some $|p|\geq n$, that is $D_p\Upsilon_F[\tau]\ne 0$.

We let $\tau_1 = \mcI_p(\tau_0) \star \tau_0$, where $\star$ is the Grossman-Larson product introduced in Subsection \ref{subsection_DecoratedTrees}. From the pre-Lie morphism property of the elementary differential from Lemma \ref{lem_ElemDiffMorphism}
$$
\Upsilon_F[\tau_1] =\Upsilon_F[\tau_0] D_p \Upsilon_F[\tau_0]. 
$$
We have $D_p\Upsilon_F[\tau_1]\ne 0$, because both factors $\Upsilon_F[\tau_0]$ and  $D^p\Upsilon_F[\tau_0]$ are polynomials in $\mcX_p$, with $\Upsilon_F[\tau_0]$ being non constant and $D^p\Upsilon_F[\tau_0]$ being non-zero. We also have $|\tau_1|= 2|\tau_0|-|p|<|\tau_0|$.
Iterating this procedure yields an infinite sequence of trees $(\tau_j)_{j\geq0}$ with strictly decreasing degree, and that are non vanishing as $D^p\Upsilon_F[\tau_j]\ne 0$. This contradicts subcriticality.
\end{proof}



	\subsection{The regularity structure of iterated paraproducts} \label{subsect_theRSIteratedParap}
	
	We describe here a suitable version of the regularity structure introduced in \cite{LocalExpansionsParacSystems} which encodes the local expansion structure of iterated paraproducts.
	Let $\mathcal{A}=\big\{a_1,\dots,a_M\big\}$ be an alphabet, where each letter $a_i$
	is assigned a degree $|a_i|$. For a word $ w = w_1....w_n $ where $w_i \in  \mathcal{A}$,  one sets
	$
		|w| = \sum_{i=1}^n |w_i|$.
	 We assume that the degree of any non-empty word is never an integer. 
	
	A basis $\mcB$ of $T$ consists in a set of words $\mcU$ over $\mathcal{A}$ together with two multi-indices $\ell,p\in\bfN^{d+1}$. The algebra $T^+$ will be generated by words of $\mcU$ now equipped with three multi-indices $k,\ell,p\in\bfN^{d+1}$. We will use the following notation to represent these basis elements
	\begin{equation}
		\mcB = \Big\{ \lg w\rg_\ell X^p, \quad  w \in \mcU, \enskip \ell,p\in\bfN^{d+1}   \Big\}, \quad T =  \lg  \mcB \rg = \mathcal{T}_{\mathcal{A}}
	\end{equation}
	and
	\begin{equation} \begin{aligned}
		\mcB^+ & = \Big\{ \lg w\rg_\ell^k  X^p, \quad  w \in \mcU, \enskip k,\ell,p\in\bfN^{d+1}, \enskip |w|+|\ell|-|k| >0  \Big\}
		\\ T^+ &= \rg \mcB^+ \lg = \mathcal{T}_{\mathcal{A}}^+.
		\end{aligned}
	\end{equation}
	One also sets degrees over the symbols in $\mcB$ and $\mcB^+$ by
	\begin{equation}
		\big| \lg w\rg_\ell X^p \big| = |w|+|\ell|+|p|, \qquad\quad \big| \lg w \rg_\ell^k X^p \big| = |w|+|\ell|-|k|+|p|.
	\end{equation}
	One defines a coproduct and a coaction by setting
	\begin{equation}
		\label{coproduct_words}
		\begin{aligned}
		\Delta \lg w \rg_\ell & = \sum_{w=w_1w_2}\sum_{\ell=\ell_1+\ell_2 } \sum_{\substack{ |r|<|w_1|+|\ell_1| \\ r=r_1+r_2}} \frac{1}{r_1!r_2!} \binom{\ell}{\ell_1}   \lg w_2 \rg_{\ell_2+r_1}X^{r_2} \otimes  \lg w_1 \rg_{\ell_1}^r,
	\\
		\hat{\Delta}^{\!+} \lg w \rg_\ell^k & = \sum_{\substack{ w=w_1w_2 \\ \ell=\ell_1+\ell_2 \\ k=k_1+k_2}} \sum_{ r=r_1+r_2}\frac{1}{r_1!r_2!} \binom{\ell}{\ell_1}\binom{k}{k_1} \, \lg w_2 \rg_{\ell_2+r_1}^{k_2} X^{r_2} \otimes  \lg w_1 \rg_{\ell_1}^{k_1+r},
		\\ 	\Delta^{\!+} & = \left( \mathfrak{p}_+  \otimes \mathfrak{p}_+ \right) \hat{\Delta}^{\!+}, \quad \Delta X_i = \hat{\Delta}^{\!+} X_i = X_i \otimes \one + \one \otimes X_i 
		\end{aligned}
	\end{equation}
	and by extending it to $(T,T^+)$ by multiplicativity.
	Here, $ \mathfrak{p}_+ $ keeps the words with positive degree belonging to $\mathcal{B}_+$. \label{model_words}
	Suppose that we are given for any letter $a\in \mathcal{A}$ a distribution $Z_a\in C^{|a|}$, one defines a model ${ (\PPi^\mathcal{A},g^\mathcal{A})}$ over $\scrT_\mcA = (\mathcal{T}_{\mathcal{A}}, \mathcal{T}_{\mathcal{A}}^+)$ by setting
	\begin{equation}\label{def_model_words}\begin{split}
			{\PPi}^\mathcal{A}\big( \lg a_1\dots a_n\rg_\ell  \big) &= \P_\ell\big(Z_{a_1},\dots,Z_{a_n}\big),
			\\
			{g}^\mathcal{A}\big( \lg a_1\dots a_n\rg ^k_\ell  \big) &= \partial^k_*\P_\ell\big(Z_{a_1},\dots,Z_{a_n}\big),
	\end{split}\end{equation}
	where the star derivatives $\partial^k_*$ are defined in \eqref{eqdef_starderivative}. This is a weaker result than the one in \cite{LocalExpansionsParacSystems} as one studies the expansions of $\P_{\bm \ell}(h_1,\dots,h_n)$ for ${\bm \ell}\in(\bfN^{d+1})^{n-1}$, whereas we describe here the local expansion of the paraproduct $\P_\ell$ obtained by regrouping the $\P_{\bm \ell}$ as in \eqref{eqdef_ParapPolWeight2}.
	 The star derivative $\partial^k_*$ of an iterated paraproduct $\P_\ell(h_1,\dots,h_n)$ involving distributions $h_j$ of regularity $\beta_j$ is defined recursively by
	\begin{equation}\label{eqdef_starderivative}	\begin{aligned}
			& \partial^k_* \P_\ell\big(h_1,\dots,h_n\big) = \partial^k \P_\ell\big(h_1,\dots,h_n\big)  
			\\
			&\, -
			\sum_{j=1}^{n-1}\sum_{\substack{\ell=\ell_1+\ell_2\\ k=k_1+k_2}}\sum_{|p|\leq \rho_j(k_1,l_1)} \frac{1}{p!}\binom{k}{k_1}\binom{\ell}{\ell_1} \partial^{k_1+p}_* {\P}_{\ell_1}\big(h_1,\dots,h_j\big) 
		\\ & \times\partial^{k_2}_*\P_{\ell_2+p}\big(h_{j+1},\dots,h_n\big),
	\end{aligned}\end{equation}
	where
	\begin{equation}
		\label{condition_sum}
		\rho_j(k_1,\ell_1) = \min \Big\{ \sum_{r=1}^j \beta_r+|\ell_1|-|k_1|, \quad -\sum_{r=j+1}^n\beta_r-|\ell_2|+|k_2|      \Big\} .
	\end{equation}
	The operator $\partial^k_*\P_\ell$ is dependent on the uplet $(\beta_j)$, we will however omit it in the notations as the $\beta$ corresponding to some distribution will always be its natural regularity in the context where it is used.  
	One can reformulate \eqref{eqdef_starderivative} using the coaction  $  \Delta$ in the next proposition:
	\begin{proposition} \label{def_prop_antipode} One first sets
		\begin{equation}
			\CA_{*} \lg w \rg_\ell^k = \lg w \rg_\ell^k - \mathcal{M} \left( \CA_{*} \mathfrak{p}_{-} \otimes \CA_{*} \mathfrak{p}_+ \right) \tilde{\Delta}^{\! +} \lg w \rg_\ell^k
		\end{equation}
		where $ \tilde{\Delta}^{\! +}$ is the reduced coproduct of $ \hat{\Delta}^{\! +}$\text{:} $$ \tilde{\Delta}^{\! +}  \lg w \rg_\ell^k = \tilde{\Delta}^{\! +}  \lg w \rg_\ell^k - \lg w \rg_\ell^k \otimes \one + \one \otimes \lg w \rg_\ell^k.$$ Then, one has
		\begin{equation}
			g( \lg w \rg_\ell^k )  =\PPi( \CA_{*} \lg w \rg_\ell^k ).
			\end{equation}
		\end{proposition}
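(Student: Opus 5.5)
The plan is to prove the identity $g(\lg w\rg_\ell^k) = \PPi(\CA_*\lg w\rg_\ell^k)$ by induction on the length of the word $w$, with an inner induction on $|k|$ if it is needed. Here $\PPi$ on $T^+$-type symbols $\lg w\rg_\ell^k$ is read as $\partial^k\P_\ell(Z_{w_1},\dots,Z_{w_n})$, $\PPi$ is extended multiplicatively to products, and $g$ is the star derivative of \eqref{def_model_words}.

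The first step is to expand the reduced coproduct. From the formula \eqref{coproduct_words} for $\hat\Delta^{\!+}$, removing the two primitive terms leaves sums over deconcatenations $w = w_1w_2$ with both $w_1$ and $w_2$ nonempty, over splittings $\ell=\ell_1+\ell_2$ and $k=k_1+k_2$, and over $r=r_1+r_2$. The generic term is
\[
\lg w_2\rg^{k_2}_{\ell_2+r_1}X^{r_2}\otimes\lg w_1\rg^{k_1+r}_{\ell_1}.
\]
One also has to check the degenerate terms where one word is empty, namely those involving pure polynomials $X^{r_2}$. Under $\PPi$ these are polynomials in $x$ evaluated at the base point, or $\lg\emptyset\rg$ terms that vanish, so they should drop out. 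I would check this against the convention that $\lg\emptyset\rg^k_\ell$ is zero unless it is trivial.

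Next I would apply the projections. $\mathfrak p_-$ on the left factor keeps only $\lg w_2\rg^{k_2}_{\ell_2+r_1}$ of nonpositive degree, and $\mathfrak p_+$ on the right keeps $|w_1|+|\ell_1|-|k_1|-|r|>0$. Setting $p=r$ and $j=|w_1|$, the positivity constraint reads $|p|<\sum_{i\le j}\beta_i+|\ell_1|-|k_1|$. The negativity of the left factor, with its index $\ell_2+r_1$, gives the second bound in $\rho_j$. Together these reproduce the constraint $|p|\le\rho_j(k_1,\ell_1)$ of \eqref{condition_sum}. Here I use that degrees are never integers, so strict and non-strict inequalities agree.

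Once the index sets match, I would apply $\PPi$ multiplicatively to $\mathcal M(\CA_*\mathfrak p_-\otimes\CA_*\mathfrak p_+)\tilde\Delta^{\!+}$. By the induction hypothesis on shorter words, $\PPi\CA_*$ of the right factor equals $\partial^{k_1+p}_*\P_{\ell_1}(Z_{w_1})$. The left factor should likewise equal $\partial^{k_2}_*\P_{\ell_2+p}(Z_{w_2})$. The subtle point is that $\mathfrak p_-$ sends elements to $T$-type or negative objects, for which $\CA_*$ acts as identity-like on the relevant part. I expect to need to interpret $\CA_*$ on the negative part as the same recursion, so that induction still applies.

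The main obstacle is the combinatorial matching of the coefficients. The formula has $\frac{1}{r_1!r_2!}$ summed over $r=r_1+r_2$ together with the $X^{r_2}$ factor, whereas \eqref{eqdef_starderivative} has a single $\frac1{p!}$. I would handle this by noting that the $X^{r_2}$ factors, evaluated at the base point, collapse the $r_2$-sum: only $r_2=0$ survives after recentering, or, alternatively, the Vandermonde identity $\sum_{r_1+r_2=p}\frac{1}{r_1!r_2!}(\cdot)=\frac{1}{p!}(\cdot)$ applies. The binomial factors $\binom{\ell}{\ell_1}\binom{k}{k_1}$ then match directly. With the terms matched, the recursive definition \eqref{eqdef_starderivative} and the recursion defining $\CA_*$ coincide term by term, which closes the induction.
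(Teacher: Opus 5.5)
Your overall route is the paper's: induct on the length of $w$, unfold \eqref{eqdef_starderivative}, apply the induction hypothesis to both factors, and recognise the subtracted sum as $\PPi\,\mathcal M(\CA_*\mathfrak p_-\otimes\CA_*\mathfrak p_+)\tilde\Delta^{\!+}\lg w\rg^k_\ell$. Two smaller corrections first. $\CA_*$ is not ``identity-like'' on the left factors. These are words of negative degree, hence outside $\mcB^+$, so the induction statement must be $\PPi(\CA_*\lg w\rg^k_\ell)=\partial^k_*\P_\ell(Z_{w_1},\dots,Z_{w_n})$ for all $k,\ell$, whatever the sign of the degree; the paper does this silently by writing $g$ on such words. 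Also, the terms with $w_2$ empty, $\frac{X^r}{r!}\otimes\lg w\rg^{k+r}_\ell$ with $r\neq0$, are not primitive and stay in $\tilde\Delta^{\!+}$. They disappear because $\mathfrak p_-$ kills $X^r$, not through evaluation at a base point.

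The genuine problem is the step you call the main obstacle: neither of your two resolutions works. The identity $\sum_{r_1+r_2=p}\frac{1}{r_1!\,r_2!}=\frac{1}{p!}$ is false, since the left side equals $\frac{1}{p!}\prod_{i=0}^d 2^{p_i}$. The binomial theorem needs weights $a^{r_1}b^{r_2}$, and none are present. Nor do the terms with $r_2\neq0$ vanish on evaluation: $\PPi(X^{r_2})(x)=x^{r_2}$ is not recentred, and such terms break translation covariance, which $\partial^k_*\P_\ell$ has.

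Concretely, take $w=ab$ with $|a|=5/2$, $|b|=3/10$, $k=2e_1$, $\ell=0$, and $j\geq1$. The term $\lg b\rg^{2e_1}_0X^{e_j}\otimes\lg a\rg^{e_j}_0$ has coefficient $1$ and factor degrees $-7/10$ and $3/2$, so it passes both projections. Unless it is discarded, it contributes a term proportional to $x^{e_j}\,\partial^{e_j}Z_a\,\partial^{2e_1}Z_b$, which has no counterpart in \eqref{eqdef_starderivative}.

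The argument closes only if the left projection keeps the polynomial-free words $\lg w_2\rg^{k_2}_{\ell_2+r_1}$ alone, i.e.\ $r_2=0$ and $r_1=r=p$. Then $\frac{1}{r_1!\,r_2!}$ is literally $\frac{1}{p!}$, the factors $\binom{\ell}{\ell_1}\binom{k}{k_1}$ match, and the two degree conditions give $|p|\le\rho_j(k_1,\ell_1)$. This is exactly how the paper's proof reads $\tilde\Delta^{\!+}$: it matches \eqref{eqdef_starderivative} term by term with $\lg w_2\rg^{k_2}_{\ell_2+p}\otimes\lg w_1\rg^{k_1+p}_{\ell_1}$. So build this into the meaning of $\mathfrak p_-$ rather than try to collapse the $r_2$-sum.
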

		\begin{proof} We proceed by induction on the size of the words. Let $ w = w_1...w_n $ a word on $\CA$. One has from \eqref{def_model_words} and \eqref{eqdef_starderivative}
			\begin{equation*}
				\begin{aligned}
				&	g( \lg w_1....w_n \rg_\ell^k )  = \partial^k_*\P_\ell\big(Z_{w_1},\dots,Z_{w_n}\big)
					\\
					 &  = \partial^k \P_\ell\big(Z_{w_1},\dots,Z_{w_n}\big)  
					\\
					&\, -
					\sum_{j=1}^{n-1}\sum_{\substack{\ell=\ell_1+\ell_2\\ k=k_1+k_2}}\sum_{|p|\leq \gamma_j(k_1,l_1)} \frac{1}{p!}\binom{k}{k_1}\binom{\ell}{\ell_1} g(\lg h_1 \dots h_j \rg_{\ell_1}^{k_1+p})
					 g( \lg h_{j+1} \dots h_n \rg_{\ell_2 + p}^{k_2})
					 \\ & =  \PPi (\lg w \rg_{\ell}^{k})  
					 \\
					 &\, -
					 \sum_{j=1}^{n-1}\sum_{\substack{\ell=\ell_1+\ell_2\\ k=k_1+k_2}}\sum_{|p|\leq \gamma_j(k_1,l_1)} \frac{1}{p!}\binom{k}{k_1}\binom{\ell}{\ell_1} \PPi( \CA_{*}\lg h_1 \dots h_j \rg_{\ell_1}^{k_1+p})
					 \PPi(\CA_{*} \lg h_{j+1} \dots h_n \rg_{\ell_2 + p}^{k_2})
					 \\ & =  \PPi(\lg w \rg_\ell^k) -  \left(  \PPi \CA_{*} \mathfrak{p}_{-} \otimes \PPi \CA_{*} \mathfrak{p}_+ \right) \tilde{\Delta}^{\! +} \lg w \rg_\ell^k
					 \\ & = \PPi( \CA_{*} \lg w \rg_\ell^k)
				\end{aligned}
			\end{equation*}
			where from the second equality to the third we have used the induction hypothesis.
			\end{proof}

	\begin{example}
		If $f,g$ are regarded as functions in $C^\beta$ with $\beta\in(0,1)$, one has
		$$
		\partial^{e_i}_*\P(f,g) = \partial^{e_i} \P(f,g) - f \, \partial^{e_i} g.
		$$
		The second term satisfied the condition \eqref{condition_sum}: $ \beta >  0 $, $ \beta-|e_i| = \beta-1 < 0 $.
		If $(f,g,h)\in \prod_{i=1}^3 C^{\beta_i} $ with $(\beta_i)_i = (3/2 , -4/3,7/6  ) $
		$$
		\partial_*^{e_i} \P(f,g,h) = \partial^{e_i} \P(f,g,h) - f \, \partial^{e_i}_*\P(g,h) - \sum_{j=1}^d\partial^{e_j}f \, \partial^{e_i}_*\P_{e_j}(g,h)
		$$
	\end{example}
	We define  for smooth functions $h_1,\dots,h_n$ the commutator $ \bfC_\ell(h_1,\dots,h_n) $ as 
	\begin{equation}\begin{split}\label{eqdef_corrector}
			\bfC_\ell(h_1,\dots,h_n) = \partial^{0}_* \P_\ell(h_1,\dots, h_n).
	\end{split}\end{equation}

\begin{example}
	If $f\in C^\alpha$ and $g\in C^\beta$ with $\beta_1>0$ and $\beta_2<0$
	$$
	\bfC (f,g) = \P(f,g) - f g,
	$$
	and if $k\ne 0$, we have 
	$
	\bfC_k(f,g) = \P_k(f,g).
	$
	If $(f,g,h)\in \prod_{j=1}^{3}C^{\beta_j}$ with $\beta_1,\beta_1+\beta_2>0$ and $\beta_3,\beta_3+\beta_2<0$
	$$
	\bfC(f,g,h) = \P(f,g,h) - \P(f,g)h - \sum_{|k|<\min\{\beta_1,-\beta_2-\beta_3\} } \partial^k f \, \P_k(g,h).
	$$
\end{example}
	

	

	
	
	

	\section{Stochastic data on words \label{section_fromRStoPS}}
	
	In this section we prove Theorem \ref{thm_regularityZtau} which combined with the paracontrolled representation from Equation \eqref{eq_ParacRepIterated1} will be crucial in Section \ref{section_ParacAnsatz}. It is analog to \cite[Theorem 1]{BailleulHoshinoRS1} except that this time the reference function $Z_\tau$ will indeed be in $C^{|\tau|}$ and the regularity of these functions will not be integers, so that we will be able to exploit results from \cite{LocalExpansionsParacSystems}.
	We suppose in this section that the  regularity structure $\scrT=(T,T^+)$  satisfies the following assumption that was already present in \cite{PCandRS2}. 
	\begin{assumption}\label{assumption_notint}
		For any integer $n\geq 1$ and $\tau_1,\dots,\tau_n \in \mcB\backslash \mcB_X$, the sum $\sum_{j=1}^n |\tau_j|$ is not an integer. We make the same assumption for $ \mcB $ replaced by $\mcB^+$.
	\end{assumption}
	This assumption implies the Assumption (B) from \cite{PCandRS2}, that restricts the way polynomials interact with coproducts and ensure good algebraic properties for the derivations $D^n$ and $\downarrow^n$ described in the next subsection. We will also use this assumption in order to use the results from \cite{LocalExpansionsParacSystems} as the very close Assumption (A) is needed there.

	

	\subsection{Two derivations on $\scrT$ \ }
	\label{section_derivations}
	Assumption \ref{assumption_notint} implies the following structural property on $\scrT$ that was already present in \cite{PCandRS2}.
	
	\begin{lemma}\label{lem_polynomialsintheRS}
		Under Assumption \ref{assumption_notint}, we have the following
		\begin{enumerate}
			\item For any $\sigma,\tau\in\mcB$, one has either $\tau/\sigma\in \text{Span}(\mcB_X)$ or $\tau/\sigma\in \text{Span}(\mcB \backslash \mcB_X)$.
			\item For $\tau\in\mcB\backslash \mcB_X$ and $\sigma\in\mcB_X$, we have  $\tau/\sigma\in \text{Span}(\mcB \backslash \mcB_X)$.
			\item For $\tau\in\mcB\backslash \mcB_X$ and $\sigma<\tau$ such that $\tau/\sigma\in \mcB^+$, we have $\sigma\in \text{Span}(\mcB \backslash \mcB_X)$.
		\end{enumerate}
	\end{lemma}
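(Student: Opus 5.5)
The plan is to combine Assumption~\ref{assumption_notint} with the homogeneity of the coaction. Since $(T,T^+)$ is a graded regularity structure, $\Delta$ preserves total degree: if $\tau\in\mcB$ and $\sigma\otimes\tau/\sigma$ appears in $\Delta\tau$, then every term of $\tau/\sigma$ has degree $|\tau|-|\sigma|$. The basic observation is that an element of $\mcB_X$ has integer degree. An element of $\mcB^+$ that is not a pure monomial factors as $X^p$ times a product of non-polynomial generators $\nu_1\cdots\nu_m$, so its degree is $|p|+\sum_j|\nu_j|$. By the $\mcB^+$ part of Assumption~\ref{assumption_notint}, this degree is not an integer. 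Hence non-polynomial basis elements of $T^+$ have non-integer degree, and polynomial ones have integer degree. This dichotomy drives all three items.

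\textbf{Item 1.} Fix $\sigma,\tau\in\mcB$.
\begin{itemize}
\item If $|\tau|-|\sigma|$ is an integer, homogeneity and the dichotomy force every basis component of $\tau/\sigma$ to be polynomial, so $\tau/\sigma\in\text{Span}(\mcB_X)$.
\item If $|\tau|-|\sigma|$ is not an integer, no component can be polynomial, so $\tau/\sigma\in\text{Span}(\mcB\backslash\mcB_X)$, read as the non-polynomial part of $T^+$.
\end{itemize}
The only care needed is that $\tau/\sigma$ is a homogeneous element, which follows from the grading of the comodule structure.

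\textbf{Item 2.} Take $\tau\notin\mcB_X$ and $\sigma=X^k$. Write $\tau$, up to a monomial factor, in terms of non-polynomial pieces. Its degree is an integer plus a nonempty sum of degrees of non-polynomial elements, which is non-integer by the assumption. So $|\tau|$ is non-integer. Then $|\tau|-|X^k|$ is non-integer, and item 1 gives the claim.

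I expect the main obstacle here: justifying that a non-polynomial element of $\mcB$ really has non-integer degree requires the assumption applied with $n=1$ to $\tau$ itself. That case is covered directly, since $\tau\in\mcB\backslash\mcB_X$.

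\textbf{Item 3.} Suppose $\tau\notin\mcB_X$, $\sigma<\tau$, and $\tau/\sigma\in\mcB^+$. If $\sigma$ were polynomial, item 2 would make $\tau/\sigma$ non-polynomial. Combined with the hypothesis, the remaining contradiction must come from degrees. The degree of $\tau/\sigma$ would equal $|\tau|-|\sigma|$, a non-integer. This is consistent so far, so a degree count alone does not settle it. Instead I would use the structure of $\Delta$. By \eqref{eq_CoproductOfMcItau} and its multiplicativity (and analogously \eqref{coproduct_words} for words), the terms with left factor a pure monomial $X^r$ come only from the counit-type piece $\one\otimes\tau$ multiplied by polynomial shifts. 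The right factor is then $\tau$ up to polynomials, not a single basis element distinct from the expected form. Concretely, I would interpret the item as saying $\sigma$ is non-polynomial whenever the right factor is a genuine (non-polynomial, non-trivial) basis element. I would argue this by showing that polynomial left factors arise only paired with right factors involving the extracted $\mcI^+$ or $\lg\cdot\rg^r$ planted pieces summed in a combination. If this structural route becomes unwieldy, I would fall back on the degree dichotomy applied to $\sigma$: $|\sigma|=|\tau|-|\tau/\sigma|$, and decide integrality from the assumption applied to the tuple consisting of the non-polynomial factors of $\tau/\sigma$ together with $\sigma$.
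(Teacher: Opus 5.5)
\textbf{Items 1 and 2 are fine; item 3 is not proved, and the claim you set out to prove there is false.}

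Your treatment of items 1 and 2 is correct. The coaction preserves degree, so $\tau/\sigma$ is homogeneous of degree $|\tau|-|\sigma|$. The case $n=1$ of Assumption~\ref{assumption_notint}, applied directly to a basis element of $\mcB$ or $\mcB^+$, then says that a basis element is a monomial if and only if its degree is an integer. (You do not need to factor elements of $\mcB^+$ for this. The step also tacitly uses $\beta\in\bfN$, so that monomials have integer degree; the paper's own phrasing of the assumption presupposes the same.)

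Item 3 is where the proposal breaks down. Both as printed and under your reading (that $\sigma\notin\mcB_X$ whenever $\tau/\sigma$ is a genuine non-polynomial basis element), the claim is false, so neither of your two routes can work. Take $\tau=\mcI(\Xi)$ with $|\mcI(\Xi)|>0$, as in gPAM. By \eqref{eq_CoproductOfMcItau}, $\Delta\tau$ contains the term $\one\otimes\mcI^+_0(\Xi)$. So $\sigma=\one=X^0\in\mcB_X$ satisfies $\sigma<\tau$ with $\tau/\sigma=\mcI^+_0(\Xi)\in\mcB^+\setminus\mcB_X$. Likewise $X^n$ pairs with $\frac{1}{n!}\mcI^+_n(\Xi)$ for every $|n|<|\mcI(\Xi)|$.

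This has three consequences for your argument:
\begin{itemize}
\item Your structural claim, that monomial left factors only occur paired with combinations of planted pieces, is wrong. The paper in fact relies on such monomial $\sigma\prec\tau$: they produce the terms $\widetilde g_x(D^k\tau)\frac{(y-x)^k}{k!}$ in the proof of Lemma~\ref{lem_derivModifModel}.
\item Your fallback is circular. Assumption~\ref{assumption_notint} can be applied to a tuple containing $\sigma$ only once you already know $\sigma\notin\mcB_X$.
\item Your own observation that the degree count gives no contradiction was the signal that the statement cannot hold in this form.
\end{itemize}

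\textbf{The intended reading and its proof.} The hypothesis of item 3 has to be read as $\tau/\sigma\in\text{Span}(\mcB_X)$, i.e.\ $\sigma\leqq\tau$; the printed $\mcB^+$ should be $\mcB_X$. In other words, $\downarrow^k$ sends non-polynomial symbols to combinations of non-polynomial symbols. This matches how the paper separates the $\sigma\leqq\tau$, which are the non-polynomial components of the $\downarrow^j\tau$, from the $\sigma\prec\tau$, which may be monomials. With this reading your dichotomy settles item 3 in one line. Since $\tau/\sigma\neq 0$ is homogeneous and polynomial, $|\tau|-|\sigma|\in\mathbf{Z}$. By the case $n=1$ of the assumption, $|\tau|\notin\mathbf{Z}$. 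Hence $|\sigma|\notin\mathbf{Z}$, and the basis element $\sigma$ is not a monomial.
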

		Define for any $n\in\bfN^{d+1}$ the linear operator $D^n$ by setting for any $\tau\in\mcB$
	\begin{equation}  \label{linear_operator}
		D^n\tau = n!  \, \tau / X^n, \quad 
		\downarrow^n\hspace{-3pt}\tau =  n!\, X^n \backslash \tau.
	\end{equation}
	The notation $\sigma\backslash \tau$ is defined in \eqref{eq_sigmabackslashtau}. On the polynomials, the operators $D^n$ and $\downarrow^n$ act like classical differentiation
		$$
		D^n X^p = \mathbf{1}_{n\leq p}\, \frac{p!}{(p-n)!} \, X^{p-n}  \quad \text{and} \quad \downarrow^n X^p = \mathbf{1}_{n\leq p}\, \frac{p!}{(p-n)!} \, X^{p-n}.
		$$
	 In the regularity structure of decorated trees, for any decorated tree $\tau$,
		$$
		D^n \mcI(\tau) = \mcI_{n}^+(\tau)  \quad  \text{and} \quad 	D^n \mcI_{k}^+ (\tau) = \mcI_{k+n}^+(\tau).
		$$
	 In the regularity structure of iterated paraproducts, for any nonempty word $w$
		$$
		D^n \lg w\rg_\ell = \lg w \rg_\ell^{n} \quad \text{and} \quad D^n \lg w\rg_\ell^k = \lg w \rg_\ell^{k+n}.
		$$
		If $n\ne 0$
		$$
			\downarrow^n \lg w\rg_\ell = 0 \quad \text{and} \quad \downarrow^n \lg w\rg_\ell^k =0.
		$$
	These two operations will play the role of derivations on  $\scrT$. Lemma \ref{lem_polynomialsintheRS} implies the following algebraic identities.
	\begin{lemma}\label{lem_DnSnpropenvrac}
		We have the following identities (under Assumption 1).
		\begin{enumerate}
			\item For any $n_1,n_2\in\bfN^{d+1}$
			\begin{equation}\label{eq_commutDnDownaroown}
			D^{n_1} D^{n_2} = D^{n_1+n_2},  \quad 
			\downarrow^{n_1}\hspace{-1pt} \downarrow^{n_2}\hspace{-0pt} = \downarrow^{n_1+n_2}, \quad 
			\downarrow^{n_1}D^{n_2} = D^{n_2}\downarrow^{n_1}.
		\end{equation}
			\item  For any $\tau\in T \sqcup T^+$,  $\sigma\in\mcB^+$ and $n\in\bfN^{d+1}$
			\begin{equation} \label{eq_commutSnDiv}
				\downarrow^n(\tau/\sigma) = (\downarrow^n\tau)/\sigma.
			\end{equation}
			\item  We have the Leibniz rules
			\begin{equation} \label{eq_leibnizSn}
			\begin{aligned}	D^n(\tau_1\tau_2) & = \sum_{n=n_1+n_2} \binom{n}{n_1}  D^{n_1}\tau_1 \, D^{n_2}\tau_2,
			\\
				\downarrow^n\hspace{-2pt}(\tau_1\tau_2) & = \sum_{n=n_1+n_2} \binom{n}{n_1} \downarrow^{n_1}\hspace{-2pt}\tau_1 \, \downarrow^{n_2}\hspace{-2pt}\tau_2.
				\end{aligned}
			\end{equation}
			\item For any $n\in\bfN^{d+1}$ and $\tau\in T$, we have the commutation relations
			\begin{equation*} \label{eq_commutDerivationDelta}
				\Delta D^n\tau = (D^n\otimes\id)\Delta \tau, \quad \text{and}\quad
				\Delta \downarrow^n\tau = (\id \, \otimes \downarrow^n)\Delta \tau.
			\end{equation*}
			The same holds true for $\tau \in \CT^+$ and $\Delta$ replaced by $ \Delta^{\!+} $.
		\end{enumerate}
	\end{lemma}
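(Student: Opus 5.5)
The plan is to derive all four items from the comodule/Hopf axioms, namely coassociativity $(\Delta\otimes\id)\Delta = (\id\otimes\Delta^{\!+})\Delta$ and multiplicativity of $\Delta$, $\Delta^{\!+}$. The input from Assumption \ref{assumption_notint} is Lemma \ref{lem_polynomialsintheRS}, which lets us isolate the polynomial components. Write $\Delta\tau = \sum_\sigma \sigma\otimes\tau/\sigma$. By Lemma \ref{lem_polynomialsintheRS}, the terms with left factor in $\mcB_X$ and those with right factor in $\mcB_X$ are cleanly separated from the genuinely non-polynomial terms. Since $D^n\tau = n!\,\tau/X^n$ and $\downarrow^n\tau = n!\,X^n\backslash\tau$ simply extract the coefficient of $X^n$ on the left, respectively on the right, each identity becomes a statement about coefficients of $X^p\otimes\cdot$ or $\cdot\otimes X^p$ in an iterated coproduct.

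\textbf{Items 4 and 1.} I would start with item 4. Apply coassociativity to $\tau$ and pair the first tensor slot with $(X^n)^*$. On the left this gives $\sum_{\sigma}\langle (X^n)^*,\sigma/\cdot\rangle$, which assembles into $\Delta(\tau/X^n)$ after one more use of coassociativity. On the right, $\Delta X^n = \sum\binom{n}{m}X^m\otimes X^{n-m}$, and Lemma \ref{lem_polynomialsintheRS} (points 2 and 3) says the only terms $\sigma$ of $\Delta\tau$ whose left factor contains $X^n$ are obtained through polynomial left factors. Comparing coefficients yields $\Delta D^n\tau = (D^n\otimes\id)\Delta\tau$. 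The statement for $\downarrow^n$ is symmetric: pair the last slot with $(X^n)^*$ and use that $X^n$ is primitive-like in $T^+$, so that $\Delta^{\!+}$ only splits it into polynomials.

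Item 1 then follows by applying item 4 twice. For instance, $D^{n_1}D^{n_2}\tau$ is $n_1!n_2!$ times the coefficient of $X^{n_1}\otimes X^{n_2}$ in $(\Delta\otimes\id)\Delta\tau$, which equals $(\id\otimes\Delta^{\!+})\Delta\tau$. Inside $\Delta^{\!+}X^{n_1+n_2}$ the coefficient of $X^{n_1}\otimes X^{n_2}$ is $\binom{n_1+n_2}{n_1}$, which gives $D^{n_1+n_2}$ with the right factorials. The same argument works for $\downarrow$. For the mixed commutation, note that $D$ acts on the first leg and $\downarrow$ on the last leg of the double coproduct, so they commute.

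\textbf{Item 2.} Take the coefficient of $\sigma$ in the second leg of $(\id\otimes\Delta^{\!+})\Delta\tau$ and of $X^n$ in the first leg. Coassociativity gives $(\downarrow^n\tau)/\sigma$ on one side and $\downarrow^n(\tau/\sigma)$ on the other. Lemma \ref{lem_polynomialsintheRS} guarantees that no cross terms between polynomial and non-polynomial pieces appear.

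\textbf{Item 3.} This follows from multiplicativity of $\Delta$ (resp. $\Delta^{\!+}$) together with the binomial expansion of $\Delta X^n$. The coefficient of $X^n$ in $\Delta\tau_1\,\Delta\tau_2$ arises from products $X^{n_1}X^{n_2}$, with $n_1!n_2!$ normalisations giving $\binom{n}{n_1}$.

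\textbf{Main obstacle.} The delicate step is justifying that the only contributions to these coefficients come from pure polynomial factors. For example, a product of two non-polynomial elements must never equal $X^n$, and a non-polynomial $\sigma$ must never contribute to $\tau/X^n$ through a polynomial quotient. This is exactly where the non-integrality in Assumption \ref{assumption_notint} enters: degrees of non-polynomial products are never integers, so such a product cannot coincide with a monomial. I would handle this carefully once, through Lemma \ref{lem_polynomialsintheRS}, and reuse it in every item.
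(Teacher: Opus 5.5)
Your overall strategy is the right one: extract coefficients from the two sides of $(\Delta\otimes\id)\Delta=(\id\otimes\Delta^{\!+})\Delta$, use multiplicativity, and discard non-polynomial contributions via Lemma~\ref{lem_polynomialsintheRS}. It is also all the paper offers, since it states the lemma as a consequence of Lemma~\ref{lem_polynomialsintheRS} without proof. However, several of your extractions are set up incorrectly, and you misplace where Assumption~\ref{assumption_notint} is needed.

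\textbf{Items 2 and 4 and the mixed commutation.} Item~4, item~2 and $\downarrow^{n_1}D^{n_2}=D^{n_2}\downarrow^{n_1}$ follow from coassociativity alone. No expansion of $\Delta X^n$ and no appeal to Lemma~\ref{lem_polynomialsintheRS} are needed.
\begin{itemize}
\item Taking the coefficient of $X^n$ in the first leg gives directly $\Delta^{\!+}(\tau/X^n)=\sum_\mu(\mu/X^n)\otimes\tau/\mu$. Here $\Delta D^n\tau$ must be read as $\Delta^{\!+}D^n\tau$, since $D^n\tau\in T^+$.
\item Taking the coefficient of $X^n$ in the third leg gives $\Delta(X^n\backslash\tau)=\sum_\rho\rho\otimes X^n\backslash(\tau/\rho)$.
\item In item~2 your slot choice is wrong. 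Taking $X^n$ in the first leg and $\sigma$ in the second produces $(\tau/X^n)/\sigma$, which involves $D^n$, not $\downarrow^n$. You need $\sigma$ in the first leg and $X^n$ in the third. This yields $X^n\backslash(\tau/\sigma)$ from $(\id\otimes\Delta^{\!+})\Delta\tau$ and $(X^n\backslash\tau)/\sigma$ from $\sum_\mu\Delta(\mu\backslash\tau)\otimes\mu$.
\end{itemize}

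\textbf{Where the assumption is actually used.} Assumption~\ref{assumption_notint} enters only in $D^{n_1}D^{n_2}=D^{n_1+n_2}$, $\downarrow^{n_1}\downarrow^{n_2}=\downarrow^{n_1+n_2}$ and the Leibniz rules.
\begin{itemize}
\item For $D^{n_1}D^{n_2}$ you have the two sides reversed. The quantity $n_1!\,n_2!\,(\tau/X^{n_2})/X^{n_1}$ is read off as the coefficient of $X^{n_2}\otimes X^{n_1}$ in the first two legs of $(\id\otimes\Delta^{\!+})\Delta\tau$. The binomial coefficient $\binom{n_1+n_2}{n_1}$ comes from $\Delta X^{n_1+n_2}$ on the side $\sum_\sigma\Delta\sigma\otimes\tau/\sigma$, not from $\Delta^{\!+}X^{n_1+n_2}$.
\item The step carrying the content, which your last paragraph only gestures at, must be stated precisely. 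Every non-polynomial $\sigma\in\mcB$ contributes zero, because $\sigma/X^{n_2}$ has no polynomial component by point~2 of Lemma~\ref{lem_polynomialsintheRS}.
\item Symmetrically, for $\downarrow^{n_1}\downarrow^{n_2}$ you need that $\rho/X^{n_1}$ has no polynomial component for every non-polynomial $\rho\in\mcB^+$ appearing in $\sum_\rho\rho\backslash\tau\otimes\Delta^{\!+}\rho$.
\item For the Leibniz rules the inputs are multiplicativity, $X^{n_1}X^{n_2}=X^{n_1+n_2}$, and the fact that a product of basis elements with at least one non-polynomial factor has non-integer degree and so has no $X^n$ component. The binomial expansion of $\Delta X^n$ plays no role there.
\end{itemize}
With these corrections the argument goes through.
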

	For any $\sigma, \tau\in\mcB^{+}$, we write $\sigma\leqq\tau$ if $\sigma<\tau$ and $\tau/\sigma$ is a polynomial, we also write
	$$
	\sigma\prec \tau \textrm{ if } \sigma<\tau \textrm{ but not } \sigma\leqq\tau.
	$$ We use the same notation when $ \sigma \in \mcB $ and $\tau \in \mcB^+$.
	The following Lemma will be useful in the computations for reindexing sums.
	\begin{lemma}\label{lem_SnDnreindex}
		For any $\tau$ in $T$ and $n\in\bfN^{d+1}$, the following identities holds in $T\otimes T^{+}$
		\begin{equation} \label{eq_Dntausigma}
			\sum_{\sigma\leq\tau} \sigma \otimes D^n(\tau/\sigma)= \sum_{\nu\leq\tau} \downarrow^n\hspace{-2pt} \nu \otimes \tau/\nu,
		\end{equation}
		and
		\begin{equation} \label{eq_Dntausigma2}
			\sum_{\sigma\prec\tau} \sigma \otimes D^n(\tau/\sigma) = \sum_{\nu\prec\tau} \downarrow^n\hspace{-2pt} \nu \otimes \tau/\nu.
		\end{equation}
		The same is true for $\tau \in T^+$ and using $ \Delta^{\!+} $.
	\end{lemma}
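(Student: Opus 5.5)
The plan is to prove both identities by comparing coefficients against a fixed $\sigma$ in the first tensor factor. Everything is reduced to coassociativity of $\Delta$ combined with the explicit coproduct of polynomials $\Delta^{\!+} X^n = \sum_{m}\binom{n}{m} X^m\otimes X^{n-m}$. The right-hand side of \eqref{eq_Dntausigma} is read as follows: by definition \eqref{linear_operator}, $\downarrow^n\nu = n!\, X^n\backslash \nu$, so $\downarrow^n\nu$ is $n!$ times the left factor of $\Delta\nu$ paired with $X^n$ on the right. The left-hand side uses $D^n(\tau/\sigma) = n!\,(\tau/\sigma)/X^n$, which is a coefficient of $\Delta^{\!+}(\tau/\sigma)$.

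First step: write coassociativity
\begin{equation*}
(\Delta\otimes \id)\Delta \tau = (\id\otimes\Delta^{\!+})\Delta\tau ,
\end{equation*}
and expand both sides in the basis to get, for all $\sigma$,
\begin{equation*}
\sum_{\nu} \sigma\backslash\!\!\backslash \ldots
\end{equation*}
More concretely, $\sum_\nu (\nu/\sigma)\otimes(\tau/\nu) = \Delta^{\!+}(\tau/\sigma)$ as elements of $T^+\otimes T^+$. Next, apply to both sides the map $\langle X^n,\cdot\rangle\otimes\id$ that extracts the coefficient of $X^n$ in the left factor, normalised with $n!$. Here one has to be careful that this coefficient functional applied to $\Delta^{\!+}(\tau/\sigma)$ gives exactly $D^n(\tau/\sigma)$; that is, one uses the version of the definition $D^n\mu = n!\,\mu/X^n$ with $X^n$ on the correct side. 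If the convention instead places $X^n$ on the right, one uses the right-comodule form $\Delta\tau=\sum\sigma\otimes\tau/\sigma$ and extracts accordingly. On the other side one obtains $\sum_\nu n!\,\langle X^n, \nu/\sigma\rangle\, \tau/\nu$. Summing against $\sigma$ and recognising $\sum_\sigma n!\langle X^n,\nu/\sigma\rangle\,\sigma = n!\,X^n\backslash\nu=\downarrow^n\nu$ then yields \eqref{eq_Dntausigma}. Where the conventions make this identification delicate, I would instead check it directly on $D^n$ via item 4 of Lemma \ref{lem_DnSnpropenvrac}: apply $\Delta$ to $\downarrow^n$ and $D^n$, and use the commutation relations $\Delta\downarrow^n=(\id\otimes\downarrow^n)\Delta$ and $D^n\downarrow^m=\downarrow^m D^n$.

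Second step, \eqref{eq_Dntausigma2}: I would split the sum in \eqref{eq_Dntausigma} into terms where $\tau/\sigma$ (respectively $\tau/\nu$) is polynomial and terms where it is not, using Lemma \ref{lem_polynomialsintheRS}(1). The key observation is that $D^n$ preserves the dichotomy. If $\tau/\sigma$ is polynomial, then $D^n(\tau/\sigma)$ is polynomial. If $\tau/\sigma\in\mathrm{Span}(\mcB\backslash\mcB_X)$, then $D^n(\tau/\sigma)$ stays non-polynomial, since $D^n$ only shifts the upper decoration and does not create pure polynomials from non-polynomial elements, by Lemma \ref{lem_polynomialsintheRS}(2).

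Hence projecting both sides of \eqref{eq_Dntausigma} onto $T\otimes\mathrm{Span}(\mcB^+\backslash\mcB_X)$ in the second factor gives the $\prec$ version. On the left this projection keeps exactly $\sigma\prec\tau$, and the same holds on the right for $\nu\prec\tau$. The case $\tau\in T^+$ with $\Delta^{\!+}$ is identical.

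The main obstacle is bookkeeping of conventions: which side $X^n$ sits on in the definitions of $/$ and $\backslash$, and the factorials in $\Delta X^n$. I would verify these on the simple example $\tau=X^p$ first, then on $\mcI_b(\tau)$ using \eqref{eq_CoproductOfMcItau}.
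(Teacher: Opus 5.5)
Your proof is correct and is essentially the paper's argument: coassociativity $(\Delta\otimes\id)\Delta=(\id\otimes\Delta^{\!+})\Delta$, with the $X^n$-coefficient extracted in the middle factor, gives \eqref{eq_Dntausigma}. Projecting the second factor off the polynomials then gives \eqref{eq_Dntausigma2}, since $D^n$ sends polynomials to polynomials and, by Lemma \ref{lem_polynomialsintheRS}, non-polynomials to non-polynomials, which is exactly the paper's $D^n\pi^{\perp}_{T_X}=\pi^{\perp}_{T_X}D^n$. You can drop the unfinished display and the fallback via item 4 of Lemma \ref{lem_DnSnpropenvrac}; the fallback is unnecessary and would not apply directly anyway, because those relations put $D^n$ and $\downarrow^n$ in the opposite tensor factors from the ones in the lemma.
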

	\begin{proof}
		We prove the result for $\tau\in T$, the identity with $\tau\in T^+$ follows from the same computations. We let $\pi_{X^n} : T^+\rightarrow T^+$ be the orthogonal projection onto the subspace generated by $ X^n$. On the one hand 
		\begin{align*}
			\big(\text{id} \otimes \pi_{X^n} \otimes \text{id} \big)\big(\id \otimes \Delta^{\!+} \big)\Delta\tau 
			&=
			\sum_{\sigma\leq\tau} \sigma \otimes \big( ( \pi_{X^n} \otimes \id) \Delta^{\!+} (\tau/\sigma) \big)
			\\
			&=\sum_{\sigma\leq\tau }\frac{1}{n!} \, \sigma \otimes X^n \otimes D^n(\tau/\sigma),
		\end{align*}
		and on the other hand
		\begin{align*}
			(\text{id} \otimes \pi_{X^n} \otimes \text{id}  )(\Delta \otimes\id)\Delta \tau 
			&=
			\sum_{\sigma\leq\tau}   \big(   (\id \otimes \pi_{X^n} ) \Delta \sigma        \big) \otimes \tau/\sigma
			\\
			&=\sum_{\sigma\leq\tau }\frac{1}{n!}  \downarrow^n\hspace{-3pt}\sigma \otimes X^n \otimes \tau/\sigma.
		\end{align*}
		The result follows from the co-associativity property $(\Delta \otimes \text{id} )\Delta = (\text{id}\otimes\Delta^{\!+})\Delta$. 
	Letting $\pi^{\perp}_{T_X}$ the orthogonal projection parallel to the vector space of polynomials. One obtains the second identity \eqref{eq_Dntausigma2} by applying $\id \otimes \pi^{\perp}_{T_X} $ to \eqref{eq_Dntausigma}, and by using the identity $D^n\pi^{\perp}_{T_X} = \pi^{\perp}_{T_X} D^n$, which is a consequence of Lemma \ref{lem_polynomialsintheRS}.	
	\end{proof}

	

	\subsection{ A modified model} \label{subsect_modifmodel}

	Integrating local remainders ${ \Pi}_x(\tau)$ and ${\gamma}_{xy}(\tau/\sigma)$ against suitable Littlewood-Paley kernels gives functions that have some regularity and that will enable to write paracontrolled representations. A first example of it will be the generalised Littlewood-Paley block $[\tau]_i$, first defined in \cite{LocalExpansionsParacSystems} by setting for any $\tau\in T$
	\begin{equation}\begin{aligned}
			&[\tau]_i \defeq \Delta_i \big({ \PPi}(\tau)\big)
		- \sum_{q\geq 1} (-1)^{q-1} \sum_{\tau_q<\cdots<\tau_1<\tau} \\ &  \Delta_{<i-1}\big({ g}\big(\tau/\tau_1\big)\big)\cdots \Delta_{<i-1}\big({ g}\big(\tau_{q-1}/\tau_q\big)\big)  \Delta_i\big({ \PPi}\big(\tau_q\big)\big).
	\end{aligned}\end{equation}
where the $\tau / \tau_1,..., \tau_{q-1}/\tau_{q} \in T^+$ and are computed by iterations of $ \Delta^{\!+} $. 
One has a similar formula for $\tau/\sigma\in T^+$
	\begin{equation}\begin{split}
			&[\tau/\sigma]_i \defeq \Delta_i \big({ g}(\tau/\sigma)\big)
		\quad- \sum_{q \geq 1} (-1)^{q-1}  \hspace{-5pt}\sum_{\sigma<\tau_q<\cdots<\tau_1<\tau} \\ &  \Delta_{<i-1}\big({ g}\big(\tau/\tau_1\big)\big)\cdots \Delta_{<i-1}\big({ g}\big(\tau_{q-1}/\tau_q\big)\big)  \Delta_i\big({ g }\big(\tau_q/\sigma\big)\big).
	\end{split}\end{equation}
	The brackets $[\tau]_i$ and $[\tau/\sigma]_i$ could have also been defined by the following recursive relations
	\begin{equation} \begin{aligned}
		[\tau]_i & = \Delta_i\big({ \PPi}(\tau)\big) - \sum_{\sigma<\tau} \Delta_{<i-1} \big({ g}(\tau/\sigma)\big) \, [\sigma]_i
	\\
		[\tau/\sigma]_i & = \Delta_i\big({ g}(\tau/\sigma)\big) - \sum_{\sigma<\nu<\tau} \Delta_{<i-1} \big( { g}(\tau/\nu) \big) \, [\nu/\sigma]_i
		\end{aligned}
	\end{equation}
From \cite{LocalExpansionsParacSystems} one has the following continuity estimate
	\begin{proposition} \label{prop_RegCrochets}
		For any $\tau\in T_{|\tau|}$ and $\tau/\sigma\in T_{|\tau/ \sigma|}^+$,  we have
		$$ 
		\sup_{i\geq -1} 2^{-i|\tau|} [\tau]_i \lesssim \llparenthesis \PPi , g\rrparenthesis, \quad 
		\sup_{i\geq -1} 2^{-i|\tau/\sigma|} [\tau/\sigma]_i \lesssim \llparenthesis \PPi , g\rrparenthesis.
		$$
			
	\end{proposition}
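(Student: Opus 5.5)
We prove the two bounds of Proposition~\ref{prop_RegCrochets} by induction on the finite poset $\{\sigma : \sigma < \tau\}$ (resp. on the interval $\sigma<\nu\le\tau$ in $T^+$), ordered by degree. The starting point is the recursive form of the brackets,
\[
[\tau]_i = \Delta_i\big(\PPi(\tau)\big) - \sum_{\sigma<\tau} \Delta_{<i-1}\big(g(\tau/\sigma)\big)\,[\sigma]_i .
\]
We rewrite it so that only recentred quantities appear. Formally, $[\tau]_i(y)$ should be close to $\Delta_i$ applied to the local remainder $\Pi_y\tau$, evaluated at $y$. Concretely, we first establish the identity
\[
[\tau]_i(y) = \big(\Delta_i \Pi_y \tau\big)(y) + \sum_{\sigma<\tau} \Big( g_y(\tau/\sigma) - \Delta_{<i-1}g(\tau/\sigma)(y)\Big)[\sigma]_i(y) + \text{(lower order brackets)}.
\]
It follows from $\PPi = (\Pi_y \otimes g_y)\Delta$, which is the inverse of $\Pi_y = (\PPi\otimes g_y\CA_+)\Delta$, together with coassociativity $(\Delta\otimes\id)\Delta = (\id\otimes\Delta^{+})\Delta$. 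We use these to regroup the iterated sums over chains $\tau_q<\dots<\tau_1<\tau$ in the explicit definition of $[\tau]_i$.

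The analytic estimates then go as follows. The first term is bounded by $\lambda^{|\tau|}$-type scaling: $\Delta_i$ has a kernel $K_i$ concentrated at scale $2^{-i}$ with rapid decay, so the model bound $\|\tau\|_{(\PPi,g)}$ gives $|(\Delta_i\Pi_y\tau)(y)|\lesssim 2^{-i|\tau|}\llparenthesis\PPi,g\rrparenthesis$. For the second term we need $|g_y(\nu) - \Delta_{<i-1}g(\nu)(y)| \lesssim 2^{-i|\nu|}$ when $|\nu|>0$. We write
\[
g_y(\nu) - \Delta_{<i-1}g(\nu)(y) = \int K_{<i-1}(y-z)\big(g_y(\nu)-g_z(\nu)\big)\,dz ,
\]
expand $g_z(\nu) = \sum g_y(\nu/\mu)\gamma_{zy}(\mu)$, and use $|\gamma_{zy}(\mu)|\lesssim |z-y|^{|\mu|}$. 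This yields a sum of terms $g_y(\cdot)\,2^{-i|\mu|}$, whose products have total degree $|\nu|$. The terms in which $g_y(\nu/\mu)$ is not killed by the scaling are precisely those reabsorbed by the lower-order brackets in the regrouping. Combining this with the induction hypothesis $[\sigma]_i\lesssim 2^{-i|\sigma|}$ and $|\tau/\sigma|+|\sigma| = |\tau|$ gives the bound $2^{-i|\tau|}$. The case of $[\tau/\sigma]_i$ is identical, with $\PPi$ replaced by $g$, the coaction by $\Delta^{+}$, and the model bound by the bound on $\gamma_{xy}$.

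The main obstacle is the algebraic regrouping. One must check that the alternating sum over chains in the definition of $[\tau]_i$ exactly reproduces the telescoping of $\Delta_{<i-1}g - g_y$, with multiplicative constants coming from $\Delta^{+}$. Here the deconcatenation structure on words and Assumption~\ref{assumption_notint} are used to ensure that no polynomial of integer degree produces a logarithmic loss. Since the statement is quoted from \cite{LocalExpansionsParacSystems}, we follow their induction. The only adaptation is the replacement of their Littlewood–Paley kernels by the space-time kernels $K_i=K_{<i+1}-K_{<i}$ of Subsection~\ref{subsect_spacetime_paraproduct}. That replacement only requires the moment condition on $\phi$ up to order $B$ and the scaling properties of $K_{<i}$, as explained in Appendix~\ref{appendix_A}.
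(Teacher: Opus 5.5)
Your plan has the right ingredients: $\PPi=(\Pi_y\otimes g_y)\Delta$, coassociativity, scaling of the kernels, and induction on $\sigma<\tau$. But the step that carries the proof is never done, and the identity you do write cannot be closed term by term.

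The bound you say you need, $|g_y(\nu)-\Delta_{<i-1}g(\nu)(y)|\lesssim 2^{-i|\nu|}$, is false. Set $\bar\gamma^{i}_y(\mu)=\int K_{<i-1}(y-z)\,\gamma_{zy}(\mu)\,dz$. Since $g_z=(\gamma_{zy}\otimes g_y)\Delta^{\!+}$, one gets $g_y(\nu)-\Delta_{<i-1}g(\nu)(y)=-\sum_{\mu\neq\one}\bar\gamma^i_y(\mu)\,g_y(\nu/\mu)$. The terms with $\mu\neq\nu$ are $O(2^{-i|\mu|})$ times factors $g_y(\nu/\mu)$ that are not small.

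Your unspecified ``lower order brackets'' are exactly $\sum_{\sigma<\tau}g_y(\tau/\sigma)\big((\Delta_i\Pi_y\sigma)(y)-[\sigma]_i(y)\big)$. These again carry bare $O(1)$ coefficients $g_y(\tau/\sigma)$. So neither of your last two terms is $O(2^{-i|\tau|})$ on its own. You assert that the bad pieces are ``reabsorbed'', call this the main obstacle, and then defer to \cite{LocalExpansionsParacSystems} instead of proving it. As written, the argument does not close.

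The missing step is to divide out $g_y$ on the right. Take $i\ge1$, so that $\int K_{<i-1}=1$ and $\bar\gamma^i_y(\one)=1$. By linearity, $\Delta_{<i-1}g(\nu)(y)=(\bar\gamma^i_y*g_y)(\nu)$, where $h*h'=(h\otimes h')\Delta^{\!+}$. Also $(\Delta_i\PPi\tau)(y)=\big((\Delta_i\Pi_y\,\cdot\,)(y)\otimes g_y\big)\Delta\tau$. The recursive definition of the brackets therefore reads
$\big([\,\cdot\,]_i(y)\otimes(\bar\gamma^i_y*g_y)\big)\Delta\tau=\big((\Delta_i\Pi_y\,\cdot\,)(y)\otimes g_y\big)\Delta\tau$.
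Convolve on the right with $g_y^{-1}=g_y\CA_+$ and use $(\Delta\otimes\id)\Delta=(\id\otimes\Delta^{\!+})\Delta$. This gives the exact identity
\[
[\tau]_i(y)=(\Delta_i\Pi_y\tau)(y)-\sum_{\sigma<\tau}\bar\gamma^i_y(\tau/\sigma)\,[\sigma]_i(y),
\]
with no bare $g_y$ left. The coefficients satisfy $|\bar\gamma^i_y(\nu)|\le\|\nu\|_{(\PPi,g)}\int|K_{<i-1}(z)|\,|z|^{|\nu|}\,dz\lesssim 2^{-i|\nu|}\|\nu\|_{(\PPi,g)}$. Your scaling bound on $(\Delta_i\Pi_y\tau)(y)$ then applies, with the tails of $K_i$ controlled by the global bound on $\gamma$. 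The induction closes using $|\tau/\sigma|+|\sigma|=|\tau|$. The $T^+$ case is the same, with $(\Delta_i\gamma_{\cdot\,y}(\tau/\sigma))(y)$ as the leading term.

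This is a slightly leaner form of the route the paper relies on (via \cite{LocalExpansionsParacSystems}; cf.\ Appendix~\ref{appendix_A} and Lemma~\ref{lem_crochets_Besov}). There the base point is averaged with $K_{<i-1}$ rather than frozen at $y$. Finally, Assumption~\ref{assumption_notint}, the word structure and the moment condition on $\phi$, which you invoke, play no role in this bound.
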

	As the $[\tau]_i$ and $[\tau/\sigma]_i$ have their Fourier transform supported in some ball $2^iB$, these brackets can be interpreted as generalised Littlewood-Paley blocks of some function in $C^{|\tau|}$ or $C^{|\tau/\sigma|}$. We refer to \cite{LocalExpansionsParacSystems} for a proof of this result. This notion of bracket $[\tau]_i$ is not plainly satisfactory for our purpose because on the one hand the natural paracontrolled representation formula involving these brackets is written in terms of some simplified notion of iterated paraproduct that was introduced in \cite{LocalExpansionsParacSystems}, and on the other hand some degrees of some of these brackets are integers, which prevents us to use results from \cite{LocalExpansionsParacSystems} as it contradicts Assumption (A) from there. 
This last issue is solved by introducing some modified 'model' $\widetilde{ \PPi}$ and $\widetilde{ g}$, that were already defined in \cite{LocalExpansionsParacSystems}, which will prevent us from extracting polynomials when writing coproducts by hiding them in the $\widetilde{\PPi}$ and $\widetilde{ g}$.
	We define
	\begin{equation}\label{eqdef_pitilde} \begin{split}
			&\widetilde{ \PPi}(\tau)_i =  \Delta_i\big( { \PPi}(\tau)\big) 
		- \sum_{q\geq 1}(-1)^{q-1} \hspace{-0.2cm} \sum_{\sigma_q\leqq \cdots \leqq\sigma_1 \leqq \tau} \\ &  \Delta_{<i-1}\big({ g}\big(\tau/\sigma_1\big)\big)\cdots \Delta_{<i-1}\big({ g}\big(\sigma_{q-1}/\sigma_q\big)\big) \, \Delta_i\big({ \PPi}(\sigma_q)\big), 
	\end{split}\end{equation}
	and
	\begin{equation}\label{eqdef_gtilde}\begin{split}
			&\widetilde{ g}(\tau/\sigma)_i =  \Delta_i\big( { g}(\tau/\sigma)\big) 
			- \sum_{q\geq 1}(-1)^{q-1} \hspace{-0.2cm} \sum_{\sigma_q\leqq \cdots \leqq\sigma_1 \leqq \tau} \\ & \Delta_{<i-1}\big({ g}\big(\tau/\sigma_1\big)\big)\cdots \Delta_{<i-1}\big({ g}\big(\sigma_{q-1}/\sigma_q\big)\big) \, \Delta_i\big({ g}(\sigma_q/\sigma)\big), 
		\end{split}
	\end{equation}
	We also set 
	$$
	\widetilde{ \PPi}(\tau) = \sum_{i\geq -1} \widetilde{ \PPi}(\tau)_i
	\quad \text{and} \quad 
	\widetilde{ g}(\tau) = \sum_{i\geq -1} \widetilde{ g}(\tau)_i.
	$$
	The following Lemma gives another possible definition of $\widetilde{ \PPi}$ and $\widetilde{ g}$ that will be useful in the computations involving them.
	
	\begin{lemma}
		For any $\tau\in T$ we have 
		\begin{equation} \label{eq_pitildetopi}
			\widetilde{ \PPi}(\tau)(y) = \sum_{j\geq 0}\frac{(-1)^{j}y^j}{j!}\, { \PPi}\big(\downarrow^j\hspace{-3pt}\tau\big)(y), \quad 
			{ \PPi}(\tau)(y) = \sum_{j\geq 0}\frac{y^j}{j!} \, \widetilde{ \PPi}\big(\downarrow^j\hspace{-2pt}\tau\big)(y).
		\end{equation}
		And likewise for $\tau\in T^+$ we have 
		\begin{equation}\label{eq_defgtilde2}
			\widetilde{ g}_x(\tau) = \sum_{j\geq 0}\frac{(-1)^{j}x^j}{j!}\, { g}_x\big(\downarrow^j\hspace{-2pt}\tau\big),
\quad
			{ g}_x(\tau) = \sum_{j\geq 0}\frac{x^j}{j!} \, \widetilde{ g}_x\big(\downarrow^j\hspace{-2pt}\tau\big).
		\end{equation}
	\end{lemma}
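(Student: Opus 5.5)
The plan is to prove the second identity in \eqref{eq_pitildetopi} first, by turning the chain sum \eqref{eqdef_pitilde} into a recursion, and then to obtain the first identity by inverting a triangular system. The case of $\widetilde g$ in \eqref{eq_defgtilde2} will follow by the same computation, with $\Delta^{\!+}$ in place of $\Delta$.

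\emph{Step 1: recursion.} Resumming the alternating sum over chains $\sigma_q\leqq\cdots\leqq\sigma_1\leqq\tau$ in \eqref{eqdef_pitilde}, exactly as for the brackets $[\tau]_i$, gives
\begin{equation*}
\widetilde{\PPi}(\tau)_i=\Delta_i\big(\PPi(\tau)\big)-\sum_{\sigma\leqq\tau,\,\sigma\neq\tau}\Delta_{<i-1}\big(g(\tau/\sigma)\big)\,\widetilde{\PPi}(\sigma)_i .
\end{equation*}
By Lemma \ref{lem_polynomialsintheRS}, the terms with $\sigma\leqq\tau$ are exactly those where $\tau/\sigma$ is polynomial. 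Projecting $\Delta\tau$ onto $T\otimes T_X$ and using the definition \eqref{linear_operator} of $\downarrow^n$, this part of the coaction is
\begin{equation*}
\sum_{n}\frac{1}{n!}\downarrow^n\hspace{-2pt}\tau\otimes X^n .
\end{equation*}
Since $g(X^n)(x)=x^n$, the recursion becomes
\begin{equation*}
\widetilde{\PPi}(\tau)_i=\Delta_i\PPi(\tau)-\sum_{n\neq0}\frac{1}{n!}\,\Delta_{<i-1}(x^n)\,\widetilde{\PPi}(\downarrow^n\hspace{-2pt}\tau)_i .
\end{equation*}

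\emph{Step 2: the analytic point.} I need $\Delta_{<i-1}(x^n)=x^n$ for the finitely many relevant $n$. This holds because the spatial cut-off $\chi$ equals $1$ near the origin, so $\tilde K_{<i}$ has vanishing moments of every positive order. In time, $\phi_i$ has unit mass and vanishing moments of orders $1$ to $B$, and $B$ is chosen large enough to exceed every $|n|$ arising from $\downarrow^n\tau\neq0$. The convolution of these kernels with a monomial of such degree then reproduces the monomial. I expect this step to be the main obstacle. It needs care for the lowest indices $i$ and for the cut-off $\phi_i$, which is not a pure Fourier multiplier in time, and it needs the finiteness of the sum in $n$, which follows from $\tau/\sigma\neq0$ only for finitely many $\sigma$. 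Granting it, summing over $i\geq-1$ and using $\sum_i\Delta_i=\id$ gives
\begin{equation*}
\PPi(\tau)(y)=\sum_{n}\frac{y^n}{n!}\,\widetilde{\PPi}(\downarrow^n\hspace{-2pt}\tau)(y),
\end{equation*}
which is the second identity.

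\emph{Step 3: inversion.} Apply the second identity to each $\downarrow^j\tau$ and insert it into the proposed right-hand side of the first identity. Using $\downarrow^{j}\downarrow^{n}=\downarrow^{j+n}$ from \eqref{eq_commutDnDownaroown}, that right-hand side becomes
\begin{equation*}
\sum_{m}\,\widetilde{\PPi}(\downarrow^m\hspace{-2pt}\tau)(y)\,y^m\sum_{j+n=m}\frac{(-1)^j}{j!\,n!}.
\end{equation*}
The inner sum is $(1-1)^m/m!=\delta_{m,0}$ in multi-index notation, so the expression reduces to $\widetilde{\PPi}(\tau)(y)$, which proves the first identity. Equivalently, the system is triangular with respect to degree, so the first identity can also be obtained by induction on $|\tau|$.

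For $\widetilde g$, I run the identical argument with the definition \eqref{eqdef_gtilde}, the coproduct $\Delta^{\!+}$, and the $T^+$ version of the reindexing in Lemma \ref{lem_SnDnreindex}.
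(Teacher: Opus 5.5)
Your proposal is correct and is essentially the paper's own argument. You resum the chain sum \eqref{eqdef_pitilde} into a recursion in which the polynomial factors $\Delta_{<i-1}(g(X^n))$ become $y^n$, sum over blocks to get the second identity, and invert via $\sum_{j+n=m}(-1)^j/(j!\,n!)=\delta_{m,0}$. The paper does the same Pascal inversion, but blockwise, which also yields \eqref{eq_pitopitildei}, used later in Lemma~\ref{lem_gtildetogitilde}.

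The paper uses the fact you isolate in Step~2, that $\Delta_{<i-1}$ reproduces monomials, without comment. Your caveat about the lowest indices is well founded. With $\Delta_{<j}=\sum_{i\le j-1}\Delta_i$ one has $\Delta_{<-1}=\Delta_{<-2}=0$, so the identities are exact only under a convention in which the low-frequency cut-offs also reproduce polynomials; otherwise they hold up to terms involving the blocks $i\in\{-1,0\}$.
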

	
	\begin{proof}
		We prove the two relations involving $\widetilde{\PPi}$, the other two are proven along the same lines.
		As 
	$	\big\{ \sigma, \, \sigma \leqq \tau \big\} = \big\{\downarrow^j\hspace{-2pt}\tau,\, j\geq 1 \big\}$,
		we can rewrite \eqref{eqdef_pitilde} as   
		$$
		\widetilde{\PPi}(\tau)_i(y) = \Delta_i\big( { \PPi}(\tau)\big)(y) - \sum_{j\geq 1} \frac{y^j}{j!}  \widetilde{ \PPi}\big( \downarrow^j\hspace{-2pt}\tau\big)_i(y),
		$$
		that is 
		$$
		\Delta_i\big( {\PPi}(\tau)\big)(y) = \sum_{j\geq 0} \frac{y^j}{j!}   \widetilde{ \PPi}\big(\downarrow^j\hspace{-2pt}\tau\big)_i(y),
		$$
		hence \eqref{eq_pitildetopi}. Then, from Pascal inversion
		\begin{equation}\label{eq_pitopitildei}
			\widetilde{ \PPi}(\tau)_i(y) = \sum_{j\geq 0} \frac{(-1)^jy^j}{j!}   \Delta_i \big({ \PPi}\big(\downarrow^j\hspace{-2pt}\tau\big)\big)(y).
		\end{equation}
	\end{proof}
	
	\begin{example}
		For any $k\in\bfN^{d+1}\backslash \{ 0 \} $ and any symbol $\tau\in\mcB$, we have
		$$
		\widetilde{\PPi}(X^k\tau) = 0.
		$$
		For any $\tau\in\mcB$ such that $\downarrow^{e_i}\tau = 0$ for any $1\leq i \leq d+1$, this time
		$$
		\widetilde{\PPi}(\tau) = { \PPi}(\tau).
		$$
		In the regularity structure of decorated trees with the naive model on it, we have 
		\begin{align*}
			\PPi \big(\mcI(X^\ell\Xi)\big) &= \scrL^{-1} X^{\ell}\xi,
			\quad 
			\PPi \big(\mcI(\mcI(X\Xi)\Xi)\big) = \scrL^{-1}\big( \xi\scrL^{-1} X \xi\big) 
			\\
			\widetilde{\PPi}\big(\mcI(X^\ell\Xi)\big) &= \scrL_\ell^{-1}\xi,
			\quad 
			\widetilde{\PPi}\big(\mcI(\mcI(X\Xi)\Xi)\big) = \scrL^{-1}\big( \xi\scrL_1^{-1}\xi\big) +  \scrL_1^{-1}\big( \xi \scrL^{-1}\xi \big).
		\end{align*}
		where $ X^\ell \xi $ is the function $ x \mapsto x^{\ell} \xi(x) $.
	\end{example}
	We now collect some properties of the $\widetilde{\PPi}$ and $\widetilde{ g}$ that will be useful later on.
	\begin{lemma} \label{lem_gtildetogitilde}
		For any $\tau\in T$ and for any $\tau/\sigma\in T^+$
		\begin{equation}\label{eq_gitildefromdeltai}
			\widetilde{ \PPi}(\tau)_i = \sum_{m\geq 0} \frac{1}{m!} \, \Delta_i^m\big(\widetilde{ \PPi}(\downarrow^m\hspace{-2pt}\tau)\big),\quad 
			\widetilde{ g}(\tau/\sigma)_i = \sum_{m\geq 0} \frac{1}{m!} \, \Delta_i^m\big(\widetilde{ g}(\downarrow^m\hspace{-2pt}\tau/\sigma )\big).
		\end{equation}
	\end{lemma}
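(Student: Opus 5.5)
We start from the formula \eqref{eq_pitopitildei} obtained in the proof of the previous lemma,
\[
\widetilde{\PPi}(\tau)_i(y) = \sum_{j\geq 0} \frac{(-1)^j y^j}{j!} \, \Delta_i\big(\PPi(\downarrow^j\hspace{-2pt}\tau)\big)(y).
\]
The factors $y^j$ sit outside the Littlewood--Paley block and must be brought inside. For this we replace $\PPi(\downarrow^j\tau)$ by its expression through $\widetilde{\PPi}$ given in \eqref{eq_pitildetopi}:
\[
\PPi(\downarrow^j\tau)(x) = \sum_{q\ge 0}\frac{x^q}{q!}\,\widetilde{\PPi}(\downarrow^{q+j}\tau)(x),
\]
where we used $\downarrow^q\downarrow^j=\downarrow^{q+j}$ from Lemma \ref{lem_DnSnpropenvrac}. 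Writing $\Delta_i$ as convolution with $K_i$ gives
\[
\widetilde{\PPi}(\tau)_i(y) = \sum_{j,q}\frac{(-1)^j}{j!\,q!}\int K_i(y-x)\, y^j x^q\, \widetilde{\PPi}(\downarrow^{q+j}\tau)(x)\,\ddd x .
\]

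Next we set $m=q+j$ and use the binomial theorem for multi-indices:
\[
\sum_{q+j=m}\frac{(-1)^j y^j x^q}{j!\,q!} = \frac{(x-y)^m}{m!}.
\]
Recognising the weighted projector $\Delta_i^m$ from \eqref{eqdef_littlewoodprojectorL}, namely $\int K_i(y-x)(x-y)^m h(x)\,\ddd x$, we obtain
\[
\widetilde{\PPi}(\tau)_i = \sum_m \frac{1}{m!}\Delta_i^m\big(\widetilde{\PPi}(\downarrow^m\tau)\big),
\]
which is the first identity.

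The identity for $\widetilde g(\tau/\sigma)_i$ follows the same pattern. Arguing as in the proof of the previous lemma, one first derives the analogue of \eqref{eq_pitopitildei}:
\[
\widetilde g(\tau/\sigma)_i(y)=\sum_j \frac{(-1)^j y^j}{j!}\,\Delta_i\big(g(\downarrow^j\tau/\sigma)\big)(y).
\]
This uses the fact that the chains $\sigma_q\leqq\cdots\leqq\tau$ are indexed by the $\downarrow^j\tau$. One then invokes \eqref{eq_commutSnDiv}, $\downarrow^n(\tau/\sigma)=(\downarrow^n\tau)/\sigma$, so that the relations \eqref{eq_defgtilde2} apply to the element $\tau/\sigma$, and repeats the binomial recombination.

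The main obstacle is bookkeeping. We must justify that the sums over $j$ and $q$ are finite, which holds since $\downarrow^m\tau=0$ for $m$ large. We must also make sure the convention for $(x-y)^m$ in \eqref{eqdef_littlewoodprojectorL} matches the sign produced by the binomial expansion; I expect this to require a careful check of that definition. Finally, the combinatorial factors from $\downarrow^j$ (the $n!$ in \eqref{linear_operator}) have to be tracked consistently with \eqref{eq_pitildetopi}.
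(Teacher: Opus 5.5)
Your proof is correct and is essentially the paper's argument run in reverse: the paper starts from $\sum_m \frac{1}{m!}\Delta_i^m(\widetilde{\PPi}(\downarrow^m\tau))$ and expands $\widetilde{\PPi}(\downarrow^m\tau)$ with the first identity of \eqref{eq_pitildetopi}. It then uses $\sum_{m+j=k}\frac{(y-x)^m(-y)^j}{m!\,j!}=\frac{(-x)^k}{k!}$ to pull the monomial out of the block and land on \eqref{eq_pitopitildei}. You instead substitute the inverse formula into \eqref{eq_pitopitildei} and recombine into $(x-y)^m/m!$. Your sign convention for $\Delta_i^m$ and your use of $\downarrow^q\downarrow^j=\downarrow^{q+j}$ are both right, and the $\widetilde g$ case is treated at the same level of detail as in the paper.
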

	
	\begin{proof}
		We prove \eqref{eq_gitildefromdeltai}, the other identity is proven along the same lines. From \eqref{eq_pitopitildei},
		\begin{align*}
			\sum_{m\geq 0} \frac{1}{m!} \, \Delta_i^m\big(\widetilde{ \PPi}(\downarrow^m\hspace{-2pt}\tau)\big)(x) 
			&=
			\sum_{m,j\geq 0}\int_{\bfR^{d+1}} K_i(x-y) \frac{(y-x)^m}{m!} \frac{(-y)^j}{j!} { \PPi}\big(\downarrow^{m+j}\hspace{-2pt}\tau\big)(y) \, \ddd y
			\\   
			&=
			\sum_{k\geq 0} \int_{\bfR^{d+1}}  K_i(x-y) \frac{(-x)^k}{k!} {\PPi}\big( \downarrow^k\hspace{-2pt}\tau\big)(y) \, \ddd y
			\\
			&= \sum_{k\geq 0} \frac{(-x)^k}{k!} \Delta_i\big({\PPi}(\downarrow^k\hspace{-2pt}\tau)\big)(x)
			=
			\widetilde{\PPi}(\tau)_i(x).
		\end{align*}
	\end{proof}
		\begin{lemma}\label{lem_multiplicativitygtilde}
		For any $\tau_1,\tau_2\in T^+$, we have the multiplicativity property
		$$
		\widetilde{ g}(\tau_1\tau_2) = \widetilde{ g}(\tau_1)\, \widetilde{ g}(\tau_2).
		$$
	\end{lemma}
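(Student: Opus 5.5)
We prove the claim pointwise, using the explicit formula \eqref{eq_defgtilde2}, which writes $\widetilde g$ in terms of $g$ and the derivation $\downarrow$. For $\tau\in T^+$ that formula gives
$$
\widetilde g_x(\tau)=\sum_{j\geq 0}\frac{(-1)^j x^j}{j!}\, g_x\big(\downarrow^j\tau\big),
$$
where $\widetilde g(\tau)$ is the function $x\mapsto \widetilde g_x(\tau)$. The argument then has three steps.

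\emph{Step 1 (Leibniz expansion).} Apply this formula to $\tau_1\tau_2$ and expand $\downarrow^j(\tau_1\tau_2)$ with the Leibniz rule \eqref{eq_leibnizSn}:
$$
\downarrow^j(\tau_1\tau_2)=\sum_{j=j_1+j_2}\binom{j}{j_1}\downarrow^{j_1}\tau_1\,\downarrow^{j_2}\tau_2 .
$$

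\emph{Step 2 (multiplicativity of $g$).} Since $g$ is a model, $g_x$ is a character of $T^+$. In the naive model on trees it is extended multiplicatively, and on words the formula \eqref{def_model_words} is extended multiplicatively. Hence
$$
g_x\big(\downarrow^{j_1}\tau_1\,\downarrow^{j_2}\tau_2\big)=g_x\big(\downarrow^{j_1}\tau_1\big)\,g_x\big(\downarrow^{j_2}\tau_2\big).
$$

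\emph{Step 3 (factorisation).} The multi-index identity
$$
\frac{(-1)^j x^j}{j!}\binom{j}{j_1}=\frac{(-1)^{j_1}x^{j_1}}{j_1!}\cdot\frac{(-1)^{j_2}x^{j_2}}{j_2!}
$$
shows that the double sum splits as a product of two independent sums. Each factor is exactly $\widetilde g_x(\tau_1)$, respectively $\widetilde g_x(\tau_2)$, by \eqref{eq_defgtilde2}. All sums are finite, since $\downarrow^j\tau=0$ for $|j|$ large because polynomial decorations are bounded.

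\emph{Main obstacle.} The delicate point is that the Leibniz rule \eqref{eq_leibnizSn} must hold in $T^+$, whose product is followed by the projection $\mathfrak p_+$ onto positive degree in the word setting. Lemma~\ref{lem_DnSnpropenvrac} states the Leibniz rule under Assumption~\ref{assumption_notint}, and we rely on it. Concretely, $\downarrow^n$ only removes polynomial factors $X^n$, so it is compatible with the degree cut: removing $X^n$ lowers the degree by the integer $|n|$, and by the non-integrality assumption no non-polynomial element crosses degree zero under this operation. We also need $\widetilde g(\tau)=\sum_i\widetilde g(\tau)_i$ to coincide with the function $x\mapsto\widetilde g_x(\tau)$ of \eqref{eq_defgtilde2}; this is the content of that lemma, proved via \eqref{eq_pitopitildei}, and we use it as stated.
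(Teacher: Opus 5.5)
Your proof is correct and is the paper's argument: expand $\widetilde g_x(\tau_1\tau_2)$ via \eqref{eq_defgtilde2}, apply the Leibniz rule \eqref{eq_leibnizSn} for $\downarrow$, use that $g_x$ is multiplicative on $T^+$, and split the double sum using $\frac{(-x)^{j}}{j!}\binom{j}{j_1}=\frac{(-x)^{j_1}}{j_1!}\frac{(-x)^{j_2}}{j_2!}$. Your aside on the degree cut is unnecessary, since the paper also just takes the Leibniz rule from Lemma~\ref{lem_DnSnpropenvrac}, so nothing is missing.
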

	\begin{proof}
		From \eqref{eq_defgtilde2}, \eqref{eq_leibnizSn} and the multiplicativity of ${ g}$,
		\begin{align*}
			\widetilde{ g}_x(\tau_1\tau_2) &= \sum_{n\geq 0} \frac{(-x)^n}{n!} { g}_x\Big( \sum_{n=n_1+n_2}\binom{n}{n_1} \downarrow^{n_1}\hspace{-3pt}\tau_1 \,  \downarrow^{n_2}\hspace{-3pt}\tau_2\Big)
			\\
			&= \sum_{n_1,n_2\geq 0} \frac{(-x)^{n_1+n_2}}{n_1! \, n_2!} { g}_x\big(\downarrow^{n_1}\hspace{-3pt}\tau_1\big) \, { g}_x\big(\downarrow^{n_2}\hspace{-3pt}\tau_2\big)
			=\widetilde{ g}_x(\tau_1)\, \widetilde{ g}_x(\tau_2).
		\end{align*} 
	\end{proof}
	One can rewrite local remainders using ${ (\widetilde{\PPi},\widetilde{g})}$
	\begin{equation}
		\begin{aligned}
		{ \Pi}_{x}(\tau)(y) &= \sum_{j\geq 0}\frac{(-1)^j}{j!}x^j \, { \PPi}(\downarrow^j\hspace{-2pt} \tau)(y) -\sum_{\sigma \prec \tau} \widetilde{ g}_x(\tau/\sigma) { \Pi}_{x}(\sigma)(y),
\\
		{ \gamma}_{xy}(\tau) &= \sum_{j\geq 0}\frac{(-1)^j}{j!}x^j \, { g}_y(\downarrow^j\hspace{-2pt} \tau) -\sum_{\sigma \prec \tau} \widetilde{ g}_x(\tau/\sigma) { \gamma}_{xy}(\sigma).
		\end{aligned}
	\end{equation}
	We have the following alternative definition of the brackets $[\tau]_i$, in terms of the modified model $({\widetilde{\PPi}}, \widetilde{g})$.
	\begin{proposition}\label{lem_sumgtilde}
		For any $\tau\in\mcB$ and $i\geq -1$ we have the relation
		\[
		[\tau]_i =  \widetilde{ \PPi}(\tau)_i - \sum_{q\geq 1} (-1)^{q-1} \sum_{\tau_q\prec\cdots\prec\tau_1\prec\tau} \widetilde{ g}\big(\tau/\tau_1\big)_{<i-1} \cdots \,  \widetilde{ g}\big(\tau_{q-1}/\tau_{q}\big)_{<i-1} \,  \widetilde{ \PPi} \big(\tau_q\big)_i.
		\]
		Likewise for $\tau/\sigma\in \mcB^+$ we have
		\begin{align*}
			[\tau/\sigma]_i &=  \widetilde{ g}(\tau/\sigma)_i 
			\\
			&\quad - \sum_{q\geq 1} (-1)^{q-1}  \hspace{-0.5cm}\sum_{\sigma\prec\sigma_q\prec\cdots\prec\sigma_1\prec\tau} \widetilde{ g}\big(\tau/\sigma_1\big)_{<i-1}\cdots \, \widetilde{ g}\big(\sigma_{q-1}/\sigma_{q}\big)_{<i-1} \,  \widetilde{ g}\big(\sigma_q/\sigma\big)_i.
		\end{align*}
	\end{proposition}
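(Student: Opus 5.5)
The natural strategy is a Möbius/geometric-series inversion. Both the definition of $[\tau]_i$ and that of $\widetilde{\PPi}(\tau)_i$ are alternating sums over chains, so they can be read as inverses of operators on the poset of basis elements ordered by $<$. The claim is then that $[\cdot]_i$ can be obtained in two stages: first invert along polynomial steps $\leqq$, which produces $\widetilde{\PPi}$ and $\widetilde g$, and then invert along non-polynomial steps $\prec$ using the modified quantities.

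\textbf{Step 1: recursive reformulation.} It suffices to show that the right-hand side, call it $R_i(\tau)$, satisfies
\[
R_i(\tau)=\widetilde{\PPi}(\tau)_i-\sum_{\sigma\prec\tau}\widetilde g(\tau/\sigma)_{<i-1}R_i(\sigma),
\]
and that $[\tau]_i$ satisfies the same recursion. The recursion for $R_i$ is immediate from peeling off the first link of each chain. Since the recursion determines its solution uniquely by induction along $<$, the identity then follows.

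\textbf{Step 2: expanding $\widetilde g$.} Use the recursive relation $[\tau]_i=\Delta_i\PPi(\tau)-\sum_{\sigma<\tau}\Delta_{<i-1}g(\tau/\sigma)\,[\sigma]_i$, and split each $\sigma<\tau$ according to whether $\tau/\sigma$ is polynomial or not. Lemma~\ref{lem_polynomialsintheRS} guarantees that this dichotomy is clean.

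To turn $\Delta_{<i-1}g(\tau/\sigma)$ into $\widetilde g$, substitute the definition \eqref{eqdef_gtilde}, summed over $j<i-1$. This gives
\[
\widetilde g(\tau/\sigma)_{<i-1}=\Delta_{<i-1}g(\tau/\sigma)-\sum_{q\ge1}(-1)^{q-1}\sum_{\sigma_q\leqq\cdots\leqq\sigma_1\leqq\tau}\Delta_{<i-1}g(\tau/\sigma_1)\cdots\Delta_{<i-1}g(\sigma_q/\sigma).
\]
One point needs checking here. The low-frequency cutoff applies to the last factor only through $\Delta_j$ with $j<i-1$, while the earlier factors carry their own cutoffs $\Delta_{<j-1}$. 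For the identity to close, the product of cutoffs must be reorganised into a single $\Delta_{<i-1}$ on each factor. I would expect this to be handled in the flat setting through the weighted projectors of Lemma~\ref{lem_gtildetogitilde}, rewriting everything via \eqref{eq_defgtilde2} in terms of $\downarrow^j$ and monomial factors. Concretely, I would use \eqref{eq_defgtilde2} to write $g(\tau/\sigma)=\sum_j\frac{x^j}{j!}\widetilde g(\downarrow^j\tau/\sigma)$. By Lemma~\ref{lem_DnSnpropenvrac}(2), the polynomial steps $\sigma_1\leqq\tau$ are exactly $\sigma_1=\downarrow^j\tau$ up to normalisation.

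\textbf{Step 3: resummation.} Insert these expansions into the recursion for $[\tau]_i$ and use Lemma~\ref{lem_SnDnreindex}, namely \eqref{eq_Dntausigma2} together with its $\leqq$ counterpart. This moves the $\downarrow^j$ from the right factor onto the left one, so that all polynomial-chain contributions regroup into $\widetilde{\PPi}(\tau)_i$ through the formula \eqref{eq_pitopitildei}. Only the $\prec$-terms remain, each carrying a $\widetilde g$ coefficient. The multiplicativity Lemma~\ref{lem_multiplicativitygtilde} is used to merge the products of expanded factors.

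\textbf{Main obstacle.} The hard step is this resummation: verifying that the alternating chain sums over mixed chains, with links of type $\leqq$ and $\prec$ interleaved, collapse exactly to chains of pure $\prec$ type with $\widetilde{\cdot}$ weights. The frequency cutoffs must remain consistent throughout. I would first try to treat it purely combinatorially as the identity $(1+A+B)^{-1}=(1+(1+A)^{-1}B)^{-1}(1+A)^{-1}$ in the incidence algebra, where $A$ collects the polynomial links and $B$ the non-polynomial ones. I would then check that the cutoffs respect this factorisation, which is exactly the content of the definitions \eqref{eqdef_pitilde} and \eqref{eqdef_gtilde}.

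The $\tau/\sigma$ statement follows by the identical argument with $\Delta^{\!+}$ in place of $\Delta$.
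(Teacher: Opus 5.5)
Your incidence-algebra identity $(1+A+B)^{-1}=(1+(1+A)^{-1}B)^{-1}(1+A)^{-1}$ is exactly the paper's proof. The paper substitutes \eqref{eqdef_pitilde} and \eqref{eqdef_gtilde}, expands the products, and regroups using the unique decomposition of every chain into maximal runs of $\leqq$-steps separated by single $\prec$-steps. Your cutoff concern is resolved because the $\leqq$-factors $g(\tau/\sigma_1)$ are polynomials, on which the low-frequency cutoffs act trivially (the same convention used to derive \eqref{eq_pitildetopi}); so the Step~3 detour through Lemma~\ref{lem_SnDnreindex} and Lemma~\ref{lem_multiplicativitygtilde} is not needed.
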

	
	\begin{proof} 
		Plugging the definition of $\widetilde{ \PPi}$ and $\widetilde{ g}$ from \eqref{eqdef_pitilde} and \eqref{eqdef_gtilde} into the right hand side of the identity to prove, developing the products, one recovers the definition of $[\tau]_i$ and $[\tau/\sigma]_i$ as any descending sequence $\tau_q<\cdots<\tau_1<\tau$ takes the form
		$$
		\cdots\leqq\tau_{3,0}\prec \tau_{2,e_2} \leqq \cdots  \leqq  \tau_{2,0} \prec \tau_{1,e_1} \leqq \cdots \leqq \tau_{1,1} \leqq \tau.
		$$
	\end{proof}
	One can rewrite Proposition \ref{lem_sumgtilde} as the following alternative recursive definition for the $[\tau]_i,[\tau/\sigma]_i$
	\begin{equation}\label{eq_defreccrochettilde}
		\begin{aligned}
		[\tau]_i & = \widetilde{\PPi}(\tau)_i - \sum_{\sigma\prec\tau} \widetilde{ g}(\tau/\sigma)_{<i-1} \, [\sigma]_i, \\
		[\tau/\sigma]_i & = \widetilde{ g}(\tau/\sigma)_i - \sum_{\sigma\prec\nu\prec\tau}  \widetilde{ g}(\tau/\nu)_{<i-1} \, [\nu/\sigma]_i.
		\end{aligned}
	\end{equation}

	

	\subsection{A generalised Hairer-Kelly map}\label{subsect_HairerKelly}
	Given a regularity structure $\scrT$ satisfying Assumption \ref{assumption_notint}, one defines a new regularity structure $\scrT_\P$ by setting it to be the regularity structure of iterated paraproducts $\mcT_\mcA$, where $\mcA$ is the alphabet consisting of the symbols from $\mcB\sqcup \mcB^+ $, and where each letter has the same degree as the symbol it represents.

	\begin{defi}\label{def_Psitau}
		One defines a map from $\scrT$ to $\scrT_\P$ by setting for any $\tau\in\mcB \sqcup \mcB^+$
		\begin{equation} \label{forest_formula}
			\Psi(\tau) = \sum_{\substack{k\geq 0 \\  k=k_1+k_2}} \sum_{\substack{q\geq 0 \\ \tau_q\prec\dots\prec\tau_1\prec\tau}} \, \frac{1}{k_1!\, k_2!} \, \big\lg \downarrow^k\hspace{-2pt}\tau/\tau_1,\, \tau_1/\tau_2, \dots, \tau_q \big\rg_{k_1} X^{k_2}.
		\end{equation}
	\end{defi}
		One can write a recursive definition of the map $ \Psi $ given by
	\begin{equation} \label{recursive_psi}
	\Psi(\tau) = \sum_{\substack{k\geq 0 \\  k=k_1+k_2}} \, \frac{1}{k_1!\, k_2!} \lg \downarrow_k \tau  \rg_{k_1}  X^{k_2} + \mcM_c (\pi_{T_X}^\perp \otimes \Psi \pi_{T_X}^\perp ) \Delta \tau,
\end{equation}
	where $\pi_{T_X}^\perp$ is the  orthogonal projection parallel to the polynomial vector spaces $T_X$, and where the concatenation map is given by
	$$
	\mcM_c( a\otimes  \lg w\rg_\ell X^k   ) =  \lg wa \rg_{\ell} X^{k}.
	$$
	 The formula \eqref{forest_formula} can be interpreted as a forest formula obtained when one iterates $ \Delta $ and $\Delta^{\!+}$. As we send decorated trees to words, one can draw a parallel with the Hairer-Kelly used in \cite{HK15} for moving from non-geometric to geometric rough paths. The recursive formula \eqref{recursive_psi} is close in spirit to \cite{BCEF20}.
	In our case, we do not use any shuffle product and we do not claim any Hopf algebra morphism as in the Hairer-Kelly map. 
	\begin{example} Below, we provide some example of computations. We suppose that $|\Xi| = - \frac{3}{2}-\kappa$ which is the regularity of space-time white noise in dimension $d=1$. Then, one has $ | \CI(\Xi) |  = \frac{1}{2}-\kappa$ and  
		\begin{align*}
		\Psi\left( \mcI(\Xi\mcI(\Xi)) \right) &=  \lg \mcI(\Xi\mcI(\Xi)))\rg + \lg \mcI^+(\Xi) , \, \mcI(\Xi) \rg
	\\
		\Psi\left(X_{(0,1)}\mcI(\Xi) \right) &= \lg X_{(0,1)}\mcI(\Xi)\rg + \lg \mcI(\Xi)\rg X_{(0,1)}.
	\end{align*}
	For the first example, we have used the fact that
	\begin{equation*}\begin{aligned}
 \Delta \mcI(\Xi\mcI(\Xi))   &=  \mcI(\Xi\mcI(\Xi)) \otimes \one + \one \otimes \mcI(\Xi\mcI(\Xi)) \\ & +
X_{(0,1)} \otimes \CI_{(0,1)}^+(\Xi\mcI(\Xi))   + 
\CI(\Xi) \otimes \CI^+(\Xi) 
			\\
 \left( \pi^{\perp}_{T_X} \otimes \pi^{\perp}_{T_X} \right)		\Delta \mcI(\Xi\mcI(\Xi))  & =  \CI(\Xi) \otimes \CI^+(\Xi).
	\end{aligned}
	\end{equation*}
	We provide a third example below
\begin{align*}
			& \Psi\left( \mcI(\Xi X_{(0,1)}\mcI(\Xi))\right) =  \lg \mcI(\Xi X_{(0,1)}\mcI(\Xi))\rg  + \lg \mcI^+(\Xi) , \, \mcI(X_{(0,1)}\Xi) \rg
			\\
			&\quad + \lg \mcI(\Xi \mcI(\Xi))\rg X_{(0,1)}+\lg \mcI^+(\Xi) , \, \mcI(\Xi) \rg X_{(0,1)} + \lg \mcI^+(\Xi) , \, \mcI(\Xi) \rg_{(0,1)}.
		\end{align*}
	\end{example}

	It will be useful in later computations to have in hand the value of the coproduct $\Delta\Psi(\tau)$, it is computed in the next lemma. We first introduce for $n\geq0$
	$$
	\Psi_n(\tau) = \sum_{\substack{n=n_1+n_2}} \sum_{\substack{q\geq 0 \\ \tau_q\prec\dots\prec\tau_1\prec\tau}} \, \frac{1}{n_1!\, n_2!} \, \big\lg \tau/\tau_1,\, \tau_1/\tau_2, \dots, \tau_q \big\rg_{n_1} X^{n_2}.
	$$
	The maps $\Psi_n$ and $\Psi$ are related by the following identity
	\begin{equation}\label{eq_PsinAndPsi}
		\Psi(\tau) = \sum_{n\in \N^{d+1}} \Psi_n(\downarrow^n\tau).
	\end{equation}
	
	\begin{lemma}\label{lem_DeltaPsitau}
		For any $\tau\in \mcB \sqcup \mcB^+$,
			\begin{align*}
			\Delta \Psi(\tau) = & \sum_{\substack{\sigma\prec\tau\\n\in \N^{d+1}}} \Psi_n(\sigma)\otimes  D^n\left(\Psi(\tau/\sigma)\right)
			+\sum_{n \in \N^{d+1}} \frac{X^n}{n!} \otimes D^n \left( \Psi(\tau)\right) +\sum_{n\in\bfN^{d+1}} \Psi(\downarrow^n\!\tau)\otimes \frac{X^n}{n!}.
		\end{align*}
		
	
\end{lemma}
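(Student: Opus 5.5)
We compute $\Delta\Psi(\tau)$ directly from the forest formula \eqref{forest_formula}, using the coaction \eqref{coproduct_words} on words. It is extended multiplicatively to the factor $X^{k_2}$. We first apply it to a single term
\[
\frac{1}{k_1!k_2!}\lg \downarrow^k\tau/\tau_1,\tau_1/\tau_2,\dots,\tau_q\rg_{k_1}X^{k_2}.
\]
Since the letters are written from left to right as $\downarrow^k\tau/\tau_1,\dots,\tau_q$ (in $\Psi$ the concatenation $\mcM_c$ puts the "upper" letter first), a deconcatenation $w=w_1w_2$ splits the chain at some index $j\in\{0,\dots,q+1\}$. The left piece $w_1=\lg\downarrow^k\tau/\tau_1,\dots,\tau_{j-1}/\tau_j\rg^r_{\ell_1}$ goes to the right tensor factor. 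The right piece $\lg \tau_j/\tau_{j+1},\dots,\tau_q\rg_{\ell_2+r_1}X^{r_2}$ stays in the left factor. The $X^{k_2}$ factor contributes $\sum\binom{k_2}{m}X^{m}\otimes X^{k_2-m}$. We will sort the resulting terms into three families: $j=0$ (empty $w_1$), $j=q+1$ (empty $w_2$), and $1\le j\le q$ with both pieces nonempty. These should produce respectively the last sum, the middle sum, and the first sum of the claimed identity.

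For $1\le j\le q$ we set $\sigma=\tau_j\prec\tau$. The left factor, collected over chains below $\sigma$ and over the splittings $\ell_2+r_1$, $r_2+m$, has the form of $\Psi_n(\sigma)$ with $n=$ total polynomial weight extracted. We must check that the binomial and factorial weights $\frac{1}{k_1!k_2!}\binom{k_1}{\ell_1}\frac{1}{r_1!r_2!}\binom{k_2}{m}$ recombine into $\frac{1}{n_1!n_2!}$ times a factor attached to the right. The right factor is a word $\lg \downarrow^k\tau/\tau_1,\dots,\tau_{j-1}/\sigma\rg^{r}_{\ell_1}X^{k_2-m}$. Using $\downarrow^k\tau/\tau_1=\downarrow^k(\tau/\tau_1)$ from \eqref{eq_commutSnDiv}, the chains $\tau_1\prec\dots\prec\tau_{j-1}$ between $\sigma$ and $\tau$ are exactly the chains entering $\Psi(\tau/\sigma)$, since $\Delta^{\!+}$ iterates coassociatively. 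The upper decoration $r$ is precisely what $D^n$ adds, by $D^n\lg w\rg^k_\ell=\lg w\rg^{k+n}_\ell$ and the Leibniz rule \eqref{eq_leibnizSn} on $X^{k_2-m}$. This should yield $\Psi_n(\sigma)\otimes D^n\Psi(\tau/\sigma)$, with the constraint $|r|<|w_1|+|\ell_1|$ matching the positivity projection $\mathfrak p_+$ built into $T^+$. Here $\downarrow^k$ lands entirely on the top letter, so the $\downarrow$ in $\Psi(\tau/\sigma)$ is reproduced, while $\Psi_n(\sigma)$ carries no $\downarrow$, consistent with the statement.

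For $j=q+1$ the whole word moves to the right with upper index $r$, and the left factor is $X^{r_2+m}$ times an empty word with weight $\ell_2+r_1$. An empty word $\lg\rg_{\ell}$ vanishes unless $\ell=0$ (the paraproduct weight is absent), so $r_1=0$. The result is $\frac{X^n}{n!}\otimes D^n\Psi(\tau)$, once we use Lemma \ref{lem_DnSnpropenvrac}(1) to commute $D^n$ with $\downarrow^k$. For $j=0$ the right factor is $X^{k_2-m}$ and $r=0$, so only the polynomial part of the coaction survives. We then reindex via \eqref{eq_PsinAndPsi} and $\downarrow^{a}\downarrow^{b}=\downarrow^{a+b}$: the terms $\frac{1}{k_1!k_2!}\binom{k_2}{m}\lg\dots\rg_{k_1}X^{m}\otimes X^{k_2-m}$ become $\Psi(\downarrow^n\tau)\otimes\frac{X^n}{n!}$ with $n=k_2-m$. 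This uses that $\downarrow^n$ of the chain is encoded by shifting $k$ by $n$.

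The main obstacle is the combinatorial bookkeeping of the multinomial coefficients in the middle family. We must show that the $\binom{k}{\cdot}$ splittings of $\downarrow^k$, the weights $\ell_1,\ell_2,r_1,r_2$, and the $X^{k_2}$ expansion factor exactly as $\Psi_n(\sigma)\otimes D^n\Psi(\tau/\sigma)$. We would handle this by generating functions: multiply by formal $\frac{y^{k}}{k!}$ and use that $\sum\frac{1}{k_1!k_2!}$ over $k_1+k_2=k$ corresponds to a product of exponentials. This turns the identity into an exponential/shift statement, much as in the proof of Lemma \ref{lem_gtildetogitilde}. A secondary check is that the degree truncations ($\mathfrak p_+$ and the condition $|r|<|w_1|+|\ell_1|$) agree on both sides. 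This should follow from Assumption \ref{assumption_notint}, which excludes boundary cases.
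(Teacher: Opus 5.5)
Your proposal is correct and follows essentially the same route as the paper: apply the coaction \eqref{coproduct_words} to each word of the forest formula, sort the deconcatenations into the cases $j=0$, $j=q+1$ and $1\le j\le q$ (with $\sigma=\tau_j$), and match these with the three sums. The coefficient bookkeeping you plan to do with generating functions in the middle family reduces exactly to the Chu--Vandermonde identity $\sum_{r_1}\binom{n_1}{r_1}\binom{n_2}{p-r_1}=\binom{n_1+n_2}{p}$ that the paper uses; the subscript weight and the $X$-power passed to the left factor reappear as part of the polynomial factor of $\Psi(\tau/\sigma)$, which $D^n$ differentiates by Leibniz.
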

	
	\begin{proof}
		From the definition of the coproduct from Subsection \ref{subsect_theRSIteratedParap}, the coproduct $\Delta\Psi(\tau)$ is equal to
		\begin{equation}\label{eq_coproduitPsiTau_proof}
			\begin{aligned}
			&\sum_{\substack{\sigma\prec\tau \\ \sigma\prec\dots\prec\sigma_1\prec\tau \\ \nu_s\prec\dots\prec\nu_1\prec\sigma }}\sum_{\substack{(k_i)\in\mcP_4(k) \\ |p|<|\downarrow^k \tau/\sigma|+|k_2| \\ p=p_1+p_2 }} \hspace{-5pt} \frac{1}{k_1!k_2!k_3!k_4!} \frac{1}{p_1!p_2!} 
			\Big\lg \sigma/\mu_1,\dots,\mu_s \Big\rg_{k_3+p_1} X^{k_4+p_2}
			\\
			& \otimes \Big\lg   \downarrow^k\tau/\sigma_1,\dots,\sigma_r/\sigma     \Big\rg^p_{k_2}X^{k_1} + \,\sum_{n\geq 0} \frac{X^n}{n!} \otimes D^n \left( \Psi(\tau)\right) + \Psi(\downarrow^n\tau)\otimes \frac{X^n}{n!} .
		\end{aligned}
	\end{equation}
		Re-indexing the first sum gives  
		\begin{align*}
			&\sum_{\substack{\sigma\prec\tau \\ \sigma\prec\dots\prec\sigma_1\prec\tau \\ \nu_s\prec\dots\prec\nu_1\prec\sigma }}\sum_{ \substack{k=k_1+k_2+k_{34} \\ |q|<|\tau/\sigma|-|k_1| \\ q=q_1+q_2}} 	\sum_{k_{34}=k_3+k_4}  \frac{1}{k_1!k_2!}
			 \frac{1}{k_3!k_4!(q_1-k_3)!(q_2-k_4)!} \\ & \Big\lg \sigma/\mu_1,\dots,\mu_s\Big\rg_{q_1} X^{q_2}
			 \otimes \Big\lg  \downarrow^k\tau/\sigma_1,\dots,\sigma_r/\sigma  \Big\rg^{q-k_{34}}_{k_2} X^{k_1}.
		\end{align*}
		From the Chu-Vandermonde identity,
		$$
		\sum_{k_{34}=k_3+k_4} \frac{1}{k_3!(q_1-k_3)!}\frac{1}{k_4!(q_2-k_4)!} = \frac{1}{(q-k_{34})!k_{34}!}  \binom{q}{q_1}.
		$$
		Then the first sum in \eqref{eq_coproduitPsiTau_proof} is equal to 
		\begin{align*}
			&\sum_{\sigma\prec\tau } \sum_{\substack{k,q\geq0 \\ k=k_1+k_2+k_3 \\ q=q_1+q_2}} \sum_{\substack{ \sigma\prec\sigma_q\prec\dots\prec\tau \\ \nu_s\prec\dots\prec\nu_1\prec\sigma   }} 	\frac{1}{q_1!q_2!}\Big\lg \sigma/\nu_1,\dots,\nu_s \Big\rg_{q_1}X^{q_2}
			\\
			& \otimes \frac{q!}{k_1!k_2!k_{34}!(q-k_{34})!}  \Big\lg \downarrow^k \tau/\sigma_1,\dots, \sigma_q/\sigma \Big\rg^{q-k_{34}}_{k_2} X^{k_1},
		\end{align*}
		which is indeed equal to $\sum_{\sigma\prec\tau}\sum_{q \in \N^{d+1}} \Psi_q(\sigma) \otimes D^q\Psi(\tau/\sigma)$.
	\end{proof}

	\begin{lemma}\label{lem_commutdownarrowPsi}
		For any $\tau\in\mcB$ and $j\in\bfN^{d+1}$ we have the identity
		$$
		\downarrow^j\Psi(\tau) = \Psi(\downarrow^j\tau).
		$$
	\end{lemma}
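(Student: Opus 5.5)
We compute $\downarrow^j$ on the words of $\scrT_\P$ directly and compare term by term with $\Psi(\downarrow^j\tau)$. By the recalled action of $\downarrow^n$ on iterated paraproduct symbols, $\downarrow^j\lg w\rg_\ell=0$ for $j\neq0$ and a nonempty word $w$. The Leibniz rule \eqref{eq_leibnizSn} then gives
$$\downarrow^j\big(\lg w\rg_{k_1}X^{k_2}\big)=\frac{k_2!}{(k_2-j)!}\,\lg w\rg_{k_1}X^{k_2-j}\,\mathbf 1_{j\le k_2}.$$
Applying this to \eqref{forest_formula}, the factor $\frac{1}{k_2!}\cdot\frac{k_2!}{(k_2-j)!}$ becomes $\frac{1}{(k_2-j)!}$. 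Setting $k_2'=k_2-j$ and $k'=k_1+k_2'$, so that $k=k'+j$, we obtain
$$\downarrow^j\Psi(\tau)=\sum_{k'=k_1+k_2'}\ \sum_{\tau_q\prec\dots\prec\tau_1\prec\tau}\frac{1}{k_1!k_2'!}\big\lg \downarrow^{k'+j}\tau/\tau_1,\tau_1/\tau_2,\dots,\tau_q\big\rg_{k_1}X^{k_2'}.$$
By \eqref{eq_commutDnDownaroown}, $\downarrow^{k'+j}=\downarrow^{k'}\downarrow^j$. By \eqref{eq_commutSnDiv}, $\downarrow^{k'}\downarrow^j\tau/\tau_1=\downarrow^{k'}\big((\downarrow^j\tau)/\tau_1\big)$.

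It remains to match the index sets: chains $\tau_q\prec\dots\prec\tau_1\prec\tau$ must correspond to chains $\tau_q\prec\dots\prec\tau_1\prec\downarrow^j\tau$. Only the top step matters, since the lower ones are unchanged. The claim is that, for $\tau_1\prec\tau$, one has $(\downarrow^j\tau)/\tau_1\neq0$ exactly when $\tau_1\prec\downarrow^j\tau$, and that the other terms vanish.

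To prove this, apply $\id\otimes\downarrow^j$ to the coaction, using Lemma \ref{lem_DnSnpropenvrac}(4): $\Delta\downarrow^j\tau=(\id\otimes\downarrow^j)\Delta\tau$. Hence $(\downarrow^j\tau)/\sigma=\downarrow^j(\tau/\sigma)$ for every $\sigma$. By Lemma \ref{lem_polynomialsintheRS}(1), $\tau/\sigma$ lies either in polynomials or in $\mathrm{Span}(\mcB\backslash\mcB_X)$. The derivation $\downarrow^j$ preserves each of these subspaces: on polynomials it is differentiation, and on non-polynomial symbols it acts with the polynomial part only. So the relation $\prec$ (non-polynomial quotient) is compatible. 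Consequently the sum over $\tau_1\prec\tau$ of $\tau_1\otimes\downarrow^j(\tau/\tau_1)$ coincides with the sum over $\tau_1\prec\downarrow^j\tau$ of $\tau_1\otimes(\downarrow^j\tau)/\tau_1$. In the identity, $\tau_1$ ranges over basis elements, and those with vanishing quotient contribute zero. The expression obtained above is therefore exactly \eqref{forest_formula} for $\downarrow^j\tau$, extended linearly.

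A cleaner alternative is induction via the recursive formula \eqref{recursive_psi}. There the polynomial term commutes by the computation above. For the concatenation term one uses $(\pi^\perp\otimes\Psi\pi^\perp)(\id\otimes\downarrow^j)\Delta\tau$ and the induction hypothesis on $\tau/\sigma$ (of lower degree), together with the fact that $\downarrow^j$ acts only on the $X$ factor of $\lg w a\rg_\ell X^k$ and so commutes with $\mcM_c$. The main obstacle I expect is the bookkeeping in the previous paragraph. One must make sure that $\downarrow^j$ commutes with $\pi^\perp_{T_X}$ and with the first-factor quotients. The key input is Lemma \ref{lem_polynomialsintheRS} under Assumption \ref{assumption_notint}, which I would invoke precisely there.
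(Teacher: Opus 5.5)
Your proof is correct and follows the paper's argument. $\downarrow^j$ kills $\lg w\rg_{k_1}$ for nonempty $w$, the Leibniz rule moves it onto $X^{k_2}$, and the shift $k_2\mapsto k_2-j$ together with $\downarrow^{k+j}=\downarrow^{k}\downarrow^{j}$ and \eqref{eq_commutSnDiv} yields $\Psi(\downarrow^j\tau)$; your use of $\Delta\downarrow^j=(\id\otimes\downarrow^j)\Delta$ and Lemma \ref{lem_polynomialsintheRS} to identify the chains below $\tau$ with those below $\downarrow^j\tau$ makes explicit a step the paper leaves implicit. In your optional recursive variant, induct on the number of nodes rather than on the degree, since $|\tau/\sigma|$ exceeds $|\tau|$ whenever $|\sigma|<0$.
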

	\begin{proof}
		For any non-empty word $w$ and any $\ell\in\bfN^{d+1}$, we have $\downarrow^k\lg w\rg_\ell =0$ if $k\ne 0$. Then from the Leibniz rule of Lemma \ref{lem_DnSnpropenvrac}
		\begin{align*}
			\downarrow^j\Psi(\tau) &= \sum_{q\geq0}\sum_{\tau_q\prec\dots\prec\tau}\sum_{k_1,k_2\geq0} \frac{1}{k_1!}\Big\lg \downarrow^{k_1+k_2}\tau/\tau_1,\dots , \tau_q \Big\rg_{k_1} \downarrow^j \frac{X^{k_2}}{k_2!}
			\\
			&=\sum_{q\geq 0}\sum_{\tau_q\prec\dots\prec\tau}\sum_{k_1,k_2\geq0} \mathbf{1}_{k_2\geq j} \, \frac{1}{k_1!} \frac{X^{k_2-j}}{(k_2-j)!} \Big\lg \downarrow^{k_1+k_2-j}\downarrow^{j}\tau/\tau_1,\dots ,\tau_q \Big\rg_{k_1}  
			\\
			&= \Psi(\downarrow^j\tau).
		\end{align*}
	\end{proof}
	
	

	\subsection{ The distributions $Y_\tau$  \  }
	
	We define here the set of distributions $(Y_\tau)_\tau$ indexed by the symbols of the regularity structure we are using. These will be the building blocks for the commutation relations we will use to build the reference distributions $Z_\tau$ and for computing the right hand side of the equation. In the sequel, we will use the following notations $ [\cdot]_i^{\mathcal{Y}}, \widetilde{\PPi}^{\mathcal{Y}}, \widetilde{g}^{\mathcal{Y}} $ to stress that decorated tree letters $ \tau $ are sent to $Y_{\tau}$.
	
	\begin{defi}\label{def_Ytau}
		Define recursively for any $\tau\in \mcB \sqcup \mcB^+$ 
		\begin{equation}\label{eqdef_Ztau1}
			Y_{\tau} = \widetilde{ \PPi}(\tau) - \, \sum_{k\geq 0} \, \sum_{\sigma\prec \tau} \, \frac{1}{k!} \, \P_k\Big( \widetilde{ g}\big(\downarrow^k\hspace{-2pt} \tau/\sigma\big) , \enskip Y_\sigma  \Big).
		\end{equation}
	\end{defi}
	In practice, we look at specific elements of $\mcB^+$ of the form $ \tau / \sigma $ with $ \tau, \sigma \in \mcB $ and we consider 
	\begin{equation} \label{eqdef_Ztau2}
		Y_{\tau/\sigma} = \widetilde{ g}(\tau/\sigma) - \sum_{k\geq 0} \, \sum_{\sigma\prec \nu\prec \tau} \, \frac{1}{k!} \, \P_k\Big( \widetilde{ g}\big( \downarrow^k\hspace{-2pt}\tau/\nu\big) , \enskip Y_{\nu/\sigma}  \Big).
	\end{equation}
	Re-writing \eqref{eqdef_Ztau1} and \eqref{eqdef_Ztau2} as 
	\begin{equation}\label{eq_parac_representation}\begin{split}
			\widetilde{ \PPi}(\tau) &=Y_{\tau}  + \, \sum_{k\geq 0}\, \sum_{\sigma\prec \tau} \, \frac{1}{k!} \, \P_k\Big( \widetilde{ g}\big(\downarrow^k\hspace{-2pt} \tau/\sigma\big) , \enskip Y_\sigma  \Big),
			\\
			\widetilde{ g}(\tau/\sigma) &=Y_{\tau/\sigma}  + \,  \sum_{k\geq 0} \, \sum_{\sigma\prec \nu \prec \tau} \frac{1}{k!} \, \P_k\Big( \widetilde{ g}\big(\downarrow^k\hspace{-2pt} \tau/\nu\big) , \enskip Y_{\nu / \sigma}  \Big),
	\end{split}\end{equation}
	and iterating these two identities in the first argument of the paraproduct, together with the use of \eqref{eq_commutSnDiv}, yield the following representation of the model ${ (\widetilde{\PPi}, \widetilde{g})}$ as a sum of iterated paraproducts.
	\begin{equation}\label{eq_ParacRepIterated1}
		\widetilde{\PPi}(\tau) = \sum_{k\geq 0 } \, \sum_{q\geq 0} \, \sum_{\tau_q\prec\dots\prec\tau} \, \frac{1}{k!} \, \P_k\Big(Y_{\downarrow^k\hspace{0pt}\tau/\tau_1},\dots, Y_{\tau_q} \Big),
	\end{equation}
	
	\begin{equation}\label{eq_ParacRepIterated2}
		\widetilde{ g}(\tau/\sigma) = \sum_{k\geq 0 } \, \sum_{q\geq 0} \, \sum_{\sigma\prec\sigma_q\prec\dots\prec \tau} \, \frac{1}{k!} \, \P_k\Big(Y_{\downarrow^k\hspace{0pt}\tau/\sigma_1},\dots, Y_{\sigma_q/\sigma} \Big).
	\end{equation}
	These representations can be rewritten using the $\Psi$ map
	\begin{equation} \label{eq_PiTauPiPsiTau}
		\widetilde{\PPi}(\tau) = 	\widetilde{\PPi}^\mcY\big(\Psi(\tau)\big)
		,\qquad
		\widetilde{ g}(\tau/\sigma) = \widetilde{\PPi}^\mcY\big(\Psi(\tau/\sigma)\big).
	\end{equation}
	The following Lemma gives a stronger version of these last equations in terms of Littlewood-Paley blocks. 
	\begin{lemma} \label{lem_PiiPiPsii_00}
		We have for any $\tau,\sigma\in\mcB$ and $i\geq-1$
		$$
			\widetilde{\PPi}(\tau)_i = 	\widetilde{\PPi}^\mcY\big(\Psi(\tau)\big)_i, 
			\qquad
			 \widetilde{g}(\tau/\sigma)_i = 	\widetilde{\PPi}^\mcY\big(\Psi(\tau/\sigma)\big)_i.
		$$
	\end{lemma}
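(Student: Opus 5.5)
We need to show that the Littlewood--Paley block identity holds, namely $\widetilde{\PPi}(\tau)_i = \widetilde{\PPi}^\mcY(\Psi(\tau))_i$, where the $i$-th block of a word is understood through Lemma~\ref{lem_gtildetogitilde} applied in the word regularity structure. The plan is to argue by induction on $\tau$ with respect to the order $\prec$, which is well founded because the set $\{\sigma:\sigma<\tau\}$ is finite. The whole argument rests on two block-wise identities.

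The first identity is on the tree side. We combine the recursive definition \eqref{eq_defreccrochettilde} of the brackets with the block version of \eqref{eq_parac_representation}. Concretely, we show that
\[
\widetilde{\PPi}(\tau)_i = [\tau]_i^{\mcY}+\sum_{\sigma\prec\tau}\widetilde g(\tau/\sigma)_{<i-1}[\sigma]_i^{\mcY},
\]
and we then unfold the brackets.

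The second identity is on the word side. We define the $i$-th block of $\widetilde{\PPi}^\mcY(\lg w\rg_\ell X^p)$ by the formula \eqref{eqdef_pitilde} in $\scrT_\P$. On the word side the polynomial parts $\leqq$ come only from the letters $X^{k_2}$ and from the weights $\ell$. By the Example following \eqref{eq_pitildetopi}, $\widetilde{\PPi}^\mcY$ kills every term containing $X^{k_2}$ with $k_2\neq0$. It therefore suffices to compute
\[
\widetilde{\PPi}^\mcY\big(\lg \downarrow^k\tau/\tau_1,\dots,\tau_q\rg_{k}\big)_i ,
\]
and we use Lemma~\ref{lem_gtildetogitilde} to rewrite it through the weighted projectors $\Delta_i^m$ acting on paraproducts.

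The core computation is the following. Using $\P_k(h_1,\dots,h_n)=\sum_j (\Delta^{\bm\ell}_{<j-1}\cdots)\,\Delta_j h_n$, the $i$-th block of the iterated paraproduct equals $\Delta_{<i-1}$ of the first $q$ arguments times $\Delta_i Y_{\tau_q}$, up to weight redistribution. We then expand the weights via the Leibniz-type formula \eqref{eqdef_ParapPolWeight2} together with Lemma~\ref{lem_SnDnreindex}. This identifies
\[
\frac{1}{k!}\Delta^{k}_{<i-1}\text{-weighted factors}
\]
with $\widetilde g(\downarrow^k\tau/\sigma)_{<i-1}$ through \eqref{eq_gitildefromdeltai}. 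Summing over the chains $\tau_q\prec\dots\prec\tau$ and applying the induction hypothesis to each $\widetilde g(\tau_{j}/\tau_{j+1})$ and to $\widetilde{\PPi}(\tau_q)_i$ reproduces the expansion of Proposition~\ref{lem_sumgtilde} read backwards. For the $\tau/\sigma$ case we use the same argument with \eqref{eqdef_Ztau2}, \eqref{eq_ParacRepIterated2} and the second half of Proposition~\ref{lem_sumgtilde}.

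The main obstacle is the exact matching of the polynomial weights at block level. The identity \eqref{eq_PiTauPiPsiTau} only holds after summing over $i$, and the projector-level statement requires checking that the weight $k_1$ on $\lg\cdot\rg_{k_1}$ distributes through $\Delta^{\ell}_{<i-1}$ exactly as $\downarrow^k$ distributes through the chain, via \eqref{eq_commutSnDiv} and the Chu--Vandermonde identity as in Lemma~\ref{lem_DeltaPsitau}. We will first treat the case without weights ($\downarrow^k\tau=0$ for $k\neq0$) and then reduce the general case to it by Lemma~\ref{lem_commutdownarrowPsi} together with the Pascal inversion \eqref{eq_pitopitildei}.
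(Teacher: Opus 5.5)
Your plan has a genuine gap, and it sits at the polynomial weights you flag as the main obstacle. The reduction ``$\widetilde{\PPi}^\mcY$ kills every term containing $X^{k_2}$ with $k_2\neq0$'' holds for the summed object; that is what the Example after \eqref{eq_pitildetopi} records. It is false block by block. In $\scrT_\P$ one has $\downarrow^m\lg w\rg_{k_1}=0$ for $m\neq0$, so the Leibniz rule and Lemma~\ref{lem_gtildetogitilde} give
\[
\widetilde{\PPi}^\mcY\big(\lg a_1,\dots,a_n\rg_{k_1}X^{k_2}\big)_i=\Delta_i^{k_2}\big(\P_{k_1}(Y_{a_1},\dots,Y_{a_n})\big).
\]
This is nonzero in general; only its sum over $i$ vanishes. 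These terms are exactly what matches the contributions $\frac{1}{m!}\Delta_i^m\big(\widetilde{\PPi}(\downarrow^m\tau)\big)$ with $m\neq0$ on the tree side. Discarding them therefore breaks the identity whenever $\downarrow^m\tau\neq0$.

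Your ``core computation'' also misreads the block of a word. Since no $\sigma\leqq\lg w\rg_\ell$ exists, \eqref{eqdef_pitilde} gives $\widetilde{\PPi}^\mcY(\lg w\rg_\ell)_i=\Delta_i\P_\ell(Y_{a_1},\dots,Y_{a_n})$. That is the ordinary Littlewood--Paley block of the whole paraproduct, not $\Delta_{<i-1}(\cdots)\,\Delta_iY_{a_n}$, so the matching with Proposition~\ref{lem_sumgtilde} read backwards does not go through. Your first identity is also problematic under either reading. If $[\sigma]^\mcY_i$ means $[\Psi(\sigma)]^\mcY_i$, you are using Lemma~\ref{lem_sumcrochets}, whose proof relies on the present lemma, so the argument is circular. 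If it means $\Delta_iY_\sigma$, the identity is false in general, since together with \eqref{eq_defreccrochettilde} it would force $[\tau]_i=\Delta_iY_\tau$.

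The missing idea is that a block is a linear functional of summed objects. The global identity \eqref{eq_PiTauPiPsiTau} therefore already suffices, and no induction or block analysis of paraproducts is needed. By Lemma~\ref{lem_gtildetogitilde}, $\widetilde{\PPi}(\tau)_i=\sum_m\frac{1}{m!}\Delta_i^m\big(\widetilde{\PPi}(\downarrow^m\tau)\big)$. Apply \eqref{eq_PiTauPiPsiTau} to each $\downarrow^m\tau$. Rewrite $\Psi(\downarrow^m\tau)=\downarrow^m\Psi(\tau)$ by Lemma~\ref{lem_commutdownarrowPsi}. Then read Lemma~\ref{lem_gtildetogitilde} backwards in $\scrT_\P$ to obtain $\widetilde{\PPi}^\mcY(\Psi(\tau))_i$. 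The $\tau/\sigma$ case is identical.

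Your ``unweighted case'' is simply $\Delta_i$ applied to \eqref{eq_PiTauPiPsiTau}. Your closing sentence names roughly the right ingredients, but it contradicts your earlier reduction, which discards exactly the $X^{k_2}$-terms this argument needs.
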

	\begin{proof}
	We prove the first identity. From Lemma \ref{lem_gtildetogitilde} and \eqref{eq_PiTauPiPsiTau}
	\begin{align*}
		\widetilde{\PPi}(\tau)_i &= \sum_{m\geq 0}\frac{1}{m!}\Delta_i^m\left(\widetilde{\PPi}(\downarrow^m\tau)\right) 
		= \sum_{m\geq 0}\frac{1}{m!}\Delta_i^m\left(\widetilde{\PPi}(\Psi(\downarrow^m\tau))\right). \end{align*}
	From Lemmas \ref{lem_commutdownarrowPsi} and \ref{lem_gtildetogitilde}, this is equal to
	$$
	\sum_{m\geq 0}\frac{1}{m!}\Delta_i^m\left(\widetilde{\PPi}(\downarrow^m\Psi(\tau))\right)  = 	\widetilde{\PPi}\left(\Psi(\tau)\right)_i.
	$$
	\end{proof}
	
	\begin{example} 
		One has 
		\begin{equation*}
			\begin{aligned}
			Y_{\CI(\Xi)} &= \tilde{\PPi} \CI(\Xi) = K * \xi,
			\\
				Y_{\CI(\Xi \CI(\Xi))} & = \tilde{\PPi} \CI(\Xi \CI(\Xi)) - \P\Big(\tilde{g}(\CI^{+}(\Xi)), Y_{\CI(\Xi)} \Big) \\ &  =  K* (\xi K*\xi) - \P\Big(K*\xi, K*\xi \Big)
				\end{aligned}
		\end{equation*}
		where we have used the fact that $\tilde{g}(\CI^{+}(\Xi)) = g(\CI^{+}(\Xi)) =  K * \xi  $ (see \cite[Proposition 15]{BH26}). For the regularity structure on the paraproducts, we provide one example below:
		\begin{equation*}
			\begin{aligned}
		\left( \pi^{\perp}_{T_X} \otimes \pi^{\perp}_{T_X} \right)	\Delta \lg  \CI^+(\Xi),  \CI(\Xi ) \rg_{(0,1)} &=   \lg \CI(\Xi) \rg_{(0,1)} \otimes  \lg \CI^+(\Xi)\rg^0 \\ & + \lg \CI(\Xi) \rg X_{(0,1)} \otimes  \lg \CI^+(\Xi) \rg_{(0,1)}^{(0,1)}
			\\
			Y_{\lg  \CI(\Xi),  \CI(\Xi ) \rg_{(0,1)}} &= P_{(0,1)}(K*\xi , K * \xi)
			\end{aligned}
		\end{equation*}
		which is due to the fact that $Y$ is zero on words with a single letter.
	\end{example}
	The following theorem is the main result of this section, asserting that the $Y_\tau$ have the regularity we would expect belonging to $ C^{|\tau|} $.
\begin{theorem} \label{thm_regularityZtau}
		For any symbol $\tau\in \mcB \sqcup \mcB^+$, the distribution $Y_\tau$ is in $C^{|\tau|}$, and furthermore we have the continuity estimate
		\begin{equation}\label{eq_continuity_Y}
		\norme{Y_\tau}_{|\tau|} \lesssim  \Vert\tau\Vert_{ (\PPi,g)}^*.
	\end{equation}
	\end{theorem}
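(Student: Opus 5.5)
We argue by induction on $\tau$ with respect to the order $\prec$, which is well founded since $\sigma\prec\tau$ implies $\sigma$ is a strictly smaller symbol. The plan is to compare the Littlewood--Paley blocks $\Delta_i Y_\tau$ with the generalised blocks $[\tau]_i$ of Proposition~\ref{prop_RegCrochets}, whose size is $2^{i|\tau|}\Vert\tau\Vert^*_{(\PPi,g)}$. The gap between the two is made of paraproducts of lower order terms, and these are either controlled by induction or cancel at leading order.

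\textbf{Step 1: a block formula for $Y_\tau$.} Start from the recursive description \eqref{eq_defreccrochettilde},
$$[\tau]_i = \widetilde{\PPi}(\tau)_i - \sum_{\sigma\prec\tau}\widetilde g(\tau/\sigma)_{<i-1}[\sigma]_i .$$
Combine it with the definition \eqref{eqdef_Ztau1} of $Y_\tau$ and expand the paraproduct blockwise:
$$\Delta_i\P_k(f,h)\approx\sum_{j\sim i}\Delta_i\big(\Delta^k_{<j-1}f\,\Delta_j h\big).$$
Then use Lemma~\ref{lem_gtildetogitilde}, which gives $\widetilde g(\tau/\sigma)_{<i-1} = \sum_m \frac1{m!}\Delta^m_{<i-1}\widetilde g(\downarrow^m\tau/\sigma)$, together with \eqref{eq_commutSnDiv}. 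With these, rewrite
$$\Delta_i Y_\tau = \Delta_i\Big([\tau]_\cdot\text{-sum}\Big) + \sum_{\sigma\prec\tau}\sum_k\frac1{k!}\Big(\text{commutator}\Big)_i .$$
The commutators are of the form
$$\Delta^k_{<i-1}\widetilde g(\downarrow^k\tau/\sigma)\,[\sigma]_i-\Delta_i\P_k\big(\widetilde g(\downarrow^k\tau/\sigma),Y_\sigma\big).$$
Because the weights $k$ match (this is exactly why $\P_k$ and the $1/k!$ appear in \eqref{eqdef_Ztau1}), the leading term of the Taylor expansion cancels. What remains are differences $[\sigma]_i-\Delta_iY_\sigma$, handled by induction, and genuine commutators $[\Delta_i,\Delta^k_{<j-1}f]\Delta_j h$.

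\textbf{Step 2: estimates.} By induction, $Y_\sigma\in C^{|\sigma|}$ and $[\sigma]_i$ is of the same order, so $[\sigma]_i-\Delta_iY_\sigma$ has size $2^{i|\sigma|}$. The block $\widetilde g(\tau/\sigma)_{<i-1}$ can be expanded via Proposition~\ref{lem_sumgtilde} into brackets $[\tau/\nu]$ of positive degree. The difficulty is that a naive product bound only gives $2^{i|\sigma|}$, not $2^{i|\tau|}$, since $|\tau|>|\sigma|$. To fix this, I would not estimate term by term. Instead I would iterate the identity, exactly as in the passage from \eqref{eq_parac_representation} to \eqref{eq_ParacRepIterated1}, and so express everything in the word picture through \eqref{eq_PiTauPiPsiTau} and Lemma~\ref{lem_PiiPiPsii_00}:
$$\Delta_i\widetilde\PPi(\tau)=\Delta_i\widetilde\PPi^{\mcY}(\Psi(\tau)).$$
By definition $Y_\tau$ is then the one-letter remainder of the expansion of $\widetilde\PPi^{\mcY}\Psi(\tau)$ into iterated paraproducts. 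Its regularity then follows from the local expansion theory of iterated paraproducts of \cite{LocalExpansionsParacSystems}, transported to our paraproducts by Appendix~\ref{appendix_A}. This is legitimate because, by Assumption~\ref{assumption_notint}, all letter degrees $|\tau/\tau_1|,\dots$ are non-integer. Concretely, I would show that
$$[\tau]_i-\Delta_iY_\tau=\sum_{q\ge1}(\text{iterated corrector terms }\bfC_\ell)_i ,$$
and each such term is bounded by $2^{i|\tau|}$ times products $\prod\Vert\tau_{j}/\tau_{j+1}\Vert$. This uses the commutator bounds for $\bfC_\ell$ from \cite{LocalExpansionsParacSystems}, with the star derivatives summed up to the thresholds $\rho_j$ of \eqref{condition_sum}.

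\textbf{Step 3: conclusion.} Collecting, $\sup_i 2^{-i|\tau|}\|\Delta_iY_\tau\|_{L^\infty}\lesssim\Vert\tau\Vert^*_{(\PPi,g)}$. The recursive definition of $\Vert\cdot\Vert^*$ exactly absorbs the products $\Vert\tau/\sigma\Vert\,\Vert\sigma\Vert^*$. The case $\tau/\sigma\in\mcB^+$ goes the same way, using \eqref{eqdef_Ztau2} and the second identities throughout. The main obstacle is Step 2: the cancellation which upgrades the regularity from $|\sigma|$ to $|\tau|$. I expect it to rest on the reindexing Lemma~\ref{lem_SnDnreindex} and Lemma~\ref{lem_DeltaPsitau}, which match the polynomial corrections $D^n(\tau/\sigma)$ against the weighted paraproducts $\P_k$.
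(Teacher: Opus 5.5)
There is a genuine gap in Step~2: the decisive claim is asserted, not proved. The local expansion theory of \cite{LocalExpansionsParacSystems} takes the regularities of the letters as input. It tells you that iterated paraproducts of the $Y_\mu$ with already controlled $\mu$ define a model on words. Hence brackets $[w]^{\mcY}_i$ of words of length at least two are of size $2^{i|w|}$. It says nothing about the single letter $Y_\tau$, whose regularity is what you are trying to prove, so saying that this regularity ``follows from the local expansion theory'' is circular.

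The only input that knows $\Pi_x\tau$ is of order $|\tau|$ is the tree bracket $[\tau]_i$, bounded through Proposition~\ref{prop_RegCrochets}. The missing idea is the exact bridge between the two models, Lemma~\ref{lem_sumcrochets}: $[\tau]_i=[\Psi(\tau)]^{\mcY}_i$. Since $[\lg\tau\rg]^{\mcY}_i=\Delta_iY_\tau$, this gives
\[
\Delta_iY_\tau=[\tau]_i-[\overline\Psi(\tau)]^{\mcY}_i .
\]
The first term is controlled by the model $(\PPi,g)$. The second consists of brackets of words of degree $|\tau|$ whose letters are strictly smaller, so the induction hypothesis, \cite[Theorem 1]{LocalExpansionsParacSystems} and Proposition~\ref{prop_RegCrochets} for the word model bound it. This is the correct form of your decomposition of $[\tau]_i-\Delta_iY_\tau$. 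It is made of word brackets, not of correctors $\bfC_\ell$: the estimates on $\bfC_\ell$ from \cite{LocalExpansionsParacSystems} are $L^\infty$ bounds on functions, not block decay of order $2^{i|\tau|}$.

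Proving $[\tau]_i=[\Psi(\tau)]^{\mcY}_i$ is where the cancellation you are looking for lives. Lemmas~\ref{lem_DeltaPsitau} and~\ref{lem_SnDnreindex} do not suffice for it. Expanding $[\Psi(\tau)]^{\mcY}_i$ with the coproduct of $\Psi(\tau)$ produces the coefficients $\widetilde g^{\mcY}(D^q\Psi(\tau/\sigma))_{<i-1}$, i.e.\ star derivatives of iterated paraproducts of the $Y_\mu$. The tree bracket $[\tau]_i$ instead contains the recentering coefficients $\widetilde g(D^q(\tau/\sigma))_{<i-1}$ of the tree model. These coincide only by Proposition~\ref{prop_gDntaustarderivative}. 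That proposition is proved via Lemma~\ref{lem_derivModifModel} and Proposition~\ref{prop_starderivY}, by matching the Taylor subtractions $\partial^n_y\Pi_x\sigma(y)_{|y=x}$ at the tree level with the subtractions in the recursive definition \eqref{eqdef_starderivative} of $\partial^k_*\P_\ell$. Only after this identification do the reindexing of Lemma~\ref{lem_SnDnreindex} and $\sum_q\Psi_q(\downarrow^q\mu)=\Psi(\mu)$ close the induction.

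Your Step~1 remark that the leading Taylor terms cancel because the weights $k$ match is a heuristic shadow of this identity. Without $\widetilde g(D^n\tau)=\widetilde g^{\mcY}(D^n\Psi(\tau))$, the commutator route only gives size $2^{i|\sigma|}$, as you yourself observe.
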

	Theorem \ref{thm_regularityZtau} relies on  Proposition \ref{prop_gDntaustarderivative} and Lemma \ref{lem_sumcrochets}. 
	\begin{proposition}\label{prop_gDntaustarderivative}
		For any $\tau\in \mcB \sqcup \mcB^+$, we have the identity
		\begin{equation}\label{eq_starderivativeZtau}
			\widetilde{ g}\big( D^n\tau\big)_i = \widetilde{ g}^\mcY\left(D^n\Psi(\tau)\right)_i. 
		\end{equation}
	\end{proposition}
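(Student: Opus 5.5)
We prove \eqref{eq_starderivativeZtau} by induction on $\tau$ with respect to the partial order $\prec$ (equivalently, on the number of non-polynomial cuts in $\Delta\tau$). The idea is to write both sides as the Littlewood--Paley block of a $D^n$-type recentred quantity, expressed through the deconcatenation structure. We then identify the two expansions using Lemma~\ref{lem_DeltaPsitau}.

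\textbf{Step 1: recursive formula for $\widetilde g(D^n\tau)_i$ on the tree side.} Apply \eqref{eq_defgtilde2} together with the recursion relating $g$ to $\PPi$ through the antipode, namely $g_x(D^n\tau) = n!\,\gamma$-type Taylor coefficients of $\Pi_x\tau$ at $x$. In the modified model this yields
\[
\widetilde g(D^n\tau)_i = \Delta_i\big(\partial^n\widetilde\PPi(\tau)\big) - \sum_{\sigma\prec\tau}\sum_{m}\frac{1}{m!}\Delta_i\Big(\widetilde g(\downarrow^m\tau/\sigma)\,\widetilde g(D^{n+m}\sigma)\Big)\text{-type corrections}.
\]
The precise form follows from coassociativity, exactly as in the proof of Lemma~\ref{lem_SnDnreindex}: one applies $D^n$ to $\Delta\tau$ and uses \eqref{eq_Dntausigma2} and the commutation $\Delta D^n = (D^n\otimes\id)\Delta$ from Lemma~\ref{lem_DnSnpropenvrac}. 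The resulting formula expresses $\widetilde g(D^n\tau)$ as $\partial^n\widetilde\PPi(\tau)$ minus products $\widetilde g(\cdot/\sigma)\,\widetilde g(D^{\cdot}\sigma)$, with $\sigma$ strictly smaller.

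\textbf{Step 2: the word side.} On $\scrT_\P$, Proposition~\ref{def_prop_antipode} gives $\widetilde g^\mcY(D^n\lg w\rg_\ell)=\PPi^\mcY(\CA_*\lg w\rg_\ell^n)$, after transporting to the tilde version via \eqref{eq_defgtilde2}, Lemma~\ref{lem_commutdownarrowPsi} and the fact that $\downarrow^k$ kills nonempty words. Unfolding $\CA_*$ produces $\partial^n\widetilde\PPi^\mcY(\Psi(\tau))$ minus $\mcM(\widetilde g^\mcY\otimes\widetilde g^\mcY D^{\cdot})$ applied to the reduced coproduct of $\Psi(\tau)$. By \eqref{eq_PiTauPiPsiTau} and Lemma~\ref{lem_PiiPiPsii_00}, the leading terms $\partial^n\widetilde\PPi(\tau)_i$ and $\partial^n\widetilde\PPi^\mcY(\Psi\tau)_i$ agree.

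\textbf{Step 3: matching the correction terms.} Insert Lemma~\ref{lem_DeltaPsitau}. The cross terms $\sum_{\sigma\prec\tau}\Psi_m(\sigma)\otimes D^m\Psi(\tau/\sigma)$ pair, under the word model, with the tree-side corrections. We use three ingredients for this pairing:
\begin{itemize}
\item the induction hypothesis applied to $D^m(\tau/\sigma)$ and to the smaller $\sigma$;
\item relation \eqref{eq_PsinAndPsi} to convert $\Psi_m$ into $\Psi\circ\downarrow^m$;
\item \eqref{eq_commutSnDiv}.
\end{itemize}
The two purely polynomial terms of Lemma~\ref{lem_DeltaPsitau} vanish under $\widetilde g$, respectively $\widetilde\PPi$, except for the $X^0$ part, by the example $\widetilde\PPi(X^k\tau)=0$ for $k\neq0$. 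Finally, we restrict to the projection $\mathfrak p_+$ on positive-degree elements, which corresponds exactly to the truncation $|p|\le\rho_j$ in \eqref{eqdef_starderivative}. Assumption~\ref{assumption_notint} guarantees that no borderline integer degrees occur.

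\textbf{Main obstacle.} The hardest step is Step 3. The combinatorial factors $1/(k_1!k_2!)$ coming from $\Psi$ and the binomials in $\hat\Delta^{\!+}$ must match exactly. In addition, the degree cutoff in $\mathfrak p_+$ must agree with the cutoff $|n|<|\tau/\sigma|$ on the tree side. We plan to handle the factors with a Chu--Vandermonde resummation, as in the proof of Lemma~\ref{lem_DeltaPsitau}. We plan to handle the cutoffs by checking that both are governed by the same degree $|\tau/\sigma|-|k|$.
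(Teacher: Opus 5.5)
\textbf{There is a gap: the induction does not close on the statement as posed.} The overall plan is sound: apply a recursion for $\widetilde g(D^n\,\cdot\,)$ to both $\tau$ and $\Psi(\tau)$, expand $\Delta\Psi(\tau)$ with Lemma~\ref{lem_DeltaPsitau}, reindex with Lemma~\ref{lem_SnDnreindex} and \eqref{eq_PsinAndPsi}, then induct. The problem is your Step 1: the correction terms are not products $\widetilde g(\cdot/\sigma)\,\widetilde g(D^{\cdot}\sigma)$ of positive-degree quantities. Differentiating $\PPi\tau=\sum_\sigma g_x(\tau/\sigma)\,\Pi_x\sigma$ at $y=x$ gives, for $|n|<|\tau|$ (Lemma~\ref{lem_derivModifModel}, point 3),
\[
\widetilde g_x(D^n\tau)=\sum_{k\le n}\binom{n}{k}\partial^{n-k}\widetilde{\PPi}(\downarrow^k\tau)(x)-\sum_{\substack{\sigma\prec\tau,\ \sigma\notin T_X\\ |\sigma|<|n|}}\widetilde g_x(\tau/\sigma)\,\partial^n_y\big(\Pi_x\sigma(y)\big)_{|y=x}.
\]
The second factors are Taylor jets of $\Pi_x\sigma$ of order larger than $|\sigma|$. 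They are not values of $\widetilde g$ on any element of $T^+$, since $D^n\sigma=n!\,\sigma/X^n$ vanishes when $|n|>|\sigma|$. On the word side they are exactly the $\mathfrak p_-$ factors in $\CA_*$ of Proposition~\ref{def_prop_antipode}, i.e.\ negative-degree star derivatives. Your Step 2 writes these as $\widetilde g^\mcY$ of something, which hides them.

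After reindexing, the cross term for $\Psi(\tau)$ becomes $\widetilde g(\tau/\nu)\,\partial^n_y(\Pi^\mcY_x\Psi(\nu)(y))_{|y=x}$ with $|\nu|<|n|$. To match the tree side you need
\[
\partial^n_y\big(\Pi^\mcY_x\Psi(\nu)(y)\big)_{|y=x}=\partial^n_y\big(\Pi_x\nu(y)\big)_{|y=x},
\]
and the statement you are inducting on says nothing about this.

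\textbf{How to repair it.} Strengthen the induction to the pair of identities in Proposition~\ref{prop_starderivY}:
\begin{itemize}
\item $\widetilde g^\mcY(D^n\Psi(\tau))=\widetilde g(D^n\tau)$ when $|n|<|\tau|$;
\item equality of the negative jets above when $|n|>|\tau|$.
\end{itemize}
Prove them jointly using points 2 and 3 of Lemma~\ref{lem_derivModifModel}: the positive statement is applied to $\tau/\sigma$ and the negative one to $\sigma$.

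Two further points need correcting.
\begin{itemize}
\item Work pointwise, not with blocks. $\widetilde g(\cdot)_i$ is not $\Delta_i$ of $\widetilde g(\cdot)$ but $\sum_m\frac1{m!}\Delta_i^m\,\widetilde g(\downarrow^m\cdot)$ (Lemma~\ref{lem_gtildetogitilde}), so your Step 1 expression with $\Delta_i$ of products does not describe the block. Once the pointwise identity holds for every $\downarrow^m\tau$, the block statement follows from Lemma~\ref{lem_gtildetogitilde}, $\downarrow^mD^n=D^n\downarrow^m$ and Lemma~\ref{lem_commutdownarrowPsi}.
\item The leading term is $\sum_{k\le n}\binom nk\partial^{n-k}\widetilde{\PPi}(\downarrow^k\tau)$, not just $\partial^n\widetilde{\PPi}(\tau)$. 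It is matched through Lemma~\ref{lem_commutdownarrowPsi}. The polynomial cuts of Lemma~\ref{lem_DeltaPsitau} do not simply drop out; they produce these $\downarrow^k$ terms.
\end{itemize}
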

	The proof of Proposition \ref{prop_gDntaustarderivative} is postponed to the next subsection, it will follow as a corollary of the stronger Proposition \ref{prop_starderivY} below. 
	
	\begin{lemma}\label{lem_sumcrochets}
		For any symbol $\tau\in\mcB$ and $i\geq-1$, we have the relation
		\begin{equation}\begin{split}\label{eq_propdecompencrochet}
				[\tau]_i = \left[ \Psi(\tau) \right]_i^\mathcal{Y}.
		\end{split}\end{equation}    
	\end{lemma}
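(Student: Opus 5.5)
Note first that $[\cdot]_i^{\mcY}$ is the generalised Littlewood--Paley block built from the model $(\widetilde{\PPi}^\mcY,\widetilde g^\mcY)$ on the regularity structure of words $\scrT_\P$. On words this model has no polynomial extractions hidden in the brackets: $\downarrow^n\lg w\rg_\ell=0$ for $n\neq 0$, so $\widetilde{\PPi}^\mcY=\PPi^\mcY$ on non-polynomial words. The plan is an induction on $\tau\in\mcB$ along the partial order $\prec$, using on each side the recursive description \eqref{eq_defreccrochettilde}:
\[
[\tau]_i=\widetilde{\PPi}(\tau)_i-\sum_{\sigma\prec\tau}\widetilde g(\tau/\sigma)_{<i-1}[\sigma]_i,
\]
together with the analogous recursion for the bracket of a linear combination of words in $\scrT_\P$. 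For words the recursion reads
\[
[W]_i^\mcY=\widetilde{\PPi}^\mcY(W)_i-\sum_{V\prec W}\widetilde g^\mcY(W/V)_{<i-1}[V]_i^\mcY .
\]
For $\tau\in\mcB_X$ both sides are trivially equal, since $\Psi(X^k)=X^k$ up to the $\lg\,\rg$ conventions. This is the base case.

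For the inductive step, Lemma \ref{lem_PiiPiPsii_00} identifies the first terms: $\widetilde{\PPi}(\tau)_i=\widetilde{\PPi}^\mcY(\Psi(\tau))_i$. It then remains to prove
\[
\sum_{\sigma\prec\tau}\widetilde g(\tau/\sigma)_{<i-1}[\sigma]_i=\sum_{V\prec \Psi(\tau)}\widetilde g^\mcY(\Psi(\tau)/V)_{<i-1}[V]^\mcY_i .
\]
To expand the right-hand side I use Lemma \ref{lem_DeltaPsitau}, projected with $\pi^\perp_{T_X}$ on both factors so that only the $\prec$ part survives. This leaves
\[
\sum_{\sigma\prec\tau}\sum_n\Psi_n(\sigma)\otimes D^n\Psi(\tau/\sigma).
\]
The two other terms of that lemma involve a polynomial factor $X^n/n!$ and are exactly what $\prec$ excludes. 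The right-hand side therefore becomes
\[
\sum_{\sigma\prec\tau}\sum_n \widetilde g^\mcY\big(D^n\Psi(\tau/\sigma)\big)_{<i-1}\,[\Psi_n(\sigma)]^\mcY_i .
\]

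Next, Proposition \ref{prop_gDntaustarderivative} (taken as given) converts $\widetilde g^\mcY(D^n\Psi(\tau/\sigma))_{<i-1}$ into $\widetilde g(D^n(\tau/\sigma))_{<i-1}$. I then resum in $n$ using Lemma \ref{lem_SnDnreindex}, in the form \eqref{eq_Dntausigma2}:
\[
\sum_{\sigma\prec\tau}\sigma\otimes D^n(\tau/\sigma)=\sum_{\nu\prec\tau}\downarrow^n\nu\otimes\tau/\nu .
\]
This turns the sum into
\[
\sum_{\nu\prec\tau}\widetilde g(\tau/\nu)_{<i-1}\sum_n[\Psi_n(\downarrow^n\nu)]_i^\mcY .
\]
By \eqref{eq_PsinAndPsi}, $\sum_n\Psi_n(\downarrow^n\nu)=\Psi(\nu)$. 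The inductive hypothesis $[\nu]_i=[\Psi(\nu)]_i^\mcY$, valid since $\nu\prec\tau$, then closes the identity.

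The delicate step is justifying the word-side recursion and the projection. Concretely, one must check two things: that the $\prec$-part of the coproduct on $\scrT_\P$ restricted to $\Psi(\tau)$ is exactly the first sum in Lemma \ref{lem_DeltaPsitau}, and that the pairing with $D^n$ is compatible with $\sigma\otimes\tau/\sigma$ versus $\downarrow^n\nu\otimes\tau/\nu$ after applying $\widetilde g$. The first point relies on Assumption \ref{assumption_notint}, which guarantees that no word of $\Psi_n(\sigma)$ carries an integer degree. I also need the bracket to be linear and to pass through the combination $\Psi_n(\sigma)$; this follows from the recursive definition, since brackets are defined basis-wise and extended linearly. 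If the recursion for words is awkward to use directly, I would instead expand both sides via Proposition \ref{lem_sumgtilde} into full chain sums and match chains $\tau_q\prec\dots\prec\tau$ with the letters of $\Psi$ using \eqref{forest_formula}. That route is more combinatorial but follows the same idea.
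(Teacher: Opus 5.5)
Your proposal is correct and follows the paper's proof essentially line by line. Both arguments induct along $\prec$ using the recursion \eqref{eq_defreccrochettilde} on each side, expand the word-side coproduct with Lemma \ref{lem_DeltaPsitau}, convert $\widetilde{\PPi}^\mcY$ and $\widetilde g^\mcY$ back to $\widetilde{\PPi}$ and $\widetilde g$ via Lemma \ref{lem_PiiPiPsii_00} and Proposition \ref{prop_gDntaustarderivative}, and then resum with Lemma \ref{lem_SnDnreindex} and \eqref{eq_PsinAndPsi} before applying the inductive hypothesis.
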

	
	\begin{proof}
		We are going to prove the claim by induction.
		We compute the bracket $[\Psi(\tau)]_i^\mcY$. From its definition in \eqref{eq_defreccrochettilde} and from Lemma \ref{lem_DeltaPsitau}, we have
		\begin{align*}
			[\Psi(\tau)]_i^\mcY = \widetilde{\PPi}^\mcY\left(\Psi(\tau)\right)_i - \sum_{\sigma\prec\tau}\sum_{q\geq 0}\, \widetilde{ g }^\mcY\left(D^q\Psi(\tau/\sigma) \right)_{<i-1} \, \left[ \Psi_q(\sigma)  \right]_i^\mcY.
		\end{align*}
		From Lemma \ref{lem_PiiPiPsii_00} we have $\widetilde{\PPi}^\mcY\left(\Psi(\tau)\right)_i=\widetilde{\PPi}\left(\tau\right)_i$, and from Proposition \ref{prop_gDntaustarderivative} we have $\widetilde{ g }^\mcY\left(D^q\Psi(\tau/\sigma)\right)_{<i-1}=\widetilde{ g }\left(D^q(\tau/\sigma)\right)_{<i-1}$. Then 
		\begin{align*}
			[\Psi(\tau)]_i^\mcY = \widetilde{\PPi}\left(\tau\right)_i - \sum_{\sigma\prec\tau}\sum_{q\geq 0}\, \widetilde{ g }\left(D^q(\tau/\sigma) \right)_{<i-1} \, \left[ \Psi_q(\sigma)  \right]_i^\mcY.
		\end{align*}
		From Lemma \ref{lem_SnDnreindex}, this is equal to 
		$$
		\widetilde{\PPi}\left(\tau\right)_i - \sum_{\mu\prec\tau}\sum_{q\geq 0}\, \widetilde{g}\left(\tau/\mu \right)_{<i-1} \, \left[ \Psi_q(\downarrow^q\mu)  \right]_i^\mcY.
		$$
		We have $\sum_{q\geq 0} \Psi_q(\downarrow^q\mu)=\Psi(\mu)$, then from the induction hypothesis on the element $\mu$, we obtain
			$
			\widetilde{ \PPi}(\tau)_i - \sum_{\mu\prec\tau}  \widetilde{ g}\big(\tau/\mu \big)_{<i-1} \, \big[ \mu \big]_i,
		$
		which is indeed equal to $[\tau]_i$ from \eqref{eq_defreccrochettilde}.
	\end{proof}
	
	We are now able to prove Theorem \ref{thm_regularityZtau}.
	
	\begin{proof}[of Theorem \ref{thm_regularityZtau}]
		We prove it by induction on the size of  $\tau$. As $[\lg\tau\rg]^\mcY_i = \Delta_i( Y_\tau)$, Lemma \ref{lem_sumcrochets} rewrites as 
		$$
		\Delta_i( Y_\tau) = [\tau]_i - \sum_{\substack{k\geq 0\\k=k_1+k_2}}\, \sum_{\substack{q\geq 1 \\ \tau_q\prec\dots\prec\tau_1\prec \tau}} \frac{1}{k_1!k_2!}\Big[ \big\lg {\downarrow^k\hspace{-2pt}\tau/\tau_1}, \dots , {\tau_q}   \big\rg_{k_1} X^{k_2} \Big]_i^\mcY.
		$$
		By induction hypothesis, when $\mu$ is a letter of a word $\lg {\downarrow^k\hspace{-2pt}\tau/\sigma_1}, \dots , {\sigma_q}   \big\rg$ as above, then the distributions $Y_\mu$ is in $C^{|\mu|}$ with bound $\norme{Y_\mu}_{C^{|\mu|}}\lesssim \norme{\tau}_{(\PPi,g)}$. 
		Then,  \cite[Theorem 1]{LocalExpansionsParacSystems} ensures that this collection defines a model over $\scrT$, and then Proposition \ref{prop_RegCrochets} implies that
		all these terms are indeed dominated by $2^{-i|\tau|} \Vert\tau\Vert_{ (\PPi,g)}^*$.
	\end{proof}

	Theorem \ref{thm_regularityZtau} admits the following corollary, giving a paracontrolled representation for any modelled distribution based on the $Y_\tau$.

	\begin{corollary}\label{cor_repmodelledditribParap}
		Let ${\bm f}(x) = \sum_{\tau\in \mcB} f_\tau(x) \tau$ some modelled distribution in $\mcD^\gamma(T,{ g})$. There exists a function ${\bm f}^\# \in C^\gamma$, and for any $\tau\in\mcB$ there exists a distribution $f_\tau^\# \in C^{\gamma-|\tau|}$ such that
		\begin{equation}\label{eq_repreconstructionfparap}
			{\bm R}{\bm f} = \sum_{\tau \in \mcB, \, |\tau|<\gamma} \P({f_\tau},\,  Y_{\tau})  + {\bm f}^\#,
		\end{equation}
		and
		\begin{equation}\label{eq_repftauparap}
			f_\tau = \sum_{\nu > \tau} \, \,  \P({f_\nu},\,  Y_{\nu /\tau})  + f_\tau^\#.
		\end{equation}
		Furthermore we have the continuity estimate
		\begin{equation}
			\lVert {{\bm f}^\#} \rVert_\gamma + \sup_{\tau\in\mcB }\lVert {f_\tau^\#}\lVert_{\gamma-|\tau|} \lesssim \norme{\bm f}^*.
		\end{equation}
	\end{corollary}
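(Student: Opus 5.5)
The plan is to reduce the statement to the analogous representation theorem of \cite{BailleulHoshinoRS1,PCandRS2}, with the brackets and the $Y_\tau$ of this section in place of the simplified iterated paraproducts used there. We start from the Littlewood--Paley decomposition ${\bm R}{\bm f} = \sum_i \Delta_i {\bm R}{\bm f}$. Since $K_i$ is a rescaled test function at scale $2^{-i}$, the reconstruction bound gives
\[
\Delta_i {\bm R}{\bm f}(x) = \Delta_i\big(\Pi_x {\bm f}(x)\big)(x) + O(2^{-i\gamma}).
\]
Next we expand $\Pi_x\tau$ through the recursive relation \eqref{eq_defreccrochettilde} for the brackets. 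Using $\Pi_x = (\PPi\otimes g_x\CA_+)\Delta$ and the definition of $[\tau]_i$ (and, for $T^+$ elements, $[\tau/\sigma]_i$), we rewrite $\Delta_i(\Pi_x {\bm f}(x))(x)$ as $\sum_\tau f_\tau(x)[\tau]_i(x)$. The cost is error terms of the form $(\Delta_{<i-1} g(\cdot)(x) - g_x(\cdot))$, and these are controlled by the $\mcD^\gamma$ bound of ${\bm f}$ together with the size of the model. This is the standard computation of \cite{LocalExpansionsParacSystems}, and we import it via Appendix \ref{appendix_A}.

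Lemma \ref{lem_sumcrochets} gives $[\tau]_i = [\Psi(\tau)]_i^\mcY$. Expanding the word brackets, the one-letter term is exactly $\Delta_i Y_\tau$, and all longer words carry letters $\tau/\tau_1$ in the first position. Reorganising $\sum_i \sum_\tau f_\tau \Delta_i Y_\tau$ with $f_\tau$ replaced by $\Delta_{<i-1} f_\tau$ produces $\P(f_\tau, Y_\tau)$. The difference $f_\tau - \Delta_{<i-1}f_\tau$ must combine with the longer-word contributions $\sum_{\nu>\tau} f_\nu\,\Delta_{<i-1}Y_{\nu/\tau}\,\Delta_i Y_\tau$.

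This is where \eqref{eq_repftauparap} enters. We prove it first, by descending induction on $|\tau|$: the highest-degree coefficients satisfy $f_\tau \in C^{\gamma-|\tau|}$ directly from the $\mcD^\gamma$ condition. For a general $\tau$ we apply the same argument to the modelled distribution $x\mapsto \big(\sum_{\nu} f_\nu(x)\,\nu/\tau\big)$. This is a modelled distribution of order $\gamma-|\tau|$ over the $T^+$ part, with reconstruction $f_\tau$, and $Y_{\nu/\tau}$ and \eqref{eqdef_Ztau2} play the role of $Y_\nu$. Expanding $f_\tau$ this way and inserting it into the $\P(f_\tau,Y_\tau)$ terms reproduces the iterated paraproducts $\P_k(Y_{\downarrow^k\tau/\tau_1},\dots,Y_{\tau_q})$ coming from $\Psi$, by \eqref{eq_ParacRepIterated1}.

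The remainders are then controlled as follows. Everything left over is a sum of terms whose $i$-th block is $O(2^{-i\gamma})$, so it belongs to $C^\gamma$. For $f^\#_\tau$ the same holds with $\gamma-|\tau|$. When $\gamma-|\tau|<0$ we use that $\Delta_i$ of a resonant-type remainder is bounded by $2^{-i(\gamma-|\tau|)}$.

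The main obstacle is the combinatorial matching in the previous step: the polynomial weights $k$ in $\P_k$ and the $\downarrow^k$ decorations must cancel exactly against the Taylor parts of the $f_\nu$. For this I would use Lemma \ref{lem_SnDnreindex} together with Proposition \ref{prop_gDntaustarderivative}, which identifies $\widetilde g(D^n\tau)$ with the star derivatives on words. If the direct bookkeeping becomes unwieldy, the fallback is to cite \cite[Theorem 1]{PCandRS2} with the brackets $[\tau]_i$, whose bounds are Proposition \ref{prop_RegCrochets}. Lemma \ref{lem_sumcrochets} would then translate their reference functions into the $Y_\tau$, and Theorem \ref{thm_regularityZtau} would supply the regularity $Y_\tau\in C^{|\tau|}$ needed for the paraproducts to make sense, with the continuity estimate.
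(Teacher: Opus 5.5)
\textbf{There is a genuine gap.} You leave the step that produces the $C^\gamma$ remainder open, and the cancellation mechanism you describe for it is not the one that operates. Take the simplest case $\Delta\tau=\tau\otimes\one+\sigma\otimes\mu$, with $\mu$ non-polynomial.

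\begin{itemize}
\item \emph{The first-step errors are not small.} Replacing $g_x(\mu)$ by $\Delta_{<i-1}g(\mu)(x)$ in $\Delta_i(\Pi_x{\bm f}(x))(x)$ leaves the error $f_\tau(x)\big(\Delta_{<i-1}g(\mu)-g(\mu)\big)(x)\,\Delta_i\PPi(\sigma)(x)$. This is of size $2^{-i|\tau|}$, not $2^{-i\gamma}$, and the $\mcD^\gamma$ bound alone does not control it. It only disappears against $(f_\sigma-\Delta_{<i-1}f_\sigma)\Delta_iY_\sigma$, after one uses $f_\sigma-\P(f_\tau,Y_\mu)\in C^{\gamma-|\sigma|}$.
\item \emph{The longer-word brackets are misidentified.} The brackets in $[\overline\Psi(\tau)]^\mcY_i$ are not of the form $\Delta_{<i-1}Y_{\nu/\tau}\,\Delta_iY_\tau$. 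By \eqref{eq_defreccrochettilde} on words they are commutator blocks. In the example, $[\lg\mu,\sigma\rg]^\mcY_i=\Delta_i\P(Y_\mu,Y_\sigma)-\Delta_{<i-1}Y_\mu\,\Delta_iY_\sigma$, of size $2^{-i|\tau|}$. So the matching of $f_\tau-\Delta_{<i-1}f_\tau$ against $\sum_\nu f_\nu\Delta_{<i-1}Y_{\nu/\tau}\Delta_iY_\tau$ is not what happens, and the cancellation you call the main obstacle is never established.
\item \emph{The summation principle fails for pointwise blocks.} ``$i$-th block is $O(2^{-i\gamma})$, hence $C^\gamma$'' needs Fourier localisation, which pointwise products $f_\tau(x)\Delta_i(\cdot)(x)$ lack. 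For instance, $\sum_i(f_\tau-\Delta_{<i-1}f_\tau)\Delta_iY_\tau$ has blocks of size $O(2^{-i\gamma})$ but is only in $C^{\gamma-|\tau|}$ when $|\tau|>0$.
\item \emph{The fallback does not close the gap.} Lemma \ref{lem_sumcrochets} is a blockwise identity. From a bracket representation $\sum_\tau\sum_i\Delta_{<i-1}f_\tau[\tau]_i$ it yields $\sum_\tau\P(f_\tau,Y_\tau)$ plus $\sum_\tau\sum_i\Delta_{<i-1}f_\tau[\overline\Psi(\tau)]^\mcY_i$. Each summand of the latter lies only in $C^{|\tau|}$, and showing the total is in $C^\gamma$ is exactly the missing step.
\end{itemize}

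\textbf{The missing idea} is to put ${\bm f}$ into the regularity structure and apply Theorem \ref{thm_regularityZtau} once; this is the paper's route.
\begin{itemize}
\item Adjoin a symbol $F$ of degree $\gamma$ and symbols $F_\tau$ ($|\tau|<\gamma$) of degree $\gamma-|\tau|$, with $\Delta F=F\otimes\one+\sum_{|\tau|<\gamma}\tau\otimes F_\tau$ and $\Delta^{\!+}F_\tau=F_\tau\otimes\one+\sum_{\mu\geq\tau}\mu/\tau\otimes F_\mu$.
\item Extend the model by $\PPi(F)={\bm R}{\bm f}$ and $g_x(F_\tau)=f_\tau(x)$. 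The reconstruction bound and the $\mcD^\gamma$ bound are precisely the model bounds for $F$ and $F_\tau$, so $\Vert F\Vert^*_{(\PPi,g)}$ and $\Vert F_\tau\Vert^*_{(\PPi,g)}$ are $\lesssim\norme{\bm f}^*$.
\item Since $\downarrow^jF=\downarrow^jF_\tau=0$ for $j\neq0$, one has $\widetilde\PPi(F)={\bm R}{\bm f}$ and $\widetilde g(F_\tau)=f_\tau$, and only $k=0$ survives in \eqref{eqdef_Ztau1} and \eqref{eqdef_Ztau2}. Hence $Y_F={\bm R}{\bm f}-\sum_{|\tau|<\gamma}\P(f_\tau,Y_\tau)$ and $Y_{F_\tau}=f_\tau-\sum_{\nu>\tau}\P(f_\nu,Y_{\nu/\tau})$.
\item Theorem \ref{thm_regularityZtau} with \eqref{eq_continuity_Y} then gives ${\bm f}^\#=Y_F\in C^\gamma$, $f^\#_\tau=Y_{F_\tau}\in C^{\gamma-|\tau|}$, and the continuity estimate.
\end{itemize}
Your descending induction, which treats $f_\tau$ as the reconstruction of $\sum_\nu f_\nu\,\nu/\tau$, is the $F_\tau$ half of this extension. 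Once $F$ and $F_\tau$ are genuine symbols, every cancellation you were tracking by hand is already inside the proof of Theorem \ref{thm_regularityZtau}, via Lemma \ref{lem_sumcrochets}, Proposition \ref{prop_RegCrochets} and \cite[Theorem 1]{LocalExpansionsParacSystems}.
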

	
	\begin{proof}
		We apply Theorem \ref{thm_regularityZtau} in some extension $\scrT_{\bm f}=(T_{\bm f}, T_{\bm f}^+)$ of the regularity structure $\scrT$, that encodes the local developments of ${\bm f}$ and its coefficients $(f_\tau)$.
		We set $T_{\bm f}=T\oplus \bfR\, F$ and $T_{\bm f}^+=T^+\oplus T_{<\gamma}$, one write
		$$
		\mcB_{\bm f}^+ = \mcB^+ \sqcup \big\{ F_\tau, \quad \tau\in\mcB_{<\gamma}   \big\}, \quad \text{and}\quad \mcB_{\bm f } = \mcB \sqcup {F}.
		$$
		One sets $|F_\tau|=\gamma-|\tau|$, and extends the coproducts from $\scrT$ to $\scrT_{\bm f}$ by setting
		\begin{align*}
		\Delta^{\!+} F_\tau =  F_\tau \otimes \mathbf{1} \, + \,   \sum_{\mu\geq\tau} \mu/\tau\otimes F_\mu, \quad
		\Delta F = F\otimes \mathbf{ 1} + \sum_{|\tau|<\gamma} \tau \otimes F_\tau
		\end{align*}
		And one extends the model ${ (\PPi,g)}$ to $\scrT_{\bm f}$  by setting $\PPi(F)={\bm R}{\bm f}$ and ${ g}_x(F_\tau) = f_\tau(x) $. One obtains the corollary by noting that $\downarrow^j\hspace{-2pt}F_\tau=0$ for any $\tau\in\mcB$ and $j\geq 1$.	\end{proof}
	One can iterate  \eqref{eq_repreconstructionfparap} and \eqref{eq_repftauparap}, and obtain the following representation
	\begin{equation}
		{\bm R}{\bm f} = \sum_{\nu_q\succ\dots\succ\nu_1} \P\big( f_{\nu_q}^\#, \, Y_{\nu_q/\nu_{q-1}},\dots, Y_{\nu_1}\big),
	\end{equation}
	\begin{equation}
		f_\tau = \sum_{\nu_q \succ \dots\succ\nu_1\succ\tau} \P\big( f_{\nu_q}^\#, \, Y_{\nu_q/\nu_{q-1}},\dots, Y_{\nu_1/\tau}\big),
	\end{equation}
 where the distributions in the paraproducts have enough regularity.
	For $|k|<\gamma-|\tau|$, we set 
	\begin{equation}
		\partial^k_* \, f_\tau  = \sum_{\nu_q \succ \dots\succ\nu_1\succ\tau} \partial^k_*\P\big( f_{\nu_q}^\#, \, Y_{\nu_q/\nu_{q-1}},\dots, Y_{\nu_1/\tau}\big).
	\end{equation}
	
	\begin{proposition}\label{prop_starderiv_coefficientmodelleddist}
		For any modelled distribution ${\bm f}\in\mcD^\gamma(T,{ g})$, for any $\tau\in\mcB$ and $|k|<\gamma-|\tau|$, we have the identity
		$$
		\partial^k_* \, f_\tau = f_{(\downarrow^k)^*\tau},
		$$
		where $(\downarrow^k)^*$ is the adjoint of $\downarrow^k\hspace{-2pt}$ on $T$.
	\end{proposition}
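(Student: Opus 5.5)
We deduce the identity from the extension trick used in the proof of Corollary \ref{cor_repmodelledditribParap}, combined with Proposition \ref{def_prop_antipode} and the correspondence of Proposition \ref{prop_gDntaustarderivative}. We work in the extended regularity structure $\scrT_{\bm f}$. There the coefficient $f_\tau$ is realised as $g(F_\tau)$, and the iterated paracontrolled representation of $f_\tau$ is nothing but $\widetilde{g}(F_\tau)=\widetilde{\PPi}^\mcY(\Psi(F_\tau))$. Since $\downarrow^j F_\tau=0$ for $j\neq0$, one has $\widetilde g(F_\tau)=g(F_\tau)=f_\tau$. Moreover, by \eqref{forest_formula},
$$\Psi(F_\tau)=\sum_{q}\sum_{\nu_q\succ\dots\succ\nu_1\succ\tau}\lg F_{\nu_q},\nu_q/\nu_{q-1},\dots,\nu_1/\tau\rg ,$$
after identifying the top factor $F_{\nu_q}$ with $f^\#_{\nu_q}$ through $Y_{F_{\nu_q}}=f^\#_{\nu_q}$. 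Thus the definition of $\partial^k_* f_\tau$ given just before the statement reads
$$\partial^k_* f_\tau=\widetilde g^\mcY\big(D^k\Psi(F_\tau)\big),$$
because $g^\mcY(\lg w\rg^k)=\partial^k_*\P(Z_{w_1},\dots)$ by \eqref{def_model_words}.

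\textbf{Step 1.} Apply Proposition \ref{prop_gDntaustarderivative} (summed over $i$) to the symbol $F_\tau\in\mcB^+_{\bm f}$. This gives
$$\widetilde g^\mcY\big(D^k\Psi(F_\tau)\big)=\widetilde g\big(D^kF_\tau\big)=g\big(D^kF_\tau\big),$$
again using that $\downarrow^j$ annihilates $D^kF_\tau$. For this we need Assumption \ref{assumption_notint} on the extension. We can arrange it by perturbing $\gamma$ slightly, or we can note that the proof of Proposition \ref{prop_gDntaustarderivative} only uses the structure of the coproducts.

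\textbf{Step 2.} Compute $D^kF_\tau=k!\,F_\tau/X^k$ using the coproduct
$$\Delta^{\!+}F_\tau=F_\tau\otimes\one+\sum_{\mu\geq\tau}\mu/\tau\otimes F_\mu .$$
The terms with $\mu/\tau$ equal to a multiple of $X^k$ are exactly those with $\tau=\downarrow^k\mu/k!$ up to normalisation, that is, those $\mu$ with $\langle \downarrow^k\mu,\tau\rangle\neq0$. By \eqref{linear_operator} and the definition of the adjoint with respect to the scalar product on $T$, this yields
$$D^kF_\tau=\sum_\mu c_\mu F_\mu \quad\text{with}\quad \sum_\mu c_\mu \mu=(\downarrow^k)^*\tau .$$
Extending $F_\cdot$ linearly, we get $g(D^kF_\tau)=f_{(\downarrow^k)^*\tau}$. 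Combining Steps 1 and 2 gives the claim.

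\textbf{Main obstacle.} The main difficulty is making the identification in the displayed formula for $\partial^k_* f_\tau$ rigorous. The star derivative is defined recursively with cut-offs $\rho_j$ that depend on the regularities of the letters, and we must check that these cut-offs match the truncation $|r|<|w_1|+|\ell_1|$ in the coproduct \eqref{coproduct_words}. This requires $|F_\nu|=\gamma-|\nu|$, together with the restriction $|k|<\gamma-|\tau|$, which guarantees that $\lg\cdots\rg^k$ lies in $\mcB^+$. We would handle this with Proposition \ref{def_prop_antipode}, which expresses $g^\mcY$ through $\CA_*$ and hence through the coproduct. We would also track the normalisation constants in Step 2, namely the factors $k!$ and the symmetry factors $S(\mu)$ coming from the scalar product.
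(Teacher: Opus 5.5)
Your proposal is correct and follows the paper's proof. You apply Proposition~\ref{prop_gDntaustarderivative} to the symbol $F_\tau$ in the extended structure $\scrT_{\bm f}$ from the proof of Corollary~\ref{cor_repmodelledditribParap}, and you reduce everything to the identity $D^kF_\tau=F_{(\downarrow^k)^*\tau}$, read off from the coproduct $\Delta^{\!+}F_\tau=F_\tau\otimes\one+\sum_{\mu\geq\tau}\mu/\tau\otimes F_\mu$. The additional points you raise are left implicit in the paper, so your version is slightly more careful: the identification of $\partial^k_*f_\tau$ with $\widetilde g^{\mcY}(D^k\Psi(F_\tau))$, Assumption~\ref{assumption_notint} for the extension, and the normalisation constants $k!$ and $S(\mu)$.
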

	\begin{proof}
		We apply Proposition \ref{prop_gDntaustarderivative} on the extended regularity structure described in the proof of Corollary \ref{cor_repmodelledditribParap}, with the symbol $F_\tau$. It suffices to show the identity
		$$
		D^k F_\tau = F_{(\downarrow^k\hspace{0pt})^*\tau}.
		$$
		We still let $\pi_{X^k}$ be the orthogonal projection on the line generated by the monomial $X^k$. From the formula $\Delta F_\tau = \sum_{\sigma\in\mcB} F_\sigma \otimes \sigma/\tau$, we have
		\begin{align*}
			D^k F_\tau &= \sum_{\sigma\in\mcB}  F_\sigma \, \lg \, \sigma/\tau , \, X^k  \rg 
			=\sum_{\sigma\in\mcB} F_\sigma  \, \lg \, \downarrow^k\sigma, \, \tau  \rg  
			\\
			&= \sum_{\sigma\in\mcB}  F_\sigma \, \lg  \,  \sigma , \, (\downarrow^k)^* \tau   \rg 
			= F_{(\downarrow^k)^* \tau }.
		\end{align*}
	\end{proof}

	

	\subsection{Paracontrolled representation for the derivatives \ }
	This subsection is dedicated to the proof of Proposition \ref{prop_gDntaustarderivative}.
	\begin{lemma}\label{lem_derivModifModel}
		Given a smooth model ${ (\PPi,g)}$, one has
		\begin{enumerate}
			\item
			For any $\tau\in\mcB$ and $n\geq0$,
			\begin{equation}\label{eq_partialnkpitilde}
				\sum_{j\geq 0} \frac{(-1)^jx^j}{j!}  \partial^n  \big({\PPi}\big(\downarrow^j\hspace{-2pt}\tau\big)\big)(x) =  \sum_{k\leq n}\binom{n}{k} \partial^{n-k} \big( \widetilde{ \PPi}\big(\downarrow^k\hspace{-2pt}\tau\big)\big)(x).
			\end{equation}
			\item \label{item_DnPixTau_general} For any $\tau\in\mcB$ and $|n|>|\tau|$, 
			\begin{equation} \label{eq_DnPixTau_general}
				\begin{aligned}
				\partial^n_y\big(  { \Pi}_x\tau(y)   \big)_{|y=x} 
				& = 
				\sum_{k\leq n}\binom{n}{k} \partial^{n-k} \big( \widetilde{ \PPi}\big(\downarrow^k\hspace{-2pt}\tau\big)\big)(x)
				\, \\ & - 
				\sum_{\substack{\sigma\prec\tau, \,\, \sigma\notin T_X \\  |\sigma|-|n|<0}} \widetilde{ g}_x\big(\tau/\sigma) \, \partial^n_{y} \big({\Pi}_x\sigma(y)\big)_{|y=x}.
				\end{aligned}
			\end{equation}
			
			\item \label{item_gDnTau_general} For any $\tau\in\mcB$ and $|n|<|\tau|$, 
			\begin{equation}\label{eq_gDnTau_general}
				\widetilde{ g}_x\big( D^n\tau \big)	= 
				\sum_{k\leq n} \binom{n}{k}\partial^{n-k} \big( \widetilde{ \PPi}\big(\downarrow^k\hspace{-2pt}\tau\big)\big)(x)
				- 
				\sum_{\substack{\sigma\prec\tau, \,\, \sigma\notin T_X \\  |\sigma|-|n|<0}} \widetilde{ g}_x\big(\tau/\sigma) \, \partial^n_{y} \big({\Pi}_x\sigma(y)\big)_{|y=x}.
			\end{equation}
		\end{enumerate}
	\end{lemma}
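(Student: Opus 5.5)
We prove the three items in order; each uses the previous one, and the main tools are the rewriting formulas \eqref{eq_pitildetopi}, \eqref{eq_defgtilde2} and the expression of local remainders through the modified model $(\widetilde{\PPi},\widetilde g)$.

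\textbf{Item 1.} Start from the second identity in \eqref{eq_pitildetopi}, applied to $\downarrow^j\tau$ and combined with \eqref{eq_commutDnDownaroown}:
\[
\PPi(\downarrow^j\tau)(y)=\sum_{m}\frac{y^m}{m!}\widetilde{\PPi}(\downarrow^{j+m}\tau)(y).
\]
Differentiate $n$ times in $y$ with the Leibniz rule; this gives factors $y^{m-a}/(m-a)!$ multiplied by $\partial^{n-a}\widetilde{\PPi}(\downarrow^{j+m}\tau)$. Then multiply by $(-x)^j/j!$, sum over $j$, and evaluate at $y=x$. Regroup the sum by $k=j+m$. The inner sum over $j+m'=k-a$ of $(-x)^j x^{m'}/(j!\,m'!)$ equals $\mathbf 1_{k=a}$ by the binomial theorem. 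What remains is $\sum_{k\le n}\binom nk\partial^{n-k}\widetilde\PPi(\downarrow^k\tau)(x)$, which is \eqref{eq_partialnkpitilde}. All of this is finite algebra, since the model is smooth.

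\textbf{Item 2.} Use the rewritten local remainder
\[
\Pi_x\tau(y)=\sum_j\frac{(-x)^j}{j!}\PPi(\downarrow^j\tau)(y)-\sum_{\sigma\prec\tau}\widetilde g_x(\tau/\sigma)\,\Pi_x\sigma(y),
\]
apply $\partial^n_y$ and set $y=x$. The first term becomes the right-hand side of Item 1. In the second sum we have to discard two kinds of $\sigma$.
\begin{itemize}
\item $\sigma$ a polynomial $X^m$: then $\Pi_x X^m(y)=(y-x)^m$, so its $n$-th derivative at $y=x$ vanishes unless $m=n$. The case $m=n$ would need $X^n\prec\tau$ with a nonpolynomial quotient, and we must exclude this using $|n|>|\tau|$.
\item $\sigma$ with $|\sigma|>|n|$: these must also drop out.
\end{itemize}

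I expect this exclusion to be the main obstacle. In the smooth setting, the bound $|\Pi_x\sigma(y)|\lesssim|y-x|^{|\sigma|}$ with $|\sigma|>|n|$ forces the $n$-th derivative at $x$ to vanish. The point is that this is a genuine property of smooth models, not only of the reconstruction, so it holds for the smooth model assumed here. For the polynomial case $\sigma=X^n$, Lemma \ref{lem_polynomialsintheRS} (part 2) shows that $\tau/X^n$ is nonpolynomial only if it is of the form $D^n\tau/n!$. Its degree is $|\tau|-|n|<0$, so the $\widetilde g$ of this quotient vanishes, because $T^+$ contains only positive degrees. This gives \eqref{eq_DnPixTau_general}.

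\textbf{Item 3.} Here $|n|<|\tau|$, so the recursive definition \eqref{def_g}, or equivalently the fact that $g_x(D^n\tau)$ is the $n$-th Taylor coefficient, gives
\[
g_x(D^n\tau)=\partial^n_y\Big(\sum_\sigma g_x(\tau/\sigma)\,\Pi_x\sigma(y)\Big)\Big|_{y=x}.
\]
Pass to $\widetilde g$ using \eqref{eq_defgtilde2} and Lemma \ref{lem_SnDnreindex}, which moves the $\downarrow^k$ to the arguments. Then proceed as in Item 2 with the roles reversed. The term $\sigma=X^n$ now survives and produces $\widetilde g_x(D^n\tau)$ on the left. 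The terms with $\sigma$ polynomial and different from $X^n$ vanish at $y=x$. The terms with $|\sigma|>|n|$ vanish by the same smooth-model vanishing argument as in Item 2. Rearranging yields \eqref{eq_gDnTau_general}.
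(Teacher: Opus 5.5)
Your Items 1 and 2 are correct and follow the paper's proof. You use the same Leibniz and binomial cancellation for \eqref{eq_partialnkpitilde}, and the same $\widetilde g$-expansion of $\Pi_x\tau(y)$ differentiated at $y=x$. Your justification of the restrictions $\sigma\notin T_X$ and $|\sigma|<|n|$ in Item 2 is actually more explicit than the paper's. Among polynomials only $X^n$ could survive, and $\tau/X^n=0$ because its degree $|\tau|-|n|$ is negative.

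Item 3, however, rests on a false identity.

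\emph{Why it fails.} Since $\Pi_x=(\PPi\otimes g_x\CA_+)\Delta$ and $g_x$ is a character, $\sum_\sigma g_x(\tau/\sigma)\Pi_x\sigma(y)=\big((\Pi_x\otimes g_x)\Delta\tau\big)(y)=\PPi\tau(y)$. Your display therefore asserts $g_x(D^n\tau)=\partial^n\PPi\tau(x)$, which is wrong in general. The two sides differ by the sum of $g_x(\tau/\sigma)\,\partial^n_y\Pi_x\sigma(y)|_{y=x}$ over the nonpolynomial $\sigma\neq\tau$ with $|\sigma|<|n|$. That sum is exactly the correction the lemma computes. For example, for $\Phi^4_3$ with $\tau=\mcI(\mcI(\mcI(\Xi)^3)\mcI(\Xi)^2)$, $n=e_1$ and $\sigma=\mcI(\mcI(\Xi)^2)$, the term is generically nonzero. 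The sum in \eqref{def_g} should only run over $\sigma$ with $|\mcI_k(\sigma)|>0$; read without that constraint it yields your identity.

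Your display is also inconsistent with your next step. Its $\sigma=X^n$ summand already equals $g_x(D^n\tau)$, so moving it to the left cancels the left side rather than producing $\widetilde g_x(D^n\tau)$.

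\emph{What is missing.} The quantity to differentiate is $\Pi_x\tau$ itself, and $\partial^n_y\Pi_x\tau(y)|_{y=x}=0$ because $|n|<|\tau|$. This is the same smooth-model vanishing you already invoke for $|\sigma|>|n|$, now applied to $\sigma=\tau$. The paper's route is then:
\begin{enumerate}
\item In the Item 2 expansion of $\Pi_x\tau(y)$, separate the terms $\sigma=X^k$. Since $\tau/X^k=D^k\tau/k!$, they give $\sum_{|k|<|\tau|}\widetilde g_x(D^k\tau)\frac{(y-x)^k}{k!}$.
\item Apply $\partial^n_y$ at $y=x$, and use Item 1 together with the vanishing of the left side.
\item Discard the terms with $|\sigma|>|n|$.
\end{enumerate}
No separate passage from $g$ to $\widetilde g$ is needed, since that expansion is already in $\widetilde g$ form. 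Lemma \ref{lem_SnDnreindex}, which you cite for that passage, is not the relevant tool anyway.
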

	
	\begin{proof}
		We first prove point \textit{1.} From  \eqref{eq_pitildetopi}, we have
		\begin{align*}
			\sum_{j\geq 0} \frac{(-1)^jx^j}{j!}  \partial^n  \big({\PPi}\big(\downarrow^j\hspace{-2pt}\tau\big)\big)(x) 
			&=
			\sum_{j\geq 0} \frac{(-1)^jx^j}{j!}  \partial^n  \Big(  \sum_{m\geq 0} \frac{x^m}{m!}\widetilde{\PPi}\big(\downarrow^{m+j}\hspace{-3pt}\tau\big)\Big)
			\\
			&=
			\sum_{j,m\geq 0}\sum_{n=n_1+n_2} \binom{n}{n_1} \frac{(-1)^j x^{m+j-n_1}}{j! (m-n_1)!} \partial^{n_2} \widetilde{\PPi}\big(  \downarrow^{m+j}\hspace{-3pt}\tau\big)
			\\
			&=\sum_{\substack{p\geq 0}} \enskip \sum_{j\leq p } \frac{(-1)^j x^{p}}{j!(p-j)!}  \sum_{\substack{ p+n =r_1+r_2}}  \binom{n}{r_1}  \partial^{r_1} \widetilde{\PPi} \big(  \downarrow^{r_2}\hspace{-3pt}\tau  \big)
			\\
			&= \sum_{k\leq n} \binom{n}{k}\partial^{n-k} \widetilde{ \PPi}\big(\downarrow^k\hspace{-2pt}\tau\big).
		\end{align*}
		We now turn to points \textit{2} and \textit{3}. We have
		\begin{align*}
			{ \Pi}_x(\tau)(y) &= ({\PPi}\tau)(y) - \sum_{\sigma<\tau}{ g}_x(\tau/\sigma) \,  {\Pi}_x\sigma(y)
			\\
			&=
			({\PPi}\tau)(y) - \sum_{j\geq 1}\frac{x^j}{j!}{ \Pi}_x(\downarrow^j\hspace{-2pt}\tau)(y)  - \sum_{\sigma\prec\tau} { g}_x(\tau/\sigma) \, {\Pi}_x\sigma(y),
		\end{align*}
		that is 
		$$
		\sum_{j\geq 0}\frac{x^j}{j!}{ \Pi}_x(\downarrow^j\hspace{-2pt}\tau)(y)  
		=
		({\PPi}\tau)(y)  - \sum_{\sigma\prec\tau} { g}_x(\tau/\sigma) \, {\Pi}_x\sigma(y),
		$$
		and by Pascal inversion
		\begin{equation}\label{eq_pitildetorestepi}\begin{split}
				{ \Pi}_x\tau(y) &=  \sum_{j\geq 0}\frac{(-x)^j}{j!} \Big( {\PPi}(\downarrow^j\tau)(y) - \sum_{\sigma\prec \downarrow^j \tau} { g}_x(\downarrow^j\tau/\sigma) \, {\Pi}_x\sigma(y)\Big)
				\\
				&=  \sum_{j\geq 0}\frac{(-x)^j}{j!}  {\PPi}(\downarrow^j\hspace{-2pt}\tau)(y) \,  - \,  \sum_{\sigma\prec\tau} \widetilde{ g}_x(\tau/\sigma) \, {\Pi}_x\sigma(y).
		\end{split}\end{equation}
		Using Equation \eqref{eq_partialnkpitilde}, we obtain
		\begin{align*}
			\partial^n_y\big(  { \Pi}_x\tau(y)   \big)_{|y=x} 
			&= 
			\sum_{j\geq 0}\frac{(-x)^j}{j!}  \partial^n{\PPi}(\downarrow^j\hspace{-2pt}\tau)(x) \,  - \,  \sum_{\sigma\prec\tau} \widetilde{ g}_x(\tau/\sigma) \, \partial^n_y\big({\Pi}_x\sigma(y)\big)_{|y=x}
			\\
			&=\sum_{k\leq n} \partial^{n-k} \big( \widetilde{ \PPi}\big(\downarrow^k\hspace{-2pt}\tau\big)\big)(x) \,  - \,  \sum_{\sigma\prec\tau} \widetilde{ g}_x(\tau/\sigma) \, \big({\Pi}_x\sigma(y)\big)_{|y=x},
		\end{align*}
		hence point \textit{2}. We suppose now that $|n|<|\tau|$, one can rewrite \eqref{eq_pitildetorestepi} as
		\begin{align*}
			({\Pi}_x\tau)(y) =\sum_{j\geq 0}\frac{(-x)^j}{j!}  {\PPi}(\downarrow^j\hspace{-2pt}\tau)(y)&-\sum_{\sigma\prec\tau, \,\, \sigma\notin T_X}  \widetilde{ g}_x(\tau/\sigma) \, {\Pi}_x\sigma 
			\\
			&- \sum_{|k|<|\tau|} \widetilde{ g}_x(D^k\tau) \,  \frac{(y-x)^k}{k!}.
		\end{align*}
		As $|n|<|\tau|$, one has $\partial_y^n\big({ \Pi}_x\tau\big)_{|y=x} = 0$, and then from \eqref{eq_partialnkpitilde},
		\begin{align*}
			\widetilde{ g }_x(D^n\tau) &= 
			\partial^n_{y} \Big(\sum_{j\geq 0}\frac{(-x)^j}{j!}  {\PPi}(\downarrow^j\hspace{-2pt}\tau)(y)-\sum_{\sigma\prec\tau, \,\, \sigma\notin T_X}  \widetilde{ g}_x(\tau/\sigma) \,  ({\Pi}_x\sigma)(y) \Big)_{|y=x}
			\\
			&=
			\sum_{k\leq n} \binom{n}{k}\partial^{n-k} \big( \widetilde{ \PPi}\big(\downarrow^k\hspace{-2pt}\tau\big)\big)(x) \, - \, \sum_{\sigma\prec\tau, \,\, \sigma\notin T_X}  \widetilde{ g}_x(\tau/\sigma) \, \partial^n_y\big({\Pi}_x\sigma\big)_{|y=x}.
		\end{align*}	
		For $\sigma\prec\tau$ such that $|n|<|\sigma|$, one has $\partial_y^n\big({ \Pi}_x\tau\big)_{|y=x} = 0$, then one can restrict the second sum to the $\sigma\prec\tau$ with $\sigma\notin T_X$ and $|\sigma|<|n|$, which gives the result.
	\end{proof}

	\begin{proposition}\label{prop_starderivY}
		Let $\tau\in \mcB\sqcup\mcB^+$ and $n\in\bfN^{d+1}$,
		\begin{itemize}
			\item[-] If $|\tau|-|n|>0$, then 
			\begin{equation}
				\widetilde{ g}^\mcY\big( D^n \Psi(\tau) \big)
				= \widetilde{ g}\big( D^n \tau \big).
			\end{equation}
			\item[-] If $|\tau|-|n|<0$, then 
			\begin{equation}
				\partial^n_{y} \Big({\Pi}^\mcY_x\left(\Psi(\tau)\right)(y)\Big)_{|y=x}=
				\partial^n_y \big({ \Pi}_x \tau (y) \big)_{|y=x}.
			\end{equation}
		\end{itemize}
	\end{proposition}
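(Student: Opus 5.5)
I will prove both items at once by strong induction on $\tau$, ordered by the size of the symbol (number of steps in descending chains $\tau_q\prec\dots\prec\tau$). This is the same ordering used in Lemma \ref{lem_sumcrochets} and Theorem \ref{thm_regularityZtau}. The key idea is that both sides of each identity satisfy the same recursion, given by Lemma \ref{lem_derivModifModel}.

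On the tree side, points 2 and 3 of Lemma \ref{lem_derivModifModel} express $\widetilde g_x(D^n\tau)$ (when $|n|<|\tau|$), respectively $\partial^n_y(\Pi_x\tau)(y)|_{y=x}$ (when $|n|>|\tau|$), in the same way. Each equals the common term $\sum_{k\le n}\binom{n}{k}\partial^{n-k}\widetilde\PPi(\downarrow^k\tau)(x)$ minus a correction $\sum_{\sigma\prec\tau,\,|\sigma|<|n|}\widetilde g_x(\tau/\sigma)\,\partial^n_y(\Pi_x\sigma)(y)|_{y=x}$.

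Step 1 is to apply the same lemma in the regularity structure of words $\scrT_\P$ with the model $(\PPi^\mcY,g^\mcY)$ to the element $\Psi(\tau)$. This is a linear combination, so I apply it termwise and use linearity. By Assumption \ref{assumption_notint}, the degrees in $\scrT_\P$ are non-integer, so the lemma applies.

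Step 2 is to match the common terms. Lemma \ref{lem_commutdownarrowPsi} gives $\downarrow^k\Psi(\tau)=\Psi(\downarrow^k\tau)$. Together with \eqref{eq_PiTauPiPsiTau} and Lemma \ref{lem_PiiPiPsii_00}, this yields
$$\widetilde\PPi^\mcY\big(\downarrow^k\Psi(\tau)\big)=\widetilde\PPi(\downarrow^k\tau).$$
Hence the first terms agree.

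Step 3 is to match the correction sums. The coproduct $\Delta\Psi(\tau)$ is given by Lemma \ref{lem_DeltaPsitau}. Its terms of the form $X^n/n!\otimes\cdot$ and $\cdot\otimes X^n/n!$ are polynomial, so they do not enter the $\prec$-sum. The remaining part is $\sum_{\sigma\prec\tau}\sum_m\Psi_m(\sigma)\otimes D^m\Psi(\tau/\sigma)$, so the word-side correction reads
$$\sum_{\sigma\prec\tau}\sum_m \widetilde g^\mcY\big(D^m\Psi(\tau/\sigma)\big)\,\partial^n_y\big(\Pi^\mcY_x\Psi_m(\sigma)\big)(y)|_{y=x}.$$
I then need to do three things:
\begin{enumerate}
\item Use the induction hypothesis (first item) on $\tau/\sigma\in\mcB^+$, which is smaller than $\tau$, to replace $\widetilde g^\mcY(D^m\Psi(\tau/\sigma))$ by $\widetilde g(D^m(\tau/\sigma))$. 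When $|\tau/\sigma|-|m|<0$, this quantity vanishes by $\mathfrak p_+$.
\item Reindex with Lemma \ref{lem_SnDnreindex} (identity \eqref{eq_Dntausigma2}), moving $D^m$ from the right factor onto $\downarrow^m$ on the left. This turns $\sum_m\Psi_m(\downarrow^m\mu)$ into $\Psi(\mu)$ via \eqref{eq_PsinAndPsi}.
\item Apply the induction hypothesis (second item) to each $\mu\prec\tau$ with $|\mu|<|n|$, obtaining $\partial^n(\Pi_x\mu)|_{x}$. For $|\mu|>|n|$, the contribution is zero on both sides.
\end{enumerate}
This is exactly the manipulation already performed in the proof of Lemma \ref{lem_sumcrochets}, now with derivatives evaluated at the diagonal instead of Littlewood–Paley blocks.

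Base case: for $\tau$ with no $\sigma\prec\tau$, the correction sums are empty and Step 2 concludes. I treat $\tau\in\mcB^+$ the same way, using $\Delta^{\!+}$ and the $\mcB^+$ versions of the lemmas.

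The main obstacle is the bookkeeping in Step 3. The factor $\widetilde g^\mcY(D^m\Psi(\tau/\sigma))$ involves words whose derivatives are computed through the star derivatives, via $\CA_*$ in Proposition \ref{def_prop_antipode}. I must make sure that the truncation conditions $\rho_j$ in \eqref{condition_sum} (equivalently the projection $\mathfrak p_+$) match exactly the range $|\sigma|<|n|$ on the tree side, so that no boundary terms survive. To handle this, I would first check the identity on the simplest case $\tau=\mcI(\Xi\mcI(\Xi))$ using the examples computed after Definition \ref{def_Psitau}. After that, I would control the general case with the non-integrality of degrees, which guarantees that the strict inequalities on the two sides coincide.
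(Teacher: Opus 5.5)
Your proposal is correct and follows essentially the same route as the paper. Both apply points 2 and 3 of Lemma \ref{lem_derivModifModel} to $\Psi(\tau)$ in the word structure, expand $\Delta\Psi(\tau)$ with Lemma \ref{lem_DeltaPsitau}, match the leading terms through Lemma \ref{lem_commutdownarrowPsi}, and close the correction sum using the induction hypothesis, Lemma \ref{lem_SnDnreindex} and \eqref{eq_PsinAndPsi}. The obstacle you flag about the truncations $\rho_j$ does not actually arise: Lemma \ref{lem_derivModifModel} holds for any model over a structure satisfying Assumption \ref{assumption_notint}, so those cut-offs are already built into the coaction on words and the model $(\PPi^\mcY,g^\mcY)$, and no case-by-case check is needed.
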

	\begin{proof}
		We prove both identities in the same induction. We let $\tau\in\mcB\sqcup\mcB^+$ and $|n|<|\tau|$ and we apply point \ref{item_gDnTau_general} of Lemma \ref{lem_derivModifModel} to the element $\Psi(\tau)$. Using Lemma \ref{lem_DeltaPsitau}, we have
		\begin{align*}
		\widetilde{ g}^\mcY_x\big( D^n \Psi(\tau) \big) =& \sum_{k\leq n} \binom{n}{k} \partial^{n-k} \Big(\widetilde{\PPi}^\mcY\left( \downarrow^k\Psi(\tau)\right)\Big)(x) 
		\\&-
			\sum_{\substack{\sigma\prec\tau \\ |q|<|\tau/\sigma| \\ |\sigma|+|q|-|n|<0}} \widetilde{g}^\mcY_x\left(D^q\Psi(\tau/\sigma)\right) \,\, \partial^n_{y} \left({\Pi}^\mcY_x\Psi_q(\sigma)(y)\right)_{|y=x}.
		\end{align*}
		Using Lemma \ref{lem_commutdownarrowPsi} we have $\widetilde{\PPi}^\mcY\left( \downarrow^k\Psi(\tau)\right)=\widetilde{\PPi}\left( \downarrow^k\tau\right)$, and the induction hypothesis ensures that 
		$\widetilde{g}^\mcY_x\left(D^q\Psi(\tau/\sigma)\right) = \widetilde{g}^\mcY_x\left(\Psi(D^q(\tau/\sigma))\right)$. Then $	\widetilde{ g}^\mcY_x\big( D^n \Psi(\tau) \big)$ is equal to 
		\begin{align*}
			& \sum_{k\leq n} \binom{n}{k} \partial^{n-k} \Big(\widetilde{\PPi}\left( \downarrow^k\tau\right)\Big) (x)
			-
				\sum_{\substack{\sigma\prec\tau \\ |q|<|\tau/\sigma| \\ |\sigma|+|q|-|n|<0}} \widetilde{g}^\mcY_x\left(\Psi(D^q(\tau/\sigma))\right) \,\, \partial^n_{y} \left({\Pi}^\mcY_x\Psi_q(\sigma)(y)\right)_{|y=x}.
		\end{align*}
		From the definition of $\mcY$, we have $\widetilde{g}^\mcY_x\left(\Psi(D^q(\tau/\sigma))\right)=\widetilde{g}_x\left(D^q(\tau/\sigma)\right)$. From Lemma \ref{lem_SnDnreindex} and induction hypothesis $	\widetilde{ g}^\mcY_x\big( D^n \Psi(\tau) \big)$ is then equal to 
		\begin{align*}
			& \sum_{k\leq n} \binom{n}{k} \partial^{n-k} \Big(\widetilde{\PPi}\left( \downarrow^k\tau\right)\Big) (x)
			-
			\sum_{\substack{\nu\prec\tau  \\ |\nu|-|n|<0 \\ q\geq 0}} \widetilde{g}^\mcY_x\left(\Psi(\tau/\nu)\right) \, \partial^n_{y} \left({\Pi}^\mcY_x\Psi_q(\downarrow^q\nu)(y)\right)_{|y=x}
			\\
			&=
			\sum_{k\leq n} \binom{n}{k} \partial^{n-k} \Big(\widetilde{\PPi}\left( \downarrow^k\tau\right)\Big) (x)
			-
			\sum_{\substack{\nu\prec\tau  \\ |\nu|-|n|<0 }} \widetilde{g}^\mcY_x\left(\Psi(\tau/\nu)\right) \,\, \partial^n_{y} \left(({\Pi}_x\nu)(y)\right)_{|y=x}.
		\end{align*}
		which is indeed equal to $\widetilde{g}(D^n\tau)$ from point \ref{item_gDnTau_general} of Lemma  \ref{lem_derivModifModel}.
		The second identity of the Lemma is proven along the same lines, by using point \ref{item_DnPixTau_general} of Lemma \ref{lem_derivModifModel}
		
	\end{proof}
	We are now ready to prove Proposition \ref{prop_gDntaustarderivative} from  Proposition \ref{prop_starderivY}.
	
	\begin{proof}[of Proposition \ref{prop_gDntaustarderivative}]
		
		 From Lemma \ref{lem_gtildetogitilde} and \eqref{eq_commutDnDownaroown}
		\begin{align*}
			\widetilde{\PPi}(D^n\tau)_i &= \sum_{m\geq 0}\frac{1}{m!}\Delta_i^m\left(\widetilde{\PPi}(\downarrow^m D^n\tau)\right) 
			= \sum_{m\geq 0}\frac{1}{m!}\Delta_i^m\left(\widetilde{\PPi}(D^n\downarrow^m\tau)\right) 
		\end{align*}
		From Proposition \ref{prop_starderivY} and Lemma \ref{lem_commutdownarrowPsi} and \eqref{eq_commutDnDownaroown}, this is equal to
		$$
		\sum_{m\geq 0}\frac{1}{m!}\Delta_i^m\left(\widetilde{g}\left(D^n\Psi(\downarrow^m\tau)\right)\right)  
		= 	\sum_{m\geq 0}\frac{1}{m!}\Delta_i^m\left(\widetilde{g}\left(\downarrow^m D^n\Psi(\tau)\right)\right)  
		$$
		which is indeed equal to $\widetilde{g}(D^n\Psi(\tau))_i$ from Lemma \ref{lem_gtildetogitilde}.

	\end{proof}

	
	

 	\section{Stochastic data on decorated trees} \label{section_constructionZtau}
	
	In this section we construct the stochastic fields we will use to build solutions for Equation \eqref{eq_gPAM}. The stochastic data $\mcZ$ we will use in the resolution of \eqref{eq_gPAM}, will take the form of a tuple of distributions $\mcZ=(Z_\tau)_\tau$ indexed by decorated trees of degree at most $\gamma$. The role of these distributions will be first to define the paracontrolled ansatz, and then to compute the right hand side of \eqref{eq_gPAM} for functions satisfying the ansatz.
	The distributions $Z_\tau$ will live in the space $C^{|\tau|}$, and we are going to impose some algebraic relations on the $(Z_\tau)$, this will give the notion of admissible stochastic data. One puts a topology on this set of stochastic datas by defining the distance $\norme{  \cdot \, ; \,  \cdot }_{\frak N}$ with
	$$
	\norme{\mcZ^1;\mcZ^2}_{\frak N} = \sum_{\substack{\tau\in\mcT_{<\gamma}}} \norme{Z_\tau^1 - Z_\tau^2}_{C^{|\tau|}}, 
	$$
	and we set $\norme{\mcZ}_{\frak N} = \norme{\mcZ;0}_{\frak N}$.
	Given such collection $(Z_\tau)$, it will be convenient for computations to enhance $\tau\mapsto Z_\tau$ as a linear map from $T \oplus T^{+}$ to $S'(\bfR^{d+1})$.

	\subsection{The naive stochastic data} \label{subsect_naivemodel}

We first place ourselves in the situation where all the noises $\xi_\ell$ in \eqref{eq_gPAM} are smooth, this will be typically the case when we use mollified versions $\xi^\eps_\ell$ of the noise. We are going to define the naive collection of stochastic fields, which is the equivalent of naive model in regularity structure. We call this collection naive because it will typically diverge when one removes the regularisation, and that we will need to renormalise it.
We start from a collection of smooth noises $(\xi_\ell)_{\ell\in\bbrack{1;\ell_0}}$. We now define the smooth field $Z_\tau$ by induction on the tree $\tau\in \mcB\sqcup\mcB^+$. We initialise the induction by setting   
$
Z_{\Xi_\ell} = \xi_\ell
$
for any $\ell\in\bbrack{1;\ell_0}$ and 
$
Z_{X^k}=0
$
for any $k\in\bfN^{d+1}$. We introduce the reduced Hairer-Kelly map
\begin{align*}
\overline\Psi(\tau)  &= \Psi(\tau) -  \lg  \tau \rg.
\end{align*}
We remark that for any tree $\tau$ the reduced term $\overline \Psi(\tau)$ is a sum of words involving only letters that are trees strictly smaller than $\tau$. This will enable us to formulate the recursive definition of the $Z_\tau$.
For $\tau = X^p\Xi_\ell\prod_{j=1}^n\mcI_{b_j}(\tau_j) \in T$, we set 
\begin{equation}\label{eq_defZtau_Naif}
	Z_\tau = \mathbf{1}_{p=0} \,\, \xi_\ell  \,  \prod_{j=1}^{n}\sum_{l_j\geq 0} (\partial^{b_j}K)_{l_j} * \widetilde{\PPi}^\mcZ \left(\Psi(\downarrow^{l_j}\tau_j)\right) - \widetilde{ \PPi}^\mcZ\left(\overline\Psi(\tau)\right),
\end{equation}
where $ (\partial^{b_j}K)_{l_j}(x) =(\partial^{b_j}K)(x) \, x^{l_j}$. For $\tau = X^p\prod_{j=1}^n\mcI^+_{b_j}(\tau_j) \in T^+$
\begin{equation}  \label{eq_defZtau_T+}
		Z_\tau = \mathbf{1}_{p=0} \prod_{j=1}^{n} \widetilde{ g}^\mcZ\left(D^{b_j}\Psi(\mcI(\tau_j))\right)  - \widetilde{ \PPi}^\mcZ\left(\overline\Psi(\tau)\right).
\end{equation}
These two expressions can be made more explicit by writing
$$
\widetilde{ \PPi}^\mcZ(\Psi(\tau))= \sum_{k\geq 0} \sum_{\substack{ q\geq 0 \\ \sigma_q\prec\dots\prec\sigma_1\prec \tau  }} \frac{1}{k!} \P_k \left(Z_{\downarrow^k\tau/\sigma_1},\dots, Z_{\sigma_q}\right),
$$
and 
$$
\widetilde{ g}^\mcZ\left(D^{n}\Psi(\tau)\right) = \sum_{\substack{k\geq 0 \\ k=k_1+k_2}} \sum_{\substack{q\geq 0 \\ \sigma_q\prec\dots\prec\sigma_1\prec \tau }} \frac{1}{k_2!} \binom{n}{k_1}  \partial^{n-k_1}_*\P_{k_2}\left(  Z_{\downarrow^k\tau/\sigma_1},\dots, Z_{\sigma_q}   \right). 
$$
The right hand side of \eqref{eq_defZtau_Naif} and \eqref{eq_defZtau_T+} should be interpreted as some commutation relation involving paraproducts, integrations and products, that we evaluate in the $Z_\mu$ for smaller trees $\mu$. Equations \eqref{eq_defZtau_Naif} and \eqref{eq_defZtau_T+} can be reformulated as 
\begin{equation} \label{eq_defZtau_Naif_Alternative}
	\widetilde{ \PPi}^\mcZ\left(\Psi(\tau)\right) = \mathbf{1}_{p=0} \, \xi_\ell  \,  \prod_{j=1}^{n}\sum_{l_j\geq 0} (\partial^{b_j}K)_{l_j} * \widetilde{\PPi}^\mcZ\left(\Psi(\downarrow^{l_j}\tau_j)\right),  
\end{equation}
for $\tau = X^p\Xi_\ell\prod_{j=1}^n\mcI_{b_j}(\tau_j) \in T$. And for $\tau = X^p\prod_{j=1}^n\mcI^+_{b_j}(\tau_j) \in T^+$,
\begin{equation} \label{eq_defZtau_T+_alternative}
	\widetilde{ \PPi}^\mcZ\left(\Psi(\tau)\right) = \mathbf{1}_{p=0} \prod_{j=1}^{n} \widetilde{ g}^\mcZ\left(D^{b_j}\Psi(\mcI(\tau_j))\right).
\end{equation}

\begin{example} We have the following examples of naive stochastic fields one can associate to some smooth noise $\xi_\ell$.
	\begin{align*}
		Z_{\mcI(\Xi_\ell)} &= K * \xi_\ell ,
		\quad
		Z_{X^{k}\mcI(\Xi_\ell)} = 0, \quad 
		Z_{\Xi_\ell \mcI(\Xi_\ell)} =  Z_{\mcI(\Xi_\ell)} \, \xi_\ell - \P(Z_{\mcI(\Xi_\ell)},\, \xi_\ell),
		\\
		Z_{(\mcI(\Xi_\ell))^2} &= Z_{\mcI(\Xi_\ell)}^2 - 2 \P(Z_{\mcI(\Xi_\ell)}, Z_{\mcI(\Xi_\ell)} ),
		\quad 
		Z_{\mcI(X^n \Xi_\ell)} = K_n * \xi_\ell,
		\\
		Z_{X^k(\mcI(\Xi_\ell))^2} &= -\P_{k}( 	Z_{\mcI(\Xi_\ell)} , 	Z_{\mcI(\Xi_\ell)}  ), 
		\\
		Z_{\mcI(\Xi_\ell\mcI(\Xi_\ell)) } & = K * (  Z_{\mcI(\Xi_{\ell})} \, \xi_\ell)    -\P( 	Z_{\mcI(\Xi_\ell)} , 	Z_{\mcI(\Xi_\ell)}  ),  
	\end{align*}
	where $k \neq 0$.
\end{example}

These equations can also be rewritten using the $\PPi$ maps instead of $\widetilde{\PPi}$, it is done in the following proposition
\begin{proposition}\label{prop_alternativedef_Ztaunaif}
	Equations \eqref{eq_defZtau_Naif_Alternative} and \eqref{eq_defZtau_T+_alternative} are respectively equivalent to
\begin{equation} \label{eq_defZtau_Naif_Alternative_PPi}
	{ \PPi}^\mcZ\left(\Psi(\tau)\right)(x) = x^p \, \xi_\ell  \,  \prod_{j=1}^{n} \partial^{b_j}K * {\PPi}^\mcZ\left(\Psi(\tau_j)\right),  
\end{equation}
for $\tau = X^p\Xi_\ell\prod_{j=1}^n\mcI_{b_j}(\tau_j) \in T$. And 
\begin{equation} \label{eq_defZtau_T+_alternative_PPi}
	{ \PPi}^\mcZ\left(\Psi(\tau)\right)(x) = x^p \prod_{j=1}^{n} { g}^\mcZ\left(D^{b_j}\Psi(\mcI(\tau_j))\right),
\end{equation}
for $\tau = X^p\prod_{j=1}^n\mcI^+_{b_j}(\tau_j) \in T^+$.
\end{proposition}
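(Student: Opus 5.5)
The plan is to pass between $\PPi$ and $\widetilde{\PPi}$ with the Pascal-type relations \eqref{eq_pitildetopi}, \eqref{eq_defgtilde2} and Lemma~\ref{lem_commutdownarrowPsi}, and then to compare the families of identities indexed by the $\downarrow^j\tau$. We begin with the first equivalence. Suppose \eqref{eq_defZtau_Naif_Alternative} holds for every tree. By \eqref{eq_pitildetopi} and Lemma~\ref{lem_commutdownarrowPsi},
\[
\PPi^\mcZ(\Psi(\tau))(x)=\sum_{j}\frac{x^j}{j!}\,\widetilde{\PPi}^\mcZ\big(\Psi(\downarrow^j\tau)\big)(x).
\]
For $\tau = X^p\Xi_\ell\prod_{j=1}^n\mcI_{b_j}(\tau_j)$ we apply the Leibniz rule \eqref{eq_leibnizSn} to $\downarrow^j\tau$. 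Since $\downarrow^m\mcI_{b}(\sigma)$ vanishes for $m\neq 0$ when $\sigma$ is a tree (only the first term of \eqref{eq_CoproductOfMcItau} produces $X^m$ on the right, and through $\Delta\sigma$ it yields $\mcI_b(\downarrow^m\sigma)$), the $\downarrow$ operator only hits the factor $X^p$ together with the inner trees. The precise bookkeeping is $\downarrow^m\mcI_b(\sigma)=\sum \mcI_b(\downarrow^{m}\sigma)$ up to binomial factors; we will verify this from \eqref{eq_CoproductOfMcItau} and the definition \eqref{linear_operator}.

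Substituting \eqref{eq_defZtau_Naif_Alternative} for each $\downarrow^j\tau$ then gives a sum over the splitting $j = j_0+\sum_i m_i$. The indicator $\mathbf 1_{p-j_0=0}$ forces $j_0=p$, and the factor $x^{p}/p!\cdot p!$ produces $x^p$. For each branch we obtain
\[
\sum_{m_i,l_i}\frac{x^{m_i}}{m_i!}\,(\partial^{b_i}K)_{l_i}*\widetilde{\PPi}^\mcZ\big(\Psi(\downarrow^{l_i+m_i}\tau_i)\big)(x).
\]
We then expand $(x-y)^{l}$ inside $(\partial^{b}K)_l$ binomially. Using \eqref{eq_pitildetopi} once more, in the form $\sum_j \frac{y^j}{j!}\widetilde\PPi(\downarrow^j\sigma)(y)=\PPi(\sigma)(y)$, the double sum collapses to $\partial^{b_i}K*\PPi^\mcZ(\Psi(\tau_i))(x)$. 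Here we use that $x^{m}$ is constant under the convolution in $y$ and that $\sum_{l+m=r}\frac{(x-y)^l x^m}{l!m!}$ reorganises via Chu–Vandermonde to $y^r/r!$, up to sign. This yields \eqref{eq_defZtau_Naif_Alternative_PPi}. The converse follows by the inverse Pascal relation $\widetilde{\PPi}(\sigma)=\sum_j\frac{(-x)^j}{j!}\PPi(\downarrow^j\sigma)$ and the same computation run backwards. Alternatively, the two systems indexed by $\{\downarrow^j\tau\}$ are related by an invertible triangular transformation, since $\downarrow$ strictly lowers the polynomial/tree size.

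The $T^+$ case is analogous. We use \eqref{eq_defgtilde2} and multiplicativity (Lemma~\ref{lem_multiplicativitygtilde}) for $\widetilde g$ and $g$, together with the commutations $\downarrow^m D^{b} = D^b\downarrow^m$ from \eqref{eq_commutDnDownaroown} and $\downarrow^m\Psi=\Psi\downarrow^m$. The factor $\prod_j\widetilde g^\mcZ(D^{b_j}\Psi(\mcI(\tau_j)))$ re-sums through \eqref{eq_defgtilde2} into $\prod_j g^\mcZ(D^{b_j}\Psi(\mcI(\tau_j)))$, and the root $X^p$ again supplies $x^p$.

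The main obstacle will be the exact bookkeeping of $\downarrow^m$ on $\mcI_b(\sigma)$ and the binomial/sign reshuffle in the convolution step. To handle it, we will first settle the single-edge case $\tau=\mcI_b(\sigma)$ by a direct computation, and then extend it to products using the Leibniz rule.
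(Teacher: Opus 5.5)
Your proposal is correct and follows the paper's own route: the Pascal relation \eqref{eq_pitildetopi} with Lemma~\ref{lem_commutdownarrowPsi}, then the Leibniz rule for $\downarrow^k$ on the tree product, then substitution of \eqref{eq_defZtau_Naif_Alternative} with the indicator forcing the root share of $k$ to equal $p$, and finally a binomial resummation under the convolution followed by \eqref{eq_pitildetopi} again (resp.\ \eqref{eq_defgtilde2} in the $T^+$ case); your converse direction is a harmless addition that the paper leaves implicit. Two points need tidying: $\downarrow^m\mcI_b(\sigma)$ does not vanish but equals exactly $\mcI_b(\downarrow^m\sigma)$ with no binomial factors, and the collapse to $y^r/r!$ holds precisely when the weight in $(\partial^bK)_l$ is $(y-x)^l/l!$ (the convention the paper's computation actually uses), not merely ``up to sign''.
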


\begin{proof}
	We suppose $\mcZ$ is the naive stochastic data. We let $\tau = X^p\Xi_\ell\prod_{j=1}^n\mcI_{b_j}(\tau_j) \in T$. From \eqref{eq_pitildetopi}, Lemma \ref{lem_commutdownarrowPsi}, and from the Leibniz formula for $\downarrow^k$
\begin{align*}
	{ \PPi}^\mcZ\left(\Psi(\tau)\right)(x) &= \sum_{k\geq0 } \frac{x^k}{k!} \widetilde{ \PPi}^\mcZ\Big(\Psi\Big(\downarrow^k \Big(X^p\Xi_\ell\prod_{j=1}^n\mcI_{b_j}(\tau_j)\Big)      \Big)\Big)(x) 
	\\
	&= \sum_{\substack{k\geq0 \\ (k_j) \in\mcP_{n+1 }(k)}} x^k \frac{p!}{(p-k_0)!}\prod_{j=1}^{n}\frac{1}{k_j!} \widetilde{ \PPi}^\mcZ\Big(\Psi \Big(X^{p-k_0}\Xi_\ell\prod_{j=1}^n\mcI^+_{b_j}(\downarrow^{k_j}\tau_j)\Big)   \Big)(x),
\end{align*}
from \eqref{eq_defZtau_Naif_Alternative}, this is equal to 
	$$
	\sum_{\substack{k\geq0 \\ (k_j) \in\mcP_{n+1 }(k)}}  \mathbf{1}_{k_0=p} \, x^{k_0} \xi_\ell \prod_{j=1}^n\sum_{l_j\geq0} \frac{x^{k_j}}{k_j!} (\partial_{b_j}K)_{l_j}*\widetilde\PPi^\mcZ\Big(\Psi(\downarrow^{k_j+l_j}\tau_j)\Big)(x),
	$$
	from the definition of $(\partial_{b_j}K)_{l_j}$, we have 
	\begin{align*}
	&\sum_{k_j,l_j\geq0} \frac{x^{k_j}}{k_j!} (\partial_{b_j}K)_{l_j}*\widetilde\PPi^\mcZ\Big(\Psi(\downarrow^{k_j+l_j}\tau_j)\Big)(x) 
	\\=& 
	\int_{\bfR^{d+1}} \sum_{k_j,l_j\geq0}(\partial_{b_j}K)(x-z) \frac{(z-x)^{l_j}}{l_j!} \frac{x^{k_j}}{k_j!} \widetilde\PPi^\mcZ\left(\Psi(\downarrow^{k_j+l_j}\tau_j)\right)(z) \ddd z
	\\=&
	\sum_{q_j\geq0}(\partial_{b_j}K) * \Big( \frac{z^{q_j}}{q_j!} \widetilde\PPi^\mcZ\left(\Psi(\downarrow^{q_j}\tau_j)\right)(z)\Big) 
		\\=&
(\partial_{b_j}K) * \Big( \PPi^\mcZ\left(\Psi(\tau_j)\right)(z)\Big) ,
	\end{align*}
	from where the first identity follows.
Likewise for $\tau = X^p\prod_{j=1}^n\mcI^+_{b_j}(\tau_j) \in T^+$, from \eqref{eq_pitildetopi} and Lemmas \ref{lem_DnSnpropenvrac}, \ref{lem_commutdownarrowPsi}
	\begin{align*}
		{ \PPi}^\mcZ\left(\Psi(\tau)\right)(x) &= \sum_{k\geq0 } \frac{x^k}{k!} \widetilde{ \PPi}^\mcZ\Big(\Psi\Big(\downarrow^k \Big(X^p\prod_{j=1}^n\mcI^+_{b_j}(\tau_j)\Big)      \Big)\Big)(x) 
		\\
		&= \sum_{\substack{k\geq0 \\ (k_j) \in\mcP_{n+1 }(k)}} x^k \frac{p!}{(p-k_0)!}\prod_{j=1}^{n}\frac{1}{k_j!} \widetilde{ \PPi}^\mcZ\Big(\Psi \Big(X^{p-k_0}\prod_{j=1}^n\mcI_{b_j}^+(\downarrow^{k_j}\tau_j)\Big)   \Big)(x),
	\\
	&=\sum_{\substack{k\geq0 \\ (k_j) \in\mcP_{n+1 }(k)}}  \mathbf{1}_{k_0=p} x^{k_0}  \prod_{j=1}^n \frac{x^{k_j}}{k_j!} \widetilde g^\mcZ\Big(D^{b_j}\Psi(\downarrow^{k_j}\tau_j)\Big)(x)
	\\
		&= x^{p}  \prod_{j=1}^n  g^\mcZ\Big(D^{b_j}\Psi(\tau_j)\Big)(x).
	\end{align*}

\end{proof}

	

	\subsection{General admissible stochastic data and their algebraic properties} 
	
	We define the notion of admissible stochastic data, by retaining some of the algebraic properties of the naive collection that we will need in Section \ref{section_ParacAnsatz}.

	\begin{definition} \label{admissible}
		An uplet $(Z_\tau)_{\tau\in\mcT}$ is said to be admissible if it satisfies \eqref{eq_defZtau_T+} and 
			\begin{equation}\label{eq_ZtauAdmissible}
				{\PPi}^\mcZ\left( \Psi(\mcI(\tau))\right) =
				K * { \PPi}^\mcZ\left( \Psi(\tau)\right) .
			\end{equation}
	\end{definition}
	Equation \eqref{eq_ZtauAdmissible} can be rewritten using the modified model  $\widetilde\PPi$ map instead of  ${\PPi}$ in the following way
	\begin{equation}\label{eq_ZtauAdmissible_tilde}
	 \widetilde{ \PPi}^\mcZ\left( \Psi(\mcI(\tau))\right) = \sum_{l\geq0}
	K_l * \widetilde{ \PPi}^\mcZ\left( \Psi(\downarrow^l\tau)\right) .
\end{equation}
	The following proposition, analogous to \cite[Theorem 21]{BailleulHoshinoRS1}, enables the construction of admissible stochastic data $\mcZ$ when we are given the ill-defined fields $Z_\tau$, that is the ones where $\tau$ has negative degree. 
		\begin{theorem} \label{prop_extensionthm_Ztau}
		Suppose we are given a collection
			$$
		\mcZ^- = \big( Z_\tau \big)_{\tau\in\mcT,\, |\tau|<0}\in \prod_{\tau\in\mcT_{<0}}C^{|\tau|}.
		$$
		There exists admissible stochastic data $\mcZ=(Z_\tau)_{\tau\in\mcT}$ extending $\mcZ^-$.
	\end{theorem}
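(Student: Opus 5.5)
We construct the extension $\mcZ$ by induction on decorated trees, ordered by size: number of edges and noises, then polynomial decorations. This is the ordering used for the naive data in Subsection \ref{subsect_naivemodel}; it guarantees that $\overline\Psi(\tau)$ only involves letters strictly smaller than $\tau$. At each step we define $Z_\tau$ by the admissibility constraints wherever they apply, and by the prescribed $\mcZ^-$ otherwise. We then prove that $Z_\tau\in C^{|\tau|}$ by identifying $Z_\tau$ with a distribution $Y_{\tau}$ to which Theorem \ref{thm_regularityZtau} applies.

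\emph{Definition of $Z_\tau$.} Set $Z_{X^k}=0$ and $Z_{\Xi_\ell}=\xi_\ell$. We then distinguish three cases.
\begin{itemize}
\item For $\tau=\mcI_b(\sigma)$ (or $X^p\mcI_b(\sigma)$), we impose \eqref{eq_ZtauAdmissible_tilde} (with derivative $b$). That is, we set
\[
Z_\tau=\sum_{l\geq0}(\partial^bK)_l*\widetilde\PPi^\mcZ\big(\Psi(\downarrow^l\sigma)\big)-\widetilde\PPi^\mcZ\big(\overline\Psi(\tau)\big),
\]
which only uses previously constructed $Z_\mu$.
\item For $\tau\in\mcB^+$, we impose \eqref{eq_defZtau_T+} verbatim.
\item For the remaining product trees $\tau=X^p\Xi_\ell\prod_j\mcI_{b_j}(\tau_j)$, there are two subcases. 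If $|\tau|<0$, we take $Z_\tau$ from $\mcZ^-$. If $|\tau|>0$ (and $|\tau|$ is not an integer by Assumption \ref{assumption_notint}), we define $Z_\tau$ through a reconstruction. The product $\prod_j$ of the already defined modelled lifts is a modelled distribution of positive order $|\tau|$, and we set $\widetilde\PPi^\mcZ(\Psi(\tau))$ to be the unique reconstruction given by the reconstruction theorem. Equivalently, we impose
\[
Z_\tau:=\widetilde\PPi^\mcZ(\Psi(\tau))-\widetilde\PPi^\mcZ(\overline\Psi(\tau)).
\]
\end{itemize}
Uniqueness of reconstruction for positive order makes this canonical.

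\emph{Regularity.} Here we follow the strategy announced in the paper, via the two propositions connecting $Z_\tau$ with $Y^\mcZ_\tau$. By the induction hypothesis, all letters $\mu$ appearing in $\overline\Psi(\tau)$ satisfy $Z_\mu\in C^{|\mu|}$. Hence the words built from them define a model $(\PPi^\mcZ,g^\mcZ)$ on $\scrT_\P$, by \cite[Theorem 1]{LocalExpansionsParacSystems}. On the trees constructed so far, the pair $(\PPi,g):=(\PPi^\mcZ\circ\Psi, g^\mcZ\circ\Psi)$ then defines a pre-model on the regularity structure of decorated trees.

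We show that it is a genuine model, i.e.\ that it satisfies the analytic bounds $\Vert\tau\Vert_{(\PPi,g)}^*<\infty$. For the integration case this is Schauder's estimate: admissibility is exactly $\PPi\mcI(\tau)=K*\PPi\tau$, and $g$ is compatible with it by \eqref{eq_defZtau_T+} and Proposition \ref{prop_alternativedef_Ztaunaif}. For the product case with $|\tau|>0$ it follows from the reconstruction bound, and for $|\tau|<0$ it holds by hypothesis.

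Next we verify the algebraic identity $Z_\tau=Y_\tau$, where $Y$ is built from this tree model as in Definition \ref{def_Ytau}. Combining \eqref{eq_PiTauPiPsiTau} and the recursive form \eqref{recursive_psi} of $\Psi$, the terms $\widetilde\PPi^\mcZ(\overline\Psi(\tau))$ coincide with the paraproduct corrections in \eqref{eqdef_Ztau1}, and this gives the identity. Theorem \ref{thm_regularityZtau} then yields $Z_\tau\in C^{|\tau|}$, with the bound $\lesssim\Vert\tau\Vert^*$.

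\emph{Main obstacle.} The delicate point is consistency. We must check that $\Psi$ intertwines the coproducts (Lemma \ref{lem_DeltaPsitau}), so that the $g$ defined via \eqref{eq_defZtau_T+} really gives the recentring of the $\PPi$ defined via \eqref{eq_ZtauAdmissible}; this is a Chen-type relation. In particular, the polynomial corrections in $\mcI^+_{b+n}$ must match $D^n\Psi(\mcI(\tau))$. Here we would use Proposition \ref{prop_starderivY} and Proposition \ref{prop_gDntaustarderivative}, which identify $\widetilde g^\mcY(D^n\Psi(\cdot))$ with $\widetilde g(D^n\cdot)$. With these, the induction closes simultaneously on $\mcB$ and $\mcB^+$.
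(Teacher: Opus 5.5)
Your outline works, but the induction runs on a different object than the paper's. The paper never builds a model on decorated trees in this proof. It inducts on the degrees $\alpha_j$ and sets $Z_{\mcI(\tau)}=Y^\mcZ_{\mcI(\Psi(\tau))}$ on the word structure extended by abstract integration. The model there comes from \cite[Theorem 5.14]{Hai14} applied to words, and Proposition \ref{prop_Ztau=YPsitau_integration} shows this is the admissible choice. It then defines $T^+$ by \eqref{eq_defZtau_T+} and obtains the positive non-planted trees from the word-level reconstruction of Lemma \ref{lem_reconstructionarg_Ztau}.

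You instead pull everything back to trees and run the classical extension theorem there: prescribed data for negative products, reconstruction of $(\id\otimes g_x)\Delta\tau-\tau$ for positive ones, Schauder for $\mcI$. You then read off regularity from Theorem \ref{thm_regularityZtau} on the tree structure. The identity $Z_\tau=Y_\tau$ reduces to the definitions once the pulled-back $g$ is known to be the tree model's $g$. Your route makes the link with the usual extension of admissible models explicit and gives Corollary \ref{PPi_Psi} along the way. The paper's route stays on words, so the consistency you flag as the main obstacle is absorbed into the algebraic identity of Proposition \ref{prop_Ztau=YPsitau_integration}. That same word-level reconstruction is what is reused in Besov form in Lemma \ref{cor_reconstruction_convergence}.

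One step must be reordered, otherwise the planted case is circular. Schauder's estimate bounds the model whose $g_x(\mcI^+_n(\sigma))$ is Hairer's recentring, not a priori the pulled-back pair. Proposition \ref{prop_alternativedef_Ztaunaif} only converts between the tilde and non-tilde forms. Propositions \ref{prop_starderivY} and \ref{prop_gDntaustarderivative} presuppose an existing model on trees. So you cannot assert the Schauder bound before proving the consistency you defer to the end. The fix:
\begin{enumerate}
\item Extend the tree model to $\mcI(\sigma)$ and the $\mcI^+_n(\sigma)$ by Hairer's theorem, and build $Y$ from it.
\item Check $Y_{\mcI(\sigma)}=Z_{\mcI(\sigma)}$, using tilde admissibility on both sides plus the induction hypothesis.
\item Only then invoke Proposition \ref{prop_starderivY} to get $\widetilde g(\mcI^+_n(\sigma))=\widetilde g^{\mcZ}(D^n\Psi(\mcI(\sigma)))$, which is exactly \eqref{eq_defZtau_T+}.
\end{enumerate}
Two minor points: the reconstruction in the positive product case defines $\PPi\tau$ rather than $\widetilde\PPi^\mcZ(\Psi(\tau))$, and $Z_{\Xi_\ell}$ should be taken from $\mcZ^-$ rather than set to $\xi_\ell$.
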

	Theorem \ref{prop_extensionthm_Ztau} is proven at the end of the section.
	In the rest of this subsection, we set $\mcZ$ and admissible collection of stochastic fields. We prove some properties of such collection $\mcZ$.
	
	\begin{lemma} \label{lem_PsiTau_multiplicatif}
		For any tree  $\tau=X^p \prod_{j=1}^n \mcI^+_{b_j}(\tau_j) \in T^+$,
		\begin{align*}
		\widetilde{\PPi}^\mcZ\left(\Psi(\tau)\right) & =  \mathbf{1}_{p=0} \, 	\prod_{j=1}^{n} \widetilde{\PPi}^\mcZ\left(\Psi(\mcI^+_{b_j}(\tau_j))\right), 
		\\   {\PPi}^\mcZ\left(\Psi(\tau)\right) & =  x^p \, 	\prod_{j=1}^{n} {\PPi}^\mcZ\left(\Psi(\mcI^+_{b_j}(\tau_j))\right). 
		\end{align*}
	\end{lemma}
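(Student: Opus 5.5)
The plan is to derive the first identity directly from the admissibility relation \eqref{eq_defZtau_T+}, in its reformulation \eqref{eq_defZtau_T+_alternative}, and then to obtain the second identity from the first via \eqref{eq_pitildetopi}, exactly as in the proof of Proposition \ref{prop_alternativedef_Ztaunaif}.

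\textbf{Step 1: planted trees.} Applying \eqref{eq_defZtau_T+_alternative} to the single planted tree $\mcI^+_{b_j}(\tau_j)$, i.e.\ the case $p=0$, $n=1$, gives
\[
\widetilde{\PPi}^\mcZ\bigl(\Psi(\mcI^+_{b_j}(\tau_j))\bigr)=\widetilde g^\mcZ\bigl(D^{b_j}\Psi(\mcI(\tau_j))\bigr).
\]

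\textbf{Step 2: the product.} Applying \eqref{eq_defZtau_T+_alternative} to the full product $\tau=X^p\prod_j\mcI^+_{b_j}(\tau_j)$ gives $\mathbf 1_{p=0}\prod_j \widetilde g^\mcZ(D^{b_j}\Psi(\mcI(\tau_j)))$. Substituting Step 1 into each factor yields the first claimed identity.

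\textbf{Step 3: passing to $\PPi^\mcZ$.} Write
\[
\PPi^\mcZ(\Psi(\tau))(x)=\sum_k \frac{x^k}{k!}\,\widetilde\PPi^\mcZ\bigl(\downarrow^k\Psi(\tau)\bigr)(x),
\]
which holds since $\PPi^\mcZ$ is a model on words and \eqref{eq_pitildetopi} applies there. By Lemma \ref{lem_commutdownarrowPsi} we have $\downarrow^k\Psi(\tau)=\Psi(\downarrow^k\tau)$. Expand $\downarrow^k\tau$ with the Leibniz rule \eqref{eq_leibnizSn}, distributing $k=k_0+\sum_j k_j$ among $X^p$ and the factors $\mcI^+_{b_j}(\tau_j)$. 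Each factor $\downarrow^{k_j}\mcI^+_{b_j}(\tau_j)$ is again a product of planted trees, up to a polynomial part which is killed by the indicator. Applying the first identity to each term, only $k_0=p$ survives, with coefficient $p!/0!$, which cancels the $1/p!$ coming from $x^p$. The sum then factorises as
\[
x^p\prod_j\sum_{k_j}\frac{x^{k_j}}{k_j!}\,\widetilde\PPi^\mcZ\bigl(\Psi(\downarrow^{k_j}\mcI^+_{b_j}(\tau_j))\bigr),
\]
and applying \eqref{eq_pitildetopi} backwards to each factor gives $x^p\prod_j\PPi^\mcZ(\Psi(\mcI^+_{b_j}(\tau_j)))$.

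\textbf{Main obstacle.} The delicate point is Step 3. One must check that $\downarrow^{k_j}$ acting on $\mcI^+_{b_j}(\tau_j)$ stays inside the span of planted trees, possibly multiplied by monomials. If $\mcI^+$ elements are indexed so that $\downarrow$ acts only on the inner tree, this is immediate. Otherwise I would fall back on Lemma \ref{lem_multiplicativitygtilde} together with \eqref{eq_defgtilde2}: these show that multiplicativity of the tilde objects is equivalent to multiplicativity of the untilded ones, by the same binomial resummation used in the proof of Lemma \ref{lem_multiplicativitygtilde}.
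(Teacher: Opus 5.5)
Your proof is correct, and Steps 1--2 are exactly the paper's argument for the first identity.

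For the second identity the paper does not resum. It uses the untilded form \eqref{eq_defZtau_T+_alternative_PPi} of the admissibility relation, which comes from Proposition~\ref{prop_alternativedef_Ztaunaif}; that proposition's $T^+$ part only uses \eqref{eq_defZtau_T+_alternative}, so it holds for any admissible $\mcZ$. The paper applies this form to each $\mcI^+_{b_j}(\tau_j)$ to get $\PPi^\mcZ(\Psi(\mcI^+_{b_j}(\tau_j)))=g^\mcZ(D^{b_j}\Psi(\mcI(\tau_j)))$, and substitutes into the same relation for $\tau$.

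Your Step 3 instead redoes, directly for the map $\nu\mapsto\widetilde\PPi^\mcZ(\Psi(\nu))$, the $\downarrow^k$/Leibniz resummation that underlies that equivalence. The paper performs that resummation on the $\widetilde g^\mcZ$ side via \eqref{eq_defgtilde2}. The paper's route is shorter given Proposition~\ref{prop_alternativedef_Ztaunaif}. Yours is self-contained, and it shows that the untilded identity follows formally from the tilded one, independently of the right-hand side of \eqref{eq_defZtau_T+}.

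The obstacle you flag is not real. By \eqref{eq_CoproductOfMcItau}, the only terms of $\Delta^{\!+}\mcI^+_b(\sigma)$ with a monomial right factor come from $(\mcI^+_b\otimes\id)\Delta\sigma$. Hence $\downarrow^k\mcI^+_b(\sigma)=\mcI^+_b(\downarrow^k\sigma)$, a single planted tree with no polynomial part.

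Moreover, every basis element of $T^+$ has the form $X^m\prod_j\mcI^+_{b_j}(\sigma_j)$. So the first identity, extended by linearity, already states that $\nu\mapsto\widetilde\PPi^\mcZ(\Psi(\nu))$ is multiplicative on all of $T^+$, with the convention $X^m\mapsto\mathbf{1}_{m=0}$. Your fallback therefore goes through without any closure property for planted trees: run the binomial resummation of Lemma~\ref{lem_multiplicativitygtilde} for this map, combined with \eqref{eq_pitildetopi} and Lemma~\ref{lem_commutdownarrowPsi}.
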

	\begin{proof}
		For any $j\in\bbrack{1;n}$, from \eqref{eq_defZtau_T+_alternative} applied to $\mcI_{b_j}^+(\tau_j)\in T^+$, we have 
		\begin{align*}
			\widetilde{ \PPi}^\mcZ\left(\Psi(\mcI_{b_j}^+(\tau))\right) & = \widetilde{ g}^\mcZ\left(D^{b_j}\Psi(\mcI(\tau_j))\right),
			 \\
			 { \PPi}^\mcZ\left(\Psi(\mcI_{b_j}^+(\tau))\right) & = { g}^\mcZ\left(D^{b_j}\Psi(\mcI(\tau_j))\right).
		\end{align*}
		Then, substituting this equality in \eqref{eq_defZtau_T+_alternative} applied to $\tau$ gives the lemma.
	\end{proof}

\begin{proposition} \label{Prop_CommutDerivativePsi}We suppose that the collection $\mcZ$ is smooth. For any tree $\tau \in T^+$ and $n\in\bfN^{d+1}$ such that $|n|<|\tau|$,	
		\begin{equation}\begin{split}
				{ g}^\mcZ\left(D^n \Psi(\tau)  \right) =  { \PPi }^\mcZ\left( \Psi(D^n\tau)  \right).
		\end{split}\end{equation}
\end{proposition}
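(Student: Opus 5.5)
The plan is to prove the identity by induction on the size of $\tau\in T^+$, using the multiplicative structure of $\PPi^\mcZ\circ\Psi$ on $T^+$ from Lemma \ref{lem_PsiTau_multiplicatif} together with the Leibniz rule \eqref{eq_leibnizSn} for $D^n$. The argument has three steps: the base case $\tau=\mcI^+_b(\sigma)$, the reduction of a general product to that case, and the identification of $g^\mcZ$ on words with classical derivatives on the product side.

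\emph{Base case.} Since $D^n\mcI^+_b(\sigma)=\mcI^+_{b+n}(\sigma)$, the right-hand side is ${\PPi}^\mcZ(\Psi(\mcI^+_{b+n}(\sigma)))=g^\mcZ(D^{b+n}\Psi(\mcI(\sigma)))$ by Lemma \ref{lem_PsiTau_multiplicatif}, i.e.\ by \eqref{eq_defZtau_T+_alternative_PPi}. The left-hand side is $g^\mcZ(D^n\Psi(\mcI^+_b(\sigma)))$. Here I would use Proposition \ref{prop_starderivY} in its $\widetilde g$ form, together with \eqref{eq_defgtilde2} and Lemma \ref{lem_commutdownarrowPsi} to pass between $g$ and $\widetilde g$. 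The aim is an intermediate identity
\[ g^\mcZ\bigl(D^n\Psi(\mcI^+_b(\sigma))\bigr)=g^\mcZ\bigl(D^{n}\Psi(D^b\mcI(\sigma))\bigr)=g^\mcZ\bigl(D^{n+b}\Psi(\mcI(\sigma))\bigr), \]
which rests on commuting $D^b$ with $\Psi$ at the level of $g^\mcZ$. That commutation is exactly the content of the statement applied to the smaller tree $\mcI(\sigma)$ viewed inside $T^+$, so it is available from the induction hypothesis. The additivity $D^nD^b=D^{n+b}$ is \eqref{eq_commutDnDownaroown}.

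\emph{Products.} For $\tau=X^p\prod_j\mcI^+_{b_j}(\tau_j)$, expand $D^n\tau$ by \eqref{eq_leibnizSn} as a sum of products $D^{n_0}X^p\prod_j\mcI^+_{b_j+n_j}(\tau_j)$. By Lemma \ref{lem_PsiTau_multiplicatif}, ${\PPi}^\mcZ\Psi(D^n\tau)$ is then the Leibniz expansion of $\partial^n$ applied to $x^p\prod_j{\PPi}^\mcZ\Psi(\mcI^+_{b_j}(\tau_j))$, with each derivative factor replaced by ${\PPi}^\mcZ\Psi(\mcI^+_{b_j+n_j}(\tau_j))$.

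On the left, we must evaluate $g^\mcZ(D^n\Psi(\tau))$. The key observation is that, for smooth data, ${\PPi}^\mcZ\Psi(\tau)$ is the product of the ${\PPi}^\mcZ\Psi(\mcI^+_{b_j}(\tau_j))$. Moreover $g^\mcZ(D^n w)$ is a star derivative $\partial^n_*$ of ${\PPi}^\mcZ(w)$ by Proposition \ref{def_prop_antipode}. When $|n|<|\tau|$, these star derivatives coincide with the genuine Taylor coefficients, so $g^\mcZ(D^n\Psi(\tau))(x)=\partial^n_y$ of the positive part of the local expansion at $x$. This is the content of point \ref{item_gDnTau_general} of Lemma \ref{lem_derivModifModel} combined with Proposition \ref{prop_starderivY}.

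\emph{Comparing the two sides.} Both sides are then Leibniz expansions of a product of $\Pi_x$-recentred factors, evaluated at $y=x$. The terms with negative degree vanish at the base point, and the remaining terms match the base cases factor by factor.

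\emph{Main obstacle.} The hardest step is showing that $g^\mcZ\circ D^n\circ\Psi$ is multiplicative on products in $T^+$, i.e.\ that the Leibniz rule for $D^n$ on words intertwines correctly with the product of paraproducts. I would attack this by computing $\Delta\Psi(\tau)$ via Lemma \ref{lem_DeltaPsitau} and using the identity
\[ g_x = ({\PPi}\otimes g_x\CA_+)\Delta \]
evaluated at $y=x$, so that only the polynomial components $X^n/n!\otimes D^n\Psi(\tau)$ survive. This reduces the claim to $\partial^n({\PPi}^\mcZ\Psi(\tau))$ minus the contributions of the $\sigma\prec\tau$ terms, and the same reduction applies to ${\PPi}^\mcZ\Psi(D^n\tau)$.
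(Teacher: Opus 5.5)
There is a genuine gap, and it sits where the content of the proposition lies. In your base case you need
\[
g^\mcZ\bigl(D^n\Psi(\mcI^+_b(\sigma))\bigr)=g^\mcZ\bigl(D^{n+b}\Psi(\mcI(\sigma))\bigr),
\]
and you take it from the induction hypothesis applied to ``$\mcI(\sigma)$ viewed inside $T^+$''. That element is $\mcI^+_0(\sigma)=D^0\mcI(\sigma)$. It has as many nodes as $\mcI^+_b(\sigma)$ and degree at least as large, and for $b=0$ it is the very tree under consideration, so no induction covers it.

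Nor is the identity a formal consequence of $D^nD^b=D^{n+b}$. The elements $\Psi(\mcI^+_b(\sigma))$ and $D^b\Psi(\mcI(\sigma))$ of $\scrT_\P$ are different; already their one-letter parts differ, $\lg\mcI^+_b(\sigma)\rg$ versus $\lg\mcI(\sigma)\rg^b$. Admissibility \eqref{eq_defZtau_T+_alternative_PPi} only identifies $\PPi^\mcZ(\Psi(\mcI^+_b(\sigma)))$ with $g^\mcZ(D^b\Psi(\mcI(\sigma)))$. That they remain equal after applying $D^n$ and $g^\mcZ$ is exactly what must be shown, even for a single planted factor.

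Proposition \ref{prop_starderivY} is not available either. It concerns the $Y_\tau$ built from a model on trees given in advance, and the only candidate here, $\tau\mapsto\PPi^\mcZ(\Psi(\tau))$, is shown to be a model in Corollary \ref{PPi_Psi} using the present proposition.

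In the product step you correctly name the obstacle, multiplicativity of $g^\mcZ\circ D^n\circ\Psi$, but you leave it open. $\Psi$ does not send products of trees to products of words. Your remark that ``the same reduction applies to $\PPi^\mcZ\Psi(D^n\tau)$'' has no content as stated, since $\PPi^\mcZ\Psi(D^n\tau)$ is not of the form $g^\mcZ(D^n\,\cdot\,)$. Two smaller errors: star derivatives are not classical Taylor coefficients, and after $\partial^n_y(\cdot)_{|y=x}$ it is the recentred terms of degree above $|n|$ that vanish, not those of negative degree.

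The missing idea is to pass through a genuine product in the algebra $T^+$ of $\scrT_\P$. For $\tau=X^p\prod_j\mcI^+_{b_j}(\tau_j)$ set $w'=X^p\prod_j D^{b_j}\Psi(\mcI(\tau_j))$, and first prove $g^\mcZ(D^n\Psi(\tau))=g^\mcZ(D^nw')$. Express both sides through the un-tilded analogue of \eqref{eq_gDnTau_general},
\[
g_x(D^n w)=\partial^n\bigl(\PPi(w)\bigr)(x)-\sum_{\substack{\sigma\prec w\\ |\sigma|<|n|}} g_x(w/\sigma)\,\partial^n_y\bigl(\Pi_x\sigma(y)\bigr)_{|y=x},
\]
with $\gamma_{yx}$ in place of $\Pi_x$ for elements of $T^+$. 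The leading terms agree because $g^\mcZ(w')=\PPi^\mcZ(\Psi(\tau))$ by admissibility.

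For the correction terms:
\begin{itemize}
\item expand $\Delta\Psi(\tau)$ with Lemma \ref{lem_DeltaPsitau}, and $\Delta^{\!+}w'$ by multiplicativity and Lemma \ref{lem_DnSnpropenvrac};
\item reindex with Lemma \ref{lem_SnDnreindex};
\item match coefficients using the induction hypothesis on the strictly smaller elements $\tau/\sigma$, $\tau_j/\sigma_j$ and $X^{p_2}\prod_j\mcI^+_{b_j}(\sigma_j)$ (with $\sigma\prec\tau$, $\sigma_j\prec\tau_j$), together with Lemmas \ref{lem_multiplicativitygtilde} and \ref{lem_PsiTau_multiplicatif}.
\end{itemize}
This treats one planted factor and several at once, so no separate base case is needed. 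Only then do the Leibniz rule for $D^n$ in $T^+$, multiplicativity of $g^\mcZ$ there, and admissibility applied to each term of $D^n\tau$ give $\PPi^\mcZ(\Psi(D^n\tau))$. That final step is the part of your plan that is correct.
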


\begin{proof}
We prove the identity by induction on the degree of $\tau$. We let $\tau = X^p \prod_{j=1}^{N} \mcI_{b_j}^+(\tau_j)$ and suppose $|n|<|\tau|$.
We will first prove the following identity
\begin{equation}\label{eq_preuveZDnPsiTau_ZPsiDnTau}
	{ g}^\mcZ\left(D^n \Psi(\tau)  \right) = { g}^\mcZ \Big( D^n \Big( X^p \prod_{j=1}^{N} D^{b_j} \Psi( \mcI(\tau_j) )   \Big)    \Big).
\end{equation}
The following formula is very close to \eqref{eq_gDnTau_general} from Lemma \ref{lem_derivModifModel} and is valid for any regularity structure satisfying Assumption \ref{assumption_notint} and model $(\PPi,g)$ over it.
\begin{equation}\label{eq_gDnTau_general_withouttilde}
	{ g}_x\big( D^n\tau \big)	= 
	\partial^n(\PPi(\tau))
	- 
	\sum_{\substack{\sigma\prec\tau\\  |\sigma|-|n|<0}} { g}_x\big(\tau/\sigma) \, \partial^n_{y} \big({\Pi}_x\sigma(y)\big)_{|y=x},
\end{equation}
the proof follows the same line as for \eqref{eq_gDnTau_general} and can be found in \cite[Lemma 16]{BailleulHoshinoRS1}. Then the function ${ g}^\mcZ\left(D^n \Psi(\tau)  \right)$ is equal to
\begin{align*}
	&\partial^{n}\left( \widetilde{\PPi}^\mcZ(\Psi(\tau) )\right) \, - \, \sum_{\substack{\sigma\prec\tau \\ q\geq 0}} \widetilde{ g}^\mcZ\left(D^q\Psi(\tau/\sigma)\right) \partial^n_y \Big({\Pi}^\mcZ_x (\Psi_q(\sigma))(y)\Big)_{|y=x}.
\end{align*}
From induction hypothesis and Lemma \ref{lem_SnDnreindex}, and from \eqref{eq_PsinAndPsi}, this is equal to  
	\begin{align*}
		&\partial^{n}\left( \widetilde{\PPi}^\mcZ(\Psi(\tau) )\right) \, - \, \sum_{\substack{\sigma\prec\tau \\ q\geq 0}} \widetilde{ g}^\mcZ\left(\Psi(\tau/\sigma)\right) \partial^n_y \Big({\Pi}^\mcZ_x (\Psi(\sigma))(y)\Big)_{|y=x}.
	\end{align*}

On the other hand from \eqref{eq_gDnTau_general_withouttilde} and  \eqref{eq_commutDerivationDelta}, one has
\begin{align*}
&	{ g}^\mcZ \Big( D^n \Big( X^p \prod_{j=1}^{N} D^{b_j} \Psi( \mcI(\tau_j) )   \Big)    \Big) 
=  \partial^{n}\Big( {g}^\mcZ \Big(  X^p \prod_{j=1}^{N} D^{b_j} \Psi( \mcI(\tau_j) )   \Big) \Big)
	\\ -  &\sum_{\substack{\sigma_1\dots\sigma_N\prec\tau_1\dots\tau_N \\ q_j \geq 0 \\ p=p_1+p_2}} \binom{p}{p_1} { g}^\mcZ\Big(X^{p_1}\prod_{j=1}^{N} D^{q_j}\Psi(\tau_j/\sigma_j)\Big)
	\,
	 \partial^n_y \Big({\gamma^\mcZ_{yx}} \Big(X^{p_2}\prod_{j=1}^N D^{b_j}\Psi_{q_j}(\mcI(\sigma_j))\Big)\Big)_{|y=x}.
\end{align*}
From induction hypothesis and Lemmas \ref{lem_multiplicativitygtilde} and \ref{lem_PsiTau_multiplicatif}
\begin{align*}
	{ g}^\mcZ\Big(\prod_{j=1}^{N} D^{q_j}\Psi(\tau_j/\sigma_j)\Big) 
	=
	\prod_{j=1}^{N} { g}^\mcZ\Big( \Psi\left(D^{q_j}(\tau_j/\sigma_j)\right)\Big)
	 =
	  { g}^\mcZ\Big(\Psi\Big(\prod_{j=1}^{N} \left(D^{q_j}(\tau_j/\sigma_j)\right)\Big)\Big).
\end{align*}
Then, using Lemma \ref{lem_SnDnreindex}, the function $	\widetilde{ g}^\mcZ \left( D^n \left( X^p \prod_{j=1}^{N} D^{b_j} \Psi( \mcI(\tau_j) )   \right)    \right) $ is equal to 
\begin{align*}
	&\partial^{n}\Big( {g}^\mcZ \Big(  X^p \prod_{j=1}^{N} D^{b_j} \Psi( \mcI(\tau_j) )   \Big) \Big)
\\ -  &\sum_{\substack{\sigma_1\dots\sigma_N\prec\tau_1\dots\tau_N \\ q_j \geq 0 }} \widetilde{ g}^\mcZ\Big(\Psi\Big(\prod_{j=1}^{N} \tau_j/\sigma_j\Big)\Big)
\,
\partial^n_y \Big({\gamma^\mcZ_{yx}} \Big(X^{p}\prod_{j=1}^N D^{b_j}\Psi(\mcI(\sigma_j))\Big)\Big)_{|y=x}.
\end{align*}
From induction hypothesis,
\begin{align*}
	{\gamma_{yx}^\mcZ} \Big(X^{p_2}\prod_{j=1}^N D^{b_j}\Psi(\mcI(\sigma_j))\Big)
	&= {\gamma_{yx}^\mcZ} \Big(X^{p_2}\prod_{j=1}^N \Psi(D^{b_j}\mcI(\sigma_j))\Big).
\end{align*}
Hence the relation \ref{eq_preuveZDnPsiTau_ZPsiDnTau}. Then from Lemma \ref{lem_DnSnpropenvrac} and from \eqref{eq_defZtau_T+_alternative}
\begin{align*}
	  { g}^\mcZ\Big( D^n \Psi(\tau)    \Big)  
	  &=
	  { g}^\mcZ\Big( D^n \Big( X^p \prod_{j=1}^{n} D^{b_j} \Psi( \mcI(\tau_j) )   \Big)    \Big)  
	  \\&=
	   {g}^\mcZ\Big( \sum_{\bm n=(n_j) \in \mcP_{1+N}(n)} \binom{n}{\bm n} \frac{p !}{(p-n_0)!} X^{p-n_0} \prod_{j=1}^{n} D^{b_j+n_j} \Psi( \mcI(\tau_j) )   \Big)    \Big) 
	   \\&=
	   \sum_{\bm n=(n_j) \in \mcP_{1+N}(n)} \binom{n}{\bm n} \frac{p !}{(p-n_0)!} x^{p-n_0} \,  \prod_{j=1}^{n} { g}^\mcZ\Big( D^{b_j+n_j} \Psi( \mcI(\tau_j) )   \Big)    
	   \\&= 
	   { g}^\mcZ\left(\Psi(D^n\tau)\right) .
 \end{align*} 
\end{proof}

\begin{remark}\label{remark_CommutDerivPsiTau_withoutTilde}
	From Lemmas and giving the commutation between $\downarrow^j$ and $\Psi$ and $D^n$, we can show that one also has
	$$
	\widetilde{ g}^\mcZ\left(D^n \Psi(\tau)  \right) =  \widetilde{ \PPi }^\mcZ\left( \Psi(D^n\tau)  \right)
	$$
\end{remark}

\begin{proposition}\label{prop_derivetoile_Ztau_Neg}
	For any tree $\tau\in T\sqcup T^+$ and $|n|>|\tau|$, we have 
	\begin{align*}
		\sum_{\substack{k\geq 0 \\ k=k_1+k_2}} \sum_{\substack{q\geq 0 \\ \sigma_q\prec\dots\prec\sigma_1\prec \tau }} \frac{1}{k_2!} \binom{n}{k_1}  \partial^{n-k_1}_*\P_{k_2}\left(  Z_{\downarrow^k\tau/\sigma_1},\dots, Z_{\sigma_q}   \right) = \partial^n_y\Big(\Pi_x^\mcZ(\Psi(\tau))(y)\Big)_{|y=x}
	\end{align*}
\end{proposition}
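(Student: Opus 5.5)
The plan is to reduce the statement to an identity at the level of single words in the regularity structure $\scrT_\P$ of iterated paraproducts, and then sum over the decomposition of $\Psi(\tau)$ given in Definition \ref{def_Psitau}.

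\textbf{Step 1: a word-level identity.} For a word $w=a_1\cdots a_m$ (letters evaluated through $\mcZ$), a weight $\ell$ and $n$ with $|w|+|\ell|-|n|<0$, I would prove
\begin{equation*}
\partial^n_*\P_\ell\big(Z_{a_1},\dots,Z_{a_m}\big)(x)=\partial^n_y\Big(\Pi^\mcZ_x\lg w\rg_\ell(y)\Big)_{|y=x}.
\end{equation*}
The argument goes by induction on the length of $w$.
\begin{itemize}
\item First expand $\Pi_x=(\PPi\otimes g_x\CA_+)\Delta$ using the coproduct \eqref{coproduct_words}. The terms produced are $\lg w_2\rg_{\ell_2+r_1}X^{r_2}\otimes\lg w_1\rg^{r}_{\ell_1}$ with $|r|<|w_1|+|\ell_1|$.
\item Apply $\partial^n_y$ and set $y=x$. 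The factor $(y-x)^{r_2}$ survives only when $\partial^n$ hits it exactly $r_2$ times, so a binomial coefficient $\binom{n}{r_2}r_2!$ appears and the remaining $\partial^{n-r_2}$ falls on $\PPi\lg w_2\rg$.
\item Recombine the indices $r_1,r_2$ through a Chu--Vandermonde identity, as in the proof of Lemma \ref{lem_DeltaPsitau}.
\item Use Proposition \ref{def_prop_antipode} (the twisted antipode $\CA_*$) to identify $g_x\CA_+$ evaluated on the prefix words with star derivatives of the prefixes.
\end{itemize}
The result should reproduce exactly the recursion \eqref{eqdef_starderivative}. The key check is that the truncation $|p|\le\rho_j(k_1,\ell_1)$ in \eqref{condition_sum} matches the truncation $|r|<|w_1|+|\ell_1|$ of $\Delta$ on prefixes together with the condition that the total degree minus $|n|$ is negative. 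Assumption \ref{assumption_notint} excludes the boundary (integer-degree) cases.

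\textbf{Step 2: summing over $\Psi(\tau)$.} For the mixed elements $\lg w\rg_{k_1}X^{k_2}$ appearing in $\Psi(\tau)$, multiplicativity gives
\begin{equation*}
\Pi_x\big(\lg w\rg_{k_1}X^{k_2}\big)(y)=\Pi_x\lg w\rg_{k_1}(y)\,(y-x)^{k_2}.
\end{equation*}
Differentiating at $y=x$ yields $\binom{n}{k_2}k_2!\,\partial^{n-k_2}_y\Pi_x\lg w\rg_{k_1}(y)|_{y=x}$. Since $|w|+|k_1|-|n-k_2|=|\tau|-|n|<0$, Step 1 applies and turns each such term into $\partial^{n-k_2}_*\P_{k_1}(\cdots)$. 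After relabelling $k_1\leftrightarrow k_2$, the prefactor $\frac{1}{k_1!k_2!}\binom{n}{k_2}k_2!$ becomes the prefactor $\frac{1}{k_2!}\binom{n}{k_1}$ in the statement, with the letters $Z_{\downarrow^k\tau/\sigma_1},\dots,Z_{\sigma_q}$. Summing over the chains $\sigma_q\prec\dots\prec\sigma_1\prec\tau$ then gives the stated identity. The case $\tau\in T^+$ is treated identically using the Hopf structure of $T^+$.

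\textbf{Main obstacle.} The hard part is Step 1, in particular the precise bookkeeping showing that the cutoffs in $\Delta$ and in the star-derivative recursion coincide once the total degree is negative. Some letters have $|a|$ positive and others negative, so one must check carefully that every subword of positive degree is correctly handled by $g$ (that is, $\partial^m_*$ with $|m|$ below its degree), as established in Proposition \ref{def_prop_antipode}. I would first test the identity on words of length two, where $\partial^{e_i}_*\P(f,g)=\partial^{e_i}\P(f,g)-f\,\partial^{e_i}g$, to calibrate the indices before running the general induction.
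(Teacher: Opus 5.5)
Your overall strategy is sound and differs from the paper's, but Step 1 as you describe it would not work and needs a different mechanism. Expanding $\Pi_x=(\PPi\otimes g_x\CA_+)\Delta$ produces $\PPi(X^{r_2})(y)=y^{r_2}$, not $(y-x)^{r_2}$. It also produces prefix factors $g_x\CA_+(\lg w_1\rg^{r}_{\ell_1})$, which are antipode-twisted alternating sums of products of $g_x$ on sub-prefixes, not star derivatives of $w_1$. Proposition \ref{def_prop_antipode} describes $g$ itself ($g=\PPi\,\CA_{*}$), not $g_x\CA_+$.

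Use instead the relation $\PPi=(\Pi_x\otimes g_x)\Delta$, that is
\begin{equation*}
\Pi_x\lg w\rg_\ell=\PPi\lg w\rg_\ell-\sum\frac{1}{r_1!\,r_2!}\binom{\ell}{\ell_1}\,g_x\big(\lg w_1\rg^{r}_{\ell_1}\big)\,\Pi_x\big(\lg w_2\rg_{\ell_2+r_1}X^{r_2}\big),
\end{equation*}
summed over the terms of \eqref{coproduct_words} with $w_1$ non-empty. Here $g_x(\lg w_1\rg^{r}_{\ell_1})=\partial^{r}_*\P_{\ell_1}$ of the letters of $w_1$, by definition \eqref{def_model_words}, and $\Pi_x(\lg w_2\rg X^{r_2})(y)=(y-x)^{r_2}\,\Pi_x\lg w_2\rg(y)$. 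Now apply $\partial^n_y$ at $y=x$:
\begin{itemize}
\item The terms with $w_2$ empty are polynomial of degree $<|w|+|\ell|<|n|$, so they vanish.
\item The terms with $|w_2|+|\ell_2|+|r|>|n|$ vanish. This follows from the model bounds, or from the same induction, which also shows $\partial^n_y\Pi_x\sigma(y)_{|y=x}=0$ when $|\sigma|>|n|$.
\item The remaining terms match \eqref{eqdef_starderivative} one-to-one under $r_2\leftrightarrow k_1$, $r_1\leftrightarrow p$. The coefficient becomes $\frac{1}{r_1!}\binom{n}{r_2}\binom{\ell}{\ell_1}$, and the constraints $|r|<|w_1|+|\ell_1|$ and $|w_2|+|\ell_2|+|r|<|n|$ are exactly $|p|\le\rho_j(k_1,\ell_1)$.
\end{itemize}
The induction on word length closes because $w_2$ is strictly shorter, and no Chu--Vandermonde step is needed. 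Your Step 2 is then correct as written. The case $\tau\in T^+$ needs nothing extra (in particular no Hopf structure), since $\Psi(\tau)$ is a combination of elements $\lg w\rg_{k_1}X^{k_2}$ in both cases.

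The paper instead argues by induction on the tree $\tau$. It splits each chain $\sigma_q\prec\dots\prec\sigma_1\prec\tau$ at an intermediate $\sigma\prec\tau$ via \eqref{eqdef_starderivative} and re-indexes with Chu--Vandermonde. It then uses the induction hypothesis for $\sigma$, together with Proposition \ref{Prop_CommutDerivativePsi}, to turn the prefix factors into $\widetilde g(D^q(\tau/\sigma))$. Finally it recognises the tree-indexed recursion of point 2 of Lemma \ref{lem_derivModifModel}, with $\Delta\Psi(\tau)$ supplied by Lemma \ref{lem_DeltaPsitau}.

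Your route instead isolates a general fact about the word model: star derivatives beyond the degree are the derivatives at $x$ of the recentred model. The proposition then reduces to Leibniz bookkeeping on \eqref{forest_formula}. This bypasses the modified model and Proposition \ref{Prop_CommutDerivativePsi}, so it never uses admissibility of $\mcZ$ and holds for arbitrary smooth letter data. The paper's version has the advantage of running in parallel with its other tree-level inductions (Proposition \ref{prop_starderivY}, Lemma \ref{lem_halfderivative}).
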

\begin{proof}
We prove the identity by induction. From Lemma \ref{lem_derivModifModel}
\begin{align*}
	\sum_{\substack{k\geq 0 \\ k=k_1+k_2}} \sum_{\sigma_q\prec\dots\prec\tau } \frac{1}{k_2!} \binom{n}{k_1}\partial^{n-k_1}\P_{k_2}\Big(  Z_{\downarrow^k\tau/\sigma_1},\dots, Z_{\sigma_q}   \Big) 
&=
	 \sum_{k_1\leq n}\partial^{n-k_1} \widetilde{ \PPi}\big( \Psi(\downarrow^{k_1}\tau) \big)
	 \\
	 &= \sum_{j\geq0} \frac{(-1)^jx^j}{j!} \partial^n \PPi(\Psi(\downarrow^j\tau)).
\end{align*}
From the definition of $\partial^k_*{\P}_\ell$ given in \ref{eqdef_starderivative}, 
\begin{align*}
	&\sum_{\substack{k\geq 0 \\ k=k_1+k_2}} \sum_{\sigma_q\prec\dots\prec\tau } \frac{1}{k_2!} \binom{n}{k_1}\Big\{ \partial^{n-k_1}\P_{k_2}\big(  Z_{\downarrow^k\tau/\sigma_1},\dots, Z_{\sigma_q}   \big) \\
	&\hspace{7.5cm}-\partial^{n-k_1}_*\P_{k_2}\big(  Z_{\downarrow^k\tau/\sigma_1},\dots, Z_{\sigma_q}   \big)\Big\} 
	\\
	&= \sum_{\substack{\sigma\prec\tau \\ \sigma_q\prec\dots\prec\tau \\ \nu_s\prec\dots\prec\sigma}} \sum_{(k_i)\in\mcP_3(k)}\sum_{\substack{n-k_1=p_1+p_2 \\ |r|< \delta }} \frac{n!}{k_1!k_3!k_4!p_1!p_2!r!}\partial^{p_1+r}_* \P_{k_3} \big(Z_{\downarrow^k\tau/\sigma_1},\dots,Z_{\sigma_q/\sigma}\big) 
	\\
	&\hspace{8cm}\times \partial^{p_2}_*\P_{k_4+r}\big(Z_{\sigma/\nu_1},\dots, Z_{\nu_s}   \big),
\end{align*}
with $\delta=\max\{ |\tau/\sigma| - |p_1|-|k_1|-|k_4|, \enskip |p_2|-|\sigma|-|k_4|\}$. One re-index the sum of the right hand side to obtain
\begin{align*}
	&\sum_{\substack{\sigma\prec\tau \\ \sigma_q\prec\dots\prec\tau \\ \nu_s\prec\dots\prec\sigma}} \sum_{|q|<|\tau/\sigma |} \sum_{\substack{k\geq 0 \\ k=k_3+k_{14}}} 
	\partial^{ q-k_{14}}_* \P_{k_3} \big(Z_{\downarrow^k\tau/\sigma_1},\dots,Z_{\sigma_q/\sigma}\big) 
	\\
	&\hspace{0.5cm}\times \sum_{\substack{q=q_1+q_2 \\ k_{14}=k_1+k_4 }} \frac{n!}{k_1!k_3!k_4!(q_2-k_4)!(q_1-k_1)!(n-q_1)! } \partial^{n-q_1}_*\P_{q_2}\big(Z_{\sigma/\nu_1},\dots, Z_{\nu_s}   \big).
\end{align*}
From the Chu-Vandermonde identity
$$
\sum_{\substack{ k_{14} = k_1+k_4      }} \frac{1}{k_1!(q_1-k_1)} \frac{1}{k_4! (q_2-k_4)!}= \frac{1}{ q_1!q_2! } \binom{q}{k_{14}}.
$$
From the induction hypothesis and \ref{Prop_CommutDerivativePsi}, this is equal to 
\begin{align*}
	&\sum_{\substack{\sigma\prec\tau \\ |q|<|\tau/\sigma| \\  |\sigma|-|n|+|q|<0 }}\sum_{\nu_s\prec\dots\prec\sigma}\sum_{q=q_1+q_2} \frac{1}{q_2!}\binom{n}{q_1}  \widetilde{ g}\big(D^{q}\tau/\sigma\big) \, \partial^{n-q_1}_*\P_{q_2}\big(Z_{\sigma/\nu_1},\dots, Z_{\nu_s}   \big)
	\\
	&\qquad= \sum_{\substack{\sigma\prec\tau \\  |\sigma|-|n|<0 }}\sum_{\nu_s\prec\dots\prec\sigma}\sum_{q=q_1+q_2} \frac{1}{q_2!}\binom{n}{q_1}  \widetilde{ g}^\mcZ\big(\Psi(\tau/\sigma)\big) \,   \partial^{n-q_1}_*\P_{q_2}\big(Z_{\downarrow^{q}\sigma/\nu_1},\dots, Z_{\nu_s}   \big)
	\\
	&\qquad =
	\sum_{\substack{\sigma\prec\tau, \, \sigma\notin T_X\\ |\sigma|-|n|<0}} \widetilde{ g}_x(\tau/\sigma) \, \partial^n_y\big( { \Pi}_x\sigma(y)\big)_{|y=x}.
\end{align*}
Hence the proposition.
\end{proof}

Theorem \ref{Prop_CommutDerivativePsi} admits the following corollary, that makes the link between admissible stochastic data and admissible models on decorated trees.  
	\begin{corollary} \label{PPi_Psi}
	Setting for any $\tau\in (T\sqcup T^+)\backslash T_X$
	$$
	{ \PPi}(\tau) = { \PPi}^\mcZ (\Psi(\tau)) \quad\text{and}\quad { g }(\tau) = { \PPi }^\mcZ(\Psi(\tau)), 
	$$
	and $\PPi(X^k)(x)=x^k, \, g_x(X^k)=x^k$ for any $k\in\bfN^{d+1}$, defines an admissible pre-model ${ (\PPi,g)}$  on the regularity structure of decorated trees.
\end{corollary}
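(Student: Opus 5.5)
To show that $(\PPi,g)$ is an admissible pre-model on decorated trees, I will check three things. First, $\PPi$ is compatible with the tree operations: $\PPi\Xi_\ell=\xi_\ell$, $\PPi(X^k)(x)=x^k$, $\PPi\mcI_b(\tau)=\partial^bK*\PPi\tau$, and $\PPi$ is multiplicative at the root as in \eqref{def_PPi_trees}. Second, $g$ is a character of $T^+$ satisfying the recursion \eqref{def_g}. Third, the pair is consistent with the coaction, i.e.\ $\Pi_x=(\PPi\otimes g_x\CA_+)\Delta$ reproduces the recursive description \eqref{def_Pi_trees}. The plan is to read each item off the algebraic identities already established for admissible $\mcZ$, transporting them through $\Psi$.

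\textbf{Multiplicativity and integration.} For $\tau=X^p\Xi_\ell\prod_j\mcI_{b_j}(\tau_j)$, I will use Proposition \ref{prop_alternativedef_Ztaunaif}, namely \eqref{eq_defZtau_Naif_Alternative_PPi}. This gives $\PPi^\mcZ(\Psi(\tau))=x^p\xi_\ell\prod_j\partial^{b_j}K*\PPi^\mcZ(\Psi(\tau_j))$, which is exactly $\PPi\tau=x^p\xi_\ell\prod_j\PPi\mcI_{b_j}(\tau_j)$ once one knows $\PPi\mcI_b(\sigma)=\partial^bK*\PPi\sigma$. That last identity follows from the admissibility condition \eqref{eq_ZtauAdmissible}, after differentiating (using $D^b$, or equivalently grafting with an edge decoration) and commuting $\partial^b$ with the convolution.

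\textbf{Characters on $T^+$.} For $\tau=X^p\prod_j\mcI^+_{b_j}(\tau_j)\in T^+$, Lemma \ref{lem_PsiTau_multiplicatif} gives $g_x(\tau)=x^p\prod_j g_x(\mcI^+_{b_j}(\tau_j))$, so $g$ is multiplicative. For the generators I will use \eqref{eq_defZtau_T+_alternative_PPi} together with Proposition \ref{Prop_CommutDerivativePsi}:
\[
g(\mcI^+_b(\sigma))=g^\mcZ\big(D^b\Psi(\mcI(\sigma))\big)=\PPi^\mcZ\big(\Psi(D^b\mcI(\sigma))\big).
\]
This identifies $g(\mcI_b^+(\sigma))$ with the Taylor coefficient $\partial^b(K*\PPi\sigma)(x)$ recentred in the way dictated by \eqref{eq_gDnTau_general_withouttilde}. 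Comparing this formula, written for the tree $\mcI(\sigma)$ and its model $\Pi_x$, with \eqref{def_g} then gives $g_x(\mcI^+_b(\sigma))=\sum_{\nu\le\sigma}g_x(\sigma/\nu)(\partial^bK*\Pi_x\nu)(x)$. Here I use that $\partial^n_y(\Pi_x\mcI(\sigma))(x)=0$ for $|n|<|\mcI(\sigma)|$, and that Proposition \ref{prop_derivetoile_Ztau_Neg} handles the terms with $|\sigma|-|n|<0$ by identifying star-derivatives with $\partial^n\Pi^\mcZ_x(\Psi(\sigma))$.

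\textbf{Main obstacle.} The hard step is the consistency of $\Pi_x$ built from $\PPi^\mcZ\circ\Psi$ with the tree coaction. Concretely, one needs $\Pi_x\tau=\Pi^\mcZ_x(\Psi(\tau))$ up to the polynomial terms. I would prove this by induction on the size of trees, using Lemma \ref{lem_DeltaPsitau} to compare $\Delta\Psi(\tau)$ with $(\Psi\otimes\Psi)\Delta\tau$, where the extra $D^n$ and $\Psi_n$ terms are reindexed through Lemma \ref{lem_SnDnreindex} and \eqref{eq_PsinAndPsi}. I would also use \eqref{eq_pitildetorestepi} to pass between the tilde and non-tilde models. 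The polynomial part is trivially consistent, since $\PPi(X^k)=x^k$ and $g_x(X^k)=x^k$. Once $\Pi_x$ agrees with \eqref{def_Pi_trees}, admissibility of $(\PPi,g)$ follows.
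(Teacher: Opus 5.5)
Your central target, $\Pi_x\tau=\Pi^{\mcZ}_x(\Psi(\tau))$ proved by induction through Lemma \ref{lem_DeltaPsitau}, is indeed the core of the paper's argument. However, your first and last items aim at the wrong object. The corollary concerns a general admissible $\mcZ$, which is only required to satisfy \eqref{eq_defZtau_T+} and \eqref{eq_ZtauAdmissible}. Proposition \ref{prop_alternativedef_Ztaunaif}, and hence \eqref{eq_defZtau_Naif_Alternative_PPi}, is proved only for the naive data, since it rests on \eqref{eq_defZtau_Naif_Alternative}. For admissible data the fields $Z_\tau$ with $|\tau|<0$ are free (Theorem \ref{prop_extensionthm_Ztau}). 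Since $\Psi(\tau)=\lg\tau\rg+\overline\Psi(\tau)$, where $\overline\Psi(\tau)$ only involves strictly smaller trees, $\PPi\tau=Z_\tau+\PPi^{\mcZ}(\overline\Psi(\tau))$ is then essentially arbitrary. Consequently $\PPi$ is not multiplicative on $T$, and $\Pi_x$ does not satisfy the multiplicative recursion \eqref{def_Pi_trees}. This is exactly what must be allowed: the renormalised data $\widehat Z$ of Sections \ref{section_renormalized_equation} and \ref{Sec::8} are admissible but not naive. Both of these checks would therefore fail and should be dropped. Admissibility only asks that $g$ be a character compatible with polynomials (Lemma \ref{lem_PsiTau_multiplicatif}) and that $\PPi\mcI(\tau)=K*\PPi\tau$, which is \eqref{eq_ZtauAdmissible} verbatim.

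In the step you correctly single out as the hard one, reindexing via Lemmas \ref{lem_DeltaPsitau}, \ref{lem_SnDnreindex} and \eqref{eq_PsinAndPsi} is not enough by itself. One has $\Pi^{\mcZ}_x=(\PPi^{\mcZ}\otimes(g^{\mcZ}_x)^{-1})\Delta$, and Lemma \ref{lem_DeltaPsitau} places $D^q\Psi(\tau/\sigma)$ and $D^n\Psi(\tau)$ in the right tensor factor. So you first need $(g^{\mcZ})^{-1}(D^q\Psi(\mu))=g^{-1}(D^q\mu)$ for $\mu\in T^+$. This is not automatic, because the two inverse characters come from unrelated antipodes, one on words and one on trees. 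The paper proves it by induction from the recursion for the inverse character, using Lemmas \ref{lem_DeltaPsitau}, \ref{lem_SnDnreindex} and Proposition \ref{Prop_CommutDerivativePsi}; the last of these turns $g^{\mcZ}(D^q\Psi(\cdot))$ into $\PPi^{\mcZ}(\Psi(D^q\,\cdot))$. With that identity one gets $\Pi^{\mcZ}_x(\Psi(\tau))=\Pi_x\tau$ exactly, polynomial terms included, and likewise $\gamma_{yx}(\tau)=\Pi^{\mcZ}_x(\Psi(\tau))(y)$ for $\tau\in T^+$. The role of these identities is then analytic, not a comparison with \eqref{def_Pi_trees}. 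They show that the increments of $(\PPi,g)$ are increments of the word model, which has the required bounds by \cite[Theorem 1]{LocalExpansionsParacSystems}. This is how the paper concludes that $(\PPi,g)$ is a model. Your plan never says where the bounds would come from, and \eqref{eq_pitildetorestepi} is not needed.
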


\begin{proof}
 Lemma \ref{lem_PsiTau_multiplicatif} gives the multiplicativity of $g$, along its admissibility with respect to polynomials. And admissibility of the map $\PPi$ is clear from \eqref{eq_ZtauAdmissible}.
 We have to verify the analytical bounds on the increments of the maps $(\PPi,g)$. We prove first by induction that $g^{-1}(D^q\tau) = (g^\mcZ)^{-1}(D^q\Psi(\tau))$  for any $\tau\in T^+$ . From Lemmas \ref{lem_DeltaPsitau}, \ref{lem_SnDnreindex} and Proposition \ref{Prop_CommutDerivativePsi}, one has   
	\begin{align*}
	(g^\mcZ)^{-1}(D^n\Psi(\tau) ) &=	 -g^\mcZ(D^n\Psi(\tau) ) - \sum_{\substack{\sigma\prec\tau\\ q\geq 0}} g^\mcZ(D^q\Psi(\tau/\sigma)) \, (g^\mcZ)^{-1}(D^n\Psi_q(\sigma))
	\\ & 	\quad-\sum_{q\geq 0} g^\mcZ(D^q\Psi(\tau)) g\Big(D^n\frac{X^q}{q!}\Big)
		\\
		&= -\PPi^\mcZ(\Psi(D^n\tau) ) - \sum_{\substack{\sigma\prec\tau}} \PPi^\mcZ(\Psi(\tau/\sigma)) \, (g^\mcZ)^{-1}(D^n\Psi(\sigma))
	\\ & 	\quad-\sum_{q\geq 0} \PPi^\mcZ(\Psi(D^q\tau)) g\Big(D^n\frac{X^q}{q!}\Big)
		\\
		&= -g(D^n\tau ) - \sum_{\substack{\sigma\prec\tau}} g(\tau/\sigma) \, g^{-1}(D^n\sigma)
		-\sum_{q\geq 0} g(D^q\tau) g\Big(D^n\frac{X^q}{q!}\Big)
		\\
		&= 	g^{-1}(D^n\tau).
	\end{align*}
	Then, for any base point $x\in\bfR^{d+1}$, from Lemmas \ref{lem_DeltaPsitau} and \ref{lem_SnDnreindex}, the increment $\Pi_x^\mcZ( \Psi(\tau)   )$ is equal to
	\begin{align*}
		&\Pi_x^\mcZ( \Psi(\tau)   ) =  (\PPi^\mcZ\otimes (g^\mcZ_x)^{-1})\Delta \Psi(\tau)
		\\
		&=\PPi^\mcZ(\Psi(\tau)) + \sum_{\substack{\sigma\prec\tau \\ q\geq0}} (g^\mcZ_x)^{-1}(D^q\Psi(\tau/\sigma))  \,\, \PPi( \Psi_q(\sigma) ) +\sum_{n\geq0} (g_x^\mcZ)^{-1} (D^n\Psi(\tau)) \frac{x^n}{n!}
		\\
		&= \PPi(\tau) +  \sum_{\substack{\sigma\prec\tau \\ q\geq0}} g_x^{-1}(D^q(\tau/\sigma))  \,\, \PPi( \Psi_q(\sigma) ) + \sum_{n\geq0} g_x^{-1} (D^n\tau) \frac{x^n}{n!}
		\\
		&= \PPi(\tau) +  \sum_{\sigma\prec\tau} g_x^{-1}(\tau/\sigma)  \,\, \PPi( \Psi_q(\downarrow^q\sigma) ) +\sum_{n\geq0} g_x^{-1} (D^n\tau) \frac{x^n}{n!}
		\\
		&=\Pi_x\tau.
	\end{align*}
	For $\tau\in T^+$, it follows from the same computations that $\gamma_{yx}(\tau) = \Pi_x^\mcZ(\Psi(\tau))(y) $. We then see that the increments $\Pi_x\tau$ and $\gamma_{yx}(\tau)$ have the adequate analytical estimates to ensure that $(\PPi,g)$ defines a model. 
\end{proof}

\subsection{Stochastic data via extensions of a regularity structure}

In this subsection, we give in Propositions \ref{prop_Ztau=YPsitau_integration} and \ref{prop_Ztau=YPsitau_mult} an alternative characterization of the admissibility of a stochastic data. In order to formulate it, we are going to define an extension of the regularity structure of iterated  paraproducts that enables products and integration against a kernel.

\paragraph{ Extensions of the regularity structure of iterated paraproducts.  }

We suppose we are given some alphabet $\mcA$ and consider the regularity structure $\scrT_\mcA$ described in \ref{subsect_theRSIteratedParap}. In this subsection, we will define extensions of this regularity structure in order to enable the operations of the right hand side of the singular SPDE at hand.

We suppose in this subsection that all letters in the alphabet $\mcA$ indexing reference distributions have positive degree. We consider the algebra $\widehat{T_\mcA}$ freely generated by polynomials and by symbols $\lg w \rg_k$ where $w$ is a word in $\mcU$ and $k\in\bfN^{d+1}$. A natural basis of this algebra is the set of monomials
$$
\mcM = \Big\{  \prod_{j=1}^n \lg w_j \rg_{k_j} X^p,\quad n\geq 0, \,  (w_j)_{1\leq j\leq n}\in\mcU^n   ,\, (k_j)_{1\leq j\leq n}\in(\bfN^{d+1})^n,\, p\in \bfN^{d+1}  \Big\}.
$$
Any monomial in $\mcM$ comes with a degree given by 
$$
\Big| \prod_{j=1}^n \lg w_j \rg_{k_j}X^p \Big| = \sum_{j=1}^n |w_j|+|k_j|+|p|.
$$
One extends the co-product of $T_\mcA$ to $\widehat{T_\mcA}$ by multiplicativity, turning it into a co-module over $T_\mcA^+$.
One also extends the models defined by a family $(Z_a)_{a\in\mcA}$ from  $\scrT_\mcA$ to $\widehat{T_\mcA}$ by setting
$$
{\PPi}\Big( \prod_{j=1}^n\lg w_j \rg_{k_j} \Big) = \prod_{j=1}^n { \PPi}\big( \lg w_j\rg_{k_j} \big).
$$
We define now the extension $\scrT_{\mcA,\mcI}$ by setting the basis 
\begin{align*}
	\mcB_{\mcA,\mcI}&= \mcB_\mcA \sqcup \Big\{ \mcI_n( \lg w\rg_\ell X^p ), \quad n\in\bfN^d, \enskip \lg w\rg_\ell X^p\in\mcB_\mcA   \Big\}  
	\\
	\mcB_{\mcA,\mcI}^+ &= \mcB_\mcA^+ \sqcup \Big\{ \mcI_n^+( \lg w\rg_\ell X^p ), \quad n\in\bfN^d, \enskip \lg w\rg_\ell X^p\in\mcB_\mcA, \enskip |n|<2 + |\lg w\rg_\ell X^p|  \Big\},
\end{align*}
We extend the homogeneities from $\mcB_\mcA$ to $\mcB_{\mcA,\mcI}$ by setting
$$
\big|  \mcI_n( \lg w\rg_\ell X^p )  \big| = \big| \lg w\rg_\ell X^p   \big| + 2 - |n|.
$$
And one defines a coproduct by setting
$$
\Delta \,  \mcI_n \big( \lg w\rg_\ell X^k \big) = \big(  \mcI_n \otimes \, \id  \big) \Delta(\lg w\rg_\ell X^k) +\sum_{|p|<|\lg w\rg_\ell X^k|+2-|n|}  \frac{X^p}{p!} \otimes \mcI_{n+p}^+ \big( \lg w\rg_\ell X^k \big) 
$$
and 
$$
\Delta \,  \mcI_n^+ \big( \lg w\rg_\ell X^k \big) = \big( \mcI_n^+ \otimes \,  \id  \big) \Delta(\lg w\rg_\ell X^k) +\sum_{|p|<|\lg w\rg_\ell X^k|+2-|n|}  \frac{X^p}{p!} \otimes   \mcI_{n+p}^+ \big( \lg w\rg_\ell X^k \big) 
$$
Theorem 5.14 from \cite{Hai14} ensures the existence of a model on this regularity structure satisfying 
$$
{\PPi}\Big(\mcI_n \big( \lg w \rg_\ell X^k \big)\Big)= (\partial^n K) * {\PPi}\left(\lg w \rg_\ell X^k\right).
$$
	
\paragraph{Alternative characterisation of the distributions $Z_\tau$}

	\begin{proposition}\label{prop_Ztau=YPsitau_integration}
		For any $\tau\in\mcT$ 
		$$
		Z_{\mcI(\tau)} = Y_{\mcI(\Psi(\tau))}^\mcZ.
		$$
	\end{proposition}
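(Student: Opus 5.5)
We argue by induction on the size of $\tau$, comparing the recursive definition of $Z_{\mcI(\tau)}$ with that of $Y^\mcZ$ on the extended regularity structure $\scrT_{\mcA,\mcI}$. The alphabet $\mcA$ consists of the trees (and elements of $\mcB^+$) strictly smaller than $\mcI(\tau)$, each letter $\mu$ interpreted as $Z_\mu$. Definition \ref{def_Ytau} gives
\[
Y^\mcZ_{\mcI(\Psi(\tau))} = \widetilde{\PPi}^\mcZ\big(\mcI(\Psi(\tau))\big) - \sum_{k\geq 0}\sum_{\sigma\prec \mcI(\Psi(\tau))}\frac{1}{k!}\P_k\Big(\widetilde g^\mcZ\big(\downarrow^k \mcI(\Psi(\tau))/\sigma\big),\, Y^\mcZ_\sigma\Big).
\]
On the other side, admissibility \eqref{eq_ZtauAdmissible_tilde} together with $\Psi(\mcI(\tau)) = \lg \mcI(\tau)\rg + \overline\Psi(\mcI(\tau))$ gives
\[
Z_{\mcI(\tau)} = \sum_{l\geq0} K_l*\widetilde{\PPi}^\mcZ\big(\Psi(\downarrow^l\tau)\big) - \widetilde{\PPi}^\mcZ\big(\overline\Psi(\mcI(\tau))\big).
\]
The proof therefore splits into two identifications.

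\textbf{First identification (leading terms).} We show that $\widetilde{\PPi}^\mcZ(\mcI(\Psi(\tau))) = \sum_l K_l * \widetilde{\PPi}^\mcZ(\Psi(\downarrow^l\tau))$. In the extended structure one has $\PPi(\mcI(w)) = K*\PPi(w)$. Using \eqref{eq_pitildetopi}, the definition of $\widetilde\PPi$ in terms of $\downarrow^j$, and Lemma \ref{lem_commutdownarrowPsi} (commuting $\downarrow^j$ with $\Psi$), the computation is the same Taylor/Pascal resummation carried out in the proof of Proposition \ref{prop_alternativedef_Ztaunaif}. One needs $\downarrow^j\mcI(w)=\mcI(\downarrow^j w)$ on the extended structure, which follows from the coproduct formula for $\mcI_n$ together with Assumption \ref{assumption_notint}.

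\textbf{Second identification (correction terms).} We match $\widetilde{\PPi}^\mcZ(\overline\Psi(\mcI(\tau)))$ with the sum of paraproduct corrections. First compute the coproduct $\Delta\mcI(\Psi(\tau)) = (\mcI\otimes\id)\Delta\Psi(\tau) + \sum_p \frac{X^p}{p!}\otimes \mcI^+_p(\Psi(\tau))$ and insert Lemma \ref{lem_DeltaPsitau}. The elements $\sigma\prec \mcI(\Psi(\tau))$ are then of the form $\mcI(\Psi_n(\sigma'))$ with $\sigma'\prec\tau$, or are pure words coming from $\Psi(\tau)$ paired with the $X^n$ terms. The quotients $\mcI(\Psi(\tau))/\sigma$ are of the form $D^n\Psi(\tau/\sigma')$ or $\mcI^+_p(\Psi(\tau))$.

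Apply Proposition \ref{Prop_CommutDerivativePsi} and Remark \ref{remark_CommutDerivPsiTau_withoutTilde} to replace $\widetilde g^\mcZ(D^n\Psi(\cdot))$ by $\widetilde{\PPi}^\mcZ(\Psi(D^n\cdot))$, and use \eqref{eq_defZtau_T+_alternative} for the $\mcI^+_p$ factors. Then reindex with Lemma \ref{lem_SnDnreindex} and \eqref{eq_PsinAndPsi}, and apply the induction hypothesis $Y^\mcZ_{\mcI(\Psi(\sigma'))} = Z_{\mcI(\sigma')}$ for the smaller trees. After these steps the corrections become
\[
\sum_k\sum_{\mu\prec\mcI(\tau)} \frac{1}{k!}\P_k\big(\widetilde{\PPi}^\mcZ(\Psi(\downarrow^k\mcI(\tau)/\mu)),\, Z_\mu\big).
\]
Iterating this in the first argument, as in \eqref{eq_ParacRepIterated1}, recovers exactly $\widetilde{\PPi}^\mcZ(\overline\Psi(\mcI(\tau)))$, namely all words of length at least two in the forest formula \eqref{forest_formula}.

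\textbf{Main obstacle.} I expect the hardest part to be this second identification, and specifically the bookkeeping of polynomial decorations. The coproduct of $\mcI(\Psi(\tau))$ produces terms $\mcI(\lg w\rg_\ell X^p)$ with $\ell\neq 0$, and these must be matched with $\downarrow^k$ acting on $\mcI(\tau)/\mu$ and with the weights $\P_k$. My plan is to first verify the identity in the model case $\tau=\Xi\mcI(\Xi)$ against the examples given earlier in the paper. For the general case I would then use Chu–Vandermonde resummations analogous to those in the proofs of Lemma \ref{lem_DeltaPsitau} and Proposition \ref{prop_derivetoile_Ztau_Neg}.
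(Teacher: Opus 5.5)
Your proposal is correct and follows the paper's proof essentially step for step: compute $\Delta\mcI(\Psi(\tau))$ via Lemma~\ref{lem_DeltaPsitau}, use Proposition~\ref{Prop_CommutDerivativePsi}, reindex with Lemma~\ref{lem_SnDnreindex} and \eqref{eq_PsinAndPsi}, apply the induction hypothesis, and recognise $\sum_{l}K_l*\widetilde{\PPi}^\mcZ(\Psi(\downarrow^l\tau))-\widetilde{\PPi}^\mcZ(\overline\Psi(\mcI(\tau)))$, which equals $Z_{\mcI(\tau)}$ by admissibility (your ``first identification'' is a step the paper performs in a single line). One small correction: the terms whose quotient is $\mcI^+_p(\Psi(\tau))$ correspond to $\sigma=X^p$, not to words coming from $\Psi(\tau)$, and they simply vanish because $Y_{X^p}=\mathbf{1}_{p=0}$ and $\P_k(f,1)=0$, so no appeal to \eqref{eq_defZtau_T+_alternative} is needed for them.
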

	\begin{proof}
		We prove the first identity by induction on the size of $\tau$.
		We have from Lemma \ref{lem_DeltaPsitau}
		$$
		\Delta \mcI(\Psi(\tau)) = \sum_{\sigma\prec\tau}\sum_{n\in \N^{d+1}} \mcI(\Psi_n(\sigma)) \otimes D^n(\Psi(\tau/\sigma))  + \sum_{n \in \N^{d+1}} \frac{X^n}{n!} \otimes \mcI_n^+(\Psi(\tau)).
		$$
	Definition \ref{def_Ytau} and Proposition \ref{Prop_CommutDerivativePsi} and Lemmas \ref{lem_DnSnpropenvrac} and \ref{lem_commutdownarrowPsi} give 
		\begin{align*}
			Y_{\mcI(\Psi(\tau))}^\mcZ &= \widetilde{ \PPi}^\mcZ(\mcI(\Psi(\tau))) - \sum_{\substack{\sigma\prec\tau\\ n\geq0}}\sum_{k\geq 0} \frac{1}{k!} \P_k\Big(\widetilde{ g}^\mcZ\Big(\downarrow^k D^n\Psi( \tau/\sigma) \Big) ,  \,\, Y^\mcZ_{\mcI(\Psi_n(\sigma))} \Big)
			\\
			&=\widetilde{ \PPi}^\mcZ(\mcI(\Psi(\tau))) - \sum_{\substack{\sigma\prec\tau\\ n\geq0}}\sum_{k\geq 0} \frac{1}{k!} \P_k\Big(\widetilde{ g}^\mcZ\Big(\Psi(D^n (\downarrow^k\tau/\sigma)) \Big) ,  \,\, Y^\mcZ_{\mcI(\Psi_n(\sigma))} \Big).
		\end{align*}
		From Lemma \ref{lem_SnDnreindex}, we obtain 
			\begin{align*}
			Y_{\mcI(\Psi(\tau))}^\mcZ 
			&=
			\widetilde{ \PPi}^\mcZ(\mcI(\Psi(\tau))) - \sum_{\substack{\sigma\prec\tau\\ n\geq0}}\sum_{k\geq 0} \frac{1}{k!} \P_k\Big(\widetilde{ g}^\mcZ\Big(\Psi( \downarrow^k\tau/\sigma) \Big) ,  \,\, Y^\mcZ_{\mcI(\Psi_n(\downarrow^n\sigma))} \Big)
			\\
			&= \sum_{l\geq 0} K_l * \widetilde{ \PPi}^\mcZ(\Psi(\downarrow^l\tau)) - \sum_{\substack{\sigma\prec\tau\\ }}\sum_{k\geq 0} \frac{1}{k!} \P_k\Big(\widetilde{ \PPi}^\mcZ\Big(\Psi( \downarrow^k\tau/\sigma) \Big) ,  \,\, Y^\mcZ_{\mcI(\Psi(\sigma))} \Big).
		\end{align*}
		From the induction hypothesis we have $Y^\mcZ_{\mcI(\Psi(\sigma))}=Z_{\mcI(\sigma)}$, and writing $\widetilde{ \PPi}^\mcZ\left(\Psi( \downarrow^k\tau/\sigma)\right)$ as an iterated paraproduct, we obtain
		$$
		Y_{\mcI(\Psi(\tau))}^\mcZ = \sum_{l\geq 0} K_l * \widetilde{ \PPi}^\mcZ(\Psi(\downarrow^l\tau)) - \widetilde{\PPi}^\mcZ\left( \overline\Psi( \mcI(\tau) ) \right).
		$$
		which is indeed equal to $Z_{\mcI(\tau)}$ from \eqref{eq_ZtauAdmissible}.
	\end{proof}

	\begin{proposition}\label{prop_Ztau=YPsitau_mult}
		For $\tau = X^p\prod_{j=1}^n\mcI^+_{b_j}(\tau_j) \in T^+$, one has
		$$
		Z_\tau = Y_{X^p\prod_{j=1}^n\Psi(\mcI^+_{b_j}(\tau_j))}^\mcZ.
		$$
	\end{proposition}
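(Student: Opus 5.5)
Following the proof of Proposition \ref{prop_Ztau=YPsitau_integration}, we argue by induction on the size of $\tau = X^p\prod_{j=1}^n\mcI^+_{b_j}(\tau_j)$. We work in the extended regularity structure $\widehat{T_\mcA}$ on monomials of words, with coproduct extended multiplicatively. Set $\Theta = X^p\prod_{j=1}^n\Psi(\mcI^+_{b_j}(\tau_j))$. The induction has three steps.

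\textbf{Step 1: coproduct of $\Theta$.} By multiplicativity and Lemma \ref{lem_DeltaPsitau} applied to each factor $\mcI^+_{b_j}(\tau_j)$, $\Delta\Theta$ is a product of sums. Expanding it and using the binomial formula for $\Delta X^p$ gives three kinds of terms:
\begin{itemize}
\item \emph{Polynomial terms} $\frac{X^m}{m!}\otimes D^m\Theta$. By the Leibniz rule \eqref{eq_leibnizSn} they are killed in the definition of $Y$, since $\sigma\prec$ excludes polynomial quotients.
\item \emph{Terms} $\Theta'\otimes X^{m}$ with $\Theta'$ built from the $\Psi(\downarrow^{m_j}\mcI^+_{b_j}(\tau_j))$. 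These are absorbed in the $\downarrow^k$ sums.
\item \emph{Genuine terms} $\prod_j \Psi_{q_j}(\sigma_j)X^{p_2}\otimes X^{p_1}\prod_j D^{q_j}\Psi(\mcI^+_{b_j}(\tau_j)/\sigma_j)$, where not all $\sigma_j$ are trivial.
\end{itemize}
Definition \ref{def_Ytau} then expresses
\[
Y^\mcZ_\Theta=\widetilde\PPi^\mcZ(\Theta)-\sum_{\sigma\prec}\sum_k\tfrac1{k!}\P_k\big(\widetilde g^\mcZ(\downarrow^k\Theta/\sigma),\,Y^\mcZ_{\sigma}\big).
\]

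\textbf{Step 2: identify each piece with tree objects.} For the first term, multiplicativity of $\widetilde\PPi^\mcZ$ on $\widehat{T_\mcA}$ together with Lemma \ref{lem_PsiTau_multiplicatif} and \eqref{eq_defZtau_T+_alternative} gives
\[
\widetilde\PPi^\mcZ(\Theta)=\mathbf 1_{p=0}\prod_j\widetilde g^\mcZ\big(D^{b_j}\Psi(\mcI(\tau_j))\big)=\widetilde\PPi^\mcZ(\Psi(\tau)).
\]
For the coefficients, Remark \ref{remark_CommutDerivPsiTau_withoutTilde}, Lemma \ref{lem_multiplicativitygtilde}, Lemma \ref{lem_commutdownarrowPsi} and Lemma \ref{lem_PsiTau_multiplicatif} convert
\[
\widetilde g^\mcZ\Big(X^{p_1}\prod_j D^{q_j}\Psi(\mcI^+_{b_j}(\tau_j)/\sigma_j)\Big)\quad\text{into}\quad \widetilde\PPi^\mcZ\Big(\Psi\big(D^{q}(\tau/\sigma)\big)\Big),
\]
where $\sigma=X^{p_2}\prod_j\sigma_j$ runs over the $\sigma\prec\tau$. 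Lemma \ref{lem_SnDnreindex} then removes the $D^q$ by moving it to $\downarrow^q\sigma$, and \eqref{eq_PsinAndPsi} resums $\sum_q\Psi_q(\downarrow^q\sigma)=\Psi(\sigma)$. For the lower-order pieces, the induction hypothesis (for the $\sigma$ of product form), combined with Proposition \ref{prop_Ztau=YPsitau_integration} for single integration factors, identifies $Y^\mcZ$ of the remaining left factor with $Z_\sigma$.

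\textbf{Step 3: conclude.} Steps 1 and 2 give
\[
Y^\mcZ_\Theta=\widetilde\PPi^\mcZ(\Psi(\tau))-\sum_{\sigma\prec\tau}\sum_k\tfrac1{k!}\P_k\big(\widetilde\PPi^\mcZ(\Psi(\downarrow^k\tau/\sigma)),\,Z_\sigma\big).
\]
Writing each $\widetilde\PPi^\mcZ(\Psi(\downarrow^k\tau/\sigma))$ as the iterated paraproduct \eqref{eq_ParacRepIterated2} and recombining as in \eqref{eq_ParacRepIterated1}, the subtracted sum is exactly $\widetilde\PPi^\mcZ(\overline\Psi(\tau))$. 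Comparing with \eqref{eq_defZtau_T+} yields $Z_\tau$.

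\textbf{Main obstacle.} The combinatorics of Step 1 are the hard part: we must check that the multiplicative coproduct of $\Theta$ produces exactly the pairs $(\sigma,\tau/\sigma)$ with $\sigma\prec\tau$ in $T^+$, with the right multinomial and Chu--Vandermonde coefficients. A further subtlety is that mixed terms, where some $\sigma_j$ is polynomial and others are not, are still $\prec$ by Lemma \ref{lem_polynomialsintheRS}. We would first handle $n=1$, where $\Theta=X^p\Psi(\mcI^+_b(\tau_1))$, and then use the Leibniz rules to pass to general $n$.
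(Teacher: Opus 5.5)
Your proposal is correct and follows the paper's proof essentially step for step: induction on size, the multiplicative coproduct of $X^p\prod_j\Psi(\mcI^+_{b_j}(\tau_j))$ from Lemma \ref{lem_DeltaPsitau}, Proposition \ref{Prop_CommutDerivativePsi} and Lemma \ref{lem_SnDnreindex} to strip the $D^{q_j}$, the induction hypothesis on the left factors, Lemma \ref{lem_PsiTau_multiplicatif}, and comparison with \eqref{eq_defZtau_T+}. Three small fixes are needed: the terms $\frac{X^m}{m!}\otimes D^m\Theta$ are not excluded by $\prec$ (their quotient is not polynomial) but vanish because $Y_{X^m}$ is $0$ for $m\neq 0$ and the constant $1$ for $m=0$, while $\P_k(\cdot,1)=0$; the resummation must be done factor by factor, so that the left factor stays a product of words $X^{p_2}\prod_j\Psi(\mcI^+_{b_j}(\sigma_j))$ rather than $\Psi$ of the product tree; and with that, Proposition \ref{prop_Ztau=YPsitau_integration} is not needed, because every left factor, including the single-factor case $n=1$, $p=0$, is covered directly by the induction hypothesis.
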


	\begin{proof}
		We prove the result by induction on the size of the tree $\tau=X^p\prod_{j=1}^n\mcI^+_{b_j}(\tau_j)$.
		From Lemma \ref{lem_DeltaPsitau}
		\begin{align*}
	&	\Delta\Big(X^p\prod_{j=1}^n\Psi(\mcI^+_{b_j}(\tau_j))\Big) \\ &= \sum_{\substack{\sigma_1\dots\sigma_n\prec\tau_1\dots\tau_n \\ q_j\geq 0\\ p=p_1+p_2}}\binom{p}{p_1}\, X^{p_1} \prod_{j=1}^{n} \Psi_{q_j}(\mcI_{b_j}^+ (\sigma_j))   
		\otimes
	X^{p_2}\prod_{j=1}^{n} D^{q_j} \Psi(\tau_j/\sigma_j)
	\\
	&\quad + \sum_{n\geq0} \frac{X^n}{n!} \otimes D^n \Big(X^p\prod_{j=1}^n\Psi(\mcI^+_{b_j}(\tau_j))\Big).
		\end{align*}
		Writing Definition \ref{def_Ytau}, 
		\begin{align*}
		&	 Y_{X^p\prod_{j=1}^n\Psi(\mcI^+_{b_j}(\tau_j))}^\mcZ = \widetilde{\PPi}^\mcZ\Big( X^p\prod_{j=1}^n\Psi(\mcI^+_{b_j}(\tau_j)) \Big) 
			 \\
			 &- \sum_{\substack{\sigma_1\dots\sigma_n\prec\tau_1\dots\tau_n \\q_j \geq 0 \\ k\geq0 }} \frac{1}{k!}\P_k\Big(  \widetilde{ g}^\mcZ\Big( \downarrow^k X^{p_1}\prod_{j=1}^n D^{q_j}\Psi(\tau_j/\sigma_j)    \Big) ,\enskip  Y^\mcZ_{X^{p_2}\prod_{j=1}^n\Psi_{q_j}(\mcI^+_{b_j}(\sigma_j)) }  \Big)
		\end{align*} and
		using Proposition \ref{Prop_CommutDerivativePsi} and Lemma \ref{lem_SnDnreindex}, the second line is equal to 
		\begin{align*}
			 &\sum_{\substack{\sigma_1\dots\sigma_n\prec\tau_1\dots\tau_n \\q_j \geq 0 \\ k\geq0 }} \frac{1}{k!}\P_k\Big( \widetilde{ g}^\mcZ\Big(  \downarrow^k X^{p_1} \prod_{j=1}^n\Psi(  D^{q_j}(\tau_j/\sigma_j) )   \Big) ,\enskip  Y^\mcZ_{X^{p_2}\prod_{j=1}^n\Psi_{q_j}(\mcI^+_{b_j}(\sigma_j)) }  \Big)
			 \\
			 &=\sum_{\substack{\sigma_1\dots\sigma_n\prec\tau_1\dots\tau_n \\q_j \geq 0 \\ k\geq0 }} \frac{1}{k!}\P_k\Big(  \widetilde{ \PPi}^\mcZ\Big( \downarrow^k X^{p_1} \prod_{j=1}^n\Psi( \tau_j/\sigma_j )   \Big) ,\enskip  Y^\mcZ_{X^{p_2}\prod_{j=1}^n\Psi_{q_j}(\downarrow^{q_j}\mcI^+_{b_j}(\sigma_j)) }  \Big)
		\end{align*}
	The induction hypothesis gives $Y^\mcZ_{X^p\prod_{j=1}^n\Psi_{q_j}(\downarrow^{q_j}\mcI^+_{b_j}(\sigma_j)) } = Z_{{X^p\prod_{j=1}^n\mcI^+_{b_j}(\sigma_j) }}$. And Lemma \ref{lem_PsiTau_multiplicatif} gives
	$$
	\widetilde{\PPi}^\mcZ\Big( \downarrow^k X^{p_1} \prod_{j=1}^n\Psi( \tau_j/\sigma_j )\Big)  = \widetilde{ \PPi}^\mcZ\Big( \Psi\Big(\downarrow^k X^{p_1} \prod_{j=1}^n \tau_j/\sigma_j \Big)\Big).
	$$
	Then 
	\begin{align*}
		Y_{X^p\prod_{j=1}^n\Psi(\mcI^+_{b_j}(\tau_j))}^\mcZ &= \mathbf{1}_{p=0} \, \widetilde{\PPi}^\mcZ\Big( \prod_{j=1}^n\Psi(\mcI^+_{b_j}(\tau_j)) \Big) 
		\\
		&\quad- \sum_{\substack{\sigma_1\dots\sigma_n\prec\tau_1\dots\tau_n \\ k\geq0 }} \frac{1}{k!}\P_k\Big(  \widetilde{ g}^\mcZ\Big( \downarrow^k \prod_{j=1}^n \Psi(\tau_j/\sigma_j)    \Big) ,\enskip  Z_{{X^p\prod_{j=1}^n\mcI^+_{b_j}(\sigma_j) }}  \Big),
	\end{align*} 
	which is equal to $Z_{X^p\prod_{j=1}^n\Psi(\mcI^+_{b_j}(\tau_j))}$ from its definition in \eqref{eq_defZtau_T+}.
\end{proof}

The following lemma will be useful for proving Theorem \ref{prop_extensionthm_Ztau}, it will give regularity estimates on the $Z_\tau$. 

\begin{lemma}\label{lem_reconstructionarg_Ztau}
Let $\tau\in\mcT$ such that $|\tau|>0$. We have
$$
\norme{ \PPi^\mcZ\big(\Psi(D^0\tau)\big) - g^\mcZ\big(D^0\overline{\Psi}(\tau)\big) }_{C^{|\tau|}} \lesssim \norme{D^0\tau}_\mcZ^*+ \norme{\tau}^{*,-}_\mcZ.
$$
\end{lemma}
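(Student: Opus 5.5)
The plan is to read the quantity $\PPi^\mcZ\big(\Psi(D^0\tau)\big) - g^\mcZ\big(D^0\overline{\Psi}(\tau)\big)$ as a reconstruction error. It is the difference between a global distribution and the value at the base point of its local expansion, with the leading letter $\lg \tau\rg$ removed. Since $|\tau|>0$, uniqueness in the reconstruction theorem gives a $C^{|\tau|}$ bound, provided we build a modelled distribution of order $|\tau|$ whose reconstruction is $\PPi^\mcZ(\Psi(D^0\tau))$.

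\textbf{Step 1: the modelled distribution.} We work in the regularity structure of iterated paraproducts $\scrT_\mcA$, whose letters are the trees strictly smaller than $\tau$, equipped with the model $(\PPi^\mcZ,g^\mcZ)$. By the induction hypothesis (on the size of $\tau$, as in the proof of Theorem \ref{prop_extensionthm_Ztau}), each letter satisfies $Z_\mu\in C^{|\mu|}$. Hence \cite[Theorem 1]{LocalExpansionsParacSystems} makes $(\PPi^\mcZ,g^\mcZ)$ a model on the words appearing in $\overline\Psi(\tau)$, with size controlled by $\norme{\tau}^{*,-}_\mcZ$. Using Lemma \ref{lem_DeltaPsitau}, we take
\[
{\bm f}(x) = \sum_{\sigma\prec\tau}\sum_{q} g^\mcZ_x\big(D^q\Psi(\tau/\sigma)\big)\,\Psi_q(\sigma) + \sum_{|n|<|\tau|} g_x^\mcZ\big(D^n\overline\Psi(\tau)\big)\frac{X^n}{n!},
\]
which is the germ $\Gamma$-transported from the coproduct of $\overline\Psi(\tau)$, truncated at degree $|\tau|$.

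\textbf{Step 2: $\mcD^{|\tau|}$ bounds.} We check ${\bm f}\in\mcD^{|\tau|}$ with norm bounded by $\norme{D^0\tau}_\mcZ^*+\norme{\tau}^{*,-}_\mcZ$. The coefficients of the non-polynomial components are, by Proposition \ref{Prop_CommutDerivativePsi}, equal to $g^\mcZ(\Psi(D^q(\tau/\sigma)))$. These are controlled through Lemma \ref{lem_SnDnreindex} and the coassociativity $(\Delta\otimes\id)\Delta=(\id\otimes\Delta^{\!+})\Delta$: the increments ${\bm f}(y)-\Gamma_{yx}{\bm f}(x)$ reduce to $\gamma_{yx}$ applied to the $T^+$ elements $\tau/\sigma$, bounded by $|y-x|^{|\tau/\sigma|}$. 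The $X^n$ components come from $D^n$ of the polynomial part and give the $\norme{D^0\tau}^*_\mcZ$ contribution.

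\textbf{Step 3: identifying the reconstruction.} We show that $\Pi_x^\mcZ{\bm f}(x)$ equals $\Pi_x^\mcZ\Psi(\tau)$ minus the single-letter term $\lg\tau\rg$, up to the contribution of degree $\geq|\tau|$. Proposition \ref{prop_derivetoile_Ztau_Neg} and Proposition \ref{prop_starderivY} supply the identification of $\partial^n_y\Pi_x^\mcZ$ with the tree-side objects. The local behaviour is then $\langle \PPi^\mcZ(\Psi(D^0\tau)) - \Pi_x^\mcZ{\bm f}(x),\varphi_x^\lambda\rangle\lesssim\lambda^{|\tau|}$, so by uniqueness ${\bm R}{\bm f}$ differs from $\PPi^\mcZ(\Psi(D^0\tau))$ by a $C^{|\tau|}$ function. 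Evaluating at $y=x$ leaves exactly $g^\mcZ(D^0\overline\Psi(\tau))(x)$, namely the polynomial coefficient of order $0$. The usual estimate $\|{\bm R}{\bm f}-{\bm f}_{X^0}\|_{C^{|\tau|}}\lesssim\|{\bm f}\|_{\mcD^{|\tau|}}$, which holds for positive $|\tau|$ by a Littlewood--Paley characterization, then gives the claim.

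\textbf{Main obstacle.} The hardest part is Step 3, the algebraic identification of $\Pi_x^\mcZ{\bm f}(x)$. One must verify that removing the leading letter $\lg\tau\rg$ (whose $Z_\tau$ is precisely the unknown being estimated) is compatible with the coaction, since $\overline\Psi$ is not a comodule morphism. I would first try to write $\Delta\overline\Psi(\tau)=\Delta\Psi(\tau)-\Delta\lg\tau\rg$ and use that $\Delta\lg\tau\rg=\lg\tau\rg\otimes\one+\sum_n \frac{X^n}{n!}\otimes\lg\tau\rg^n$ by \eqref{coproduct_words}. The discrepancy is then confined to polynomial terms, which are absorbed in the $X^n$ components of ${\bm f}$.
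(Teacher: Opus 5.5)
Your Step 3 fails, and the last step cannot be repaired inside a single-modelled-distribution argument.

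By Proposition \ref{Prop_CommutDerivativePsi}, $\PPi^\mcZ(\Psi(D^0\tau))(x)=g^\mcZ_x(D^0\Psi(\tau))$, so its lift has $\one$-coefficient $g^\mcZ_x(D^0\Psi(\tau))$. Your ${\bm f}$ has $\one$-coefficient $g^\mcZ_x(D^0\overline\Psi(\tau))$ instead. Since $D^0\Psi(\tau)-D^0\overline\Psi(\tau)=\lg\tau\rg^0$ and $g^\mcZ(\lg\tau\rg^0)=Z_\tau$, the pairing $\langle \PPi^\mcZ(\Psi(D^0\tau))-\Pi^\mcZ_x{\bm f}(x),\varphi^\lambda_x\rangle$ contains the constant $Z_\tau(x)$. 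That term is $O(1)$, not $O(\lambda^{|\tau|})$. So ${\bm f}$ does not reconstruct $\PPi^\mcZ(\Psi(D^0\tau))$ unless $Z_\tau\equiv 0$, and the uniqueness argument does not apply.

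Concretely, take $\tau=\mcI(\Xi)^2$ with $|\mcI(\Xi)|>0$ and smooth data. Then ${\bm f}(x)=2(K*\xi)(x)\lg\mcI(\Xi)\rg+2\P(K*\xi,K*\xi)(x)\one$, so ${\bm R}{\bm f}=2\P(K*\xi,K*\xi)$. On the other hand $\PPi^\mcZ(\Psi(D^0\tau))=(K*\xi)^2$. The discrepancy is the resonant product $(K*\xi)\odot(K*\xi)$, which is exactly the unknown $Z_\tau$.

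The ``usual estimate'' $\|{\bm R}{\bm f}-{\bm f}_{X^0}\|_{C^{|\tau|}}\lesssim\|{\bm f}\|_{\mcD^{|\tau|}}$ does not rescue this. When all non-polynomial components of ${\bm f}$ have positive degree, ${\bm R}{\bm f}={\bm f}_{X^0}$ identically, so the estimate carries no information. When some $\Psi_q(\sigma)$ has negative degree, the estimate is false: ${\bm R}{\bm f}-{\bm f}_{X^0}$ then has only the regularity of the lowest such component. The quantity you want is a difference of $\one$-coefficients of two different modelled distributions, and no single ${\bm f}$ produces it.

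The missing idea is already in your last paragraph; you just use it in the wrong place. Because $\Delta\lg\tau\rg=\lg\tau\rg\otimes\one+\sum_n\frac{X^n}{n!}\otimes\lg\tau\rg^n$, compare two lifts from \cite[Theorem 1]{LocalExpansionsParacSystems}:
\begin{itemize}
\item ${\bm h}^1$, the lift of $\PPi^\mcZ(\Psi(D^0\tau))$, with polynomial coefficients $\frac{1}{n!}\PPi^\mcZ(\Psi(D^n\tau))$;
\item ${\bm h}^2$, the lift of $g^\mcZ(D^0\overline\Psi(\tau))$, which is essentially your ${\bm f}$.
\end{itemize}
These two have the same non-polynomial part, once you use Proposition \ref{Prop_CommutDerivativePsi} to identify $\Pi^\mcZ_x\Psi(D^0\sigma)(y)$ with $\gamma_{yx}(D^0\Psi(\sigma))$.

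Hence ${\bm h}^1-{\bm h}^2$ is a polynomial-valued element of $\mcD^{|\tau|}$. Its $\one$-coefficient is exactly $\PPi^\mcZ(\Psi(D^0\tau))-g^\mcZ(D^0\overline\Psi(\tau))$. A polynomial-valued modelled distribution of positive order $\gamma$ has $C^\gamma$ coefficients, which gives the norm bound $\lesssim\|{\bm h}^1\|_{\mcD^{|\tau|}}+\|{\bm h}^2\|_{\mcD^{|\tau|}}\lesssim\norme{D^0\tau}^*_\mcZ+\norme{\tau}^{*,-}_\mcZ$.

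This is the paper's argument. Keep the discrepancy as a separate polynomial-valued modelled distribution rather than absorbing it into ${\bm f}$; then no reconstruction-minus-coefficient estimate is needed.
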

\begin{proof}
	From \cite[Theorem 1]{LocalExpansionsParacSystems} the distribution $\PPi^\mcZ\big(\Psi(D^0\tau)\big)$ admits a lift as a modelled distribution  given by
	\begin{align*}
		{\bm h}_\tau^1 = (g^\mcZ\otimes \id)\Delta\big(\Psi(D^0\tau)\big) - \Psi(D^0\tau).
	\end{align*}
	From Lemmas \ref{lem_DeltaPsitau}, \ref{lem_DnSnpropenvrac} and \ref{lem_SnDnreindex}
	\begin{align*}
		{\bm h}_\tau^1 &= \sum_{\substack{\sigma\prec\tau\\ q\geq0}} g^\mcZ\big(D^q\Psi(\tau/\sigma)  \big) \Psi_q(D^0\sigma) + \sum_{n\geq 0} \frac{1}{n!}g^\mcZ(D^n\Psi(D^0\tau)  ) \, X^n
		\\
		&= \sum_{\substack{\sigma\prec\tau\\ q\geq0}} g^\mcZ\Big(\Psi(D^q\tau/\sigma)  \big) \Psi_q(D^0\sigma) + \sum_{n\geq 0} \frac{1}{n!}\PPi^\mcZ(\Psi(D^n\tau)  ) \, X^n
		\\
			&= \sum_{\substack{\sigma\prec\tau\\ }} g^\mcZ\big(\Psi(\tau/\sigma)  \big) \Psi(D^0\sigma) + \sum_{n\geq 0} \frac{1}{n!}\PPi^\mcZ(\Psi(D^n\tau)  ) \, X^n.
	\end{align*}
	Likewise the function $g^\mcZ\big(D^0\overline{\Psi}(\tau)\big)$ admits the lift 
		\begin{align*}
		{\bm h}_\tau^1 &= \sum_{\substack{\sigma\prec\tau\\ q\geq0}} g^\mcZ\big(D^q\Psi(\tau/\sigma)  \big) D^0\Psi_q(\sigma) + \sum_{n\geq 0} \frac{1}{n!}g^\mcZ(D^n\overline\Psi(\tau)  ) \, X^n
		\\
		&= \sum_{\substack{\sigma\prec\tau\\ q\geq0}} g^\mcZ\big(\Psi(D^q\tau/\sigma)  \big) D^0\Psi_q(\sigma) + \sum_{n\geq 0} \frac{1}{n!}\PPi^\mcZ(D^n\overline\Psi(\tau)  ) \, X^n
		\\
		&= \sum_{\substack{\sigma\prec\tau\\ }} g^\mcZ\big(\Psi(\tau/\sigma)  \big) D^0\Psi(\sigma) + \sum_{n\geq 0} \frac{1}{n!}\PPi^\mcZ(D^n\overline\Psi(\tau)  ) \, X^n.
	\end{align*}
	From Proposition \ref{Prop_CommutDerivativePsi} we have $\Pi_x(\Psi(D^0\sigma))(y) = \gamma_{yx}(D^0\Psi(\sigma))$ for any $x,y\in\bfR^{d+1}$.
	Then $\PPi^\mcZ\big(\Psi(D^0\tau)\big) - g^\mcZ\big(D^0\overline{\Psi}(\tau)\big)$ admits a lift that takes value in the polynomial sector. It follows  that $\PPi^\mcZ\big(\Psi(D^0\tau)\big) - g^\mcZ\big(D^0\overline{\Psi}(\tau)\big)$ is indeed in $C^{|\tau|}$ and that
	\begin{align*}
\norme{ \PPi^\mcZ\big(\Psi(D^0\tau)\big) - g^\mcZ\big(D^0\overline{\Psi}(\tau)\big) }_{C^{|\tau|}} \lesssim \norme{{\bm h}_\tau^1-{\bm h}_\tau^2}_{\mcD^{|\tau|}}&\lesssim \norme{{\bm h}_\tau^1}_{\mcD^{|\tau|}}+\norme{{\bm h}_\tau^2}_{\mcD^{|\tau|}}
\\
&\lesssim  \norme{D^0\tau}_\mcZ^*+ \norme{\tau}^{*,-}_\mcZ.
	\end{align*}
\end{proof}

We are also able now to prove Theorem \ref{prop_extensionthm_Ztau}
\begin{proof}[of Theorem \ref{prop_extensionthm_Ztau}]
	The proof is inspired from \cite{BailleulHoshinoRS1}. We construct the stochastic data by induction on the degree.
	For $\alpha\in\bfR$, we let $T_{<\alpha}^+$ the subalgebra of $T^+$ generated by the symbols $X^{e_i}$ with $i\in\bbrack{1,d+1}$ and $\mcI_b^+(\tau)$ with $|\tau|<\alpha$.
		We write the set $A$ of degrees of non polynomial decorated trees as  $A=(\alpha_1<\alpha_2<\dots)$.
	We suppose that all the $Z_\tau$ with $\tau\in T_{<\alpha_j}\sqcup T^+_{<\alpha_j}$ are already constructed. 
	For any planted tree $\mcI(\tau)$ for $\tau\in T_{<\alpha_j}$, we set 
	$$
	Z_{\mcI(\tau)} = Y^\mcZ_{\mcI(\Psi(\tau))},
	$$
	which is indeed in $C^{|\tau|+\beta}$ from Theorem \ref{thm_regularityZtau}.
	We then define $Z_\tau$ for $\tau\in T^+_{\alpha_j+1}$ according to \eqref{eq_defZtau_T+}. 
	For $\tau\in\mcT$ with $|\tau|=\alpha_{j+1}$, one set
	$$
	Z_\tau = { \PPi^\mcZ\big(\Psi(D^0\tau)\big) - g^\mcZ\big(D^0\overline{\Psi}(\tau)\big) } .
	$$
	The Lemma \ref{lem_reconstructionarg_Ztau} shows that $Z_\tau$ has the required regularity.
\end{proof}

\subsection{Multiplicative stochastic data}

Given a smooth stochastic data $\mcZ=(Z_\tau)$, one defines a corresponding multiplicative collection $\mcZ^\circ=(Z^\circ_\tau)_{\tau\in\mcT}$ indexed only by trees in $T$, by setting for $\tau = X^p\Xi_\ell\prod_{j=1}^n\mcI_{b_j}(\tau_j) $
\begin{equation} \label{eq_defZotau}
	 Z_{\tau}^\circ= \mathbf{1}_{p=0} \, \xi_\ell  \,  \prod_{j=1}^{n}\sum_{l_j\geq 0} (\partial^{b_j}K)_{l_j} * \widetilde{\PPi}^{\mcZ}\left(\Psi(\downarrow^{l_j}\tau_j)\right) - \widetilde{ \PPi}^{\mcZ^\circ}\left(\overline\Psi(\tau)\right). 
\end{equation}
which defines indeed an unique collection $\mcZ^\circ$ by induction on the size of the tree as $\overline\Psi(\tau)$ involves only smaller trees. Equation \ref{eq_defZotau} can be rewritten as
\begin{equation}\label{eq_mult_PPi_Zcirc}
	 \widetilde{ \PPi}^{\mcZ^\circ}\left(\Psi(\tau)\right)= \mathbf{1}_{p=0} \, \xi_\ell  \,  \prod_{j=1}^{n}\sum_{l_j\geq 0} (\partial^{b_j}K)_{l_j} * \widetilde{\PPi}^{\mcZ}\left(\Psi(\downarrow^{l_j}\tau_j)\right). 
\end{equation}

 From the same computations as in the proof of Proposition \ref{prop_alternativedef_Ztaunaif}, the collection $\mcZ^\circ$ is characterized by the following equation
\begin{equation}\label{eq_mult_PPi_Zcirc1}
	{ \PPi}^{\mcZ^\circ}\left(\Psi(\tau)\right) = x^p \, \xi_\ell  \,  \prod_{j=1}^{n} \partial^{b_j}K * {\PPi}^\mcZ\left(\Psi(\tau_j)\right).
\end{equation}

We notice that the naive collection satisfies $Z_\tau=Z^\circ_\tau$.

\begin{lemma}\label{lem_multiplicativity_modelPsicirc}
	For any $\tau = X^p\Xi_\ell\prod_{j=1}^n\mcI_{b_j}(\tau_j)$, we have
	$$
	\Pi^{\mcZ^\circ}_x( \Psi(\tau) )(x) =  \mathbf{1}_{p=0}\xi_\ell \prod_{j=1}^{n} {\Pi}_x^\mcZ \left( \Psi(\mcI_{b_j}(\tau_j))\right)(x).
	$$
\end{lemma}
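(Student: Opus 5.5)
Our plan is to evaluate the recentred model $\Pi^{\mcZ^\circ}_x=(\PPi^{\mcZ^\circ}\otimes (g_x^{\mcZ^\circ})^{-1})\Delta$ on $\Psi(\tau)$ using the explicit coproduct of Lemma \ref{lem_DeltaPsitau}:
\[
\Pi^{\mcZ^\circ}_x(\Psi(\tau))(y)=\PPi^{\mcZ^\circ}(\Psi(\tau))(y)+\sum_{\substack{\sigma\prec\tau\\ q\geq 0}}(g^{\mcZ}_x)^{-1}\big(D^q\Psi(\tau/\sigma)\big)\,\PPi^{\mcZ^\circ}(\Psi_q(\sigma))(y)+\sum_{n}(g^\mcZ_x)^{-1}(D^n\Psi(\tau))\frac{y^n}{n!}.
\]
Here we use that $\mcZ^\circ$ and $\mcZ$ coincide on $T^+$, since $\mcZ^\circ$ only modifies the letters from $T$ and the elements $\tau/\sigma$ lie in $T^+$. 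We then evaluate at $y=x$. The first step is to replace $\PPi^{\mcZ^\circ}(\Psi(\tau))$ by the product formula \eqref{eq_mult_PPi_Zcirc1}, namely $x^p\xi_\ell\prod_j \partial^{b_j}K*\PPi^\mcZ(\Psi(\tau_j))$. By admissibility \eqref{eq_ZtauAdmissible}, each factor equals $\PPi^\mcZ(\Psi(\mcI_{b_j}(\tau_j)))$.

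Next we rewrite the correction terms on the tree side. By the argument of Corollary \ref{PPi_Psi}, one has $(g^\mcZ_x)^{-1}(D^q\Psi(\cdot))=g_x^{-1}(D^q\,\cdot)$ for the tree pre-model $(\PPi,g)$ built from $\mcZ$. Lemma \ref{lem_SnDnreindex} then collapses $\sum_q \Psi_q(\downarrow^q\sigma)$ into $\Psi(\sigma)$. This brings the expansion to the form
\[
\Pi^{\mcZ^\circ}_x(\Psi(\tau))=\sum_{\sigma\leq \tau} g_x^{-1}(\tau/\sigma)\,\PPi^{\mcZ^\circ}(\Psi(\sigma)),
\]
with polynomial terms included.

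The key step is to use the tree coaction for products: $\Delta\tau=\Delta(X^p\Xi_\ell)\prod_j\Delta\mcI_{b_j}(\tau_j)$. Each $\sigma$ factorises as $X^{p_1}\Xi_\ell\prod_j\sigma_j$ with $\sigma_j\leq\mcI_{b_j}(\tau_j)$. The coefficient $g_x^{-1}$ is multiplicative on the product of quotients, and $\PPi^{\mcZ^\circ}(\Psi(\sigma))$ factorises again by \eqref{eq_mult_PPi_Zcirc1} (for $\sigma_j$ planted) or trivially (for $\sigma_j$ polynomial). Regrouping factor by factor therefore yields
\[
\prod_j\Big(\sum_{\sigma_j}g_x^{-1}(\mcI_{b_j}(\tau_j)/\sigma_j)\,\PPi^\mcZ(\Psi(\sigma_j))\Big)=\prod_j \Pi_x^\mcZ\big(\Psi(\mcI_{b_j}(\tau_j))\big)
\]
at $y=x$, times the recentred factor $(y-x)^p\xi_\ell$. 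Evaluated at $y=x$, the latter is $\mathbf 1_{p=0}\xi_\ell$.

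The main obstacle is this middle identification. The coproduct of Lemma \ref{lem_DeltaPsitau} only involves $\sigma\prec\tau$ together with explicit polynomial parts, whereas the product of the factor coproducts also contains mixed terms, where some $\sigma_j$ is a polynomial and others are not. We plan to handle this by redoing the tree-side computation exactly as in the last display chain of the proof of Corollary \ref{PPi_Psi}. There the sum over $\sigma\prec\tau$, combined with the $\Psi_q$ and $D^n$ terms, reassembles into the full coaction via Lemma \ref{lem_SnDnreindex}. This is where Assumption \ref{assumption_notint} enters, through Lemma \ref{lem_polynomialsintheRS}, to separate the polynomial from the non-polynomial quotients.
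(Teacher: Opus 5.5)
Your proposal is correct, but it is organised differently from the paper's proof.

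\textbf{The paper's route.} It uses the recursive form $\Pi^{\mcZ^\circ}_x(\Psi(\tau))=\PPi^{\mcZ^\circ}(\Psi(\tau))-\sum_{\sigma\prec\tau}g^{\mcZ}_x(\Psi(\tau/\sigma))\,\Pi^{\mcZ^\circ}_x(\Psi(\sigma))$, so it works with $g_x$ rather than $g_x^{-1}$. It splits the sum over $\sigma\prec\tau$ along the factors $X^p$, $\Xi_\ell$, $\mcI_{b_j}(\tau_j)$. It then closes by induction on the size of $\tau$, applying the hypothesis to the smaller trees $X^{p_2}\Xi_\ell\prod_j\mcI_{b_j}(\sigma_j)$ and only tracking values at $y=x$.

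\textbf{Your route.} You transfer everything at once to the tree side, $\Pi^{\mcZ^\circ}_x(\Psi(\tau))=(\PPi^{\circ}\otimes g_x^{-1})\Delta\tau$ with $\PPi^{\circ}(\sigma)=\PPi^{\mcZ^\circ}(\Psi(\sigma))$, by rerunning the computation of Corollary \ref{PPi_Psi}. This is legitimate, because that computation only involves $g^{\mcZ}$ on words whose letters lie in $T^+$, where $\mcZ^\circ$ and $\mcZ$ agree. After that you need only the standard fact that three multiplicative ingredients give a multiplicative recentred map:
\begin{itemize}
\item the multiplicative $\PPi^\circ$ given by \eqref{eq_mult_PPi_Zcirc1};
\item the character $g_x^{-1}$ (the antipode of the commutative algebra $T^+$ is an algebra morphism);
\item the multiplicative coaction $\Delta$.
\end{itemize}
This avoids the induction. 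It also gives the stronger pointwise identity $\Pi^{\mcZ^\circ}_x(\Psi(\tau))(y)=(y-x)^p\xi_\ell(y)\prod_j\Pi^{\mcZ}_x(\Psi(\mcI_{b_j}(\tau_j)))(y)$ for every $y$. The paper's route never has to treat $g_x^{-1}$ as a character, but pays with the induction and with tracking which products of factor terms are $\prec\tau$.

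\textbf{Three small remarks.}
\begin{itemize}
\item The obstacle you describe does not really arise. A mixed term (some $\sigma_j$ polynomial, some planted) has both $\sigma$ and $\tau/\sigma$ non-polynomial. It is therefore simply one of the $\sigma\prec\tau$ in Lemma \ref{lem_DeltaPsitau}. Once you are on the full coaction $\sum_{\sigma\le\tau}$, multiplicativity of $\Delta$ handles it. Assumption \ref{assumption_notint} enters only through Lemma \ref{lem_SnDnreindex} in the transfer step.
\item Your first display omits the terms $\PPi^{\mcZ^\circ}(\Psi(\downarrow^n\tau))\,(-x)^n/n!$ for $n\neq0$, coming from $\Psi(\downarrow^n\tau)\otimes X^n/n!$ in Lemma \ref{lem_DeltaPsitau}. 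They are nonzero when $p\neq 0$. They are exactly the $\sigma\leqq\tau$ terms you need to reach $\sum_{\sigma\le\tau}$ and to produce the factor $(y-x)^p$.
\item Identifying $\partial^{b_j}K*\PPi^{\mcZ}(\Psi(\sigma_j'))$ with $\PPi^{\mcZ}(\Psi(\mcI_{b_j}(\sigma_j')))$ uses \eqref{eq_ZtauAdmissible} for $\mcI_b$ with $b\neq0$. The paper relies on this implicitly as well.
\end{itemize}
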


\begin{proof}
From the same computations as in the proof of Lemma \ref{lem_PsiTau_multiplicatif}, we have
	\begin{equation}\label{eq_mult_PPi_Zcirc0}
\PPi^{\mcZ^\circ}( \Psi(\tau) )(x) = x^p \xi_\ell \prod_{j=1}^{n} {\PPi}^{\mcZ^\circ} \left( \Psi(\mcI_{b_j}(\tau_j))\right),
\end{equation}
From Lemma \ref{lem_DeltaPsitau} and Proposition \ref{Prop_CommutDerivativePsi}
	\begin{align*}
		\Pi^{\mcZ^\circ}_x( \Psi(\tau) ) &= 	\PPi^{\mcZ^\circ}( \Psi(\tau) ) - \sum_{\substack{\sigma\prec\tau\\ q\geq0}} g_x^\mcZ(D^q\Psi(\tau/\sigma))) \, 	\Pi^{\mcZ^\circ}_x( \Psi_q(\sigma) ) 
		\\
		&=\PPi^{\mcZ^\circ}( \Psi(\tau) ) - \sum_{\substack{\sigma\prec\tau\\ q\geq0}} g_x^\mcZ\left(\Psi(D^q(\tau/\sigma))\right) \, 	\Pi^{\mcZ^\circ}_x( \Psi_q(\sigma) ).
	\end{align*}
From Lemmas \ref{lem_SnDnreindex} and \eqref{eq_PsinAndPsi}, this is equal to 
\begin{align*}
&\PPi^{\mcZ^\circ}( \Psi(\tau) ) - \sum_{\substack{\sigma\prec\tau}} g_x^\mcZ\left(\Psi(\tau/\sigma)\right) \, 	\Pi^{\mcZ^\circ}_x( \Psi(\sigma) )(x)
= \PPi^{\mcZ^\circ}( \Psi(\tau) )  \\& - \sum_{\substack{\sigma_1\dots\sigma_n\prec\tau_1\dots\tau_n\\ p=p_1+p_2}}\binom{p}{p_1} g_x^\mcZ\Big(\Psi\Big(X^{p_1}\prod_{j=1}^{n}\tau_j/\sigma_j\Big)\Big) \, 	\Pi^{\mcZ^\circ}_x\Big( \Psi\Big(X^{p_2}\prod_{j=1}^{n}\mcI_{b_j}(\sigma_j)\Big) \Big).
\end{align*}
From \eqref{eq_mult_PPi_Zcirc}, Lemma \ref{lem_PsiTau_multiplicatif} and induction hypothesis, we get 
\begin{align*}
&x^p \xi_\ell \prod_{j=1}^{n} {\PPi}^{\mcZ^\circ} \left( \Psi(\mcI_{b_j}(\tau_j))\right) 
\\
&-\sum_{\substack{\sigma_1\dots\sigma_n\prec\tau_1\dots\tau_n\\ p=p_1+p_2}}\binom{p}{p_1} x^{p_1}\prod_{j=1}^{n}g_x^\mcZ\Big(\Psi\Big(\tau_j/\sigma_j\Big)\Big) \, 	\mathbf{1}_{p_2=0}\xi_\ell\prod_{j=1}^{n}\Pi^{\mcZ^\circ}_x\Big( \Psi\Big(X^{p_2}\mcI_{b_j}(\sigma_j)\Big) \Big)(x)
\\
&= \mathbf{1}_{p=0}\xi_\ell \prod_{j=1}^{n} {\Pi}_x^\mcZ \left( \Psi(\mcI_{b_j}(\tau_j))\right)(x).
\end{align*}
\end{proof}

 For any planted tree $\tau$ and $|n|>|\mcI(\tau)|$, we define the distribution
\begin{equation} \label{eq_defZplusNeg_0}
	\overline Z_{\mcI_{n}(\tau)} = \partial^{n}_y\Big( { \Pi}_x \Psi(\mcI (\tau))(y)\Big)_{|y=x}.
\end{equation}

	\begin{proposition} \label{lem_SumNegativeCorrector}
	Suppose we are given a smooth admissible stochastic data $\mcZ=(Z_\tau)$. Let a tree $\tau= X^k \Xi_\ell \prod_{j=1}^n\mcI_{b_j}(\tau_j) \, $ with $|\tau|<0$.
	\begin{enumerate}
		\item If $k\ne 0$ or $|\mcI_{b_j}(\tau_j)|>0$ for some $j\in\bbrack{1,n}$, then
		\begin{equation}
			\sum_{\substack{q\geq 0 \\ \tau_q\prec\dots\prec\tau_1\prec\tau}}\sum_{k\geq 0} \frac{1}{k!}{\bf C}_k(Z_{\downarrow^k\tau/\tau_1},\dots,Z_{\tau_{q-1}/\tau_q}, Z_{\tau_q}^\circ) = 0.
		\end{equation}
		\item Otherwise 
		\begin{equation}
			\sum_{\substack{q\geq 0 \\ \tau_q\prec\dots\prec\tau_1\prec\tau}}\sum_{k\geq 0} \frac{1}{k!}{\bf C}_k(Z_{\downarrow^k\tau/\tau_1},\dots, Z_{\tau_{q-1}/\tau_q}, Z_{\tau_q}^\circ) = \xi_\ell \prod_{j=1}^n  \overline Z_{\mcI_{b_j}(\tau_j)}.
		\end{equation}
	\end{enumerate}
\end{proposition}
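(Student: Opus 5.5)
The plan is to recognise the left-hand side as the evaluation at $y=x$ of a local remainder, and then use the multiplicativity of $\mcZ^\circ$ from Lemma \ref{lem_multiplicativity_modelPsicirc}. Recall that $\bfC_k = \partial^0_*\P_k$. By Proposition \ref{prop_derivetoile_Ztau_Neg}, applied with $n=0$ (legitimate since $|\tau|<0$), a sum of the form $\sum_{k}\sum_{\tau_q\prec\dots\prec\tau}\frac{1}{k!}\partial^0_*\P_k(Z_{\downarrow^k\tau/\tau_1},\dots,Z_{\tau_q})$ equals $\Pi^\mcZ_x(\Psi(\tau))(x)$. In our sum the last letter is $Z^\circ_{\tau_q}$ and not $Z_{\tau_q}$. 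So the first step is to rerun the induction of Proposition \ref{prop_derivetoile_Ztau_Neg} with the mixed collection, in which the last letter of each word is evaluated with $\mcZ^\circ$ and all other letters with $\mcZ$.

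Concretely, I would consider the pre-model on words that sends a word $\lg a_1\dots a_m\rg_\ell$ to $\P_\ell(Z_{a_1},\dots,Z_{a_{m-1}},Z^\circ_{a_m})$. This is still covered by the analysis of Subsection \ref{subsect_theRSIteratedParap}, since only the letters change. The coproduct in \eqref{coproduct_words} always puts the last letter in the left factor $\lg w_2\rg$, and the right factors $\lg w_1\rg^r$ never contain the last letter. Therefore the $g$-part involves only $\mcZ$, and Proposition \ref{Prop_CommutDerivativePsi} applies unchanged. The induction of Proposition \ref{prop_derivetoile_Ztau_Neg} then gives that the left-hand side equals $\Pi^{\mcZ^\circ}_x(\Psi(\tau))(x)$, where $\Pi_x^{\mcZ^\circ}$ is exactly the mixed recentred map used in Lemma \ref{lem_multiplicativity_modelPsicirc}.

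Lemma \ref{lem_multiplicativity_modelPsicirc} then gives
\[
\Pi^{\mcZ^\circ}_x(\Psi(\tau))(x)=\mathbf{1}_{k=0}\,\xi_\ell\prod_{j=1}^n\Pi^\mcZ_x\big(\Psi(\mcI_{b_j}(\tau_j))\big)(x).
\]
This settles the case $k\neq 0$ immediately. For the remaining factors, Corollary \ref{PPi_Psi} gives $\Pi^\mcZ_x(\Psi(\mcI_{b_j}(\tau_j)))=\Pi_x\mcI_{b_j}(\tau_j)$ for the associated admissible model on trees.

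\begin{itemize}
\item If $|\mcI_{b_j}(\tau_j)|>0$, then $\Pi_x\mcI_{b_j}(\tau_j)$ vanishes at order $|\mcI_{b_j}(\tau_j)|>0$ at $x$, because its positive Taylor part is removed in \eqref{def_Pi_trees}. Hence its value at $y=x$ is $0$, which proves item 1.
\item If $|\mcI_{b_j}(\tau_j)|<0$, no polynomial is subtracted. Since $\mcI_{b_j}=D^{b_j}\mcI$ on the recentred level, the value at $x$ is $\partial^{b_j}_y\Pi_x\Psi(\mcI(\tau_j))(y)|_{y=x}=\overline Z_{\mcI_{b_j}(\tau_j)}$ by \eqref{eq_defZplusNeg_0}, where $|b_j|>|\mcI(\tau_j)|$. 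This proves item 2.
\end{itemize}

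The main obstacle is the first step: checking carefully that Proposition \ref{prop_derivetoile_Ztau_Neg} holds with the last letter replaced by $Z^\circ$. The Chu--Vandermonde reindexing is unaffected. However, the inductive use of $\partial^n_y\Pi_x(\sigma)$ for $\sigma\prec\tau$ must be done with $\Pi^{\mcZ^\circ}_x$ on $\sigma$. Here I would rely on the recursion \eqref{eq_defZotau}, which builds $\mcZ^\circ$ with $\widetilde\PPi^{\mcZ^\circ}(\overline\Psi)$, so that last letters are always $\circ$-ones, consistently with the mixed pre-model.
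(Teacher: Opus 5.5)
Your proposal is correct and follows the paper's proof. The paper isolates your first step as an auxiliary lemma (the left-hand side equals $\Pi^{\mcZ^\circ}_x(\Psi(\tau))(x)$), proved by the same $n=0$ induction as Proposition \ref{prop_derivetoile_Ztau_Neg} with the last letter read in $\mcZ^\circ$. It then concludes, as you do, with Lemma \ref{lem_multiplicativity_modelPsicirc}, the vanishing at $x$ of positive-degree planted factors, and the identification $\Pi^{\mcZ}_x(\Psi(\mcI_{n}(\sigma)))(x)=\overline Z_{\mcI_n(\sigma)}$. The paper checks that last identity by a short direct induction, whereas you read it off from Corollary \ref{PPi_Psi} and \eqref{def_Pi_trees}; both work.

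One correction to your justification of the mixed pre-model. The coproduct does contain the terms $\frac{X^n}{n!}\otimes D^n\Psi(\cdot)$, whose right factor is the whole word, last letter included. These are harmless here only because every word entering the induction has degree at most $|\tau|<0$, so those $D^n\Psi(\cdot)$ vanish in $T^+$.
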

We will prove the proposition from the following Lemma
\begin{lemma}\label{lem_sum Negative_correctors}
	For any smooth admissible data $\mcZ$ and any tree $\tau$ with $|\tau|<0$
	\begin{align*}
		\sum_{\substack{q\geq 0 \\ \tau_q\prec\dots\prec\tau_1\prec\tau}}\sum_{k\geq 0} \frac{1}{k!}{\bf C}_k(Z_{\downarrow^k\tau/\tau_1},\dots,Z_{\tau_{q-1}/\tau_q} ,Z_{\tau_q}^\circ)(x) = { \Pi}_x^{\mcZ^\circ}(\Psi(\tau))(x),
	\end{align*}
\end{lemma}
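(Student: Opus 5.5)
The natural route is to evaluate both sides through the definition of the correctors $\bfC_k = \partial^0_*\P_k$ and the recursive structure of $\Pi_x$ on words. Since $\mcZ$ is smooth, all paraproducts and star derivatives are honest functions, so everything can be computed pointwise at the base point $x$.

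\textbf{Step 1: expand the star derivative.} Unfold the definition \eqref{eqdef_starderivative} of $\partial^0_*\P_k$ applied to the word $\lg \downarrow^k\tau/\tau_1,\dots,\tau_{q-1}/\tau_q,\tau_q\rg_k$, with the last letter interpreted through $Z^\circ_{\tau_q}$. This produces the full paraproduct minus a sum over cut points $j$. In each such term, a left factor $\partial^{p}_*\P_{\ell_1}$ built on $Z_{\downarrow^k\tau/\tau_1},\dots,Z_{\tau_{j-1}/\tau_j}$ multiplies a right factor $\partial^{0}_*\P_{\ell_2+p}$ built on the remaining letters ending with $Z^\circ_{\tau_q}$.

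\textbf{Step 2: identify the two pieces.} Summing the full-paraproduct part over all chains gives $\widetilde\PPi^{\mcZ,\circ}(\Psi(\tau))$, where the last letter is read with $Z^\circ$. This follows from the explicit expression for $\widetilde\PPi^\mcZ(\Psi(\tau))$ below \eqref{eq_defZtau_T+}, together with the fact that the definition \eqref{eq_defZotau} was designed exactly so that this sum reproduces \eqref{eq_mult_PPi_Zcirc}. For the left factors, Remark \ref{remark_CommutDerivPsiTau_withoutTilde} and Proposition \ref{Prop_CommutDerivativePsi} turn the resummed left factor into $\widetilde g^\mcZ_x(D^p(\tau/\sigma))$. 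Then the reindexing of Lemma \ref{lem_SnDnreindex} and the Chu--Vandermonde identity convert $\sum_p D^p(\tau/\sigma)\otimes\Psi_p(\sigma)$ into $\tau/\nu\otimes\Psi(\nu)$, exactly as in the proof of Proposition \ref{prop_derivetoile_Ztau_Neg}.

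\textbf{Step 3: induct and close.} Proceed by induction on the size of $\tau$, so that the right-factor sums, which are the same corrector sums for smaller $\sigma\prec\tau$ with $|\sigma|<0$, equal $\Pi^{\mcZ^\circ}_x(\Psi(\sigma))(x)$. Combining the pieces yields
\begin{equation*}
\widetilde\PPi^{\mcZ^\circ}(\Psi(\tau))(x)-\sum_{j\geq0}\frac{(-x)^j}{j!}\cdots-\sum_{\sigma\prec\tau}\widetilde g^\mcZ_x(\tau/\sigma)\,\Pi^{\mcZ^\circ}_x(\Psi(\sigma))(x).
\end{equation*}
By the recentering formula \eqref{eq_pitildetorestepi}, written on words with $\mcZ^\circ$ in the last slot, this expression is precisely $\Pi^{\mcZ^\circ}_x(\Psi(\tau))(x)$.

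Two boundary issues have to be checked along the way. First, the cutoff condition $|p|\le\rho_j$ in \eqref{condition_sum} must match the constraint $|\sigma|-|n|<0$ with $n=0$. Since $|\tau|<0$, the sub-chain ends $\sigma$ are negative, so the condition reduces to $|\sigma|<0$, and this is what makes the expansion of $\Pi_x$ evaluated at $y=x$ (point 2 of Lemma \ref{lem_derivModifModel} with $n=0$) apply. Second, the polynomial terms vanish at $y=x$.

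The main obstacle is expected to be the combinatorial bookkeeping in Step 2. The difficulty is matching the multinomial weights $\binom{k}{k_1}\binom{\ell}{\ell_1}/p!$ produced by the star-derivative recursion with the $\frac1{k_1!k_2!}$ weights in $\Psi$, while keeping track of the fact that only the last letter carries $\circ$. To handle this, I would first reduce to the model-level identity via Lemma \ref{lem_DeltaPsitau}, letting the coproduct do the splitting, and only then translate back to paraproducts through Proposition \ref{def_prop_antipode}. This avoids handling the indices by hand.
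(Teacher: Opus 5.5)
Your proposal follows the paper's proof: induction on $\tau$, unfolding the correctors $\bfC_k$ through the star-derivative recursion \eqref{eqdef_starderivative}, resumming the left factors with Proposition~\ref{Prop_CommutDerivativePsi}, and reindexing with Lemma~\ref{lem_SnDnreindex} so that the right factors become the corrector sums for $\mu\prec\tau$ (automatically of negative degree). These are then matched against the recursion for $\Pi^{\mcZ^\circ}_x(\Psi(\tau))(x)$ coming from Lemma~\ref{lem_DeltaPsitau}. One slip to fix: your final display should contain no separate $\sum_{j}\frac{(-x)^j}{j!}\cdots$ term, since by \eqref{eq_pitildetopi} at $y=x$ those contributions are exactly $\widetilde{\PPi}^{\mcZ^\circ}(\Psi(\tau))(x)$, so \eqref{eq_pitildetorestepi} at the base point reads $\Pi^{\mcZ^\circ}_x(\Psi(\tau))(x)=\widetilde{\PPi}^{\mcZ^\circ}(\Psi(\tau))(x)-\sum_{\mu\prec\tau}\widetilde g^{\mcZ}_x(\tau/\mu)\,\Pi^{\mcZ^\circ}_x(\Psi(\mu))(x)$.
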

\begin{proof}
	We prove the result by induction on the size of the tree $\tau$. We let $\tau= X^k \Xi_\ell \prod_{j=1}^n\mcI_{b_j}(\tau_j)$. From Lemma \ref{lem_DeltaPsitau}
	\begin{align*}
		{ \Pi}_x^{\mcZ^\circ}(\Psi(\tau))(x) &= 	{ \PPi}^{\mcZ^\circ}(\Psi(\tau))(x) - \sum_{ \sigma\prec\tau }\sum_{q\geq 0}  {g}_x^\mcZ(D^q\Psi(\tau/\sigma))  { \Pi}_x^{\mcZ^\circ}(\Psi_q(\sigma))(x)
	\end{align*}
From Proposition \ref{Prop_CommutDerivativePsi} and Remark \ref{remark_CommutDerivPsiTau_withoutTilde}, we have $ {g}_x^\mcZ(D^q\Psi(\tau/\sigma)) =  {\PPi}^\mcZ(\Psi(D^q(\tau/\sigma)))(x)$. Then Lemma \ref{lem_SnDnreindex} and \eqref{eq_PsinAndPsi}, we get
		\begin{equation}\label{eq_Pixtau(x)_dev}
		{ \Pi}_x^{\mcZ^\circ}(\Psi(\tau))(x) = 	{ \PPi}^{\mcZ^\circ}(\Psi(\tau))(x) - \sum_{ \mu\prec\tau }  {g}_x^\mcZ(\Psi(\tau/\mu)) \,  { \Pi}_x^{\mcZ^\circ}(\Psi(\mu))(x)
	\end{equation}
On the other hand from \eqref{eqdef_corrector} and \eqref{eqdef_starderivative} 
	\begin{align*}
		&\sum_{\substack{q\geq 0 \\ \tau_q\prec\dots\prec\tau_1\prec\tau}}\sum_{k\geq 0} \frac{1}{k!}{\bf C}_k(Z_{\downarrow^k\tau/\tau_1},\dots, Z_{\tau_q}^\circ) 
		= 	\sum_{\substack{q\geq 0 \\ \tau_q\prec\dots\prec\tau_1\prec\tau}}\sum_{k\geq 0} \frac{1}{k!}{\P }_k(Z_{\downarrow^k\tau/\tau_1},\dots, Z_{\tau_q}^\circ) 
		\\
		&\qquad- \sum_{\substack{\sigma\prec\tau \\ k=k_1+k_2}}\sum_{|p|<|\downarrow^k\tau/\sigma|} \frac{1}{k!p!} \binom{k}{k_1} \partial^p_*\P_{k_1}(Z_{\downarrow^k\tau/\sigma_1},\dots,Z_{\sigma_q/\sigma}) \bfC_{k_2+p}( Z_{\sigma/\nu_1},\dots, Z_{\nu_r}) 
		\end{align*}
		re-indexing the last sum gives
		\begin{align*}
			 \widetilde{\PPi}^{\mcZ^\circ}(\Psi(\tau)) - & \sum_{\substack{\sigma\prec\tau \\ |n|<|\tau/\sigma|}}\sum_{\substack{k_1\geq 0 \\ n=p+k_2}} \frac{1}{k_1!n!}\binom{n}{k_2}     \partial^{n-k_2}_*\P_{k_1}(Z_{\downarrow^{k_1+k_2}\tau/\sigma_1},\dots,Z_{\sigma_q/\sigma}) \\ & \bfC_{n}( Z_{\sigma/\nu_1},\dots, Z_{\nu_r}). 
		\end{align*}
		Proposition \ref{Prop_CommutDerivativePsi} rewrites as 
		$$
	\sum_{\substack{k_1\geq 0 \\ n=p+k_2}} \frac{1}{k_1!}\binom{n}{k_2}     \partial^{n-k_2}_*\P_{k_1}(Z_{\downarrow^{k_1+k_2}\tau/\sigma_1},\dots,Z_{\sigma_q/\sigma}) =  \widetilde{\PPi}^\mcZ( \Psi( D^{n}(\tau/\sigma)  )  ). 
		$$
		Then from Lemma \ref{lem_SnDnreindex}, we obtain 
		\begin{align*}
			\widetilde{\PPi}^{\mcZ^\circ}(\Psi(\tau)) - \sum_{\substack{\sigma\prec\tau \\ |n|<|\tau/\sigma|}}\frac{1}{n!} \widetilde{\PPi}^\mcZ\left( \Psi( \tau/\sigma  )  \right) \, \bfC_{n}( Z_{\sigma/\nu_1},\dots, Z_{\nu_r}).
		\end{align*}
	From the induction hypothesis, one recovers indeed $\Pi_x^{\mcZ^\circ}(\Psi(\tau))(x)$ from \eqref{eq_Pixtau(x)_dev}.

\end{proof}
We are now ready to prove Proposition \ref{lem_SumNegativeCorrector}

\begin{proof}[of Proposition \ref{lem_SumNegativeCorrector}]
For any $\tau= X^k \Xi_\ell \prod_{j=1}^n\mcI_{b_j}(\tau_j) $, from Lemmas \ref{lem_SumNegativeCorrector} and \ref{lem_multiplicativity_modelPsicirc}, we have

	\begin{align*}
			\sum_{\substack{q\geq 0 \\ \tau_q\prec\dots\prec\tau_1\prec\tau}}\sum_{k\geq 0} \frac{1}{k!}{\bf C}_k(Z_{\downarrow^k\tau/\tau_1},\dots,Z_{\tau_{q-1}/\tau_q}, Z_{\tau_q}^\circ)  = \prod_{j=1}^n {\Pi}_x^{\mcZ^\circ} \left(\Psi(\mcI_{b_j}(\tau_j))\right)(x).
	\end{align*}
	If we suppose that $|\mcI_{b_j}(\sigma_j)|> 0 $ for some $j\in\bbrack{1;N}$, from the definition of a model we would have ${\Pi}_x(\mcI_{b_j}(\sigma_j))(x)=0$, hence the first point of the Lemma.
	
	In order to prove the second point we have to show $\Pi_x^\mcZ(\Psi(\mcI_n(\sigma)))(x) = \overline{Z}_{\mcI_n(\sigma)}(x)$ for any $\sigma$ and $|n|<|\sigma|$, we are going to prove it by induction. We have indeed from the definition of $\Pi_x$ and from Lemmas \ref{lem_DeltaPsitau} and Proposition \ref{Prop_CommutDerivativePsi}
	\begin{align*}
		\overline{Z}_{\mcI_n(\sigma)}(x)&= \partial^{n}_y\Big( { \Pi}_x^\mcZ \Psi(\mcI (\tau))(y)\Big)_{|y=x}
		\\
		&= \partial^n_y\Big( \PPi^\mcZ\left(\Psi(\mcI(\sigma))\right)(y)  - \sum_{\substack{\mu\prec\sigma \\ q\geq  0}} g_x^\mcZ\left(D^q\Psi(\sigma/\mu)\right) \, \Pi^{\mcZ^\circ}_x \Psi_q(\mcI(\mu))    \Big)_{|y=x}
		\\
		&= \partial^n \PPi^\mcZ\left(\Psi(\mcI(\sigma))\right)  - \sum_{\substack{\mu\prec\sigma \\ q\geq  0}} g^\mcZ_x\left(\Psi(D^q(\sigma/\mu))\right) \,  \partial^n_y\Big(\Pi^{\mcZ^\circ}_x \Psi_q(\mcI(\nu))    \Big)_{|y=x}.
	\end{align*}
	From lemmas \ref{lem_SnDnreindex} and induction hypothesis, this is equal to
	\begin{align*}
		& \partial^n \PPi^\mcZ\left(\Psi(\mcI(\sigma))\right)  - \sum_{\substack{\nu\prec\sigma }} g_x^\mcZ\left(\Psi(\sigma/\nu)\right)  \partial^n_y\Big(\Pi^{\mcZ^\circ}_x \Psi(\mcI(\nu))(y)    \Big)_{|y=x}
		\\
		&=  \PPi^\mcZ\left(\Psi(\mcI_n(\sigma))\right) - \sum_{\substack{\nu\prec\sigma }} g_x^\mcZ(\Psi(\sigma/\nu))  \Pi^{\mcZ^\circ}_x \Psi(\mcI_n(\nu))(x)    
		\\
		&= \Pi_x^\mcZ\left(\Psi(\mcI_n(\sigma))\right)(x).
	\end{align*}
Which concludes the proof.
\end{proof}

	

	
	
	

	\section{The paracontrolled ansatz} \label{section_ParacAnsatz}
	
	In this section, we define the paraconrolled ansatz, and we show how to compute the right hand side of \eqref{eq_gPAM}. We assume that we are given an admissible stochastic data $\mcZ=(Z_\tau)_{\tau\in \mcT}$ as described in Section \ref{section_constructionZtau}.

	\subsection{The paracontrolled ansatz and paraliearisation}
	
	\subsubsection{Definition of the paracontrolled ansatz and paracontrolled systems}
	We suppose first that the collection $\mcZ$ is smooth. For any planted tree $\mcI(\tau)$ with $|\mcI(\tau)|<|n|$ with $n \in \N^{d+1}$, we recall the distribution of $\overline Z$
	\begin{equation} \label{eq_defZplusNeg}
		\overline Z_{\mcI_{n}(\tau)} = \partial^{n}_y\Big( { \Pi}^\mcZ_x \Psi(\mcI (\tau))(y)\Big)_{|y=x}.
			\end{equation}
	It has from Proposition \ref{prop_derivetoile_Ztau_Neg} the alternative definition
	$$
	\overline Z_{\mcI_{n}(\tau)} = \sum_{\substack{q\geq 0 \\ \tau_q\prec\dots\prec\tau_1}}\sum_{k=k_1+k_2\geq0} \frac{1}{k_2!} \partial^{n-k_1}_*  \P_{k_2}(Z_{\downarrow^k\tau/\tau_1},\dots,Z_{\mcI(\tau_q)}  ).
	$$
	For any decorated tree $\tau\in\mcT$ such that $|\mcI(\tau)|<n$, we know from Lemma \ref{lem_elementary differential_form} that the elementary differential $\Upsilon_F[\tau]$ is a function of the variables $\mcX_k$ with $|k|<n$. So that one can define recursively the modified derivatives
	\begin{equation}\label{eqdef_partialStarU}
		\partial_*^{n} u = \partial^{n} u - \sum_{|\mcI(\tau)|<|n|} \frac{1}{S(\tau)}\Upsilon_F[\tau]( (\partial^j_*u)_{|j|<n} ) \, \overline Z_{\mcI_{n}(\tau)}.
\end{equation}
	For any tree $\tau\in\mcT$ with $|\mcI(\tau)|<n$, we will use the shorthand 
	\begin{equation}\label{eq_defutau}
		u_\tau = \frac{1}{S(\tau)}\Upsilon_F[\tau]( (\partial^j_*u)_{|j|<n} ).
	\end{equation}
	For $\gamma > 0$, we say that $u$ satisfies the paracontrolled ansatz up to order $\gamma$ with random data $\mcZ$ if one has   
		\begin{equation}\label{eq_pamansatz}
		u=\sum_{|\mcI(\tau)|<\gamma} \P(u_\tau , \, Z_{\mcI(\tau)} ) + u^\#,
	\end{equation}
		for some $u^\#\in C^\gamma$, where $u_\tau$ is the elementary differential defined in \eqref{eq_defutau}.
		The vector space of couples $(u,u^\#)$ such that $u$ satisfies the paracontrolled ansatz with remainder $u^\#$ is endowed with the norm
		$$
		\lVert(u,u^\#)\lVert_{\alpha,\gamma}  = \norme{u}_{C^\alpha} + \lVert u^\# \lVert_{C^\gamma}.
		$$ 
	
	\begin{remark}
		In order to show that the set of functions satisfying the paracontrolled ansatz is not trivial, we can the follow the trick that was used in \cite[Lemma 4.12]{AllezChouk}. This construction is made explicit in the proof of Proposition \ref{prop_AppGamma}. 
	\end{remark}
	
	One also define the stronger notion of paracontrolled system associated with \eqref{eq_gPAM}. This corresponds to the same notion of paracontrolled systems from \cite{BailleulBernicotHighOrder}, where the generalized derivatives are given by elementary differentials.
	
	\begin{defi}\label{def_PAMparacSystem}
		One says that $u$ develops in the  paracontrolled system associated with \eqref{eq_gPAM} to order $\gamma$, if for any $\tau\in\mcT$ with $|\mcI(\tau)|<\gamma$ we have 
		$$
		u_\tau = \sum_{\substack{\sigma\succ\tau \\ |\mcI(\sigma)|<\gamma}} \P(u_\sigma,Z_{\sigma/\tau} ) + u_\tau^\#.
		$$
		with $u_\tau^\# \in C^{\gamma-|\mcI(\tau)|}$.
			We write $\mcD^\gamma_\P(\mcZ)$ for the vector space of such paracontrolled systems, we endow it with the norm
		$$
		\norme{u}_{\mcD^\gamma_\P(\mcZ)} = \sum_{\substack{|\mcI(\tau)|<\gamma} } \lVert u_\tau^\#\lVert_{C^{\gamma-|\mcI(\tau)|}}.
		$$
		\end{defi}

		This definition can be reformulated using the notion of a paracontrolled system introduced in \cite[Section 1.3]{LocalExpansionsParacSystems}. More precisely, we say that $u$ can be written at the top of a paracontrolled system $(u_w)_w$ of order $\gamma$ with alphabet $\mcT$ and reference distributions the $(Z_\tau)_{\tau\in\mcT}$, and where the set of admissible words is
		\begin{align*}
			\mcU_{<\gamma} = \big\{ \lg\tau/\tau_1,\dots, \mcI(\tau_q) \rg , \quad |\mcI(\tau)|<\gamma, q\geq 0, \, \tau_q\prec\dots\prec\tau_1\prec \tau   \big\}. 
		\end{align*}
		One has then for any word
		$
			u_{w} = \sum_{\tau \in \CT} \P(u_{w \tau} , \, Z_{\tau} ) + u^\#_w
		$
		with $ u^\#_w \in C^{\gamma - |w|} $.
		For $w=\lg \tau_1, \dots, \mcI(\tau_n) \rg $, we have $$u_w= 
		\frac{1}{\prod_{i=1}^n S(\tau_i)} \Upsilon_F[  \tau_1 \star \dots \star \tau_n].
		$$

	When $u$ develops in the paracontrolled system associated to \eqref{eq_gPAM} we can also write it as the sum of iterated paraproducts 
	\begin{equation}\label{eq_gPamSystem_to_IteratedParap}
		u = \sum_{|\mcI(\tau)|<\gamma} \sum_{\substack{q\geq0 \\ \tau_q\prec\dots\prec\tau_1\prec\tau}} \P( u_\tau^\#, Z_{\tau/\tau_1},\dots, Z_{\tau_{q-1}/\tau_q}, Z_{\mcI(\tau_q)}   ) + u^\#. 
	\end{equation}

	From \eqref{eq_CoproductOfMcItau} giving the interaction between $\mcI$ and the coproduct, we have for any tree $\tau$ the identity $$ \sum_{\sigma\prec\mcI(\tau)} \sigma \otimes (\mcI(\tau))/\sigma  = \sum_{\nu\prec\tau}  \mcI(\nu)  \otimes \tau/\nu.  $$ This is due to the fact that $ \sigma \neq X^k $ as we are considering $ \sigma \prec\mcI(\tau) $. Iterating this identity by co-associativity, we obtain for any integer $q\geq1$ the following relation which will be useful in later computations 
	\begin{equation}\label{eq_CommutMotAvecMcI}
\sum_{\sigma_q\prec \dots \prec \sigma_1 \prec \mcI(\tau)}   \sigma_q \otimes \dots \otimes (\mcI(\tau))/\sigma_1  = \sum_{\tau_q\prec \dots \prec\tau_1\prec \tau}  \mcI(\tau_q) \otimes \dots \otimes \tau/\tau_1.
\end{equation}
	The following algebraic Lemma will be useful in computations, a proof can be found for instance in \cite[Proposition 2]{RenomrmalisedSpde}.
	\begin{lemma}\label{lem_ElemDiffMorphism}
		We have the following two identies  
		\begin{enumerate}
			\item For any $\sigma\in \mcT$ and $\tau=X^k\prod_{j=1}^n\mcI_{b_j}(\tau_j)$ we have
			$$
			\Upsilon_F\Big[ \Big(X^k \prod_{j=1}^n\mcI_{b_j}(\tau_j) \Big) \star \sigma  \Big] = \prod_{j=1}^n \Upsilon_F[\tau_j] \,\, \partial^k \prod_{j=1}^n D_{b_j} \Upsilon_F[\sigma] .
			$$
			\item For any $\tau\in\mcT$ and $k\in\bfN^{d+1}$
			\begin{equation}\label{eq_derivUpsilon}
			\Upsilon_F[(\downarrow^k)^*\tau] = \partial^k \Upsilon_F[\tau].
			\end{equation}
		\end{enumerate}
	\end{lemma}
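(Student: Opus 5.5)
Both identities are purely algebraic statements about the elementary differentials defined recursively in \eqref{eq_defUpsilon}. I would prove them by induction on the size of the trees involved, using only that $\partial^k$ and $D_b$ are commuting derivations on $\mcC$ that satisfy the Leibniz rule. No analytic input is needed.

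For the first identity, I would start with the single grafting $\mcI_b(\mu)\curvearrowright\sigma$, where $\sigma = X^m\Xi_\ell\prod_i\mcI_{a_i}(\sigma_i)$. The claim to prove is
$$\Upsilon_F[\mcI_b(\mu)\curvearrowright\sigma]=\Upsilon_F[\mu]\,D_b\Upsilon_F[\sigma].$$
By definition of $\curvearrowright_b$, the grafting splits into two kinds of terms. The first kind grafts $\mu$ at the root, with the polynomial decoration lowered by $m'$ and the edge decoration changed to $b-m'$. The second kind grafts $\mu$ inside some $\sigma_i$, which is again a grafting on a smaller tree. On the right-hand side, I would apply the Leibniz rule to
$$D_b\Big(\prod_i\Upsilon_F[\sigma_i]\,\partial^m D^{a_1}\cdots D^{a_n}\Upsilon_F[\Xi_\ell]\Big).$$
The derivative falling on a factor $\Upsilon_F[\sigma_i]$ matches the grafting inside $\sigma_i$, by the induction hypothesis. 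The derivative falling on $\partial^m(\cdot)$ requires the commutator
$$D_b\,\partial^m=\sum_{m'}\binom{m}{m'}\partial^{m-m'}D_{b-m'}D\ldots,$$
which comes from $D_b\mcX_{j+k}=\delta_{b,j+k}$ in the definition $\partial^kF=\sum_j\mcX_{j+k}D^jF$. This commutator reproduces exactly the binomial coefficients $\binom{\Labn_v}{m'}$ and the lowered decorations in the definition of $\curvearrowright_b$. I expect this commutator computation to be the main obstacle, and I would first check it on $m=e_i$ and then iterate.

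Simultaneous grafting of several $\mcI_{b_j}(\mu_j)$ then follows by iterating the single-graft identity. One has to take care that the iterated single graftings differ from the simultaneous grafting by terms in which one $\mu_j$ lands on another $\mu_{j'}$. These correction terms correspond to $D_{b_j}$ hitting $\Upsilon_F[\mu_{j'}]$, so they disappear when the identity is stated with the $\Upsilon_F[\mu_j]$ factored out, which is the pre-Lie/Guin–Oudom mechanism. Finally, the operator $\uparrow^k_{N_\sigma}$ distributes $X^k$ over the nodes with multinomial weights, and it matches $\partial^k$ applied via the Leibniz rule to the product formula \eqref{eq_defUpsilon}. This uses the same commutation of $\partial$ with the factors.

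For the second identity, I would compute $(\downarrow^k)^*$ directly. Since $\downarrow^k\tau=k!\,X^k\backslash\tau$ extracts $X^k$ from the coproduct, the duality formula $\lg\mu\star\tau,\nu\rg=\lg\mu\otimes\tau,\Delta\nu\rg$ identifies $(\downarrow^k)^*\tau$ with $X^k\star\tau=\uparrow^k_{N_\tau}\tau$, up to the normalisation $k!$ and the symmetry factors in the scalar product. The identity then becomes the special case $n=0$ of point 1, which gives $\Upsilon_F[\uparrow^k_{N_\tau}\tau]=\partial^k\Upsilon_F[\tau]$. Alternatively, I would prove it directly by induction using the Leibniz rule for $\partial^k$ over \eqref{eq_defUpsilon}, together with the fact that $\partial^k$ commutes with $\partial^m D^{a}$ on $\Upsilon_F[\Xi_\ell]$.
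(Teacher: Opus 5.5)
Your argument is correct and is essentially the standard proof behind the result, which the paper only cites (\cite[Proposition 2]{RenomrmalisedSpde}): the single-graft identity $\Upsilon_F[\mcI_b(\mu)\curvearrowright\sigma]=\Upsilon_F[\mu]\,D_b\Upsilon_F[\sigma]$, its Guin--Oudom extension to simultaneous graftings, the Leibniz rule matching $\uparrow^k_{N_\sigma}$ with $\partial^k$, and (2) as the case $n=0$ via $(\downarrow^k)^*\tau=X^k\star\tau$, which holds exactly with no normalisation discrepancy, since $\lg X^k,X^k\rg=S(X^k)=k!$. One correction: contrary to your opening sentence, $\partial_i$ and $D_b$ do not commute --- $[D_b,\partial_i]=D_{b-e_i}$ is exactly what produces the lowered decorations --- and the binomial formula $D_b\partial^m=\sum_{m'}\binom{m}{m'}\partial^{m-m'}D_{b-m'}$ (indeed the lemma itself) requires $\partial^m=\prod_i\partial_i^{m_i}$ with $\partial_iF=\sum_j\mcX_{j+e_i}D^jF$, as your ``check $m=e_i$ and iterate'' implicitly assumes, rather than the displayed formula $\partial^mF=\sum_j\mcX_{j+m}D^jF$ read literally for $|m|\geq 2$.
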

	The following Lemma is crucial for computing the lift as a modelled distribution of any distribution $u$ that develops in the paracontrolled system associated to \eqref{eq_gPAM}. It enables to compute coefficients of the expansion we obtain when applying Theorem 1 from \cite{LocalExpansionsParacSystems}.
	We finish the subsection with another crucial lemma which is a general Fa\`a di Bruno formula coming from \cite[Lemma A.1]{BCCH}. It will apply in addition to Lemma \ref{lem_ElemDiffMorphism} in some of the main proofs of this section (see Proofs of Lemma \ref{lem_deriv_uk_induction} and Theorem \ref{prop_paralinearization}). 
	\begin{lemma} \label{Faa Di Bruno}
		We suppose given a smooth function $h(v_{\star})$ where $v_{\star} $ corresponds to a subset of the variables  $  \partial_{\star}^{k} u $ with $k \in \N^{d+1}$, one has
		$$
		\frac{\partial^{k}h(v_*)}{k!} =  \sum_{a_1,...,a_r} \, \sum_{k = \sum_{i=1}^r \beta_i k_i} \; \prod_{i=1}^r \frac{1}{\beta_i !} \left(\frac{\partial_{*}^{a_i +k_i} u}{k_i!}\right)^{\beta_i} \prod_{i=1}^r (D_{a_i})^{\beta_i} h(v_*).
		$$
		\end{lemma}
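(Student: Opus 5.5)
This formula is the generalised chain rule for the $\partial_*$ derivatives. I plan to reduce it to the classical multivariate Faà di Bruno formula, applied to the composition of $h$ with the vector of variables $v_* = (\partial_*^{a}u)_a$. The structural input is the way $\partial_*$ acts on these variables: $\partial_*^{k}$ applied to $\partial_*^{a}u$ should give $\partial_*^{a+k}u$. This is the analogue of the defining relation $\partial^k F(\mcX)=\sum_j \mcX_{j+k}D^jF(\mcX)$ for the indeterminates $\mcX$, with $\mcX_j$ identified with $\partial_*^j u$.

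The first step is to prove the formula at the level of indeterminates. I define the derivation $\partial^{e_i}$ on $\mcC$ by $\partial^{e_i}\mcX_a=\mcX_{a+e_i}$, and $\partial^k$ as its iterates; these commute because they shift indices. For $h\in\mcC$ I then expand $\partial^k h$ by induction on $|k|$. Applying $\partial^{e_i}$ to a term
$$\prod_i (\mcX_{a_i+k_i})^{\beta_i}\prod_i D_{a_i}^{\beta_i}h$$
produces two kinds of terms. The first kind differentiates one factor $\mcX_{a_i+k_i}$, which raises $k_i$ by $e_i$. The second kind hits the derivative part through $\partial^{e_i}D^{\cdot}h=\sum_a \mcX_{a+e_i}D_aD^{\cdot}h$, which opens a new block with $k_{r+1}=e_i$.

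Rather than matching multiplicities by hand, I would use the generating-function proof. Formally,
$$\sum_k \frac{t^k}{k!}\partial^k h(\mcX)=h\big((\textstyle\sum_{m} \tfrac{t^m}{m!}\mcX_{a+m})_a\big),$$
which holds because both sides are exponentials of the same commuting derivation in the formal variable $t$. A Taylor expansion of the right-hand side in the increments $\sum_{m\neq0}\frac{t^m}{m!}\mcX_{a+m}$ then yields exactly
$$\sum \prod_i \frac{1}{\beta_i!}\Big(\frac{\mcX_{a_i+k_i}}{k_i!}\Big)^{\beta_i}\prod_i D_{a_i}^{\beta_i}h,$$
with the sum over distinct pairs $(a_i,k_i)$ and multiplicities $\beta_i$. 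The factor $1/\beta_i!$ arises from the multinomial expansion of powers of identical blocks.

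The second step is to specialise $\mcX_j\mapsto\partial_*^j u$. This is legitimate once the identity holds polynomially in the $\mcX$ and smoothly in $h$.

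The main obstacle is justifying that $\partial^k$ of $h(v_*)$, read as an actual derivative of the composed function, equals the derivation above evaluated at $\mcX_j=\partial^j_* u$. The paper defines $\partial_*^n u$ in \eqref{eqdef_partialStarU} as a correction of $\partial^n u$, so $\partial^{e_i}\partial_*^{a}u\neq\partial_*^{a+e_i}u$ in general. I would therefore follow \cite[Lemma A.1]{BCCH} and read the lemma as the algebraic identity in $\mcC$, where $\partial^k$ denotes the formal derivation. Equivalently, the left-hand side is the coefficient of $X^k/k!$ in the composition of $h$ with the lifted modelled distribution, whose polynomial coefficients are the $\partial_*^ju$. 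I would state this interpretation explicitly and then conclude via the generating-function identity.
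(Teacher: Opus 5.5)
Your proof is correct. The paper gives no argument for this lemma; it imports it from \cite[Lemma A.1]{BCCH}. Your generating-function derivation is therefore a self-contained replacement for a citation, rather than a variant of a proof in the text.

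The route works as claimed. You realise $\sum_k \frac{t^k}{k!}\partial^k h$ as $h$ evaluated at $\big(\sum_m \frac{t^m}{m!}\mcX_{a+m}\big)_a$, then Taylor expand in the increments with $m\neq 0$ and apply the multinomial theorem. This produces exactly the right-hand side, summed over distinct pairs $(a_i,k_i)$ with $k_i\neq 0$. State the condition $k_i\neq 0$ explicitly, since it is only implicit in the lemma. For non-polynomial $h$, the cleanest justification of the first step is a one-line chain-rule check: $G(t)=h\big((\sum_m \frac{t^m}{m!}\mcX_{a+m})_a\big)$ satisfies $\partial_{t_i}G=\partial^{e_i}G$ and $G(0)=h$ as a formal power series in $t$.

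Two remarks on interpretation, both of which you handle correctly.

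First, $\partial^k$ on $\mcC$ must be the iterate of the derivations $\partial^{e_i}$, as you define it. The paper's displayed formula $\partial^kF=\sum_j\mcX_{j+k}D^jF$ is only right for $|k|=1$. With that literal definition the lemma already fails for $F=\mcX_0^2$ and $k=2e_1$, where the term $2\mcX_{e_1}^2$ is missing. So drop the phrase ``analogue of the defining relation'' and simply state the iterate definition.

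Second, you read the lemma as an identity in $\mcC$ evaluated at $\mcX_j=\partial_*^j u$, equivalently as the $X^k/k!$ coefficient of the lift of $h(v_*)$. This is exactly how it is used in the proof of Theorem \ref{prop_paralinearization}. In the proof of Lemma \ref{lem_deriv_uk_induction}, however, the paper also invokes the lemma for the genuine derivative of the composite function, with $\partial^{k_j}\partial_*^{b_j}u$ in place of $\partial_*^{k_j+b_j}u$. Your argument covers that reading too, by Taylor expanding $h(v_*(x+t))$ instead, so the same computation justifies both uses.
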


	\subsubsection{Commutation with the derivatives}
	
	This subsubsection is devoted to the next Proposition \ref{lem_starDerivUpsilon} will be crucial for performing computations, it gives roughly a commutation relation between the derivative $D^k$ and the map $\Psi$ at the level of the ansatz. 
	
	One defines
	$$
	\partial^k_{*,\gamma}u_\tau = \partial^k u_\tau^\# + \sum_{\substack{r\geq 0, \, |\mcI(\nu_r)|<\gamma \\ \nu_r\succ\dots\succ\nu_1\succ\tau}}\partial^k_*\P\big(u_{\nu_r}^\#, Z_{\nu_r/\nu_{r-1}},\dots, Z_{\nu_1/\tau}\big).
	$$
	and 
	$$
	\widetilde{\partial}^k_{*,\gamma} u  = \partial^k u^{\#} + \sum_{\substack{r\geq 0, \, |\mcI(\nu_r)|<k \\ \nu_r\succ\dots\succ\nu_1\succ\nu}}\partial^k_*\P\big(u_{\nu_r}^\#, Z_{\nu_r/\nu_{r-1}},\dots,Z_{\nu_1/\nu}, Z_{\mcI(\nu)}\big),
	$$
	where in both equations we associate to $u_{\nu_r}^\#$ the homogeneity $\gamma-|\mcI(\nu_r)|$.
	
	\begin{lemma}\label{lem_starderivutau_notdependonGamma}
		The function $\partial^k_{*,\gamma}u_\tau$ does not depend on $\gamma>k+|\mcI(\tau)|$. The function $\widetilde\partial^k_{*,\gamma}u$ does not depend either on $\gamma>|k|$.
	\end{lemma}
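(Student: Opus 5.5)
The plan is to compare $\partial^k_{*,\gamma}u_\tau$ with $\partial^k_{*,\gamma'}u_\tau$ for two admissible values $\gamma<\gamma'$, both larger than $|k|+|\mcI(\tau)|$, and to show that their difference vanishes. Changing $\gamma$ affects the definition in two ways.

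\begin{itemize}
\item The range of the sum over chains $\nu_r\succ\dots\succ\nu_1\succ\tau$ changes, since new trees $\nu_r$ with $\gamma\le |\mcI(\nu_r)|<\gamma'$ enter.
\item The remainders $u^\#_{\nu}$ change, together with the homogeneity $\gamma-|\mcI(\nu)|$ attached to them, and this homogeneity enters the truncation condition \eqref{condition_sum} in the recursive definition \eqref{eqdef_starderivative} of $\partial^k_*$.
\end{itemize}

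First I relate the two remainders. Write the system of Definition \ref{def_PAMparacSystem} at order $\gamma'$. For $|\mcI(\nu)|<\gamma$ this gives
$$
u^{\#,\gamma}_\nu=u^{\#,\gamma'}_\nu+\sum_{\substack{\sigma\succ\nu\\ \gamma\le|\mcI(\sigma)|<\gamma'}}\P(u_\sigma,Z_{\sigma/\nu}).
$$
Iterating this identity on $u_\sigma$ expresses the order-$\gamma$ expansion as the order-$\gamma'$ expansion, with each new term regrouped into iterated paraproducts
$$
\P\big(u^{\#,\gamma'}_{\nu_r},Z_{\nu_r/\nu_{r-1}},\dots\big).
$$
After this substitution both sides are sums of $\partial^k_*$ applied to the same family of iterated paraproducts, so it suffices to show that $\partial^k_*$ is compatible with the regrouping. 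Concretely, this means identities of the form
$$
\partial^k_*\P\big(\P(a,Z_{\sigma/\nu}),Z_{\nu/\cdot},\dots\big)=\partial^k_*\P\big(a,Z_{\sigma/\nu},Z_{\nu/\cdot},\dots\big),
$$
where the left side is computed with $\P(a,Z_{\sigma/\nu})$ treated as a single entry of regularity $\gamma-|\mcI(\nu)|$, and the right side with $a$ of regularity $\gamma'-|\mcI(\sigma)|$.

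The key step is a comparison of the two recursive formulas \eqref{eqdef_starderivative} term by term. Splitting the word at a cut point $j$ produces subtracted terms indexed by $p$ with $|p|\le\rho_j$. The two sides differ only at cut points inside the first block, and for those the thresholds $\rho_j$ computed with the two sets of homogeneities differ. The point is that these thresholds depend only on sums of regularities of the prefix, so
$$
(\gamma'-|\mcI(\sigma)|)+|Z_{\sigma/\nu}|=\gamma'-|\mcI(\nu)|,
$$
while the suffix sums are unchanged. I then have to check that the range $\gamma-|\mcI(\nu)|\le |p|+\dots<\gamma'-|\mcI(\nu)|$, which appears only on one side, contributes nothing. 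There are two candidate reasons.

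\begin{itemize}
\item The minimum in $\rho_j$ is attained by the suffix constraint $-\sum\beta_r-|\ell_2|+|k_2|$, which does not involve $\gamma$. This is plausible because $|k|<\gamma-|\mcI(\tau)|$ bounds the relevant derivative orders.
\item By Assumption \ref{assumption_notint} no degree is an integer, so there are no boundary cases.
\end{itemize}

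This bookkeeping is the main obstacle. I would try the first route first and, if it fails, fall back on the representation of $\partial^k_*u_\tau$ as a coefficient of a modelled distribution through Proposition \ref{prop_starderiv_coefficientmodelleddist}. That coefficient is intrinsic, since the lift from \cite[Theorem 1]{LocalExpansionsParacSystems} of the top-level distribution does not depend on $\gamma$ below the truncation level. It is therefore independent of $\gamma$ once $\gamma>|k|+|\mcI(\tau)|$.

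The statement for $\widetilde\partial^k_{*,\gamma}u$ follows in the same way. The top level $u=\sum\P(u_\nu,Z_{\mcI(\nu)})+u^\#$ is treated like the coefficient equations, with the word ending in the letter $Z_{\mcI(\nu)}$ and the identity \eqref{eq_CommutMotAvecMcI} used to rewrite chains. Here the condition $\gamma>|k|$ plays the role of $\gamma>|k|+|\mcI(\tau)|$.
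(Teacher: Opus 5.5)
Your first route works, but the step you leave as ``plausible'' is the entire content of the lemma, and it is a one-line check. Since $|\mcI(\mu)|=|\mcI(\tau)|+|\mu/\tau|$ for $\mu\succ\tau$, at a cut located at a tree $\mu$ with $k=k_1+k_2$ one has
\[
\gamma-|\mcI(\mu)|-|k_1|>-|\mu/\tau|+|k_2|\iff\gamma>|k|+|\mcI(\tau)|.
\]
So the minimum in \eqref{condition_sum} is always the $\gamma$-free suffix bound, and the one-sided range of $p$ is empty. Non-integrality of degrees plays no role here; it only rules out ties.

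To finish your argument you must also state two things.
\begin{itemize}
\item[(i)] Cuts strictly inside the inserted block, at $\sigma$ and at the trees $\mu_i\succ\sigma$ above it, occur only on the flat side. They vanish because their suffix sum is $|\mcI(\mu_i)|-|\mcI(\tau)|\ge\gamma-|\mcI(\tau)|>|k|\ge|k_2|$.
\item[(ii)] At the common cuts, the prefix factors $\partial^{k_1+p}_*$ of the nested and flat prefixes must themselves be identified. This is an induction on $|k|+|\mcI(\tau)|$ with $(\tau,k)$ replaced by $(\mu,k_1+p)$, and the hypothesis is preserved since $|k_1+p|+|\mcI(\mu)|<|k|+|\mcI(\tau)|$.
\end{itemize}
Your sentence ``the two sides differ only at cut points inside the first block, and for those the thresholds differ'' conflates these two cases. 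The thresholds shift at the common cuts at and after the block boundary, while interior cuts exist on one side only. Drop the fallback through Proposition \ref{prop_starderiv_coefficientmodelleddist}. It presupposes a modelled-distribution lift of $u$ with coefficients $u_\tau$, which the paper only obtains in Lemma \ref{lem_LiftGpamsystemToRS} via Proposition \ref{lem_starDerivUpsilon}, and that proposition rests on the present lemma; in the paper's logical order it is circular.

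The paper's proof never compares $u^{\#,\gamma}$ with $u^{\#,\gamma'}$ and needs no regrouping identity. It applies \eqref{eqdef_starderivative} once to every term of $\partial^k_{*,\gamma}u_\tau$:
\begin{itemize}
\item the full-derivative terms add up to $\partial^k u_\tau$, which is intrinsic;
\item for a cut at $\sigma\succ\tau$, the prefixes over all chains above $\sigma$ resum to $\partial^{k_1+r}_{*,\gamma}u_\sigma$, since the prefix degree is $\gamma-|\mcI(\sigma)|$ whatever the chain;
\item the suffixes $\partial^{k_2}_*\P_r(Z_{\sigma/\sigma_1},\dots,Z_{\sigma_s/\tau})$ do not involve $\gamma$.
\end{itemize}
The inequality above makes the cutoff $\gamma$-free and forces $|\mcI(\sigma)|<|\mcI(\tau)|+|k|<\gamma$. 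Hence both values of $\gamma$ index the same terms, and induction on $|k|+|\mcI(\tau)|$ closes the argument.

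Once completed, your approach yields as a by-product a regrouping identity for $\partial^k_*$ under a degree condition. It pays for this with a bijection between the chains of the two expansions, which uses that $|\mcI(\cdot)|$ strictly increases along chains. The paper's one-step recursion is shorter and needs nothing beyond the definition.
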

	\begin{proof}
		We prove the first claim by induction on $|k|+|\mcI(\tau)|$, the second one is proven along the same lines.
		We take $\gamma_1>\gamma_2>|k|+|\mcI(\tau)|$, from the definition of $\partial^k_*$ in \eqref{eqdef_starderivative}, we have for $i\in\{1,2\}$
		\begin{equation}\label{eq_proof_starderivutau_gamma}
			\partial^k_{*,\gamma_i} u_\tau  =  \partial^k u_\tau - \sum_{\substack{\sigma\succ\tau \\ k=k_1+k_2}} \sum_{|r|< \rho_i    } \frac{1}{r!}\binom{k}{k_1} \partial^{k_1+r}_{*,\gamma_i}u_\sigma \sum_{\tau\prec\sigma_s\prec \dots\prec\sigma} \partial^{k_2}_*\P_r(Z_{\sigma/\sigma_1},\dots, Z_{\sigma_s/\tau}      ) 
		\end{equation}
		with 
		$$
		\rho_i \defeq \min\{  \gamma_i - |\mcI(\sigma)| - |k_1| , \quad - |\sigma/\tau|+|k_2|      \}.
		$$
		We have assumed $\gamma_i>|k|+|\mcI(\tau)| $, so that 
		$$
		\gamma_i - |\mcI(\sigma)| - |k_1| > - |\sigma/\tau|+|k_2|.  
		$$
		Then $\rho_i=-|\sigma/\tau|+|k_2|$,  and we see that in does not depend on $i\in\{1,2\}$. Then the sum in the right hand side of \eqref{eq_proof_starderivutau_gamma} is indexed on the same set for $i\in\{1,2\}$, and we can apply induction hypothesis to get the lemma.
	\end{proof}

	The Lemma \ref{lem_starderivutau_notdependonGamma} enables us to forget the dependence on $\gamma$ in $\partial^k_{*,\gamma}u_\tau$ and in $\widetilde\partial^k_{*,\gamma}u$, and simply write $\partial^k_*u_\tau$ and $\widetilde\partial^k_{*}u$.

	\begin{proposition}\label{lem_starDerivUpsilon}
		We suppose that $u$ can be developed in a paracontrolled system associated with \eqref{eq_gPAM} at order $\gamma$ with random data $\mcZ=(Z_\tau)_{\tau\in\mcT}$. For any $\tau\in\mcT$ and $k\in\bfN^{d+1}$ such that $|k| < \gamma-|\mcI(\tau)|$, we have
		\begin{equation}\label{eq_starPartialUTau} \begin{split}
			\partial_*^ku_\tau= u_{(\downarrow^k)^* \tau}.
		\end{split}\end{equation}
	For any $k\in\bfN^{d+1}$ with $|k|<\gamma$, 
	\begin{equation}\label{eq_starPartialU} \begin{split}
			\widetilde\partial^k_{*}u = \partial^k_* u.
	\end{split}\end{equation}
	\end{proposition}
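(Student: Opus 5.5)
We prove \eqref{eq_starPartialUTau} and \eqref{eq_starPartialU} together, by induction on the quantity $|k|+|\mcI(\tau)|$; for the second identity we use $|k|$ in place of $|\mcI(\tau)|$. The aim is to reduce both statements to Proposition \ref{prop_starderiv_coefficientmodelleddist}. First, by the iterated representation \eqref{eq_gPamSystem_to_IteratedParap}, the system $(u_\tau)$ is a paracontrolled system over the alphabet $\mcT$ with reference distributions $Z$. Theorem 1 of \cite{LocalExpansionsParacSystems} then lifts $u$ to a modelled distribution $\bm u$ over the regularity structure of words, with coefficients given by star derivatives. The idea is to transport this lift to the structure of decorated trees via $\Psi$, using Corollary \ref{PPi_Psi} together with the identities $g^\mcZ(D^n\Psi(\tau))=\PPi^\mcZ(\Psi(D^n\tau))$ of Proposition \ref{Prop_CommutDerivativePsi}. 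This yields a modelled distribution $\bm f(x)=\sum_\tau u_\tau(x)\,\mcI(\tau)+\sum_n \frac{\widetilde\partial^n_* u(x)}{n!}X^n$ whose tree coefficients are exactly the $u_\tau$. With this in hand, Proposition \ref{prop_starderiv_coefficientmodelleddist} applied to $\bm f$ gives $\partial^k_*u_\tau=f_{(\downarrow^k)^*\mcI(\tau)}$, and the remaining task is to identify that coefficient as $u_{(\downarrow^k)^*\tau}$.

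The identification works as follows. We compute $(\downarrow^k)^*\mcI(\tau)$ with the duality $\lg\mu\star\tau,\nu\rg=\lg\mu\otimes\tau,\Delta\nu\rg$: in $\bm f$, the adjoint of $\downarrow^k$ acts on the root-trivial part and produces $\mcI((\downarrow^k)^*\tau)$ plus polynomial terms. On the coefficient side, \eqref{eq_derivUpsilon} gives $\Upsilon_F[(\downarrow^k)^*\tau]=\partial^k\Upsilon_F[\tau]$, where $\partial^k$ is the formal derivative in the indeterminates $\mcX$. By the Faà di Bruno formula of Lemma \ref{Faa Di Bruno}, this formal derivative evaluated at $(\partial^j_*u)$ matches the chain rule applied to $u_\tau=\frac{1}{S(\tau)}\Upsilon_F[\tau]((\partial_*^ju)_j)$, provided one knows $\partial^{a+k_i}_*$ of the arguments. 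That input is supplied by the induction hypothesis in the form $\widetilde\partial^{j}_*u=\partial^j_*u$ for $|j|$ smaller, which explains why \eqref{eq_starPartialU} has to be carried along in the same induction. The symmetry factors $S(\tau)$ need to be tracked through the adjoint, using the normalisation $\lg\tau,\tau\rg=S(\tau)$.

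For \eqref{eq_starPartialU}, we expand $\partial^k u$ through the ansatz and the definition of $\partial^k_*$ in \eqref{eqdef_starderivative}. The terms removed from $\partial^k u$ when forming $\widetilde\partial^k_*u$ are sums of $\partial^{k_1+r}_*u_\sigma$ times $\partial^{k_2}_*\P_r(Z,\dots,Z_{\mcI(\cdot)})$ over negative-degree configurations. By the already established \eqref{eq_starPartialUTau}, these become $u_{(\downarrow^{\cdot})^*\sigma}$. By Proposition \ref{prop_derivetoile_Ztau_Neg}, the paraproduct factors regroup into $\overline Z_{\mcI_k(\tau)}$. Using Lemma \ref{lem_SnDnreindex} to reindex, what remains is exactly the correction $\sum_{|\mcI(\tau)|<|k|}u_\tau\overline Z_{\mcI_k(\tau)}$ appearing in \eqref{eqdef_partialStarU}, and this gives $\widetilde\partial^k_*u=\partial^k_*u$.

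The main obstacle is the combinatorial bookkeeping. The products of $u_\sigma$ obtained from the paracontrolled expansion have to be matched term by term with the sum over $(\downarrow^k)^*$ and the Faà di Bruno decomposition, with multinomial and symmetry factors agreeing. My first attempt at this matching would be to check it on the pre-Lie level, using point 1 of Lemma \ref{lem_ElemDiffMorphism}: writing $(\downarrow^k)^*\tau$ as a sum of grafting products $(X^{k}\cdots)\star\tau$, each elementary differential factorises, and the factors can then be compared directly with the Faà di Bruno terms.
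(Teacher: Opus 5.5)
\textbf{There is a genuine gap: the reduction to Proposition \ref{prop_starderiv_coefficientmodelleddist} is circular.} Theorem 1 of \cite{LocalExpansionsParacSystems}, applied to the iterated paraproducts in \eqref{eq_gPamSystem_to_IteratedParap}, gives a lift on words. In it, the coefficient in front of $\lg\sigma/\sigma_1,\dots,\mcI(\sigma_q)\rg^{k_2}_{r_1}X^{r_2}$ is $\partial^{k_1+r}_*u_\sigma$, a star derivative of a coefficient. To collapse this sum into $\sum_\sigma u_\sigma\,D^k\Psi(\mcI(\sigma))$ you need $\partial^{k_1+r}_*u_\sigma=u_{(\downarrow^{k_1+r})^*\sigma}$, and already at the first step you need $\widetilde\partial^k_*u=\partial^k_*u$. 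Only after this collapse is the lift $\Psi$ of something on trees, so that Corollary \ref{PPi_Psi} and Proposition \ref{Prop_CommutDerivativePsi} can transport it. This is exactly how the paper proves Lemma \ref{lem_LiftGpamsystemToRS}, and it cites the present proposition to do so.

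Said differently: the modelled-distribution bound for your $\bm f$ at the component $\mcI(\tau)$ contains the terms coming from $\mcI(\sigma)$ with $\mcI(\sigma)/\mcI(\tau)$ polynomial. These contribute $\sum_k\frac{(y-x)^k}{k!}u_{(\downarrow^k)^*\tau}(x)$. So asserting that $\bm f$ with coefficients $u_\sigma$ is modelled is the coherence statement itself. A telltale sign is that once $\bm f$ is granted, $f_{(\downarrow^k)^*\mcI(\tau)}=u_{(\downarrow^k)^*\tau}$ holds by linearity, and your Fa\`a di Bruno identification has nothing left to do.

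If you instead prove \eqref{eq_starPartialUTau} by direct computation, as the paper does in Lemma \ref{lem_deriv_uk_induction}, you are missing an ingredient. The induction must carry a third statement, the half-derivative identity of Lemma \ref{lem_halfderivative}:
\[
\partial^p\widetilde\partial^a_*u-\sum_{|\mcI_a(\mu)|\in(0,|p|)}u_\mu\,\overline Z_{\mcI_{p+a}(\mu)}=\widetilde\partial^{p+a}_*u .
\]
It is needed because the two sides involve different derivatives. The chain rule applied to $u_\tau=\Upsilon_F[\tau](v_*)/S(\tau)$ produces ordinary derivatives $\partial^{k_j}(\partial^{b_j}_*u)$ of star derivatives. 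By contrast, $u_{(\downarrow^k)^*\tau}$, through \eqref{eq_derivUpsilon} and Fa\`a di Bruno, involves $\partial^{k_j+b_j}_*u$. These differ by the $\overline Z$ terms, and your hypothesis $\widetilde\partial^j_*u=\partial^j_*u$ does not bridge them.

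The argument closes because the corrector terms in $\partial^k_*u_\tau$ cancel these differences exactly through a binomial expansion. Those corrector terms are computed via Proposition \ref{prop_derivetoile_Ztau_Neg} and the Leibniz rule on $\Pi^\mcZ_x\Psi(\sigma/\tau)$ with $\sigma/\tau=X^p\prod_j\mcI^+_{b_j}(\mu_j)$. This yields $u_{\mu\star\tau}\prod_j\overline Z_{\mcI_{b_j+k_j}(\mu_j)}$, which is rewritten using Lemma \ref{lem_ElemDiffMorphism}. The half-derivative identity itself comes from Lemma \ref{lem_derivModifModel} applied on the extended structure $\scrT_{\bm f}$.

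Your treatment of \eqref{eq_starPartialU} is fine and matches Lemma \ref{lem_deriv_uk_to_u}.
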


The Proposition \ref{lem_starDerivUpsilon} will be proven by induction, we will first need the following Lemmas \ref{lem_deriv_uk_to_u} \ref{lem_halfderivative} and \ref{lem_deriv_uk_induction} giving all the steps for the induction. We always suppose here that $u$ develops in the paracontrolled system associated to \eqref{eq_gPAM}.
	
	It will convenient to use the extension $\scrT_{\bm f}$ of the decorated trees regularity structure, that was defined in the proof of Corollary \ref{cor_repmodelledditribParap}. We will use words over the symbols of this extended structure, the model $(\PPi^{\mcZ,u},g^{\mcZ,u})$ we are going to work with is such that each tree $\tau$ will still corresponds to $Z_\tau$, and $F_{\tau}$ will correspond to $u_\tau^\#$. We have for instance 
$$
\partial^k_*u_\tau = \widetilde{g}^{\mcZ,u}\big(D^k\Psi(F_\tau)\big).
$$
and 
$$
\widetilde{g}^{\mcZ,u}(\Psi(F))=\partial^k u^{\#} + \sum_{\substack{r\geq 0, \, |\mcI(\nu_r)|<k \\ \nu_r\succ\dots\succ\nu_1\succ\nu}}\partial^k_*\P\big(u_{\nu_r}^\#, Z_{\nu_r/\nu_{r-1}},\dots,Z_{\nu_1/\nu}, Z_{\mcI(\nu)}\big).
$$

		\begin{lemma}\label{lem_deriv_uk_to_u}
		We let $b\in\bfN^{d+1}$ with $|b|<\gamma$ and suppose that $\partial^k_*u_\tau = u_{(\downarrow^k)^*\tau}$ for any $k\in\bfN^{d+1}$ and $\tau\in\mcT$ such that $|k|+|\mcI(\tau)|< |b|$, then
			$$
			\widetilde\partial^b_{*}u = \partial^b_* u.
			$$
		\end{lemma}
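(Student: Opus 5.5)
Both sides are defined by recursions on $|b|$, and the plan is to show they satisfy the same recursion, inducting on $|b|$. By definition \eqref{eqdef_partialStarU},
$$
\partial^b_* u = \partial^b u - \sum_{|\mcI(\tau)|<|b|} u_\tau \, \overline Z_{\mcI_b(\tau)},
$$
where the $u_\tau$ are built from the $\partial^j_*u$ with $|j|<|b|$. By the induction hypothesis (smaller $|b|$), these coincide with $\widetilde\partial^j_*u$. The task is therefore to expand $\widetilde\partial^b_*u$ using the recursive definition \eqref{eqdef_starderivative} of $\partial^b_*$ on the iterated paraproducts $\P(u^\#_{\nu_r},Z_{\nu_r/\nu_{r-1}},\dots,Z_{\mcI(\nu)})$ appearing in its definition, and to recognise the subtracted terms as $u_\tau\overline Z_{\mcI_b(\tau)}$.

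\textbf{Step 1: split the star derivatives.} Start from the representation \eqref{eq_gPamSystem_to_IteratedParap} of $u$ as a sum of iterated paraproducts. Then
$$
\partial^b u = \partial^b u^\# + \sum \partial^b \P(u^\#_{\nu_r},\dots,Z_{\mcI(\nu)}).
$$
Apply \eqref{eqdef_starderivative} to each $\partial^b\P(\cdots)$, cutting the word at every position $j$. Each cut splits the word into a left part, carrying $u^\#_{\nu_r}$ and the letters down to some $\sigma$, and a right part $\lg \sigma/\dots,\mcI(\nu)\rg$. By \eqref{eq_CommutMotAvecMcI}, the right part can be written as $\lg\tau/\tau_1,\dots,\mcI(\tau_q)\rg$ with $\sigma=\mcI(\tau)$-type letters.

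\textbf{Step 2: regroup the left parts.} Summing the left parts over all chains $\nu_r\succ\dots\succ\tau$ produces
$$
\partial^{k_1+p}_{*}u_\tau \quad\text{(with weight $\ell_1$)},
$$
up to the paraproduct weights, as in the regrouping of Corollary \ref{cor_repmodelledditribParap}. By hypothesis this equals $u_{(\downarrow^{k_1+p})^*\tau}$. This step is legitimate because the cut condition $\rho_j$ in \eqref{condition_sum} forces $|k_1+p|+|\mcI(\tau)|<|b|$, which is exactly the regime in which the hypothesis applies. The $\gamma$-independence of Lemma \ref{lem_starderivutau_notdependonGamma} lets me use the natural homogeneities here.

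\textbf{Step 3: regroup the right parts.} Summing the right parts with their binomial weights gives the expression from Proposition \ref{prop_derivetoile_Ztau_Neg}, which equals $\overline Z_{\mcI_{b-k_1-p}(\ldots)}$. I will then reindex using $(\downarrow^m)^*$, with the Chu--Vandermonde bookkeeping of Lemma \ref{lem_SnDnreindex}. This should absorb the polynomial weights $\ell$, turning the pair (coefficient $u_{(\downarrow^m)^*\tau}$, distribution with weight $m$) into $u_{\tau'}\,\overline Z_{\mcI_b(\tau')}$ summed over all $\tau'$ with $|\mcI(\tau')|<|b|$.

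The main obstacle is this last combinatorial matching: checking that the cut-off ranges in $\rho_j$ coincide exactly with the constraint $|\mcI(\tau)|<|b|$ in \eqref{eqdef_partialStarU}, and that the factorials and symmetry factors agree. For the symmetry factors I would invoke \eqref{eq_derivUpsilon}, $\Upsilon_F[(\downarrow^k)^*\tau]=\partial^k\Upsilon_F[\tau]$, together with the adjointness used in Proposition \ref{prop_starderiv_coefficientmodelleddist}, and carry out the sum over the tree basis with the scalar product weighted by $S(\tau)$.
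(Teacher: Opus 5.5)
Your skeleton is the paper's (your $k_1,p$ are $b_1,r$ below). You expand $\widetilde\partial^b_*u$ through the recursive definition \eqref{eqdef_starderivative}, and regroup the left factors of each cut into $\partial^{b_1+r}_*u_\sigma$. You then replace these by $u_{(\downarrow^{b_1+r})^*\sigma}$ using the hypothesis, which is legitimate, as you say, because the second entry of $\rho_j$ in \eqref{condition_sum} gives $|b_1+r|+|\mcI(\sigma)|<|b|$. You finish with Proposition~\ref{prop_derivetoile_Ztau_Neg}. Step~3, however, uses that proposition at the wrong moment, and its intermediate claim is false.

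Because the outer paraproduct has weight $0$, every cut leaves the left factor unweighted. The whole weight $r$ goes on the right factor $\partial^{b_2}_*\P_r(Z_{\sigma/\nu_1},\dots,Z_{\mcI(\nu_s)})$, whose first letter involves the cut tree $\sigma$ itself. In Proposition~\ref{prop_derivetoile_Ztau_Neg}, a weight $k_2\neq0$ always comes together with $\downarrow^{k_1+k_2}$ applied to the tree in the first letter. Hence for $r\neq0$ these right factors are pieces of $\overline Z_{\mcI_b(\sigma')}$ for the larger trees $\sigma'$ with $\langle\downarrow^{b_1+r}\sigma',\sigma\rangle\neq0$, not of any $\overline Z_{\mcI_{b-b_1-r}(\sigma)}$. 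For $r=0$ they give only the $k=0$ part of $\overline Z_{\mcI_{b_2}(\sigma)}$ once $\sigma$ carries polynomial decorations. So the right factors cannot be summed into $\overline Z$'s before the coefficients are processed.

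The correct order is: hypothesis, then reindexing, then Proposition~\ref{prop_derivetoile_Ztau_Neg}. Read $u_{(\downarrow^m)^*\sigma}$ as $\Upsilon_F[(\downarrow^m)^*\sigma]/S(\sigma)$. Adjointness for the $S$-weighted scalar product then gives $\sum_\sigma u_{(\downarrow^m)^*\sigma}F(\sigma)=\sum_\sigma u_\sigma F(\downarrow^m\sigma)$ for every linear $F$: both sides equal $F(\downarrow^m\Upsilon)$ with $\Upsilon=\sum_\sigma\Upsilon_F[\sigma]\,\sigma/S(\sigma)$. So the symmetry factors cause no difficulty. This moves the derivation onto the letters, and the subtracted terms become
\[
\sum_{|\mcI(\sigma)|<|b|}u_\sigma\sum_{b=b_1+b_2}\sum_{r\geq0}\frac{1}{r!}\binom{b}{b_1}\sum_{\nu_s\prec\dots\prec\sigma}\partial^{b_2}_*\P_r\big(Z_{\downarrow^{b_1+r}\sigma/\nu_1},\dots,Z_{\mcI(\nu_s)}\big).
\]
The inner sum is literally the left-hand side of Proposition~\ref{prop_derivetoile_Ztau_Neg} for the tree $\mcI(\sigma)$ and $n=b$, with $k_1=b_1$, $k_2=r$, and the words matched via \eqref{eq_CommutMotAvecMcI}. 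So it equals $\overline Z_{\mcI_b(\sigma)}$, and \eqref{eqdef_partialStarU} is recovered.

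The range matching you single out as the main obstacle is then automatic. Since $|\downarrow^m\sigma|=|\sigma|-|m|$, the cut condition $|\mcI(\downarrow^{b_1+r}\sigma)|-|b_2|+|r|<0$ is equivalent to $|\mcI(\sigma)|<|b|$, uniformly in $(b_1,r)$. The first entry of $\rho_j$ then follows from $|b|<\gamma$.

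Finally, no induction on $|b|$ is needed. $\partial^b_*u$ and the remainders $u^\#_\nu$ entering $\widetilde\partial^b_*u$ are built from the same coefficients $u_\tau$, i.e.\ from the $\partial^j_*u$ of \eqref{eqdef_partialStarU}. The lemma is therefore one direct computation.
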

	\begin{proof}
		We now prove the identity \eqref{eq_starPartialU}. From the definition of $\partial^b_*$ in \eqref{eqdef_starderivative}
	\begin{align*}
		\partial^b u - \sum_{\substack{|\mcI(\sigma)|<\gamma \\ b=b_1+b_2}} \sum_{\substack{ r\geq  0 \\ |b_1|+r <\gamma-|\mcI(\sigma)| \\ |\mcI(\sigma)|-|b_2|+|r|<0 }} \frac{1}{r!}\binom{k}{k_1} \partial^{b_1+r}_*u_\sigma \,  \sum_{\nu_s\prec\dots\prec\sigma}\partial^{b_2}_*\P_r(Z_{\sigma/\nu_1},\dots, Z_{\mcI(\nu_s)}). 
	\end{align*}
	In the sum  above we have $|b_1|+|r|= |b|-|b_2|+|r|<|b|-|\mcI(\sigma)|$, we can therefore use the induction hypothesis to write $\partial^{b_1+r}_*u_\sigma = u_{(\downarrow^{b_1+r})^*\sigma}$. The left hand side of the identity to prove is then equal to  
	\begin{align*}
		\partial^b u - \sum_{\substack{ |\mcI(\sigma)|<|b| }} \sum_{\substack{ r\geq  0 \\ b=b_1+b_2 }} \frac{1}{r!}\binom{b}{b_1} u_\sigma \,  \sum_{\substack{s\geq0 \\ \nu_s\prec\dots\prec\sigma}}\partial^{b_2}_*\P_r(Z_{\downarrow^{b_1+r}\sigma/\nu_1},\dots, Z_{\mcI(\nu_s)}) 
	\end{align*}
	From  Proposition \ref{prop_derivetoile_Ztau_Neg}, we have 
	$$
	\sum_{\substack{b=b_1+b_2 \\ r\geq 0}} \sum_{\substack{s\geq0 \\ \nu_s\prec\dots\prec\sigma}} \frac{1}{r!}\binom{k}{k_1} \partial^{b_2}_*\P_r(Z_{\downarrow^{b_1+r}\sigma/\nu_1},\dots, Z_{\mcI(\nu_s)}) = \overline{Z}_{\mcI_b(\sigma)}.
	$$ 
	One recovers indeed $\partial^b_*u$ from its definition.
	\end{proof}

	\begin{lemma}\label{lem_halfderivative}
	Let $p,a\in\bfN^{d+1}$ such that $|p|+|a|<\gamma$, suppose that $\partial^q_*u_\nu = u_{(\downarrow^q)^*\nu}$ for any $|q|+|\mcI(\nu)|<  |p|+|a|$. We have 
	\begin{equation}\label{eq_halfstarderiv}
	\partial^p\widetilde\partial^a_* u - \sum_{|\mcI_a(\mu)|\in(0,|p|)} u_\mu \overline{Z}_{\mcI_{p+a}\mu} = \widetilde\partial^{p+a}_* u .
\end{equation}
	\end{lemma}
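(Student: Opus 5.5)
We prove \eqref{eq_halfstarderiv} by differentiating the defining expression of $\widetilde\partial^a_*u$ and reorganising the output as the defining expression of $\widetilde\partial^{p+a}_*u$. By definition,
$$
\widetilde\partial^a_*u = \partial^a u^\# + \sum_{\nu_r\succ\dots\succ\nu_1\succ\nu,\ |\mcI(\nu_r)|<|a|}\partial^a_*\P\big(u^\#_{\nu_r},Z_{\nu_r/\nu_{r-1}},\dots,Z_{\mcI(\nu)}\big).
$$
Applying $\partial^p$ gives $\partial^{p+a}u^\#$ together with the terms $\partial^p\partial^a_*\P(\cdots)$. The first step is to expand each $\partial^p\partial^a_*\P$ using the recursive definition \eqref{eqdef_starderivative} at orders $a$ and $p+a$. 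The difference between $\partial^p\partial^a_*\P(h_1,\dots,h_n)$ and $\partial^{p+a}_*\P(h_1,\dots,h_n)$ is a sum over cuts $j$ of products
$$
\partial^{k_1+r}_*\P_{\ell_1}(h_1,\dots,h_j)\;\partial^{k_2}_*\P_{\ell_2+r}(h_{j+1},\dots,h_n).
$$
These are exactly the terms whose index $r$ satisfies the constraint $\rho_j$ at level $p+a$ but not at level $a$: the truncation window is enlarged by the extra derivatives.

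In the second step we regroup these extra terms according to the cut $\nu_r\succ\dots\succ\nu_{i}=\mu\succ\dots\succ\nu$. The left factor, summed over chains above $\mu$ with the initial coefficient $u^\#_{\nu_r}$, is by definition $\partial^{k_1+r}_*u_\mu$. By the hypothesis $\partial^q_*u_\nu=u_{(\downarrow^q)^*\nu}$, valid here because $|q|+|\mcI(\mu)|<|p|+|a|$, it equals $u_{(\downarrow^{k_1+r})^*\mu}$. We then move the adjoint $(\downarrow)^*$ onto the right factor, as in the proof of Lemma \ref{lem_deriv_uk_to_u}, writing $\Psi$-letters $\downarrow^{k_1+r}\mu/\cdot$. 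The right factors summed over chains below $\mu$ combine, by Proposition \ref{prop_derivetoile_Ztau_Neg}, into $\partial^{p+a}_y\Pi^\mcZ_x\Psi(\mcI(\mu))(y)|_{y=x}=\overline Z_{\mcI_{p+a}\mu}$. That proposition applies because the enlarged range forces $|\mcI(\mu)|<|p+a|$.

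The third step is to identify which $\mu$ occur. Terms with $|\mcI(\mu)|<|a|$ were already present at level $a$ and cancel against the corresponding part of $\widetilde\partial^{p+a}_*u$. The terms that are new at level $p+a$ are of two kinds. Chains with $|a|\le|\mcI(\nu_r)|<|p+a|$ enter the definition of $\widetilde\partial^{p+a}_*u$ but not that of $\widetilde\partial^a_*u$. Terms with $|\mcI(\nu_r)|<|a|$ pick up additional $r$'s in the star-derivative window. Both kinds collect into $u_\mu\overline Z_{\mcI_{p+a}\mu}$ with $0<|\mcI_a(\mu)|<|p|$, that is $|a|<|\mcI(\mu)|<|p+a|$. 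The value $|\mcI_a(\mu)|=0$ is excluded by Assumption \ref{assumption_notint}. This gives $\partial^p\widetilde\partial^a_*u=\widetilde\partial^{p+a}_*u+\sum u_\mu\overline Z_{\mcI_{p+a}\mu}$, which is \eqref{eq_halfstarderiv}.

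The main obstacle is the bookkeeping of the truncation windows $\rho_j$ under the change of level from $a$ to $p+a$. One must check that the surplus terms match exactly the range $|\mcI_a(\mu)|\in(0,|p|)$, and that the binomial coefficients combine via Chu–Vandermonde to give $\overline Z_{\mcI_{p+a}\mu}$ with no leftover. I would first verify this on words of length two, and then argue by induction on the word length using the recursive structure of \eqref{eqdef_starderivative}.
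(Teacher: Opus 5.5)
Your Step~1 is not correct as stated, and it carries the whole lemma. Applying $\partial^p$ to a level-$a$ correction $\partial^{a_1+r}_*\P(h_1,\dots,h_j)\,\partial^{a_2}_*\P_r(h_{j+1},\dots,h_n)$ distributes $p=p_1+p_2$ by Leibniz onto factors that are themselves star derivatives. On the sub-words, $\partial^{p_1}\partial^{m}_*\P\neq\partial^{p_1+m}_*\P$. So $\partial^p\partial^a_*\P-\partial^{p+a}_*\P$ is not just the level-$(p+a)$ recursion terms lying outside the level-$a$ window.

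Concretely, take gPAM in $d=2$ with $\alpha=|\mcI(\Xi)|\in(\frac12,1)$, $F=F(u)$, the ansatz at order $\gamma\in(2,3\alpha)$, and $a=e_1$, $p=e_2$. The word $(u^\#_{\Xi\mcI(\Xi)},Z_{\mcI^+(\Xi)},Z_{\mcI(\Xi)})$ then contributes, besides the terms you predict, the extra term $-u^\#_{\Xi\mcI(\Xi)}\,\partial^{e_2}Z_{\mcI^+(\Xi)}\,\partial^{e_1}Z_{\mcI(\Xi)}$. It comes from $\partial^{e_2}\P(u^\#_{\Xi\mcI(\Xi)},Z_{\mcI^+(\Xi)})\neq\partial^{e_2}_*\P(u^\#_{\Xi\mcI(\Xi)},Z_{\mcI^+(\Xi)})$.

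The paper never compares recursions. It writes $\widetilde\partial^k_*u=\widetilde g^{\mcZ,u}(D^k\Psi(F))$ in the extended structure $\scrT_{\bm f}$ and applies Lemma~\ref{lem_derivModifModel} to $D^p(D^a\Psi(F))$. The model axioms absorb all these commutators, and the correction comes out as $\widetilde g^{\mcZ,u}(D^q\Psi(F_\nu))\,\partial^p_y\big(\Pi^\mcZ_x(D^a\Psi_q(\mcI(\nu)))(y)\big)_{|y=x}$. All of $\partial^p$ sits on the recentred piece, with no splitting of $p$. That identity should replace your Step~1. Your Step~2 (induction hypothesis, adjoint shift via \eqref{eq_PsinAndPsi} and Proposition~\ref{Prop_CommutDerivativePsi}) then parallels the rest of the paper's argument, except for its final identification, which is the next problem.

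Step~3 cannot be completed for $a\neq0$: the surplus does not assemble into $\overline Z_{\mcI_{p+a}\mu}$. In the same example:
\begin{itemize}
\item The word $(u^\#_\Xi,Z_{\mcI(\Xi)})$ contributes $\partial^{e_1}u^\#_\Xi\,\partial^{e_2}Z_{\mcI(\Xi)}$ to the left-hand side but no $\partial^{e_2}u^\#_\Xi\,\partial^{e_1}Z_{\mcI(\Xi)}$. The $(k_1,k_2)=(e_2,e_1)$ term is already produced by $\partial^{e_2}$ of the level-$e_1$ term $u^\#_\Xi\,\partial^{e_1}Z_{\mcI(\Xi)}$ and cancels. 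No other word can produce such a term.
\item The right-hand side of \eqref{eq_halfstarderiv} does contain this term, through $u_{X^{e_2}\Xi}\,\overline Z_{\mcI_{e_1+e_2}(X^{e_2}\Xi)}$. Here $u_{X^{e_2}\Xi}=\partial^{e_2}_*u_\Xi$ contains $\partial^{e_2}u^\#_\Xi$, and $\overline Z_{\mcI_{e_1+e_2}(X^{e_2}\Xi)}$ contains $\partial^{e_1}Z_{\mcI(\Xi)}$ (the $k=e_2$ term in Proposition~\ref{prop_derivetoile_Ztau_Neg}).
\end{itemize}
Equivalently, here $\widetilde\partial^{e_i}_*u=\partial^{e_i}u-F(u)\,\partial^{e_i}Z_{\mcI(\Xi)}$, so
\[
\partial^{e_2}\widetilde\partial^{e_1}_*u-\partial^{e_1}\widetilde\partial^{e_2}_*u=F'(u)\big(\partial^{e_1}u\,\partial^{e_2}Z_{\mcI(\Xi)}-\partial^{e_2}u\,\partial^{e_1}Z_{\mcI(\Xi)}\big),
\]
which is generically nonzero. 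Yet the right-hand side of \eqref{eq_halfstarderiv} is symmetric in $(p,a)$ whenever $|p|=|a|$.

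In your bookkeeping, the cancellation keeps only the part of $\binom{p+a}{k_1}=\sum_{a_1+p_1=k_1}\binom{a}{a_1}\binom{p}{p_1}$ with $|a|+|p_1|<|\mcI(\mu)|$, after the adjoint shift. So Proposition~\ref{prop_derivetoile_Ztau_Neg} never receives the full binomial it needs. What the computation, and the paper's model identity, actually yields is $\sum u_\mu\,\partial^p_y\big(\Pi^\mcZ_x\Psi(\mcI^+_a(\mu))(y)\big)_{|y=x}$. This coincides with $\overline Z_{\mcI_{p+a}\mu}$ only when $a=0$, and the last line of the paper's proof makes the same identification.

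This is harmless for the heat operator with $\gamma<2$, where $|p|+|a|<\gamma$ forces $a=0$ or $p=0$. In general, the statement needs that restriction, or the modified right-hand side above.
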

	
	\begin{proof}

			We have  $\widetilde\partial^{p+a}_* u =\widetilde{g}^{\mcZ,u}(D^{p+a}\Psi(F)  )$, then from Lemma \ref{lem_derivModifModel}
		\begin{align*}
			\widetilde\partial^{p+a}_*u &= \widetilde{g}^{\mcZ,u}\big(D^{p}D^a\Psi(F)\big)
			\\
			&=  \partial^{p} \big( \widetilde{g}^{\mcZ,u}\big(D^a\Psi(F)\big)\big) \\
			&\enskip - \sum_{\substack{\gamma-|\mcI(\nu)|-|q|>0 \\ |\mcI(\nu)|+|q|-|a|-|p|<0 }} \widetilde{g}^{\mcZ,u}\big(D^q\Psi(F_\nu)\big) \,\, \partial^{p}_y \Big( \Pi_x^\mcZ D^a\Psi_q(\mcI(\nu))  (y)   \Big)_{|y=x}. 
		\end{align*}
		In the sum above, we have $|q|+|\mcI(\nu)|<|p|+|a|$. We use the induction hypothesis to write $\widetilde{g}^{\mcZ,u}(D^q\Psi(F_\nu))= \widetilde{g}^{\mcZ,u}(\Psi(F_{(\downarrow^q)^*\nu}))$, then using Proposition \ref{Prop_CommutDerivativePsi} and \eqref{eq_PsinAndPsi}
		\begin{align*}
			\partial^{p+a}_*u &= 
			\partial^{p} \partial^a_*u \sum_{\substack{|\mcI_b(\nu)|\in(0,|p|)\\ q\geq  0}} \widetilde{g}^{\mcZ,u}(\Psi(F_\nu)) \,\, \partial^{p}_y \Big( \Pi_x^\mcZ( D^a\Psi_q(\downarrow^q\mcI(\nu)))  (y)   \Big)_{|y=x}
			\\
			 &= 
			\partial^{p} \partial^a_*u \sum_{\substack{|\mcI_a(\nu)|\in(0,|p|) \\ q\geq 0 }} u_\nu \,\, \partial^{p}_y \Big( \Pi_x^\mcZ( D^a\Psi(\mcI(\nu)))  (y)   \Big)_{|y=x}
			\\
			 &= 
			\partial^{p} \partial^a_*u \sum_{\substack{|\mcI_a(\nu)|\in(0,|p|) }} u_\nu \,\, \partial^{p}_y \Big( \Pi_x^\mcZ( \Psi(\mcI_a^+(\nu)))  (y)   \Big)_{|y=x}
		\end{align*}

		Using Proposition \ref{prop_derivetoile_Ztau_Neg}, we get indeed
		
		\begin{align*}
		  \partial^{p} \partial^a_*u - \sum_{|\mcI_a(\nu)|\in(0,|p|)}  u_\nu \overline Z_{\mcI_{p+a}(\nu)}. 
		\end{align*}
		
	\end{proof}
	
	\begin{lemma}\label{lem_deriv_uk_induction}
		We let $\tau\in\mcT$ and $k\in\bfN^{d+1}$ with $|\mcI(\tau)|+|k|<\gamma$.
 		Suppose $\partial^q_*u_\sigma = u_{(\downarrow^q)^*\sigma} $ for any $q\in\bfN^{d+1}$ and $\sigma\in\mcT$ such that $|\mcI(\sigma)|+|q|<|\mcI(\tau)|+|k|$. We suppose also that \ref{eq_halfstarderiv} holds true for $|p|+|a|<|\mcI(\tau)| + |k|$ and $\widetilde\partial^b_*u= \partial^b_*u$ for $|b|<|k|+|\mcI(\tau)|$.
 		Then
 		$$
 		\partial^k_*u_\tau = u_{(\downarrow^k)^*\tau}.
 		$$
	\end{lemma}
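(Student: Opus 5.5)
We compute $\partial^k_*u_\tau$ in two independent ways and identify the results. The target is $u_{(\downarrow^k)^*\tau}$, which by \eqref{eq_defutau} and \eqref{eq_derivUpsilon} of Lemma \ref{lem_ElemDiffMorphism} equals $\frac{1}{S(\tau)}\partial^k\Upsilon_F[\tau]$ evaluated at $(\partial^j_*u)_j$, up to symmetry factors. Here $\partial^k$ is the formal derivation on $\mcC$. So the claim amounts to a chain rule in which ordinary derivatives of $u$ are replaced by star derivatives.

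The first step is to expand $\partial^k u_\tau$ with the Fa\`a di Bruno formula of Lemma \ref{Faa Di Bruno}, applied to $h=\frac{1}{S(\tau)}\Upsilon_F[\tau]$ viewed as a function of $v_*=(\partial^j_*u)_{|j|<\gamma-|\mcI(\tau)|}$. This produces products of the factors $\partial^{k_i}(\partial^{a_i}_*u)$ with derivatives $\prod_i D_{a_i}^{\beta_i}h$. For each factor we use Lemma \ref{lem_halfderivative} together with Lemma \ref{lem_deriv_uk_to_u}; both apply under the inductive hypotheses, because $|k_i|+|a_i|\le |k|+|\mcI(\tau)|$ strictly in the relevant range. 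These give
\[
\partial^{k_i}\partial^{a_i}_*u=\partial^{k_i+a_i}_*u+\sum_{|\mcI_{a_i}(\mu)|\in(0,|k_i|)}u_\mu\,\overline Z_{\mcI_{k_i+a_i}(\mu)} .
\]
Substituting back splits $\partial^k u_\tau$ into two parts. The part containing only $\partial_*$'s is exactly $u_{(\downarrow^k)^*\tau}$: it is the formal Fa\`a di Bruno expression for $\partial^k\Upsilon_F[\tau]$. The other part collects the terms containing at least one $u_\mu\overline Z$ factor. In that part we use the first identity of Lemma \ref{lem_ElemDiffMorphism} to recombine $\prod u_{\mu_i}\,D_{a_i}\ldots\Upsilon_F[\tau]$ into $u_\sigma$ with $\sigma$ a grafting $\prod \mcI_{a_i}(\mu_i)\star\tau$, again up to symmetry factors.

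The second step expands $\partial^k_*u_\tau$ using definition \eqref{eqdef_starderivative}, exactly as in \eqref{eq_proof_starderivutau_gamma}. This gives $\partial^k u_\tau$ minus a sum over $\sigma\succ\tau$, $k=k_1+k_2$ and $r$ of
\[
\tfrac{1}{r!}\tbinom{k}{k_1}\,\partial^{k_1+r}_*u_\sigma\;\partial^{k_2}_*\P_r(Z_{\sigma/\sigma_1},\dots,Z_{\sigma_s/\tau}).
\]
Since $|k_1|+|r|+|\mcI(\sigma)|<|k|+|\mcI(\tau)|$, the induction hypothesis turns $\partial^{k_1+r}_*u_\sigma$ into $u_{(\downarrow^{k_1+r})^*\sigma}$. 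Then Lemma \ref{lem_SnDnreindex}, used to move $\downarrow$ from $\sigma$ onto $\sigma/\tau$, together with Proposition \ref{prop_derivetoile_Ztau_Neg}, resums the paraproduct factors into $\partial^{k}_y\big(\Pi^\mcZ_x\Psi(\cdot)(y)\big)_{|y=x}$, that is, into $\overline Z$-type terms.

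The main obstacle is matching the correction sums from the two steps. Using the multiplicativity of $\Pi_x$ on $T^+$ (Lemma \ref{lem_PsiTau_multiplicatif}), the quantity $\sigma/\tau$ for $\sigma\succ\tau$ decomposes as a product of planted trees $\mcI^+_{a_i}(\mu_i)$ with polynomial decorations. The matching therefore reduces to the combinatorial identity
\[
\langle \sigma/\tau,\ \textstyle\prod_i \mcI^+_{a_i}(\mu_i)\rangle=\text{coefficient of }\sigma\text{ in }\textstyle\prod_i\mcI_{a_i}(\mu_i)\star\tau ,
\]
which is the duality between $\Delta$ and the Grossman--Larson product $\star$. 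I would first prove this bookkeeping for a single grafting ($n=1$) and then iterate. The delicate points are the multinomial and symmetry factors: I expect them to cancel through a Chu--Vandermonde identity, as in Lemma \ref{lem_DeltaPsitau}.
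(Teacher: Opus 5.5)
Your proposal is correct and follows essentially the same route as the paper. Both arguments expand $\partial^k_*u_\tau$ via \eqref{eqdef_starderivative}, apply the induction hypothesis and Proposition~\ref{prop_derivetoile_Ztau_Neg} to turn the correction into products of $\overline Z$'s indexed by $\sigma/\tau=X^p\prod_j\mcI^+_{b_j}(\mu_j)$, recombine via Lemma~\ref{lem_ElemDiffMorphism} into $u_{\mu\star\tau}$, and match against the Fa\`a di Bruno expansion of $\partial^k u_\tau$ after substituting \eqref{eq_halfstarderiv}. The only difference is bookkeeping: the paper absorbs the $X^p$ part and the leftover $\partial_*^{k_j+b_j}u$ factors with a second Fa\`a di Bruno expansion of $\partial^p\prod_j D_{b_j}^{\beta_j}\Upsilon_F[\tau]$ plus a binomial expansion of $(\partial^{k_j}\partial^{b_j}_*u)^{\beta_j}$, treating all graftings at once rather than via Chu--Vandermonde one grafting at a time.
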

	\begin{proof}

	For $k\in\bfN^{d+1}$, from the definition of $\partial^k_*\P$ given in \eqref{eqdef_starderivative}
		\begin{align*}
			\partial_*^{k} u_\tau &= \partial^{k} u_\tau  -\sum_{\substack{\sigma\succ\tau \\ q\geq0 }} \sum_{\substack{ \gamma-|\mcI(\sigma)|-|q| \\ |\sigma/\tau|+|q|-|k|<0           }} \partial^q_*u_\sigma \sum_{\substack{q=q_1+q_2\\ \tau\prec\sigma_r\prec\dots\prec\tau }} \frac{1}{q_2!}\binom{k}{q_1}\partial^{k-q_1}_*\P_{q_2}(Z_{\sigma/\sigma_1},\dots,Z_{\sigma_r/\tau})
		\end{align*}
		In the sum above, we have $|\mcI(\sigma)|+|q|<|\mcI(\tau)|+|k|$, then from our assumptions we get 
		\begin{align*}
			\partial_*^{k} u_\tau &= \partial^{k} u_\tau  -\sum_{\substack{\sigma\succ\tau \\ q\geq0 }} \sum_{\substack{ \gamma-|\mcI(\sigma)|-|q| \\ |\sigma/\tau|+|q|-|k|<0           }} u_{(\downarrow^q)^*\sigma} \,\,  \sum_{\substack{q=q_1+q_2\\ \tau\prec\sigma_r\dots\prec\tau }} \frac{1}{q_2!}\binom{k}{q_1}\partial^{k-q_1}_*\P_{q_2}(Z_{\sigma/\sigma_1},\dots,Z_{\sigma_r/\tau})
			\\
			&= \partial^{k} u_\tau  -\sum_{\substack{\sigma\succ\tau \\ q\geq0 }} \sum_{\substack{ \gamma-|\mcI(\sigma)| \\ |\sigma/\tau|-|k|<0           }} u_\sigma \, \, \sum_{\substack{q=q_1+q_2\\ \tau\prec\sigma_r\dots\prec\tau }} \frac{1}{q_2!}\binom{k}{q_1}\partial^{k-q_1}_*\P_{q_2}(Z_{\downarrow^q\sigma/\sigma_1},\dots,Z_{\sigma_r/\tau})
		\end{align*}

	From Proposition \ref{prop_derivetoile_Ztau_Neg}, we have 
	$$\sum_{\substack{q=q_1+q_2\\ \tau\prec\sigma_r\dots\prec\tau }} \frac{1}{q_2!}\binom{k}{q_1}\partial^{k-q_1}_*\P_{q_2}(Z_{\sigma/\sigma_1},\dots,Z_{\sigma_r/\tau}) = \partial^k_y \big(\Pi_x^\mcZ \Psi(\sigma/\tau)(y)\big)_{|y=x}$$.

	Writing $\sigma/\tau = X^p \prod_{j=1}^{N} \mcI^+_{b_j}(\mu_j)$, we have from the Leibniz rule and from \ref{prop_derivetoile_Ztau_Neg}

	\begin{align*}
	\partial^k_y \Big( \Pi^\mcZ_x \Psi(\sigma/\tau) (y)  \Big)_{|y=x} &=
	 \partial^k_y\Big( (y-x)^p \prod_{j=1}^N \Pi^\mcZ_x \Psi(\mcI_{b_j}(\mu_j) ) \Big)_{|y=x}
	\\
	&= \sum_{\substack{k=p+\sum_{j=1}^N k_j \\ |\mcI_{b_j}(\mu_j)|\in(0,|k_j|) }} k! p! \prod_{j=1}^{N}\frac{1}{k_j!}\overline{Z}_{\mcI_{b_j+k_j}(\mu_j)}.
\end{align*}
We obtain 
		\begin{align*}
			\partial_*^{k} u_\tau  &= \partial^{k} u_\tau -\sum_{\substack{\mu=X^p\prod_{j=1}^N \mcI_{b_j}(\mu_j) \\ k=p+\sum_{j=1}^N k_j\\  |\mcI_{b_j}(\mu_j)|\in(0,|k_j|)}}  u_{\mu\star \tau} \, p!k! \, \prod_{j=1}^N \frac{1		}{k_j!} \overline Z_{\mcI_{b_j+k_j}(\mu_j) }.
		\end{align*}		
For $\mu=X^p\prod_{j=1}^n \mcI^+_{b_j}(\mu_j)^{\beta_j}$, Lemma \ref{lem_ElemDiffMorphism} and the Faà di Bruno formula (Lemma \ref{Faa Di Bruno}) gives   
\begin{align*}
& \Upsilon[\mu\star\tau](v_*) = \prod_{j=1}^n  S(\mu_j)^{\beta_j} u_{\mu_j}^{\beta_j} \, \, \partial^p_\mcX\prod_{j=1}^{n}  D_{b_j}^{\beta_j} \Upsilon[\tau](v_*)
\\
&= \sum_{\substack{p=\sum_{i=1}^m \alpha_i p_i  \\ a_i\in\bfN^{d+1}   }} S(\mu\star\tau)\prod_{j=1}^n  \frac{1}{\beta_j!}u_{\mu_j}^{\beta_j} \prod_{i=1}^m \frac{1}{\alpha_i !} \Big(\frac{\partial_*^{p_i+a_i} u}{p_i !}\Big)^{\alpha_i}    \prod_{i=1}^m\prod_{j=1}^{n}D_{a_i}^{\alpha_i} D_{b_j}^{\beta_j} \Upsilon[\tau](v_*).
\end{align*}	
Then 
		\begin{align*}
			\partial^k_*u_\tau&=\partial^{k} u_\tau - \sum_{ \substack{ 0\leq N\leq M  \\ k=\sum_{j=1}^M \beta_j k_j \\     b_j\in\bfN^{d+1}}}   \sum_{\substack{\prod_{j=1}^N (\mcI^+_{b_j}(\mu_j))^{\beta_j} \\|\mcI_{b_j}(\mu_j)|\in(0,|k_j|)  }} \prod_{j=1}^N  u_{\mu_j}^{\beta_j}\overline Z_{\mcI_{b_j+k_j}(\mu_j)}^{\beta_j} \\&\prod_{j=N+1}^M  \frac{1}{\beta_j!}\Big(\frac{1}{k_j!}\partial_*^{k_j+b_j} u\Big)^{\beta_j}  \prod_{j=1}^{M} D_{b_j}^{\beta_j} \Upsilon[\tau](v_*).
		\end{align*}	
	From the Faà di Bruno formula (Lemma \ref{Faa Di Bruno})
		$$
		\partial^{k}u_\tau=\partial^{k}\Upsilon_F[\tau](v_*) = \sum_{\substack{k=\sum_{j=1}^M \beta_jk_j    \\ b_j\in\bfN^{d+1}  }} \prod_{j=1}^{M} \frac{1}{\beta_j!} D_{b_j}^{\beta_j}\Upsilon_F[\tau](v_*) \prod_{j=1}^{M}\frac{1}{\beta_j!}\Big(\frac{1}{k_j!}\partial^{k_j} \partial^{b_j}_*u \Big)^{\beta_j}.
		$$
	Then  
		\begin{align*}
		&	\partial_*^{k} u_\tau  = \sum_{\substack{k=\sum_{j=1}^{M} \beta_jk_j  \\ c_j\in\bfN^{d+1}  }} \prod_{j=1}^M D_{b_j}^{\beta_j}\Upsilon_F[\tau](v_*)  \Big\{ \prod_{j=1}^M \frac{1}{\beta_j!}\Big(\frac{1}{k_j!}\partial^{k_j} \partial^{b_j}_*u \Big)^{\beta_j} 
		\\ &	- \sum_{\substack{\prod_{j=1}^N (\mcI_{b_j}\mu_j)^{\beta_j} \\ |\mcI_{b_j}(\mu_j)|\in(0,|k_j|)}}    \prod_{j=1}^N \frac{1}{\beta_j!}  u_{\mu_j}^{\beta_j} \overline Z_{\mcI_{b_j+k_j}(\mu_j)}^{\beta_j} \prod_{j=N+1}^M \frac{1}{\beta_j!} \Big(\frac{1}{k_j!}\partial_*^{k_j+b_j} u\Big)^{\beta_j}   \Big\}.
		\end{align*}		
		We use \eqref{eq_halfstarderiv} under the binomial identity
		\begin{equs}
	&	\big(\partial^k\partial^b_* u\big)^{\beta}
		-
		\sum_{\beta'<\beta}\binom{\beta}{\beta'} \Big(\sum_{|\mcI_b(\mu)|\in(0,|k|)} u_\mu \overline{Z}_{\mcI_{b+k}\mu}\Big)^{\beta-\beta'} (\partial^{k+b}_*u)^{\beta'}  
	 	= 
		(\partial^{k+b}_* u)^\beta 
		\end{equs}
		to get
			\begin{equs}
		\partial^k_*u_{\tau} & = \sum_{\substack{k=\sum_{j=1}^{M} \beta_jk_j   \\ b_j\in\bfN^{d+1}  }} \prod_{j=1}^{M} D_{b_j}^{\beta_j}\Upsilon_F[\tau](v_*)    \prod_{j=1}^{M}\frac{1}{\beta_j!}\Big(\frac{1}{k_j!} \partial^{k_j+b_j}_*u \Big)^{\beta_j}
		\end{equs}
	which is indeed equal to $u_{(\downarrow^k)^*\tau}$ from Lemma \ref{lem_ElemDiffMorphism} and from the Faà di Bruno formula (Lemma \ref{Faa Di Bruno}).
	\end{proof}

	We are now able to prove the proposition \ref{lem_starDerivUpsilon}.

	\begin{proof}[of Proposition \ref{lem_starDerivUpsilon}]
	We prove by induction on $\alpha$ the following statement 
	$$
	(E_\alpha) : \left\{ \begin{array}{lll}
	&\forall |k|+|\mcI(\tau)|<\alpha , \qquad  &\partial^k_* u_\tau = u_{(\downarrow^k)^*\tau}
	\\
	&\forall |b|<\alpha, \qquad &\widetilde{\partial}^b_*u = \partial^b_*u
	\\
&\forall |a|+|p|< \alpha, \qquad 	&\partial^p\widetilde\partial^a_* u - \sum_{|\mcI_a(\mu)|\in(0,|p|)} u_\mu \overline{Z}_{\mcI_{p+a}(\mu)} = \widetilde\partial^{p+a}_* u 	
\end{array}\right.
	$$
where $\alpha$ is an element of  $A = (\alpha_0<\alpha_1<\dots)$ the set of degrees of the decorated trees in $\mcT$.
From the conjunction of the Lemmas \ref{lem_deriv_uk_to_u} \ref{lem_halfderivative} and \ref{lem_deriv_uk_induction}, we see that $(E_{\alpha_j})$ implies  $(E_{\alpha_{j+1}})$ for any $j\geq 0$ with $\alpha_{j+1}<\gamma$. we obtain in particular the proposition.
	\end{proof}

	\begin{corollary}\label{cor_welldefinition_starpartialu}
		If $u$ develops in the paracontrolled system associated to \eqref{eq_gPAM} at order $\gamma$, then for any $|k|<\gamma$ the derivative $\partial^k_*u$ is in $L^\infty$, and furthermore $\norme{\partial^k_* u}_{L^\infty}\lesssim \norme{u}_{\mcD^\gamma_\P}$. 
	\end{corollary}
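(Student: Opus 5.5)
The plan is to show that $\partial^k_* u$ is $L^\infty$ by expressing it through quantities already known to be bounded, using Proposition \ref{lem_starDerivUpsilon}, specifically the identity \eqref{eq_starPartialU}:
\[
\partial^k_* u = \widetilde\partial^k_* u = \partial^k u^{\#} + \sum_{\substack{r\geq 0, \, |\mcI(\nu_r)|<|k| \\ \nu_r\succ\dots\succ\nu_1\succ\nu}}\partial^k_*\P\big(u_{\nu_r}^\#, Z_{\nu_r/\nu_{r-1}},\dots,Z_{\nu_1/\nu}, Z_{\mcI(\nu)}\big).
\]
Here $u^\#\in C^\gamma$ with $|k|<\gamma$, so $\partial^k u^\#$ is a bounded function with $\norme{\partial^k u^\#}_{L^\infty}\lesssim \norme{u^\#}_{C^\gamma}$. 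It therefore remains to control each star derivative of an iterated paraproduct.

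For the remaining terms, I would work in the regularity structure of iterated paraproducts $\scrT_\mcA$ of Subsection \ref{subsect_theRSIteratedParap}, enlarged by a letter for $u^\#_{\nu_r}$ of degree $\gamma-|\mcI(\nu_r)|>0$, as in the extension $\scrT_{\bm f}$ used in Corollary \ref{cor_repmodelledditribParap}. In that structure, the identity $g^\mcA(\lg w\rg^k_\ell)=\partial^k_*\P_\ell(\dots)$ from \eqref{def_model_words} shows that each term equals $g^{\mcZ,u}$ evaluated on a symbol $\lg F_{\nu_r},\nu_r/\nu_{r-1},\dots,\mcI(\nu)\rg^k$. The total degree of the word is $\gamma-|\mcI(\nu_r)|+|\mcI(\nu_r)|=\gamma>|k|$, so this is a positive-degree element of $\mcB^+$.

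By \cite[Theorem 1]{LocalExpansionsParacSystems}, which is applicable because letters have non-integer degrees under Assumption \ref{assumption_notint}, the family $(Z_\tau)$ together with $u^\#_{\nu}$ defines a model whose size is bounded by $\norme{\mcZ}_{\frak N}$ and $\sup_\nu\norme{u^\#_\nu}_{C^{\gamma-|\mcI(\nu)|}}$. The value of $g_x$ on such a positive-degree symbol is a bounded function of $x$. Concretely, the star derivative is built recursively through \eqref{eqdef_starderivative}, and it is exactly the coefficient controlled by the model norm $\Vert\cdot\Vert^*$. Its $L^\infty$ bound is then multilinear: linear in $u^\#_{\nu_r}$ and polynomial in the norms of the $Z$'s. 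Summing over the finitely many sequences $\nu_r\succ\dots\succ\nu$ (finite by subcriticality) gives
\[
\norme{\partial^k_* u}_{L^\infty}\lesssim \norme{u^\#}_{C^\gamma}+\sum_\nu \norme{u^\#_\nu}_{C^{\gamma-|\mcI(\nu)|}}\lesssim\norme{u}_{\mcD^\gamma_\P},
\]
with the implicit constant depending on $\norme{\mcZ}_{\frak N}$.

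The main obstacle is justifying that $L^\infty$ control of $g_x$ on positive-degree words follows from the model bounds of \cite{LocalExpansionsParacSystems}. Those bounds are stated for increments $\gamma_{xy}$, and the paraproduct $\P_\ell$ here is the regrouped one \eqref{eqdef_ParapPolWeight2}. I would handle this by expanding $\partial^k_*\P_\ell$ through \eqref{eqdef_starderivative} into the star derivatives of the $\P_{\bm\ell}$ treated there, and then using that $g_x(\lg w\rg^k)$ is an $L^\infty$ function with $\norme{\cdot}_{L^\infty}\lesssim\llparenthesis\PPi,g\rrparenthesis$. That last fact is part of the construction of the model in \cite[Theorem 1]{LocalExpansionsParacSystems}, and by Appendix \ref{appendix_A} it extends to the present Littlewood–Paley projectors.
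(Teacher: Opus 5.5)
Your proposal is correct and follows essentially the same route as the paper: you rewrite $\partial^k_*u$ through \eqref{eq_starPartialU} as $\partial^k u^\#$ plus star derivatives of iterated paraproducts whose total homogeneity is $\gamma>|k|$, and then invoke \cite[Theorem 1]{LocalExpansionsParacSystems} for their $L^\infty$ bounds and their continuous (multilinear) dependence on the arguments. Your extra points are consistent with the paper's argument: the care about the regrouped paraproducts, and the observation that the final bound needs $\norme{u^\#}_{C^\gamma}$ to be counted in $\norme{u}_{\mcD^\gamma_\P}$, which the paper does implicitly (cf.\ Lemma \ref{lem_LiftGpamsystemToRS}).
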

	
	\begin{proof}
		We write $\partial^k_*u$ as in  \eqref{eq_starPartialU}, and we use  \cite[Theorem 1]{LocalExpansionsParacSystems} which asserts in particular that any star derivative $\partial^k_*$ is in $L^\infty$ and depends continuously on its arguments, as $|k|$ is smaller than the sum of homogeneities associated to each distribution, which is $\gamma$. 
	\end{proof}

	\subsubsection{Paralinearisation}
	
	\begin{lemma}  \label{lem_LiftGpamsystemToRS}
		For any admissible collection of stochastic fields $\mcZ=(Z_\tau)$, if the distribution $u$ develops in a paracontrolled system associated with \eqref{eq_gPAM} at order $\gamma > 0$, then the function $\partial_*^{k}u$ admits a lift as a modelled distribution on the regularity structure $\scrT_\P$ with model induced by $\mcZ=(Z_\tau)$, given by
		$$
		\bm{\partial_{*}^{k} u} = \sum_{|\mcI(\tau)|<\gamma } u_\tau \, \Psi(\mcI_{k}^+ (\tau))  + \sum_{ |n|\leq \gamma - |k|} \frac{1}{n!}\partial_*^{k + n}u \, X^{n}.
		$$
		and we have the continuity estimate $$\lVert\bm{\partial_{*}^{k} u}\lVert_{\mcD^{\gamma-|k|}} \lesssim \lVert {u^\#} \lVert_{C^\gamma}  +  \sum_{|\mcI(\tau)|<\gamma}\lVert u_\tau^\#\lVert_{C^{\gamma-|\mcI(\tau)|}}.$$
	\end{lemma}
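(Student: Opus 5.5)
The plan is to realise $\bm{\partial_*^k u}$ as a modelled distribution produced by the general machinery of \cite[Theorem 1]{LocalExpansionsParacSystems}, and then to identify its coefficients with the claimed ones using Proposition \ref{lem_starDerivUpsilon}.

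First I rewrite $u$ as the sum of iterated paraproducts \eqref{eq_gPamSystem_to_IteratedParap}, whose letters are the $u_\nu^\#$ together with the $Z_{\nu_{i}/\nu_{i+1}}$ and $Z_{\mcI(\nu_q)}$. I work in the extended structure $\scrT_{\bm f}$ with the model $(\PPi^{\mcZ,u},g^{\mcZ,u})$, where $F_\nu\mapsto u^\#_\nu$. In that language $u=\widetilde\PPi^{\mcZ,u}(\Psi(F))$ modulo $u^\#$. The star derivative $\partial^k_*u$ coincides with $\widetilde g^{\mcZ,u}(D^k\Psi(F))$ by \eqref{eq_starPartialU}. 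By \cite[Theorem 1]{LocalExpansionsParacSystems}, this iterated paraproduct of total homogeneity $\gamma$ admits the lift
$$(g^{\mcZ,u}\otimes\id)\Delta\big(D^k\Psi(F)\big)-D^k\Psi(F),$$
which lies in $\mcD^{\gamma-|k|}$ with norm bounded by the norms of the letters. Those norms are controlled by $\lVert u^\#\lVert_{C^\gamma}+\sum_\tau\lVert u_\tau^\#\lVert_{C^{\gamma-|\mcI(\tau)|}}$ and by $\norme{\mcZ}_{\frak N}$. This step gives the continuity estimate directly, and the remaining work is purely algebraic.

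Next I compute the coproduct. Using Lemma \ref{lem_DeltaPsitau}, the commutation $\Delta D^k=(D^k\otimes\id)\Delta$ of Lemma \ref{lem_DnSnpropenvrac}, and the form $\Delta F=F\otimes\one+\sum\tau\otimes F_\tau$, I split the expansion into two parts. The first part consists of the terms with a non-polynomial left factor. Their coefficients are $g^{\mcZ,u}(D^q\Psi(F_\tau))$ multiplied by $D^k\Psi_q(\mcI(\tau))$. Reindexing with Lemma \ref{lem_SnDnreindex} and \eqref{eq_PsinAndPsi}, exactly as in the proof of Lemma \ref{lem_reconstructionarg_Ztau}, turns these into $g^{\mcZ,u}(\Psi(F_\tau))\,D^k\Psi(\mcI(\tau))$. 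The second part consists of the polynomial terms $\frac{X^n}{n!}\,g^{\mcZ,u}(D^{n+k}\Psi(F))$. Two identifications then finish the argument. Proposition \ref{lem_starDerivUpsilon} gives $g(\Psi(F_\tau))=u_\tau$ (taking $k=0$) and $g(D^{k+n}\Psi(F))=\partial^{k+n}_*u$. Proposition \ref{Prop_CommutDerivativePsi} gives $D^k\Psi(\mcI(\tau))\to\Psi(\mcI^+_k(\tau))$ at the level of the model, since $D^k\mcI(\tau)=\mcI_k^+(\tau)$. Finally, the truncation $|n|\le\gamma-|k|$ comes from the degree restriction in $\mcD^{\gamma-|k|}$.

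The main obstacle is the mismatch between $g$ and $\widetilde g$ (the polynomial recentring of Subsection \ref{subsect_modifmodel}) together with the replacement of $D^k\Psi$ by $\Psi D^k$ inside a modelled distribution rather than only after evaluation. My approach is to argue that two lifts whose components agree after applying the model have the same reconstruction and differ only by elements annihilated by $\Pi_x$. Combined with the uniqueness of reconstruction for $\gamma-|k|>0$, this should let me replace one expression by the other without changing the $\mcD^{\gamma-|k|}$ bound.
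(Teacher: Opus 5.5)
Your proposal follows essentially the paper's proof: you lift $\partial^k_*u$ via \cite[Theorem 1]{LocalExpansionsParacSystems}, identify the coefficients through Proposition \ref{lem_starDerivUpsilon}, and replace $D^k\Psi(\mcI(\tau))$ by $\Psi(\mcI_k^+(\tau))$ using the equality of increments from Proposition \ref{Prop_CommutDerivativePsi}. The only difference is packaging: you read the lift off the coproduct $\Delta D^k\Psi(F)$ in the extended structure, as in Lemma \ref{lem_reconstructionarg_Ztau}, with Lemma \ref{lem_DeltaPsitau} doing the work of the paper's hand-made Leibniz check, whereas the paper sums the explicit lifts of each iterated paraproduct.

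Two small remarks. First, the $g$ versus $\widetilde g$ mismatch you flag as an obstacle is vacuous here, because $\downarrow^j F=\downarrow^j F_\tau=0$ for $j\neq 0$. Second, the reindexing step needs Proposition \ref{lem_starDerivUpsilon} for all $q$, not only $k=0$, in order to rewrite $g^{\mcZ,u}(D^q\Psi(F_\tau))=u_{(\downarrow^q)^*\tau}$. That commutation for the coefficient letters $F_\tau$ is exactly what Proposition \ref{lem_starDerivUpsilon} provides; Proposition \ref{Prop_CommutDerivativePsi}, which the proof of Lemma \ref{lem_reconstructionarg_Ztau} uses, does not cover it.
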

	
	\begin{proof}
		One can write
		$$
		u = \sum_{ |\mcI(\tau)|<\gamma}\sum_{q\geq 0}\sum_{\tau_q\prec\dots\prec\tau_1\prec\tau} \P\big( u_\tau^\#, Z_{\tau/\tau_1}, \dots, Z_{\mcI(\tau_q)}    \big) + u^\#.
		$$		
	And for $|k|<\gamma$ we have from \eqref{eq_starPartialU}
		$$
		\partial_*^{k}u = \sum_{|\mcI(\tau)|<\gamma}\sum_{q\geq 0}\sum_{\tau_q\prec\dots\prec\tau_1\prec\tau} \partial_*^{k}\P\big( u_\tau^\#, Z_{\tau/\tau_1}, \dots, Z_{\mcI(\tau_q)}    \big) + \partial^{k}u^\#.
		$$
		Then, from \cite[Theorem 1]{LocalExpansionsParacSystems}, applied to the uplet of distributions $(u_\tau^\#,\dots,Z_{\mcI(\tau_q)})$, the function 
		$\partial_*^{k}\P\big( u_\tau^\#, Z_{\tau/\tau_1}, \dots, Z_{\mcI(\tau_q)}    \big)$ admits a lift as a modelled distribution
		\begin{align*}
		&\sum_{\substack{0\leq j\leq q \\ k=k_1+k_2 \\ r=r_1+r_2 }} \frac{1}{r_1!r_2!}\binom{k}{k_1}\partial^{k_1+r}_*\P\big( u_\tau^\#, Z_{\tau/\tau_1}, \dots, Z_{\tau_{j-1}/\tau_{j}}\big) \, \lg \tau_j/\tau_{j+1},\dots \mcI(\tau_q)   \rg^{k_2}_{r_1}X^{r_2}
		\\
		&+\sum_{|r|<\gamma-|\mcI(\tau_q)|-|k|} \frac{1}{r!} \partial_*^{k+r}\P\big( u_\tau^\#, Z_{\tau/\tau_1}, \dots, Z_{\mcI(\tau_q)}    \big) \,  X^r,
		\end{align*}
		where the first sum over $r$ is taken over $r_1,r_2$ such that $|k_1|+|r|<\gamma-|\mcI(\tau_j)|$ and $|\mcI(\tau_j)|-|k_2|+|r_1|>0$.
		 Summing these lifts, we obtain the following lift of $\partial^{k}_*u$ as a modelled distribution
		\begin{align*}
			{\bm \partial_*^{k}}{\bm u} &= \sum_{|n|<\gamma-|k|}\partial_*^{k+n} u \, \frac{X^n}{n!} 
			\\&+ \sum_{\substack{|\mcI(\sigma)|<\gamma \\k=k_1+k_2 }}\sum_{\substack{q\geq 0 \\ \sigma_q\prec\dots\prec\sigma}}\sum_{\substack{ r=r_1+r_2 \geq0 \\ \gamma-|\sigma|-|k_1|-|r|>0\\|\mcI(\sigma)|+|r|-|k_2|>0 }} \frac{1}{r_1!r_2!}\binom{k}{k_1}\partial^{k_1+r}_* u_\sigma \, \lg \sigma/\sigma_1,\dots,\mcI(\sigma_q)    \rg^{k_2}_{r_1}X^{r_2} .
		\end{align*}
		From Lemma \ref{lem_starDerivUpsilon}, we get 
		\begin{align*}
				{\bm \partial_*^{k}}{\bm u} &= \sum_{|n|<\gamma-|k|}\partial_*^{k+n} u \, \frac{X^n}{n!} 
				+ \sum_{\substack{|\mcI(\sigma)|<\gamma \\ k=k_1+k_2} }\sum_{\substack{q\geq 0 \\ \sigma_q\prec\dots\prec\sigma}} \sum_{\substack{ r=r_1+r_2 \geq0 \\ |\mcI(\sigma)|-|k|-|r_2|>0 }} \\ & \frac{1}{r_1!r_2!}\binom{k}{k_1}   u_\sigma \,\, \lg \downarrow^{k_1+r} \sigma/\sigma_1,\dots,\mcI(\sigma_q)    \rg^{k_2}_{r_1}X^{r_2} .
	\end{align*}
	We verify from the Leibniz formula that for any tree $\sigma$
	\begin{align*}&\sum_{\substack{q\geq 0 \\ \sigma_q\prec\dots\prec\sigma}}\sum_{\substack{k=k_1+k_2\\ r=r_1+r_2\geq 0}}\frac{1}{r_1!r_2!}\binom{k}{k_1} u_\sigma \mathbf{1}_{|\mcI(\sigma)|-|k|-|r_2|>0}\lg \downarrow^{k_1+r} \sigma/\sigma_1,\dots,\mcI(\sigma_q)    \rg^{k_2}_{r_1}X^{r_2} 
		\\
		&= D^k \Psi(\mcI(\sigma)),
		\end{align*}
	so that 
	\begin{align*}
		{\bm \partial_*^{k}}{\bm u} &= \sum_{|n|<\gamma-|k|}\partial_*^{k+n} u \, \frac{X^n}{n!} 
	+ \sum_{|\mcI(\sigma)|<\gamma } u_\sigma \,\, D^k\Psi(\mcI(\sigma)).
	\end{align*}
	We have from Proposition \ref{Prop_CommutDerivativePsi} the equality of increments $$\Pi_x^\mcZ\big( D^k\Psi(\mcI(\sigma))\big)=\Pi^\mcZ_x\left(\Psi(\mcI_k^+(\sigma))\right),$$ then the lift above for $\partial^k_*u$ is indeed equivalent to $$	\bm{\partial_{*}^{k} u} = \sum_{|\mcI(\tau)|<\gamma } u_\tau \, \Psi(\mcI_{k}^+ (\tau))  + \sum_{ |n|\leq \gamma - |k|} \frac{1}{n!}\partial_*^{k + n} u \, X^{n}.$$
	\end{proof}
	
	It is clear that if some function $u$ can be developed in the paracontrolled system associated with \eqref{eq_gPAM}, then $u$ satisfies the ansatz up to the same order, the following Proposition \ref{prop_pamansatz_to_pamsystem} shows that the two notions are actually equivalent. We first need the following paralinearisation result.

 	\begin{theorem}\label{prop_paralinearization}
		Suppose that $\mcZ=(Z_\tau)$ is an admissible collection of stochastic fields and that $u$ can be be developed in the paracontrolled system up associated with \eqref{eq_gPAM} to order $\gamma$,.
For any smooth function $h$ depending on $v_* = (\partial^k_{*} u)_{k \in A}$ where $A$ is a finite subset of $\N^{d+1}$ such that $r_h<\gamma$ with $r_h =\sup \{ |k|,k\in A\} $, one has 
			$$
			h(v_*) = \sum_{n\geq 0}\sum_{\substack{\tau=  X^r \prod_{i=1}^{n}\mcI^+_{a_i}(\tau_i)^{\beta_i}\in\mcT^+ \\ |\tau|<\gamma-r_h}} \frac{1}{ r!}\P\Big( \partial^r \prod_{i=1}^n D_{a_i}^{\beta_i} h(v_*) \prod_{j=1}^{n} \frac{u_{\tau_j}^{\beta_j}}{\beta_j!} \, ,Z_\tau\Big)  + h(v)^\#,
			$$
			where the $ \mathcal{I}_{a_i}(\tau_i) $ are pairwise disjoint. One has $h(v)^\#\in C^{\gamma-r_h}$, and that $\bm{u}\in\mcD^\gamma_\P(\mcZ) \mapsto h(v_*)^\#\in C^{\gamma-r_h}$ is local Lipschitz.
	\end{theorem}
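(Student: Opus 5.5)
The plan is to go through the regularity structure $\scrT_\P$ on words, as announced in the introduction. By Lemma \ref{lem_LiftGpamsystemToRS}, each $\partial^k_* u$ with $k\in A$ has a lift
\[
\bm{\partial_*^k u}=\sum_{|\mcI(\tau)|<\gamma}u_\tau\,\Psi(\mcI^+_k(\tau))+\sum_{|n|<\gamma-|k|}\frac{1}{n!}\partial_*^{k+n}u\,X^n
\]
in $\mcD^{\gamma-|k|}$. All these lifts therefore belong to $\mcD^{\gamma-r_h}$.

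First I compose the smooth $h$ with the vector of lifts, in the usual way of \cite{Hai14}. I write $\bm{\partial_*^k u}=\partial^k_*u\,\one+\widetilde{\bm{\partial_*^k u}}$ and Taylor expand $h$ around $v_*$ up to order $\gamma-r_h$. The result is a modelled distribution $h_{\gamma-r_h}(\bm v_*)\in\mcD^{\gamma-r_h}$ whose reconstruction is $h(v_*)$, since the reconstruction of a composition of function-valued modelled distributions is the composition of the reconstructions. The map is locally Lipschitz in the lifts, and hence in $\bm u$.

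Next I compute its coefficients. A term $\prod_i D_{a_i}^{\beta_i}h(v_*)\prod_i\frac{1}{\beta_i!}\bigl(u_{\tau_i}\Psi(\mcI^+_{a_i}(\tau_i))\bigr)^{\beta_i}$ is combined with the polynomial parts $\frac{1}{n!}\partial_*^{a+n}u\,X^n$. The Faà di Bruno formula (Lemma \ref{Faa Di Bruno}) regroups these products of $\partial_*^{a+n}u$ with $D_a$-derivatives into $\frac{1}{r!}\partial^r\prod_iD_{a_i}^{\beta_i}h(v_*)$ times $X^r$. Using the multiplicativity in Lemma \ref{lem_PsiTau_multiplicatif}, which at the level of increments is Proposition \ref{Prop_CommutDerivativePsi}, I identify $X^r\prod_i\Psi(\mcI^+_{a_i}(\tau_i))^{\beta_i}$ with $\Psi(\tau)$ for $\tau=X^r\prod_i\mcI^+_{a_i}(\tau_i)^{\beta_i}\in\mcT^+$. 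This holds modulo terms that have the same model increments. The lift thus becomes
\[
\sum_{|\tau|<\gamma-r_h}c_\tau\,\Psi(\tau),\qquad c_\tau=\frac{1}{r!}\partial^r\prod_iD_{a_i}^{\beta_i}h(v_*)\prod_j\frac{u_{\tau_j}^{\beta_j}}{\beta_j!},
\]
viewed as a modelled distribution on an extension of the structure in which $\Psi(\tau)$ corresponds to the tree $\tau$ itself, via Corollary \ref{PPi_Psi}.

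Then I apply Corollary \ref{cor_repmodelledditribParap} to this modelled distribution on the decorated-tree structure with the model of Corollary \ref{PPi_Psi}. It gives
\[
h(v_*)=\sum_\tau\P(c_\tau,Y_\tau)+h(v)^\#,\qquad h(v)^\#\in C^{\gamma-r_h},
\]
together with the continuity estimate. It then remains to show $Y_\tau=Z_\tau$ for $\tau\in\mcT^+$ of this form. This follows from Proposition \ref{prop_Ztau=YPsitau_mult} combined with the identity $\widetilde\PPi(\tau)=\widetilde\PPi^\mcZ(\Psi(\tau))$: the recursive definitions of $Y$ and of $Z$ in \eqref{eq_defZtau_T+} then coincide. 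The polynomial terms $X^r$ alone give $Z_{X^r}=0$, so they drop out and the formula takes the stated shape.

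The main obstacle is the second step. There I must track exactly how the Taylor expansion combined with Faà di Bruno reproduces the symmetry factors $\beta_j!$ and $1/r!$, how $\partial^r$ falls only on the $h$-factor, and that the words generated by products of $\Psi$'s agree with $\Psi$ of the product up to terms invisible to the model. I would handle this by mirroring the Faà di Bruno computation in the proof of Lemma \ref{lem_deriv_uk_induction}, and by relying on Lemma \ref{lem_ElemDiffMorphism} for the combinatorics of $\star$.
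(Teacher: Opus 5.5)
Your first three steps (the lift from Lemma \ref{lem_LiftGpamsystemToRS}, Hairer's composition, and the Fa\`a di Bruno regrouping into $\frac{1}{r!}\partial^r\prod_i D_{a_i}^{\beta_i}h(v_*)$) are exactly the paper's, and that bookkeeping is routine. The real subtlety is which element carries the $Y$, and that is where your proposal has a gap.

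You replace $X^r\prod_i\Psi(\mcI^+_{a_i}(\tau_i))^{\beta_i}$ by $\Psi(\tau)$ ``modulo terms with the same model increments'' and then pass to the decorated-tree structure. These are different elements of $\widehat{T_\mcA}$. The first is a product of several words; the second is a linear combination of single words. Lemma \ref{lem_PsiTau_multiplicatif} only says that $\PPi^\mcZ$, and hence $\Pi^\mcZ_x$, agrees on them. That does not let you swap them inside a modelled distribution. Membership in $\mcD^{\gamma-r_h}$, and the output of Corollary \ref{cor_repmodelledditribParap} (namely which distribution $Y$ multiplies $c_\tau$), depend on the coaction of the specific element, not only on its $\Pi_x$.

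The transfer to trees has the same problem and one more. Corollary \ref{cor_repmodelledditribParap} needs a $T$-valued modelled distribution, and $\tau\in\mcT^+$ is not in $T$. On the $T$-trees $X^r\prod_i\mcI_{a_i}(\tau_i)^{\beta_i}$, admissibility (Definition \ref{admissible}) constrains only $T^+$-trees and planted trees. So nothing makes the model multiplicative there, and the reconstruction would not be $h(v_*)$.

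To rescue your route you would have to build the extension explicitly. You would then have to prove that the multiplicative map $X^r\prod_i\mcI^+_{a_i}(\tau_i)^{\beta_i}\mapsto X^r\prod_i\Psi(\mcI^+_{a_i}(\tau_i))^{\beta_i}$ intertwines the actions $\Gamma_{yx}$ on trees and on words, using Lemma \ref{lem_DeltaPsitau}, Proposition \ref{Prop_CommutDerivativePsi}, Lemma \ref{lem_SnDnreindex} and \eqref{eq_PsinAndPsi}. None of this is in the proposal.

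The detour is unnecessary, and the paper avoids it by keeping the product elements; this is what the extension $\widehat{T_\mcA}$ by products is for. Apply Corollary \ref{cor_repmodelledditribParap} directly in the word structure with products to the composed lift. It gives $\P\big(c_\tau, Y^\mcZ_{X^r\prod_i\Psi(\mcI^+_{a_i}(\tau_i))^{\beta_i}}\big)$. Proposition \ref{prop_Ztau=YPsitau_mult} then states exactly that $Y^\mcZ_{X^r\prod_i\Psi(\mcI^+_{a_i}(\tau_i))^{\beta_i}}=Z_\tau$.

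You do cite that proposition, but it concerns $Y^\mcZ$ of the product element you discarded, not the tree-model $Y_\tau$. For the tree-model $Y_\tau$, the identity $Y_\tau=Z_\tau$ would follow by a direct induction from $\widetilde{\PPi}(\tau)=\widetilde{\PPi}^\mcZ(\Psi(\tau))$ and \eqref{eq_PiTauPiPsiTau}.
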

	
	\begin{proof}
		Lemma \ref{lem_LiftGpamsystemToRS} ensures that $\partial^k_{*}u$ admits the lift
$$	\bm{\partial_{*}^{k} u} = \sum_{|\mcI(\tau)|<\gamma } u_\tau \, \Psi(\mcI_{k}^+ (\tau))  + \sum_{ |n|\leq \gamma - |k|} \frac{1}{n!}\partial_*^{k + n} u X^{n}$$
as a modelled distribution in $\mcD^{\gamma-|k|}(\scrT_\P)$. Then, from  \cite[Theorem 4.16]{Hai14}, one gets
		$$
		{\bm h}\bm{(v_{*} )} = \mathcal{P}_{\gamma} \sum_{n \geq 0 } \sum_{\substack{ a_1,...,a_n \\ \beta_1,...,\beta_n}} \prod_{i=1}^n \frac{1}{\beta_i !} ( 	\bm{\partial_{*}^{a_i} u} -  \partial_{*}^{a_i} u \one)^{\beta_i} D_{a_i}^{\beta_i} h(v_*),
		$$
		where $\mathcal{P}_{\gamma}$ keeps only the elements of degree $\gamma$. After expanding the various power of $\beta_i$, one gets
		\begin{equation*}
			\begin{aligned}
	&	{\bm h}\bm{(v_{*} )} = \mathcal{P}_{\gamma} 
	\sum_{{\color{black} X^k \prod_{i=1}^m\CI_{a_i}(\tau_i)^{\beta_i}}}
	\sum_{a_{m+1},...,a_{m+n}} \, \sum_{k = \sum_{i=m+1}^{m+n} \beta_i k_i} X^k
	\\ & \prod_{i=1}^{m} \frac{1}{\beta_i !} \left( u_{\tau_i} \Psi(\CI_{a_i}^+(\tau_i)) \right)^{\beta_i}     \prod_{i=m+1}^{n} \frac{1}{\beta_i !} \left(  \frac{ \partial_{*}^{a_i +k_i} u}{k_i!} \right)^{\beta_i} \Big\{ \prod_{i=1}^{m+n} (D_{a_{i}})^{\beta_{i}} h(v_*) \Big\}.
		 \end{aligned}
		\end{equation*}
		where the $ \CI_{a_i}(\tau_i) $ are pairwise disjoint.
		 We use the Fa\`a di Bruno formula (see Lemma \ref{Faa Di Bruno}) 
		$$
		\frac{\partial^{k}h(v_*)}{k!} =  \sum_{a_1,...,a_r} \, \sum_{k = \sum_{i=1}^r \beta_i k_i} \; \prod_{i=1}^r \frac{1}{\beta_i !} \left(\frac{\partial_{*}^{a_i +k_i} u}{k_i!}\right)^{\beta_i} \prod_{i=1}^r (D_{a_i})^{\beta_i} h(v_*).
		$$
		to get
		\begin{equation*}
			\begin{aligned}
				&	{\bm h}\bm{(v_{*})} = \mathcal{P}_{\gamma} 
				\sum_{{\color{black}  X^k \prod_{i=1}^n\CI_{a_i}^+(\tau_i)^{\beta_i}}}
				  \frac{X^k}{k!}
				 \prod_{i=1}^{n} \frac{1}{\beta_i !} \left( u_{\tau_i} \Psi(\CI_{a_i}^+(\tau_i)) \right)^{\beta_i}     \Big\{ \partial^k \prod_{i=1}^{n} (D_{a_{i}})^{\beta_{i}} h(v_*) \Big\}.
			\end{aligned}
		\end{equation*}
		And Corollary \ref{cor_repmodelledditribParap} gives the paracontrolled representation
		\begin{align*}
			h(v_*) & = \sum_{|X^k \prod_{i=1}^n\CI_{a_i}^+(\tau_i)^{\beta_i} | \leq \gamma}  \P \Big( \prod_{i=1}^n u_{\tau_i}^{\beta_i} \partial^k \prod_{i=1}^{n} (D_{a_{i}})^{\beta_{i}} h(v_*) , \,\,  Y^\mcZ_{X^k \prod_{i=1}^n\Psi(\CI_{a_i}^+(\tau_i))^{\beta_i} } \Big) \\ & + (h(v_*))^\#.
		\end{align*}
		with $h(v_*)^\#\in C^\gamma$. 
	We conclude from  Proposition \ref{prop_Ztau=YPsitau_mult} that gives
		$$
		Z_{X^k \prod_{i=1}^n (\CI_{a_i}^+(\tau_i))^{\beta_i} }  = Y^\mcZ_{X^k \prod_{i=1}^n\Psi(\CI_{a_i}^+(\tau_i))^{\beta_i} }.
		$$
	\end{proof}

	\begin{corollary}\label{cor_paralinearistaion2}
		For any tree $\tau\in\mcT$, we have the identity
		$$
			\frac{\Upsilon_F[\tau](v_*)}{S(\tau)} =  \sum_{\substack{ |\mu|<\gamma- r_\tau}} \frac{1}{S(\mu) S(\tau)}\P\Big(  \Upsilon_F[\mu \star \tau](v_*)  \, ,Z_\mu\Big)  + u_\tau^\#
		$$
		where  $r_\tau=\lfloor |\mcI(\tau)| \rfloor$, and $u_\tau^\# \in C^{\gamma-r_\tau}$.
	\end{corollary}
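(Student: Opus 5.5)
The plan is to obtain the corollary as a direct specialisation of Theorem \ref{prop_paralinearization}, applied to $h = \Upsilon_F[\tau]/S(\tau)$.

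First, by Lemma \ref{lem_elementary differential_form}, $\Upsilon_F[\tau]$ depends only on the variables $\mcX_k$ with $|k|<|\mcI(\tau)|$, and it is polynomial in its negative arguments. So $h(v_*)$ is a smooth function of $v_*=(\partial^k_* u)_{k\in A}$ with $r_h\le r_\tau=\lfloor|\mcI(\tau)|\rfloor<\gamma$. (If $r_h<r_\tau$, the remainder is a fortiori in $C^{\gamma-r_\tau}$ after truncating the sum; enlarging $A$ artificially also gives $r_h=r_\tau$.)

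Theorem \ref{prop_paralinearization} then gives
\[
h(v_*)=\sum_{\mu=X^r\prod_i\mcI^+_{a_i}(\tau_i)^{\beta_i},\ |\mu|<\gamma-r_\tau}\frac1{r!}\P\Big(\partial^r\prod_i D_{a_i}^{\beta_i}h(v_*)\prod_i\frac{u_{\tau_i}^{\beta_i}}{\beta_i!},\,Z_\mu\Big)+h(v)^\#,
\]
with $h(v)^\#\in C^{\gamma-r_\tau}$, which we rename $u_\tau^\#$.

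The remaining step is purely algebraic: identify the first argument of each paraproduct. Recall $u_{\tau_i}=\Upsilon_F[\tau_i]/S(\tau_i)$. Item 1 of Lemma \ref{lem_ElemDiffMorphism} gives
\[
\Upsilon_F[\mu\star\tau]=\prod_i\Upsilon_F[\tau_i]^{\beta_i}\,\partial^r\prod_i D_{a_i}^{\beta_i}\Upsilon_F[\tau].
\]
The factors $\Upsilon_F[\tau_i]$ do not depend on the variables differentiated by $\partial^r$, since in the paralinearisation the derivative lands only on $h$. Hence the first argument equals
\[
\frac{1}{S(\tau)}\,\frac{1}{r!\prod_i\beta_i!\,S(\tau_i)^{\beta_i}}\,\Upsilon_F[\mu\star\tau](v_*).
\]
By the recursive definition of symmetry factors (the $\mcI_{a_i}(\tau_i)$ being pairwise distinct), $S(\mu)=r!\prod_i S(\tau_i)^{\beta_i}\beta_i!$. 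Therefore the argument is exactly $\Upsilon_F[\mu\star\tau](v_*)/(S(\mu)S(\tau))$, which is the claimed formula. Here $\mu$ ranges over $T^+$ elements with $|\mu|<\gamma-r_\tau$.

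The main obstacle is this combinatorial matching. The symmetry factor of $\mu\in T^+$ must be read with the same convention as in the theorem: the $\beta_i!$ arise from repeated identical branches and $r!$ from the polynomial decoration. We also have to check that $\partial^r$ in Lemma \ref{lem_ElemDiffMorphism} (the total derivative $\partial^k F$) agrees with the $\partial^r$ of the theorem evaluated at $v_*$, which holds by construction of $\partial$ on $\mcC$. Regularity of the remainder and local Lipschitz dependence are inherited directly from Theorem \ref{prop_paralinearization}.
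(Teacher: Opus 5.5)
Your proposal is correct and follows the paper's proof essentially verbatim: apply Theorem~\ref{prop_paralinearization} to $\Upsilon_F[\tau]/S(\tau)$, use Lemma~\ref{lem_elementary differential_form} to bound which $\partial^j_*u$ enter, and use Lemma~\ref{lem_ElemDiffMorphism} together with $S(\mu)=r!\prod_i S(\tau_i)^{\beta_i}\beta_i!$ to rewrite each paraproduct coefficient as $\Upsilon_F[\mu\star\tau](v_*)/(S(\mu)S(\tau))$. Two of your details are more careful than the paper's write-up: you absorb the extra terms into the remainder when $r_h<r_\tau$, and you keep the exponent $\beta_i$ on $S(\tau_i)$, which the paper's display drops.
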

	
	\begin{proof}
		We are going to use Proposition \ref{prop_paralinearization} in order to paralinearise the elementary differentials $ \Upsilon_F[\tau](v_*)$.
		Lemma \ref{lem_ElemDiffMorphism} gives
		\begin{equation}\label{eq_morphismUpsilon_ProofCoro}
			\Upsilon_F\Big[X^k\prod_{i=1}^{n}\mcI^+_{a_i} (\tau_i)^{\beta_i} \star \tau  \Big] = \prod_{i=1}^{n} \Upsilon_F[\tau_i]^{\beta_i} \partial^k \prod_{i=1}^n D_{a_i}^{\beta_i} \Upsilon_F[\tau].
		\end{equation}
		From Lemma \ref{lem_elementary differential_form}, $\Upsilon[\tau](v_*)$ depends only on the $\partial^j_*u$ for $j<r_\tau$ . Then from Theorem \ref{prop_paralinearization} and \eqref{eq_morphismUpsilon_ProofCoro} we have
		\begin{equation*}
			\begin{aligned}
			& 	\frac{\Upsilon_F[\tau](v_*)}{S(\tau)} 
			\\	& = \sum_{n\geq 0}\sum_{\substack{\mu =  X^k \prod_{i=1}^{n}\mcI^+_{a_i}(\tau_i)^{\beta_i}\in\mcT^+ \\ |\mu|<\gamma-r_\tau}} \frac{1}{  k! S(\tau)}\P\Big( \partial^k \prod_{i=1}^n D_{a_i}^{\beta_i} \Upsilon_F[\tau](v_*) \prod_{i=1}^{n} \frac{u_{\tau_i}^{\beta_i}}{\beta_i!} \, ,Z_\mu\Big)  + u_\tau^\#
				\\ & = \sum_{n\geq 0}\sum_{\substack{\mu=  X^k \prod_{i=1}^{n}\mcI^+_{a_i}(\tau_i)^{\beta_i}\in\mcT^+ \\ |\mu|<\gamma-r_\tau}} \frac{1}{ k!S(\tau) } \prod_{i=1}^n \frac{1}{\beta_i ! S(\tau_i)}\P\Big(  \Upsilon_F[\mu \star \tau](v_*)  \, ,Z_\mu\Big)  + u_\tau^\#
				\\ & = \sum_{\substack{ |\mu|<\gamma-r_\tau}} \frac{1}{S(\mu) S(\tau)}\P\Big(  \Upsilon_F[\mu \star \tau](v_*)  \, ,Z_\mu\Big)  + u_\tau^\#
			\end{aligned}
		\end{equation*}
		with $u_\tau^\#\in C^{\gamma-r_\tau}$.
	\end{proof}

	\begin{proposition}\label{prop_pamansatz_to_pamsystem}
		If a function $u$ verifies the paracontrolled ansatz of \eqref{eq_gPAM} up to order $\gamma$ given below
		\begin{equation*}
			\begin{aligned}
			u=\sum_{\tau\in\mcT} \P ( u_\tau , \, Z_{\mcI(\tau)} ) + u^\#,
	\end{aligned}
			\end{equation*}
				with $u^\#\in C^\gamma$.
		 Then $u$ can be be developed in the  paracontrolled system associated with \eqref{eq_gPAM} up to order $\gamma$, as defined in Definition \ref{def_PAMparacSystem}. 
	\end{proposition}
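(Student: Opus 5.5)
The argument goes by induction on the degrees. For each coefficient $u_\tau$ with $|\mcI(\tau)|<\gamma$, we show that it admits the expansion required in Definition \ref{def_PAMparacSystem}, namely $u_\tau = \sum_{\sigma\succ\tau} \P(u_\sigma, Z_{\sigma/\tau}) + u_\tau^\#$ with $u_\tau^\#\in C^{\gamma-|\mcI(\tau)|}$. The tool is Corollary \ref{cor_paralinearistaion2}, applied to $u_\tau=\Upsilon_F[\tau](v_*)/S(\tau)$. The difficulty is that Theorem \ref{prop_paralinearization} and its corollary assume that $u$ already develops in a paracontrolled system; this is exactly what we want to prove. To get around this, the induction runs downward in the degree $|\mcI(\tau)|$ and upward in the accuracy.

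\textbf{Induction hypothesis.} Let $\alpha_0<\alpha_1<\dots$ be the degrees of the planted trees below $\gamma$. We assume that $u$ develops in the paracontrolled system to some order $\gamma'<\gamma$ (at the start, $\gamma'$ equal to the smallest positive degree works trivially). Under this hypothesis, the quantities $\partial^k_*u$ with $|k|<\gamma'$ are well defined and bounded by Corollary \ref{cor_welldefinition_starpartialu}. By Lemma \ref{lem_elementary differential_form}, $u_\tau$ depends only on the $\partial^j_*u$ with $|j|<|\mcI(\tau)|$.

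\textbf{Induction step.} We apply Corollary \ref{cor_paralinearistaion2} at order $\gamma'$. This gives
\[
u_\tau = \sum_{|\mu|<\gamma'-r_\tau}\frac{1}{S(\mu)S(\tau)}\P\big(\Upsilon_F[\mu\star\tau](v_*),Z_\mu\big)+u_\tau^\#,
\]
with the remainder in $C^{\gamma'-r_\tau}$. We then identify $\Upsilon_F[\mu\star\tau]/(S(\mu)S(\tau))$ with a sum of $u_\sigma$ over $\sigma\succ\tau$ satisfying $\sigma/\tau=\mu$. This uses the duality between $\star$ and $\Delta$ together with the symmetry-factor bookkeeping, as in the formula for $u_w$ stated after Definition \ref{def_PAMparacSystem}. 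Once this is done, the expansion of $u_\tau$ has the required form at a better order.

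For $u$ itself, the ansatz $u=\sum\P(u_\tau,Z_{\mcI(\tau)})+u^\#$ is plugged into itself using the improved expansions of the $u_\tau$. The required regularity of the remainders is checked with the paraproduct estimates $\P(f,g)\in C^{\alpha+\beta}$ when $f\in C^\alpha$, $\alpha<0$. We also need that $\partial^k_*u$ does not depend on the order $\gamma'$, which follows from Lemma \ref{lem_starderivutau_notdependonGamma}.

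\textbf{Gain per step.} Each step raises the order $\gamma'$ by at least $\min_\tau|\mcI(\tau)|>0$. Indeed, the paraproduct of a coefficient of regularity $\gamma'-|\mcI(\sigma)|$ with $Z_{\sigma/\tau}$ gains the positive degree $|\sigma/\tau|$. Finitely many steps therefore reach $\gamma$, thanks to subcriticality and the finiteness of $\mcT_{<\gamma}$.

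\textbf{Main obstacle.} The main obstacle is the combinatorial identification at the induction step. We must show that the tree coefficients $\tfrac{1}{S(\mu)S(\tau)}\Upsilon_F[\mu\star\tau]$, summed over $\mu$, recombine exactly into $\sum_{\sigma\succ\tau}u_\sigma$ with reference distribution $Z_{\sigma/\tau}$. The planned argument uses Lemma \ref{lem_ElemDiffMorphism} and the relation $\lg\mu\star\tau,\sigma\rg=\lg\mu\otimes\tau,\Delta\sigma\rg$. The delicate points are the counting of symmetry factors and the treatment of the polynomial part of $\mu$.
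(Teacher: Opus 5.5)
Your overall strategy is the paper's: paralinearise each coefficient $u_\tau=\Upsilon_F[\tau](v_*)/S(\tau)$ with Corollary \ref{cor_paralinearistaion2} at a lower order, then regroup $\sum_\mu \frac{1}{S(\mu)S(\tau)}\P(\Upsilon_F[\mu\star\tau](v_*),Z_\mu)$ into $\sum_{\sigma\succ\tau}\P(u_\sigma,Z_{\sigma/\tau})$. The step you single out as the main obstacle is not one: the paper does this identification in one line, using $\langle \frac{\mu}{S(\mu)}\star\frac{\tau}{S(\tau)},\sigma\rangle=\langle\frac{\mu}{S(\mu)}\otimes\frac{\tau}{S(\tau)},\Delta\sigma\rangle$.

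\textbf{The gap is the ``gain per step''.}
\begin{itemize}
\item Applied at order $\gamma'$, the corollary only gives $u_\tau^\#\in C^{\gamma'-r_\tau}$ with $r_\tau=\lfloor|\mcI(\tau)|\rfloor$. Order $\gamma''$ requires $u_\tau^\#\in C^{\gamma''-|\mcI(\tau)|}$, so what you can gain at $\tau$ is only the fractional part $|\mcI(\tau)|-r_\tau$.
\item Your justification does not produce any gain. $\P(\cdot,Z_{\sigma/\tau})$ with a coefficient in $C^{\gamma'-|\mcI(\sigma)|}$ lands in $C^{\gamma'-|\mcI(\tau)|}$, which is exactly the same order. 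It also concerns the paraproduct terms, whereas what must be controlled is the remainder.
\item The claimed bound $\min_\tau|\mcI(\tau)|>0$ is false in general: in the paper's $\Phi^4_3$ example, $|\mcI(\Xi)|<0$.
\item You never check that the corollary applies to the trees newly entering at the higher order. Their $u_\tau$ must depend only on $\partial^k_*u$ with $|k|<\gamma'$, i.e.\ you need $r_\tau<\gamma'$.
\end{itemize}

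\textbf{The missing idea is the inequality $\gamma-|\mcI(\tau)|\le\alpha_j-r_\tau$.} The paper inducts on $j$ for $\gamma\in(\alpha_j,\alpha_{j+1}]$, where $\alpha_1<\alpha_2<\dots$ are the degrees of planted trees. The ansatz at order $\gamma$ implies the ansatz at order $\alpha_j$, hence by induction a paracontrolled system at $\alpha_j$. The corollary at order $\alpha_j$ then gives remainders in $C^{\alpha_j-r_\tau}$. Choose $\nu_j$ with $|\mcI(\nu_j)|=\alpha_j$. Then $\alpha_j+|\mcI(\tau)|-r_\tau$ is the degree of the planted tree $\mcI(\mcI_{r_\tau}(\tau)\star\nu_j)$ and is strictly larger than $\alpha_j$, so it is at least $\alpha_{j+1}\ge\gamma$. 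One step therefore suffices. Moreover only trees with $|\mcI(\tau)|\le\alpha_j$ occur, so $r_\tau<\alpha_j$ holds automatically because degrees are non-integers.

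Your bootstrap can be repaired in a different way. Take the step $\delta=\min_\tau(|\mcI(\tau)|-r_\tau)$ over the finitely many relevant trees, which is positive by Assumption \ref{assumption_notint}. Then note that $|\mcI(\tau)|<\gamma'+\delta$ forces $r_\tau<\gamma'$, so the corollary applies to every tree needed at the next order. As written, however, the quantitative step is wrong. Separately, substituting the ansatz for $u$ into itself is unnecessary: Definition \ref{def_PAMparacSystem} only asks for the ansatz, which is given, together with the expansions of the $u_\tau$.
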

	\begin{proof}
		We let $A=(\alpha_1<\alpha_2<\dots)$ be the set of degrees of the elements $\mcI(\tau)$ for $\tau\in\mcT$, and we prove the proposition for $\gamma\in (\alpha_j,\alpha_{j+1}]$ by induction on $j$. As the proposition is apparent for $j=1$, we suppose that $u$ satisfies the paracontrolled ansatz of \eqref{eq_gPAM} up to order $\gamma\in(\alpha_j,\alpha_{j+1}]$ with $j\geq 2$. 
		
	In paricular $u$ satisfies the paracontrolled anstaz up to order $\alpha_j$, and from induction hypothesis it develops in the paracontrolled system at order $\alpha_j$.
	 Then for any tree $\tau$ with $|\mcI(\tau)|<\gamma$, we have from corollary \ref{cor_paralinearistaion2}
		\begin{equation*}\label{eq_devatorderAlphaj-rtau}
			\begin{aligned}
		\frac{\Upsilon_F[\tau](v_*)}{S(\tau)} 
		 & = \sum_{\substack{ |\mu|<\alpha_j-r_\tau}} \frac{1}{S(\mu) S(\tau)}\P\Big(  \Upsilon_F[\mu \star \tau](v_*)  \, ,Z_\mu\Big)  + u_\tau^\#,
		\\& = \sum_{\substack{\sigma \succ\tau \\ |\sigma|<\alpha_j-r_\tau+|\tau|  }} \frac{1}{S(\sigma)}\P\Big(\Upsilon_F[\sigma](v_*), \, Z_{\sigma/\tau}\Big)  + u_\tau^\# 
		\end{aligned}
		\end{equation*}
		where we have used
	\begin{equation*}
		\langle \frac{\mu}{S(\mu)} \star  \frac{\tau}{S(\tau)} , \sigma  \rangle  = \langle \frac{\mu}{S(\mu)} \otimes  \frac{\tau}{S(\tau)} , \Delta \sigma  \rangle.
	\end{equation*}
		One has $u_\tau^\#\in C^{\alpha_j-r_\tau}$. We would like to truncate the expansion \eqref{eq_devatorderAlphaj-rtau} at order $\gamma-|\mcI(\tau)|$
		$$
		\frac{\Upsilon_F[\tau](v_*)}{S(\tau)} 
		= \sum_{\substack{\sigma \succ\tau \\ |\mcI(\sigma)|<\gamma  }} \frac{1}{S(\sigma)}\P\Big(\Upsilon_F[\sigma](v_*), \, Z_{\sigma/\tau}\Big)  + u_\tau^\# 
		$$
		for another $u_\tau^\#$ that belongs to $C^{\gamma-|\mcI(\tau)|}$. To do this, we have to verify the inequality \begin{equation}\label{eq_ineqalpha_jgammartau}\gamma-|\mcI(\tau)|\leq \alpha_j-r_\tau.
		\end{equation} 
		Setting $\nu_j\in\mcT$ such that $|\mcI(\nu_j)|=\alpha_j$, we have $\alpha_j+|\mcI(\tau)|-r_\tau=|\mcI(  \mcI_{r_\tau}(\tau) \star \nu_j  )|$ and $\alpha_j+|\mcI(\tau)|-r_\tau>\alpha_j$. 
		From the definition of the collection $(\alpha_j)$, we get $\alpha_j+|\mcI(\tau)|-r_\tau\geq\alpha_{j+1}\geq \gamma$, hence \eqref{eq_ineqalpha_jgammartau}.
One can iterate the decomposition on  $ \frac{1}{S(\sigma) }\Upsilon_F[\sigma ]  $ to get the full paracontrolled system.
	\end{proof}

	Proposition \ref{prop_pamansatz_to_pamsystem} enables us to define the paracontrolled ansatz when the stochastic data is not smooth. The main difficulty is that the coefficients $u_\tau$ depend on the derivatives $\partial_*^k u$ with $|k|<|\mcI(\tau)|$, which are not a priori well defined in the non-smooth setting for $\mcZ$. The key idea is to exploit the fact that $u$ can be developed in the paracontrolled system from Definition \ref{def_PAMparacSystem} and to define $\partial_*^k u$ instead through \eqref{eq_starPartialU}. More precisely, we define the paracontrolled ansatz at order $\gamma\in [j,j+1)$ by induction on $j$. We say that $u$ satisfies the paracontrolled ansatz at order $\gamma\in[j,j+1)$ if it already satisfies the ansatz at any order $r$ with $r<\gamma$, in that case $u$ develops in the paracontrolled system at order $r$ and Corollary \ref{cor_welldefinition_starpartialu} ensures that $\partial^j_*u$ is well defined as an element of $L^\infty$. We then finally impose \eqref{eq_pamansatz} at order $\gamma$.

	\subsection{The right hand side of the equation} \label{Subsection_RHS}

	One defines here a map $u\mapsto \Linv (\sum_{\ell=1}^{\ell_0} F_\ell(v) \cdot \xi_\ell  ) $ defined on functions $ v_* = (\partial_{*}^{k} u)_{k \in A}$ satisfying the  paracontrolled ansatz of \eqref{eq_gPAM}. We suppose here that $u$ satisfies the paracontrolled ansatz at order $\gamma = \gamma_0$, where $\gamma_0$ is the exponent defined in Subsection \ref{subsec_subcriticality}.

	\subsubsection{The paracontrolled structure of $\Upsilon_F[\Xi_\ell](v_*)$.}	 \label{subsect_paracstructure_UpsilonXi}

	In Definition \ref{def_PAMparacSystem} of the paracontrolled systems of order $\gamma$, the coefficient $u_{\Xi_\ell}= \Upsilon_F[\Xi_\ell](v_*)$ admits an expansion at order $\gamma-|\mcI(\Xi_\ell)|$. We will need an expansion at a higher order to define the singular product $F_\ell(v)\xi_\ell$, which can be rewritten as $\Upsilon_F[\Xi_\ell](v)\xi_\ell$.
	
	We have set in Subsection \ref{subsec_subcriticality} the quantities $r_{\Xi_\ell} =  \max\{ |k|\in\bfN, \, p_{k,\ell}\ne 0   \},$ and  $\gamma_\ell=\gamma-r_{\Xi_\ell}$. We are going to expand $u_{\Xi_\ell}$ at order $\gamma_\ell$, for $\tau\in\mcT_o$ let 
	\begin{equation}\label{eq_udash_gammaell}
		{u}_{\tau\Xi_\ell}^{\#,\gamma_\ell} = {u}_{\tau\Xi_\ell} - \sum_{\substack{\sigma\succ\tau \\ |\mcI(\sigma\Xi_\ell)|<\gamma_\ell}} \P(   u_{\sigma\Xi_\ell} , Z_{\sigma/\tau}  ).
	\end{equation}

	The following identity is the same as the one from Lemma \ref{lem_starDerivUpsilon}, except that the paracontrolled expansion is done at the higher order $\gamma_\ell$.
	
	\begin{lemma}\label{lem_starDerivUpsilon_hat}
	We suppose that $u$ can be developed in a  paracontrolled system associated with \eqref{eq_gPAM} at order $\gamma$ with admissible stochastic data $\mcZ=(Z_\tau)_{\tau\in\mcT}$. For any $\ell\in\bbrack{1;\ell_0}$ and  $\tau\in\mcT_o$, for any $k\in\bfN^{d+1}$ such that $|k|< \gamma_\ell-|\mcI(\tau\Xi_\ell)|$, we have
	\begin{equation}\label{eq_starPartialUTau_hat} \begin{split}
			&\partial^k u_{\tau\Xi_\ell}^{\#,\gamma_\ell} + \sum_{\substack{r\geq 0, \, |\mcI(\nu_r\Xi_\ell)|<\gamma_\ell \\ \nu_r\succ\dots\succ\nu_1\succ\tau}}\partial^k_*\P\big( u_{\nu_r\Xi_\ell}^{\#,\gamma_\ell}, Z_{\nu_r/\nu_{r-1}},\dots, Z_{\nu_1/\tau}\big) 
			= u_{(\downarrow^k)^* \tau\Xi_\ell}.
	\end{split}\end{equation}
	\end{lemma}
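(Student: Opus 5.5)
The plan is to rerun the induction behind Proposition \ref{lem_starDerivUpsilon}, with the truncation level $\gamma$ replaced by $\gamma_\ell$, and to keep the whole argument inside the subtree family $\{\tau\Xi_\ell,\ \tau\in\mcT_o\}$. The starting point is the definition \eqref{eq_udash_gammaell}. Iterating it on the coefficients $u_{\sigma\Xi_\ell}$ writes $u_{\tau\Xi_\ell}$ as a finite sum of iterated paraproducts $\P(u^{\#,\gamma_\ell}_{\nu_r\Xi_\ell},Z_{\nu_r/\nu_{r-1}},\dots,Z_{\nu_1/\tau})$, exactly as \eqref{eq_gPamSystem_to_IteratedParap} does at level $\gamma$. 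The left-hand side of \eqref{eq_starPartialUTau_hat} is then the star derivative of this representation, with homogeneity $\gamma_\ell-|\mcI(\nu_r\Xi_\ell)|$ assigned to each remainder.

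The first step is to check that this left-hand side is independent of the truncation level. I would argue as in Lemma \ref{lem_starderivutau_notdependonGamma}: for $|k|<\gamma_\ell-|\mcI(\tau\Xi_\ell)|$, the cutoff $\rho$ in \eqref{eqdef_starderivative} is always attained by the branch $-|\sigma/\tau|+|k_2|$. So the expression does not depend on the level, and it agrees with the level-$\gamma$ object $\partial^k_*u_{\tau\Xi_\ell}$ whenever the latter is defined.

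The second step proves \eqref{eq_starPartialUTau_hat} by induction on $|k|+|\mcI(\tau\Xi_\ell)|$, following the proof of Lemma \ref{lem_deriv_uk_induction}. I would expand $\partial^k_*$ of the iterated paraproducts through \eqref{eqdef_starderivative} and apply the induction hypothesis to the inner factors $\partial^{q}_*u_{\sigma\Xi_\ell}$, which gives $u_{(\downarrow^q)^*\sigma\Xi_\ell}$. Proposition \ref{prop_derivetoile_Ztau_Neg} then turns the remaining star derivatives into $\partial^k_y(\Pi_x^\mcZ\Psi(\sigma/\tau)(y))|_{y=x}$. Because the noise $\Xi_\ell$ sits at the root and $\sigma\succ\tau$, we have $\sigma\Xi_\ell/\tau\Xi_\ell=\sigma/\tau$, so this quantity factorises into products of $\overline Z_{\mcI_{b_j+k_j}(\mu_j)}$. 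Combining Lemma \ref{lem_ElemDiffMorphism}, the Faà di Bruno formula (Lemma \ref{Faa Di Bruno}) and the binomial identity built on \eqref{eq_halfstarderiv}, the ordinary derivative $\partial^k u_{\tau\Xi_\ell}=\partial^k\Upsilon_F[\tau\Xi_\ell](v_*)/S(\tau\Xi_\ell)$ minus these corrections collapses to $u_{(\downarrow^k)^*\tau\Xi_\ell}$.

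The main obstacle is that this Faà di Bruno step uses the identities $\widetilde\partial^b_*u=\partial^b_*u$ and \eqref{eq_halfstarderiv} for indices $b$ and $p+a$ that may now exceed $\gamma$. Proposition \ref{lem_starDerivUpsilon} only supplies these up to $\gamma$. What has to be checked is that only derivatives $\partial^{b}_*u$ with $b\in A_-\sqcup A_+$, plus lower-order shifts, ever occur in $\Upsilon_F[\tau\Xi_\ell]$. Here I would use Lemma \ref{lem_elementary differential_form} together with the choice $\gamma_\ell=\gamma-r_{\Xi_\ell}+|\mcI(\Xi_\ell)|$: every derivative $\partial^{k_j+b_j}_*u$ that appears should satisfy $|k_j+b_j|<\gamma$, since $|k|$ is bounded by the excess $\gamma_\ell-|\mcI(\tau\Xi_\ell)|$ and $|b_j|\le r_{\Xi_\ell}$. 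If this bookkeeping holds, all needed identities are available from Proposition \ref{lem_starDerivUpsilon}, and the induction closes as in Lemma \ref{lem_deriv_uk_induction}.
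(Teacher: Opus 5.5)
Your first step is the paper's entire proof. The paper reduces the statement to Proposition \ref{lem_starDerivUpsilon} by checking, through the recursion \eqref{eqdef_starderivative}, that the level-$\gamma_\ell$ and level-$\gamma$ expansions produce the same cuts, with terminal case $k=0$ where both sides equal $u_{\tau\Xi_\ell}$. You are right that this only reaches $|k|<\gamma-|\mcI(\tau\Xi_\ell)|$. Indeed $\gamma_\ell-\gamma=|\mcI(\Xi_\ell)|-r_{\Xi_\ell}>0$ (Lemma \ref{lem_ineqgammal}), and beyond that threshold the two levels genuinely differ. For gPAM at $\gamma=2\alpha$ with $\tau=\mathbf{1}$ and $k=e_i$, the level-$\gamma$ expression is $\partial^{e_i}f(u)$. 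The right-hand side is $f'(u)\partial^{e_i}_*u=\partial^{e_i}f(u)-ff'(u)\,\partial^{e_i}Z_{\mcI(\Xi)}$, which is what the level-$\gamma_\ell$ expansion gives. The paper does not treat this range separately, so your second step addresses a real issue.

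The gap is in how you close that step. Since $|\mcI(\tau\Xi_\ell)|=|\mcI(\Xi_\ell)|+|\tau|$, the hypothesis reads $|k|<\gamma-r_{\Xi_\ell}-|\tau|$. So even granting $|b_j|\le r_{\Xi_\ell}$, you only get $|k_j+b_j|<\gamma-|\tau|$, which exceeds $\gamma$ whenever $\tau\in\mcT_o$ has negative degree, for instance $\tau=\mcI_{e_1}(\Xi_{\ell'})$. The bound $|b_j|\le r_{\Xi_\ell}$ also fails in general. The variables of $\Upsilon_F[\tau\Xi_\ell]=\prod_j\Upsilon_F[\tau_j]\,D^{a_1}\cdots D^{a_n}F_\ell$ include those of the other nonlinearities $F_{\ell'}$, which enter through the factors $\Upsilon_F[\tau_j]$. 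Lemma \ref{lem_elementary differential_form} only controls $|b_j|$ by $|\mcI(\tau\Xi_\ell)|$, which lets $|k_j+b_j|$ go up to $\gamma_\ell$, not below $\gamma$.

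A concrete case: take $d=1$, $F_1=h(u)$ with space-time white noise, and $F_2=g(u)(\partial_xu)^2$ with a noise of degree $-\frac14$; this is subcritical. Set $\tau=\mcI_{e_1}(\Xi_1)$ and $k=e_1$. Then $k$ lies in the admissible range, yet $u_{(\downarrow^{e_1})^*\tau\Xi_2}$ contains $\partial^{2e_1}_*u$ with $|2e_1|=2>\gamma_0$. At such orders Proposition \ref{lem_starDerivUpsilon} supplies neither $\widetilde\partial^n_*u=\partial^n_*u$ nor \eqref{eq_halfstarderiv}. Both would need information on trees with $|\mcI(\mu)|\ge\gamma$, which the order-$\gamma$ ansatz does not contain, so your induction does not close.

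As written, your argument proves the lemma only under an extra condition such as $|k|+|b|<\gamma$ for every variable $\mcX_b$ of $\Upsilon_F[\tau\Xi_\ell]$, which holds for instance in gPAM. Outside that case the extended range is left open by your proposal, as it is by the paper's one-line reduction.
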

	
	\begin{proof}
		From Lemma \ref{lem_starDerivUpsilon}, it is sufficient to prove
		\begin{align*}
		&\partial^k u_{\tau\Xi_\ell}^{\#,\gamma_\ell} + \sum_{\substack{r\geq 0, \, |\mcI(\nu_r\Xi_\ell)|<\gamma_\ell \\ \nu_r\succ\dots\succ\nu_1\succ\tau}}\partial^k_*\P\big( u_{\nu_r\Xi_\ell}^{\#,\gamma_\ell}, Z_{\nu_r/\nu_{r-1}},\dots, Z_{\nu_1/\tau}\big) 
		\\&=
		\partial^k u_{\tau\Xi_\ell}^{\#} + \sum_{\substack{r\geq 0, \, |\mcI(\nu_r\Xi_\ell)|<\gamma \\ \nu_r\succ\dots\succ\nu_1\succ\tau}}\partial^k_*\P\big( u_{\nu_r\Xi_\ell}^{\#}, Z_{\nu_r/\nu_{r-1}},\dots, Z_{\nu_1/\tau}\big).
		\end{align*}
		It is done by induction, using the recursive definition \eqref{eqdef_starderivative}. We have to check that we have the same set of cuts in both cases, and for the terminal cases we note that
		\begin{align*}
			& u_{\tau\Xi_\ell}^{\#,\gamma_\ell} + \sum_{\substack{r\geq 0, \, |\mcI(\nu_r\Xi_\ell)|<\gamma_\ell \\ \nu_r\succ\dots\succ\nu_1\succ\tau}}\P\big( u_{\nu_r\Xi_\ell}^{\#,\gamma_\ell}, Z_{\nu_r/\nu_{r-1}},\dots, Z_{\nu_1/\tau}\big) 
			\\&=
		 u_{\tau\Xi_\ell}^{\#} + \sum_{\substack{r\geq 0, \, |\mcI(\nu_r\Xi_\ell)|<\gamma \\ \nu_r\succ\dots\succ\nu_1\succ\tau}}\P\big( u_{\nu_r\Xi_\ell}^{\#}, Z_{\nu_r/\nu_{r-1}},\dots, Z_{\nu_1/\tau}\big),
		\end{align*} 
		as both sides are equal to $u_{\tau\Xi_\ell}$.
	\end{proof}

	\begin{proposition}\label{prop_regularite_utaudash_gammaell}
		For any admissible stochastic data $\mcZ$ and $u$ satisfying the paracontrolled ansatz, we have ${u}_{\tau\Xi_\ell}^{\#,\gamma_\ell}\in C^{  \gamma_\ell-|\mcI(\tau\Xi_\ell)|   }.   $
	\end{proposition}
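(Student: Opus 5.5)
The plan is to mimic the proof of Theorem \ref{prop_paralinearization} and Corollary \ref{cor_paralinearistaion2}, but with the expansion carried out to the higher order $\gamma_\ell$ instead of $\gamma-|\mcI(\tau\Xi_\ell)|$.

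\textbf{Step 1 (lift).} Lemma \ref{lem_LiftGpamsystemToRS} lifts each $\partial^k_*u$, $k\in A$, to a modelled distribution in $\mcD^{\gamma-|k|}(\scrT_\P)$. The elementary differential $u_{\tau\Xi_\ell}=\frac{1}{S(\tau\Xi_\ell)}\Upsilon_F[\tau\Xi_\ell](v_*)$ is smooth in the $(\partial^k_*u)_{k\in A_+}$ and polynomial in the $(\partial^k_*u)_{k\in A_-}$. By Lemma \ref{lem_elementary differential_form} and the definition of $r_{\Xi_\ell}$, it depends only on $\partial^k_*u$ with $|k|\le r_{\Xi_\ell}$. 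Hence $r_h\le r_{\Xi_\ell}$, and composing with \cite[Theorem 4.16]{Hai14} gives a lift in $\mcD^{\gamma-r_{\Xi_\ell}}$.

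\textbf{Step 2 (paracontrolled form).} Corollary \ref{cor_repmodelledditribParap} then represents $u_{\tau\Xi_\ell}$ as a paraproduct expansion against the $Y^\mcZ$ of the lifted coefficients, with a remainder in $C^{\gamma-r_{\Xi_\ell}}$. Proposition \ref{prop_Ztau=YPsitau_mult} turns these $Y^\mcZ$ into $Z_\mu$. The Grossman--Larson duality used in Corollary \ref{cor_paralinearistaion2} rewrites the coefficients as $u_{\sigma\Xi_\ell}$ with $\sigma\succ\tau$ and the reference distributions as $Z_{\sigma/\tau}$. The result is an expansion of exactly the form \eqref{eq_udash_gammaell}, with remainder regularity $\gamma-r_{\Xi_\ell}$.

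\textbf{Step 3 (matching orders).} Since $\gamma_\ell-|\mcI(\tau\Xi_\ell)|=\gamma-r_{\Xi_\ell}-|\tau|$ for the root-level case, I would compare the truncation index sets. Terms with $|\mcI(\sigma\Xi_\ell)|\ge\gamma_\ell$ satisfy $|\sigma/\tau|\ge\gamma_\ell-|\mcI(\tau\Xi_\ell)|$. Each such paraproduct $\P(u_{\sigma\Xi_\ell},Z_{\sigma/\tau})$ then lies in $C^{\gamma_\ell-|\mcI(\tau\Xi_\ell)|}$, because $u_{\sigma\Xi_\ell}\in L^\infty$ by Corollary \ref{cor_welldefinition_starpartialu}. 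These terms can therefore be absorbed into the remainder, as in the truncation argument of Proposition \ref{prop_pamansatz_to_pamsystem}.

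\textbf{Alternative route and main obstacle.} For non-root $\tau$ the required order exceeds what Step 2 provides directly. The cleanest route there is to build a modelled distribution ${\bm f}_\ell=\sum_\tau u_{\tau\Xi_\ell}\,\tau$ (sum over $\tau\in\mcT_o$). Its coefficients are identified via Lemma \ref{lem_starDerivUpsilon_hat} and Proposition \ref{prop_starderiv_coefficientmodelleddist}, which give $\partial^k_*u_{\tau\Xi_\ell}=u_{(\downarrow^k)^*\tau\Xi_\ell}$, exactly the consistency required of a $\mcD^{\gamma_\ell-|\mcI(\Xi_\ell)|}$ element. One then applies Corollary \ref{cor_repmodelledditribParap}, whose equation \eqref{eq_repftauparap} yields $u^{\#,\gamma_\ell}_{\tau\Xi_\ell}\in C^{\gamma_\ell-|\mcI(\tau\Xi_\ell)|}$.

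The main obstacle is verifying the $\mcD^{\gamma_\ell}$ increment bound for ${\bm f}_\ell$. This is where subcriticality enters: Lemma \ref{lem_ineqgammal} ensures that the relevant $\partial^k_*u$ with $|k|<\gamma_\ell-|\mcI(\tau\Xi_\ell)|$ are well defined. I would check it by induction on $|\tau|$ downward, using the structure of \eqref{eqdef_starderivative}.
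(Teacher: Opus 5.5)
Your Steps 1--3 follow the paper's route. The paper applies Theorem \ref{prop_paralinearization} to $\Upsilon_F[\tau\Xi_\ell]$; that theorem's proof is exactly your lift, composition, Corollary \ref{cor_repmodelledditribParap} and Proposition \ref{prop_Ztau=YPsitau_mult}. This gives a remainder in $C^{\gamma-r_h}$. The paper then truncates and identifies the result with \eqref{eq_udash_gammaell} via the coherence of Proposition \ref{prop_pamansatz_to_pamsystem}. Take $\gamma_\ell=\gamma-r_{\Xi_\ell}+|\mcI(\Xi_\ell)|$ as in Subsection \ref{subsec_subcriticality}, which is the definition needed for $\gamma_\ell-\beta>\kappa$. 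Everything then hinges on the order comparison
\[
\gamma-r_h\;\ge\;\gamma_\ell-|\mcI(\tau\Xi_\ell)|\;=\;\gamma-r_{\Xi_\ell}-|\tau| ,\qquad\text{i.e.}\qquad r_h\le r_{\Xi_\ell}+|\tau| ,
\]
and your proposal never establishes it. There are three problems.
\begin{itemize}
\item Your Step 1 bound $r_h\le r_{\Xi_\ell}$ is false. By \eqref{eq_defUpsilon}, a root monomial $X^k$ produces $\partial^kF_\ell=\sum_j\mcX_{j+k}D^jF_\ell$. Already in gPAM, $u_{X^{e_i}\Xi}=f'(u)\,\partial^{e_i}_*u$ involves a first derivative while $r_{\Xi}=0$. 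The paper instead uses Lemma \ref{lem_elementary differential_form}, which only gives $r_h\le\lfloor|\mcI(\tau\Xi_\ell)|\rfloor$.
\item Step 3 only treats the harmless direction: absorbing superfluous terms once the comparison already holds.
\item Your alternative route is circular. A $\mcD^{\gamma_\ell-|\mcI(\Xi_\ell)|}$ bound for ${\bm f}_\ell=\sum_\tau u_{\tau\Xi_\ell}\,\tau$ is equivalent to the statement itself. Proposition \ref{prop_starderiv_coefficientmodelleddist} presupposes that ${\bm f}_\ell$ is already a modelled distribution. Lemma \ref{lem_starDerivUpsilon_hat} is an algebraic identity about the $\gamma_\ell$-expansion and says nothing about the regularity of its remainder. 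The ``induction downward'' is where the whole proof would have to live, and it is not supplied.
\end{itemize}

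Your suspicion that the required order exceeds what paralinearisation provides when $|\tau|<0$ is correct, and no induction can remove it. Take the paper's $\Phi^4_3$ example, with $\xi\in C^{-5/2-\kappa}$. Let $\Xi_1$ be the trivial noise with $F_1=-\mcX_0^3$, so $r_{\Xi_1}=0$, $|\mcI(\Xi_1)|=2$ and $\gamma_1=\gamma+2$. Let $\tau=\mcI(\Xi_2)^2$ with $\Xi_2$ the white noise. Then $u_{\tau\Xi_1}=-3\,\partial^0_*u$. For $\xi=0$, which gives admissible naive data, one has $u=u^\#=\partial^0_*u$ with $u^\#$ an arbitrary element of $C^\gamma$. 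All $Z_{\sigma/\tau}$ in \eqref{eq_udash_gammaell} are then zero or smooth. So $u^{\#,\gamma_1}_{\tau\Xi_1}$ equals $-3u^\#$ up to a smooth function, whereas the statement asks for $C^{\gamma_1-|\mcI(\tau\Xi_1)|}=C^{\gamma+1+2\kappa}$.

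The paper's own proof settles the comparison in one line (``we check that $\gamma-r_{\tau\Xi_\ell}\ge\gamma_\ell-|\mcI(\tau\Xi_\ell)|$''). That amounts to $\lfloor|\mcI(\tau\Xi_\ell)|\rfloor\le r_{\Xi_\ell}+|\tau|$, which fails in this example as well. The source of the trouble is that $\gamma_\ell$ is built from the crude bound $\sum_k p_{k,\ell}(\alpha_0-|k|)<\alpha_0-r_{\Xi_\ell}$ used in Lemma \ref{lem_ineqgammal}, which makes $\gamma_\ell$ too large. A correct version of your Steps 1--3 would need a smaller threshold, closer to the actual subcriticality margin. It would also need the correct count of derivatives entering $\Upsilon_F[\tau\Xi_\ell]$.
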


	\begin{proof}
		From Lemma \ref{lem_elementary differential_form}, the elementary differential $\Upsilon[\tau\Xi_\ell](v_*)$ depends on the $\partial^j_*u$ with $|j|<r_{\tau\Xi_\ell}= \lfloor |\mcI(\tau\Xi_\ell)|\rfloor$. The Theorem \ref{prop_paralinearization} gives the paracontrolled representation
		\begin{align*}
		u_{\tau\Xi_\ell}& = \sum_{n\geq 0}\sum_{\substack{\sigma=  X^r \prod_{i=1}^{n}\mcI^+_{a_i}(\tau_i)^{\beta_i}\in\mcT^+ \\ |\sigma|<\gamma-r_{\tau\Xi_\ell}}} \frac{1}{ r!}\P\Big( \partial^r \prod_{i=1}^n D_{a_i}^{\beta_i} \Upsilon_F[\tau\Xi_\ell](v_*) \prod_{j=1}^{n} \frac{u_{\tau_j}^{\beta_j}}{\beta_j!} \, ,Z_\sigma\Big) \\ &  + u_{\tau\Xi_\ell}^{\#,\gamma-r_{\tau\Xi_\ell}},
		\end{align*}
		with $u_{\tau\Xi_\ell}^{\#,\gamma-r_{\tau\Xi_\ell}}\in C^{\gamma-r_{\tau\Xi_\ell}}$.
		We check that $\gamma-r_{\tau\Xi_\ell}\geq \gamma_\ell-|\mcI(\tau\Xi_\ell)|$, so that one can truncate this expansion at order $\gamma_\ell-|\mcI(\tau\Xi_\ell)|$. The coherence property from Proposition \ref{prop_pamansatz_to_pamsystem} ensures that the expansion we obtain corresponds indeed to the one in \eqref{eq_udash_gammaell}.
	\end{proof}
	

	\subsubsection{The non-linearity $F_\ell(v) \, \xi_\ell$} \label{subsect_productmap}
	
	We define a map $(u,u^\#) \mapsto F_\ell(v)\cdot\xi_\ell$ from the set of distributions satisfying the paracontrolled ansatz of \eqref{eq_gPAM} to the set of distributions by setting
	\begin{equation}\label{eqdef_Produit_Fluxil1}
		F_\ell(v)\cdot\xi_\ell = \sum_{\substack{\tau\in\mcT_o \\ |\mcI(\tau\Xi_\ell) |<\gamma_\ell  }  } \P(u_{\tau\Xi_\ell}, Z_{\tau\Xi_\ell} ) + \Phi_\ell^\#,
	\end{equation}
	with
	\begin{equation}\label{eqdef_Produit_Fluxil2}
		\Phi_\ell^\# = \sum_{\substack{\tau\in\mcT_o \\ |\mcI(\tau\Xi_\ell)| <\gamma_\ell  }  } \sum_{ \substack{q\geq 0 \\ \tau_q\prec\dots\prec \tau}} {\bf C} \Big(u^{\#,\gamma_\ell}_{\tau\Xi_\ell},Z_{\tau/\tau_1}, \dots, Z_{\tau_q\Xi_\ell}\Big) .
	\end{equation}
	This formula will be justified in Proposition \ref{prop_nonlinearitxiell_generalized} below, which implies that it coincides with the classical products when the stochastic data is the naive one.
	When the stochastic data $\mcZ$ is smooth, we are able to give the following definition of some multiplicative version of the product map.
		\begin{equation}\label{eqdef_Produit_Fluxil1_mult}
		(F_\ell(v)\cdot\xi_\ell)^\circ = \sum_{\substack{\tau\in\mcT_o \\ |\mcI(\tau\Xi_\ell) |<\gamma_\ell  }  } \P(u_{\tau\Xi_\ell}, Z_{\tau\Xi_\ell}^\circ ) + \Phi_\ell^{\circ,\#},
	\end{equation}
	with
	\begin{equation}\label{eqdef_Produit_Fluxil2_mult}
		\Phi_\ell^{\circ,\#} = - \sum_{\substack{\tau\in\mcT_o \\ |\mcI(\tau\Xi_\ell)| <\gamma_\ell  }  } \sum_{ \substack{q\geq 0 \\ \tau_q\prec\dots\prec \tau}} {\bf C} \Big( u^{\#,\gamma_\ell}_{\tau\Xi_\ell},Z_{\tau/\tau_1}, \dots,Z_{\tau_{q-1}/\tau_q} ,Z_{\tau_q\Xi_\ell}^\circ\Big) .
	\end{equation}
Remember that when $\mcZ$ is the naive stochastic data we have $Z_\tau=Z^\circ_\tau$, in particular $(F_\ell(v)\cdot\xi_\ell)^\circ = F_\ell(v)\cdot\xi_\ell$.
		More generally we set for any $\tau\in\mcT_o$ the product
		\begin{equation}\label{eqdef_Produit_ElementaryDiff1}
			\left(\Upsilon_F[\tau\Xi_\ell](v) \cdot \xi_\ell\right)^{\circ} = \sum_{ \substack{\sigma\in\mcT_o \\ |\mcI(\sigma \star \tau\Xi_\ell)|<\gamma_\ell}} \P(u_{\sigma \star \tau\Xi_\ell}, Z^{\circ}_{\sigma\Xi_\ell} ) + \Phi_{\tau,\ell}^{\circ,\#},
		\end{equation}
		with
		\begin{equation}\label{eqdef_Produit_ElementaryDiff2}
			\Phi_{\tau,\ell}^{\circ,\#} = - \sum_{\substack{\sigma\in\mcT_o \\ |\mcI(\sigma \star \tau\Xi_\ell)|<\gamma_\ell    }} \sum_{\substack{q\geq0 \\ \sigma_q\prec\dots\prec\sigma_1\prec\sigma }} {\bf C} \Big(u^{\#,\gamma_\ell}_{\sigma \star \tau\Xi_\ell},Z_{\sigma/\sigma_1}, \dots,Z_{\sigma_{q-1}/\sigma_q}, Z^{\circ}_{\sigma_q\Xi_\ell}\Big) .
		\end{equation}
		
	The formulae above do not explicitly involve resonant products, as one might expect. These resonant products are however hidden in the correctors $\bfC$ appearing above. For instance, one has $$-\bfC(u_\Xi^\#,Z_\Xi) = -u_\Xi^\# Z_\Xi + \P(u^\#_\Xi,Z_\Xi) =  u_\Xi^\# \odot Z_\Xi +  \P(Z_\Xi,u^\#).$$
		The next proposition justifies the product formulae above.
			\begin{proposition}\label{prop_nonlinearitxiell_generalized}
		For any smooth admissible stochastic data $\mcZ=(Z_\tau)$  and any $u$ satisfying the paracontrolled ansatz, the distribution $(\Upsilon_F[\tau\Xi_\ell](v)\cdot\xi_\ell)^\circ$ given in \eqref{eqdef_Produit_ElementaryDiff1} and \eqref{eqdef_Produit_ElementaryDiff2} coincides with the classical pointwise product $$(\Upsilon_F[\tau\Xi_\ell](v)\cdot\xi_\ell)^\circ = \Upsilon_F[\tau\Xi_\ell](v)\xi_\ell.$$
	\end{proposition}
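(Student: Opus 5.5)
We will expand the classical product $\Upsilon_F[\tau\Xi_\ell](v)\,\xi_\ell$ in the same way that the correctors $\bfC$ are built, and then read off the result with Proposition \ref{lem_SumNegativeCorrector}.

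\textbf{Step 1: iterated paraproduct expansion of the coefficient.} By Proposition \ref{prop_regularite_utaudash_gammaell} and the definition \eqref{eq_udash_gammaell}, iterating in the first argument gives
\[
u_{\tau\Xi_\ell}=\sum_{\substack{\sigma\in\mcT_o\\ |\mcI(\sigma\star\tau\Xi_\ell)|<\gamma_\ell}}\ \sum_{\sigma_q\prec\dots\prec\sigma_1\prec\sigma}\P\big(u^{\#,\gamma_\ell}_{\sigma\star\tau\Xi_\ell},Z_{\sigma/\sigma_1},\dots,Z_{\sigma_{q-1}/\sigma_q}\big),
\]
where we index the admissible $\sigma'\succ\tau$ as $\sigma\star\tau$, with $\sigma'/\tau=\sigma$ in the dual sense, as in the proof of Proposition \ref{prop_pamansatz_to_pamsystem}. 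We then multiply by $\xi_\ell$. Since everything is smooth, the definition of $\bfC$ in \eqref{eqdef_corrector} together with \eqref{eqdef_starderivative} lets us write each pointwise product $\P(h,Z_{\sigma/\sigma_1},\dots)\,\xi_\ell$ as $\P(h,\dots,\xi_\ell)$ minus $\bfC(h,\dots,\xi_\ell)$, plus the lower cuts. Concretely, the identity to use is
\[
\P_k(h_1,\dots,h_n)\,h_{n+1}=\P_k(h_1,\dots,h_{n+1})-\bfC_k(h_1,\dots,h_{n+1})-\sum(\text{cut terms } \partial_*^{p}\P\cdot\bfC),
\]
which is just the $\partial^0_*$ recursion.

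\textbf{Step 2: regroup by the last letter.} We reorganise the resulting sum according to the last tree $\mu=\sigma_q\Xi_\ell$ appearing together with $\xi_\ell$. On one side, the terms $\P\big(u_{\sigma\star\tau\Xi_\ell},Z^\circ_{\sigma\Xi_\ell}\big)$ of \eqref{eqdef_Produit_ElementaryDiff1} re-expand, via the definition \eqref{eq_defZotau} of $Z^\circ$ and the relation \eqref{eq_mult_PPi_Zcirc} $\widetilde\PPi^{\mcZ^\circ}(\Psi(\sigma\Xi_\ell))=\xi_\ell\cdot(\text{planted factors})$, into iterated paraproducts ending in $Z^\circ$. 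On the other side, $\Phi^{\circ,\#}_{\tau,\ell}$ collects the correctors. The claim is then equivalent to an identity, for each fixed coefficient $u^{\#,\gamma_\ell}_{\sigma\star\tau\Xi_\ell}$, between the plain product and a sum of iterated paraproducts and correctors ending in $Z^\circ_{\sigma_q\Xi_\ell}$.

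\textbf{Step 3: the coefficient-level identity.} We would prove this identity by induction on the size of $\sigma$, using the word-level model: $\PPi^{\mcZ^\circ}\Psi(\sigma\Xi_\ell)$ factorises multiplicatively by \eqref{eq_mult_PPi_Zcirc1}. The leftover corrector sums are exactly those controlled by Lemma \ref{lem_sum Negative_correctors}, namely $\Pi_x^{\mcZ^\circ}(\Psi(\sigma\Xi_\ell))(x)$. By Lemma \ref{lem_multiplicativity_modelPsicirc} and Proposition \ref{lem_SumNegativeCorrector}, these vanish unless all planted branches have negative degree, in which case they produce $\xi_\ell\prod_j\overline Z_{\mcI_{b_j}(\sigma_j)}$. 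These surviving terms have to be matched with the expansion of $u_{\tau\Xi_\ell}$ in terms of the $\partial^k_*u$, through \eqref{eqdef_partialStarU}. Since $\partial^k u=\partial^k_*u+\sum u_\nu\overline Z_{\mcI_k(\nu)}$, the Faà di Bruno formula (Lemma \ref{Faa Di Bruno}) combined with Lemma \ref{lem_ElemDiffMorphism} turns the $\overline Z$-terms into the coefficients $u_{\sigma\star\tau\Xi_\ell}$, exactly as in the proof of Lemma \ref{lem_deriv_uk_induction}.

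\textbf{Main obstacle.} The hard part is the combinatorial bookkeeping in Step 3. The binomial weights $\frac1{k!}$, the symmetry factors $S(\sigma)$ and the $\downarrow^k$ decorations coming from the correctors $\bfC_k$ must match the Grossman–Larson coefficients in $u_{\sigma\star\tau\Xi_\ell}$. The first thing to try is to carry this matching out on the dual side, using $\langle\mu\star\tau,\nu\rangle=\langle\mu\otimes\tau,\Delta\nu\rangle$ together with Lemma \ref{lem_SnDnreindex}, rather than matching coefficients term by term.
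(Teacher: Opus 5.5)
Your plan collects several of the right ingredients: Proposition \ref{lem_SumNegativeCorrector}, the multiplicativity of $\mcZ^\circ$, and the final re-expansion through \eqref{eqdef_partialStarU} and Lemma \ref{lem_ElemDiffMorphism}. However, the identity that actually carries the proof is missing, and Step 1 heads in a direction that does not lead to it.

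\textbf{Why Step 1 does not connect to the definition.} Multiplying the expansion of $u_{\tau\Xi_\ell}$ by $\xi_\ell$ produces correctors $\bfC(u^{\#,\gamma_\ell}_{\sigma\star\tau\Xi_\ell},Z_{\sigma/\sigma_1},\dots,\xi_\ell)$. The letters before $\xi_\ell$ are all positive letters in $\mcT^+$, built from star derivatives $\widetilde g^{\mcZ}(D^{b}\Psi(\cdot))$. The correctors in \eqref{eqdef_Produit_ElementaryDiff2} instead end in $Z^\circ_{\sigma_q\Xi_\ell}$, whose planted factors are genuine derivatives $\partial^{b}K*\PPi^{\mcZ}(\cdot)$. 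In addition, $u_{\tau\Xi_\ell}$ is a function of $v_*$, not of $v$. Nothing in Steps 2--3 converts one family of correctors into the other. For the same reason, your claim that ``the leftover corrector sums are exactly those controlled by Lemma \ref{lem_sum Negative_correctors}'' is unsupported. That lemma concerns $\sum_k\frac1{k!}\bfC_k(Z_{\downarrow^k\mu/\nu_1},\dots,Z^\circ_{\nu_s\Xi_\ell})$, which are chains of tree letters only, with no coefficient letter $u^{\#,\gamma_\ell}$ in front.

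\textbf{The missing step.} Work from the definition side and split each corrector of \eqref{eqdef_Produit_ElementaryDiff2} itself with the $\partial^0_*$ recursion \eqref{eqdef_starderivative}.
\begin{itemize}
\item Cut at an intermediate tree $\mu$. This gives terms $\frac1{k!}\partial^k_*\P(u^{\#,\gamma_\ell}_{\sigma\star\tau\Xi_\ell},\dots,Z_{\sigma_r/\mu})\,\bfC_k(Z_{\mu/\nu_1},\dots,Z^\circ_{\nu_s\Xi_\ell})$.
\item The uncut terms resum to $\sum_\mu\P(u_{\mu\star\tau\Xi_\ell},Z^\circ_{\mu\Xi_\ell})$. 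These cancel the paraproduct part of \eqref{eqdef_Produit_ElementaryDiff1}.
\item Apply Lemma \ref{lem_starDerivUpsilon_hat}, which your proposal never invokes. Summed over coefficient chains, it collapses the coefficient side: $\partial^k u^{\#,\gamma_\ell}_{\mu\star\tau\Xi_\ell}+\sum\partial^k_*\P(\dots)=u_{(\downarrow^k)^*(\mu\star\tau\Xi_\ell)}$.
\item Move $(\downarrow^k)^*$ onto the tree side by duality. This leaves $\sum_\mu u_{\mu\star\tau\Xi_\ell}\sum_k\frac1{k!}\sum\bfC_k(Z_{\downarrow^k\mu/\nu_1},\dots,Z^\circ_{\nu_s\Xi_\ell})$.
\item Apply Proposition \ref{lem_SumNegativeCorrector} to $\mu\Xi_\ell$ directly. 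The sum vanishes unless $\mu=\prod_j\mcI_{b_j}(\mu_j)^{\beta_j}$ with every planted branch of negative degree, in which case it equals $\xi_\ell\prod_j\overline Z^{\beta_j}_{\mcI_{b_j}(\mu_j)}$.
\end{itemize}
This is what dissolves your ``main obstacle'': the weights $\frac1{k!}$ and the $\downarrow^k$'s are absorbed by Lemma \ref{lem_starDerivUpsilon_hat} and adjointness, not by a term-by-term Grossman--Larson matching, and no induction on $\sigma$ is needed.

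\textbf{The final step.} What remains is a finite Taylor expansion of $\Upsilon_F[\tau\Xi_\ell]$, which is polynomial in the $A_-$ variables, around $v_*$. The increments are $\sum_\nu u_\nu\overline Z_{\mcI_k(\nu)}$, and one uses $\Upsilon_F[\mu\star\tau\Xi_\ell]=\prod_j\Upsilon_F[\mu_j]^{\beta_j}\prod_jD_{b_j}^{\beta_j}\Upsilon_F[\tau\Xi_\ell]$ together with \eqref{eqdef_partialStarU}. Fa\`a di Bruno is not required at this point.
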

	In particular if one take $\tau=\mathbf{1}$, we get indeed
	$
	\left(F_\ell(v)\cdot\xi_\ell\right)^\circ = F_\ell(v)\xi_\ell.
	$
	The more general Proposition \ref{prop_nonlinearitxiell_generalized} will be useful for the computation of the renormalised equation in Section \ref{section_renormalized_equation}.
		\begin{proof}
		We compute the remainder term $\Phi_{\tau,\ell}^{\circ,\#}$ as defined in \eqref{eqdef_Produit_Fluxil2}, using the definition of the corrector ${\bf C}$ given in Subsection \ref{subsect_theRSIteratedParap} via \eqref{eqdef_corrector} and  \eqref{eqdef_starderivative}
		\begin{align*}
			\Phi_{\tau,\ell}^{\circ,\#}  &=
		-\sum_{\substack{\sigma\in\mcT_o \\ |\mcI(\sigma \star \tau\Xi_\ell)|<\gamma_\ell    }} \sum_{\substack{q\geq0 \\ \sigma_q\prec\dots\prec\sigma_1\prec\sigma }} {\bf C} \Big(u^{\#,\gamma_\ell}_{\sigma \star \tau\Xi_\ell},Z_{\sigma/\sigma_1}, \dots, Z^{\circ}_{\sigma_q\Xi_\ell}\Big) 
			\\
			& = -\sum_{\substack{\sigma\in\mcT_o \\ |\mcI(\sigma \star \tau\Xi_\ell)|<\gamma_\ell}} \P(u_{\sigma \star \tau\Xi_\ell}, Z_{\sigma\Xi_\ell}^\circ)  
			\\
			&+  \sum_{\substack{\sigma\in\mcT_o \\ |\mcI(\sigma \star \tau\Xi_\ell)|<\gamma_\ell}} \sum_{\substack{\mu\prec\sigma \\ |k|<\gamma_\ell-|\mcI(\mu \star \tau\Xi_\ell)|}}\sum_{\substack{ \mu\prec\sigma_r\prec\dots\prec\sigma \\ \nu_s\prec\dots\prec\nu_1\prec\mu  }} \frac{1}{k!} \partial^k_*\P(u_{\sigma \star \tau\Xi_\ell}^{\#,\gamma_\ell},\dots, Z_{\sigma_r/\mu}) \\ &\hspace{7cm}\times \bfC_k(Z_{\mu/\nu_1},\dots,Z^{\circ}_{\nu_s\Xi_\ell}).
		\end{align*}
		From Lemma \ref{lem_starDerivUpsilon_hat}, for any $\sigma\in\mcT_o$, one has 
		$$
		\partial^k u_{\mu*\tau\Xi_\ell}^{\#,\gamma_\ell} + \sum_{\substack{r\geq0 \\ \mu\prec \sigma_r\prec\dots\prec\sigma}} \partial^k_*\P( u_{\sigma \star \tau\Xi_\ell}^{\#,\gamma_\ell},\dots, Z_{\sigma_r/\mu})  = u_{(\downarrow^k)^*(\mu*\tau\Xi_\ell) }.
		$$
		And then 
		\begin{align*}
			&\Phi_{\tau,\ell}^{\circ,\#} - \sum_{\substack{\sigma\in\mcT_o \\ |\mcI(\sigma \star \tau\Xi_\ell)|< \gamma_{\ell}  }} \P(u_{\sigma \star \tau\Xi_\ell}, Z^{\circ}_{\sigma\Xi_\ell})  
			\\&=
			 \sum_{ \substack{\mu \in\mcT_o \\ |\mcI(\mu \star \tau\Xi_\ell)|< \gamma_{\ell}   }} \frac {\Upsilon_F[\mu\Xi_\ell](v_*)}{S(\mu \Xi_{\ell})} \sum_{\substack{\nu_s\prec\dots\prec\mu \\ k  \geq 0 }}  \frac{1}{k!}\bfC_k(Z_{\downarrow^k\mu/\nu_1},\dots,Z^{\circ}_{\nu_s\Xi_\ell}    ).
		\end{align*}
		From Lemma \ref{lem_SumNegativeCorrector}, the sum $\sum_k\sum_{\nu_q\prec\dots\prec\mu} \frac{1}{k!}\bfC_k(Z_{\downarrow^k\mu/\nu_1},\dots,Z^{\circ}_{\nu_q\Xi_\ell}    )$ is non-zero only for trees $\mu\in\mcT_o$ with the form 
		$\mu= \prod_{j=1}^n  (\mcI_{b_j} (\mu_j))^{\beta_j}$ with $|\mcI_{b_j} (\mu_j)|<0$ and one has
		\begin{equation*}
			\sum_{\substack{q\geq 0 \\ \nu_q\prec\dots\prec\mu  }} \sum_{k\geq 0} \frac{1}{k!} \bfC_k(Z_{\downarrow^k\mu/\nu_1},\dots,Z^{\circ}_{\nu_q\Xi_\ell}    ) = \xi_\ell \prod_{j=1}^n  \overline Z_{\mcI_{b_j}(\mu_j)}^{\beta_j}.
		\end{equation*}
		From the definition of the elementary differential in \eqref{eq_defUpsilon}, we have
		$$
		\Upsilon_F[\mu \star \tau\Xi_\ell] = \prod_{j=1}^n \Upsilon_F[\mu_j]^{\beta_j} \prod_{j=1}^n D_{b_j}^{\beta_j} \Upsilon_F[\tau\Xi_\ell].  
		$$
		Then,
		\begin{align*}\begin{split}
				&	\Phi_{\tau,\ell}^{\circ,\#} + 
				\sum_{\substack{\sigma\in\mcT_o \\ |\mcI(\sigma \star\tau\Xi_\ell)|< \gamma_{\ell}  }} \P(u_{\sigma \star\tau\Xi_\ell}, Z^{\circ}_{\sigma\Xi_\ell})
					\\
					&= \sum_{ \substack{\mu  = \prod_{j=1}^n \CI_{b_j}(\mu_j)^{\beta_i}   }} \frac {\Upsilon_F[\mu\star \tau\Xi_\ell](v_*)}{S(\mu \Xi_{\ell})} 
				\xi_\ell \prod_{j=1}^n  \overline Z_{\mcI_{b_j}(\mu_j)}^{\beta_j}
				\\
				&  = \sum_{ \substack{\mu  = \prod_{i=1}^n \CI_{b_j}(\mu_j)^{\beta_j}   }}  \prod_{j=1}^n \Upsilon_F[\mu_j](v_*)^{\beta_j} \prod_{j=1}^n D_{b_j}^{\beta_j} \Upsilon_F[\tau\Xi_\ell](v_*) \,  
				\xi_\ell \prod_{j=1}^n \frac{1}{\beta_j ! S(\mu_j)}  \overline Z_{\mcI_{b_j}(\mu_j)}^{\beta_j}
				\\
				&= \Upsilon_F[\tau\Xi_\ell](     (\partial_*^{k} u + \sum_{|\mcI(\tau)|< |k|} u_\tau \overline{Z}_{\mcI_{k}(\tau) } )_{k \in A_-}, (\partial^{n} u)_{n \in A_+}) \xi_\ell
		\end{split}\end{align*}
		where the $\CI_{b_j}(\mu_j)^{\beta_i}$ are pairwise disjoint and  we have used the fact that $\Upsilon_F[\tau\Xi_\ell]$ is polynomial in the $ \partial^k u $ that are distributions.
		From the definition of $\partial_*^{k}u$ given in \eqref{eqdef_partialStarU}, this is indeed equal to $\Upsilon_F[\tau\Xi_\ell](v)\xi_\ell$.
	\end{proof}

	\begin{proposition}
		For any $\ell\in\bbrack{1;\ell_0}$, we have $\Phi_\ell^\# \in C^{-\kappa/2}$.
		Furthermore the map $(u,u^\#)\mapsto \Phi_\ell^\#$ is local lipschitz.
	\end{proposition}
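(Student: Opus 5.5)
The claim is that each corrector $\bfC(u^{\#,\gamma_\ell}_{\tau\Xi_\ell},Z_{\tau/\tau_1},\dots,Z_{\tau_q\Xi_\ell})$ in \eqref{eqdef_Produit_Fluxil2} lies in $C^{-\kappa/2}$, and that each depends locally Lipschitz on $(u,u^\#)$. Since the sum over $\tau\in\mcT_o$ with $|\mcI(\tau\Xi_\ell)|<\gamma_\ell$ is finite by subcriticality, it suffices to prove the bound term by term.

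For each such term we apply the continuity estimate for the star derivative $\partial^0_*\P$ from \cite[Theorem 1]{LocalExpansionsParacSystems}, in the general Littlewood–Paley version of Appendix \ref{appendix_A}. That result says: if $h_1,\dots,h_n$ have regularities $\beta_1,\dots,\beta_n$ with non-integer partial sums (Assumption \ref{assumption_notint}), then $\partial^k_*\P_\ell(h_1,\dots,h_n)$ belongs to $C^{\sum_j\beta_j+|\ell|-|k|}$. Here $k=\ell=0$, and it depends multilinearly and continuously on the $h_j$. The regularities to feed in are as follows.
\begin{itemize}
\item $u^{\#,\gamma_\ell}_{\tau\Xi_\ell}$ has regularity $\gamma_\ell-|\mcI(\tau\Xi_\ell)|$, by Proposition \ref{prop_regularite_utaudash_gammaell}.
\item Each $Z_{\tau_{i-1}/\tau_i}$ has regularity $|\tau_{i-1}/\tau_i|$.
\item $Z_{\tau_q\Xi_\ell}$ has regularity $|\tau_q\Xi_\ell|$.
\end{itemize}
These degrees telescope to $|\tau\Xi_\ell|$. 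The total regularity is therefore
\[
\gamma_\ell-|\mcI(\tau\Xi_\ell)|+|\tau\Xi_\ell| \;=\; \gamma_\ell-\beta .
\]
By Lemma \ref{lem_ineqgammal} (with $\gamma=\gamma_0$) we have $\gamma_\ell>\kappa+\beta$, and the working definition of $\gamma_0$ should give $\gamma_\ell-\beta\ge -\kappa/2$. So each corrector lies in $C^{\gamma_\ell-\beta}\subset C^{-\kappa/2}$.

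The step I expect to be the main obstacle is making sure this analytic result really applies. Two things must be checked. First, $\partial^0_*$ is defined with respect to the declared homogeneities, and the subtracted terms in \eqref{eqdef_starderivative} must be consistent with the expansion of $u^{\#,\gamma_\ell}$. Lemma \ref{lem_starDerivUpsilon_hat} gives exactly this coherence, by identifying the star derivatives of the truncated system with $u_{(\downarrow^k)^*\tau\Xi_\ell}$. Second, a non-integrality condition is needed for the partial sums. I would secure it by using that all tree degrees are non-integers, and by perturbing $\gamma$ slightly if the first homogeneity causes an integer sum. If $\gamma_\ell-\beta$ falls below $-\kappa/2$ for a borderline choice, I would redo the count with $\gamma'_0=\beta-\kappa/2$ in place of $\gamma_0$.

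For local Lipschitz continuity, the map $(u,u^\#)\mapsto u^{\#,\gamma_\ell}_{\tau\Xi_\ell}$ is locally Lipschitz into $C^{\gamma_\ell-|\mcI(\tau\Xi_\ell)|}$. This comes from the local Lipschitz statement of Theorem \ref{prop_paralinearization}, applied as in Proposition \ref{prop_regularite_utaudash_gammaell}, together with the Lipschitz dependence of the coefficients $u_\sigma$ through Corollary \ref{cor_welldefinition_starpartialu}. We then compose with the multilinear continuous bound on $\bfC$. Since the $Z$ arguments are fixed, $\bfC$ is linear in its first argument, so the composition is locally Lipschitz.
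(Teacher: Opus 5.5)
Your argument is the paper's: a finite sum of correctors, each with total degree $\gamma_\ell-|\mcI(\tau\Xi_\ell)|+|\tau\Xi_\ell|=\gamma_\ell-\beta>\kappa>0$ by Lemma~\ref{lem_ineqgammal}, then \cite[Theorem 1]{LocalExpansionsParacSystems}, with local Lipschitz continuity coming from the multilinearity of $\bfC$ composed with $(u,u^\#)\mapsto u^{\#,\gamma_\ell}_{\tau\Xi_\ell}$.

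You overstate one step. When the total degree is positive, the cited result gives only $\bfC(\cdots)\in L^\infty$, which is all the paper uses, via $L^\infty\subset C^{-\kappa/2}$. It does not give $\bfC(\cdots)\in C^{\gamma_\ell-\beta}$, and that stronger claim fails in general. The reason is that the increments of $g(\nu)$ for $|\nu|>0$ contain terms $\gamma_{yx}(\nu')g_x(\nu'')$ with $0<|\nu'|<|\nu|$, for example unsubtracted products of a derivative of the first argument with a positive-degree weighted paraproduct. Also, since Lemma~\ref{lem_ineqgammal} already gives $\gamma_\ell-\beta>\kappa$, your hedging about falling below $-\kappa/2$, switching to $\gamma_0'$, and invoking Lemma~\ref{lem_starDerivUpsilon_hat} is unnecessary.
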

	
	\begin{proof}
		For any tree $\tau$, from Proposition \ref{prop_regularite_utaudash_gammaell} we have $u_{\tau\Xi_\ell}^{\#,\gamma_\ell} \in C^{\gamma_\ell-|\mcI(\tau\Xi_\ell)|}.$ Then the sum of the regularities in the corrector ${\bf C} (u^{\#,\gamma_\ell}_{\tau\Xi_\ell},Z_{\tau/\tau_1}, \dots, Z_{\tau_q\Xi_\ell})$ is equal to 
		$$
		\gamma_\ell-|\mcI(\tau\Xi_\ell)| +|\tau/\tau_1|+\sum_{j=1}^{q-1} |\tau_j/\tau_{j+1}| + |\tau_q\Xi_\ell| = \gamma_\ell-\beta  >\kappa>0,
		$$ 
		where we have used Lemma \ref{lem_ineqgammal} for the inequality above. From \cite[Theorem 1]{LocalExpansionsParacSystems}, the corrector ${\bf C} \big(u^{\#,\gamma_\ell}_{\tau\Xi_\ell},Z_{\tau/\tau_1}, \dots, Z_{\tau_q\Xi_\ell}\big)$ is indeed in $L^\infty \subset C^{-\kappa/2}$.
		
	\end{proof}

	\subsubsection{Integration} \label{subsection_integration}
	
	\begin{lemma}
		For any $u$ satisfying the paracontrolled ansatz of \eqref{eq_gPAM} and any stochastic data $\mcZ=(Z_\tau)$, for any $\ell\in\bbrack{1;\ell_0}$ the distribution $F_\ell(v)\cdot \xi_\ell$ admits a lift $\boldsymbol{\Phi}_{\ell}$ in $\mcD^{-\kappa/2 }(\scrT_\P)$ given by 
		$$
		\boldsymbol{\Phi}_{\ell}= \sum_{\substack{\tau\in\mcT_o \\ |\tau\Xi_\ell|<-\kappa/2}} u_{\tau\Xi_\ell} \Psi(\tau\Xi_\ell). 
		$$
	\end{lemma}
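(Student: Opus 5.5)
The plan is to build the lift by composing the lifts already available and then truncating. By Theorem \ref{prop_paralinearization}, or more precisely its modelled-distribution form from its proof, the coefficient $u_{\Xi_\ell}=\Upsilon_F[\Xi_\ell](v_*)$ admits a lift ${\bm h}_\ell = \mathcal{P}_{\gamma'}\sum_{\sigma} \frac{1}{S(\sigma)}\Upsilon_F[\sigma\star\Xi_\ell](v_*)\,\Psi(\sigma)$. The sum runs over $\sigma\in\mcT^+$ and lies in $\mcD^{\gamma_\ell-|\mcI(\Xi_\ell)|}$. This step uses Lemma \ref{lem_LiftGpamsystemToRS} together with \cite[Theorem 4.16]{Hai14} and the Fa\`a di Bruno formula of Lemma \ref{Faa Di Bruno}. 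In fact one should work with the expansion at the higher order $\gamma_\ell$, as in Proposition \ref{prop_regularite_utaudash_gammaell}, so that the coefficients are exactly $u_{\sigma\star\Xi_\ell}$.

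Next I multiply by the noise symbol. I would extend $\scrT_\P$ by the letter $\Xi_\ell$ and define $\boldsymbol{\Phi}_\ell = \mathcal{P}_{<-\kappa/2}\big({\bm h}_\ell\cdot \Psi(\Xi_\ell)\big)$. By the multiplication theorem for modelled distributions, the product lies in $\mcD^{\gamma_\ell-|\mcI(\Xi_\ell)|+|\Xi_\ell|}=\mcD^{\gamma_\ell-\beta}$, and Lemma \ref{lem_ineqgammal} guarantees $\gamma_\ell-\beta>\kappa>0$, which in particular exceeds $-\kappa/2$. Combinatorially, I then need the identity $\Psi(\sigma)\Psi(\Xi_\ell)\leftrightarrow\Psi(\sigma\Xi_\ell)$ at the level of the model. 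This would be the multiplicative analogue of Lemma \ref{lem_PsiTau_multiplicatif} for trees with a noise at the root. To get it, I would use Lemma \ref{lem_multiplicativity_modelPsicirc}, which gives $\Pi_x(\Psi(\tau\Xi_\ell))=\xi_\ell\prod\Pi_x\Psi(\mcI_{b_j}(\tau_j))$, together with the admissibility relation \eqref{eq_mult_PPi_Zcirc1}. Rewriting the coefficients via $\langle \sigma\star\Xi_\ell,\tau\Xi_\ell\rangle$ and the symmetry factors then produces exactly $\sum u_{\tau\Xi_\ell}\Psi(\tau\Xi_\ell)$, restricted to $|\tau\Xi_\ell|<-\kappa/2$.

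Finally, I would check that the reconstruction of $\boldsymbol{\Phi}_\ell$ equals $F_\ell(v)\cdot\xi_\ell$ as defined in \eqref{eqdef_Produit_Fluxil1}. For this I apply Corollary \ref{cor_repmodelledditribParap}, which expresses ${\bm R}\boldsymbol{\Phi}_\ell$ as $\sum \P(u_{\tau\Xi_\ell},Y_{\Psi(\tau\Xi_\ell)})$ plus a remainder. I then identify $Y_{\Psi(\tau\Xi_\ell)}$ with $Z_{\tau\Xi_\ell}$ in the same way as in Propositions \ref{prop_Ztau=YPsitau_integration} and \ref{prop_Ztau=YPsitau_mult}. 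The remainder has positive regularity, so the difference ${\bm R}\boldsymbol{\Phi}_\ell-\sum\P(u_{\tau\Xi_\ell},Z_{\tau\Xi_\ell})$ must equal $\Phi^\#_\ell$. I would verify this by matching the correctors $\bfC$ through the local expansion from \cite[Theorem 1]{LocalExpansionsParacSystems}; uniqueness of reconstruction applies because the order is positive before truncation.

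The main obstacle is the product step. On words, the product $\Psi(\sigma)\Psi(\Xi_\ell)$ is not a concatenation of words, so one must either enlarge $\scrT_\P$ as in the extension $\widehat{T_\mcA}$ or argue only at the level of increments $\Pi_x$. My first attempt would be to work purely with increments: show that $\Gamma_{yx}$-compatibility of the candidate $\boldsymbol{\Phi}_\ell$ follows from Lemma \ref{lem_DeltaPsitau} and Lemma \ref{lem_SnDnreindex}, exactly as in the proof of Corollary \ref{PPi_Psi}.
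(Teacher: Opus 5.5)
\textbf{The gap is in your product step.} The structure $\scrT_\P$ has no product of words with the noise letter. The only extension of it that carries a product of words, $\widehat{T_\mcA}$, is set up for letters of positive degree. It cannot be extended to $\Xi_\ell$ with a multiplicative model, because for rough data $\PPi^\mcZ(\Psi(\sigma))\,\xi_\ell$ is ill-defined (already $(K*\xi)\,\xi$ for gPAM). The model on such products must be supplied by the stochastic data itself, through the letters $\nu\Xi_\ell$ and the fields $Z_{\nu\Xi_\ell}$.

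The identities you invoke to identify $\Psi(\sigma)\Psi(\Xi_\ell)$ with $\Psi(\sigma\Xi_\ell)$ are Lemma \ref{lem_multiplicativity_modelPsicirc} and \eqref{eq_mult_PPi_Zcirc1}. Both concern the multiplicative collection $\mcZ^\circ$, which exists only for smooth data and differs from $\mcZ$ as soon as the data are renormalised ($\widehat Z_\tau=\widehat Z^\circ_{R\tau}$). Admissibility imposes multiplicativity only on $T^+$ (Lemma \ref{lem_PsiTau_multiplicatif}), never at a node carrying a noise, which is exactly where renormalisation acts. So for the general data in the statement, including the renormalised data fed into Proposition \ref{prop_integration}, $F_\ell(v)\cdot\xi_\ell$ is not the reconstruction of ${\bm h}_\ell$ times a noise symbol under any model you have. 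Your final step of ``matching the correctors'' would then have to carry the entire proof.

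\textbf{The missing idea is that no product is needed.} Expanding $u_{\tau\Xi_\ell}$ with \eqref{eq_udash_gammaell} iterated in the first argument, definitions \eqref{eqdef_Produit_Fluxil1}--\eqref{eqdef_Produit_Fluxil2} say that $F_\ell(v)\cdot\xi_\ell$ is the finite sum $\sum_{\tau}\sum_{\tau_q\prec\dots\prec\tau}\P\big(u^{\#,\gamma_\ell}_{\tau\Xi_\ell},Z_{\tau/\tau_1},\dots,Z_{\tau_q\Xi_\ell}\big)+\Phi^\#_\ell$. Here $\tau_q\Xi_\ell$ is a single letter of negative degree, and each tuple has total regularity $\gamma_\ell-\beta>\kappa$ by Lemma \ref{lem_ineqgammal}.

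The paper lifts each iterated paraproduct directly with \cite[Theorem 1]{LocalExpansionsParacSystems}. The result is a lift in $\mcD^{\gamma_\ell-\beta}$ whose word coefficients are star derivatives $\partial^k_*\P\big(u^{\#,\gamma_\ell}_{\tau\Xi_\ell},Z_{\tau/\sigma_1},\dots,Z_{\sigma_q/\sigma}\big)$; the polynomial part also involves $\partial^k\Phi^\#_\ell$. It then truncates at order $-\kappa/2$ and applies Lemma \ref{lem_starDerivUpsilon_hat}, which you never use, to turn these coefficients into $u_{(\downarrow^k)^*(\sigma\Xi_\ell)}$. The adjoint relation between $(\downarrow^k)^*$ and $\downarrow^k$ then regroups the words into $\sum u_{\sigma\Xi_\ell}\Psi(\sigma\Xi_\ell)$.

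Because the lift is built from the defining formula, the reconstruction identity comes for free. Your fallback, checking $\Gamma_{yx}$-compatibility via Lemmas \ref{lem_DeltaPsitau} and \ref{lem_SnDnreindex}, is purely algebraic and cannot supply the analytic bounds. Those bounds come from the local expansion of the coefficients $u_{\tau\Xi_\ell}$ at order $\gamma_\ell$.
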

	
	\begin{proof}
		The proof is very similar to the proof of Lemma \ref{lem_LiftGpamsystemToRS}. We can write 
		$$
		F_\ell(v)\cdot \xi_\ell =  \sum_{\substack{\tau\in\mcT_o \\  |\mcI(\tau\Xi_\ell)|<\gamma_\ell}} \sum_{\substack{q\geq 0\\ \tau_q\prec\dots\prec\tau}} \P( u_{\tau\Xi_\ell}^{\#,\gamma_\ell}, Z_{\tau/\tau_1},\dots, Z_{\tau_q\Xi_\ell}     )  + \Phi_\ell^{\#}
		$$
		with $u_{\tau\Xi_\ell}^{\#,\gamma_\ell}\in C^{\gamma_\ell-|\mcI(\tau\Xi_\ell)|}$ and $\Phi_\ell^\#\in C^{\gamma_\ell-\beta}$.
		From \cite[Theorem 1]{LocalExpansionsParacSystems} applied to each uplet $(u_{\tau\Xi_\ell}^{\#,\gamma_\ell}, Z_{\tau/\tau_1},\dots, Z_{\tau_q\Xi_\ell} )$, this distribution admits the following lift in $\mcD^{\gamma_\ell-\beta}$
		\begin{align*}
			{\boldsymbol{\widehat\Phi_\ell}} &= \sum_{\substack{\tau\in\mcT_o \\ |\mcI(\tau\Xi_\ell)|<\gamma_\ell }}\sum_{\substack{ \sigma\prec\tau \\ \sigma \prec\sigma_q \prec\dots\prec\mcI(\tau)  \\ \nu_s\prec\dots\prec\nu_1\prec\sigma  }} \sum_{\substack{|k|<\gamma_\ell- |\tau/\sigma| \\ k=k_1+k_2 }} \frac{1}{k_1!\, k_2!}\partial^k_*\P\Big(u_{\tau\Xi_\ell}^{\#,\gamma_\ell}, Z_{\tau/\sigma_1},\dots, Z_{\sigma_q/\sigma}\Big) 
			\\
			&\hspace{7cm} \times \lg {\sigma/\nu_1},\dots , \nu_s\Xi_\ell  \rg_{k_1}X^{k_2} 
			\\
			&+\sum_{|k|<\gamma_\ell-\beta} \Big\{   \sum_{\substack{ \tau\in\mcT_o \\  |\mcI(\tau\Xi_\ell)|<\gamma_\ell}} \sum_{\substack{q\geq 0\\ \tau_q\prec\dots\prec\tau}} \partial^k_*\P\Big( u_{\tau\Xi_\ell}^{\#,\gamma_\ell}, Z_{\tau/\tau_1},\dots, Z_{\tau_q\Xi_\ell}     \Big)  + \partial^k \Phi_\ell^{\#}\Big\} \frac{1}{k!}X^k.
		\end{align*}
		We truncate the lift at order $-\kappa/2$, and apply Lemma \ref{lem_starDerivUpsilon_hat} to obtain the new lift
		\begin{align*}
			\boldsymbol{\Phi_\ell} &= \sum_{\substack{\sigma\in\mcT_o \\ |\sigma\Xi_\ell|< -\kappa/2 }}\sum_{\substack{ \nu_s\prec\dots\prec\nu_1\prec\sigma  }} \sum_{\substack{|k|<-\kappa/2- |\sigma\Xi_\ell| \\ k=k_1+k_2 }} \frac{1}{k_1!\, k_2!} u_{(\downarrow^k)^*(\sigma\Xi_\ell)} \lg {\sigma/\nu_1},\dots , \nu_s\Xi_\ell  \rg_{k_1}X^{k_2}
			\\
			&=  \sum_{\substack{\sigma\in\mcT_o \\ |\sigma\Xi_\ell|< -\kappa/2 }} \sum_{\substack{ \nu_s\prec\dots\prec\nu_1\prec\sigma  }} \sum_{\substack{k\geq 0 \\ k=k_1+k_2 }} \frac{1}{k_1!\, k_2!} u_{\sigma\Xi_\ell} \, \lg (\downarrow^k\sigma)/\nu_1,\dots , \nu_s\Xi_\ell  \rg_{k_1}X^{k_2}
			\\
			&= \sum_{\substack{\sigma\in\mcT_o \\  |\sigma\Xi_\ell|< -\kappa/2     }}  u_{\sigma\Xi_\ell } \Psi( \sigma\Xi_\ell  ).
		\end{align*}	
	\end{proof}

	\begin{proposition}\label{prop_integration}
		We have 
		$$
		\scrL^{-1}\Big( \sum_{\ell=1}^{\ell_0} F_\ell(v) \cdot\xi_\ell  \Big) = \sum_{\substack{\tau\in\mcT \\  |\mcI(\tau)|<\gamma'_0 }} \P(u_\tau, Z_{\mcI(\tau)}  ) + \Psi^\#
		$$
		with $\Psi^\# \in C^{\gamma_0'}$, and furthermore the map $(u,u^\#) \mapsto \Psi^\#$ is local Lipschitz.
	\end{proposition}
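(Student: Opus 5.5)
We combine the lift $\boldsymbol{\Phi}_\ell$ of $F_\ell(v)\cdot\xi_\ell$ from the preceding lemma with Hairer's multilevel Schauder estimate on the regularity structure $\scrT_{\mcA,\mcI}$ (iterated paraproducts extended by the abstract integration map $\mcI$). We then read off the paracontrolled form through Corollary~\ref{cor_repmodelledditribParap} and Proposition~\ref{prop_Ztau=YPsitau_integration}.

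\textbf{Step 1: abstract integration.} Set $\boldsymbol{\Phi}=\sum_{\ell}\boldsymbol{\Phi}_\ell\in\mcD^{-\kappa/2}$. Since $-\kappa/2+\beta=\gamma_0'$ is not an integer, Theorem 5.12 of \cite{Hai14} yields
$$\mathcal{K}_{\gamma_0'}\boldsymbol{\Phi}=\sum_{\ell}\sum_{|\tau\Xi_\ell|<-\kappa/2}u_{\tau\Xi_\ell}\,\mcI\big(\Psi(\tau\Xi_\ell)\big)+(\text{polynomial part}),$$
an element of $\mcD^{\gamma_0'}$ whose reconstruction is $K*\mathbf{R}\boldsymbol{\Phi}=K*\sum_\ell F_\ell(v)\cdot\xi_\ell$. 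The smooth part $R$ of $\overline K$ only adds a function in $C^{\gamma_0'}$, which is absorbed into $\Psi^\#$. The model here is the extension to $\mcI$-symbols described in the paragraph on extensions of $\scrT_\mcA$. Continuity of $\mathcal{K}$ in the $\mcD$-norm, together with local Lipschitz dependence of $\boldsymbol{\Phi}_\ell$ on $(u,u^\#)$, gives the Lipschitz statement.

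\textbf{Step 2: relabelling.} The trees $\tau\in\mcT$ with $|\mcI(\tau)|<\gamma_0'$ are exactly those of the form $\tau\Xi_\ell$ with $|\tau\Xi_\ell|<-\kappa/2$, and their coefficients are $u_{\tau\Xi_\ell}=u_\tau$. Hence the non-polynomial part of the lift reads $\sum_{|\mcI(\tau)|<\gamma_0'}u_\tau\,\mcI(\Psi(\tau))$.

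\textbf{Step 3: paracontrolled form.} Apply Corollary~\ref{cor_repmodelledditribParap} in $\scrT_{\mcA,\mcI}$. This gives $\mathbf{R}\mathcal{K}\boldsymbol{\Phi}=\sum\P(u_\tau,Y^\mcZ_{\mcI(\Psi(\tau))})+\Psi^\#$, with $\Psi^\#\in C^{\gamma_0'}$, after checking that the coefficients of the basis elements $\mcI(\Psi(\tau))$ are the $u_\tau$.

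There is one subtlety. The Corollary expands the lift along basis symbols, whereas $\mcI(\Psi(\tau))$ is a linear combination of $\mcI(\lg w\rg_k X^p)$. So we run the Corollary on an extension of the regularity structure containing one new symbol per $\mcI(\Psi(\tau))$, exactly as in the proof of Corollary~\ref{cor_repmodelledditribParap}. This is legitimate because Lemma~\ref{lem_DeltaPsitau} and \eqref{eq_CommutMotAvecMcI} show that the coproduct closes on these combinations. Finally, Proposition~\ref{prop_Ztau=YPsitau_integration} identifies $Y^\mcZ_{\mcI(\Psi(\tau))}=Z_{\mcI(\tau)}$, which gives the stated form.

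\textbf{Main obstacle.} We expect Step 3 to be the hard part, namely verifying that the coefficient extraction in Corollary~\ref{cor_repmodelledditribParap}, applied to the grouped symbols $\mcI(\Psi(\tau))$, produces exactly $u_\tau$. This requires the recentering of $\mathcal{K}\boldsymbol{\Phi}$ to match the coproduct of $\mcI(\Psi(\tau))$. For that we would use Lemma~\ref{lem_DeltaPsitau} and Proposition~\ref{Prop_CommutDerivativePsi} to show that the $\mcI^+_n$-coefficients arising from Schauder agree with $g^\mcZ(D^n\Psi(\mcI(\tau)))$.

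Assumption~\ref{assumption_notint} guarantees that none of these degrees is an integer, so the results of \cite{LocalExpansionsParacSystems} invoked along the way apply.
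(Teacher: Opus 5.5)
Your proposal follows the paper's proof: the lift $\boldsymbol{\Phi}_\ell$, multilevel Schauder with $\mcK=\mcI+\mcN$, Corollary~\ref{cor_repmodelledditribParap}, then Proposition~\ref{prop_Ztau=YPsitau_integration}, with the smooth part of $\overline K$ absorbed into $\Psi^\#$. The ``subtlety'' you flag in Step~3 is not an obstacle: expanding $\mcI(\Psi(\tau))$ in the basis and using linearity of $Y$ and bilinearity of $\P$ turns the Corollary's sum directly into $\sum_\tau \P\big(u_\tau, Y^\mcZ_{\mcI(\Psi(\tau))}\big)$, which is exactly what the paper does (and your proposed extension would not literally close under $\Delta$, since $\Delta\mcI(\Psi(\tau))$ produces $\mcI(\Psi_n(\sigma))$ rather than $\mcI(\Psi(\sigma))$).
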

	
	\begin{proof}
		We use the multilevel Schauder estimates from \cite[Theorem 5.12]{Hai14}  to write
		$$
		K*\Big( \sum_{\ell=1}^{\ell_0} F_\ell(v) \cdot\xi_\ell  \Big) = {\bm R} \Big( \mcK \,  \sum_{\ell=1}^{\ell_0}\boldsymbol{\Phi_\ell}\Big)
		$$
		where the operator $\mcK$ has the form $\mcK = \mcI + \mcN $ where $\mcN$ takes value in the polynomial sector and $\mcI$ is the integration symbol. 
		The Corollary \ref{cor_repmodelledditribParap} gives the paracontrolled representation
		$$
			K*\Big( \sum_{\ell=1}^{\ell_0} F_\ell(v) \cdot\xi_\ell  \Big) = \sum_{\ell=1}^{\ell_0}\sum_{|\sigma\Xi_\ell|<-\kappa/2} \P\big( u_{\sigma\Xi_\ell}, Y^\mcZ_{\mcI(\Psi(\sigma\Xi_\ell))}   \big) + Y_{\mcK \boldsymbol{\Phi_\ell}}
		$$
		with $Y_{\mcK \boldsymbol{\Phi_\ell}} \in C^{\beta-\kappa/2}$. We have $\gamma_0'=\beta-\kappa/2$ by definition, and Proposition \ref{prop_Ztau=YPsitau_integration} ensures that $Y^\mcZ_{\mcI(\Psi(\sigma\Xi_\ell))}=Z_{\mcI(\sigma\Xi_\ell)}$. The remainder $R$ of $\Linv$ is $C^\infty$, integration against it gives a smooth term we can put in the paracontrolled remainder. Then  
			$$
		\Linv\Big( \sum_{\ell=1}^{\ell_0} F_\ell(v) \cdot\xi_\ell  \Big) = \sum_{\ell=1}^{\ell_0}\sum_{|\sigma\Xi_\ell|<-\kappa/2} \P( u_{\sigma\Xi_\ell}, Z_{\mcI(\sigma\Xi_\ell)}   ) + \Psi^\#
		$$
		with $\Psi^\# = \sum_{\ell=1}^{\ell_0} Y_{\mcK \boldsymbol{\Phi_\ell}} + R *( F_\ell(v)\cdot\xi_\ell)$. 
	\end{proof}

	\subsection{Two examples}
	\label{examples}
	
	In this subsection we illustrate the paracontrolled ansatz described above in the examples of the gPAM and $\Phi_3^4$ models. The example of gPAM is the prototypical example of paracontrolled ansatz initially  treated in \cite{GIP}, we will also make the associated product map explicit in this setting. The $\Phi_3^4$ model, on the other hand, provides an example in which the derivatives $\partial_*^k u$ appear.
	
	\subsubsection{The example of gPAM}
	
	We consider here the gPAM equation 
	$$
	(\partial_t-\Delta)u=f(u)\xi, \quad (t,x) \in \R_+ \times \T^2, 
	$$
	where $\xi$ is a space white noise in $C^{\alpha-2}$ with $\alpha = 1 - \kappa$ with $\kappa \leq \frac{1}{3}$. We take $\gamma=2\alpha$ and set the degree $|\Xi|$ of the noise to be $\alpha-2$. The only tree $\mcI(\tau)$ with degree strictly smaller than $\gamma$ is $\mcI(\tau)$. The paracontrolled ansatz writes then as
	$$
	u=\P\big(f(u),Z_{\mcI(\Xi)}\big) + u^\#
	$$ 
	with $u^\#\in C^\gamma$. The only additional singular stochastic fields we need is $Z_{\Xi\mcI(\Xi)}$, its naive version is given by 
	$$
	Z_{\Xi\mcI(\Xi)} = \P(K*\xi,\xi) - (K*\xi) \, \xi = - ((K*\xi) \odot \xi) - \P(\xi, K*\xi). $$
	As the paraproduct $\P(\xi, K*\xi)$ is well defined, we only have to give a definition for the resonant product $ K*\xi \odot \xi$, this is the additional ill-defined field used in \cite{GIP} for defining the singular product. We recall that the resonant product is defined for two distributions $ h_1, h_2 $ by 
	\begin{equation*}
		h_1 \odot h_2 = h_1 h_2 - \P(h_1,h_2) -  \P(h_2,h_1).
	\end{equation*}
	We now make explicit the product map defined in \eqref{eqdef_Produit_Fluxil1} and \eqref{eqdef_Produit_Fluxil2}. The paracontrolled development of $u_\Xi=f(u)$ described in Subsection \ref{subsect_paracstructure_UpsilonXi} is here
	$$
	f(u) = \P\big((ff')(u) , Z_{\mcI(\Xi)}\big)  +  f(u)^\#
	$$
	with $f(u)^\#\in C^\alpha$. We also have $u_{\Xi\mcI(\Xi)}^\# =u_{\Xi\mcI(\Xi)} = ff'(u)\in C^\alpha$. The product map \ref{subsect_productmap} writes as 
	 \begin{align*}
	 	f(u)\cdot\xi &=\P(f(u),Z_\Xi) +  \P\big(ff'(u),\,  Z_{\Xi\mcI(\Xi)}  \big)  \\&\quad -  \bfC\big( f(u)^\#,\, Z_\Xi \big)  -\bfC\big(ff'(u), Z_{\Xi\mcI(\Xi)}\big) -   \bfC\big( (ff')(u) , \, Z_{\mcI(\Xi)}, \, Z_{\Xi}   \big).
	 \end{align*}
	 We observe that this product formula is conceptually similar to the one from \cite{GIP}, except that the corrector $\bfC$ is defined differently and incorporates the resonant products.

	\subsubsection{The example of $\Phi_3^4$}
	
	We consider 
	\begin{equation}
	(\partial_t-\Delta) u = -u^3 + \xi, \quad (t,x) \in \R_+ \times \T^3
\end{equation}
	where $\xi\in C^{-5/2-\kappa}$. We set $\gamma = 1+4\kappa$, the set of planted trees $\mcI(\tau)$ with degree smaller than $\gamma$ and non vanishing elementary differential is $$\{\mcI(\Xi), \mcI(\mcI(\Xi)^2) , \mcI(\mcI(\Xi)^3)  \}.$$ We note that the tree $\mcI(\Xi)$ has negative degree and that $\overline{Z}_{\mcI(\Xi)} = Z_{\mcI(\Xi)}=K*\xi$ for the naive model, then $\partial^0_*u = u - Z_{\mcI(\Xi)}$. The paracontrolled ansatz then takes the form
	$$
	u= \P\big( 1, Z_{\mcI(\Xi)}  \big) -  \P\big(1 ,Z_{\mcI(\mcI(\Xi)^3)} \big) + 3\P\big(\partial^0_*u , Z_{\mcI(\mcI(\Xi)^2)}\big)  + u^\#,
	$$
	with $u^\#\in C^\gamma$.
We note that for any distribution $Z$ we have $\P(1,Z)=Z $ up to the smooth term $\Delta_{\leq0}Z$. Then the ansatz above  is equivalent to the one we can find in \cite{Phi43Paracontrol} for instance.
	The set of singular stochastic fields is here 
	$$
	\{ Z_{\mcI(\Xi)^2},\, Z_{\mcI(\Xi)^3}, \, Z_{\mcI( \mcI(\Xi)^3 )\mcI(\Xi)} , \, Z_{\mcI( \mcI(\Xi)^3 )\mcI(\Xi)^2}, \, Z_{\mcI( \mcI(\Xi)^2 )\mcI(\Xi)^2}, \, Z_{X^{e_i}(\mcI\Xi)^2 }  \}.
	$$
	Setting $X=Z_{\mcI(\Xi)}=K*\xi$, the naive version of these fields is given by 
	\begin{align*}
		&Z_{\mcI(\Xi)^2}= X^2, \quad  Z_{\mcI(\Xi)^3} = X^3, \quad Z_{X^{e_i}(\mcI\Xi)^2 }=0, 
		\\
		& Z_{\mcI( \mcI(\Xi)^3 )\mcI(\Xi)} = X \, (K*X^3) - \P(K*X^3,\, X) =-\bfC(K*X^3,\, X), 
		\\
		&Z_{\mcI( \mcI(\Xi)^3 )\mcI(\Xi)^2} = X^2 \, (K*X^3) - \P(K*X^3,\, X^2)=-\bfC(K*X^3,\, X^2),
		\\
		& Z_{\mcI( \mcI(\Xi)^2 )\mcI(\Xi)^2} = X^2 \, (K*X^2)  - \P(K*X^2,\, X^2)=-\bfC(K*X^2,\, X^2).
			\end{align*}
	The correctors $\bfC$ above can be interpreted as some resonant product in the same way as in the example of gPAM above. In the end we recover the same enhanced noise as in \cite{Phi43Paracontrol}.
	
	
	

	\section{A fixed point argument} \label{section_fixedpoint}
	
	The difficulty in performing a fixed point argument is that we have to work with functions $u,u^\#$ satisfying the constraint $$u=\sum_{|\CI(\tau)| < \gamma} \P(u_\tau,Z_{\mcI(\tau)})+u^\#.$$ We cannot carry out a fixed point argument directly in a space of paracontrolled distributions satisfying this constraint, because the distribution $\scrL^{-1}(\sum_\ell F_\ell(v)\cdot \xi_\ell) $ does not necessarily satisfy the same structural constraint.

	Instead, we perform the fixed point argument only on the remainder $u^\#$. To do so, we have to construct from any $u^\#\in C^\gamma$ with $u^\#_{|t=0}=u_0$ a function $u$ satisfying the paracontrolled ansatz of \eqref{eq_gPAM} with remainder $u^\#$. Computing the right hand side of \eqref{eq_gPAM}
	$$
	\scrL^{-1}\Big(\sum_{\ell=1}^{\ell_0} F_\ell(v)\cdot \xi_\ell \Big)   = \sum_{|\mcI(\tau)|<\gamma} \P(u_\tau,Z_{\mcI(\tau)}) + \Psi^\#, 
	$$
	we see that $u$ is solution to \eqref{eq_gPAM} with initial condition $u_0$ if and only if $$\Psi^\# + P_t u_0=u^\#.$$
	We set $\boldsymbol{\Psi}_{\mcZ}^\#$ to be the map that associates to any function $u$ satisfying the paracontrolled ansatz the remainder $\Psi^\#$ defined from \ref{prop_integration} when the stochastic data is $\mcZ$.

	\begin{definition}\label{def_solution_EqPAM}
		A solution of  \eqref{eq_gPAM} with initial condition $u_0\in C^\gamma(\bfR^d)$ and stochastic data $\mcZ\in{\frak N}$, is a function $u$ satisfying the paracontrolled ansatz with remainder $u^\#$, such that 
		$$
		\boldsymbol{\Psi}_{\mcZ}^\#(u) + P_t u_0= u^\#.
		$$ 
	\end{definition}

	One can indeed construct a function $u$ satisfying the paracontrolled ansatz of \eqref{eq_gPAM} with remainder $u^\#$ provided one slightly modification of the definition of the ansatz by applying a Fourier cut-off to the reference distributions $Z_{\mcI(\tau)}$. This cut-off preserves the singular component of the distributions while making them sufficiently small in the appropriate functional spaces. To achieve this, we employ the same idea as in  \cite[Lemma 4.12]{AllezChouk}, constructing the map $\Gamma : u^\# \mapsto u$ as a perturbation of the identity, see Proposition \ref{prop_AppGamma} below.
	For any distribution $Z$ and any $a\in\bfN$, we set 
	\begin{equation}\label{eq_defZ_FourierCutoff}
		Z^{>a} = \sum_{i>a} \Delta_i Z.
	\end{equation}
	For any regularity exponents $r_1<r_2$, we have the estimate 
	\begin{equation}\label{eq_normeZ_FourierCutoff}
		\norme{Z^{>a}}_{r_1} \leq 2^{-a(r_2-r_1)}\norme{Z}_{r_2}.
	\end{equation}
	In order to apply this trick, we will have to suppose that the stochastic objects $(Z_{\mcI(\tau)})_\tau$ are slightly more regular than the regularity given by their degree $|\mcI(\tau)|$. To this end, we introduce an additional degree $|\cdot|'$, constructed as in Subsection \ref{subsection_DecoratedTrees}, from exponents satisfying $|\Xi_\ell|' > |\Xi_\ell|$, and such that the subcriticality assumption of Subsection \ref{subsec_subcriticality} still holds. We now regard the noises $\xi_\ell$ as elements of $C^{|\Xi_\ell|'}$, and construct the stochastic data with respect to this modified degree. In particular, this implies that $Z_\tau \in C^{|\tau|'}$. We let $\eps_0 = \sup\{ |\Xi_\ell|'-|\Xi_\ell|, \, \ell\in\bbrack{1;\ell_0}  \}$. Observe that, for every non-polynomial tree $\tau$, one has $|\tau|'\geq|\tau|+\eps_0$. Then \eqref{eq_normeZ_FourierCutoff} rewrites as 
	\begin{equation}\label{eq_normeZtau_smallfactor}
	\norme{Z_\tau^{>a}}_{C^{|\tau|}} \lesssim 2^{-a\eps_0}.
\end{equation}
	
	
	\begin{proposition}\label{prop_AppGamma}
		Let $\gamma >0 $ and a stochastic data $\mcZ$ satisfying \eqref{eq_normeZtau_smallfactor}, for $R>0$ big enough, there exist an integer $a\in\bfN$ and a continuous map $\Gamma : \{ \phi \in C^\gamma, \, \norme{\phi}_{C^\gamma}\leq R \} \to C^{\alpha_0}$, that associates to any $\phi\in C^\gamma$ a function $u=\Gamma \phi$ that satisfies the paracontrolled ansatz of \eqref{eq_gPAM} at order $\gamma$ and such that  
		\begin{equation}
			u=\sum_{|\mcI(\tau)|<\gamma}\P(  u_\tau   , \, Z_{\mcI(\tau)}^{>a})+\phi.
		\end{equation}
	\end{proposition}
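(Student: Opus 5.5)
The plan is to follow \cite[Lemma 4.12]{AllezChouk}: build $u=\Gamma\phi$ as the fixed point of
\[
\mathcal{M}_\phi(u)=\sum_{|\mcI(\tau)|<\gamma}\P\big(u_\tau(u),Z^{>a}_{\mcI(\tau)}\big)+\phi ,
\]
so that $u$ is a perturbation of the identity $u\approx\phi$. Since $u_\tau=\frac{1}{S(\tau)}\Upsilon_F[\tau]\big((\partial^j_*u)_{|j|<|\mcI(\tau)|}\big)$ is only defined once $u$ already satisfies the ansatz to lower order, we argue by induction over the degrees $A=(\alpha_1<\alpha_2<\dots)$ of the planted trees $\mcI(\tau)$, in the same way as the definition of the ansatz for rough data given after Proposition \ref{prop_pamansatz_to_pamsystem}. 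Assume $u$ satisfies the ansatz at order $\alpha_j$ with the cut-off data $\mcZ^{>a}$. By Proposition \ref{prop_pamansatz_to_pamsystem}, $u$ then develops in the paracontrolled system. Corollary \ref{cor_welldefinition_starpartialu} shows that the $\partial^k_*u$ with $|k|<\alpha_j$ are in $L^\infty$ with norm $\lesssim\norme{u}_{\mcD^{\alpha_j}_\P}$, and that they are locally Lipschitz in the data. As a result every coefficient $u_\tau$ entering at the next level is a locally Lipschitz function of $(u,\phi)$.

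\textbf{Contraction.} We work in the ball $\{(u,u^\#):\ u^\#=\phi,\ \norme{u}_{C^{\alpha_0}}\le 2R\}$, and at each level of the induction also control the remainders $u^\#_\tau$ of the paracontrolled system. The paraproduct estimate gives
\[
\norme{\P(f,Z^{>a}_{\mcI(\tau)})}_{C^{|\mcI(\tau)|}}\lesssim \norme{f}_{L^\infty}\,\norme{Z^{>a}_{\mcI(\tau)}}_{C^{|\mcI(\tau)|}}\lesssim 2^{-a\eps_0}\norme{f}_{L^\infty},
\]
where the last step uses \eqref{eq_normeZtau_smallfactor}. Hence $\mathcal{M}_\phi(u)-\phi$ and $\mathcal{M}_\phi(u)-\mathcal{M}_\phi(v)$ are bounded by $C(R)\,2^{-a\eps_0}$ times, respectively, $1$ and the distance between $u$ and $v$. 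Here $C(R)$ comes from the local Lipschitz bounds on $\Upsilon_F[\tau]$, which are polynomial in the negative arguments and smooth in the others, on a ball of radius depending on $R$ and on $\norme{\mcZ}_{\frak N}$. We choose $a$ large so that $C(R)2^{-a\eps_0}<1/2$. Then $\mathcal{M}_\phi$ maps the ball into itself and is a contraction. Its fixed point $u=\Gamma\phi$ satisfies the displayed identity, and continuity of $\Gamma$ in $\phi$ follows from the uniform contraction. Because $A\cap(0,\gamma)$ is finite by subcriticality, finitely many steps, with one $a$ taken as the maximum over them, reach order $\gamma$.

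\textbf{Main obstacle.} The hard step is the Lipschitz dependence of the star derivatives $\partial^k_*u$, since these enter $u_\tau$. The $\partial^k_*$ are defined through the iterated paraproduct representation \eqref{eq_starPartialU}. In that representation the remainders $u^\#_\tau$ are themselves produced by paralinearisation (Theorem \ref{prop_paralinearization}, Corollary \ref{cor_paralinearistaion2}), so they depend on $u$ nonlinearly. We handle this by propagating, inside the same fixed point, the full norm $\norme{u}_{\mcD^\gamma_\P(\mcZ^{>a})}$ rather than only $\norme{u}_{C^{\alpha_0}}$, using the local Lipschitz statement in Theorem \ref{prop_paralinearization} for the $u^\#_\tau$. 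We also need the cut-off data $\mcZ^{>a}$ to remain admissible in the sense required there, or at least to satisfy the algebraic identities used. If exact admissibility of $\mcZ^{>a}$ fails, the fallback is to keep the uncut $\mcZ$ in the letters $Z_{\sigma/\tau}$ and apply the cut-off only to the planted $Z_{\mcI(\tau)}$ appearing in the ansatz, with the differences $Z_{\mcI(\tau)}-Z^{>a}_{\mcI(\tau)}$ absorbed into the smooth part.
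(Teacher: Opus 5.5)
Your proposal has a genuine gap in the contraction step. The map $\mathcal{M}_\phi$ is not a self-map of any set on which your Lipschitz bounds hold.

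The coefficient $u_\tau(u)$ is controlled only when $\partial^j_*u\in L^\infty$, i.e.\ when $u$ already satisfies the ansatz, with its own coefficients, to order $>|j|$. This is a nonlinear constraint, not membership in a ball of $C^{\alpha_0}$, and $\norme{u_\tau(u)-u_\tau(v)}_{L^\infty}$ is not bounded by $\norme{u-v}_{C^{\alpha_0}}$. Moreover, $\mathcal{M}_\phi$ does not preserve the constraint. Suppose $u$ satisfies the ansatz at order $\alpha_j$ and set $w=\mathcal{M}_\phi(u)$. Then
\[
w-\sum_{|\mcI(\tau)|<\alpha_j}\P\big(u_\tau(w),Z^{>a}_{\mcI(\tau)}\big)=\sum_{|\mcI(\tau)|<\alpha_j}\P\big(u_\tau(u)-u_\tau(w),Z^{>a}_{\mcI(\tau)}\big)+\dots
\]
A paraproduct $\P(f,Z)$ with bounded $f$ is in general no more regular than $Z$, so these terms are only in $C^{|\mcI(\tau)|}$, not in $C^{\alpha_j}$. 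Correspondingly, $\partial^j_*w$ contains terms of the form $(u_\tau(u)-u_\tau(w))\,\overline Z_{\mcI_j(\tau)}$, which are not controlled by $\norme{\mcZ}_{\frak N}$. So the iterates leave the domain after one step.

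Your ball $\{(u,u^\#):u^\#=\phi\}$ does not repair this: if $u^\#$ is the ansatz remainder of $u$, this set is exactly the set of fixed points. Propagating the $\mcD^\gamma_\P$ norm inside the same fixed point does not help either, because what is lost is the coherence constraint, not a bound.

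The missing idea, which is what the paper does, is to make the unknown at each level a remainder in a linear space, and to generate candidate solutions through the already-constructed lower-order map. The induction hypothesis should not be that some $u$ satisfies the ansatz at order $k$. It should be that there is a locally Lipschitz map $\Gamma^k_{a,\mcZ}$ sending every $v$ in a ball of $C^k$ to a function satisfying the ansatz to order $k$ with remainder $v$. Then the $\partial^j_*\Gamma^k_{a,\mcZ}(v)$ with $|j|\le k$ are well defined and locally Lipschitz in $v$ on the whole ball. For $\alpha\in(k,k+1]$ one solves
\[
v=\Lambda^{k,\alpha}_{\phi,a,\mcZ}v=\sum_{k<|\mcI(\tau)|<\alpha}\frac{1}{S(\tau)}\P\Big(\Upsilon_F[\tau]\big((\partial^j_*\Gamma^k_{a,\mcZ}v)_{|j|\le k}\big),Z^{>a}_{\mcI(\tau)}\Big)+\phi
\]
by contraction in $C^k$; the smallness $2^{-a\eps_0}$ enters exactly as in your estimate. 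One then sets $\Gamma^\alpha_{a,\mcZ}\phi=\Gamma^k_{a,\mcZ}\big(\Gamma^{k,\alpha}_{a,\mcZ}\phi\big)$, and concatenating the defining identities of $\Gamma^k_{a,\mcZ}$ and $\Lambda^{k,\alpha}_{\phi,a,\mcZ}$ gives the ansatz at order $\alpha$ with remainder $\phi$.

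The split works because, by Lemma \ref{lem_elementary differential_form}, the new coefficients for $k<|\mcI(\tau)|<\alpha$ involve only $\partial^j_*$ with $|j|\le k$, and these are determined by the lower-order structure. Your paraproduct estimate and your choice of $a$ carry over unchanged once the fixed point is posed this way.
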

	
	\begin{proof}
		We use the same trick as in the proof of \cite[Lemma 4.12]{AllezChouk}, an additional difficulty is the presence of the derivative $\partial_*^ku$ in the elementary differentials for trees $\tau$ with $|\mcI(\tau)|>|k|$.
		We define for any exponent $0\leq\alpha\leq\gamma$ a map  $	\Gamma^\alpha_{a,\mcZ}$  mapping any function $\phi\in C^\alpha$ to a function  $\Gamma^\alpha_{a,\mcZ}\phi$ satisfying the paracontrolled ansatz with troncated stochastic data up to order $\alpha$ with remainder $\phi$, that is
		\begin{equation}\label{eq_defGamma_beta}
			\Gamma^\alpha_{a,\mcZ} \phi = \sum_{|\mcI(\tau)|<\alpha } \frac{1}{S(\tau)}\P\left( \Upsilon_F[\tau](((\partial^j_*\Gamma_{a,\mcZ}^k(\phi))_{|j|\leq k} ) , \,   Z_{\mcI(\tau)}^{>a}  \right) + \phi.
		\end{equation}
		We construct the map $	\Gamma^\alpha_{a,\mcZ}$ with $\alpha\in (k,k+1]$, by a recursion on the integer $k$. This recursion is started by setting $\Gamma^0_{a,\mcZ}= \id$. We now let $\alpha\leq\gamma$ and $k$ the unique integer such that $\alpha\in (k,k+1]$ and we suppose that the map $\Gamma^k_{a,\mcZ}$ is given. 
		For any $v\in C^{k}$, the function $\Gamma^k_{a,\mcZ}(v)$ satisfies the paracontrolled ansatz up to order $k$, so that $\partial^j_*\Gamma_{a,\mcZ}^k(v)$ is well defined for any $|j|\leq k$. This observation enables us to define for any $\phi \in C^{\alpha}$ the map
		$$
		\Lambda^{k,\alpha}_{\phi,a,\mcZ} v  = \sum_{k<|\mcI(\tau)|<\alpha } \frac{1}{S(\tau)}  \P \Big( \Upsilon_F[\tau](((\partial^j_*\Gamma_{a,\mcZ}^k(v))_{|j|\leq k} ) , \, Z_{\mcI(\tau)}^{>a} \Big) +  \phi,
		$$
		defined on $C^k$. 
		We are going to prove that the map $\Lambda^{k,\alpha}_{\phi,a,\mcZ}$ is a contraction on some big enough ball of $C^k$ when one chooses $a$ small enough. We write the shorthand $$(\Gamma v)_\tau =\Upsilon_F[\tau](((\partial^j_*\Gamma_{a,\mcZ}^k(v))_{|j|\leq k} ).$$ As $(\Gamma v)_\tau\in L^\infty$ is a local lipschitz function of $v\in C^k$, and from the estimate $$\norme{\P(f,g)}_{C^k}\lesssim \norme{f}_{L^\infty}\norme{g}_{C^k}$$ and \eqref{eq_normeZ_FourierCutoff}, we have the following estimate for $R_1>0$ and $v\in B_{R_1}^k$ 
		\begin{align*}
			\lVert \Lambda^{k,\beta}_{\phi,a,\mcZ} v\lVert_{C^{k}} &\lesssim \norme{\phi}_{C^{\alpha_0}} + \sum_{k<|\mcI(\tau)|<\alpha} \norme{(\Gamma v)_\tau}_{L^\infty} \lVert Z_{\mcI(\tau)}^{> a}\lVert_{C^{k}}
			\\
			&\lesssim_{R_1} \norme{\phi}_{C^{\alpha_0}} +  \sum_{k<|\mcI(\tau)|<\alpha }  2^{-a\eps_0}\lVert Z_{\mcI(\tau)}\lVert_{C^{|\mcI(\tau)|}} .
		\end{align*}
		If one chooses $a$ and $R_1$ big enough, the application $\Lambda^{k,\alpha}_{\phi,a}$ maps the ball $B_{R_1}^{\alpha_0}$ into itself. For any $v_1,v_2\in B_{R_1}^{k}$, we have from the local  lipschitz continuity of $v\mapsto(\Gamma v)_\tau$  
		\begin{align*}
			\norme{ \Lambda_{\phi,a,\mcZ}^{k,\alpha} v_1 - \Lambda_{\phi,a,\mcZ}^{k,\alpha} v_2    }_{C^{k}} &\lesssim \sum_{k<|\mcI(\tau)|<\alpha} \norme{(\Gamma v_1)_\tau-(\Gamma v_2)_\tau}_{L^\infty} \lVert Z_{\mcI(\tau)}^{> a}\lVert_{C^{k}}
			\\
			&\lesssim_{R_1} \sum_{k<|\mcI(\tau)|<\alpha}  2^{-a\eps_0}\norme{Z_{\mcI(\tau)}}_{C^{k}} \norme{v_1-v_2}_{C^{k}} .
		\end{align*}
			Choosing $a$ big enough, the contraction mapping theorem ensures the existence of a unique fixed point in $C^{k}$ for the map $\Lambda_{\phi,a}^{k,\alpha}$. We set $\Gamma^{k,\alpha}_{a,\mcZ} \phi$ to be this fixed point. We let 
			$$
		\Gamma^{\alpha}_{a,\mcZ} \phi = \Gamma^k_{a,\mcZ} \,\Gamma^{k,\alpha}_{a,\mcZ}  \phi.
		$$
		Combining the definition of $\Gamma^k_{a,\mcZ}$ which reads as
		\begin{equation*}
			\Gamma^k_{a,\mcZ} \phi = \sum_{|\mcI(\tau)|<k } \frac{1}{S(\tau)}\P\left( \Upsilon_F[\tau]\big( ( \partial^j_*\Gamma^k_{a,\mcZ} \phi)_{|j|<k} \big) , \,   Z_{\mcI(\tau)}^{>a}  \right) + \phi
		\end{equation*}
		 and the definition of $\Gamma^{k,\alpha}_{a,\mcZ}$ which reads as 
		 $$
		 \Gamma^{k,\beta}_{a,\mcZ}\phi = \sum_{k<|\mcI(\tau)|<\alpha} \frac{1}{S(\tau)}  \P \Big( \Upsilon_F[\tau]\big(((\partial^j_*\Gamma_{a,\mcZ}^k(\Gamma^{k,\alpha}_{a,\mcZ}\phi))_{|j|\leq k} \big) , \, Z_{\mcI(\tau)}^{>a} \Big) +  \phi ,
		 $$ 
		 we see that $\Gamma^{\alpha}_{a,\mcZ} \phi$ satisfies indeed the paracontrolled ansatz up to order $\alpha$ with truncated stochastic data and remainder $\phi$.
	\end{proof}

		We set the exponent $\gamma_0$ to be the one defined in Subsection \ref{subsec_subcriticality}. The $\Gamma$ map constructed above gives from $\phi\in C^{\gamma_0}$ a function satisfying the paracontrolled ansatz of \eqref{eq_gPAM} without a Fourier cut-off as described in Section \ref{section_ParacAnsatz}, as one can add the terms involving the low frequencies of $Z_{\mcI(\tau)}$ in the remainder. Setting $u=\Gamma^{\gamma_0}_{a,\mcZ}\phi$, this writes as
		\begin{equation}\label{eq_AnsatzWithUdashPhi}
			u=\sum_{|\mcI(\tau)|<\gamma_0}\P(u_\tau, Z_{\mcI(\tau)} ) + {\bold u}_{a,\mcZ}^\# (\phi),
		\end{equation}
		where the map ${\bold u}_{a,\mcZ}^\# : C^{\gamma_0} \to C^{\gamma_0}$ is defined by 
		\begin{equation}\label{eq_defUdashPhi}
			{\bold u}_{a,\mcZ}^\# (\phi) \defeq \phi - \sum_{|\mcI(\tau)|<\gamma_0} \P( {\mathbf u}_\tau(\phi) , Z_{\mcI(\tau)}^{\leq a} ),
		\end{equation}
		where we also looked at the elementary differential $u_\tau$ as a function of $\phi$ setting
			$$
		{\mathbf u}_\tau(\phi) = 	{\mathbf u}_{a,\mcZ,\tau}(\phi) = \frac{1}{S(\tau)}\Upsilon_F[\tau]\big( (\partial^j_* \Gamma^{\gamma_0}_{a,\mcZ} \phi)\big).
		$$
		We will also write by abuse of notation
		$
		{\boldsymbol{\Psi}}^\#_{a,\mcZ}(\phi) = 	{\boldsymbol{\Psi}}_{a,\mcZ}^\#( \Gamma^{\gamma_0}_{a,\mcZ} \phi , {\mathbf u }_{a,\mcZ}^\#(\phi)) .
		$																																				
		We fix a time horizon $T$ and for $\gamma>0$ we set $C^\gamma_T$ to be set of functions with $\gamma$ Hölder regularity on $(0,T)\times\bfR^d$. The equation $u^\# = \Psi^\# + P_tu_0$ from Definition \ref{def_solution_EqPAM} rewrites as 
		\begin{equation}\label{eq_pointfixe1}
			\phi = \Psi^\# +  \sum_{|\mcI(\tau)|<\gamma_0} \P(u_\tau,Z_{\mcI(\tau)}^{\leq a})  + P_t u_0.
		\end{equation}
		As we noted above, given an initial condition $u_0$ and a stochastic data  $\mcZ$, all functions in \eqref{eq_pointfixe1} depend only on $\phi\in C^{\gamma_0}$. We can then define a fixed point map $\boldsymbol{\Theta}_{u_0,\mcZ}^T : C^{\gamma_0}_T \to C^{\gamma_0}_T $ by setting 
		\begin{equation}\label{eq_DefAppTheta}
		\boldsymbol{\Theta}_{u_0,\mcZ}^T (\phi) =  \boldsymbol{\Psi}_{a,\mcZ}^\#(\phi) + \sum_{|\mcI(\tau)|<\gamma_0} \P({\mathbf u}_\tau(\phi) ,Z_{\mcI(\tau)}^{\leq a})  + P_t u_0,
	\end{equation}

		\begin{proposition} \label{fixed_point_prop}
			For any initial condition $u_0 \in  C^{\gamma_0}(\bfR^d)$ and admissible stochastic data $\mcZ=(Z_\tau)_{\tau\in\mcT_{<\gamma_0}}$, there exists a small time $T>0$ such that the Equation \eqref{eq_gPAM} has a unique solution in the sense of Definition \ref{def_solution_EqPAM}. Furthermore the solution $(u,u^\#)$ depends continuously on $u_0$ and $\mcZ$.
		\end{proposition}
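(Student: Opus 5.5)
We will show that $\boldsymbol{\Theta}_{u_0,\mcZ}^T$ from \eqref{eq_DefAppTheta} is a contraction on a ball of $C^{\gamma_0}_T$ for $T$ small. A fixed point $\phi$ then gives the solution $u=\Gamma^{\gamma_0}_{a,\mcZ}\phi$ with remainder $u^\#={\bold u}^\#_{a,\mcZ}(\phi)$. Indeed, \eqref{eq_pointfixe1} is equivalent to $\boldsymbol{\Psi}^\#_\mcZ(u)+P_tu_0=u^\#$ by \eqref{eq_AnsatzWithUdashPhi} and \eqref{eq_defUdashPhi}. Conversely, any solution in the sense of Definition \ref{def_solution_EqPAM} whose remainder lies in the ball gives such a fixed point, because $\Gamma$ is the unique fixed point of the maps $\Lambda^{k,\alpha}_{\phi,a,\mcZ}$ constructed in Proposition \ref{prop_AppGamma}. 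This identification yields uniqueness.

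\textbf{Step 1: Lipschitz control of the building blocks.} Fix $R$ with $\norme{P_tu_0}_{C^{\gamma_0}_T}\le R/2$, and take the integer $a$ given by Proposition \ref{prop_AppGamma} for the ball $B_R$. From the contraction construction, $\phi\mapsto\Gamma^{\gamma_0}_{a,\mcZ}\phi$ is locally Lipschitz from $C^{\gamma_0}$ into the paracontrolled systems $\mcD^{\gamma_0}_\P(\mcZ)$; we use Proposition \ref{prop_pamansatz_to_pamsystem} to pass from the ansatz to the system. Corollary \ref{cor_welldefinition_starpartialu} then makes the coefficients ${\mathbf u}_\tau(\phi)$ locally Lipschitz in $L^\infty$. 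Next, Theorem \ref{prop_paralinearization} together with Proposition \ref{prop_regularite_utaudash_gammaell} makes $u^{\#,\gamma_\ell}_{\tau\Xi_\ell}$ locally Lipschitz, so $\Phi^\#_\ell$ is locally Lipschitz. Finally, Proposition \ref{prop_integration} gives that $\boldsymbol{\Psi}^\#_{a,\mcZ}(\phi)\in C^{\gamma_0'}$ is locally Lipschitz. The dependence on $\mcZ$ enters only through multilinear quantities: paraproducts, correctors $\bfC$ (via \cite[Theorem 1]{LocalExpansionsParacSystems}), and reconstruction. The same estimates therefore also give local Lipschitz continuity in $\norme{\cdot;\cdot}_{\frak N}$.

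\textbf{Step 2: gaining a small factor in $T$.} This is the main obstacle. The gain comes from integration: $\boldsymbol{\Psi}^\#$ lies in $C^{\gamma_0'}$ with $\gamma_0'=\gamma_0+\kappa/2$. For a function vanishing at $t=0$, the standard estimate $\norme{f}_{C^{\gamma_0}_T}\lesssim T^{\kappa/(2\beta)}\norme{f}_{C^{\gamma_0'}_T}$ then produces a factor $T^\theta$. To use it, we first subtract the $t=0$ values; the Schauder/multilevel estimates of \cite[Theorem 5.12]{Hai14} are applied with the kernel restricted to $t>0$. The low-frequency terms $\P({\mathbf u}_\tau(\phi),Z^{\le a}_{\mcI(\tau)})$ are smooth in the second argument. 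They enter $\boldsymbol{\Theta}$ in the same way as in the definition of ${\bold u}^\#$, so the initial trace of $\boldsymbol{\Theta}(\phi)-P_tu_0$ reduces to that of $\boldsymbol{\Psi}^\#$ plus these low-frequency paraproducts. I would handle these terms by the factor $T^\theta$ as well: work in $C^{\gamma_0}$ with time-localised paraproducts, and use that $\Linv$ produces functions vanishing at $0$ up to the $P_tu_0$ part. If this fails, the alternative is to shift the low-frequency terms into the definition of $\Gamma$, so that they cancel identically between \eqref{eq_AnsatzWithUdashPhi} and \eqref{eq_pointfixe1}.

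\textbf{Step 3: conclusion.} Combining Steps 1 and 2 gives
\[
\norme{\boldsymbol{\Theta}(\phi)}\le R/2+C_RT^\theta,\qquad
\norme{\boldsymbol{\Theta}(\phi_1)-\boldsymbol{\Theta}(\phi_2)}\le C_RT^\theta\norme{\phi_1-\phi_2}.
\]
Choosing $T$ small, Banach's fixed point theorem gives a unique fixed point in the ball. Continuous dependence on $(u_0,\mcZ)$ follows from the uniform contraction principle, using the joint Lipschitz estimates of Step 1 and the continuity of $\phi\mapsto(\Gamma\phi,{\bold u}^\#(\phi))$.
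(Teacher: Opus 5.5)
Your proposal follows the paper's proof: a contraction argument for $\boldsymbol{\Theta}^T_{u_0,\mcZ}$ in $C^{\gamma_0}_T$. The Lipschitz bounds come from the local Lipschitz continuity of $\Gamma$ together with Propositions \ref{prop_pamansatz_to_pamsystem} and \ref{prop_integration}, and the factor $T^{\kappa/4}$ comes from \eqref{eq_facteurT_Cgamma}, applied both to $\boldsymbol{\Psi}^\#\in C^{\gamma_0'}$ and to the low-frequency paraproducts $\P(\mathbf{u}_\tau(\phi),Z^{\le a}_{\mcI(\tau)})$ with $Z^{\le a}_{\mcI(\tau)}\in C^{\gamma_0'}$; this is exactly your primary plan in Step 2, so your fallback is not needed. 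The initial-trace issue and the identification of solutions with fixed points of $\boldsymbol{\Theta}$ that you raise are passed over silently in the paper, so your version is slightly more explicit than the original.
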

		
		\begin{proof}
			We apply the contraction mapping theorem in the space $C^\gamma_T$ to the map $\boldsymbol{\Theta}_{u_0,\mcZ}^T$.
		We set a large constant $R>0$, and we suppose we are given a couple of initial conditions $u_0^1,u_0^2\in C^{\gamma_0}(\bfR^d)$ and a couple of stochastic data $\mcZ^1,\mcZ^2\in\frak{N}$ such that $\norme{u_0^i}_{C^{\gamma_0}},\norme{\mcZ^i}_{\frak N}< R$ for $i\in\{1,2\}$.

			We are going to get a bound on $\lVert\boldsymbol{\Theta}_{u_0^1,\mcZ^1}^T(\phi_1) -\boldsymbol{\Theta}_{u_0^2,\mcZ^2}^T(\phi_2) \lVert_{C^{\gamma_0}_T}$ by estimating each term of the right hand side of \eqref{eq_DefAppTheta}. In order to gain a factor $T^\delta$ we will exploit the fact that terms depending on $\phi$ in \eqref{eq_DefAppTheta} are slightly more regular than $\phi$ by using the following fact, already used in \cite{BailleulBernicotHighOrder} for their fixed point argument. For exponents $0<r_1<r_2$ and any $w\in C^{r_2}_T$ such that $w_{|t=0} = 0$, we have
			\begin{equation}\label{eq_facteurT_Cgamma}
				\norme{w}_{C^{r_1}_T} \lesssim T^{\frac{r_2-r_1}{2}} \norme{w}_{C^{r_2}_T}.
			\end{equation}
			We first estimate the term in $\Psi^\#$. Combining Propositions \ref{prop_pamansatz_to_pamsystem} and \ref{prop_integration}, for any $\phi_1,\phi_2 \in C^{\gamma_0}_T$ with $\norme{\phi_i}_{C_T^{\gamma_0}} \leq R/2$, we have the estimate
			\begin{align*}
				\lVert \boldsymbol{\Psi}^\#_{\mcZ^1}(\phi_1) - \boldsymbol{\Psi}^\#_{\mcZ^2}(\phi_2) \lVert_{C_T^{\gamma'_0}} \lesssim \norme{\mcZ^1;\mcZ^2}_{\frak{N}} + \norme{\phi_1 - \phi_2}_{C_T^{\gamma_0}}.
			\end{align*}
			Then, from \eqref{eq_facteurT_Cgamma}  
			$$
				\lVert \boldsymbol{\Psi}^\#_{\mcZ^1}(\phi_1) - \boldsymbol{\Psi}^\#_{\mcZ^2}(\phi_2) \lVert_{C_T^{\gamma_0}} 
				\lesssim 
				T^{\frac{\kappa}{4}}\Big(\norme{\mcZ^1;\mcZ^2}_{\frak{N}} + \norme{\phi_1 - \phi_2}_{C_T^{\gamma_0}}\Big).
			$$			
Still using \eqref{eq_facteurT_Cgamma} and the paraproduct estimate
\begin{align*}
	&\Big\lVert \sum_{|\mcI(\tau)|<\gamma_0} \P\big(u_\tau(\phi_1),Z_{\mcI(\tau)}^{1,\leq a}\big)- \sum_{|\mcI(\tau)|<\gamma_0} \P\big(u_\tau(\phi_2),Z_{\mcI(\tau)}^{2,\leq a}\big)\Big\lVert_{C^{\gamma_0}_T}
	\\
	&\lesssim  	T^{\frac{\kappa}{4}}  \sum_{|\mcI(\tau)|<\gamma_0} \norme{u_\tau(\phi_1) - u_\tau(\phi_2)}_{L^\infty} \lVert Z_{\mcI(\tau)}^{1,\leq a}\lVert_{C^{\gamma'_0}_T} + \norme{u_\tau(\phi_2)}_{L^\infty} \lVert Z_{\mcI(\tau)}^{1,\leq a} - Z_{\mcI(\tau)}^{2,\leq a} \lVert_{C^{\gamma'_0}_T}.
\end{align*}
Then from the local Lipschitz continuity of the $\Gamma$ map
\begin{align*}
	&\Big\lVert \sum_{|\mcI(\tau)|<\gamma_0} \P(u_\tau(\phi_1),Z_{\mcI(\tau)}^{1,\leq a})- \sum_{|\mcI(\tau)|<\gamma_0} \P(u_\tau(\phi_2),Z_{\mcI(\tau)}^{2,\leq a})\Big\lVert_{C^{\gamma_0}_T}
	\\
	&\hspace{6cm}\lesssim_R  	T^{\frac{\kappa}{4}}  \left( \norme{\phi_1-\phi_2}_{C_T^{\gamma_0}}  +    \norme{\mcZ^1;\mcZ^2}_{\frak N}      \right).
\end{align*}
We also have $\norme{P_tu_0^1-P_tu_0^2}_{C^{\gamma_0}_T} \lesssim \norme{u_0^1-u_0^2}_{C^{\gamma_0}} $. Then we end up with the bound
			\begin{equation*}\begin{split}
					\norme{\boldsymbol{\Theta}_{u_0,\mcZ^1}^T(\phi_1) -\boldsymbol{\Theta}_{u_0,\mcZ^2}^T(\phi_2)}_{C^{\gamma_0}_T} 
					&\lesssim \norme{u_0^1-u_0^2}_{C^{\gamma_0}(\bfR^d)}  + \norme{\mcZ^1;\mcZ^2}_{\frak N} 
					\\
					&  \quad+ T^{\frac{\kappa}{4}}\norme{\phi_1-\phi_2}_{C^{\gamma_0}_T}  .
			\end{split}\end{equation*}
			If one chooses $T$ sufficiently small, the application $\boldsymbol{\Theta}_{u_0,\mcZ}^T$ maps the ball $B^{\gamma_0}(R)=\{ \norme{\phi}_{C^{\gamma_0}}\leq R \}$ into itself. And choosing eventually $T$ smaller makes it a contraction. Then the map $\boldsymbol{\Theta}_{u_0,\mcZ}^T$ admits a unique fixed point in $C^{\gamma_0}_T$, which depends continuously on $\mcZ\in\frak{N}$  and $u_0\in C^{\gamma_0}(\bfR^d)$. 
			
		\end{proof}

		\begin{remark}
			We have established a local well-posedness result for regular initial conditions. To extend this result to initial conditions in $C^{\alpha_0}$, we need to work in weighted Hölder spaces $C^\alpha_w$, where the weight $w$ exhibits a singularity near the hyperplane $\{t = 0\}$. This would require adapting the results of \cite{LocalExpansionsParacSystems} to this setting.
		\end{remark}



	
	
	

	\section{Renormalised equations} \label{section_renormalized_equation}

	\subsection{Renormalisation with renormalisation maps}

	Taking a mollification of the noise $(\xi^\eps)_{\eps}$, one obtains  a regularised enhanced noise $(\mcZ^\eps)_\eps$. There is no hope for $(\mcZ^\eps)$ to converge in $\prod_{\tau\in\mcT} C^{|\tau|}$ as $\eps$ goes to $0$, one can renormalise $\mcZ^\eps$ by defining
	$$
	\widehat Z^\eps_\tau = Z^\eps_{M_\eps\tau}
	$$
	for some adequate renormalisation maps $M_\eps$.

	\begin{definition}
		An admissible renormalisation map is a couple $(M,M^+)$ made of a linear maps $M : T \to T$ and an algebra morphism $M^+ : T^+ \to T^+$ such that 
		\begin{equation}\label{eq_RenormMapCoprodcuct1}
			(M\otimes M^+)\Delta = \Delta M, \qquad \text{and} \qquad (M^+\otimes M^+)\Delta^{\!+} = \Delta^{\!+} M^+,
		\end{equation}
	for any $n\in \N^{d+1}$
		\begin{equation}\label{eq_commut_MandMcI}
			M\mcI_n(\tau) = \mcI_n(M\tau), \qquad \text{and} \qquad 	M^+\mcI^+_n(\tau) = \mcI^+_n (M^+\tau),
		\end{equation}
		and for any $k\in\bfN^{d+1}$
		\begin{equation}\label{eq_commutRenormFleche}
			\downarrow^k M =M\downarrow^k,\qquad \text{and}\qquad \downarrow^k M^{+} =M^{+}\downarrow^k.
		\end{equation}
	\end{definition}
In the sequel, we will denote $M^+$ by $M$. It will be clear from the context when one has to use $M^+$ instead of $M$.
	
	\begin{lemma} \label{lem_CommutModifCoprodRenorm}
		For any $\tau\in\mcB\sqcup \mcB^+$
		$$
		\sum_{\sigma\prec M\tau} \sigma\otimes (M\tau)/\sigma   = M\mu \otimes \sum_{\mu\prec\tau} M(\tau/\mu).
		$$
	\end{lemma}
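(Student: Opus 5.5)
The identity to prove is (reading the typo charitably) $\sum_{\sigma\prec M\tau}\sigma\otimes (M\tau)/\sigma=\sum_{\mu\prec\tau} M\mu\otimes M(\tau/\mu)$. The plan is to start from the compatibility relation \eqref{eq_RenormMapCoprodcuct1}, $\Delta M\tau=(M\otimes M)\Delta\tau$, and then separate, on both sides, the terms with polynomial right factor from the others.

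Expanding the definitions, the full coproduct of $M\tau$ reads
\begin{equation*}
\Delta M\tau=\sum_{\sigma\leq M\tau}\sigma\otimes (M\tau)/\sigma ,
\end{equation*}
and its image under $M\otimes M$ gives
\begin{equation*}
(M\otimes M)\Delta\tau=\sum_{\mu\leq\tau} M\mu\otimes M(\tau/\mu).
\end{equation*}
Next, apply $\id\otimes\pi^{\perp}_{T_X}$ to both sides, where $\pi^{\perp}_{T_X}$ is the projection parallel to the polynomials used in the proof of Lemma \ref{lem_SnDnreindex}. By Lemma \ref{lem_polynomialsintheRS}, each $\tau/\mu$ lies either in $\mathrm{Span}(\mcB_X)$ or in the complementary span. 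The projection therefore keeps exactly the terms with $\sigma\prec M\tau$ on the left and $\mu\prec\tau$ on the right, provided we also know that $M$ commutes with $\pi^{\perp}_{T_X}$ on $T^+$.

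This commutation is the main step. To get it, first check that $M$ preserves $T_X$. Since $M^+$ is an algebra morphism, it suffices to show $M X_i=X_i$. This follows from \eqref{eq_commutRenormFleche}: $\downarrow^{e_i}MX_i=M\downarrow^{e_i}X_i=M\one$, and $M\one=\one$ by the counit and compatibility with $\Delta^{\!+}$. So the component of $MX_i$ detected by $\downarrow^{e_i}$ is $X_i$, and the remaining part must vanish by degree considerations and Assumption~\ref{assumption_notint}.

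One also needs $M$ to map non-polynomial basis elements into $\mathrm{Span}(\mcB\setminus\mcB_X)$. I would argue via \eqref{eq_commut_MandMcI} and multiplicativity: the generators $\mcI^+_n(\tau)$ are sent to $\mcI^+_n(M\tau)$, which is non-polynomial. If this does not suffice, I would alternatively characterise polynomial components through the operators $\downarrow^k$ and $D^k$, whose commutation with $M$ is available from \eqref{eq_commutRenormFleche} together with Lemma \ref{lem_DnSnpropenvrac}, point 4.

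With these facts, $(\id\otimes\pi^\perp_{T_X})(M\otimes M)=(M\otimes M)(\id\otimes\pi^\perp_{T_X})$, and the identity follows at once.
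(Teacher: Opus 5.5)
Your argument is correct in outline, modulo the point in the second paragraph, but it takes a different route from the paper. The paper never projects. It writes the part of the coaction with polynomial right factor explicitly as $\sum_j \downarrow^j(\cdot)\otimes X^j/j!$, which is just the definition of $\downarrow^j$. It subtracts this on both sides of $\Delta M\tau=(M\otimes M)\Delta\tau$ and matches the two subtracted terms using \eqref{eq_commutRenormFleche} and $M(X^j)=X^j$. That is exactly what the hypothesis $\downarrow^kM=M\downarrow^k$ is there for, and the $\downarrow^k$ half of your fallback is precisely the paper's proof. Your projection route instead needs $M^+$ to preserve the non-polynomial sector. You correctly obtain this from multiplicativity and \eqref{eq_commut_MandMcI}: a non-polynomial basis element of $T^+$ has a planted factor, and $\mcI^+_n(M\sigma)$ never has a monomial component.

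The trade-off is as follows. The paper's argument does not depend on how $T^+$ is generated, and it yields the absence of polynomial right factors in $\sum_{\mu\prec\tau}M\mu\otimes M(\tau/\mu)$ as a by-product. Yours uses the tree generators, but shows that for trees \eqref{eq_commutRenormFleche} is redundant: it follows by comparing the polynomial right parts of $\Delta M\tau=(M\otimes M)\Delta\tau$. The $D^k$ half of your fallback is not available. Lemma~\ref{lem_DnSnpropenvrac}(4) concerns $\Delta$, not $M$. Moreover, a map with $M\mcI(\Xi)^2=\mcI(\Xi)^2-c\one$ (as for $\Phi^4_3$) gives $D^0M\mcI(\Xi)^2=-c\one$, while $D^0\mcI(\Xi)^2=0$.

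One step needs repair. Both routes use $M(X^k)=X^k$, but your derivation of it does not work, for three reasons:
\begin{itemize}
\item $\downarrow^{e_i}$ does not isolate the coefficient of $X_i$.
\item Renormalisation maps need not preserve degree (BHZ-type maps raise it), so there are no degree considerations to invoke.
\item Assumption~\ref{assumption_notint} plays no role here.
\end{itemize}
Concretely, when $0<|\mcI(\Xi)|<1$ (as in gPAM), $\mcI^+(\Xi)$ is primitive for $\Delta^{\!+}$ and is killed by every $\downarrow^{e_j}$. Hence $X_i+c\,\mcI^+(\Xi)$ satisfies every constraint you invoke: primitivity, $\downarrow^{e_j}(\cdot)=\delta_{ij}\one$, and $M\one=\one$. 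The paper uses $M(X^j)=X^j$ tacitly, in the second line of its proof. The clean fix is to include it in the definition of an admissible renormalisation map rather than try to derive it this way.
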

	
	\begin{proof}
		From  \eqref{eq_RenormMapCoprodcuct1} and \eqref{eq_commutRenormFleche}
		\begin{align*}
		&	\sum_{\sigma\prec M\tau} \sigma \otimes (M\tau)/\sigma  = \Delta(M\tau) - \sum_{j\geq 0}  \frac{X^j}{j!}  \otimes \downarrow^j(M\tau) 
			\\
			&= (M\otimes M)\Delta\tau - \sum_{j\geq 0}  \frac{M(X^j)}{j!} \otimes  M(\downarrow^j\tau) 
			=\sum_{\mu\prec\tau}  M\mu\otimes M(\tau/\mu).
		\end{align*}
	\end{proof}
	Iterating the Lemma \ref{lem_CommutModifCoprodRenorm} gives the relation
	\begin{align*} \label{lem_CommutModifCoprodRenorm_Iterated}
		\sum_{\tau_q\prec\dots\prec\tau_1\prec\tau}  M(\tau/\tau_1) \otimes  \cdots \otimes  M\tau_q
		=
		\sum_{\sigma_q\prec\dots\prec\sigma_1\prec M\tau} (M\tau)/\sigma_1 \otimes  \cdots \otimes  \sigma_q.
	\end{align*}

	\begin{proposition}
		For any admissible renormalisation map $M$ and any admissible stochastic data $\mcZ=(Z_\tau)$, the collection $\mcZ^M = (Z_{M\tau})_\tau$ is still admissible.
	\end{proposition}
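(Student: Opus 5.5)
To show that $\mcZ^M=(Z_{M\tau})_\tau$ is admissible we must check the two defining identities of Definition \ref{admissible} for the new collection: the product relation \eqref{eq_defZtau_T+} on $T^+$, and the integration relation \eqref{eq_ZtauAdmissible}, equivalently \eqref{eq_ZtauAdmissible_tilde}.

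\textbf{Step 1: transport the Hairer--Kelly map.} Let $M$ act letter-wise on words, via $M_{\mcA}\lg a_1,\dots,a_n\rg_\ell X^p=\lg Ma_1,\dots,Ma_n\rg_\ell X^p$, extended linearly in each letter. By construction $\PPi^{\mcZ^M}=\PPi^{\mcZ}\circ M_{\mcA}$ and $\widetilde\PPi^{\mcZ^M}=\widetilde\PPi^{\mcZ}\circ M_{\mcA}$, and the same holds for $g$ and $\widetilde g$, because all of these are built from iterated paraproducts of the letters. The key claim is
$$
M_{\mcA}\Psi(\tau)=\Psi(M\tau).
$$
Proof of the claim from the forest formula \eqref{forest_formula}:
\begin{itemize}
\item use \eqref{eq_commutRenormFleche} to move $\downarrow^k$ through $M$;
\item use the iterated form of Lemma \ref{lem_CommutModifCoprodRenorm} to identify $\sum_{\tau_q\prec\dots\prec\tau}M(\tau/\tau_1)\otimes\dots\otimes M\tau_q$ with the corresponding chain sum for $M\tau$.
\end{itemize}
Lemma \ref{lem_CommutModifCoprodRenorm} is stated with $\prec$, which excludes polynomial quotients. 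This matches \eqref{forest_formula}, where polynomial parts sit in the separate $\downarrow^k$ and $X^{k_2}$ factors, handled by the commutation with $\downarrow^k$. Together these give $\widetilde\PPi^{\mcZ^M}(\Psi(\tau))=\widetilde\PPi^{\mcZ}(\Psi(M\tau))$, and similarly for $\widetilde g^{\mcZ^M}(D^n\Psi(\tau))$. For the latter we use that $D^n$ commutes with $M$, which follows from \eqref{eq_RenormMapCoprodcuct1} together with $M X^n=X^n$, by the same pairing argument as in Lemma \ref{lem_SnDnreindex}.

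\textbf{Step 2: integration identity.} By Step 1 and \eqref{eq_commut_MandMcI},
$$
\PPi^{\mcZ^M}(\Psi(\mcI\tau))=\PPi^{\mcZ}(\Psi(\mcI(M\tau))).
$$
Admissibility of $\mcZ$, applied to the linear combination $M\tau$ (admissibility is linear in $\tau$), then gives
$$
K*\PPi^{\mcZ}(\Psi(M\tau))=K*\PPi^{\mcZ^M}(\Psi(\tau)).
$$

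\textbf{Step 3: product identity.} Take $\tau=X^p\prod_j\mcI^+_{b_j}(\tau_j)\in T^+$. Since $M^+$ is an algebra morphism commuting with $\mcI^+$, we have $M\tau=M(X^p)\prod_j\mcI^+_{b_j}(M\tau_j)$. We need $M(X^p)=X^p$; this follows from \eqref{eq_RenormMapCoprodcuct1} and \eqref{eq_commutRenormFleche} on the polynomial sector, or is part of the admissibility convention.

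Apply \eqref{eq_defZtau_T+_alternative} for $\mcZ$ to each tree in the expansion of $M\tau$. Lemma \ref{lem_PsiTau_multiplicatif}, extended multilinearly, then gives
$$
\widetilde\PPi^{\mcZ}(\Psi(M\tau))=\mathbf 1_{p=0}\prod_j\widetilde g^{\mcZ}\big(D^{b_j}\Psi(\mcI(M\tau_j))\big).
$$
Converting back with Step 1 yields \eqref{eq_defZtau_T+_alternative} for $\mcZ^M$. Finally, \eqref{eq_defZtau_T+_alternative} is equivalent to \eqref{eq_defZtau_T+} because $\overline\Psi$ involves only strictly smaller letters, which are matched by the same identity.

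\textbf{Main obstacle.} The delicate point is the intertwining $M_{\mcA}\Psi=\Psi M$. Lemma \ref{lem_CommutModifCoprodRenorm} is written loosely, with $\mu$ outside the sum, so it must be read as $\sum_{\mu\prec\tau}M\mu\otimes M(\tau/\mu)$. One must also check that the degree-dependent projections $\mathfrak p_\pm$ and the truncations $|r|<\dots$ in the coproduct are preserved by $M$. I would handle this by noting that $M$ preserves degrees, or at least never lowers them, and that the equalities are understood for the full coaction before truncation.
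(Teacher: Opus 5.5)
Your proposal is correct and is essentially the paper's argument. The paper rewrites $\widetilde{\PPi}^{\mcZ^M}(\Psi(\tau))$ as $\widetilde{\PPi}^{\mcZ}(\Psi(M\tau))$ using the iterated form of Lemma \ref{lem_CommutModifCoprodRenorm}, applies admissibility of $\mcZ$ to $M\tau$ together with $M\mcI=\mcI M$, and transports back; it disposes of the product identity on $T^+$ ``by similar computations''. Your packaging as the intertwining $M_{\mcA}\Psi=\Psi M$, with the $\downarrow^k$ handled explicitly through \eqref{eq_commutRenormFleche} (the paper's displayed computation silently drops it), is only a cleaner organisation, and the degree-consistency caveat you raise for the star derivatives in $g,\widetilde{g}$ is one the paper's proof also leaves implicit.
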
	
	
	\begin{proof}
		We prove that the condition in \eqref{eq_ZtauAdmissible} holds true. We have from Lemma \ref{lem_CommutModifCoprodRenorm}
		\begin{align*}
			K * \widetilde{ \PPi}^{\mcZ^M}\left( \Psi(\tau)\right) &=  K * \sum_{\substack{q\geq 0 \\ \tau_q\prec\dots\prec\tau_1\prec\tau}}\sum_{k\geq0} \frac{1}{k!}\P_k( Z_{M(\tau/\tau_1)} ,\dots,Z_{M\tau_q}  )
			\\
			&=K * \sum_{\substack{q\geq 0 \\ \sigma_q\prec\dots\prec\sigma_1\prec M\tau}}\sum_{k\geq0} \frac{1}{k!}\P_k( Z_{(M\tau)/\sigma_1} ,\dots,Z_{\sigma_q}  ).
		\end{align*}
		Then, from the admissibility of $\mcZ$ and from \eqref{eq_commut_MandMcI}
		\begin{align*}
			&\sum_{\substack{q\geq 0 \\ \sigma_q\prec\dots\prec\sigma_1\prec M\tau}}\sum_{k\geq0} \frac{1}{k!}\P_k( Z_{(\mcI (M\tau))/\sigma_1} ,\dots,Z_{\sigma_q}  )
			\\ &
= \sum_{\substack{q\geq 0 \\ \sigma_q\prec\dots\prec\sigma_1\prec M\tau}}\sum_{k\geq0} \frac{1}{k!}\P_k( Z_{(M(\mcI(\tau)))/\sigma_1} ,\dots,Z_{\sigma_q}  ).
		\end{align*}
		And from Lemma \eqref{lem_CommutModifCoprodRenorm}, we obtain
		\begin{align*}
			& \sum_{\substack{q\geq 0 \\ \sigma_q\prec\dots\prec\sigma_1\prec M\tau}}\sum_{k\geq0} \frac{1}{k!}\P_k\left( Z_{M((\mcI(\tau))/\sigma_1)} ,\dots,Z_{M\sigma_q}  \right)
			= \widetilde{ \PPi}^{\mcZ^M}\left( \Psi(\mcI(\tau)) \right).  
		\end{align*}
	Equation \eqref{eq_defZtau_T+_alternative} giving the multiplicativity property of $\mcZ_{M\tau}$ for $\tau\in T^+$ is proven from similar computations.
	\end{proof}

	For $\eps>0$, we let $Z^\eps_\tau$ the naive model as described in Subsection \ref{subsect_naivemodel}, built from the mollified noise $\xi^\eps$. We suppose that there exists a a family of renormalisation maps $(M^\eps)_\eps$ such that for any tree $\tau\in\mcT^-$ with negative degree, the distribution $Z^\eps_{M^\eps \tau}$ converges to some distribution $\widehat{Z}_\tau$ in $C^{|\tau|}$. For any renormalisation map $M$, one defines the renormalised non-linearity 
	\begin{equation}
		MF_\ell = \Upsilon_F[M^*\Xi_\ell ].
	\end{equation}
	From \cite[Lemma 3.23]{BCCH}, one has
	\begin{equation}\label{eq_renormalisedElementaryDiff}
		\Upsilon_F[M^*\tau] = \Upsilon_{MF}[\tau].
	\end{equation}

	\begin{proposition}\label{prop_eqrenormalised_0}
		Suppose we are given a renormalised collection of random fields $(\widehat Z_\tau^\eps)_\tau = (Z^\eps_{M_\eps\tau})_\tau$ and that the function $u_\eps$ is a solution of the fixed point described in Definition \ref{def_solution_EqPAM}. Then $u_\eps$ is solution to the renormalised equation
		$$
		\scrL u_\eps =\sum_{\ell=1}^{\ell_0} (M_\eps F_\ell)(v_{\eps})\, \xi_\ell^\eps  .
		$$
	\end{proposition}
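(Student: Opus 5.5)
The plan is to show that the fixed-point identity of Definition \ref{def_solution_EqPAM}, with stochastic data $\widehat{\mcZ}^\eps=(Z^\eps_{M_\eps\tau})_\tau$, reduces to the classical mild formulation of the renormalised equation. Since $\xi^\eps$ is smooth, everything is smooth and all products are classical. By definition of a solution, $u_\eps = \Linv\big(\sum_\ell F_\ell(v)\cdot\xi^\eps_\ell\big) + P_t u_0$, where the product $F_\ell(v)\cdot\xi^\eps_\ell$ is the one defined in \eqref{eqdef_Produit_Fluxil1}–\eqref{eqdef_Produit_Fluxil2} using the renormalised data. It therefore suffices to prove the pointwise identity
\[
F_\ell(v)\cdot\xi^\eps_\ell \;=\; (M_\eps F_\ell)(v_\eps)\,\xi^\eps_\ell ,
\]
where the left-hand side uses the data $\widehat{\mcZ}^\eps$ and the coefficients $u_\tau=\Upsilon_F[\tau](v_*)/S(\tau)$ built from the $\partial^k_*$ associated with $\widehat{\mcZ}^\eps$.

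First, by the preceding proposition, $\widehat{\mcZ}^\eps$ is admissible. Hence Section \ref{section_ParacAnsatz} applies to it, including Proposition \ref{prop_nonlinearitxiell_generalized} in its multiplicative form. Next, I would identify the multiplicative collection. The naive data satisfies $Z^\eps=Z^{\eps,\circ}$, and I expect $\widehat Z^{\eps,\circ}_\tau = Z^\eps_{M_\eps \tau}$ restricted appropriately. The intended mechanism is that the correctors in \eqref{eqdef_Produit_Fluxil2} for $\widehat{\mcZ}^\eps$ are the correctors of the naive data evaluated on $M_\eps$-images. This uses Lemma \ref{lem_CommutModifCoprodRenorm} and its iterated form to rewrite sums over $\tau_q\prec\dots\prec\tau$ with renormalised letters as sums over chains below $M_\eps\tau$.

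Concretely, I would expand $M_\eps(\tau\Xi_\ell)$ in the basis and apply Proposition \ref{prop_nonlinearitxiell_generalized} to the naive data. That proposition identifies $(\Upsilon_F[\sigma\Xi_\ell](v)\cdot\xi^\eps_\ell)^\circ$ with the classical product. Regrouping by duality, $\langle M_\eps\tau,\sigma\rangle=\langle\tau,M_\eps^*\sigma\rangle$, turns the sum $\sum_\tau u_{\tau\Xi_\ell}\,Z^\eps_{M_\eps(\tau\Xi_\ell)}$ into a sum in which the elementary differentials are composed with $M^*_\eps$. By \eqref{eq_renormalisedElementaryDiff}, $\Upsilon_F[M_\eps^*\tau]=\Upsilon_{M_\eps F}[\tau]$, and summing over trees should collapse, as in the last line of the proof of Proposition \ref{prop_nonlinearitxiell_generalized}, to $\Upsilon_{M_\eps F}[\Xi_\ell](v)\,\xi^\eps_\ell=(M_\eps F_\ell)(v_\eps)\xi^\eps_\ell$. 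Here one also uses that the modified derivatives $\partial^k_*$ recombine into the true derivatives $\partial^k u$ through \eqref{eqdef_partialStarU}, with the renormalised $\overline Z$.

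The main obstacle is this regrouping step. Three things must be consistent:
\begin{itemize}
\item the truncations at $\gamma_\ell$;
\item the coefficients $\partial^k_*u$, which are defined with the renormalised $\overline Z_{\mcI_n(\tau)}$;
\item the correctors involving $Z^\circ$.
\end{itemize}
The same $M_\eps$ must act coherently on all of them. I would first check this for $\overline Z$, using the commutations \eqref{eq_commut_MandMcI} and \eqref{eq_commutRenormFleche} to get $\overline{\widehat Z}_{\mcI_n(\tau)}=\overline Z_{M_\eps\mcI_n(\tau)}$. I would then redo the computation of Proposition \ref{prop_nonlinearitxiell_generalized} with the renormalised data, tracking where $M_\eps$ enters. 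The final step is integrating with $\Linv$, which is linear and commutes with everything above.
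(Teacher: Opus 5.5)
Your proposal follows essentially the same route as the paper: rewrite the product map of Subsection~\ref{Subsection_RHS} with $\widehat Z^\eps_\tau=Z^\eps_{M_\eps\tau}$, use the iterated form of Lemma~\ref{lem_CommutModifCoprodRenorm} and duality to move $M_\eps$ onto the coefficients as $\Upsilon_F[M_\eps^*\cdot]=\Upsilon_{M_\eps F}[\cdot]$, and conclude with Proposition~\ref{prop_nonlinearitxiell_generalized} applied to the naive data, for which $Z^\eps=Z^{\eps,\circ}$. Your extra check that $\overline{\widehat Z}_{\mcI_n(\tau)}=\overline Z^\eps_{M_\eps\mcI_n(\tau)}$, which makes the $\partial^k_*u$ recombine correctly with $M_\eps F$, is a point the paper leaves implicit; your tentative identification of $\widehat Z^{\eps,\circ}$ plays no role and can be dropped, since only the naive data needs to be multiplicative.
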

	
	\begin{proof}
		We compute the right hand side as defined in Subsection \ref{Subsection_RHS}, it is given by 
		\begin{align*}
			& \sum_{\ell=1}^{\ell_0}\sum_{\substack{\tau\in\mcT_{0} \\ |\mcI(\tau\Xi_\ell)|<\gamma_\ell }}\Big( \P(u_{\tau\Xi_\ell}, \widehat Z_{\tau\Xi_\ell}^\eps ) + \sum_{\substack{q\geq 0 \\ \tau_q\prec\dots\prec\tau}} \bfC\big(u_{\tau\Xi_\ell}^{\#,\gamma_\ell},\widehat Z_{\tau/\tau_1}^\eps,\dots, \widehat Z_{\tau_q\Xi_\ell}^\eps \big)\Big) 
			\\
			=& \sum_{\ell=1}^{\ell_0}\sum_{\substack{\tau\in\mcT_{0} \\ |\mcI(\tau\Xi_\ell)|<\gamma_\ell }}\Big( \P(u_{\tau\Xi_\ell},\, Z_{M_\eps(\tau\Xi_\ell)}^\eps) + \sum_{\substack{q\geq 0 \\ \tau_q\prec\dots\prec\tau}} \bfC (u_{\tau\Xi_\ell}^{\#,\gamma_\ell},Z_{M_\eps(\tau/\tau_1)}^\eps,\dots,Z_{M_\eps(\tau_q\Xi_\ell)}^\eps)\Big).
		\end{align*}
		From Lemma \ref{lem_CommutModifCoprodRenorm}, this is equal to 
		\begin{align*}
			\sum_{\ell=1}^{\ell_0}\sum_{\substack{\tau\in\mcT_{0} \\ |\mcI(\tau\Xi_\ell)|<\gamma_\ell }} & \Big(\frac{1}{S(\tau)} \P(\Upsilon_F(\tau\Xi_\ell), Z_{M_\eps\tau}^\eps) + \sum_{\mu_q\prec\dots\prec M_\eps\tau}\bfC\Big(u_\tau^{\#,\gamma_\ell},Z_{(M_\eps(\tau\Xi_\ell))/\mu_1}^\eps,\dots,Z_{\mu_q\Xi_\ell}^\eps\Big)\Big)
			\\
			=&\sum_{\ell=1}^{\ell_0}\sum_{\substack{\tau\in\mcT_{0} \\ |\mcI(\tau\Xi_\ell)|<\gamma_\ell }} \Big( \frac{1}{S(\tau)} \P(\Upsilon_F(M_{\eps}^* \tau\Xi_\ell), \, Z_{\tau\Xi_\ell}^\eps )
			\\& + \frac{1}{S(\tau)} \sum_{\mu_q\prec\dots\prec \tau}\bfC\Big(\Upsilon_{ F}(M_{\eps}^*\tau\Xi_\ell)^{\#,\gamma_\ell},Z_{\tau/\mu_1}^\eps,\dots,Z_{\mu_q\Xi_\ell}^\eps\Big)\Big).
		\end{align*}
		And then from  \eqref{eq_renormalisedElementaryDiff} and from Proposition \ref{prop_nonlinearitxiell_generalized}, we get 
		\begin{align*}
			&\sum_{\ell=1}^{\ell_0}\sum_{\substack{\tau\in\mcT_{0} \\ |\mcI(\tau\Xi_\ell)|<\gamma_\ell }}  \frac{1}{S(\tau)}\P(  \Upsilon_{M_\eps F}[\tau\Xi_\ell]  ,\, Z_{\tau\Xi_\ell}^\eps) \\ & + \frac{1}{S(\tau)}\sum_{\sigma_q\prec\dots\prec \tau}\bfC\Big(\Upsilon_{M_\eps F}(\tau\Xi_\ell)^{\#,\gamma_\ell},Z_{\tau/\sigma_1}^\eps,\dots,Z_{\sigma_q\Xi_\ell}^\eps\Big)
			= \sum_{\ell=1}^{\ell_0} (M_\eps F_\ell)(v^\eps)\, \xi_\ell^\eps.
		\end{align*}
	\end{proof}

	\subsection{Renormalisation with preparation maps}
	
In this subsection, we also compute the renormalised equation, with the distinction that the renormalised stochastic data is defined through a preparation map instead of a renormalisation map.
	\begin{definition}  \label{def_preparation_map}
		A strong preparation map is a linear map $R:T\to T$ such that for every decorated trees $\sigma, \tau$, one has
		\begin{equation}\label{eq_DeltaRTau}
			R^{*} (\sigma \star \tau) = \sigma \star R^{*} \tau
		\end{equation}
		and such we have 
		$
		R\tau = \tau+ \sum_{i=1}^N \lambda_i\tau_i,
		$
		for constants $\lambda_i\in\bfR$ and trees $\tau_i$ with $|\tau_i|>|\tau|$ and where $\tau_i$ has less nodes than $|\tau|$.
	\end{definition}

\begin{lemma}
	For any strong preparation map $R$, one has 
	\begin{equation} \label{lem_DeltaModif_RTau}
	\sum_{\mu\prec R\tau} \mu \otimes (R\tau)/\mu = 	\sum_{\sigma\prec \tau} R\sigma \otimes \tau/\sigma.
\end{equation}
\end{lemma}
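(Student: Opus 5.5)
We prove \eqref{lem_DeltaModif_RTau} by dualising the defining property \eqref{eq_DeltaRTau} of a strong preparation map. The starting point is the duality formula of Subsection \ref{subsection_DecoratedTrees}: for $\mu\in T^+$,
\[
\lg \mu\star\tau,\nu\rg=\lg \mu\otimes\tau,\Delta\nu\rg .
\]
From this and \eqref{eq_DeltaRTau} we first aim for the intermediate identity
\[
\Delta R=(R\otimes\id)\Delta .
\]
To get it, we pair both sides against an arbitrary $\sigma\otimes\nu$ with $\sigma\in T$ and $\nu\in T^+$, taking $\nu$ to play the role of the grafted factor. For the left-hand side we compute
\[
\lg \sigma\otimes\nu,\Delta R\tau\rg=\lg \nu\star\sigma,R\tau\rg=\lg R^*(\nu\star\sigma),\tau\rg=\lg \nu\star R^*\sigma,\tau\rg .
\]
Applying duality once more, this becomes
\[
\lg R^*\sigma\otimes\nu,\Delta\tau\rg=\lg\sigma\otimes\nu,(R\otimes\id)\Delta\tau\rg .
\]
Since $\sigma$ and $\nu$ are arbitrary, this gives $\Delta R\tau=\sum_{\sigma\le\tau}R\sigma\otimes\tau/\sigma$.

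Next we split off the polynomial part, as in the proof of Lemma \ref{lem_CommutModifCoprodRenorm}. On the left we write
\[
\sum_{\mu\prec R\tau}\mu\otimes(R\tau)/\mu=\Delta R\tau-\sum_j \tfrac{X^j}{j!}\otimes D^j(R\tau)-\sum_j \downarrow^j(R\tau)\otimes\tfrac{X^j}{j!}.
\]
This uses Lemma \ref{lem_polynomialsintheRS} and the definition of $\prec$ as excluding the $\leqq$ terms. On the right we have $(R\otimes\id)\Delta\tau$ minus the analogous terms with $R$ applied to the left factors. It then remains to check two compatibilities: that $R X^j=X^j$, and that $\downarrow^j R=R\downarrow^j$, so that the polynomial pieces match.

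The identity $R X^j=X^j$ follows because $R\tau-\tau$ consists of trees with fewer nodes than $\tau$, so it vanishes on single-node polynomials. The commutation $\downarrow^j R=R\downarrow^j$ follows from \eqref{eq_DeltaRTau} with $\sigma=X^j$, since $(\downarrow^j)^*$ corresponds to $X^j\star\cdot$ up to the factor $j!$. The terms $X^j/j!\otimes D^j(R\tau)$ then come from pairing $(R\otimes\id)\Delta\tau$ with the first factor a polynomial.

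The main obstacle is making sure the splitting is consistent, that is, that the decomposition into $\prec$ versus $\leqq$ parts is preserved by $R$. Concretely, we need $R\sigma$ to be non-polynomial whenever $\sigma$ is non-polynomial. This holds because $R$ only adds trees of higher degree whose noise structure is inherited, and Assumption \ref{assumption_notint} prevents their degrees from being integers.
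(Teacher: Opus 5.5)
The first step, deriving $\Delta R=(R\otimes\id)\Delta$ by dualising \eqref{eq_DeltaRTau}, is fine; the paper simply asserts it. The splitting step, however, has a genuine gap.

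By definition, $\sigma\prec\tau$ only excludes $\sigma\leqq\tau$, i.e.\ the terms whose \emph{right} factor $\tau/\sigma$ is a polynomial. The terms $X^j\otimes\tau/X^j$ have a non-polynomial right factor (Lemma \ref{lem_polynomialsintheRS}), so they belong to the sums on both sides. By also subtracting $\sum_j\frac{X^j}{j!}\otimes D^j(R\tau)$, you make the argument depend on $R\sigma$ having no polynomial component whenever $\sigma\notin T_X$. That is false in general. The BPHZ preparation map for gPAM has $R(\Xi\mcI(\Xi))=\Xi\mcI(\Xi)-c\,\one$. Definition \ref{def_preparation_map} allows this, since $|\one|=0>|\Xi\mcI(\Xi)|$ and $\one$ has fewer nodes; it is exactly how drift counterterms enter through the trivial noise $\Xi_1=\one$ in $\Upsilon_F[R^*\Xi_\ell]$. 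Assumption \ref{assumption_notint} concerns only non-polynomial symbols and cannot exclude it.

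Concretely, take $\tau=\Xi\mcI(\Xi\mcI(\Xi))$. Your right-hand side contains $R(\Xi\mcI(\Xi))\otimes\mcI^+(\Xi)$, hence the term $-c\,\one\otimes\mcI^+(\Xi)$. Your left-hand side has discarded every term with a polynomial left factor. So the identity, in the form you set it up, fails.

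The repair is to remove only the part with polynomial right factor, which is what the paper does. By Lemma \ref{lem_polynomialsintheRS}, $\sum_{\mu\prec R\tau}\mu\otimes(R\tau)/\mu=(\id\otimes\pi^\perp_{T_X})\Delta R\tau$. Since $\id\otimes\pi^\perp_{T_X}$ acts on the right factor only, it commutes with $R\otimes\id$. Hence this equals $(R\otimes\id)(\id\otimes\pi^\perp_{T_X})\Delta\tau=\sum_{\sigma\prec\tau}R\sigma\otimes\tau/\sigma$.

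Equivalently, subtract $\sum_j\downarrow^j(R\tau)\otimes\frac{X^j}{j!}=\sum_j R(\downarrow^j\tau)\otimes\frac{X^j}{j!}$ from both sides, reading off $\downarrow^jR=R\downarrow^j$ directly from $\Delta R=(R\otimes\id)\Delta$. Neither $RX^j=X^j$ nor any preservation of non-polynomial symbols by $R$ is needed.
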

\begin{proof} One can observes that \eqref{eq_DeltaRTau} implies
	\begin{equation*}
 \left( R \otimes \id \right) \Delta = \Delta R, \quad 		R\downarrow^k = \downarrow^k R.
	\end{equation*}
		Then, one obtains
	\begin{align*}
		\sum_{\sigma\prec R\tau} \sigma \otimes (R\tau)/\sigma  &= \Delta(R\tau) - \sum_{j\geq 0} \downarrow^j(R\tau)    \otimes \frac{X^j}{j!} 
		\\
		&= (R\otimes \id )\Delta\tau - \sum_{j\geq 0} R(\downarrow^j\tau) \otimes    \frac{X^j}{j!}
		=\sum_{\mu\prec\tau}  R\mu\otimes \tau/\mu.
	\end{align*}
\end{proof}
Iterating \eqref{lem_DeltaModif_RTau} gives for any $q\geq2$ and $\tau\in\mcT$
	\begin{equation} \label{lem_DeltaModif_RTau_iterated}
		\begin{aligned}
	\sum_{\mu_q\prec\dots\prec\mu_1\prec R\tau} & \mu_q\otimes \, \mu_{q-1}/\mu_q \, \otimes \dots\otimes \, (R\tau)/\mu_1  	\\ & = \sum_{\tau_q\prec\dots\prec\tau_1\prec \tau} (R\tau_q) \, \otimes \, \tau_{q-1}/\tau_q \, \otimes \dots \otimes \, \tau/\tau_1.
	\end{aligned}
\end{equation}
Given any preparation map $R$ and smooth noises $(\xi_\ell)_{\ell\in\bbrack{1;\ell_0}}$, one defines a renormalised stochastic data $\widehat \mcZ = (\widehat Z_\tau)_\tau$. We introduce an auxiliary collection  $\widehat Z^\circ_{\tau}$ indexed by decorated trees from $T$, that will have a multiplicativity property. 
These renormalised data $\widehat\mcZ=(\widehat Z_{\tau})_\tau$ and $\widehat\mcZ^\circ=(\widehat Z^\circ_{\tau})$ are computed in the same induction by setting
\begin{equation}\label{eq_Renorm_ZtauFromZotau}
	\widehat Z_\tau = \widehat Z^\circ_{R\tau}.
\end{equation}
And by requiring 
	\begin{equation} \label{eq_defZotau_renorm}
		\widetilde{ \PPi}^{\widehat\mcZ^\circ}\left(\Psi(\tau)\right) = \mathbf{1}_{p=0} \, \xi_\ell  \,  \prod_{j=1}^{n}\sum_{l_j\geq 0} (\partial^{b_j}K)_{l_j} * \widetilde{\PPi}^{\widehat\mcZ}\left(\Psi(\downarrow^{l_j}\tau_j)\right). 
	\end{equation}
for $\tau = X^p\Xi_\ell\prod_{j=1}^n\mcI_{b_j}(\tau_j) \in T$. We define $\widehat Z_\tau$ for trees $\tau\in T^+$ from the previously constructed fields $Z_\sigma$ by \eqref{eq_defZtau_T+}.
From the same computations as in the proof of Proposition \ref{prop_alternativedef_Ztaunaif}, Equation \eqref{eq_defZotau_renorm} can be rewritten as 
	\begin{equation} \label{eq_defZotau_renorm_alt}
	{ \PPi}^{\widehat\mcZ^\circ}\left(\Psi(\tau)\right) = x^{p} \, \xi_\ell  \,  \prod_{j=1}^{n} (\partial^{b_j}K) * {\PPi}^{\widehat\mcZ}\left(\Psi(\tau_j)\right). 
\end{equation}
\begin{proposition}
	For any preparation map $R$ such that $R\mcI(\tau)=\mcI(\tau)$ for all $\tau\in\mcT$, the associated renormalised stochastic data $\widehat\mcZ=\mcZ^R$ is admissible.
	
\end{proposition}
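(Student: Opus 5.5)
Admissibility (Definition \ref{admissible}) asks for two properties of $\widehat\mcZ$: the multiplicativity relation \eqref{eq_defZtau_T+} on $T^+$, and the integration identity \eqref{eq_ZtauAdmissible}, or equivalently its tilde form \eqref{eq_ZtauAdmissible_tilde}. The first holds by construction, since $\widehat Z_\tau$ for $\tau\in T^+$ is defined through \eqref{eq_defZtau_T+} from the previously constructed fields. The plan is therefore to prove \eqref{eq_ZtauAdmissible_tilde} for $\widehat\mcZ$ by induction on the size of $\tau$, in the same spirit as the proof that $\mcZ^M$ is admissible.

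Fix $\tau\in\mcT$. Since $R\mcI(\tau)=\mcI(\tau)$, we have $\widehat Z_{\mcI(\tau)} = \widehat Z^\circ_{\mcI(\tau)}$. Moreover, $\mcI(\tau)$ has the form $X^0\Xi_1\,\mcI_0(\tau)$ with trivial noise, so the defining relation \eqref{eq_defZotau_renorm} gives
\[
\widetilde{\PPi}^{\widehat\mcZ^\circ}\big(\Psi(\mcI(\tau))\big)=\sum_{l\geq0}K_l*\widetilde{\PPi}^{\widehat\mcZ}\big(\Psi(\downarrow^l\tau)\big).
\]
This is exactly the right-hand side of \eqref{eq_ZtauAdmissible_tilde}. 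It therefore suffices to show
\[
\widetilde{\PPi}^{\widehat\mcZ}\big(\Psi(\mcI(\tau))\big)=\widetilde{\PPi}^{\widehat\mcZ^\circ}\big(\Psi(\mcI(\tau))\big).
\]

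Expand both sides via the forest formula \eqref{forest_formula}. Each word $\lg \downarrow^k\mcI(\tau)/\sigma_1,\dots,\sigma_q\rg$ carries letters $\sigma_{j-1}/\sigma_j\in T^+$ together with a last letter $\sigma_q$. By \eqref{eq_CommutMotAvecMcI}, the last letter is always planted, $\sigma_q=\mcI(\tau_q)$. On the letters in $T^+$ the two data coincide, since both use $\widehat Z$. On the last letter we compare $\widehat Z_{\mcI(\tau_q)}=\widehat Z^\circ_{R\mcI(\tau_q)}=\widehat Z^\circ_{\mcI(\tau_q)}$, again using the hypothesis on $R$. Hence the two iterated paraproduct sums agree term by term. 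The only exception is the one-letter word $\lg\mcI(\tau)\rg$ (with $q=0$), where the comparison is $\widehat Z_{\mcI(\tau)}$ against $\widehat Z^\circ_{\mcI(\tau)}$, and these are equal by the previous paragraph. The $\downarrow^k$ shifts cause no trouble: $\downarrow^k$ acts only on the first letter, which lies in $T^+$, and if $R$ is needed to commute with $\downarrow^k$ this follows from \eqref{eq_DeltaRTau}, as in the proof of \eqref{lem_DeltaModif_RTau}.

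The main obstacle is the bookkeeping. The paper defines the data through $\widehat\mcZ^\circ$ on trees in $T$ and $\widehat\mcZ$ on $T^+$ in a single induction, so one has to check that every tree appearing as a letter is strictly smaller than $\mcI(\tau)$. This is what allows the induction hypothesis (admissibility, hence equality of the relevant $\widehat Z$ and $\widehat Z^\circ$ at those letters) to be used consistently. If a direct letter-wise comparison fails, my fallback is to pass to the $\PPi$-form \eqref{eq_defZotau_renorm_alt} and apply \eqref{lem_DeltaModif_RTau_iterated}, trading $R\mcI(\tau)$ for $\mcI(\tau)$ inside the sums.
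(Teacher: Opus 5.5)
Your proposal is correct and is essentially the paper's argument. The paper expands $\PPi^{\widehat\mcZ}(\Psi(\mcI(\tau)))$ into iterated paraproducts, uses $\widehat Z_{\mcI(\tau_q)}=\widehat Z^\circ_{R\mcI(\tau_q)}=\widehat Z^\circ_{\mcI(\tau_q)}$ on the last (planted) letter to get $\PPi^{\widehat\mcZ}(\Psi(\mcI(\tau)))=\PPi^{\widehat\mcZ^\circ}(\Psi(\mcI(\tau)))$, and concludes with \eqref{eq_defZotau_renorm_alt}, whereas you run the same comparison in the tilde formulation via \eqref{eq_defZotau_renorm} and \eqref{eq_ZtauAdmissible_tilde}; note also that the equality on planted letters follows directly from $R\mcI=\mcI$, so neither an induction on $\tau$ nor any commutation of $R$ with $\downarrow^k$ is needed.
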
	

\begin{proof}
The renormalised collection $\mcZ^R$ satisfies already \eqref{eq_defZtau_T+} by definition. We also have from \eqref{eq_Renorm_ZtauFromZotau}
\begin{align*}
	\PPi^{\mcZ^R} \Psi(\mcI\tau)  &= \sum_{k\geq 0}\sum_{\substack{q\geq 0 \\  \tau_q\prec\dots\prec\tau}} \frac{1}{k!}\P_k(\widehat Z_{\downarrow^k\tau/\tau_1},\dots, \widehat Z^\circ_{R \mcI(\tau_q)}   ) 
	\\&= \sum_{k\geq 0}\sum_{\substack{q\geq 0 \\  \tau_q\prec\dots\prec\tau}} \frac{1}{k!}\P_k(\widehat Z^\circ_{\downarrow^k\tau/\tau_1},\dots, \widehat Z^\circ_{\mcI(\tau_q)}   ).
\end{align*}
Then $\PPi^{\widetilde\mcZ} \Psi(\mcI(\tau))= \PPi^{\widetilde\mcZ^{\circ}} \Psi(\mcI(\tau))$, which is indeed equal to $K*	\PPi^{\widetilde\mcZ} \Psi(\mcI(\tau)) $ from \eqref{eq_defZotau_renorm_alt}. 
\end{proof}

\begin{proposition} \label{renormalised_preparation_pde}
Given a renormalised stochastic data $(\widehat Z_\tau) = (\widehat Z^{\circ}_{R_\eps\tau})$ defined from some preparation map $R_\eps$, any function $u_\eps$ solution of \eqref{eq_gPAM} in the sense of Definition \ref{def_solution_EqPAM}, is a solution to the renormalised equation
	$$
	\scrL u_\eps = \sum_{\ell=1}^{\ell_0} \Upsilon_F[R_\eps^* \Xi_\ell](v_{\eps})\xi_\ell^\eps. 
	$$
\end{proposition}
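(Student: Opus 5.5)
The plan is to mirror the proof of Proposition \ref{prop_eqrenormalised_0}, replacing the renormalisation map $M_\eps$ by the preparation map $R_\eps$ and the multiplicative data $\mcZ^\circ$ by $\widehat\mcZ^\circ$.

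\textbf{Step 1: reduce to the right-hand side identity.} Since $u_\eps$ solves the fixed point of Definition \ref{def_solution_EqPAM}, the chain of Propositions \ref{prop_pamansatz_to_pamsystem} and \ref{prop_integration} gives $\scrL u_\eps = \sum_\ell F_\ell(v_\eps)\cdot\xi^\eps_\ell$. Here the product is the map \eqref{eqdef_Produit_Fluxil1}--\eqref{eqdef_Produit_Fluxil2}, built from $\widehat\mcZ$, and all data are smooth. So it suffices to show that, for each $\ell$,
\[
F_\ell(v_\eps)\cdot\xi_\ell^\eps=\Upsilon_F[R_\eps^*\Xi_\ell](v_\eps)\,\xi^\eps_\ell .
\]

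\textbf{Step 2: move $R_\eps$ from the data onto the coefficients.} In \eqref{eqdef_Produit_Fluxil1} and \eqref{eqdef_Produit_Fluxil2}, I substitute $\widehat Z_{\tau\Xi_\ell}=\widehat Z^\circ_{R_\eps(\tau\Xi_\ell)}$ and $\widehat Z_{\tau_q\Xi_\ell}=\widehat Z^\circ_{R_\eps(\tau_q\Xi_\ell)}$. The quotient letters $\widehat Z_{\tau_{j-1}/\tau_j}$ lie in $T^+$ and are unaffected. I then apply the iterated identity \eqref{lem_DeltaModif_RTau_iterated}, which rewrites a chain $\tau_q\prec\dots\prec\tau$ ending in $R\tau_q$ as a chain $\mu_q\prec\dots\prec R\tau$ ending in $\mu_q$.

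Expanding $R_\eps(\tau\Xi_\ell)=\sum_i\lambda_i\sigma_i$ and using the duality $\langle R\tau,\sigma\rangle=\langle\tau,R^*\sigma\rangle$, I reindex the outer sum over the trees $\sigma$ that appear. The coefficient in front of $\widehat Z^\circ_{\sigma}$ becomes
\[
\sum_\tau \frac{\langle R_\eps\tau\Xi_\ell,\sigma\rangle}{S(\tau\Xi_\ell)}\,\Upsilon_F[\tau\Xi_\ell]
=\frac{\Upsilon_F[R^*_\eps\sigma]}{S(\sigma)} .
\]
By the morphism property \eqref{eq_DeltaRTau}, written as $R^*(\mu\star\Xi_\ell)=\mu\star R^*\Xi_\ell$, together with \eqref{eq_renormalisedElementaryDiff} applied to $R_\eps F_\ell=\Upsilon_F[R_\eps^*\Xi_\ell]$, this equals $\Upsilon_{R_\eps F}[\sigma]/S(\sigma)$. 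The same rewriting applies to the remainders $u^{\#,\gamma_\ell}$, because they are defined linearly from these coefficients through \eqref{eq_udash_gammaell}.

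\textbf{Step 3: identify the product.} After Step 2, the expression is exactly $(\Upsilon_{R_\eps F}[\Xi_\ell](v)\cdot\xi_\ell)^\circ$, built from the multiplicative collection $\widehat\mcZ^\circ$ with paraproduct letters $\widehat Z$. The relation \eqref{eq_defZotau_renorm} is precisely the defining relation \eqref{eq_mult_PPi_Zcirc} of a $\mcZ^\circ$ associated with $\mcZ=\widehat\mcZ$. Therefore Lemmas \ref{lem_multiplicativity_modelPsicirc}, \ref{lem_sum Negative_correctors} and Proposition \ref{lem_SumNegativeCorrector} hold verbatim for the pair $(\widehat\mcZ,\widehat\mcZ^\circ)$, and Proposition \ref{prop_nonlinearitxiell_generalized} yields the pointwise product $\Upsilon_{R_\eps F}[\Xi_\ell](v_\eps)\xi^\eps_\ell$. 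Two consistency checks are needed here:
\begin{itemize}
\item the $\partial^k_*u$ and the $\overline Z$ in \eqref{eqdef_partialStarU} are computed with $\widehat\mcZ$, which is admissible by the preceding proposition;
\item Proposition \ref{prop_nonlinearitxiell_generalized} must be applied with the nonlinearity $R_\eps F$ in place of $F$. The coefficients $u_\tau$ were defined with $F$, so this requires the compatibility from Step 2.
\end{itemize}

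\textbf{Main obstacle.} The hardest step is the bookkeeping in Step 2. The preparation map acts only on the last letter of each word, through the $\circ$-data, and not on the whole tree. I must check that \eqref{lem_DeltaModif_RTau_iterated} correctly transports the symmetry factors and the $\gamma_\ell$ truncations. The condition $|\tau_i|>|\tau|$ in Definition \ref{def_preparation_map} is what should guarantee the truncations match: terms generated by $R$ have higher degree, so they either stay below the cutoff or fall into the remainder. The argument is inductive on the number of nodes, since $R$ decreases the node count.
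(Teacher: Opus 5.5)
Your outline follows the paper's proof: substitute $\widehat Z_{\tau\Xi_\ell}=\widehat Z^\circ_{R_\eps(\tau\Xi_\ell)}$, transport $R_\eps$ onto the coefficients with \eqref{lem_DeltaModif_RTau_iterated} and duality, use $R_\eps^*(\mu\Xi_\ell)=\mu\star R_\eps^*\Xi_\ell$, and close with the multiplicative product identity. However, the identification at the end of Step 2 is false, and Step 3 rests on it.

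Identity \eqref{eq_renormalisedElementaryDiff} concerns renormalisation maps $M$, whose adjoint acts at every noise node. For a preparation map, \eqref{eq_DeltaRTau} says precisely that $R_\eps^*$ acts only at the root, as you note yourself in your last paragraph. By Lemma \ref{lem_ElemDiffMorphism}, for $\mu=X^k\prod_j\mcI_{b_j}(\mu_j)$,
\[
\Upsilon_F[\mu\star R_\eps^*\Xi_\ell]=\prod_j\Upsilon_F[\mu_j]\;\partial^k\prod_j D_{b_j}\Upsilon_F[R_\eps^*\Xi_\ell].
\]
So the branches carry $F$ and only the root carries $R_\eps F$, whereas $\Upsilon_{R_\eps F}[\mu\Xi_\ell]$ has $R_\eps F$ on the branches too. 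For instance, with $\mu=\mcI(\Xi_{\ell'})$ one gets $F_{\ell'}\,D_0(R_\eps F)_\ell$ versus $(R_\eps F)_{\ell'}\,D_0(R_\eps F)_\ell$. These differ whenever $(R_\eps F)_{\ell'}\neq F_{\ell'}$ and $D_0(R_\eps F)_\ell\neq 0$.

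Hence after Step 2 you do not have the product map of the equation with nonlinearity $R_\eps F$. Proposition \ref{prop_nonlinearitxiell_generalized} cannot be used with $R_\eps F$ in place of $F$: its proof needs the branch factors $\Upsilon[\mu_j]/S(\mu_j)$ to be the actual coefficients $u_{\mu_j}$ of $u$. Those factors are re-summed with the $\overline Z_{\mcI_{b_j}(\mu_j)}$ into $\partial^k u$ through \eqref{eqdef_partialStarU}, and Lemma \ref{lem_starDerivUpsilon_hat} is used for the remainders. Since $u$ is paracontrolled with $F$-coefficients, the ``compatibility'' you appeal to in Step 3 is not available.

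The repair is to keep $F$ everywhere. Set $\theta=R_\eps^*\Xi_\ell$. What Step 2 actually produces is $\sum_\sigma \P(u_{\sigma\star\theta},\widehat Z^\circ_{\sigma\Xi_\ell})$ plus the correctors built on $u^{\#,\gamma_\ell}_{\sigma\star\theta}$. This is the generalised product \eqref{eqdef_Produit_ElementaryDiff1}--\eqref{eqdef_Produit_ElementaryDiff2} with $\tau\Xi_\ell$ replaced, by linearity, by $\theta$. Proposition \ref{prop_nonlinearitxiell_generalized}, applied to the pair $(\widehat\mcZ,\widehat\mcZ^\circ)$ with the original $F$, then yields $\Upsilon_F[\theta](v_\eps)\xi^\eps_\ell=\Upsilon_F[R_\eps^*\Xi_\ell](v_\eps)\xi^\eps_\ell$.

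This is why that proposition is stated for general $\tau\Xi_\ell$ and not only for $\Xi_\ell$. It is also how the paper concludes: after \eqref{identity_noise_R} it keeps $\Upsilon_F[\tau\star R_\eps^*\Xi_\ell]$, and then uses the multiplicativity of $\widehat Z^\circ$ together with the coherence of the coefficients (Corollary \ref{cor_paralinearistaion2}). Your final formula comes out right only because, at the root itself, $\Upsilon_F[R_\eps^*\Xi_\ell]=(R_\eps F)_\ell=\Upsilon_{R_\eps F}[\Xi_\ell]$.
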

\begin{proof}
	As it was done in the proof of Proposition \ref{prop_eqrenormalised_0}, we compute the right hand side of \eqref{eq_gPAM} as defined in Subsection \ref{Subsection_RHS}. From \eqref{eq_Renorm_ZtauFromZotau}, it is equal to
\begin{align*}
	 \sum_{\ell=1}^{\ell_0}\sum_{\substack{\tau\in\mcT_{0} \\ |\mcI(\tau\Xi_\ell)|<\gamma_\ell }} \Big\{ \P \Big( u_{\tau\Xi_\ell} ,\, \widehat Z_{R_{\eps}(\tau\Xi_\ell)}^{\circ}\Big) + \sum_{\tau_q\prec\dots\prec\tau} \bfC\Big(u_{\tau\Xi_\ell}^{\#,\gamma_\ell},\widehat Z_{\tau/\tau_1},\dots, \widehat Z_{R_{\eps} (\tau_q\Xi_\ell)}^{\circ}\Big)\Big\}.
\end{align*}
Using Lemma \ref{lem_DeltaModif_RTau}, we get
\begin{equs}
	\sum_{\ell=1}^{\ell_0}\sum_{\substack{\tau\in\mcT_{0} \\ |\mcI(\tau\Xi_\ell)|<\gamma_\ell }} & \Big\{ \P\Big(u_{\tau\Xi_\ell}, \,  \widehat Z_{R_\eps(\tau\Xi_\ell)}^{\circ} \Big)  + \sum_{\substack{q\geq 0 \\ \mu_q\prec\dots\prec R_\eps \tau}}\bfC\Big(u_{\tau\Xi_\ell}^{\#,\gamma_\ell},\widehat Z_{(R_\eps\tau)/\mu_1},\dots,\widehat Z_{ \mu_q\Xi_\ell}^{\circ}\Big)\Big\}
	\\
	=&\sum_{\ell=1}^{\ell_0}\sum_{\substack{\tau\in\mcT_{0} \\ |\mcI(\tau\Xi_\ell)|<\gamma_\ell }}\Big\{ \frac{1}{S(\tau)} \P \Big( \Upsilon_F(R_\eps^*(\tau\Xi_\ell)), \,   \widehat Z_{\tau\Xi_\ell}^{\circ} \Big) \\ & + \sum_{\substack{q\geq 0 \\ \mu_q\prec\dots\prec \tau}}\bfC\Big(\Upsilon_F(R_\eps^*(\tau\Xi_\ell))^{\#,\gamma_\ell},\widehat Z_{\tau/\mu_1},\dots,\widehat Z_{\mu_q\Xi_\ell}^{\circ}\Big)\Big\}
	\\ 	=&\sum_{\ell=1}^{\ell_0}\sum_{\substack{\tau\in\mcT_{0} \\ |\mcI(\tau\Xi_\ell)|<\gamma_\ell }}\Big\{ \frac{1}{S(\tau)} \P \Big( \Upsilon_F(\tau \star R_\eps^*(\Xi_\ell)), \,   \widehat Z_{\tau\Xi_\ell}^{\circ} \Big) \\ & + \sum_{\substack{q\geq 0 \\ \mu_q\prec\dots\prec \tau}}\bfC\Big(\Upsilon_F(\tau \star R_\eps^*(\Xi_\ell))^{\#,\gamma_\ell},\widehat Z_{\tau/\mu_1},\dots,\widehat Z_{\mu_q\Xi_\ell}^{\circ}\Big)\Big\}
	\\ & = \sum_{\ell=1}^{\ell_0}\Upsilon_F[R^*_\eps\Xi_{\ell}](v_{\eps})\xi_\ell^\eps
\end{equs}
where we have first used 
\begin{equation} \label{identity_noise_R}
	R_\eps^*(\tau\Xi_\ell) = 	R_\eps^*(\tau \star\Xi_\ell) = \tau \star R_{\eps}^* \Xi_\ell
	\end{equation}
	which is coming from \eqref{eq_DeltaRTau}. Then for the last line, we have applied Corollary \ref{cor_paralinearistaion2}.
\end{proof}

\begin{remark}
	Let us stress that the main steps of the previous similar are similar to the ones of the renormalised equation proof given \cite{RenomrmalisedSpde}. We list them below:
	\begin{itemize}
		\item Multiplicativity of $ \widehat{Z}^{\circ} $ which corresponds to the multiplicativity of the reconstruction operator in \cite{RenomrmalisedSpde}. 
		\item The identity \eqref{identity_noise_R} is the same in both cases.
		\item A version of Corollary \ref{cor_paralinearistaion2} is also used in \cite{RenomrmalisedSpde}  and it is connected to the coherence property that is the fact that the coefficients $u_{\tau}$ are given by the elementary differentials.
	\end{itemize}
	\end{remark}
	
	\section{Convergence of the stochastic data}
	\label{Sec::8}
	
	For proving the convergence of the stochastic data $(\widehat{Z}_{\tau})_{\tau}$, we use the spectral gap approach introduced with multi-indices in \cite{LOTP}. It has been extended to decorated trees in \cite{HS23,Hos25,BH23}. In this section, we follow mainly the formalism of \cite{BH23}. We make the following assumption based on \cite[Sec. 3.5]{BH23}.

	Let $ \Omega  $ be a separable space and $H$ a separable Hilbert space embedded densely into $\Omega$. A function $ F : \Omega \rightarrow \mathbb{R} $ is $H$-differentiable if there exists a function $\delta F : \Omega \rightarrow H^*$ such that \label{Malliavin_derivative}
	\begin{equs}
		(\delta_{\omega} F)(h) = \frac{d}{dt} F(\omega + th) |_{t=0}.
	\end{equs} 
	Then, we set  $ \Vert \ell \Vert_{H^*} =  \sup_{\Vert h \Vert \leq 1} |  \ell(h)| $ and
	one has the $H$-spectral gap inequality if there exists a constant $C > 0$ such that 
	\begin{equs}
		\label{spectral_gap_2}
		\mathbb{E}[  (F - \mathbb{E}(F))^2 ] \leq C \mathbb{E}( \Vert \delta F \Vert_{H^*}^2 )
		\end{equs}
	for any $H$-differentiable $ F \in L^2(\Omega)$ such that $\delta F \in L^2(\Omega ; H^*)$. One further has
	\begin{equs}
		\label{spectral_gap_q}
	\mathbb{E}[ |F|^q ] \lesssim |\mathbb{E}[F]|^q + \mathbb{E}[ \Vert \delta F\Vert_{H^*}^q ]
	\end{equs}
	for $q= 2^{r}$.
	
	\begin{assumption} \label{assumption_spectral gap}
		Let $ H:= \prod_{\ell=1}^{\ell_0} H^{-s_{\ell}} $ and $ \Omega = \prod_{\ell=1}^{\ell_0} C^{r_\ell} $ with 
		$r_\ell < -\frac{|\mathfrak{s}|}{2} - s_\ell$ and $ \frac{|\mathfrak{s}|}{2} + s_\ell \geq 0 $, we assume that the noises $  \xi_1,...,\xi_{\ell_0} $ satisfy the spectral gap inequality \eqref{spectral_gap_2}. 
		\end{assumption}

\subsection{Analytical prerequisites on Besov spaces}

For $\alpha\in\bfR$ and $p,q\in[1,+\infty]$ we define the Besov space 
\begin{equs} \label{besov}
B^{\alpha}_{p,q}(\bfR^{d+1}) = \Big\{ u\in \mcD'(\bfR^{d+1}), \quad \norme{u}_{B^{\alpha}_{p,q}} \defeq \norme{ \Big( 2^{-i\alpha} \norme{\Delta_i u}_{L^p}  \Big)  }_{\ell^q} < +\infty       \Big\}.
\end{equs}
The integrability exponent $q$ will almost every time be equal to $\infty$,  we will therefore use the shorthand notation $B^{\alpha}_p =B^\alpha_{p,\infty}$.
We retain the definition of regularity-integrability structure from \cite{Hos25,BH23}.
A regularity-integrability structure is a regularity structure $\scrT=(T,T^+)$ equipped with an additional degree map $i:\mcB\sqcup \mcB^+ \to [1,+\infty]$ that  verifies for any $\sigma,\tau\in\mcB$ the relation  
\begin{equation} \label{integrability}
	\frac{1}{i(\tau)}= 	\frac{1}{i(\tau/\sigma)}+	\frac{1}{i(\sigma)}.
\end{equation} 
We will write $r$ for the degree $|\cdot|$ on the underlying regularity structure. 
The couple $(r,1/i)$ defines a bigrading over $\scrT$.
A model over a regularity-integrability structure is written as a couple $(\PPi,g)$ such that
\begin{align*}
	\Vert\tau\Vert_{ (\PPi,g)}=\sup_{\phi\in\mcB}\sup_{\lambda\in(0,1)}\lambda^{-|\tau|}\norme{\lg\Pi_x\tau, \phi_x^\lambda\rg}_{L_x^{i(\tau)}}<+\infty
	\\
	\Vert\nu\Vert_{ (\PPi,g)}=\sup_{|h|\in(0,1)}\norme{   \frac{\gamma_{x,x+h}(\nu)}{|h|^{r(\nu)}}}_{L_x^{i(\nu)}} <+\infty
\end{align*}
where the subscript $x$ in $L_x^{i(\nu)}$ means that we are looking at the integrability in the $x$ variable.
Likewise, we define the space of modelled distribution $\mcD^{\gamma,\lambda}$ over $\scrT$ by the space of maps ${\bm f} : \bfR^{d+1}\to T $ such that
$$
\max_{\beta<\gamma} \norme{\norme{{\bm f}(x)}_{\beta,j}}_{L_x^{\lambda:j}}, \quad \max_{\beta<\gamma} \norme{\frac{1}{|h|^{\gamma-r}}\norme{{\bm f}(x+h)-\Gamma_{x+h,x}{\bm f}(x)}_{\beta,j}}_{L_x^{\lambda:j}}
$$ 
where $\frac{1}{\lambda:j}=\frac{1}{\lambda}-\frac{1}{j}$.

\subsection{Decorated trees with Malliavin derivatives.}

We introduce new symbols denoted by $ \dot{\Xi}_\ell $ and $\dot{\Xi}_\ell^0$ that both encode the Malliavin derivative of $\xi_\ell$ denoted by $\delta \xi_\ell$. We extend the degree map $|\cdot|$ to $ |\cdot|_p $ setting
\begin{equation*}
	| \dot{\Xi}_\ell |_p = |\Xi_\ell| + \frac{|\frak s|}{p}, \quad | \dot{\Xi}^0_\ell | = |\Xi_\ell|.
\end{equation*}
The symbol $\dot\Xi_\ell$ takes into account the gain of regularity provided by the Malliavin derivative whereas the symbol $ \dot\Xi_\ell^{0}$ does not. We will consider decorated trees which have only one symbol $ \dot{\Xi}$ or $\dot{\Xi}^0$.	We  define a derivative $D_{\Xi} $ on decorated trees that makes appear this new symbol. It is given inductively for $\sigma, \tau$ decorated trees as
\label{D_Xi}
\begin{equs}
	D_{\Xi} \Xi_\ell & = \dot{\Xi}_{\ell}, \quad D_{\Xi} X^k = 0, \quad D_{\Xi} \CI_{k}(\tau) = \CI_{k}(D_{\Xi} \tau), \\ D_{\Xi}(\sigma \tau) & = D_{\Xi}(\sigma) \tau + \sigma D_{\Xi}(\tau).
\end{equs}
The set of decorated trees with no Malliavin derivatives is still denoted by $\mcT$. The set of trees containing exactly one noise with a Malliavin derivative $\dot\Xi_\ell$ is denoted $\dot\mcT$ and the set of trees containing exactly one noise with the Malliavin derivative $\dot\Xi_\ell^0$ is denoted $\dot\mcT_0$. We also write $\dot\mcT^+$ and $\dot\mcT_0^+$ for the corresponding set of positive trees.
One also defines an integrability exponent $i : \mcT \sqcup \mcT^+ \sqcup \dot{\mcT} \sqcup \dot\mcT^{+} \to [1;+\infty]$ by setting 
$$
i(\tau) = \left\{ \begin{array}{cc} 2 & \text{for  }\tau\in\dot{\mcT}\sqcup \dot\mcT^+ \\ +\infty &\text{for  }\tau\in\mcT\sqcup{\mcT}^{+} 
\end{array}  \right. .
$$
From the Besov injection
\begin{equs} \label{Besov_injection}
	j : B^{\alpha}_p(\bfR^{d+1}) \hookrightarrow C^{\alpha-\frac{|\mathfrak{s}|}{p}}(\bfR^{d+1}),
\end{equs} one defines a map $j_*$ on decorated trees.
The map $j_* : \mcT \sqcup \mcT^+ \sqcup \dot{\mcT} \sqcup \dot\mcT^{+}  \to \mcT \sqcup \mcT^+   \sqcup \dot\mcT_0 \sqcup \dot\mcT^+_0 $ is defined inductively by setting for any $\ell\in\bbrack{1;\ell_0}$ and $n\geq0$
$$
j_* (\dot\Xi_\ell) = \dot\Xi_\ell^0 , \qquad j_*(\Xi_\ell) = \Xi_\ell,\qquad j_*(X^n) = X^n
$$
and for any $\tau,\tau_1,\dots,\tau_n\in\mcT$
$$
j_* (\mcI_k (\tau)) = \mcI_k(j_*(\tau)),\qquad j_*\Big(\prod_{j=1}^n \tau_j\Big) =\prod_{j=1}^n  j_*(\tau_j) 
$$
and for $\tau\in\mcT$ and $\tau_1,\dots,\tau_n\in\dot{\mcT}^+$
$$
j_* (\mcI_k^+ (\tau)) = \mathbf{1}_{|j_*(\tau)|-|k|>0}\, \mcI_k^+(j_*(\tau)),\qquad j_*\Big(\prod_{j=1}^n \tau_j\Big) =\prod_{j=1}^n  j_*(\tau_j).
$$
We can show that we have the following commutation relation for $\tau\in\dot{\mcT}$
\begin{equation}\label{eq_commut_j*_deriv}
	\downarrow^kj_*=j_*\downarrow^k,\qquad\qquad D^nj_*(\tau)=\mathbf{1}_{|j_*\tau|-|n|>0  } \, j_*(D^n\tau)
\end{equation}

\begin{lemma}
	For any $\tau\in\dot\mcT$ we have
	\begin{equation}\label{eq_commut_jstar_Delta}
		\Delta \, j_*(\tau) = (j_*\otimes j_*)\Delta \tau,
	\end{equation}
	and for any $\tau\in\mcT$
	\begin{equation}\label{eq_commut_jstar_DXi0}
		\Delta \, j_*(D_\Xi\tau) = \big(\id\otimes j_*D_\Xi+j_*D_\Xi\otimes \id\big)\Delta\tau
	\end{equation}
\end{lemma}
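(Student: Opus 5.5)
Both identities are proved by induction on the number of edges and nodes of $\tau$, using the recursive definitions of $\Delta$ in \eqref{eq_CoproductOfMcItau}, its multiplicativity, and the inductive definitions of $j_*$ and $D_\Xi$.

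For \eqref{eq_commut_jstar_Delta}, first check the generators. For $\dot\Xi_\ell$ one has $\Delta\dot\Xi_\ell=\one\otimes\dot\Xi_\ell$; similarly for $\dot\Xi^0_\ell$, and $j_*\one=\one$, so the identity holds. Polynomials and $\Xi_\ell$ are fixed by $j_*$. For products, both $j_*$ (on $T$ and on $T^+$) and $\Delta$ are multiplicative, so the identity propagates.

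The key case is $\mcI_k(\tau)$. Here
\[
\Delta j_*\mcI_k(\tau)=(\mcI_k\otimes\id)\Delta j_*\tau+\sum_{|n|<|j_*\tau|+\beta-|k|}\frac{X^n}{n!}\otimes\mcI^+_{k+n}(j_*\tau).
\]
By induction the first term equals $(\mcI_k j_*\otimes j_*)\Delta\tau=(j_*\otimes j_*)(\mcI_k\otimes\id)\Delta\tau$. For the polynomial part, compare it with
\[
(j_*\otimes j_*)\sum_{|n|<|\tau|+\beta-|k|}\frac{X^n}{n!}\otimes\mcI^+_{k+n}(\tau),
\]
computed with the degree $|\cdot|_p$ on $\dot\mcT$. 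The indicator $\mathbf{1}_{|j_*(\tau)|-|k+n|>0}$ built into $j_*(\mcI^+_{k+n}(\tau))$ cuts the larger range $|n|<|\tau|_p+\beta-|k|$ down exactly to $|n|<|j_*\tau|+\beta-|k|$. This degree bookkeeping is where the definition of $j_*$ on $\mcI^+$ is designed to make things match, and it is the main point to verify. The same argument applies to $\Delta^{\!+}$ on $\dot\mcT^+$.

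For \eqref{eq_commut_jstar_DXi0}, take $\tau\in\mcT$. Since $D_\Xi$ is a derivation commuting with $\mcI_k$, and $\Delta$ is multiplicative, the operator $\Delta D_\Xi$ satisfies a Leibniz rule. So does $(\id\otimes D_\Xi+D_\Xi\otimes\id)\Delta$, where $D_\Xi$ is extended to $T^+$ by $D_\Xi\mcI^+_k=\mcI^+_kD_\Xi$ and Leibniz. It therefore suffices to check generators. For $\Xi_\ell$, one has $\Delta\dot\Xi_\ell=\one\otimes\dot\Xi_\ell$, which is the right-hand side. For $\mcI_k(\tau)$, expanding $\Delta\mcI_k(D_\Xi\tau)$ and using induction gives
\[
\Delta D_\Xi\mcI_k\tau=(\id\otimes D_\Xi+D_\Xi\otimes\id)\Delta\mcI_k\tau,
\]
up to the same polynomial-range discrepancy as before: $|D_\Xi\tau|_p$ differs from $|\tau|$. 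Then apply $j_*$ using \eqref{eq_commut_jstar_Delta}. On $\mcT$ the map $j_*$ is the identity. On the $D_\Xi$-factor, the truncation indicator exactly restores the range $|n|<|\tau|+\beta-|k|$, because $|j_*D_\Xi\tau|=|\tau|$. The principal obstacle is therefore checking, in the $\mcI$ case, that the sum ranges coincide after $j_*$ even though $D_\Xi$ shifts the degree by $|\mathfrak s|/p$. I will handle this by writing both sides with explicit indicators and using $|j_*\sigma|=|\sigma|$ for the noise-replaced trees.
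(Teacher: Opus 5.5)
Your proposal is correct and is essentially the paper's proof: induction on the tree, generator checks, multiplicativity (resp.\ the Leibniz rule) for products, and, in the $\mcI_k$ case, the observation that the positivity indicator in $j_*(\mcI^+_{k+n}(\cdot))$ cuts the polynomial range computed with $|\cdot|_p$ down to the one for $j_*\tau$. Note that this indicator has to be read as $|\mcI^+_{k+n}(j_*\tau)|>0$, i.e.\ with the $\beta$ included, which is what you actually use. The only cosmetic difference is in \eqref{eq_commut_jstar_DXi0}: you first obtain $\Delta D_\Xi=(\id\otimes D_\Xi+D_\Xi\otimes\id)\Delta$ up to terms lying in $\ker(j_*\otimes j_*)$ and then invoke \eqref{eq_commut_jstar_Delta}, whereas the paper runs the induction directly on $j_*D_\Xi$ using $|j_*D_\Xi\tau|=|\tau|$.
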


\begin{proof}
	We prove the identities by induction.
	We first verify the identity for $\tau=X^n$, $\Xi_\ell$ or $\dot{\Xi}_\ell$. For $n\geq0$ and $\tau\in\dot\mcT$ we have by induction
	\begin{align*}
		\Delta \big(j_*(\mcI_n(\tau))\big) = \Delta \mcI_n(j_*(\tau)) &= (\mcI_n\otimes \id )\Delta j_*(\tau)   + \sum_{|p|<|\mcI_n(j_*(\tau))| } \frac{X^p}{p!} \otimes \mcI_{n+p}^+(j_*(\tau))
		\\
		&=(\mcI_nj_*\otimes j_*)\Delta\tau + \sum_{|p|<|\mcI_n(\tau)| } j_*(\frac{X^p}{p!}) \otimes j_*(\mcI_{n+p}^+(\tau))
		\\
		&=(j_*\otimes j_*)\Big((\mcI_n\otimes \id)\Delta\tau + \sum_{|p|<|\mcI_n(\tau)| } \frac{X^p}{p!} \otimes \mcI_{n+p}^+(\tau)\Big)
		\\
		&=(j_*\otimes j_*)\Delta\tau
	\end{align*}
	and if $\tau=\tau_1\tau_2$, from the multiplicativity of the coproduct
	\begin{align*}
		\Delta(j_*(\tau_1\tau_2)) &= \Delta\big(j_*(\tau_1)j_*(\tau_2)\big) =
		\Delta\big(j_*(\tau_1)\big)\,\Delta\big(j_*(\tau_2)\big) \\&=
		(j_*\otimes j_*)\Delta(\tau_1) \,	(j_*\otimes j_*)\Delta(\tau_2) 
		\\&= (j_*\otimes j_*)\Delta\tau_1 \Delta\tau_2  =(j_*\otimes j_*)\Delta(\tau_1\tau_2).
	\end{align*} 
	We now prove the second identity by induction.
	For any $n\geq0$ and $\tau\in\mcT$, as $|j_*(D_\Xi\tau)|=|\tau|$, we have
	\begin{align*}
		\Delta\big(j_*(D_\Xi\mcI_n(\tau) )\big) & = \Delta\big(\mcI_n(j_*(D_\Xi\tau)) \big)\\ &= \big(\mcI_n \otimes \id \big)\Delta(j_*(D_\Xi\tau)) +   \sum_{|p|<|\mcI_n(j_*(D_\Xi\tau))| } \frac{X^p}{p!} \otimes \mcI_{n+p}^+(j_*(D_\Xi\tau))
		\\
		&= \big(\mcI_nj_*D_\Xi \otimes \id \big)\Delta\tau  + \big(\mcI_n \otimes j_*D_\Xi \big)\Delta\tau
		\\
		&\quad+ \sum_{|p|<|\mcI_n(\tau)| } \frac{X^p}{p!} \otimes j_*(D_\Xi\mcI_{n+p}^+(\tau))
		\\
		&= \big(j_*D_\Xi\otimes\id + \id\otimes j_*D_\Xi   \big) (\mcI_n\otimes\id)\Delta\tau 
		\\
		&\quad+\Big(j_*D_\Xi\otimes\id + \id\otimes j_*D_\Xi   \Big)\Big( \sum_{|p|<|\mcI_n(\tau)| } \frac{X^p}{p!} \otimes\mcI_{n+p}^+(\tau)      \Big)
		\\
		&= \big(j_*D_\Xi\otimes\id + \id\otimes j_*D_\Xi   \big)\Delta\tau.
	\end{align*}
	Where we used $D_\Xi X^k=0$.
	For $\tau=\tau_1\tau_2\in\mcT$ we have $j_*(D_\Xi\tau)=j_*(D_\Xi \tau_1)\,\tau_2+\tau_1\,j_*(D_\Xi \tau_2) $, from the induction hypothesis
	\begin{align*}
		& \Delta\big(j_* D_\Xi(\tau_1\tau_2)\big) 
		\\
		&=  \big(\id\otimes j_*D_\Xi+j_*D_\Xi\otimes \id\big)\Delta\tau_1\, \Delta \tau_2  + \Delta\tau_1\, \big(\id\otimes j_*D_\Xi+j_*D_\Xi\otimes \id\big)\Delta\tau_2
		\\
		&= \big(\id\otimes j_*D_\Xi+j_*D_\Xi\otimes \id\big)\Delta(\tau_1\tau_2).
	\end{align*}
\end{proof}
We extend naturally the definition of $Z$ on these new decorated trees by interpreting $\dot{\Xi}_\ell$ and $\dot{\Xi}_\ell^0$ as $ \delta \xi_\ell$. The preparation map used for renormalising trees containing Malliavin derivatives is such that
\begin{equation}
	\label{commutation_R}
	R D_{\Xi} = D_{\Xi} R, \quad \text{and}\qquad Rj_*=j_*R.
\end{equation}

\subsection{Regularity-integrability structure on words}

One defines the regularity-integrability structure of words setting for any letter $a\in\mcA$ a degree $r(a)$ and an integrability $i(a)$.
For $w=\lg a_1,\dots,a_n\rg_\ell X^k$, we set 
\begin{equation}\label{eq_degree_besov_words}
r(w) =  \sum_{i=1}^n r(a_i) +  |\ell|+|k|,
\quad \quad
\frac{1}{i(w)} =  \sum_{i=1}^n \frac{1}{i(a_i)}.
\end{equation}
The following proposition is a generalisation of  \cite[Theorem 1]{LocalExpansionsParacSystems} towards Besov spaces. Proof elements are given in Appendix \ref{Appendix_B}.
\begin{proposition}\label{prop_modelwords_besov}
	Let $Z_a\in B^{r(a)}_{i(a)}$ for any $a\in\mcA$ setting 
	\begin{align*}
		\PPi(\lg a_1,\dots,a_n\rg_\ell ) &= \P_\ell(Z_{a_1},\dots,Z_{a_n})
		\\
		g(\lg a_1,\dots,a_n\rg_\ell^k) &= \partial^k_*\P_\ell(Z_{a_1},\dots,Z_{a_n})
	\end{align*}
	defines a model over the regularity-integrability structure of the words.
	\end{proposition}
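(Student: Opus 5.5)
The statement to prove is Proposition \ref{prop_modelwords_besov}. I would follow the proof of \cite[Theorem 1]{LocalExpansionsParacSystems} line by line, replacing every $L^\infty$ bound by an $L^p$ bound whose exponent is fixed by the integrability grading \eqref{eq_degree_besov_words}.

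The algebra needs no new work. The definitions of $\PPi$ and $g$ are unchanged, Proposition \ref{def_prop_antipode} still gives $g=\PPi\CA_*$, and the coproducts \eqref{coproduct_words} do not depend on integrability. So $\Pi_x=(\PPi\otimes g_x\CA_+)\Delta$ and $\gamma_{xy}$ keep the same explicit expansions as in the H\"older case. One still has to check that the grading \eqref{integrability} is compatible with $\Delta$: cutting $w=w_1w_2$ gives $\frac{1}{i(w)}=\frac{1}{i(w_1)}+\frac{1}{i(w_2)}$ directly from \eqref{eq_degree_besov_words}, and the polynomial letters $X^r$ and the weights carry integrability $\infty$.

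The analytic part is an induction on the length $n$ of the word. For one letter, $\Pi_x\lg a\rg=Z_a$ modulo polynomial corrections. The bound $\|\lg Z_a,\phi_x^\lambda\rg\|_{L^{i(a)}_x}\lesssim\lambda^{r(a)}\|Z_a\|_{B^{r(a)}_{i(a)}}$ is the standard Littlewood--Paley characterisation of $B^{r}_{p,\infty}$: split $Z_a=\sum_i\Delta_i Z_a$ at $2^{-i}\simeq\lambda$ and use Young's inequality for the kernels in $x$. For the star derivatives $\partial^k_*\P_\ell$ and the increments $\gamma_{x,x+h}$, I would use the recursion \eqref{eqdef_starderivative} as in the H\"older proof. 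There one expands $\Delta_{<i-1}^{\ell}$ of a lower paraproduct around $x$ by Taylor's formula with remainder, estimates low-frequency blocks by Bernstein and high-frequency blocks by the $B^{r}_p$ norm, and sums the dyadic series, which converges because the degree is not an integer (Assumption \ref{assumption_notint}). The only change is that each product of blocks coming from different letters is now bounded by H\"older's inequality in $x$ with exponents $\frac{1}{i(w)}=\sum_j\frac{1}{i(a_j)}$, instead of by a product of sup norms. Taylor remainders and the shift $x\mapsto x+h$ must be handled by Minkowski's integral inequality, so that the $L^p_x$ norm is taken after integrating over the segment; this replaces the pointwise estimate with one of type $\|f(\cdot+h)-f\|_{L^p}\le|h|\,\|\nabla f\|_{L^p}$.

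The main obstacle I expect is the mixed terms in $\Pi_x\lg w\rg_\ell$. These are the products $g_x(\lg w_1\rg^{r}_{\ell_1})\,\Pi_x(\lg w_2\rg_{\ell_2+r})$, in which one factor is evaluated at the base point $x$ and the other is tested against $\phi_x^\lambda$. Both factors depend on $x$, and $L^p_x$ norms of such products do not factor naively. I would bound them with H\"older's inequality in $x$, applying the induction hypothesis to each factor with its own exponent, and check that the splitting matches the grading \eqref{integrability}. For the reconstruction-type step, where the remainder $\PPi w-\Pi_x w$ is recentred, I would use the $L^p$ reconstruction theorem of \cite{Hos25,BH23} in place of the H\"older one.

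Collecting these estimates gives $\|\tau\|_{(\PPi,g)}\lesssim\prod_j\|Z_{a_j}\|_{B^{r(a_j)}_{i(a_j)}}$ for every word $\tau$. This is the claimed model bound over the regularity-integrability structure.
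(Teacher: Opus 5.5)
Your overall strategy is the one the paper follows in Appendix~\ref{Appendix_B}: keep the algebra of \cite[Theorem 1]{LocalExpansionsParacSystems}, replace sup norms by $L^{i(\cdot)}$ norms, treat Taylor remainders and shifts with Minkowski, and bound products by H\"older using $\frac{1}{i(w)}=\sum_j\frac{1}{i(a_j)}$. However, the step you single out as the main obstacle would fail as you describe it. Write the multi-index of the coaction \eqref{coproduct_words} as $m=m_1+m_2$ and consider $g_x(\lg w_1\rg^{m}_{\ell_1})\,\Pi_x(\lg w_2\rg_{\ell_2+m_1}X^{m_2})$. The first factor does not depend on $\lambda$. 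The second, tested against $\phi_x^\lambda$, is only of order $\lambda^{r(w_2)+|\ell_2|+|m|}$. Since the coaction imposes $|m|<r(w_1)+|\ell_1|$, this is strictly worse than the target $\lambda^{r(w)+|\ell|}$. So each such term, and $\PPi\lg w\rg_\ell$ itself, is individually of the wrong order. The homogeneity $r(w)+|\ell|$ appears only after they cancel, and no H\"older bookkeeping on separately bounded factors can produce it. Your induction hypothesis does not even control $\norme{g_x(\cdot)}_{L^p_x}$, since a model bound concerns only $\Pi_x$ and the increments $\gamma_{x,x+h}$.

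The missing idea is to realise the cancellation scale by scale before applying H\"older. Your sentence about Taylor-expanding the low-frequency factor around $x$ points the right way, but the subtracted terms have to be absorbed into that expansion inside each dyadic block, not estimated on their own. The paper does this by writing $\P_\ell(Z_{a_1},\dots,Z_{a_n})$ as a sum of simplified paraproducts $\P_<([\tau_0/\tau_1],\dots,[\tau_q],[\lg a_n\rg])$ of generalised blocks. It then proves $[\mu]\in\mcB^{r(\mu),(B)}_{i(\mu)}$ as in Lemma~\ref{lem_crochets_Besov}. There H\"older is applied to $\gamma_{z,z+h_1}(\tau/\sigma)\,\Pi_z\sigma(z+h_2)$, a product of two \emph{recentred} quantities whose degrees add up to $r(\tau)$ and whose integrabilities add up by \eqref{integrability}. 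The simplified-paraproduct estimates of \cite[Section 2]{LocalExpansionsParacSystems} then carry over to $L^p$.

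Two smaller points. First, no reconstruction theorem enters the argument. It is also unclear what you would reconstruct, since identifying $\PPi\lg w\rg_\ell$ as a reconstruction already requires the local bound you are proving. Second, Bernstein estimates are not automatic for the space-time blocks built from $\phi_i(t)$, which have no compact Fourier support in time. The paper builds them into the norm $\sup_{|k|\le B}\sup_i 2^{i(\alpha-|k|)}\norme{\partial^k f_i}_{L^p}$.
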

From the Besov injection \eqref{Besov_injection},
one defines a map $j_*$ on words, by setting
\begin{equation}\label{eq_def_jstar_word}
	j_*\big(\lg a_1,\dots a_n\rg_{k}X^\ell\big) = \big\lg a_1^0,\dots a_n^0\big\rg_{k}X^\ell.
\end{equation}
where one introduces a new alphabet $ (\CA)^0  $ given by \label{inj_ab}
\begin{equs}
\CA^0 = \{  a^0, \, a \in \CA \}.
\end{equs}
Here, $a^0$ are new letters whose degrees are given by 
\begin{equation}\label{eq_def_jstar_letter}
r(a^0)=r(a)-\frac{\mathfrak{|s|}}{i(a)},\qquad i(a^0)=\infty.
\end{equation}
We notice that the map $j_*$ maps an element from a regularity-integrability structure to an element of a regularity structure, forgetting the integrability of the element. We have defined this map both on decorated trees and words. We make an abuse of notation by using the same name for both maps.

We want to construct $Z_{\tau}$ with $ \tau $ containing a noise of type $\dot{\Xi}_{\ell}$. We will show that $ Z_{\tau} \in  B^{|\tau|_2}_2(\bfR^{d+1}) $, then by the Besov injection one has $ Z_{\tau} \in  C^{|\tau|-\frac{|\mathfrak{s}|}{2}}(\bfR^{d+1}) $. Therefore, one can set
\begin{equs}
	Z_{\tau^0} := Z_{\tau} \in  C^{r( \tau^0)}(\bfR^{d+1}).
\end{equs}

	 Given a regularity integrability structure and a model $(\PPi,g)$ over it, we can associate the distributions $(Y_\tau)$ via  \eqref{eqdef_Ztau1} and  \eqref{eqdef_Ztau2}. The Proposition \ref{prop_regularity_Y_Besov} below generalizes Theorem \ref{thm_regularityZtau} and gives the Besov regularity of these distributions $(Y_\tau)$.  
	
	We recall the bracket $ [\tau]_i $ defined via the formula
	$$
	[\tau]_i = (\P \tau)_i + \sum_{\sigma\prec\tau} ({\bf Q}(\tau/\sigma))_i \, [\sigma]_i,
	$$
	with 
	\begin{align*}
		(\P \tau)_i(z) &= \int_{(\bfR^{d+1})^2} K_{<i-1}(y-x)K_i(z-y) \, {\Pi}_x\tau(y) \,  \ddd y \ddd x,
		\\
		(\mathbf{Q} \tau)_i(z) &= \int_{(\bfR^{d+1})^2} K_{<i-1}(y-x)K_{<i-1}(z-y) \, {\ \gamma}_{yx}(\tau) \, \ddd y \ddd x.
	\end{align*}
 It was proven in \cite[Proposition 8]{BailleulHoshinoRS1}, that if $(\PPi, { g})$ is a model, then we have bounds \begin{equs}
	|(\P \tau)_i|\lesssim \Vert\tau\Vert_{ (\PPi,g)}^* 2^{-i|\tau|}, \quad |(\mathbf{Q} \tau)_i|\lesssim \Vert\tau\Vert_{ (\PPi,g)}^* 2^{-i|\tau|}.
\end{equs}
In the next proposition, we extend these bounds to regularity-integrability structure.

\begin{lemma}\label{lem_crochets_Besov}
	Let $\scrT$ a regularity-integrability structure and a model over it.
	For any $\tau\in\mcB$ and $\nu\in\mcB^+$, we have
	$$
	\sup_{i\geq-1}2^{ir(\tau)}\norme{\P(\tau)_i}_{L^{i(\tau)}} \lesssim  \llparenthesis \PPi , g\rrparenthesis , \quad\quad \sup_{i\geq-1} 2^{ir(\nu)}\norme{{\bf Q}(\nu)_i}_{L^{i(\nu)}} \lesssim  \llparenthesis \PPi , g\rrparenthesis 
	$$
	and 
	$$
		\sup_{i\geq-1}2^{ir(\tau)}\norme{[\tau]_i}_{L^{i(\tau)}} \lesssim  \llparenthesis \PPi , g\rrparenthesis.
		$$
\end{lemma}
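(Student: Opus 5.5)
The plan is to redo the proof of \cite[Proposition 8]{BailleulHoshinoRS1} with $L^\infty$ replaced by $L^{i(\cdot)}$. The bracket estimate then follows by induction on the recursive formula for $[\tau]_i$, using Hölder's inequality together with the additivity \eqref{integrability} of $1/i$.

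For $(\P\tau)_i$, I would use that $\int K_{<i-1}(y-x)\,\ddd x=1$ and that $K_i$ has vanishing moments. Subtracting the Taylor polynomial is not needed, since $\Pi_x\tau$ is already recentered at $x$. Changing variables $y=x+h$, $z=y+h'$ with $|h|,|h'|\lesssim 2^{-i}$ gives
\[
(\P\tau)_i(z)=\int K_{<i-1}(h)\,\big\langle \Pi_{z-h}\tau,\,K_i(z-\cdot)\big\rangle\,\ddd h .
\]
The test function $K_i(z-\cdot)$ lives at scale $2^{-i}$ around $z$, while the base point $z-h$ sits at distance $\lesssim 2^{-i}$ from $z$. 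The kernels are Schwartz rather than compactly supported, so I would decompose them dyadically in space. The pairing is then the pairing of $\Pi_{x}\tau$ with a test function $\phi^{2^{-i}}_x$ recentered at $x=z-h$ (a harmless translate of an admissible test function), up to tails. Minkowski's integral inequality in $z$, taking the $L^{i(\tau)}_z$ norm inside the $\ddd h$ integral, plus translation invariance of $L^{i(\tau)}$, then give
\[
\norme{(\P\tau)_i}_{L^{i(\tau)}}\lesssim \int |K_{<i-1}(h)|\,\sup_{\phi}\norme{\langle\Pi_x\tau,\phi_x^{2^{-i}}\rangle}_{L^{i(\tau)}_x}\,\ddd h\lesssim 2^{-ir(\tau)}\Vert\tau\Vert_{(\PPi,g)} .
\]
The polynomial tails of $K_{<i-1}$ at distances $2^{-i}2^{m}$ are handled by the model bound at scale $2^{-i+m}$ with a loss $2^{m|r(\tau)|}$, which the rapid decay absorbs.

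The estimate for $({\bf Q}\nu)_i$ is analogous. Writing $y=x+h$, $z=y+h'$, one has $|\gamma_{x+h,x}(\nu)|\le |h|^{r(\nu)}\,\big(|h|^{-r(\nu)}|\gamma_{x+h,x}(\nu)|\big)$. Minkowski in $z$ and translation invariance again reduce everything to $\sup_h\norme{|h|^{-r(\nu)}\gamma_{x+h,x}(\nu)}_{L^{i(\nu)}_x}$ times $\int |K_{<i-1}(h)||h|^{r(\nu)}\,\ddd h\lesssim 2^{-ir(\nu)}$. This requires $r(\nu)>0$, which holds for elements of $\mcB^+$. The model definition only controls $|h|<1$; for $|h|\ge1$ I would use the chaining/Chen relation $\gamma_{x+h,x}=\gamma_{x+h,x+h'}\star\gamma_{x+h',x}$ (coassociativity of $\Delta^{\!+}$) and iterate over unit steps. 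This gives polynomial growth in $|h|$, killed by the kernel decay.

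For the brackets, I would argue by induction on the partial order $\prec$ using
\[
[\tau]_i=(\P\tau)_i+\sum_{\sigma\prec\tau}({\bf Q}(\tau/\sigma))_i\,[\sigma]_i .
\]
Hölder's inequality with $\frac1{i(\tau)}=\frac1{i(\tau/\sigma)}+\frac1{i(\sigma)}$ gives
\[
\norme{({\bf Q}(\tau/\sigma))_i[\sigma]_i}_{L^{i(\tau)}}\le 2^{-ir(\tau/\sigma)}\,2^{-ir(\sigma)}\llparenthesis\PPi,g\rrparenthesis^2 ,
\]
and $r(\tau/\sigma)+r(\sigma)=r(\tau)$ by the grading. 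Since the bound is quadratic, one should really carry $\Vert\cdot\Vert^*$ along, exactly as in its recursive definition; this yields the claim with $\Vert\tau\Vert^*_{(\PPi,g)}\le\llparenthesis\PPi,g\rrparenthesis$.

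The main obstacle is justifying the Minkowski/translation step for non-compactly supported Littlewood--Paley kernels. For $(\P\tau)_i$ this means localising both $K_{<i-1}$ and $K_i$ dyadically and checking that the summed tails still yield $2^{-ir(\tau)}$ uniformly. That summation is where the integrability norm (rather than a pointwise sup) must be handled carefully, and I would follow the annulus decomposition used in Appendix \ref{appendix_A}.
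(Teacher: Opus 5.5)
Your proposal is correct and has the same overall structure as the paper's proof. Both bound $(\P\tau)_i$ and $(\mathbf{Q}\nu)_i$ by Minkowski in $z$, translation invariance and kernel scaling, and then induct on $[\tau]_i=(\P\tau)_i+\sum_{\sigma}(\mathbf{Q}(\tau/\sigma))_i[\sigma]_i$ using H\"older with $1/i(\tau)=1/i(\tau/\sigma)+1/i(\sigma)$.

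The one real difference is in the $\P$-term. The paper first recentres, $\Pi_x\tau=\Pi_z\tau+\sum_{\sigma<\tau}\gamma_{xz}(\tau/\sigma)\Pi_z\sigma$, so the integrand splits into a $K_{<i-1}$-factor carrying $\gamma(\tau/\sigma)$ and a $K_i$-factor carrying $\Pi_z\sigma$. H\"older is therefore already needed at that stage, and the bound is in terms of $\Vert\tau/\sigma\Vert_{(\PPi,g)}\Vert\sigma\Vert_{(\PPi,g)}$. You instead keep the base point $z-h$ and treat $K_i(z-\cdot)$ as a test function at scale $2^{-i}$ centred within $O(2^{-i})$ of it. Only the bound on $\Pi$ itself enters, and H\"older appears only in the bracket induction, which is slightly more economical. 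Your remarks on kernel tails, on the range $|h|\ge1$ for $\gamma$ via Chen's relation, and on closing the induction with $\Vert\cdot\Vert^*_{(\PPi,g)}$ also make explicit what the paper passes over. In particular, its pointwise bounds $|h_2|^{r(\tau)}$ on $\Pi_z\tau(z+h_2)$ are only heuristic for distributions.

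There is one slip in your change of variables. The identity $(\P\tau)_i(z)=\int K_{<i-1}(h)\langle\Pi_{z-h}\tau,K_i(z-\cdot)\rangle\,\mathrm{d}h$ holds when both kernels are centred at $z$, i.e.\ for $K_{<i-1}(z-x)K_i(z-y)$, which is the form used in the paper's proof up to reflection. It does not follow from substituting $y=x+h$, $z=y+h'$ into the form $K_{<i-1}(y-x)K_i(z-y)$ of the stated definition, because there the base point moves with the variable paired against $\Pi$. In that form one gets $\int\langle\Pi_{z-h}\tau,K_{<i-1}(\cdot-z+h)K_i(z-\cdot)\rangle\,\mathrm{d}h$. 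Your argument still covers this case: the product is a bump at scale $2^{-i}$ around $z-h$ with amplitude $O\big(2^{i|\mathfrak{s}|}(1+2^i|h|)^{-N}\big)$, which is integrable in $h$ uniformly in $i$.
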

\begin{proof}
	We prove the first claim, the estimate on ${\mathbf{Q}(\nu)}$ follows from the same computations. We have 
	\begin{align*}
		 \P(\tau)_i(z) &= \int_{(\bfR^{d+1})^2} K_{<i-1}(x-z)K_i(y-z)\Pi_x\tau(y) \ddd x \ddd y 
		\\& =  \int_{(\bfR^{d+1})^2} K_{<i-1}(x-z)K_i(y-z)\Big(\Pi_z\tau(y)+ \sum_{\sigma<\tau}\gamma_{xz}(\tau/\sigma)\Pi_z\sigma(z+h_2)\Big) \ddd x \ddd y
		\\
		&= \int_{(\bfR^{d+1})^2} K_{<i-1}(h_1)K_i(h_2) \\ & \Big(\Pi_z\tau(z+h_2)+ \sum_{\sigma<\tau}\gamma_{z,z+h_1}(\tau/\sigma)\Pi_z\sigma(z+h_2)\Big) \ddd h_1 \ddd h_2.
	\end{align*}
Then, from the triangular and the Hölder inequalities 
\begin{align*}
		& \norme{\P(\tau)_i(z)}_{L_z^{i(\tau)}} \leq \int_{(\bfR^{d+1})^2} |K_{<i-1}(h_1)K_i(h_2)|\Big(\norme{\Pi_z\tau(z+h_2)}_{L_z^{i(\tau)}}
		\\&+ \sum_{\sigma<\tau}\norme{\gamma_{z,z+h_1}(\tau/\sigma)}_{L_z^{i(\tau/\sigma)}}\big\lVert{\Pi_z\sigma(z+h_2)\Big)}\big\lVert_{L_z^{i(\sigma)}} \ddd h_1 \ddd h_2
		\\
		&\leq\llparenthesis \PPi , g\rrparenthesis  \int_{(\bfR^{d+1})^2} |K_{<i-1}(h_1)K_i(h_2)|\Big( |h_2|^{r(\tau)} + \sum_{\sigma<\tau}|h_1|^{r(\tau/\sigma)} |h_2|^{r(\sigma)} \Big) \ddd h_1 \ddd h_2.
\end{align*}
From the scaling property of  the $K_i$ we get indeed
$$
\norme{\P(\tau)_i(z)}_{L_z^{i(\tau)}} \leq C(\tau) \llparenthesis \PPi , g\rrparenthesis 2^{-ir(\tau)}.
$$
We prove the estimate on $[\tau]_i$ by induction on $\tau$ by writing 
$$
[\tau]_i = \P(\tau)_i + \sum_{\sigma<\tau} {\mathbf{Q}(\tau/\sigma)}_i [\sigma]_i,
$$
and applying Hölder inequality.
\end{proof}

\begin{proposition}\label{prop_regularity_Y_Besov}
	For any regularity-integrability structure , we have $Y_\tau\in B^{r(\tau)}_{i(\tau)}$ for any $\tau$.
\end{proposition}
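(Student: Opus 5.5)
I will follow the proof of Theorem \ref{thm_regularityZtau} and replace every $L^\infty$ estimate by an $L^{i(\tau)}$ estimate. The argument is an induction on the size of $\tau$ (and of $\tau/\sigma$ for elements of $\mcB^+$).

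\emph{Step 1: reduce the blocks of $Y_\tau$ to brackets.} By Lemma \ref{lem_sumcrochets}, which is purely algebraic and uses only the coproduct identities of Lemma \ref{lem_DeltaPsitau} and Proposition \ref{prop_gDntaustarderivative}, we have $[\tau]_i=[\Psi(\tau)]_i^\mcY$. Since $[\lg\tau\rg]^\mcY_i=\Delta_i(Y_\tau)$, this gives
$$
\Delta_i(Y_\tau)=[\tau]_i-\sum_{\substack{k=k_1+k_2\\ q\geq1,\ \tau_q\prec\dots\prec\tau_1\prec\tau}}\frac{1}{k_1!k_2!}\Big[\big\lg \downarrow^k\tau/\tau_1,\dots,\tau_q\big\rg_{k_1}X^{k_2}\Big]^\mcY_i .
$$
Lemma \ref{lem_crochets_Besov} bounds the first term by $2^{-ir(\tau)}\llparenthesis\PPi,g\rrparenthesis$ in $L^{i(\tau)}$.

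\emph{Step 2: control the words.} Every letter $\mu$ in the remaining words is strictly smaller than $\tau$. By the induction hypothesis $Y_\mu\in B^{r(\mu)}_{i(\mu)}$. Proposition \ref{prop_modelwords_besov} then shows that the family $(Y_\mu)$ defines a model over the regularity-integrability structure of words, with the degrees and integrabilities \eqref{eq_degree_besov_words}. Lemma \ref{lem_crochets_Besov}, applied to this word model, bounds each word bracket by $2^{-ir(w)}$ in $L^{i(w)}$. By \eqref{integrability} applied iteratively along the chain $\tau_q\prec\dots\prec\tau_1\prec\tau$, we get
$$
\frac{1}{i(w)}=\frac{1}{i(\downarrow^k\tau/\tau_1)}+\dots+\frac{1}{i(\tau_q)}=\frac{1}{i(\tau)},
$$
since $\downarrow^k$ does not change integrability. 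Likewise $r(w)=r(\tau)$, because the weights $k_1+k_2=k$ exactly compensate the $-|k|$ coming from $\downarrow^k$. Summing the finitely many terms gives $\sup_i 2^{ir(\tau)}\|\Delta_iY_\tau\|_{L^{i(\tau)}}<\infty$, which is the claim. The same argument, using \eqref{eqdef_Ztau2} and the $[\tau/\sigma]_i$ version of the identities, handles $Y_{\tau/\sigma}$.

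\emph{Main obstacle.} The delicate point is that Lemma \ref{lem_sumcrochets} and Proposition \ref{prop_gDntaustarderivative} are stated for Hölder models. One must check that their proofs never use the analytic bounds, only the coproduct and the relations between $\PPi$ and $\widetilde{\PPi}$. They also need the derivatives $\widetilde g(D^n\tau)$, which now lie in $L^{i}$ rather than $L^\infty$; this is harmless because those identities are pointwise.

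A second point to verify is the integrability bookkeeping for words with polynomial factors $X^{k_2}$. These factors carry $i=\infty$, so they do not affect $1/i(w)$. Finally, the extension of \cite[Theorem 1]{LocalExpansionsParacSystems} assumed in Proposition \ref{prop_modelwords_besov} requires non-integer degrees, so I would keep Assumption \ref{assumption_notint} in force for the degree $r$.
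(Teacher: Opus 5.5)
Your proposal is correct and follows the paper's proof. Both argue by induction on $\tau$ and use the identity $\Delta_i Y_\tau = [\tau]_i - [\overline{\Psi}(\tau)]^{\mcY}_i$ from Lemma \ref{lem_sumcrochets}. Both bound $[\tau]_i$ by Lemma \ref{lem_crochets_Besov}, and bound the word brackets by applying that same lemma to the word model given by Proposition \ref{prop_modelwords_besov}. Your explicit check that every word $w$ in $\overline{\Psi}(\tau)$ has $r(w)=r(\tau)$ and $1/i(w)=1/i(\tau)$ is exactly the bookkeeping the paper leaves implicit.
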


\begin{proof}
	The proof is the same as the one of Theorem \ref{thm_regularityZtau}. It is proven by induction on $\tau$ with the relation $[\tau]_i=[\Psi(\tau)]_i$ from Lemma \ref{lem_sumcrochets}, written under the form
	$$
	\Delta_i Y_\tau = [\tau]_i - [\overline{\Psi}(\tau)]^\mcY_i.
	$$
	From Lemma \ref{lem_crochets_Besov} and Proposition \ref{prop_modelwords_besov}, we have indeed 
	$$
		\sup_{i\geq-1}2^{ir(\tau)}\norme{[\tau]_i}_{L^{i(\tau)}} \lesssim  \llparenthesis \PPi , g\rrparenthesis, \quad
		\sup_{i\geq-1}2^{ir(\tau)}\norme{[\overline{\Psi}(\tau)]^\mcY_i}_{L^{i(\tau)}} \lesssim  \llparenthesis \PPi , g\rrparenthesis.
	$$
	Hence $\norme{Y_\tau}_{B^{r(\tau)}_{i(\tau)}}\lesssim  \llparenthesis \PPi , g\rrparenthesis.  $
\end{proof}

%
%

\subsection{Main algebraic identity}

This subsection is devoted to Theorem \ref{prop_alg_identity_convergence} and its proof.
In general we do not have the identity $\delta \widehat{Z}_\tau=\widehat{Z}_{D_{\Xi}\tau}$, the following proposition gives the correct commutation relation between $\delta$ and the map $\widehat{Z}_{\cdot}$.

\begin{theorem}\label{prop_alg_identity_convergence}
	For any tree $\tau\in\mcT$, we have 
	$$
	\delta \widehat{Z}_\tau = Y^{\widehat{\mcZ}}_{j_*\Psi(D_\Xi\tau)}.
	$$
\end{theorem}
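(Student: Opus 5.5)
The introduction already indicates the route: the theorem will follow from two purely algebraic identities, chained together. Write $Z^{R_\eps}$ for the renormalised data $\widehat{\mcZ}$ built with the preparation map $R_\eps$ (extended to trees carrying a Malliavin symbol through \eqref{commutation_R}). The plan is to prove
\begin{equation*}
\delta Z^{R_\eps}_\tau = Z^{R_\eps}_{j^*(D_\Xi\tau)}, \qquad Z^{R_\eps}_{j_*\sigma} = Y^{\widehat{\mcZ}}_{j_*\Psi(\sigma)} \quad (\sigma\in\dot\mcT),
\end{equation*}
and then apply the second identity with $\sigma=D_\Xi\tau$, using linearity in $\sigma$.

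For the first identity, induct on the size of $\tau$ using the recursive definitions \eqref{eq_Renorm_ZtauFromZotau}, \eqref{eq_defZotau_renorm} and \eqref{eq_defZtau_T+}. The operator $\delta$ is a derivation on pointwise products. It commutes with convolution by $K$, with the paraproducts $\P_k$ (which are multilinear), and with the star derivatives $\partial^k_*\P_\ell$ (which are multilinear with a cut set fixed by the degrees). Applying $\delta$ to $\widetilde\PPi^{\widehat\mcZ}(\Psi(\tau))$ therefore sums over the letters hit by $\delta$.

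Lemma \ref{lem_DeltaPsitau} and the coproduct identity \eqref{eq_commut_jstar_DXi0} show that this sum is exactly $\widetilde\PPi^{\widehat\mcZ}(\Psi(j_*D_\Xi\tau))$. The reason is that differentiating one letter $\tau_i/\tau_{i+1}$ corresponds to placing $D_\Xi$ on one side of each coproduct in the forest formula \eqref{forest_formula}. The indicator $\mathbf 1_{|j_*\tau|-|k|>0}$ in the definition of $j_*$ on $\mcI^+_k$ matches the cut sets of $\partial^k_*$: the new letters $a^0$ carry the reduced degree \eqref{eq_def_jstar_letter}, which equals $|\tau|$. Finally, $R D_\Xi=D_\Xi R$ lets $\delta$ pass through the preparation map, and the multiplicativity of $\widehat Z^\circ$ is preserved.

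For the second identity, I would reproduce Propositions \ref{prop_Ztau=YPsitau_integration} and \ref{prop_Ztau=YPsitau_mult} on the regularity structure obtained after applying $j_*$. That structure satisfies Assumption \ref{assumption_notint}, and \eqref{eq_commut_jstar_Delta} and \eqref{eq_commut_j*_deriv} show that $j_*$ intertwines $\Delta$, $D^n$ and $\downarrow^k$. Hence Lemma \ref{lem_DeltaPsitau}, Lemma \ref{lem_SnDnreindex} and Proposition \ref{Prop_CommutDerivativePsi} transfer verbatim. For non-planted trees, one compares Definition \ref{def_Ytau} applied to $j_*\Psi(\sigma)$ with the defining identity $Z_\sigma=\widetilde\PPi(\Psi(\sigma))-\widetilde\PPi(\overline\Psi(\sigma))$. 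This gives $Z_\sigma = Y_{\Psi(\sigma)}$ once the induction hypothesis is used on the strictly smaller letters.

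I expect the main obstacle to be the bookkeeping in the first identity: checking that the cut conditions \eqref{condition_sum} in $\partial^k_*$ match after differentiation. This is where the Besov embedding enters, since the degree of $\delta\xi_\ell$ must be read as $|\Xi_\ell|$ and not $|\dot\Xi_\ell|_2$. The truncation $\mathbf 1_{|j_*\tau|-|k|>0}$ is designed for exactly this purpose, and I would verify it first on $\mcI^+_k$ letters before running the general induction.
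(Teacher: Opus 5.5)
Your overall architecture is the paper's. Theorem~\ref{prop_alg_identity_convergence} is obtained by chaining Proposition~\ref{lem_deltaZ_Zj*} ($\delta\widehat Z_\tau=\widehat Z_{j_*D_\Xi\tau}$) with Proposition~\ref{lem_Zj*tau=YjstarPsi} ($\widehat Z_{j_*\sigma}=Y^{\widehat\mcZ}_{j_*\Psi(\sigma)}$). For the first identity you have the right ingredients: the Leibniz rule on the multiplicative relation \eqref{eq_defZotau_renorm}, $RD_\Xi=D_\Xi R$, $Rj_*=j_*R$, and unchanged cut sets because $|j_*D_\Xi\nu|=|\nu|$. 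Organise that step so it is not circular. The quantity $\delta\widetilde\PPi^{\widehat\mcZ}(\Psi(\tau))$ has to come from \eqref{eq_defZotau_renorm}. Expanding $\Psi(\tau)$ into letters produces the one-letter word $\lg\tau\rg$, whose derivative is the unknown $\delta\widehat Z_\tau$. The letter-by-letter argument is only legitimate on $\overline\Psi(\tau)$, whose letters are strictly smaller.

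The genuine gap is in the second identity. Running the proofs of Propositions~\ref{prop_Ztau=YPsitau_integration}--\ref{prop_Ztau=YPsitau_mult} ``verbatim'' in the reduced-degree structure yields $\widehat Z_{j_*\sigma}=Y^{\widehat\mcZ}_{\Psi(j_*\sigma)}$, not the required $Y^{\widehat\mcZ}_{j_*\Psi(\sigma)}$; your last sentence in fact concludes $Z_\sigma=Y_{\Psi(\sigma)}$. These are different objects. On words, $j_*$ only relabels letters $a\mapsto a^0$ and keeps all chains of $\sigma$ computed with the Besov degrees. On trees, $j_*$ kills the cuts $\mcI^+_n$ through the indicator. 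Hence $j_*\Psi\neq\Psi j_*$, and word-level $j_*$ does not intertwine the word coproduct or $D^n$, since the truncations depend on letter degrees. The identities \eqref{eq_commut_jstar_Delta}--\eqref{eq_commut_j*_deriv} are statements about trees only.

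For instance, take $\sigma=\mcI(\Xi_\ell\mcI(\dot\Xi_\ell))$. Then $\sigma/\mcI(X^n\Xi_\ell)=\mcI^+_n(\dot\Xi_\ell)/n!$ for every $|n|<|\mcI(\dot\Xi_\ell)|_2$. So $j_*\Psi(\sigma)$ contains the words $\lg \mcI^+_n(\dot\Xi_\ell)^0,\mcI(X^n\Xi_\ell)\rg$ with $|\mcI(\dot\Xi^0_\ell)|<|n|$. Their first letter has negative degree, is never cut, and contributes to $Y$ the full paraproduct of $\partial^nK*\delta\xi_\ell$ with $K_n*\xi_\ell$. No such words occur in $\Psi(j_*\sigma)$.

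Bridging this discrepancy is the actual content of the proof, and it needs two results from the paper.
\begin{itemize}
\item Lemma~\ref{lem_PiDXiTau}: $\widetilde\PPi^{\widehat\mcZ}(\Psi(\sigma))=\widetilde\PPi^{\widehat\mcZ}(\Psi(j_*\sigma))$ for $\sigma\in\dot\mcT$. It is proved from \eqref{eq_mult_PPi_Zcirc} and $Rj_*=j_*R$; the extra words are absorbed by the difference between $\widehat Z_\sigma$ and $\widehat Z_{j_*\sigma}$.
\item Lemma~\ref{lem_g_jstarPsi}: $\widetilde g^{\widehat\mcZ}(D^nj_*\Psi(\sigma))=\widetilde g^{\widehat\mcZ}(D^n\Psi(j_*\sigma))$ for $|n|<|j_*\sigma|$, jointly with the matching identity for $\partial^n_y\Pi_x(\cdot)(y)|_{y=x}$ when $|n|>|j_*\sigma|$. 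It is proved by induction from \eqref{eq_gDnTau_general}--\eqref{eq_DnPixTau_general}.
\end{itemize}
You cannot avoid this by working with $\Psi(j_*D_\Xi\tau)$ instead. Lemma~\ref{lem_estimate_deltaZtau} needs letters built from the Besov-controlled $\widehat Z_a$, with $a$ ranging over the chains of $D_\Xi\tau$.
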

The $j_*$ map on words was defined in \eqref{eq_def_jstar_letter} and \eqref{eq_def_jstar_word} and can be viewed as some type of projection that performs at the level of the algebra the Besov injection. 
For proving Theorem \ref{prop_alg_identity_convergence}, we  will need the following lemma.

\begin{lemma} \label{lem_PiDXiTau}
	We let $R$ some preparation map and $\widehat\mcZ$ the associated renormalised stochastic data. For any $\tau\in\dot\mcT$ we have
	$$
	\widetilde{\PPi}^{\widehat{\mcZ}}(\Psi(\tau)) = \widetilde{\PPi}^{\widehat{\mcZ}}(\Psi(j_*\tau)).
	$$
\end{lemma}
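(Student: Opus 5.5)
We argue by induction on the size of $\tau\in\dot\mcT$, using the explicit iterated-paraproduct expansion
$$
\widetilde{\PPi}^{\widehat\mcZ}(\Psi(\tau))=\sum_{k\geq0}\sum_{\substack{q\geq0\\ \tau_q\prec\dots\prec\tau_1\prec\tau}}\frac{1}{k!}\P_k\big(\widehat Z_{\downarrow^k\tau/\tau_1},\dots,\widehat Z_{\tau_q}\big)
$$
and its analogue for $j_*\tau$. The convention $\widehat Z_{\sigma^0}:=\widehat Z_\sigma$ means that the letters themselves are unaffected by $j_*$. What must be shown is therefore purely combinatorial: the chains $\tau_q\prec\dots\prec\tau_1\prec\tau$ and those for $j_*\tau$ give the same sum of iterated paraproducts.

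\textbf{Step 1: compare the coproducts.} By \eqref{eq_commut_jstar_Delta} we have $\Delta j_*\tau=(j_*\otimes j_*)\Delta\tau$, and by \eqref{eq_commut_j*_deriv} we have $\downarrow^k j_*=j_*\downarrow^k$. Iterating by co-associativity, as in \eqref{eq_CommutMotAvecMcI}, each chain for $j_*\tau$ is the image under $j_*$ of a chain for $\tau$. The map $j_*$ changes degrees, but polynomial/non-polynomial types are preserved, so the order $\prec$ is preserved as well.

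The only discrepancy comes from the factors $\mathbf{1}_{|j_*(\cdot)|-|k|>0}$ in $j_*(\mcI^+_k(\cdot))$. These kill the terms $\tau_{i-1}/\tau_i$ whose dotted subtree $\mcI_k^+(\sigma)$ has positive $|\cdot|_2$-degree but nonpositive $|\cdot|$-degree.

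\textbf{Step 2: show the killed terms cancel (main obstacle).} These surplus terms do not vanish individually, so they must be grouped. Fix a surplus planted factor $\mcI^+_{b+n}(\sigma)$ with $\sigma\in\dot\mcT$. In the expansion of $\widetilde{\PPi}^{\widehat\mcZ}(\Psi(\tau))$, collect the contributions in which the cut at $\sigma$ either occurs (giving the letter $\mcI^+_{b+n}(\sigma)$ with weight $X^n/n!$) or does not occur. Using Lemma \ref{lem_DeltaPsitau} and the admissibility relation \eqref{eq_ZtauAdmissible_tilde}, the grouped sum is
$$
K*\widetilde\PPi^{\widehat\mcZ}(\Psi(\sigma))-\sum_{n}\frac{(\cdot)^n}{n!}\,\widetilde g^{\widehat\mcZ}\big(D^n\Psi(\mcI(\sigma))\big),
$$
that is, a Taylor remainder. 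Its expansion at the excess orders $|\mcI(\sigma)|<|n|<|\mcI(\sigma)|_2$ equals the expansion generated by $j_*\sigma$, because the two Taylor remainders differ by exactly those monomials.

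To make this rigorous, I would first apply Proposition \ref{prop_alternativedef_Ztaunaif} to pass to the non-tilde maps $\PPi^{\widehat\mcZ}(\Psi(\cdot))$, which are multiplicative and admissible (Lemma \ref{lem_PsiTau_multiplicatif}). In that form the statement reduces to the following claim.

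\emph{Claim.} For each planted $\mcI^+_{k}(\sigma)$ with $\sigma\in\dot\mcT$, one has $\PPi^{\widehat\mcZ}(\Psi(\mcI^+_k(\sigma)))=0$ whenever $|j_*\mcI_k^+(\sigma)|\le0$.

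I would prove the claim from Proposition \ref{Prop_CommutDerivativePsi} together with the vanishing of $g(D^n\cdot)$ beyond the degree, established via Proposition \ref{prop_derivetoile_Ztau_Neg}. Here the regularity $\widehat Z_\sigma\in C^{|j_*\sigma|}$ determines which star-derivatives are subtracted. This is the delicate point: the star derivatives $\partial_*^n$ are defined with the natural regularities, and these are those of $j_*$.

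\textbf{Step 3: reassemble.} With the claim in hand, the multiplicative formula \eqref{eq_defZotau_renorm_alt} for $\tau$ and for $j_*\tau$ agree factor by factor, using the induction hypothesis on each $\tau_j$. The preparation map is handled by $Rj_*=j_*R$ from \eqref{commutation_R}, together with \eqref{lem_DeltaModif_RTau_iterated}. Finally, converting back to $\widetilde\PPi$ with \eqref{eq_pitildetopi} and $\downarrow^k j_*=j_*\downarrow^k$ concludes the proof.
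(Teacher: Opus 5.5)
Your bookkeeping in Step~1 is fine: by \eqref{eq_commut_jstar_Delta}, the chains of $j_*\tau$ are the $j_*$-images of those of $\tau$, minus the ones containing a killed $\mcI^+$. The reformulation that precedes it is wrong, however, and this breaks everything after it.

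The convention $\widehat Z_{\sigma^0}:=\widehat Z_\sigma$ concerns the word-level letters $a^0$ of \eqref{eq_def_jstar_letter}. The letters of $\Psi(j_*\tau)$ are instead trees such as $j_*\tau$ itself. Their data $\widehat Z_{j_*\mu}$ are produced by the recursion in the structure graded by $|\cdot|$, and in general they differ from $\widehat Z_\mu$. So the lemma is not a chain-by-chain identity, and the surplus words do not cancel among themselves.

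For example, take $|\mcI(\Xi)|<0<|\mcI(\dot\Xi)|_2$ (e.g.\ $\Phi^4_2$) and $\tau=\mcI(\Xi)^2\mcI(\dot\Xi)$. Then $\Psi(j_*\tau)=\lg j_*\tau\rg$. The surplus words of $\Psi(\tau)$ contribute $\P\big(K*\delta\xi,(K*\xi)^2\big)\neq0$ for the naive data. This is compensated by $\widehat Z_{j_*\tau}=(K*\xi)^2(K*\delta\xi)=\widehat Z_\tau+\P\big(K*\delta\xi,(K*\xi)^2\big)$.

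In particular your Claim is false. For $\sigma=\dot\Xi_\ell$ and $k=0$, one has $\PPi^{\widehat\mcZ}(\Psi(\mcI^+(\dot\Xi_\ell)))=\widehat Z_{\mcI^+(\dot\Xi_\ell)}=K*\delta\xi_\ell\neq0$ for the naive data, although $|\mcI(\dot\Xi^0_\ell)|=|\mcI(\Xi_\ell)|<0$ here. You make Step~3 conditional on the Claim, and the Taylor-remainder grouping of Step~2 is never made precise. So the proof as structured does not go through.

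What is missing is the observation that no comparison of chains is needed. By the very definition \eqref{eq_defZotau_renorm},
$\widetilde\PPi^{\widehat\mcZ^\circ}(\Psi(\tau))=\mathbf 1_{p=0}\,\xi_\ell\prod_j\sum_{l_j}(\partial^{b_j}K)_{l_j}*\widetilde\PPi^{\widehat\mcZ}(\Psi(\downarrow^{l_j}\tau_j))$,
with $\delta\xi_\ell$ in place of $\xi_\ell$ if the dot sits at the root. The same formula holds for $j_*\tau=X^p\Xi_\ell\prod_j\mcI_{b_j}(j_*\tau_j)$.

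The branches hang on $\mcI_{b_j}$, which $j_*$ never kills. So induction on the number of nodes, together with $\downarrow^{l}j_*=j_*\downarrow^{l}$, gives $\widetilde\PPi^{\widehat\mcZ^\circ}(\Psi(\tau))=\widetilde\PPi^{\widehat\mcZ^\circ}(\Psi(j_*\tau))$. The surplus words are absorbed in the difference between $\widehat Z^\circ_\tau$ and $\widehat Z^\circ_{j_*\tau}$.

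Next, $\widehat Z_\sigma=\widehat Z^\circ_{R\sigma}$ together with \eqref{lem_DeltaModif_RTau_iterated} gives $\widetilde\PPi^{\widehat\mcZ}(\Psi(\tau))=\widetilde\PPi^{\widehat\mcZ^\circ}(\Psi(R\tau))$. Finally $Rj_*=j_*R$ concludes, since the remaining trees of $R\tau$ have fewer nodes.

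This is the paper's argument. Your Step~3 alone, with the Claim and Steps~1--2 removed, is a complete proof.
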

\begin{proof}
The identity is proven by induction on the number of nodes of $\tau$. The identity is trivial when $\tau$ has form  $X^n\Xi_\ell$ or $X^n\dot{\Xi}_\ell$.
For $\tau$ with more than one node, we first prove that 
\begin{equation}\label{eq_PiMult_tau=j*tau}
	\widetilde{\PPi}^{\widehat{\mcZ}^\circ}(\Psi(\tau)) = \widetilde{\PPi}^{\widehat{\mcZ}^\circ}(\Psi(j_*\tau))
\end{equation}
If $\tau = X^p \Xi_\ell \,\mcI_{b_0} (\tau_0)\,\prod_{j=1}^{n}\sum_{l_j\geq0}\mcI_{b_j}(\tau_j)$ with $\tau_0\in\dot\mcT$, from \eqref{eq_mult_PPi_Zcirc} and induction hypothesis
 \begin{align*}
 &	\widetilde{\PPi}^{\widehat{\mcZ}}\big(\Psi(\tau)\big)
 	\\ &=  \mathbf{1}_{p=0} \,\xi_\ell \,\sum_{l_0\geq0}(\partial^{b_0}K)_{l_0} * \PPi^{\widehat{\mcZ}}\big(\Psi(\downarrow^{l_0}\!\tau_0)\big) 
 	 \prod_{j=1}^{n}\sum_{l_j\geq0} (\partial^{b_j} K)_{l_j} \!*\! \PPi^{\widehat{\mcZ}}(\Psi(\downarrow^{l_j}\!\tau_j)) 
 	\\
 	&= \mathbf{1}_{p=0} \,\xi_\ell \,\sum_{l_0\geq0}(\partial^{b_0}K)_{l_0} * \PPi^{\widehat{\mcZ}}\big(\Psi(j_*\downarrow^{l_0}\!\tau_0)\big) 
 	\prod_{j=1}^{n}\sum_{l_j\geq0}  (\partial^{b_j} K)_{l_j} \!*\! \PPi^{\widehat{\mcZ}}(\Psi(\downarrow^{l_j}\!\tau_j)) 
 \end{align*}
 On the other hand we have $j_*\tau = X^p \Xi_\ell \mcI_{b_0} (j_*\tau_0)\prod_{j=1}^{n}\mcI_{b_j}(\tau_j)$ and \eqref{eq_mult_PPi_Zcirc} gives indeed
 \begin{align*}
& \widetilde{\PPi}^{\widehat{\mcZ}^\circ}\big(\Psi(j_*\tau)\big) \\ & = \mathbf{1}_{p=0} \,\xi_\ell \,\sum_{l_0\geq0}((\partial^{b_0}K)_{l_0} * \widetilde\PPi^{\widehat{\mcZ}}\big(\Psi(\downarrow^{l_0}\!j_*\tau_0)\big) \, \prod_{j=1}^{n}\sum_{l_j\geq0} (\partial^{b_j} K)_{l_j} \!*\! \widetilde\PPi^{\widehat{\mcZ}}(\Psi(\downarrow^{l_j}\tau_j)) 
 \end{align*}
 If now we let $\tau = X^p \dot{\Xi}_\ell \prod_{j=1}^n \mcI_{b_j}\tau_j$, we see from \eqref{eq_mult_PPi_Zcirc} that both $ {\PPi}^{\widehat{\mcZ}^\circ}\big(\Psi(\tau)\big)$ and ${\PPi}^{\widehat{\mcZ}}\big(\Psi(j_*\tau)\big)$ are equal to
 $
 \mathbf{1}_{p=0}\,\delta\xi_\ell \, \prod_{j=1}^{n} \sum_{l_j\geq0} \partial^{b_j} K_{l_j} * \PPi^{\widehat{\mcZ}}(\Psi(\downarrow^{l_j}\tau_j)).
 $
 Hence \eqref{eq_PiMult_tau=j*tau}.
 Then using \eqref{eq_PiMult_tau=j*tau} and $Rj_*=j_*R$, we have
 \begin{align*}
 	\widetilde{\PPi}^{\widehat{\mcZ}}\big(\Psi(\tau)\big) &= \widetilde{\PPi}^{\widehat{\mcZ}^\circ}\big(\Psi(R\tau)\big) = {\PPi}^{\widehat{\mcZ}^\circ}\big(\Psi(j_*R\tau)\big)  
 	=\widetilde{\PPi}^{\widehat{\mcZ}}\big(\Psi(j_*\tau)\big).
 \end{align*}
 
\end{proof}

\begin{proposition} \label{lem_deltaZ_Zj*}
	For any tree $\tau\in\mcT$ we have
	$
	\delta Z_\tau = Z_{j_*(D_\Xi\tau)}.
	$
\end{proposition}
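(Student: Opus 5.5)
We argue by induction on the number of nodes of $\tau$, working throughout with the smooth (mollified) data, where $\delta$ is a genuine directional derivative. The stochastic data $Z_\tau$ is then a multilinear expression, built from paraproducts, star derivatives, convolutions with $K$ and pointwise products, of the noises $\xi_\ell$ appearing at the leaves. The plan is to differentiate the recursive definitions \eqref{eq_defZtau_Naif} and \eqref{eq_defZtau_T+}. More conveniently, we differentiate their reformulations \eqref{eq_defZtau_Naif_Alternative} and \eqref{eq_defZtau_T+_alternative} in terms of $\widetilde{\PPi}^\mcZ(\Psi(\tau))$, and then check that the result is exactly the defining relation for $Z_{j_*(D_\Xi\tau)}$. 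Here the symbol $\dot\Xi^0_\ell$ is interpreted as $\delta\xi_\ell$ and carries the degree $|\Xi_\ell|$, so all cut-off conditions coincide with those for $\tau$.

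First we establish the base cases. For $\tau=X^k$ both sides vanish. For $\tau=\Xi_\ell$ we have $\delta\xi_\ell = Z_{\dot\Xi^0_\ell}=Z_{j_*D_\Xi\Xi_\ell}$.

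Second, we handle the word level. For any word $\lg a_1,\dots,a_n\rg_\ell^k$ with letters $a_i=\tau_i$, multilinearity of $\P_\ell$ gives the Leibniz rule $\delta\,\partial^k_*\P_\ell(Z_{a_1},\dots,Z_{a_n})=\sum_i\partial^k_*\P_\ell(\dots,\delta Z_{a_i},\dots)$. The star derivative \eqref{eqdef_starderivative} is linear in each argument with cut-offs $\rho_j$ depending only on the degrees $\beta_r$, and $j_*$ preserves degrees. Applying the induction hypothesis $\delta Z_{a_i}=Z_{j_*D_\Xi a_i}$ to each letter, we get $\delta\,\widetilde{\PPi}^\mcZ(w)=\widetilde{\PPi}^\mcZ(\mathcal{D}w)$, where $\mathcal{D}$ is the derivation on words that replaces one letter $a$ by $j_*D_\Xi a$. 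The algebraic heart of the argument is the identity
\[
\mathcal{D}\,\Psi(\tau)=\Psi(j_*D_\Xi\tau),
\]
at least after applying $\widetilde{\PPi}^\mcZ$. This should follow from the forest formula \eqref{forest_formula}, since $\Psi$ is built by iterating $\Delta$ and the sum runs over chains $\tau_q\prec\dots\prec\tau_1\prec\tau$. By \eqref{eq_commut_jstar_DXi0}, the coproduct of $j_*D_\Xi\tau$ distributes the derivative over exactly one tensor factor. Iterating this via coassociativity, and using \eqref{eq_commut_j*_deriv} to commute with $\downarrow^k$, the chains for $j_*D_\Xi\tau$ are the chains for $\tau$ with one quotient $\tau_i/\tau_{i+1}$ replaced by $j_*D_\Xi(\tau_i/\tau_{i+1})$. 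The relations $\prec$ versus $\leqq$ are unchanged because polynomials are killed by $D_\Xi$.

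Third, we close the induction. For $\tau=X^p\Xi_\ell\prod_j\mcI_{b_j}(\tau_j)$ we apply $\delta$ to \eqref{eq_defZtau_Naif_Alternative}. The product rule produces two kinds of terms. One is a term with $\xi_\ell$ replaced by $\delta\xi_\ell$. The others have one factor $(\partial^{b_j}K)_{l_j}*\widetilde{\PPi}^\mcZ(\Psi(\downarrow^{l_j}\tau_j))$ replaced by the same expression with $\Psi(j_*D_\Xi\downarrow^{l_j}\tau_j)$, using the previous step and $\downarrow j_*=j_*\downarrow$. This is precisely the right-hand side of \eqref{eq_defZtau_Naif_Alternative} written for the linear combination $j_*D_\Xi\tau$, since $D_\Xi$ is a derivation and $j_*$ is multiplicative and commutes with $\mcI$. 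The left-hand side becomes $\widetilde{\PPi}^\mcZ(\Psi(j_*D_\Xi\tau))$ by step two. The term $\lg j_*D_\Xi\tau\rg$ is the only one involving the new top letter. Subtracting the $\overline\Psi$ part, which is handled by induction, therefore yields $\delta Z_\tau=Z_{j_*D_\Xi\tau}$. The case $\tau\in T^+$ is identical, using \eqref{eq_defZtau_T+_alternative} together with the indicator $\mathbf 1_{|j_*\tau|-|k|>0}$ in the definition of $j_*\mcI^+_k$, which matches the second relation of \eqref{eq_commut_j*_deriv}.

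The main obstacle is step two: rigorously matching the forest expansion of $\Psi(j_*D_\Xi\tau)$ with the letter-wise derivation of $\Psi(\tau)$, including the factorial weights and the $\downarrow^k$ on the first letter. We would attack it with the recursive formula \eqref{recursive_psi} and an induction on $\tau$, reducing it to \eqref{eq_commut_jstar_DXi0} and its $\Delta^{\!+}$ analogue.
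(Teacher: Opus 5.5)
Your core mechanism is the paper's, and for the naive data it works. Because $|\dot\Xi^0_\ell|=|\Xi_\ell|$, every cut-off attached to $j_*D_\Xi\tau$ equals the one attached to $\tau$. So $\delta$ passes through paraproducts and star derivatives by multilinearity, and the chains of $j_*D_\Xi\tau$ are matched with those of $\tau$ through \eqref{eq_commut_jstar_DXi0}.

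The gap is one of scope, and it is substantive. The paper proves this proposition, and uses it in Theorem~\ref{prop_alg_identity_convergence}, for the renormalised data $\widehat{\mcZ}$ attached to a preparation map $R$, i.e.\ $\widehat Z_\tau=\widehat Z^\circ_{R\tau}$ as in \eqref{eq_Renorm_ZtauFromZotau}. These fields do not satisfy \eqref{eq_defZtau_Naif_Alternative}, which is the relation you differentiate. Only the multiplicative collection $\widehat{\mcZ}^\circ$ satisfies a relation of that type, namely \eqref{eq_defZotau_renorm}, and its right-hand side is built from $\widehat{\mcZ}$, not from $\widehat{\mcZ}^\circ$. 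As written, your induction covers only $R=\mathrm{id}$.

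To close the gap you must run the induction for the two collections simultaneously:
\begin{itemize}
\item Differentiate \eqref{eq_defZotau_renorm} to get $\delta\widetilde{\PPi}^{\widehat{\mcZ}^\circ}(\Psi(\tau))=\widetilde{\PPi}^{\widehat{\mcZ}^\circ}(\Psi(j_*D_\Xi\tau))$ from the corresponding identity for $\widehat{\mcZ}$ on smaller trees.
\item Transfer this to $\widehat{\mcZ}$ using $\widetilde{\PPi}^{\widehat{\mcZ}}(\Psi(\tau))=\widetilde{\PPi}^{\widehat{\mcZ}^\circ}(\Psi(R\tau))$, which comes from \eqref{lem_DeltaModif_RTau_iterated}.
\item Use that the coefficients of $R$ are deterministic, so $\delta$ commutes with $R$, and that the extra trees in $R\tau$ have fewer nodes.
\item Above all, use the commutation relations \eqref{commutation_R}, $RD_\Xi=D_\Xi R$ and $Rj_*=j_*R$. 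This is exactly where the hypothesis on the preparation map enters, and nothing in your argument uses it.
\end{itemize}

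Once the model-level identity $\delta\widetilde{\PPi}^{\widehat{\mcZ}}(\Psi(\tau))=\widetilde{\PPi}^{\widehat{\mcZ}}(\Psi(j_*D_\Xi\tau))$ is in hand, the paper also avoids the step you call the main obstacle. It does not use the full forest identity $\mathcal{D}\Psi(\tau)=\Psi(j_*D_\Xi\tau)$. Instead it writes $\widehat Z_{j_*D_\Xi\tau}$ through its one-level recursion $\widetilde{\PPi}(\Psi(\sigma))-\sum_{\mu\prec\sigma,k}\frac{1}{k!}\P_k(\widetilde{\PPi}(\Psi(\downarrow^k\sigma/\mu)),\widehat Z_\mu)$. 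It then splits $\sum_{\mu\prec j_*D_\Xi\tau}$, with a single application of \eqref{eq_commut_jstar_DXi0}, into the part where $j_*D_\Xi$ hits $\mu$ and the part where it hits $\tau/\mu$. It concludes by the induction hypothesis and the Leibniz rule for $\P_k$.

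Your forest identity is true, so this is a simplification, not a correction. Likewise, working directly with $j_*D_\Xi\tau\in\dot{\mcT}_0$ is legitimate and slightly more economical than the paper's route through $D_\Xi\tau\in\dot{\mcT}$ followed by Lemma~\ref{lem_PiDXiTau}.
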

\begin{proof}
We first prove the following identities by induction on $\tau$
\begin{equation}\label{eq_commut_Pi_delta}
\widetilde{\PPi}^{\widehat{\mcZ}^\circ}\big(\Psi(D_\Xi\tau)\big) =\delta \widetilde{\PPi}^{\widehat{\mcZ}^\circ}\big( \Psi(\tau)  \big), \quad\text{and}\qquad \widetilde{\PPi}^{\widehat{\mcZ}}\big(\Psi(D_\Xi\tau)\big) =\delta \widetilde{\PPi}^{\widehat{\mcZ}}\big( \Psi(\tau)  \big) 
\end{equation}

Both identities are seen to be true if $\tau$ has form $X^n\Xi_\ell$. If we take $\tau=X^p\Xi_\ell\prod_{j=1}^n\mcI_{b_j}(\tau_j)$, writing 
$$
D_\Xi\tau = X^p \dot{\Xi}_\ell\prod_{j=1}^n\mcI_{b_j}(\tau_j) + X^p\Xi_\ell \sum_{j_0=1}^n  \mcI_{b_{j_0}}(D_\Xi\tau_{j_0}) \prod_{j\ne j_0}\mcI_{b_j}(\tau_j),
$$
we get from \eqref{eq_mult_PPi_Zcirc}
\begin{align*}
&	\widetilde{\PPi}^{\widehat{\mcZ}^\circ}(\Psi(D_\Xi\tau)) =
	 \mathbf{1}_{p=0} \delta\xi_\ell\prod_{j=1}^n\sum_{l_j\geq0} (\partial_{b_j}K)_{l_j}*\widetilde{\PPi}^{\widehat{\mcZ}}(\Psi(\downarrow^{l_j}\tau_j)) 
	\\
	&\quad+ \mathbf{1}_{p=0}\xi_\ell \sum_{j_0=1}^n \sum_{l_{j_0}\geq 0}   \Big((\partial_{b_{j_0}}K)_{l_{j_0}}* \widetilde{\PPi}^{\widehat{\mcZ}}\big(\Psi(D_\Xi\downarrow^{l_{j_0}}\tau_{j_0})\big)\Big) \prod_{j\ne j_0}\sum_{l_j\geq0} (\partial_{b_j}K)_{l_j}* \widetilde{\PPi}^{\widehat{\mcZ}}(\Psi(\downarrow^{l_j}\tau_j))
\end{align*}
Then from induction hypothesis and the Leibniz rule
\begin{align*}
	& \widetilde{\PPi}^{\widehat{\mcZ}^\circ}(\Psi(D_\Xi\tau)) =
	 \mathbf{1}_{p=0} \delta\xi_\ell\prod_{j=1}^n\sum_{l_j\geq0}(\partial_{b_j}K)_{l_j}*\widetilde{\PPi}^{\widehat{\mcZ}}(\Psi(\downarrow^{l_j}\tau_j)) 
	\\
	&\quad+  \mathbf{1}_{p=0}\xi_\ell \sum_{j_0=1}^n\sum_{l_{j_0}\geq 0 }  \delta\Big((\partial_{b_{j_0}}K)_{l_{j_0}}* \widetilde{\PPi}^{\widehat{\mcZ}}\big(\Psi({\downarrow^{l_{j_0}}}\tau_{j_0})\big)\Big) \prod_{j\ne j_0}\sum_{l_j\geq0}(\partial_{b_j}K)_{l_j}* \widetilde{\PPi}^{\widehat{\mcZ}}(\Psi(\downarrow^{l_j}\tau_j))
	\\
	&= 	\delta \Big( \mathbf{1}_{p=0} \xi_\ell\prod_{j=1}^n\sum_{l_j\geq0}(\partial_{b_j}K)_{l_j}*\widetilde{\PPi}^{\widehat{\mcZ}}(\Psi(\downarrow^{l_j}\tau_j))\Big).
\end{align*}
Then \eqref{eq_mult_PPi_Zcirc} gives indeed $\widetilde{\PPi}^{\widehat{\mcZ}^\circ}(\Psi(D_\Xi\tau)) = \delta \widetilde{\PPi}^{\widehat{\mcZ}^\circ}( \Psi(\tau)  )$.
Using the commutation relation $RD_\Xi=D_\Xi R$, we get
\begin{align*}
	\widetilde{\PPi}^{\widehat{\mcZ}}(\Psi(D_\Xi\tau)) =
	\widetilde{\PPi}^{\widehat{\mcZ}^\circ}(\Psi(RD_\Xi\tau)) =\delta \widetilde{\PPi}^{\widehat{\mcZ}^\circ}( \Psi(R\tau)  )  = \delta \widetilde{\PPi}^{\widehat{\mcZ}}( \Psi(\tau)  ).
\end{align*}
Hence \eqref{eq_commut_Pi_delta}. From Lemma \ref{lem_PiDXiTau} we have
\begin{equation}\label{eq_delta_Pi_jstarTau}
\widetilde{\PPi}^{\widehat{\mcZ}}(\Psi(j_*D_\Xi\tau)) =\delta \widetilde{\PPi}^{\widehat{\mcZ}}( \Psi(\tau)  ).
\end{equation}
We are now able to prove the Lemma by induction, from the definition of the $\Psi$ map and from \eqref{eq_commut_jstar_DXi0}
\begin{align*}
	\widehat{Z}_{j_*D_\Xi\tau} &= \widetilde{\PPi}^{\widehat{\mcZ}}\big(\Psi(j_*D_\Xi\tau)\big)-\sum_{\substack{\mu\prec j_*D_\Xi\tau\\k\geq0}} \frac{1}{k!}\P_k\Big( \widetilde{\PPi}^{\widehat{\mcZ}}\big(\Psi(\downarrow^k (j_*D_\Xi\tau)/\mu)\big) , \widehat{Z}_\mu \Big)
	\\
	&= \widetilde{\PPi}^{\widehat{\mcZ}}\big(\Psi(j_*D_\Xi\tau)\big)-\sum_{\substack{\sigma\prec\tau\\ k \geq0}} \frac{1}{k!}\P_k\Big( \widetilde{\PPi}^{\widehat{\mcZ}}\big(\Psi(\downarrow^k j_*D_\Xi(\tau/\sigma))\big) , \widehat{Z}_\sigma \Big)
	\\
	&\hspace{3cm}-\sum_{\substack{\sigma\prec\tau\\ k \geq0}} \frac{1}{k!}\P_k\Big( \widetilde{\PPi}^{\widehat{\mcZ}}\big(\Psi(\downarrow^k \tau/\sigma)\big) , \widehat{Z}_{j_*D_\Xi\sigma}\Big)
\end{align*}
Then from induction hypothesis, from \eqref{eq_delta_Pi_jstarTau} and from the Leibniz rule
\begin{align*}
	\widehat{Z}_{j_*D_\Xi\tau} &= \delta\widetilde{\PPi}^{\widehat{\mcZ}}\big(\Psi(\tau)\big)-\sum_{\substack{\sigma\prec\tau\\ k \geq0}} \frac{1}{k!}\P_k\Big( \delta\widetilde{\PPi}^{\widehat{\mcZ}}\big(\Psi(\downarrow^k(\tau/\sigma))\big) , \widehat{Z}_\sigma \Big)
	\\
	&\hspace{2.5cm}-\sum_{\substack{\sigma\prec\tau\\ k \geq0}} \frac{1}{k!}\P_k\Big( \widetilde{\PPi}^{\widehat{\mcZ}}\big(\Psi(\downarrow^k \tau/\sigma)\big) , \delta \widehat{Z}_{\sigma}\Big)
	\\
	&= \delta \Big( \widetilde{\PPi}^{\widehat{\mcZ}}\big(\Psi(\tau)\big)   -\sum_{\substack{\sigma\prec\tau\\ k \geq0}} \frac{1}{k!}\P_k\Big( \widetilde{\PPi}^{\widehat{\mcZ}}\big(\Psi(\downarrow^k \tau/\sigma)\big) , \widehat{Z}_{\sigma}\Big)\Big)
	\\
	&=\delta \widehat{Z}_\tau.
\end{align*}
\end{proof}

\begin{lemma}\label{lem_g_jstarPsi}
	For any tree $\tau\in \dot\mcT\sqcup \dot\mcT^{+}$ and $n\geq0$
	\begin{enumerate}
	\item IF $|n|<|j_*\tau|$, we have 
	\begin{equation}
	\widetilde{g}_x^{\widehat{\mcZ}}\big(D^nj_* \Psi(\tau) \big) =  \widetilde{g}_x^{\widehat{\mcZ}}\big(D^n \Psi(j_*\tau) \big).
\end{equation}
	\item IF $|n|>|j_*\tau|$, we have 
	\begin{equation}
		\partial^n_y \Big( \Pi_x^{\widehat{\mcZ}}\big( j_*\Psi(\tau)\big)(y)    \Big)_{|y=x} =  \partial^n_y \Big(  \Pi_x^{\widehat{\mcZ}} \big( \Psi(j_*\tau)   \big)(y)    \Big)_{|y=x}.
	\end{equation}
	\end{enumerate}
\end{lemma}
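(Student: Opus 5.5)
We prove both points simultaneously by induction on the size of $\tau$ (number of nodes, then degree), following the scheme of the proof of Proposition \ref{prop_starderivY}. The key inputs are Lemma \ref{lem_derivModifModel}, applied once to the word element $j_*\Psi(\tau)$ and once to $\Psi(j_*\tau)$ in the regularity structure of words with letters $(\widehat Z_a)$. Alongside it we use the coproduct formula of Lemma \ref{lem_DeltaPsitau} and the compatibility of $j_*$ with $\Delta$ from \eqref{eq_commut_jstar_Delta}. We also use the commutation relations \eqref{eq_commut_j*_deriv} and Lemma \ref{lem_PiDXiTau}, which identifies the top-level terms.

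For point 1, we write $\widetilde g_x^{\widehat\mcZ}(D^n j_*\Psi(\tau))$ via \eqref{eq_gDnTau_general}. It splits into a main term $\sum_{k\le n}\binom{n}{k}\partial^{n-k}\widetilde\PPi^{\widehat\mcZ}(\downarrow^k j_*\Psi(\tau))(x)$ and a correction sum over $\sigma\prec$ of products $\widetilde g_x(\cdot)\,\partial^n_y\Pi_x(\cdot)|_{y=x}$.

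For the main term, $\downarrow^k$ commutes with $j_*$ on words, since $j_*$ only renames letters and keeps decorations. It also commutes with $\Psi$ by Lemma \ref{lem_commutdownarrowPsi}. So the main term equals $\widetilde\PPi^{\widehat\mcZ}(j_*\Psi(\downarrow^k\tau))$. By the convention $Z_{a^0}=Z_a$, this equals $\widetilde\PPi^{\widehat\mcZ}(\Psi(\downarrow^k\tau))$, and Lemma \ref{lem_PiDXiTau} converts it into $\widetilde\PPi^{\widehat\mcZ}(\Psi(j_*\downarrow^k\tau))=\widetilde\PPi^{\widehat\mcZ}(\Psi(\downarrow^kj_*\tau))$. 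This matches the main term for $\Psi(j_*\tau)$.

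For the correction sum, Lemma \ref{lem_DeltaPsitau} expresses $\Delta j_*\Psi(\tau)$ as $(j_*\otimes j_*)$ applied to $\sum_{\sigma\prec\tau,q}\Psi_q(\sigma)\otimes D^q\Psi(\tau/\sigma)$, plus polynomial terms. The $\Delta$ of the words is compatible with $j_*$, up to the truncation $\mathfrak p_+$ on the right factor. We compare this with $\Delta\Psi(j_*\tau)$, rewritten through \eqref{eq_commut_jstar_Delta} as a sum over $j_*\sigma\prec j_*\tau$ with right factors $D^q\Psi(j_*(\tau/\sigma))$.

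We then apply the induction hypothesis to the smaller trees $\tau/\sigma$ and $\sigma$:
\begin{itemize}
\item point 1 handles the $\widetilde g$ factors with $|q|<|j_*(\tau/\sigma)|$;
\item point 2 handles the $\partial^n\Pi_x$ factors evaluated at $\sigma$ with $|j_*\sigma|<|n|$.
\end{itemize}
After that, Lemma \ref{lem_SnDnreindex} and \eqref{eq_PsinAndPsi} recombine the $\Psi_q(\downarrow^q\cdot)$ sums, and the two correction sums agree. The indicator $\mathbf 1_{|j_*\tau|-|n|>0}$ in \eqref{eq_commut_j*_deriv} is exactly what makes the index sets coincide. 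Point 2 is proved identically, starting from \eqref{eq_DnPixTau_general} instead of \eqref{eq_gDnTau_general}.

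The main obstacle is this index-set matching. The degrees change under $j_*$: letters $a^0$ have degree $r(a)-|\mathfrak s|/i(a)$. So the truncations $|q|<|\tau/\sigma|$ and $|\sigma|-|n|<0$ appearing on the $j_*\Psi(\tau)$ side must be shown to coincide with those computed on $\Psi(j_*\tau)$, and the terms $\mathcal I^+_k$ killed by $j_*$ (those with $|j_*\tau|-|k|\le0$) must correspond to terms absent from the expansion. I would handle this by proving beforehand that $j_*\circ\mathfrak p_+=\mathfrak p_+\circ j_*$ on $\dot\mcT^+$ words. This holds because exactly one letter carries the $\dot\Xi$ and its degree shift $|\mathfrak s|/2$ is the same on both sides. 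With that in hand, the word-level truncations are transported by $j_*$ exactly as for trees.
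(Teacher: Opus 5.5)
Your skeleton is the paper's: a joint induction in which \eqref{eq_gDnTau_general} (resp.\ \eqref{eq_DnPixTau_general}) is applied to $j_*\Psi(\tau)$ and to $\Psi(j_*\tau)$. The top terms are identified through Lemma \ref{lem_PiDXiTau}, and the correction sums are matched with the induction hypothesis, Lemma \ref{lem_SnDnreindex} and \eqref{eq_PsinAndPsi}.

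The gap is in the step you flag as the main obstacle. The identity $j_*\circ\mathfrak p_+=\mathfrak p_+\circ j_*$ is false. By \eqref{eq_def_jstar_letter} and \eqref{eq_degree_besov_words}, $j_*$ lowers the degree of every word containing the dotted letter by exactly $|\mathfrak s|/2$, and a uniform shift does not preserve positivity. So a word with $0<r(w)\le|\mathfrak s|/2$ survives $j_*\circ\mathfrak p_+$ but not $\mathfrak p_+\circ j_*$; an example is the one-letter word on $\mcI^+_k(\dot\Xi_\ell)$ with $|\Xi_\ell|+\beta<|k|<|\Xi_\ell|+\beta+|\mathfrak s|/2$. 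These are exactly the terms that the tree-level $j_*$ kills through its indicator on $\mcI^+_k$. They are why $j_*\Psi(\tau)\neq\Psi(j_*\tau)$ and are the content of the lemma, so no word-level commutation can remove them.

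In the paper they are absorbed by the induction instead. The coaction of $j_*\Psi(\tau)$ is computed with the $a^0$-degrees, giving right factors $D^qj_*\Psi(\tau/\sigma)$ with $|q|<|j_*(\tau/\sigma)|$. Point 1 for the smaller element $\tau/\sigma$, combined with Proposition \ref{Prop_CommutDerivativePsi} and \eqref{eq_commut_j*_deriv}, turns $\widetilde g^{\widehat\mcZ}(D^qj_*\Psi(\tau/\sigma))$ into $\widetilde\PPi^{\widehat\mcZ}(\Psi(j_*D^q(\tau/\sigma)))$. This vanishes precisely when $j_*(\tau/\sigma)=0$. Only after this rewriting does the sum over $\sigma\prec\tau$ collapse onto the sum over $\mu\prec j_*\tau$ given by \eqref{eq_commut_jstar_Delta}.

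A second problem is that you run $\tau\in\dot\mcT$ and $\tau\in\dot\mcT^+$ through the same argument and use Lemma \ref{lem_PiDXiTau} for the top term. That lemma is only available for $\tau\in\dot\mcT$. For $\tau=\mcI^+_k(\rho)$ the top term $\widetilde\PPi^{\widehat\mcZ}(\Psi(\tau))=\widetilde g^{\widehat\mcZ}(D^k\Psi(\mcI(\rho)))$ is a star derivative. Its cuts \eqref{condition_sum} depend on the degree assigned to the dotted letter, so it generally changes when the Besov degree is replaced by the $a^0$-degree. It is also nonzero even when $j_*\tau=0$.

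The paper does not treat $\dot\mcT^+$ in the same pass. It writes $\tau=D^0\tau_0$ with $\tau_0\in\dot\mcT$ and uses Proposition \ref{Prop_CommutDerivativePsi} to identify $\widetilde g^{\widehat\mcZ}(D^nj_*\Psi(\tau))$ with $\widetilde g^{\widehat\mcZ}(D^nj_*D^0\Psi(\tau_0))$. It then invokes the $\dot\mcT$ case for $\tau_0$, so Lemma \ref{lem_PiDXiTau} is only ever applied to elements of $\dot\mcT$.
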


\begin{proof}
We prove both identities in the same induction on $\tau$.
We first consider $\tau\in\dot\mcT$, from \eqref{eq_gDnTau_general}  we have 
\begin{align*}
\widetilde{ g}^{\widehat{\mcZ}}\Big(D^nj_*\Psi(\tau)      \Big) &= \partial^n \Big(\widetilde{\PPi}^{\widehat{\mcZ}}\big(j_*\Psi(\tau)\big)\Big)
\\
&\quad- \sum_{\substack{\sigma\prec \tau}} \sum_{\substack{q\geq0 \\ |j_*\sigma|+|q|-|n|<0}}   \widetilde{g}^{\widehat{\mcZ}}\big(  D^q j_*\Psi(\tau/\sigma) \big)\partial^n_y\Big( \Pi^{\widehat{\mcZ}}_x \big(j_*\Psi_q(\sigma)\big)(y)\Big)_{|y=x}  
\end{align*}
From Lemma \ref{lem_PiDXiTau} we have 
$$
 \widetilde{\PPi}^{\widehat{\mcZ}}\big(j_*\Psi(\tau)\big) = \widetilde{\PPi}^{\widehat{\mcZ}}\big(\Psi(\tau)\big)= \widetilde{\PPi}^{\widehat{\mcZ}}\big(\Psi(j_*\tau)\big).
$$
The induction hypothesis and Proposition \ref{Prop_CommutDerivativePsi} and Lemma \ref{lem_PiDXiTau} give
$$
\widetilde{g}^{\widehat{\mcZ}}\big(  D^q j_*\Psi(\tau/\sigma) \big) = \widetilde{g}^{\widehat{\mcZ}}\big(  D^q\Psi(j_*(\tau/\sigma)) \big) =  \widetilde{\PPi}^{\widehat{\mcZ}}\big(  \Psi(D^qj_*(\tau/\sigma)) \big) = \widetilde{\PPi}^{\widehat{\mcZ}}\big(  \Psi(j_*D^q(\tau/\sigma)) \big) .
$$
Then Lemma \ref{lem_SnDnreindex} and induction hypothesis give
\begin{align*}
\widetilde{ g}_x^{\widehat{\mcZ}}\Big(D^nj_*\Psi(\tau)      \Big) &= \partial^n \big(\widetilde{\PPi}^{\widehat{\mcZ}}\big(\Psi(j_*\tau)\big)\big)
\\
&\quad-
\sum_{\substack{\mu\prec \tau}} \sum_{\substack{q\geq0 \\ |j_*\mu|-|n|<0}}   \widetilde{\PPi}^{\widehat{\mcZ}}\big(   \Psi(j_*(\tau/\mu)) \big)\partial^n_y\Big( \Pi^{\widehat{\mcZ}}_x \big(j_*\Psi_q(\downarrow^q \mu)\big)(y)\Big)_{|y=x} 
\\
 &= \partial^n \big(\widetilde{\PPi}^{\widehat{\mcZ}}\big(\Psi(j_*\tau)\big)\big)
 \\
 &\quad-
 \sum_{\substack{\mu\prec \tau \\ |j_*\mu|-|n|<0}}    \widetilde{\PPi}^{\widehat{\mcZ}}\big(  \Psi(j_*(\tau/\mu)) \big)\partial^n_y\Big( \Pi^{\widehat{\mcZ}}_x \big(\Psi( j_*\mu)\big)(y)\Big)_{|y=x} .
\end{align*}
On the other from \eqref{eq_gDnTau_general}  we have 
\begin{align*}
	\widetilde{ g}^{\widehat{\mcZ}}\Big(D^n\Psi(j_*\tau)      \Big) &= \partial^n \Big(\widetilde{\PPi}^{\widehat{\mcZ}}\big(\Psi(j_*\tau)\big)\Big)
	\\
	&\quad- \sum_{\substack{\sigma\prec j_*\tau}} \sum_{\substack{q\geq0 \\ |j_*\sigma|+|q|-|n|<0}}   \widetilde{g}^{\widehat{\mcZ}}\big( j_*\Psi(\tau/\sigma) \big)\partial^n_y\Big( \Pi^{\widehat{\mcZ}}_x \big(j_*\Psi(\sigma)\big)(y)\Big)_{|y=x}  
\end{align*}
We now take $\tau\in\dot\mcT^+$. There exists a unique tree $\tau_0\in\dot\mcT$ such that $\tau=D^0\tau_0$. We first prove 
\begin{equation}\label{eq_prooflem8.5} 	\widetilde{g}_x^{\widehat{\mcZ}}\big(D^nj_* \Psi(\tau) \big)=	\widetilde{g}_x^{\widehat{\mcZ}}\big(D^nj_* D^0\Psi(\tau_0) \big) 
\end{equation} 
From \eqref{eq_gDnTau_general} we have
\begin{align*}
	\widetilde{g}_x^{\widehat{\mcZ}}\Big( D^n j_* \Psi(\tau)     \Big) =  \partial^n\Big(\widetilde{\PPi}^{\widehat{\mcZ}}(\Psi(\tau))  \Big)  - \sum_{\sigma\prec \tau} \widetilde{g}^{\widehat{\mcZ}}_x\big(D^q\Psi(\tau/\sigma)\big) \partial^n_y \Big(\Pi_x^{\widehat{\mcZ}}\Psi_q(\sigma)(y)\Big)_{|y=x} 
\end{align*}
On the other hand \eqref{eq_gDnTau_general} gives also
\begin{align*}
	\widetilde{g}^{\widehat{\mcZ}}\Big( D^n j_* D^0\Psi(\tau_0)     \Big) 
	=  \partial^n\Big(\widetilde{\PPi}^{\widehat{\mcZ}}(D^0\Psi(\tau))  \Big)  - \sum_{\sigma\prec\tau} \widetilde{g}^{\widehat{\mcZ}}_x\big(D^q\Psi(\tau/\sigma)\big) \partial^n_y \Big(\Pi_x^{\widehat{\mcZ}}\Psi_q(\sigma)(y)\Big)_{|y=x} 
\end{align*}
Where we used Proposition \ref{Prop_CommutDerivativePsi}. Hence \eqref{eq_prooflem8.5}. Then 
\begin{align*}
	\widetilde{g}^{\widehat{\mcZ}}\big(D^nj_* \Psi(\tau) \big)&=	\widetilde{g}^{\widehat{\mcZ}}\big(D^nj_* D^0\Psi(\tau_0) \big) 
	= 
	\widetilde{g}^{\widehat{\mcZ}}\big(D^nj_* \Psi(\tau_0) \big) 
	\\
	&= \widetilde{g}^{\widehat{\mcZ}}\big(D^n \Psi(j_*\tau_0) \big) 
	=
	 \widetilde{g}^{\widehat{\mcZ}}\big( D^n\Psi( j_*\tau) \big) .
\end{align*}
\end{proof}

\begin{proposition}\label{lem_Zj*tau=YjstarPsi}
For any $\tau\in\mcT\sqcup\dot\mcT$ we have 
$
	\widehat{Z}_{j_*\tau} =  Y_{j_*\Psi(\tau)}^{\widehat{\mcZ}}.
$
\end{proposition}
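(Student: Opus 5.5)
We argue by induction on the size of $\tau$, following the strategy of Lemma \ref{lem_sumcrochets} and Propositions \ref{prop_Ztau=YPsitau_integration}, \ref{prop_Ztau=YPsitau_mult}. For $\tau\in\mcT$ the map $j_*$ acts as the identity on trees and on the words of $\Psi(\tau)$ (all letters have infinite integrability, so the new letters $a^0$ coincide with $a$). The identity then reduces to $\widehat Z_\tau = Y^{\widehat\mcZ}_{\Psi(\tau)}$, which follows from \eqref{eqdef_Ztau1} applied to the pre-model $(\widetilde\PPi^{\widehat\mcZ},\widetilde g^{\widehat\mcZ})$ on words. Concretely, $Y_{\Psi(\tau)}$ is defined by subtracting from $\widetilde\PPi^{\widehat\mcZ}(\Psi(\tau))$ the paraproducts indexed by the cuts of $\Delta\Psi(\tau)$. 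By Lemma \ref{lem_DeltaPsitau} together with Lemma \ref{lem_SnDnreindex} and Proposition \ref{Prop_CommutDerivativePsi}, these cuts reproduce exactly $\widetilde\PPi^{\widehat\mcZ}(\overline\Psi(\tau))$ after applying the induction hypothesis to the strictly smaller trees. So the real content is the case $\tau\in\dot\mcT$.

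For $\tau\in\dot\mcT$ we write Definition \ref{def_Ytau} for the word element $j_*\Psi(\tau)$:
\[
Y^{\widehat\mcZ}_{j_*\Psi(\tau)} = \widetilde\PPi^{\widehat\mcZ}\big(j_*\Psi(\tau)\big) - \sum_{k\geq0}\sum_{w'\prec j_*\Psi(\tau)} \frac{1}{k!}\,\P_k\Big(\widetilde g^{\widehat\mcZ}\big(\downarrow^k j_*\Psi(\tau)/w'\big),\, Y^{\widehat\mcZ}_{w'}\Big).
\]
The first term equals $\widetilde\PPi^{\widehat\mcZ}(\Psi(j_*\tau))$ by Lemma \ref{lem_PiDXiTau}, since both equal $\widetilde\PPi^{\widehat\mcZ}(\Psi(\tau))$. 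For the coproduct we need a version of Lemma \ref{lem_DeltaPsitau} intertwined with $j_*$: the identity $\Delta j_* = (j_*\otimes j_*)\Delta$ on words, which is immediate from \eqref{eq_def_jstar_word} since $j_*$ only renames letters. It must be combined with \eqref{eq_commut_jstar_Delta} on trees and with \eqref{eq_commut_j*_deriv}. This rewrites the sum over cuts as $\sum_{\sigma\prec\tau}\sum_q$ of terms $\widetilde g^{\widehat\mcZ}(D^q j_*\Psi(\tau/\sigma))$ paired with $Y_{j_*\Psi_q(\sigma)}$. The truncation indicator $\mathbf 1_{|j_*\tau|-|n|>0}$ in \eqref{eq_commut_j*_deriv} is what makes the positive part of the degree bookkeeping match.

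Next we apply Lemma \ref{lem_g_jstarPsi}(1) to replace $\widetilde g^{\widehat\mcZ}(D^q j_*\Psi(\tau/\sigma))$ by $\widetilde g^{\widehat\mcZ}(D^q\Psi(j_*(\tau/\sigma)))$. Proposition \ref{Prop_CommutDerivativePsi} and Remark \ref{remark_CommutDerivPsiTau_withoutTilde} then turn this into $\widetilde\PPi^{\widehat\mcZ}(\Psi(D^q j_*(\tau/\sigma)))$. Lemma \ref{lem_SnDnreindex} moves $D^q$ to $\downarrow^q$ on $\sigma$, and \eqref{eq_PsinAndPsi} recombines $\sum_q\Psi_q(\downarrow^q\sigma)=\Psi(\sigma)$. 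The induction hypothesis (on $\sigma\in\mcT$ or $\dot\mcT$, according to where the dotted noise sits) gives $Y_{j_*\Psi(\sigma)}=\widehat Z_{j_*\sigma}$. Expanding $\widetilde\PPi^{\widehat\mcZ}(\Psi(j_*(\tau/\sigma)))$ as iterated paraproducts and using $\Delta j_*=(j_*\otimes j_*)\Delta$ once more, the subtracted sum becomes $\widetilde\PPi^{\widehat\mcZ}(\overline\Psi(j_*\tau))$. This identifies the expression with $\widehat Z_{j_*\tau}$ through the recursive definition \eqref{eq_defZtau_Naif}/\eqref{eq_Renorm_ZtauFromZotau}, in the equivalent form $\widetilde\PPi^{\widehat\mcZ}(\Psi(j_*\tau))=\widehat Z_{j_*\tau}+\widetilde\PPi^{\widehat\mcZ}(\overline\Psi(j_*\tau))$.

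The main obstacle is the degree mismatch. The regularity-integrability degrees of $\Psi(\tau)$ satisfy $|\dot\Xi|_p>|\Xi|$, whereas $j_*$ lowers them. The set of cuts of $j_*\Psi(\tau)$, which is governed by the $\mathfrak p_+$ projection and the bounds $|r|<|w_1|+|\ell_1|$ in \eqref{coproduct_words}, must therefore be checked against the set of cuts of $\Psi(j_*\tau)$. We would handle this with the indicator in the definition of $j_*$ on $\mcI^+_k$ and with Lemma \ref{lem_g_jstarPsi}(2). Part (2) of that lemma takes care of the negative-degree derivatives $\partial^n_y\Pi_x(\cdot)$ that appear when a cut crosses the threshold, exactly as Proposition \ref{prop_derivetoile_Ztau_Neg} does in the tree case.
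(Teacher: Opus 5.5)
Your proposal follows the paper's proof essentially step for step. You expand $Y^{\widehat\mcZ}_{j_*\Psi(\tau)}$ over the cuts of $j_*\Psi(\tau)$ (Lemma~\ref{lem_DeltaPsitau} with $j_*$ inserted), treat the leading term with Lemma~\ref{lem_PiDXiTau}, and move $j_*$ and $D^q$ inside $\Psi$ via Lemma~\ref{lem_g_jstarPsi} and Proposition~\ref{Prop_CommutDerivativePsi}. You then reassemble $\widetilde\PPi^{\widehat\mcZ}(\overline\Psi(j_*\tau))$ using Lemma~\ref{lem_SnDnreindex}, \eqref{eq_PsinAndPsi} and the induction hypothesis, which, as you note, must cover $\sigma\in\mcT$ as well as $\sigma\in\dot\mcT$.

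The only imprecision is calling $\Delta j_*=(j_*\otimes j_*)\Delta$ on words ``immediate''. The truncations in \eqref{coproduct_words} depend on degrees that $j_*$ lowers, so this identity needs the same positivity indicator as $j_*$ on $\mcI^+_k$. The paper also passes over this by simply writing down the coproduct of $j_*\Psi(\tau)$, and you correctly flag it as the main obstacle at the end.
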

\begin{proof}
We prove the identity by induction on $\tau\in\dot\mcT$. We have
$$
\Delta (j_*\Psi(\tau)) = \sum_{\substack{\sigma\prec\tau\\ q\geq 0}}  j_*\Psi_q(\sigma) \otimes  D^qj_*\Psi(\tau/\sigma)  + \sum_{n\geq 0}\frac{X^n}{n!}\otimes D^nj_*\Psi(\tau)   +  j_*\Psi(\downarrow^n\!\tau) \otimes \frac{X^n}{n!}  
$$
From the definition of the $Y$ distributions, we have
\begin{align*}
	Y^{\widehat{\mcZ}}_{j_*\Psi(\tau)} = \widetilde{\PPi}^{\widehat{\mcZ}}\big(\Psi(\tau)\big) &- \sum_{\substack{\sigma\prec\tau\\ k\geq 0 }}  \frac{1}{k!}\P_k\Big(  \widetilde{g}^{\widehat{\mcZ}}\big(\downarrow^k\!D^qj_* \Psi(\tau/\sigma )\big) , Y_{j_*\Psi_q(\sigma)}^{\widehat{\mcZ}}   \Big)
	\\
	=\widetilde{\PPi}^{\widehat{\mcZ}}\big(\Psi(\tau)\big) &- \sum_{\substack{\sigma\prec\tau\\ k\geq 0 }}  \frac{1}{k!}\P_k\Big(  \widetilde{g}^{\widehat{\mcZ}}\big(D^qj_* \Psi(\downarrow^k\tau/\sigma )\big) , Y_{j_*\Psi_q(\sigma)}^{\widehat{\mcZ}}   \Big).
\end{align*}
From Lemma \ref{lem_PiDXiTau}
$
\widetilde{\PPi}^{\widehat{\mcZ}}\big(\Psi(\tau)\big) = \widetilde{\PPi}^{\widehat{\mcZ}}\big(\Psi(j_*\tau)\big).
$
From  Proposition \ref{Prop_CommutDerivativePsi} 
\begin{align*}
 \P_k\Big(  \widetilde{g}^{\widehat{\mcZ}}\big(  D^q j_*\Psi(\downarrow^k\!\tau/\sigma) \big) , Y^{\widehat{\mcZ}}_{j_*\Psi_q(\sigma)}   \Big)
	&
	=   \P_k\Big(  \widetilde{g}^{\widehat{\mcZ}}\big(  D^q \Psi(j_*\downarrow^k\!\tau/\sigma) \big) , Y^{\widehat{\mcZ}}_{j_*\Psi_q(\sigma)}   \Big)
	\\
	&=    \P_k\Big(  \widetilde{g}^{\widehat{\mcZ}}\big(   \Psi(j_*\downarrow^k\!D^q(\tau/\sigma)) \big) , Y^{\widehat{\mcZ}}_{j_*\Psi_q(\sigma)}   \Big).
\end{align*}
From  Lemma \ref{lem_SnDnreindex} and induction hypothesis and \eqref{eq_PsinAndPsi}
\begin{align*}
	\sum_{\substack{\sigma\prec\tau \\ q\geq 0}}\P_k\Big(  \widetilde{g}^{\widehat{\mcZ}}\big(   \Psi(j_*\downarrow^k\!D^q(\tau/\sigma)) \big) , Y^{\widehat{\mcZ}}_{j_*\Psi_q(\sigma)}   \Big)
	&=
		\sum_{\substack{\sigma\prec\tau \\ q\geq 0}}\P_k\Big(  \widetilde{g}^{\widehat{\mcZ}}\big(   \Psi(j_*\downarrow^k\!\tau/\sigma) \big) , Y^{\widehat{\mcZ}}_{j_*\Psi_q(\downarrow^q\sigma)}   \Big)
		\\
		&=	\sum_{\substack{\sigma\prec\tau }}\P_k\Big(  \widetilde{g}^{\widehat{\mcZ}}\big(   \Psi(j_*\downarrow^k\!\tau/\sigma) \big) , Z_{j_*\sigma}  \Big).
\end{align*}
Then 
\begin{align*}
	Y^{{\widehat{\mcZ}}}_{j_*\Psi(D_\Xi\tau)} = \widetilde{\PPi}^{{\widehat{\mcZ}}}\big(\Psi(j_*\tau)\big) &- \sum_{\substack{\sigma\prec\tau \\ k \geq 0 }}\frac{1}{k!}\P_k\Big(  \widetilde{g}^{\widehat{\mcZ}}\big(   \Psi(j_*\downarrow^k\!\tau/\sigma) \big) , Z_{j_*\sigma}  \Big)
	\\
	&= \widetilde{\PPi}^{{\widehat{\mcZ}}}\big(\Psi(j_*\tau)\big) - \widetilde{\PPi}^{{\widehat{\mcZ}}}\big(\overline\Psi(j_*\tau)\big) = Z_{j_*\tau}.
\end{align*}
Hence the Lemma.
\end{proof}

\begin{proof}[of Theorem \ref{prop_alg_identity_convergence}]
From Propositions \ref{lem_deltaZ_Zj*} and \ref{lem_Zj*tau=YjstarPsi} we have
\begin{align*}
	\delta \widehat{Z}_\tau = \widehat{Z}_{j_*D_\Xi\tau} = Y^{\widehat{\mcZ}}_{j_*\Psi(D_\Xi\tau)}.
\end{align*}

\end{proof}

\subsection{Inductive proof of convergence}

\label{BPHZ_preparation}
According to the assumptions of Theorem \ref{main_convergence}, we take for each $\ell\in\bbrack{1;\ell_0}$ a sequence $(\xi_\ell^m)_{m\in\bfN}$ of mollified noise $\xi_\ell^m = \rho_{1/m} * \xi_\ell$ converging towards $\xi_\ell\in C^{|\Xi_\ell|}$. We take a sequence of preparation map $(R^m)_{m \in \mathbb{N}}$ such that the associated renormalised stochastic data $\widehat{\mcZ}^m= (\widehat{Z}_\tau^m)_{m \in \mathbb{N}}$ verifies that $\bbE[\widehat{Z}_\tau^m]$ is bounded.  Here, $ \widehat{Z}_{\tau}^m  $ is built using the preparation map $R^m$. We also use the notation $ \widehat{\mathcal{Z}}^m $ when one uses the stochastic data $ \widehat{Z}_{\tau}^m $ for interpreting the letters $ \tau $. One can use the so called BPHZ preparation maps $R^m$ which satisfies
\begin{equs}
	\label{BPHZ_ch}
	\mathbb{E}((\widehat{\PPi}^m \tau)(z)) = \mathbb{E}((\widehat{\PPi}^m \tau)(0)) = 0.
\end{equs}
where $ \widehat{\PPi}^m $ is given by
	\begin{equation} \label{def_PPi_trees_R}
	\begin{aligned}
		\widehat{\PPi}^{\times,m} \Xi_{\ell} & = \xi_{\ell}^m,	\quad  (\widehat{\PPi}^{\times,m} X^k )(x) = x^{k}, \quad \widehat{\PPi}^m \tau = \widehat{\PPi}^{\times,m} R^m \tau\\	\widehat{\PPi}^{\times,m} \CI_n(\tau) & = D^{n} K * \widehat{\PPi}^{m} \tau, \quad  \quad \widehat{\PPi}^{\times,m} \tau_1 \tau_2 = \widehat{\PPi}^{\times,m} \tau_1 \widehat{\PPi}^{\times,m} \tau_2,
	\end{aligned}
\end{equation}
The fact that one can remove the dependence in $z$ comes from the invariance by translation in law of the noises $\xi_{\ell}$.
This definition was first given in \cite[Theorem 6.18]{BHZ}.
 This preparation map satisfies the key algebraic identities \eqref{commutation_R} as it was noticed in \cite{BN23}. We denote by $(\widehat{\PPi}^m, \widehat{g}^m)$ the BPHZ model. \label{BPHZ_model}

The proof of convergence of $(\widehat{\mcZ}^m)_{m \in \mathbb{N}}$ is performed by induction similar to the one in the proof of Theorem \ref{prop_extensionthm_Ztau}, except that we use a spectral gap argument for building the $\widehat{Z}^m_\tau$ associated to the trees $\tau$ with $|\tau|<0$. For trees  $\tau\in\dot\mcT$, we will also prove the convergence of the $\widehat{Z}^m_\tau$ in $B_2^{|\tau|}$.

For $\alpha\in\bfR$ we still let $T_{<\alpha}$ to be the sub-vector space spanned by the trees $\tau\in\mcT$ such that $|\tau|<\alpha$, we also let $\dot T_{<\alpha}$ to the image of $T_{<\alpha}$ under $D_\Xi$. We let $T_{<\alpha}^+$ the subalgebra of $T^+$ generated by the symbols $X_i$ with $i\in\bbrack{1,d+1}$ and $\mcI_b^+(\tau)$ with $|\tau|<\alpha$.

We write the set $A$ of degrees of non polynomial decorated trees as  $A=(\alpha_1<\alpha_2<\dots)$. We let $\mcT_{\leq j}$ to be set of trees in $\mcT$ forming a basis 
of $T_{<\alpha_{j+1}}$ and $\mcT_{\leq j}^+$ is set of trees in $\mcT^+$ forming a basis 
of $T^+_{<\alpha_{j+1}}$. Likewise $\dot{\mcT}_{\leq j}$ and ${\dot\mcT}^+_{\leq j}$ are the natural basis of $\dot T_{<\alpha_{j+1}}$ and $\dot T_{<\alpha_{j+1}}^+$. We also set $\widehat{\mcT}_{\leq j} = {\mcT}_{\leq j} \sqcup \dot{\mcT}_{\leq j} \sqcup {\mcT}_{\leq j}^+ \sqcup \dot{\mcT}_{\leq j}^+$.
The inductive scheme is the following
$$
\textbf{cv}(\mcT_{\leq 0})\rightarrow \dots \rightarrow
\textbf{cv}(\dot\mcT_{\leq j}) \rightarrow \textbf{cv}(\mcT_{\leq j}) \rightarrow 
\textbf{cv}(\mcT_{\leq j}^+, \dot\mcT_{\leq j}^+) \rightarrow 
\textbf{cv}(\dot\mcT_{\leq j+1}) \rightarrow \dots
$$
where $\mathbf{cv}(\mcT^{(1)},\dots,\mcT^{(n)})$ is the shorthand meaning the convergence of the trees in the subset of trees $\mcT^{(1)}\sqcup\dots\sqcup\mcT^{(n)}$.

We use the spectral gap in the step $\textbf{cv}(\dot\mcT_{\leq j}) \rightarrow \textbf{cv}(\mcT_{\leq j})$. The other steps rely on Corollary \ref{cor_repmodelledditribParap}, which amounts to a reconstruction argument.
Given a stochastic data $\mcZ$ we let for $\tau\in \widehat{\mcT}$ the quantity 
\begin{equation*}
	\norme{\tau}^*_\mcZ =  \max\Big\{  \norme{Z_\tau}_{B_{i(\tau)}^{|\tau|}}, \, \max\Big\{\big\lVert{Z_{\downarrow^k\tau/\sigma}\big\lVert}_{B^{|\downarrow^k\tau/\sigma|}_{i(\downarrow^k\tau/\sigma)}}\norme{\sigma}_\mcZ^*,\quad \sigma\prec\tau, k\geq0    \Big\}    \Big\}. 
\end{equation*} 
We also set 
\begin{equation*}
	\norme{\tau}^*_{j(\mcZ)} =  \max\Big\{  \norme{Z_\tau}_{C^{|j_*\tau|}}, \, \max\Big\{\big\lVert{Z_{\downarrow^k\tau/\sigma}\big\lVert}_{C^{|j_*\downarrow^k\tau/\sigma|}}\norme{\sigma}_{j(\mcZ)}^*,\quad \sigma\prec\tau,k\geq0    \Big\}    \Big\}. 
\end{equation*} 
From the Besov embedding inequality \eqref{Besov_injection}, for any $\tau\in\dot\mcT\sqcup\dot\mcT^+$ we have \begin{equation}\label{eq_besovIneq_normeTauStar}
	\norme{\tau}^*_{j(\mcZ)}\lesssim \norme{\tau}^*_{\mcZ}.
	\end{equation}
We also define a distance between two stochastic datas $\mcZ,\mcZ'$ setting 
\begin{align*}
\norme{\tau}_{\mcZ:\mcZ'}^* & = \max\Big\{ \norme{Z_\tau-Z_\tau'}_{B_{i(\tau)}^{|\tau|}}, \\& \max\Big\{ \big\lVert{Z_{\downarrow^k\tau/\sigma} - Z'_{\downarrow^k\tau/\sigma}\big\lVert}_{B^{|\downarrow^k\tau/\sigma|}_{i(\downarrow^k\tau/\sigma)}} \norme{\sigma}_{\mcZ}^*, \quad \sigma\prec\tau,k\geq0   \Big\}, 
\\& \max\Big\{ \big\lVert { Z'_{\downarrow^k\tau/\sigma}\big\lVert}_{B^{|\downarrow^k\tau/\sigma|}_{i(\downarrow^k\tau/\sigma)}} \norme{\sigma}_{\mcZ:\mcZ'}^*, \quad \sigma\prec\tau,k\geq0   \Big\}\Big\}
\end{align*}
Likewise one defines for $\tau\in\dot\mcT\sqcup\mcT^+$ the distance  $\norme{\tau}_{j(\mcZ):j(\mcZ')}$.

\paragraph{ Reconstruction argument}

We define 
$$
\norme{\tau}^{*,-}_\mcZ = \max\Big\{\big\lVert{Z_{\downarrow^k\tau/\sigma}\big\lVert}_{B^{|\downarrow^k\tau/\sigma|}_{i(\downarrow^k\tau/\sigma)}}\norme{\sigma}_\mcZ^*,\qquad \sigma\prec\tau, k\geq0    \Big\},
$$
and 
\begin{align*}
	\norme{\tau}_{\mcZ:\mcZ'}^{*,-}  = \max\Big\{  &  \big\lVert{Z_{\downarrow^k\tau/\sigma} - Z'_{\downarrow^k\tau/\sigma}\big\lVert}_{B^{|\downarrow^k\tau/\sigma|}_{i(\downarrow^k\tau/\sigma)}} \norme{\sigma}_{\mcZ}^*,  
	\quad \big\lVert { Z'_{\downarrow^k\tau/\sigma}\big\lVert}_{B^{|\downarrow^k\tau/\sigma|}_{i(\downarrow^k\tau/\sigma)}} \norme{\sigma}_{\mcZ:\mcZ'}^*, \\ & \sigma\prec\tau,k\geq0  \Big\}
\end{align*}

The following Lemma follows from the same argument as in the proof of Lemma \ref{lem_reconstructionarg_Ztau} in Besov spaces.


\begin{lemma}\label{cor_reconstruction_convergence}
	For stochastic data $\mcZ,\mcZ'$ and $\tau\in \mcT_{\leq j}\sqcup\dot\mcT_{\leq j}$ with $|\tau|>0$, we have 
$$
\norme{Z_\tau}_{B^{|\tau|}_{i(\tau)}} \lesssim \norme{D^0\tau}_\mcZ^* + \norme{\tau}_\mcZ^{*,-}. 
$$
and 
$$
\norme{Z_\tau-Z'_\tau}_{B^{|\tau|}_{i(\tau)}} \lesssim \norme{D^0\tau}_{\mcZ,\mcZ'}^* + \norme{\tau}_{\mcZ,\mcZ'}^{*,-}.
$$
\end{lemma}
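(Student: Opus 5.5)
The plan is to repeat the argument of Lemma \ref{lem_reconstructionarg_Ztau}, replacing Hölder norms by Besov norms and the reconstruction theorem for regularity structures by its regularity-integrability version. The aim is to realise $Z_\tau$, up to a difference of two explicit objects, as a function whose lift as a modelled distribution lives entirely in the polynomial sector. Concretely, when $|\tau|>0$ the admissibility relations \eqref{eq_defZtau_Naif_Alternative} and \eqref{eq_defZtau_T+_alternative} give
$$
Z_\tau = \PPi^\mcZ\big(\Psi(D^0\tau)\big) - g^\mcZ\big(D^0\overline\Psi(\tau)\big),
$$
exactly as in the construction step of Theorem \ref{prop_extensionthm_Ztau}. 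For $\tau\in\dot\mcT_{\leq j}$ the same identity holds, since the $Z$ on Malliavin trees are defined through the same recursion with $\dot\Xi_\ell$ read as $\delta\xi_\ell$.

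The first step is to write the two lifts ${\bm h}^1_\tau$ and ${\bm h}^2_\tau$ of these two functions on the regularity-integrability structure of words. We use Proposition \ref{prop_modelwords_besov}, which makes $(\PPi^\mcZ,g^\mcZ)$ a model with the bigrading $(r,1/i)$ from \eqref{eq_degree_besov_words}, together with the Besov analogue of \cite[Theorem 1]{LocalExpansionsParacSystems} from Appendix \ref{Appendix_B}. As in Lemma \ref{lem_reconstructionarg_Ztau}, Lemmas \ref{lem_DeltaPsitau}, \ref{lem_DnSnpropenvrac}, \ref{lem_SnDnreindex} and Proposition \ref{Prop_CommutDerivativePsi} identify the non-polynomial parts of both lifts as $\sum_{\sigma\prec\tau} g^\mcZ(\Psi(\tau/\sigma))\,\Psi(D^0\sigma)$ and $\sum_{\sigma\prec\tau} g^\mcZ(\Psi(\tau/\sigma))\,D^0\Psi(\sigma)$. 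Proposition \ref{Prop_CommutDerivativePsi} also gives equality of the increments $\Pi_x(\Psi(D^0\sigma))=\gamma_{\cdot x}(D^0\Psi(\sigma))$, so these parts cancel in ${\bm h}^1_\tau-{\bm h}^2_\tau$. What remains is a modelled distribution of order $|\tau|$ with values in polynomials.

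The second step is the quantitative estimate. A $\mcD^{|\tau|,i(\tau)}$ modelled distribution taking values in the polynomial sector has its zeroth component in $B^{|\tau|}_{i(\tau)}$, with norm controlled by the $\mcD^{|\tau|,i(\tau)}$ norm. Here $|\tau|>0$, so this is a standard Besov characterisation by local Taylor remainders in $L^{i(\tau)}$. The $\mcD$ norms of ${\bm h}^1_\tau$ and ${\bm h}^2_\tau$ are then bounded by the Besov model norms of the letters involved. These are precisely the quantities $\norme{Z_{\downarrow^k\tau/\sigma}}\norme{\sigma}^*_\mcZ$ entering $\norme{\tau}^{*,-}_\mcZ$, plus $\norme{D^0\tau}^*_\mcZ$. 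In checking this, integrability must be tracked with \eqref{integrability} and Hölder's inequality, as in Lemma \ref{lem_crochets_Besov}. Since at most one letter carries $i=2$ and all others $i=\infty$, the products stay in $L^{i(\tau)}$. For the difference estimate we apply the same reasoning to $\mcZ$ and $\mcZ'$ simultaneously. All the maps involved are multilinear in the letters, so a telescoping argument gives the bound in $\norme{\cdot}_{\mcZ,\mcZ'}$.

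I expect the main obstacle to be the Besov version of the lifting and reconstruction statements for words. These are the input from Appendix \ref{Appendix_B}, namely Proposition \ref{prop_modelwords_besov} together with continuity of $\partial^k_*\P$ in mixed $L^p$. The delicate point is showing that the coefficients $\partial^{k}_*\P(\dots)$ of the lift lie in $L^{\lambda:j}$ with the right exponents. Here I would first try induction on word length via \eqref{eqdef_starderivative}, applying Hölder at each split.
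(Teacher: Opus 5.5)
Your proposal follows the paper's proof essentially step for step: the identity $Z_\tau=\PPi^\mcZ(\Psi(D^0\tau))-g^\mcZ(D^0\overline\Psi(\tau))$, the two lifts ${\bm h}^1_\tau,{\bm h}^2_\tau$ via Proposition~\ref{prop_modelwords_besov}, the cancellation of their non-polynomial parts through Proposition~\ref{Prop_CommutDerivativePsi}, and the bound by their $\mcD^{|\tau|}_{i(\tau)}$ norms. The one thing to tighten is the justification of that starting identity: for the renormalised data of Section~\ref{Sec::8} it is not a definition as in Theorem~\ref{prop_extensionthm_Ztau}, and the naive relation \eqref{eq_defZtau_Naif_Alternative} does not hold there in general, so derive it as the paper does from $g^\mcZ(D^0\Psi(\tau))=\PPi^\mcZ(\Psi(D^0\tau))$ (Proposition~\ref{Prop_CommutDerivativePsi}, valid for admissible data) together with $g^\mcZ(D^0\lg\tau\rg)=Z_\tau$.
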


\begin{proof}
We have from Proposition \ref{Prop_CommutDerivativePsi} 
\begin{align*}
	\PPi^\mcZ\big(\Psi(D^0\tau)\big) - g^\mcZ\big(D^0\overline{\Psi}(\tau)\big) = \PPi^\mcZ\big(\Psi(D^0\tau)\big) - g^\mcZ\big(D^0{\Psi}(\tau)\big) + Z_\tau
	=Z_\tau.
\end{align*}
The Proposition \ref{prop_modelwords_besov} extension of  \cite[Theorem 1]{LocalExpansionsParacSystems}  enables to lift both $\PPi^\mcZ\big(\Psi(D^0\tau)\big) $ and $g^\mcZ\big(D^0\overline{\Psi}(\tau)\big)$ as modelled distributions ${\bm h}^1_\tau,{\bm h}^2_\tau$ in $\mcD^{|\tau|}_{i(\tau)}$. The same computations as in the proof of Lemma \ref{lem_reconstructionarg_Ztau} show that the difference ${\bm h}^1_\tau-{\bm h}^2_\tau$ takes value in the polynomial sector, then  $Z_\tau=\PPi^\mcZ\big(\Psi(D^0\tau)\big) - g^\mcZ\big(D^0\overline{\Psi}(\tau)\big)\in B^{|\tau|}_{i(\tau)}$ and  
\begin{align*}
	\norme{ Z_\tau }_{B^{|\tau|}_{i(\tau)}} \lesssim \norme{{\bm h}_\tau^1-{\bm h}_\tau^2}_{\mcD^{|\tau|}_{i(\tau)}}\lesssim \norme{{\bm h}_\tau^1}_{\mcD^{|\tau|}_{i(\tau)}}+\norme{{\bm h}_\tau^2}_{\mcD^{|\tau|}_{i(\tau)}}
	\lesssim  \norme{D^0\tau}_\mcZ^*+ \norme{\tau}^{*,-}_\mcZ.
\end{align*}
The second inequality follows from the same arguments.
\end{proof}

\paragraph{Spectral gap argument}

\begin{lemma}\label{lem_estimate_deltaZtau}
For stochastic datas $\mcZ,\mcZ'$, for any tree $\tau\in\mcT$ with $|\tau|<0$, we have $$\norme{\delta Z_\tau}_{C^{|\tau|}}\lesssim \norme{D_\Xi\tau}^*_\mcZ$$
and 
$$\norme{\delta Z_\tau-\delta Z_\tau'}_{C^{|\tau|}}\lesssim \norme{D_\Xi\tau}^*_{\mcZ:\mcZ'}.$$
\end{lemma}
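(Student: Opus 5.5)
The plan is to feed the key algebraic identity of Theorem \ref{prop_alg_identity_convergence} into the Hölder regularity theory for the $Y$ distributions. By Proposition \ref{lem_deltaZ_Zj*} and Proposition \ref{lem_Zj*tau=YjstarPsi} one has
\[
\delta Z_\tau = Z_{j_*D_\Xi\tau} = Y^{\mcZ}_{j_*\Psi(D_\Xi\tau)} .
\]
Here $D_\Xi\tau$ is a finite linear combination of trees in $\dot\mcT$, each with $|\sigma|_2 = |\tau|+\frac{|\mathfrak s|}{2}$ and $|j_*\sigma| = |\tau|$. So it suffices to bound $\norme{Y^{\mcZ}_{j_*\Psi(\sigma)}}_{C^{|\tau|}}$ for each such $\sigma$.

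The first step is to view $j_*\Psi(\sigma)$ as an element of the regularity structure of words over the alphabet $\mcA\sqcup\mcA^0$. In each word exactly one letter is primed: a letter $a^0$ coming from a tree or quotient carrying $\dot\Xi^0_\ell$. The letters $a^0$ are interpreted as $Z_a\in B^{r(a)}_{2}\hookrightarrow C^{r(a)-|\mathfrak s|/2}$, consistent with \eqref{eq_def_jstar_letter}. With this interpretation, Proposition \ref{prop_modelwords_besov} (in its pure Hölder case, i.e.\ \cite[Theorem 1]{LocalExpansionsParacSystems}) gives a model $(\PPi^\mcZ,g^\mcZ)$ on this word structure. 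Its size on the relevant words is controlled by the product of the Hölder norms of the letters. By the Besov embedding \eqref{eq_besovIneq_normeTauStar}, that product is bounded by $\norme{\sigma}^*_\mcZ$, after grouping letters along the chains $\sigma_q\prec\dots\prec\sigma$ exactly as in the recursive definition of $\norme{\cdot}^*_\mcZ$.

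The second step applies Theorem \ref{thm_regularityZtau} to this model to get
\[
\norme{Y^{\mcZ}_{j_*\Psi(\sigma)}}_{C^{|j_*\Psi(\sigma)|}} \lesssim \Vert j_*\Psi(\sigma)\Vert^*_{(\PPi,g)} \lesssim \norme{\sigma}^*_{j(\mcZ)} \lesssim \norme{\sigma}^*_\mcZ .
\]
Since every word in $j_*\Psi(\sigma)$ has degree $|j_*\sigma| = |\tau|$, this is the desired $C^{|\tau|}$ bound. Summing over the trees $\sigma$ in $D_\Xi\tau$ gives $\norme{D_\Xi\tau}^*_\mcZ$.

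The difference estimate follows the same route. The bilinear structure of the iterated paraproducts, together with the continuity statements of \cite[Theorem 1]{LocalExpansionsParacSystems} and of Theorem \ref{thm_regularityZtau} with respect to the model, yields a bound by $\norme{D_\Xi\tau}^*_{j(\mcZ):j(\mcZ')}\lesssim\norme{D_\Xi\tau}^*_{\mcZ:\mcZ'}$.

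The main obstacle is the first step. One has to check that Assumption \ref{assumption_notint} still holds for the primed letters, whose degrees $r(a)-|\mathfrak s|/2$ could be integers. If that fails, I would perturb $|\mathfrak s|/p$ slightly, for instance by working with $p=2$ but embedding into $C^{r(a)-|\mathfrak s|/2-\epsilon}$, and absorb the $\epsilon$ loss. One also has to check that the truncations $\mathbf 1_{|j_*\tau|-|k|>0}$ in $j_*$ are compatible with the coproduct on words, so that $j_*\Psi(\sigma)$ is a legitimate element to which Theorem \ref{thm_regularityZtau} applies. For that I would use \eqref{eq_commut_j*_deriv} and Lemma \ref{lem_g_jstarPsi}.
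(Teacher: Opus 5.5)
Your proposal is correct and follows the paper's proof essentially step for step. The paper likewise combines the identity $\delta Z_\tau = Y^{\mcZ}_{j_*\Psi(D_\Xi\tau)}$ of Theorem~\ref{prop_alg_identity_convergence} with the continuity estimate \eqref{eq_continuity_Y}, bounds $\Vert j_*\Psi(D_\Xi\tau)\Vert^*_{(\PPi,g)}$ by $\norme{D_\Xi\tau}^*_{j(\mcZ)}$ via \cite[Theorem 1]{LocalExpansionsParacSystems}, and concludes with the Besov embedding \eqref{eq_besovIneq_normeTauStar}, obtaining the difference estimate from linearity of $Y$ and multilinearity of the iterated paraproducts; the caveats you raise (non-integrality of the degrees of the letters $a^0$, compatibility of the truncations in $j_*$) are not discussed in the paper's proof, so they are extra care rather than a departure from its argument.
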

\begin{proof}
		We use Theorem \ref{prop_alg_identity_convergence} and the continuity estimate \eqref{eq_continuity_Y}
	$$
	\norme{\delta Z_\tau}_{C^{|\tau|}}\lesssim \norme{j_*\Psi(D_\Xi\tau)}_{ (\PPi,g)}^*.
	$$
	Then \cite[Theorem 1]{LocalExpansionsParacSystems} ensures that
	$$
	\norme{j_*\Psi(D_\Xi\tau)}_{ (\PPi,g)}^* \lesssim \norme{D_\Xi\tau}^*_{j(\mcZ)}
	$$
	and the Besov injection \eqref{eq_besovIneq_normeTauStar} gives finally
	$$
	\norme{\delta Z_\tau}_{C^{|\tau|}} \lesssim \norme{D_\Xi\tau}^*_{\mcZ}.
	$$
	The second continuity estimate is obtained from the linearity of the $Y$ operator and the multilinearity of the iterated paraproduct.
\end{proof}

\begin{lemma}\label{lem_spectralgap_argument}
	Let random stochastic datas $\mcZ,\mcZ'$ that are invariant by translation in law, for any $\tau\in\mcT$ with $|\tau|<0$ and any $\eps>0$, we have for  big enough $q\geq 1$
	$$
	\bbE\Big[\norme{Z_\tau}_{C^{|\tau|-\eps}}^q\Big] \lesssim  \bbE\Big[\big(\norme{D_\Xi\tau}^*_\mcZ\big)^q\Big] +  \Big|\bbE\big[Z_\tau\big]\Big|^q,
	$$
	and 
	$$
	\bbE\Big[\norme{Z_\tau-Z_\tau'}_{C^{|\tau|-\eps}}^q\Big] \lesssim  \bbE\Big[\big(\norme{D_\Xi\tau}^*_{\mcZ:\mcZ'}\big)^q\Big] + \Big|  \bbE\big[Z_\tau-Z_\tau'\big]   \Big|^q.
	$$
\end{lemma}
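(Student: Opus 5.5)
The proof will follow the standard spectral gap route of \cite{LOTP,HS23,BH23}: reduce the Hölder norm to pointwise moments of Littlewood--Paley blocks, apply the $q$-moment spectral gap inequality \eqref{spectral_gap_q} pointwise, and control the Malliavin derivative with Lemma \ref{lem_estimate_deltaZtau}.

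First I reduce the $C^{|\tau|-\eps}$ norm to moments of single blocks. By translation invariance in law, for each fixed $i\geq -1$ the law of $\Delta_i Z_\tau(z)$ does not depend on $z$. A Kolmogorov/Besov-type embedding gives, for $q$ large enough (depending on $\eps$),
\begin{equation*}
\bbE\big[\norme{Z_\tau}^q_{C^{|\tau|-\eps}}\big]\lesssim \sup_{i\geq -1} 2^{-iq|\tau|}\,\bbE\big[|\Delta_i Z_\tau(0)|^q\big].
\end{equation*}
To get this I write $\norme{\cdot}_{C^{|\tau|-\eps}}\lesssim \norme{\cdot}_{B^{|\tau|-\eps+|\mathfrak s|/q}_{q,q}}$ on a compact (or weighted) domain, which is where $\eps$ and large $q$ are used.

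Next I apply \eqref{spectral_gap_q} to the random variable $F=\Delta_i Z_\tau(0)$, taking $q=2^r$. This gives $\bbE|F|^q\lesssim |\bbE F|^q+\bbE\norme{\delta F}^q_{H^*}$. The term $|\bbE F|=|\Delta_i\bbE[Z_\tau](0)|$ is bounded by $2^{i|\tau|}|\bbE[Z_\tau]|$, where $|\bbE[Z_\tau]|$ denotes the $C^{|\tau|}$ norm of the deterministic distribution $\bbE Z_\tau$. For the second term, $\delta F(h)=\Delta_i(\delta Z_\tau(h))(0)$, so
\begin{equation*}
\norme{\delta F}_{H^*}\lesssim 2^{i|\tau|}\sup_{\norme{h}_H\leq1}\norme{\delta Z_\tau(h)}_{C^{|\tau|}}\lesssim 2^{i|\tau|}\norme{D_\Xi\tau}^*_\mcZ
\end{equation*}
by Lemma \ref{lem_estimate_deltaZtau}. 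Here $\norme{D_\Xi\tau}^*_\mcZ$ is understood with the $\dot\Xi$ letter interpreted as $h$ and the supremum over the unit ball of $H$ taken. Combining the two bounds and taking the supremum over $i$ gives the first inequality.

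The difference estimate follows identically with $F=\Delta_i(Z_\tau-Z'_\tau)(0)$. This uses that both data live on the same probability space, and the second bound of Lemma \ref{lem_estimate_deltaZtau}.

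The main obstacle is justifying that $\norme{D_\Xi\tau}^*_\mcZ$ controls $\delta Z_\tau$ uniformly over the unit ball of $H$. The Besov exponents $B^{|\cdot|}_2$ for the $\dot\Xi$ trees must hold with constants uniform in $h$. This requires checking that $h\in H^{-s_\ell}\hookrightarrow B^{-|\mathfrak s|/2-s_\ell+|\mathfrak s|/2}_2$, which matches $|\dot\Xi_\ell|_2$ under Assumption \ref{assumption_spectral gap}, and that the supremum can be moved inside the moments by the multilinearity of the $Y$ construction. I would first check this on $\dot\Xi_\ell$ alone and then propagate it by the induction underlying Lemma \ref{cor_reconstruction_convergence}.
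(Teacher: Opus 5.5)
Your proposal is correct and follows essentially the paper's route: the spectral gap inequality applied to the Littlewood--Paley blocks $\Delta_i Z_\tau$, translation invariance in law to make the block moments independent of the base point, the embedding $B^{|\tau|-\eps/2}_{q,q}\hookrightarrow C^{|\tau|-\eps}$ for $q$ large, and Lemma \ref{lem_estimate_deltaZtau} to control the Malliavin derivative term. The differences are cosmetic. The paper uses that $\bbE[Z_\tau]$ is constant, so the mean only enters through the block $i=-1$, whereas you bound it via its $C^{|\tau|}$ norm (equivalent for a constant). You also make explicit two points the paper leaves implicit: the localisation needed when integrating over space-time, and the supremum over the unit ball of $H$.
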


\begin{proof}
For any $i\geq-1$, the function $\Delta_i Z_\tau$ is a multilinear functions of the noises $\xi_\ell$. Then for any $q\in [1,\infty)$ the spectral gap inequality gives
$$
\mathbb{E}\Big[|\Delta_iZ_\tau|^q\Big] \lesssim  \Big| \bbE[\Delta_i Z_\tau]\Big|^q  + \mathbb{E}\Big[|\delta\Delta_iZ_\tau|^q\Big]
$$
As the noise is invariant by translation in law the function $\bbE[Z_\tau]$ is constant and for $i\ne-1$ we have $\bbE[\Delta_iZ_\tau] =\Delta_i\bbE[Z_\tau]$. 
Integrating over the space-time variable and using Theorem \ref{prop_alg_identity_convergence} gives 
\begin{align*}
\mathbb{E}\Big[\lVert\Delta_iZ_\tau\lVert_{L^q_x}^q\Big] &\lesssim \big| \bbE[\Delta_i Z_\tau]\big|^q +  \mathbb{E}\Big[\lVert\delta\Delta_iZ_\tau\lVert_{L^q_x}^q\Big]
\\&\lesssim  \big|\bbE[Z_\tau ]\big|^q \one_{i=-1} +
 2^{-iq|\tau|} \bbE\Big[\norme{\delta Z_\tau}_{C^{|\tau|}}^q\Big]  .
\end{align*}
Then for any $\eps>0$
 $$
 \mathbb{E}\Big[\norme{Z_\tau}^q_{B^{|\tau|-\frac{\eps}{2}}_{q,q}}  \Big] = \mathbb{E}\Big[\sum_{i\geq -1} 2^{iq(|\tau|-\frac{\eps}{2})} \lVert\Delta_iZ_\tau\lVert_{L^q_x}^q \Big]\lesssim \bbE\Big[\norme{\delta Z_\tau}_{C^{|\tau|}}^q\Big] + \big|\bbE[Z_\tau]\big|^q,
 $$
We use the Besov injection $B^{|\tau|-\frac{\eps}{2}}_{q,q} \hookrightarrow C^{|\tau|-\frac{\eps}{2}-\frac{|\frak s|}{q}}$ and Lemma \ref{lem_estimate_deltaZtau} to get 
$$
\mathbb{E}\Big[\norme{Z_\tau}^q_{C^{|\tau|-\frac{\eps}{2}-\frac{|\frak s|}{q}}}\Big]\lesssim \bbE\Big[\big(\norme{D_\Xi\tau}^*_\mcZ\big)^q\Big] + \big|\bbE[Z_\tau]\big|^q.
$$ 
Choosing $q$ big enough gives the result. The second inequality is obtained via the same arguments.
\end{proof}

For $\kappa>0$ one set a degree $|\cdot|^{(\kappa)}$ setting for any $\ell\in\bbrack{1;\ell_0}$ 
$$
|\Xi_\ell|^{(\kappa)}= |\Xi_\ell| + \kappa, \qquad\qquad |\dot\Xi_\ell|^{(\kappa)}= |\dot\Xi_\ell|+\kappa,
$$
and extending it to all the trees in $\widehat\mcT$ via the same rule as in \eqref{degree_tree}.
For $\kappa_0$ sufficiently small and $\kappa\in[0,\kappa_0)$, the degree $|\cdot|^{(\kappa)}$ still verifies the assumptions of Theorem \ref{main_convergence} and the associated stochastic data does not depend on $\kappa\in(0,\kappa_0)$ as the coproduct does not either. 
We write $\norme{\tau}^{*,\kappa}_\mcZ$ when we take the degree $|\cdot|^{(\kappa)}$ instead of $|\cdot|$ for measuring the regularity of the $Z_\sigma$.
Theorem \ref{main_convergence} is a direct corollary of the following proposition

\begin{proposition}
Under the assumptions of Theorem \ref{main_convergence}, let some mollifier $\varrho$, let $\kappa\in(0,\kappa_0)$ and the BPHZ preparation maps $R^{m}$. 
For any $\tau\in\widehat\mcT$ the sequence of stochastic fields $(\widehat{Z}_{\tau}^m)_m $ converges in $L^q\big(\Omega,B_{i(\tau)}^{|\tau|^{(\kappa)}}\big)$ for any $ 1 \leq q < + \infty $. The limit is independent of the mollifier $\varrho$.
\end{proposition}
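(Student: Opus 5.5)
We argue by induction along the scheme
$$\textbf{cv}(\dot\mcT_{\leq j}) \rightarrow \textbf{cv}(\mcT_{\leq j}) \rightarrow \textbf{cv}(\mcT_{\leq j}^+, \dot\mcT_{\leq j}^+) \rightarrow \textbf{cv}(\dot\mcT_{\leq j+1}),$$
and at each step we prove two things. First, a uniform bound $\sup_m \bbE[(\norme{\tau}^{*,\kappa}_{\widehat\mcZ^m})^q]<\infty$. Second, a Cauchy property $\bbE[(\norme{\tau}^{*,\kappa}_{\widehat\mcZ^m:\widehat\mcZ^{m'}})^q]\to 0$ as $m,m'\to\infty$. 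We then obtain convergence in $L^q(\Omega,B^{|\tau|^{(\kappa)}}_{i(\tau)})$ by completeness. The slack $\kappa$ is used throughout in the same way: we prove every bound with some $\kappa'\in(\kappa,\kappa_0)$. This lets us absorb the $\eps$-losses of Lemma \ref{lem_spectralgap_argument}, together with a factor $m^{-\theta}$ that comes from interpolating between $\varrho_{1/m}$ and $\varrho_{1/m'}$ at the base level. The base case consists of the noises themselves and the trees $\mcI(\Xi_\ell)$. For these, Assumption \ref{assumption_spectral gap} gives $\delta\xi_\ell\in H^{-s_\ell}\hookrightarrow B^{|\dot\Xi_\ell|}_2$, and the standard mollifier estimates give the Cauchy property in a slightly weaker norm.

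The step $\textbf{cv}(\mcT_{\leq j}^+,\dot\mcT_{\leq j}^+)$ needs no new estimate. The positive trees are defined from lower ones through \eqref{eq_defZtau_T+}, that is, through star derivatives of iterated paraproducts of already converged objects. Proposition \ref{prop_modelwords_besov} gives continuity of these operations in the Besov–Hölder topology, multilinearly. Hölder's inequality in $\omega$, with $q$ replaced by a larger power, then transfers the moment and Cauchy bounds. The integrability bookkeeping \eqref{integrability} is consistent because each tree contains at most one dotted noise. The step $\textbf{cv}(\dot\mcT_{\leq j+1})$, and the positive-degree part of $\textbf{cv}(\mcT_{\leq j})$, follow from Lemma \ref{cor_reconstruction_convergence}. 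That lemma bounds $\norme{Z_\tau}_{B^{|\tau|}_{i(\tau)}}$, and the difference $Z_\tau-Z'_\tau$, by $\norme{D^0\tau}^*+\norme{\tau}^{*,-}$. These quantities involve only the already controlled data: strictly smaller trees and the positive trees $\tau/\sigma$.

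The main step is the negative-degree part of $\textbf{cv}(\mcT_{\leq j})$. There we apply Lemma \ref{lem_spectralgap_argument} to $\widehat Z^m_\tau$ and to $\widehat Z^m_\tau-\widehat Z^{m'}_\tau$. The fluctuation term reduces, through Theorem \ref{prop_alg_identity_convergence} and Lemma \ref{lem_estimate_deltaZtau}, to $\norme{D_\Xi\tau}^*_{\widehat\mcZ^m}$. This norm is controlled by $\textbf{cv}(\dot\mcT_{\leq j})$, since $|D_\Xi\tau|_2 = |\tau|+|\s|/2 $ and $D_\Xi\tau$ is built from letters already treated. The mean term is the delicate point, and we expect it to be the main obstacle. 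We must show that $\bbE[\widehat Z^m_\tau]$ is a bounded Cauchy constant; by translation invariance it is a constant. Our first attempt uses the BPHZ normalisation \eqref{BPHZ_ch}. Corollary \ref{PPi_Psi} and Lemma \ref{lem_sumcrochets} give $\widehat Z^m_\tau = \widetilde\PPi^m(\tau) - \widetilde\PPi^{\widehat\mcZ^m}(\overline\Psi(\tau))$. For $|\tau|<0$ the first term has zero mean. The expectation therefore equals minus the expectation of paraproducts of lower-order objects, and these converge in $L^q$ by the induction hypothesis. Their expectations consequently converge, so the mean term is Cauchy. If a paraproduct produces an ill-defined resonance in the expectation, we would instead use the positive-renormalisation structure, namely $\bbE[\Pi_0\tau(0)]=0$, and evaluate at the base point.

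Finally, independence of the mollifier. We take two mollifiers $\varrho,\varrho'$ and interleave the two sequences into a single sequence of regularisations. The Cauchy argument above applies to this interleaved sequence as well, since it used only uniform bounds on the mollifier. Hence the two limits coincide.
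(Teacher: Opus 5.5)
Your proposal is essentially the paper's own proof. It uses the same induction along $\textbf{cv}(\dot\mcT_{\leq j})\to\textbf{cv}(\mcT_{\leq j})\to\textbf{cv}(\mcT^+_{\leq j},\dot\mcT^+_{\leq j})$, Lemma \ref{cor_reconstruction_convergence} for positive-degree trees, and Lemma \ref{lem_spectralgap_argument} with Theorem \ref{prop_alg_identity_convergence} for negative-degree ones. It also uses the BPHZ condition \eqref{BPHZ_ch} together with the induction hypothesis to show that the constants $\bbE[\widehat Z^m_\tau]$ form a Cauchy sequence.

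The differences are cosmetic. You obtain independence of $\varrho$ by interleaving the two regularised sequences, whereas the paper estimates $\widehat Z^{m,\varrho}_\tau-\widehat Z^{m,\varrho'}_\tau$ directly. For the $\mcT^+$ step, the $B^{|\tau|}_{i(\tau)}$-continuity you need comes from $Z_\tau=Y^{\mcZ}_{X^p\prod_j\Psi(\mcI^+_{b_j}(\tau_j))}$ (Proposition \ref{prop_Ztau=YPsitau_mult}) combined with Proposition \ref{prop_regularity_Y_Besov}, not from Proposition \ref{prop_modelwords_besov} alone.
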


\begin{proof}
	We prove the proposition by induction on $j$. We have $\mcT_{\leq 0} = \{ \Xi_\ell, 1\leq\ell\leq\ell_0  \}$ and  $\dot\mcT_{\leq 0} = \{ \dot\Xi_\ell, 1\leq\ell\leq\ell_0  \}$, and $\widehat{Z}_{\Xi_\ell}^m = \rho_{1/m}*\xi_\ell $ and $\widehat{Z}_{\dot\Xi_\ell}^m = \rho_{1/m}*\delta\xi_\ell$, from where we get the result for $j=0$.
	
	We suppose now that the result holds true for of all the trees in $\widehat\mcT_{\leq j-1}$. 	
	We first take $\tau\in \mcT_j\sqcup \dot\mcT_j$ with  $|\tau|>0$. From Lemma \ref{cor_reconstruction_convergence} we see that $\widehat{Z}_\tau^m$ is bounded in $B_{i(\tau)}^{|\tau|^{(\kappa)}}$ as $m$ goes to $+ \infty$ and the second point of the corollary gives for any $q\geq1$ 
	$$
	\bbE\Big[\norme{\widehat{Z}_\tau^{m_1}-\widehat{Z}_\tau^{m_2}}_{B^{|\tau|^{(\kappa)}}_{i(\tau)}}^q\Big] \lesssim \sup_{\sigma\in \widehat\mcT_{\leq j-1}} \bbE\Big[\big(\norme{\sigma}^{*,\kappa}_{\widehat{\mcZ}^{m_1}:\widehat{\mcZ}^{m_2}}\big)^q\Big] \to 0 
	$$ as $m_1,m_2\to + \infty$. Which proves the convergence of $(\widehat{Z}_\tau^m)_m$ in $L^q(\Omega,B_{i(\tau)}^{|\tau|})$ by completeness. We prove that the limit is independent of the mollifier by noting that the same Lemma \ref{cor_reconstruction_convergence} gives by induction  
		$$
	\bbE\Big[\big\lVert{\widehat{Z}_\tau^{m,\rho}-\widehat{Z}_\tau^{m,\rho'}\big\lVert}_{B^{|\tau|^{(\kappa)}}_{i(\tau)}}^q\Big] \lesssim \sup_{\sigma\in \widehat\mcT_{\leq j-1}} \bbE\Big[\big(\norme{\sigma}^{*,\kappa}_{\widehat{\mcZ}^{m,\rho}:\widehat{\mcZ}^{m,\rho'}}\big)^q\Big] \to 0 
	$$
	as $m\to\infty$, where $\widehat{Z}_\tau^{m,\rho}$ is the sequence of stochastic data obtained via the mollifier $\rho$.

	We now take $\tau\in\mcT_j$ with $|\tau|>0$ we know by induction that all the $\widehat{Z}_\sigma^m$ with $\sigma\in\widehat\mcT_{\leq j-1}$ converge in $B_{i(\tau)}^{|\tau|^{(\kappa)}}$ for a small $\kappa>0$. Then Lemma \ref{lem_spectralgap_argument} gives for some $\kappa'\in(\kappa,\kappa_0)$ and big enough $q\geq 1$.
	$$
	\bbE\Big[\norme{\widehat{Z}_\tau^{m_1}-\widehat{Z}_\tau^{m_2}}_{C^{|\tau|^{(\kappa)}}}^q\Big] \lesssim  \bbE\Big[\big(\norme{D_\Xi\tau}^{*,\kappa'}_{\widehat{\mcZ}^{m_1}:\widehat{\mcZ}^{m_2}}\big)^q\Big] + \Big|\bbE\big[ \widehat{Z}_\tau^{m_1} - \widehat{Z}_\tau^{m_2}\big]\Big|^q\to 0
	$$
	as $m_1,m_2\to +\infty$. The fact that $ \Big|\bbE\big[ \widehat{Z}_\tau^{m_1} - \widehat{Z}_\tau^{m_2}\big]\Big| $ tends to zero comes from an inductive argument. Indeed,  from the definition of $ 	\widehat{Z}^m_{\tau} $, one has
	 \begin{equs}
			\widehat{Z}^m_{\tau} = \widehat{\PPi}^m \tau + F(\widehat{Z}^m_{\sigma}, \sigma \in \mathcal{T} \cup \mathcal{T}^+) 
		\end{equs}
		where  the $ \sigma $ are smaller than $\tau$ in the inductive construction of the trees and $F$ is a continuous function in the $\widehat{Z}^m_{\sigma}$ (Sum of iterated parapoducts). Therefore, one has
		\begin{equs}
			\mathbb{E}(	\widehat{Z}^m_{\tau}) =  \mathbb{E}(F(\widehat{Z}^m_{\sigma}, \sigma \in \mathcal{T} \cup \mathcal{T}^+) ) = \mathbb{E}(F(\widehat{Z}^m_{\sigma}, \sigma \in \mathcal{T} \cup \mathcal{T}^+)(0) ) 
		\end{equs}
		which convergences due to the convergence of the $\widehat{Z}^m_{\sigma}$. We have also used above the translation invariance in law.
	  This proves the convergence of $(\widehat{Z}_\tau^m)_m$ in $L^q(\Omega,C^{|\tau|^{(\kappa)}})$ for any $q\geq1$. The same argument as above gives the independence from the mollifier for the limit of the sequence $(\widehat{Z}^m_\tau)_m$, and shows also that this limit depends only on the limit of the sequence $(\bbE[\widehat{Z}_\tau^m])_m$. 
\end{proof}

	\begin{appendix}
	
	\section{Local expansions for more general paraproducts}
	\label{appendix_A}
	In this appendix we describe how the results from \cite{LocalExpansionsParacSystems} generalise to paraproducts built from more general Littlewood-Paley projectors. The algebraic part of \cite{LocalExpansionsParacSystems} will remain unchanged, we only have to check the analytical aspects. We make the following assumptions on the Littlewood-Paley operators we will use. 
	
	\begin{assumption}
	Suppose we are given a family of Kernels $(K_i)_{i\geq -1}$ on $\bfR^{d +1}$ satisfying the following three properties  
	\begin{itemize}
	\item We have the reproducing formula
	$
	\sum_{i \geq -1 } K_i = \delta_0.
	$
	\item For any $i\geq 0$ we have the scaling property 
	$$
	K_i = 2^{i (d+1)}K_0( 2^i \, \cdot \, ).
	$$
	\item For any $k\in\bfN^{d+1}$ with $|k|\leq B$ and $i\geq0$, one has the cancellation property
	$$
	\int_{\bfR^{d+1}} K_i(x) x^k \, \ddd x = 0.
	$$
	\end{itemize}
\end{assumption}
From  this family of kernels one can build paraproducts 
	$$
	\P(f,g) \defeq \sum_{i\geq -1} K_{<i-1} f \,  K_i g.
	$$
as well as their iterates with polynomial weights $\P_\ell(h_1,\dots,h_n)$ and the star derivatives $\partial^k_*\P_\ell(h_1,\dots,h_n)$ as it was defined in Subsections  \ref{subsection_analyticalsetting} and \ref{subsect_theRSIteratedParap}. 
The main result from \cite{LocalExpansionsParacSystems} we aim at generalising reads here as the following proposition.
	\begin{proposition} \label{prop_generalisation_LocalDevParap}
		Suppose the $n-$uplet of real numbers $(\alpha_j)_{1\leq j\leq n}$ is such that $\sum_{i=1}^n |\alpha_i| < B$, for any uplet of distributions $(h_{a_j})_{1 \leq j \leq n} \in \prod_{j=1}^n C^{\alpha_j}$, one defines a model on the regularity structure $\scrT_\mcA$ setting by
		$$
		{ \PPi}(\lg a_1,...,a_n\rg_\ell ) = \P_\ell(h_{a_1},\dots,h_{a_n}) 
	,\quad 
		{ g}(\lg a_1,...,a_n\rg_\ell^k ) = \partial^k_*\P_\ell(h_{a_1},\dots,h_{a_n}).
		$$
	\end{proposition}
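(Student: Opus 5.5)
The plan is to follow the architecture of the proof of \cite[Theorem 1]{LocalExpansionsParacSystems} and isolate exactly where the specific Fourier-localised Littlewood--Paley blocks were used. The algebraic part carries over verbatim. The coaction $\Delta$, the coproduct $\Delta^{\!+}$ of \eqref{coproduct_words}, the recursive definition \eqref{eqdef_starderivative} of $\partial^k_*$, and the antipode-type identity of Proposition \ref{def_prop_antipode} involve the kernels only through $\P_\ell$. Hence the multiplicativity of $g$ and the algebraic relation $\Pi_x = (\PPi\otimes g_x^{-1})\Delta$ between $\PPi$ and $g$ hold for any family $(K_i)$. What remains is to check the two analytical bounds defining a model: $|\langle \Pi_x \lg w\rg_\ell,\varphi^\lambda_x\rangle|\lesssim \lambda^{|w|+|\ell|}$ and $|\gamma_{yx}(\lg w\rg^k_\ell)|\lesssim |y-x|^{|w|+|\ell|-|k|}$, with constants multilinear in $\prod_j\|h_{a_j}\|_{\alpha_j}$.

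I would proceed by induction on the length $n$ of the word. For $n=1$ the bounds are just the Hölder regularity of $h_{a_1}$; Hölder norms defined via the new blocks are equivalent to the usual ones thanks to the reproducing formula, scaling and cancellation up to order $B$. For the inductive step, write $\P_\ell(h_1,\dots,h_n)=\sum_i (\Delta^{\ell}_{<i-1}\P_{\bm\ell_{<}}(h_1,\dots,h_{n-1}))\,\Delta_i h_n$ and recentre around $x$. Replace $\Delta_{<i-1}\P(\dots)(y)$ by its Taylor-type local expansion $\sum_{\sigma}g_x(\cdot)\,\Pi_x(\cdot)(y)$ given by the induction hypothesis. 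The remainder is then a sum over $i$ of products of (a) the local remainder of the first $n-1$ factors integrated against the kernel $K_{<i-1}$, and (b) $\Delta_i h_n$.

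The key estimates are the standard ones. Term (a) is bounded by $(2^{-i}+|y-x|)^{\beta}$, where $\beta$ is the regularity of the first $n-1$ letters; this uses only that $K_{<i-1}$ is an approximate identity at scale $2^{-i}$ with enough decay. For $i\geq0$, the block satisfies $|\Delta_i h_n|\lesssim 2^{-i\alpha_n}$ and, tested against $\varphi^\lambda_x$, gains $(2^{i}\lambda)^{-M}$ for high frequencies by the cancellation property. Summing over $2^{-i}\lessgtr\lambda$ and using that the degrees are never integers (Assumption \ref{assumption_notint}) to avoid logarithmic borderline cases gives the bound for $\Pi_x$. The bound for $\gamma_{yx}$ follows in the same way, or from the $\Pi_x$ bound via the standard argument in \cite{Hai14} once the structure is known to be a model on the lower levels.

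The step I expect to be the main obstacle is replacing Fourier support arguments. The original proof uses the fact that $\Delta_{<i-1}f\,\Delta_i g$ is spectrally supported in an annulus of size $2^i$, which gives almost orthogonality and the control of $\Delta_j$ of a paraproduct. With space-time kernels that are not Fourier-compactly supported, this has to be replaced by kernel estimates. I would write $\Delta_j(\Delta_{<i-1}f\,\Delta_i g)$ and, for $|i-j|$ large, exploit the cancellation of $K_j$ (or $K_i$) up to order $B$ against the smooth factor, Taylor-expanding to order $B$. This yields a decay $2^{-|i-j|(B-\sum|\alpha_j|)}$, which is summable precisely because $\sum_j|\alpha_j|<B$; this is where the hypothesis enters. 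I would first try to encode this as a single lemma of the form $\|\Delta_j(F\,\Delta_i g)\|_\infty\lesssim 2^{-|i-j|(B-c)}\|F\|_{C^{c}}\|\Delta_i g\|_\infty$ and use it in place of every spectral support argument.
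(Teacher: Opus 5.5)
Your route, a direct recentring induction on $n$, differs from the paper's. The step you single out as the crux is where the proposal breaks.

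In \cite{LocalExpansionsParacSystems} the Fourier localisation of the blocks is used for two things: Bernstein-type derivative bounds on the $f_i$, and the Taylor-remainder estimate of its Lemma~3. It is not used for almost orthogonality. The bounds defining a model are local, so no control of $\Delta_j$ of a paraproduct is required.

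Your substitute lemma is also false as stated. Take $i=-1$ and $g\equiv 1$, so that $\Delta_{-1}g=1$ (the $K_i$ with $i\geq 0$ have zero mass). The left-hand side is then $\|\Delta_j F\|_\infty$. For a generic $F\in C^c$ this is of order $2^{-jc}\|F\|_{C^c}$, not $2^{-(j+1)(B-c)}\|F\|_{C^c}$, once $c<B/2$. The resonant choice $F=\Delta_i g$ breaks it for $j\ll i$ as well.

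Meanwhile, the two places in your sketch that genuinely need a replacement for Fourier support are left without one.
\begin{itemize}
\item The gain $(2^i\lambda)^{-M}$ requires derivative bounds at scale $2^{-i}$ on $\Delta_{<i-1}$ of the remainder, in order to integrate by parts against $K_i$.
\item For words of positive degree, the low frequencies $2^{-i}>\lambda$ sum to $O(1)$, since the sum is dominated by $i=-1$. One only reaches $\lambda^{|w|+|\ell|}$ after two further steps. First, Taylor-expand each block at $x$ beyond the degree, which needs derivative bounds up to that order. Second, match the subtracted coefficients with the polynomial built from $\partial^k_*$ and the cut-offs $\rho_j$.
\end{itemize}

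The paper supplies exactly this missing ingredient.
\begin{itemize}
\item It redefines the block space $\mcC^{\alpha,(B)}$ through the norm $\sup_{|k|\leq B}\sup_{i}2^{i(\alpha-|k|)}\|\partial^kf_i\|_{L^\infty}$. The Bernstein estimate then holds by construction, Lemma~3 of \cite{LocalExpansionsParacSystems} extends to $\theta<B$, and the Section~2 estimates on simplified paraproducts go through unchanged.
\item It keeps the decomposition of $\P_\ell(h_{a_1},\dots,h_{a_n})$ into simplified paraproducts $\P_<([\tau_0/\tau_1],\dots,[\tau_q],[\lg a_n\rg])$, so the algebra genuinely is unchanged. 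In your direct scheme, the fact that the remainders assemble into $\Pi_x\lg w\rg_\ell$ as dictated by the coaction would have to be re-derived.
\item It then only checks that the brackets lie in the new space. Differentiating $(\P\tau)_i$ and $(\mathbf Q\tau)_i$ under the integral, and using $\partial^kK_i=2^{i(d+1+|k|)}(\partial^kK_0)(2^i\,\cdot)$, gives bounds of order $2^{-i(|\tau|-|k|)}$. These propagate to $\partial^k[\tau]_i$ via the Leibniz rule in $[\tau]_i=(\P\tau)_i+\sum_{\sigma\prec\tau}(\mathbf Q(\tau/\sigma))_i[\sigma]_i$.
\end{itemize}
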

		In \cite{LocalExpansionsParacSystems}, one defines a space of Littlewood-Paley blocks denoted $\mcC^{\alpha , (B)}$ as spaces of functions sequences $(f_i)_{i\geq -1}$ where each $f_i$ has its Fourier transform supported in some ball $2^i B$, endowed with the norm $\norme{(f_i)}=\sup_i 2^{i\alpha}\norme{f_i}_{L^\infty}$. In our case the Littlewood-Paley block does not typically have its Fourier transform supported in a ball, so that we have to find another definition for the space $\mcC^{\alpha}$.   
	
	The  interest of having functions with Fourier transform supported in balls is for using Bernstein estimates for bounding the derivatives of the blocks $f_i$. The idea here is to force these estimates by incorporating them in the norms we will be using. We let the space $\mcC^{\alpha , (B)}$ to be the vector space consisting of families of functions $(f_i)_{i\geq -1}$ such that
	$$
	\norme{(f_i)}_{\alpha,(B)} \defeq \sup_{|k| \leq B} \sup_{i\geq -1} 2^{i(\alpha-|k|)}\lVert \partial^k f_i \lVert_{L^\infty} < \infty, 
	$$ 
	the quantity $\norme{(f_i)}_{\alpha,(B)}$ defines a norm on this vector space. We will often omit the parameter $B$ as it will remain fixed.

	One checks that the results from \cite[Section 2]{LocalExpansionsParacSystems} still hold true for the simplified paraproducts $\P_<$ evaluated on these family of blocks. This follows from the same computations, by noting that we have the Bernstein estimate $$	\big\lVert{(\partial^kf_i)}\big\lVert_{\alpha-|k|,(B-|k|)}\leq 	\norme{(f_i)}_{\alpha,(B)}$$.Then from \cite[Lemma 3]{LocalExpansionsParacSystems}, it generalises to $$\sup_i 2^{i(\alpha-|\theta|)} \big\lVert{R_h^\theta f_i}\big\lVert_{L^\infty} \lesssim 	\norme{(f_i)}_{\alpha,(B)}$$ for $\theta<B$, and where $R_h^\theta f=f(\cdot+h) - \sum_{|k|<\theta} \frac{h^k}{k!} \, \partial^kf $ is the Taylor remainder.  
	
	The second part of the proof of Proposition \ref{prop_generalisation_LocalDevParap} is an induction on the number of arguments $n$, where one writes an iterated paraproduct as a sum of simplified paraproducts.
	$$
	 \P_\ell(h_{a_1},\dots,h_{a_n}) = \sum_{\ell_{n-1}} { \ell_{n-1} \choose \ell  } \sum_{\tau_q\prec\dots\prec\tau_0 =\tau_0(\ell,\ell_{n-1}) } \P_< \left( [\tau_0/\tau_1 ] ,\dots,[\tau_q] , [\langle a_n \rangle]  \right)
	$$
	where $$\tau_0(\ell,\ell_{n-1}) = \lg a_1,...,a_{n-1}\rg_{\ell-\ell_{n-1}}X^{\ell_{n-1}}. $$ 
	The blocks $[\tau_s/\tau_{s+1}]$ are built by integrating the remainder of the expansion corresponding to the symbol $\tau_s/\tau_{s+1}$, and are meant to be in our space of generalised Littlewood-Paley blocks of order $|\tau_s/\tau_{s+1}|$.
	Here again the algebraic part of the proof is the same and we only have to check the analytical bounds on the blocks $[\sigma]$.
	 As the definition of the space of Littlewood-Paley blocks was changed to incorporate bounds on the derivatives, we have to check the adequate bound on the $\partial ^k[\tau_s/\tau_{s+1}]$.
The bracket can be defined via the formula
	$$
	[\tau]_i = (\P \tau)_i + \sum_{\sigma\prec\tau} ({\bf Q}(\tau/\sigma))_i \, [\sigma]_i,
	$$
	with 
	\begin{align*}
	(\P \tau)_i(z) &= \int_{(\bfR^{d+1})^2} K_{<i-1}(y-x)K_i(z-y) \, {\Pi}_x\tau(y) \,  \ddd y \ddd x,
	\\
		(\mathbf{Q} \tau)_i(z) &= \int_{(\bfR^{d+1})^2} K_{<i-1}(y-x)K_{<i-1}(z-y) \, {\ \gamma}_{yx}(\tau) \, \ddd y \ddd x.
\end{align*}
It was proven in \cite[Proposition 8]{BailleulHoshinoRS1}, that if $(\PPi, { g})$ is a model, then we have bounds \begin{equs}
	|(\P \tau)_i|\lesssim \Vert\tau\Vert_{ (\PPi,g)}^* 2^{-i|\tau|}, \quad |(\mathbf{Q} \tau)_i|\lesssim \Vert\tau\Vert_{ (\PPi,g)}^* 2^{-i|\tau|}.
	\end{equs} The proof relies solely on the scaling property of the kernels $K_i$, and for any $k\in\bfN^{d+1}$ the derivatives $\partial^k K_i$ satisfy the scaling property $$	\partial^kK_i = 2^{i(d +1+|k|)}(\partial^kK_0)( 2^i \, \cdot \, ),$$ then setting 
	\begin{align*}
	(\partial^k\P  \tau)_i(z) &=\sum_{k=k_1+k_2} \binom{k}{k_1} \int_{(\bfR^{d+1})^2} (\partial^{k_1}K_{<i-1})(y-x)(\partial^{k_2}K_i)(z-y) {\Pi}_x\tau(y) \ddd y \ddd x
	\\
	(\partial^k\mathbf{Q}  \tau)_i(z) &= \sum_{k=k_1+k_2} \binom{k}{k_1}  \int_{(\bfR^{d+1})^2} (\partial^{k_1}K_{<i-1})(y-x)(\partial^{k_2}K_{<i-1})(z-y) { \gamma}_{yx}(\tau) \ddd y \ddd x
\end{align*}
we have this time the bounds $$|(\partial^k\P \tau)_i|\lesssim \Vert\tau\Vert_{ (\PPi,g)}^* 2^{-i(|\tau|-|k|)}, \quad  |(\partial^k\mathbf{Q} \tau)_i|\lesssim \Vert\tau\Vert_{ (\PPi,g)}^* 2^{-i(|\tau|-|k|)}.$$ Then, writing for any $k\in\bfN^d $ 
		$$
	\partial^k[\tau]_i= (\partial^k\P \tau)_i + \sum_{k=k_1+k_2}\sum_{\sigma\prec\tau} \binom{k_1}{k} (\partial^{k_1}{\bf Q}\, \tau/\sigma)_i \, \partial^{k_2}[\sigma]_i.
	$$
	we obtain from the same induction as in the proof of \cite[Proposition 18]{LocalExpansionsParacSystems} the desired bound $\partial^k[\tau]_i \lesssim \Vert\tau\Vert_{ (\PPi,g)}^* 2^{-i(|\tau|-|k|)}.$
	\section{Local developments on Besov spaces}
	\label{Appendix_B}
	This section is devoted to the proof of Proposition \ref{prop_modelwords_besov}. As in the proof of Proposition \ref{prop_generalisation_LocalDevParap}, the argument follows the same algebraic steps, so it remains only to verify the analytical estimates.
	
The additional feature in the present setting is the bi-grading $(r,\frac{1}{i})$. The exponent $r$ replaces the usual homogeneity and plays the same role in the algebraic part of the proof. Whereas, the exponent $\frac{1}{i}$ does not enter the algebraic arguments; it only determines the integrability exponent appearing in the analytical estimates. 
The only essential difference from the proof of Proposition \ref{prop_generalisation_LocalDevParap} is that the estimates for products now rely on Hölder's inequality.

	The new space of Littlewood-Paley block is denoted $\mcB_p^{\alpha , (B)}$, and is defined as the space of functions sequences $(f_i)_{i\geq -1}$ such that
		$$
		\big\lVert{(f_i)}\big\lVert_{\alpha,p,(B)} \defeq \sup_{|k| \leq B} \sup_{i\geq -1} 2^{i(\alpha-|k|)}\lVert \partial^k f_i \lVert_{L^p} < \infty,
		$$ 
		the quantity $\norme{(f_i)}_{\alpha,p,(B)}$ defines a norm. We will still omit the parameter $B$ as it will remain fixed. 
	
	We have the Bernstein estimate 
	$$	\big\lVert{(\partial^kf_i)}\big\lVert_{\alpha-|k|,p,(B-|k|)}\leq 	\norme{(f_i)}_{\alpha,p,(B)}.
	$$
	Then \cite[Lemma 3]{LocalExpansionsParacSystems} generalises to 
	$$
	\sup_i 2^{i(\alpha-|\theta|)} \big\lVert{R_h^\theta f_i}\big\lVert_{L^p} \lesssim 	\norme{(f_i)}_{\alpha,p,(B)}
	$$
	for $\theta<B$, where $R_h^\theta f=f(\cdot+h) - \sum_{|k|<\theta} \frac{h^k}{k!} \, \partial^kf $ is the Taylor remainder.  
	We then check that the results from \cite[Section 2]{LocalExpansionsParacSystems} hold true for the simplified paraproducts and integrability exponents that can be different from $+\infty$. The only difference is that one uses the Hölder inequality, this is where one uses the second part of identity \eqref{eq_degree_besov_words}. 
	
	The second part of the proof of Proposition \ref{prop_generalisation_LocalDevParap} is the same induction on the number of arguments $n$, where one writes an iterated paraproduct as a sum of simplified paraproducts. 
	$$
	\P_\ell(h_{a_1},\dots,h_{a_n}) = \sum_{\ell_{n-1}} { \ell_{n-1} \choose \ell  } \sum_{\tau_q\prec\dots\prec\tau_0 =\tau_0(\ell,\ell_{n-1}) } \P_< \left( [\tau_0/\tau_1 ] ,\dots,[\tau_q] , [\langle a_n \rangle]  \right)
	$$
	where the blocks $[\mu]$ are in $ \mcB_{i(\mu)}^{r(\mu) , (B)}$.
	
The regularity of these brackets is established from the same computations as in the proof of Lemma \ref{lem_crochets_Besov}. The only difference is that one integrates against either $\partial^{k_1}K_{<i-1},\partial^{k_2}K_i$ or $\partial^{k_1}K_{<i-1},\partial^{k_2}K_{<i-1}$, where $|k_1|+|k_2|<B$, and applies Hölder's inequality to estimate the resulting products.
	
	\section{Symbolic index}
	\label{Appendix_C}
	In this appendix, we collect the most used symbols of the article, together
	with their meaning and the page where they were first introduced.
	\begin{center}
		\renewcommand{\arraystretch}{1.1}
		\begin{longtable}{lll}
			\toprule
			Symbol & Meaning & Page\\
			\midrule
			\endfirsthead
			\toprule
			Symbol & Meaning & Page\\
			\midrule
			\endhead
			\bottomrule
			\endfoot
			\bottomrule
			\endlastfoot
	$B^{\alpha}_p $	& Short hand notation $B^{\alpha}_p =B^\alpha_{p,\infty}$.  & \pageref{besov}	\\
		$	B^{\alpha}_{p,q} $ & Besov space & \pageref{besov} 
			\\
		$\mcB$ & Basis of $T$ &  \pageref{subsection_basicsRS}
			\\
				$\dot{\mcB}$ & Abstract Malliavin derivative of $ \mcB $ &  \pageref{subsection_basicsRS}
			\\
				$\mcB^+$ & Basis of $T^+$ &  \pageref{subsection_basicsRS}
			\\
			$\CB_X$ & Basis of $T_X$ &  \pageref{eq_sigmabackslashtau}
			\\
		$ \bfC_\ell $	& Corrector with weight $ \ell $   &  \pageref{eqdef_corrector}
			\\
		$ D^n $ & Linear operator $ 	D^n\tau = n! \,  \tau / X^n  $  &	\pageref{linear_operator}
		\\
		$D_{\Xi}$ & Abstract Malliavin derivative  & \pageref{D_Xi}
		\\ 
			$ \downarrow^n $  & Linear operator $ \downarrow^n \tau =  n!\, X^n \backslash \tau$ & \pageref{linear_operator}
			\\
	 $ \partial^k_* \P_\ell$ & Linear operator on $ \P_\ell $ &	\pageref{eqdef_starderivative}	
	 \\
	 $ \partial^k_* u$ & Linear operator on $u$ &	\pageref{eqdef_partialStarU}	
			\\
			$\delta$ & Malliavin derivative &  \pageref{Malliavin_derivative}
			\\
		$\Delta$	& Coaction for the recentering  & \pageref{eq_sigmabackslashtau}
			\\
				$\Delta^{\!+}$	& Coproduct for the recentering  & \pageref{eq_sigmabackslashtau}
			\\
			$\Delta_i^\ell$  & 	Littlewood-Paley projector with monomial weight $\ell$ & \pageref{eqdef_littlewoodprojectorL} \\ 
			$\mcI_{a}$ & Abstract integrator map $ \mathcal{T} \rightarrow \mathcal{T} $ &
		 \pageref{subsection_DecoratedTrees}
		 \\
	$\mcI_{a}^+$	& Abstract integrator map $ \mathcal{T} \rightarrow \mathcal{T}^+ $ & \pageref{I_+}
			\\
			$i$ & Integrability degree $\mcB\sqcup \mcB^+ \to [1,+\infty]$ & \pageref{integrability}
			\\
		$j$ &  Besov injection $B^{\alpha}_p(\bfR^{d+1}) \hookrightarrow C^{\alpha-\frac{|\mathfrak{s}|}{p}}(\bfR^{d+1})$ & \pageref{Besov_injection}	
		\\
		$ j_*  $ & Abstract Besov injection &   \pageref{inj_ab}
			\\
			 $K_i$ & Kernel associated with $ \Delta_i $ &  \pageref{eqdef_littlewoodprojectorL}
			\\
		  $  \lg w\rg_\ell X^p $	& Decorated words in $ \mcB $  &  \pageref{subsect_theRSIteratedParap}
		  \\
		  $  \lg w\rg_\ell^k X^p $	& Decorated words in $ \mcB^+ $  &  \pageref{subsect_theRSIteratedParap}
			\\
		$	\P_{\ell}$ & Iterated paraproducts with monomial weight $ \ell $ &  \pageref{eqdef_ParapPolWeight2}
		\\
	  $ (\PPi,g)$ & Model & \pageref{model}
	  \\
	  $(\widetilde{ \PPi},\widetilde{g})$ & Modified model, no extraction of monomials  & \pageref{eqdef_pitilde}
	  \\
	  $ (\PPi^\mathcal{A},g^\mathcal{A}) $ & Model over the alphabet $ \mathcal{A} $  &  \pageref{model_words} 
	  \\
	  $(\widehat{\PPi}^m, \widehat{g}^m)$ & BPHZ model & \pageref{BPHZ_model}
	  \\
	 $ \Psi$ & Hairer-Kelly map $ \mathcal{T} \rightarrow \mathcal{T}_{\mathcal{A}} $, $\mathcal{A}$ set of decorated trees  & \pageref{def_Psitau}
	 \\
	 $R$   & Strong preparation map & \pageref{eq_DeltaRTau} 
	  \\
	  $R^m$ & BPHZ preparation map & \pageref{BPHZ_preparation}
	  \\
	  ${\bm R}$ & Reconstruction operator  & \pageref{reconstrcution_op}
	  \\
	 $ \star $ & Grossman-Larson product on decorated trees  & \pageref{star_product}
	  \\
	 $r(\cdot)$ & Degree $|\cdot|$ for a regularity-integrability structure   & \pageref{integrability}
	  \\
$S(\tau)$	&  Symmetry factor of $\tau$  &  \pageref{symmetry_factor}
		\\
	$[\tau]_i$	& Generalised Littlewood-Paley block & \pageref{subsect_modifmodel}
	\\
 $\tau/\sigma $ & Term appearing in 	$\sigma \otimes  \tau/\sigma $ & \pageref{subsection_basicsRS}
		\\
		$\tau \backslash \sigma $ & Term appearing in 	$  \tau \backslash \sigma \otimes \sigma $ & \pageref{eq_sigmabackslashtau}
		\\
		 $(T,T^+)$ & Regularity Structure  & \pageref{subsection_basicsRS}
		 \\
	 $ (\mathcal{T},\mathcal{T}^+) $ & Regularity Structure on decorated trees  &	 \pageref{subsection_DecoratedTrees}
		 \\
		$ (\mathcal{T}_{\mathcal{A}}, \mathcal{T}_{\mathcal{A}}^+)$ & Regularity Structure over words on the alphabet $ \mathcal{A} $  & \pageref{def_model_words}
		 	\\
		 $T_X$ & Polynomial Regularity Structure  & \pageref{subsection_basicsRS}
		 \\
	$\mcT$	 & Decorated trees $T_{\frak e}^{\frak n, \frak f} $ & \pageref{subsection_DecoratedTrees}		
		 \\
		 $\mcT_0$	 & Decorated trees $T_{\frak e}^{\frak n, \frak f}$ with no noise at the root  & \pageref{subsection_DecoratedTrees}	
		 \\
		 $\dot{\mcT}$	 & Decorated trees with one Malliavin derivative  & \pageref{D_Xi}	
		 \\
		 $ u^{\sharp} $ & Paracontrolled remainder  & \pageref{eq_pamansatz}
		 \\
		 $ \Upsilon_F $ & Elementary differentials &  \pageref{subsec_subcriticality}
		 \\ $|\cdot|$ & Degree map & \pageref{subsection_basicsRS}
		 \\ $Y_{\tau}$ & Stochastic data associated with a modified model $ (\widetilde{\PPi}, \widetilde{g})$ & \pageref{eqdef_Ztau1}
		 \\ $\mathcal{Y}$ & Alphabet whose letters $ \tau$ are sent to $ Y_{\tau} $  & \pageref{eqdef_Ztau1}
		 \\ $ Z_{\tau} $ & Main stochastic data  & \pageref{eq_defZtau_Naif}
		 \\  $Z_{\tau}^\circ$ & Multiplicative stochastic data  & 
		 \pageref{eq_defZotau}
		 \\ $ \widehat{Z}_{\tau} $  & Renormalised stochastic data with  $R$ & \pageref{eq_Renorm_ZtauFromZotau}
		 \\  $ \widehat{Z}_{\tau}^m $  & Renormalised stochastic data with  $R^m$ & \pageref{BPHZ_model}
		 \\ $ \widehat{Z}_{\tau}^{\circ} $  & Renormalised multiplicative stochastic data & \pageref{eq_Renorm_ZtauFromZotau}
		 \\ $\mathcal{Z}$  & Alphabet whose letters $ \tau$ are sent to $ Z_{\tau} $  & \pageref{eq_defZtau_T+}
		  \\ $\mathcal{Z}^{\circ}$  & Alphabet whose letters $ \tau$ are sent to $ Z^{\circ}_{\tau} $  & \pageref{eq_defZotau}
		 \\   $\widehat{\mathcal{Z}}$  & Alphabet whose letters $ \tau$ are sent to $ \widehat{Z}_{\tau} $   & \pageref{eq_Renorm_ZtauFromZotau}
		 \\   $ \widehat{\mathcal{Z}}^m $  & Alphabet whose letters $ \tau$ are sent to $ \widehat{Z}_{\tau}^m $ & \pageref{BPHZ_model}
		 \\ $\widehat{\mathcal{Z}}^{\circ}$  & Alphabet whose letters $ \tau$ are sent to $ \widehat{Z}^{\circ}_{\tau} $  & \pageref{eq_Renorm_ZtauFromZotau}
		\end{longtable}
	\end{center}

	\end{appendix}

\end{document}

\begin{lemma}\label{lem_Y_positivewords}
	For any $\sigma\in\mcT$ we have 
	$
	Y^\mcZ_{\Psi(\sigma)} = Z_\sigma.
	$ 
\end{lemma}

\begin{proof}
	We prove the Lemma by induction on $\sigma$. From Lemma \ref{lem_DeltaPsitau} and the definition of $Y$ we have
	\begin{align*}
		Y^\mcZ_{\Psi(\sigma)} = &\widetilde{\PPi}^\mcZ(\Psi(\sigma)) - \sum_{\substack{\mu\prec\sigma }}\sum_{k,q\geq0} \frac{1}{k!}\P_k\Big(\widetilde{g}^\mcZ\big(\downarrow^k \! D^q\Psi(\sigma/\mu)\big) , Y_{\Psi_{q}(\mu)}^\mcZ  \Big)
	\end{align*}
	From Lemmas \ref{lem_DnSnpropenvrac} and \ref{lem_commutdownarrowPsi} and from Proposition \ref{Prop_CommutDerivativePsi}, we have 
	\begin{align*}\widetilde{g}^\mcZ\big(\!\downarrow^k \! D^q\Psi(\sigma/\mu)\big) 
		&=
		\widetilde{g}^\mcZ\big( D^q\Psi(\downarrow^k\!\sigma/\mu)\big).
		\\
		&= \widetilde{\PPi}^\mcZ\big( \Psi(D^q(\downarrow^k\! \sigma/\mu))\big) = \widetilde{\PPi}^\mcZ\big( \Psi(\downarrow^k\! D^q(\sigma/\mu))\big)
	\end{align*}
	From Lemma \ref{lem_SnDnreindex}, from \eqref{eq_PsinAndPsi} and from induction hypothesis
	\begin{align*}
		\sum_{\substack{\mu\prec\sigma \\ q\geq 0}} \P_k\Big(\widetilde{\PPi}^\mcZ\big( \Psi(\downarrow^k\!D^q(\sigma/\mu))\big) , Y_{\Psi_{q}( \mu)}^\mcZ  \Big)
		&= \sum_{\substack{\nu\prec\sigma \\ q\geq 0}} \P_k\Big(\widetilde{\PPi}^\mcZ\big( \! \Psi(\downarrow^k \sigma/\nu)\big) , Y_{\Psi_{q}(\downarrow^q \nu)}^\mcZ  \Big)
		\\
		&= \sum_{\substack{\nu\prec\sigma}} \P_k\Big(\widetilde{\PPi}^\mcZ\big( \Psi(\downarrow^k\sigma/\mu)\big) , Z_\nu  \Big)
	\end{align*}
	Then 
	\begin{align*}
		Y^\mcZ_{\Psi(\sigma)} = &\widetilde{\PPi}^\mcZ\big(\Psi(\sigma)\big) - \sum_{\substack{\nu\prec\sigma\\ k\geq 0}} \frac{1}{k!}\P_k\Big(\widetilde{\PPi}^\mcZ\big( \Psi(\downarrow^k\sigma/\mu)\big) , Z_\nu  \Big)
		\\
		=&\widetilde{\PPi}^\mcZ\big(\Psi(\sigma)\big) - \widetilde{\PPi}^\mcZ\big(\overline\Psi(\sigma)\big)  = Z_\sigma.
	\end{align*}
\end{proof}

--------------------------------------------------------------------------------------------------------------------------------------------------------

We let $\scrT$ be the regularity structure of decorated trees as defined in Subsection \ref{subsection_DecoratedTrees}. Given $p\in[2,+\infty]$, we enhance it 
as the integrability structure of derived decorated trees $\dot{\scrT}_p$. We start from the noises $(\Xi_\ell)_{\ell\in\bbrack{1;\ell_0}}$ and the noises with Malliavin derivative $(\dot{\Xi}_\ell)_{\ell\in\bbrack{1;\ell_0}}$.
We set for $\ell\in\bbrack{1;\ell_0}$
\begin{equs}
	i(\Xi_\ell)=+\infty, \quad r(\dot{\Xi}_\ell)=r(\Xi_\ell)+ \frac{|\mathfrak{s}|}{p}, \quad i(\dot{\Xi}_\ell)=p.
\end{equs}   The coproduct is the same one as the one for decorated trees, with degree given by $r$.
We define the integrability $i$ by setting
\begin{equs}
	i(\mcI_n(\tau))=i(\tau), \quad \frac{1}{i(\tau_1\tau_2)}=\frac{1}{i(\tau_1)} + \frac{1}{i(\tau_2)}.
\end{equs}

----------------------------------------------------------------------------------------------------

We had defined in Subsection \ref{subsect_HairerKelly} the reduced map $\overline{\Psi}(\tau) = \Psi(\tau)-\lg\tau\rg$

\begin{proposition}\label{prop_lambda_identity}
	We have for any $\tau \in \mcB$ 
	\begin{align*}
		\Delta_i (Z_{D\tau}) - \delta \Delta_i(Z_\tau) =& \sum_{\sigma\prec D\tau}\Delta_{<i-1}\big(\lambda^\mcZ((D\tau)/\sigma)\big)\, [\Psi(\sigma)]_i  
		\\
		&- \sum_{\mu\prec\tau} {\sf Q}(\Psi(\tau/\mu))_i \Big([\Psi(D\mu)]^\mcZ_i - \delta[\Psi(\mu)]^\mcZ_i\Big)
		\\
		&-[\overline\Psi(D\tau)]^{\mcZ}_i + \delta [\overline\Psi(\tau)]^{\mcZ}_i.
	\end{align*}
\end{proposition}

\begin{proof}
	We have $[\Psi(\tau)]^\mcZ_i = \Delta_i Z_\tau + [\overline{\Psi}(\tau)]^\mcZ_i$ with $\overline{\Psi(\tau)} = \Psi(\tau)-\lg\tau\rg$, the proposition rewrites as
	
	\begin{align*}
		[\Psi(D\tau)]^{\mcZ}_i - \delta [\Psi(\tau)]^{\mcZ}_i =&  \sum_{\sigma\prec D\tau}\Delta_{<i-1}\big(\lambda^\mcZ((D\tau)/\sigma)\big)\, [\Psi(\sigma)]^\mcZ_i 
		\\
		&- \sum_{\mu\prec\tau} {\sf Q}^\mcZ(\Psi(\tau/\mu))_i \big([\Psi(D\mu)]^\mcZ_i - \delta[\Psi(\mu)]^\mcZ_i\big).
	\end{align*}
	
	We have 
	\begin{align*}
		{\sf Q}^{\sf M}(\nu)_i(z) = \Delta_{<i}\mu - \sum_{\mu_q<\dots<\mu} (-1)^q \Big\{& \Delta_{i-1}\Big(\prod_{j=0}^{q-1} g(\mu_j/\mu_{j+1})\Big) \Delta_{<i-1}\big(g(\mu_q)\big)
		\\
		&-\Delta_{<i-1}\Big(\prod_{j=0}^{q} g(\mu_j/\mu_{j+1})\Big) \Big\}
	\end{align*}
	and 
	\begin{equation}\begin{aligned}
			&[\tau]_i \defeq \Delta_{<i-1} \big({ \PPi}(\tau)\big)
			- \sum_{q\geq 1} (-1)^{q-1} \sum_{\tau_q<\cdots<\tau_1<\tau} \\ &  \Delta_{<i-1}\big({ g}\big(\tau/\tau_1\big)\big)\cdots \Delta_{<i-1}\big({ g}\big(\tau_{q-1}/\tau_q\big)\big)  \Delta_{<i-1}\big({ \PPi}\big(\tau_q\big)\big).
	\end{aligned}\end{equation}

	We have 
	\begin{align*}
		[D\tau]_i - \delta[\tau]_i  -  \sum_{\mu<\tau} {\sf Q}(\Psi(\tau/\mu))_i \big([\Psi(D\mu)]_i - \delta[\Psi(\mu)]_i\big) = 
	\end{align*}
	
	We recognise a telescopic sum and obtain
	\begin{align*}
		\sum \Delta_{<i-1}\Big((-1)^{q_1}\prod^{q_1} g(\tau_j/\tau_{j+1} ) \Big) (-1)^{q_2} \prod^{q_2} \Delta_{<i-1} (g(\sigma_j/\sigma_{j+1})) \Delta_i(\PPi(\sigma_q)).
	\end{align*}
	which is egal to $\sum_{\sigma\prec D\tau}\Delta_{<i-1}\big(\lambda^\mcZ((D\tau)/\sigma)\big)\, [\Psi(\sigma)]_i $ from \eqref{eq_deflambda2}.
\end{proof}

----------------------------------------------------------------------------------------------------------------------------------------------------------------------------------------------------------------------

	We define the map $\Delta_2$ given inductively below by
\begin{equation}
	\label{def_Delta_2}
	\begin{aligned}
		\Delta_2  \lg w\rg_\ell & =
		\sum_{w=w_1w_2}\sum_{\ell=\ell_1+\ell_2 } \sum_{\substack{ |r| > |w_1|_0+|\ell_1| \\ r=r_1+r_2}} \frac{1}{r_1!r_2!} \binom{\ell}{\ell_1} \\ &   \lg w_2 \rg_{\ell_2+r_1}X^{r_2} \otimes \mathcal{M}^+	(D^{r} \otimes \id)\Delta_2  \lg w_1 \rg_{\ell_1}
	\end{aligned}
\end{equation}
where 
\begin{equation*}
	D^{r} \lg w_1 \rg_{\ell_1} = \lg w_1 \rg_{\ell_1}^r, \quad \mathcal{M}^+(w_1 \otimes w_2) = w_1 \cdot w_2.
\end{equation*}
\begin{theorem} One has
	\begin{equation*}
		\Pi_x^{0} = \Pi_x + \left(\Pi_x \otimes g_x^{-1} \right) \Delta_2
	\end{equation*}
\end{theorem}
\begin{proof}
	We proceed by induction on the size of the words.
\end{proof}

One can write the following inductive definition for the coaction $\Delta$.
\begin{equation*}	
	\Delta \lg w \rg_\ell = \sum_{w=w_1w_2}\sum_{\ell=\ell_1+\ell_2 } \sum_{\substack{ |r|<|w_1|+|\ell_1| \\ r=r_1+r_2}} \frac{1}{r_1!r_2!} \binom{\ell}{\ell_1}   \lg w_2 \rg_{\ell_2+r_1}X^{r_2} \otimes  \lg w_1 \rg_{\ell_1}^r,
\end{equation*}
For $w = \tau_n ... \tau_1$, one has 
\begin{equation*}
	\Delta \lg w \rg_\ell =\sum_{k,r } { r \choose k } \left( B_{\tau_1,k} \circ  \uparrow^{-k}\circ \mathfrak{p}_{r} \otimes \id \right) \Delta + \sum_{r} \frac{X^r}{r!} \otimes \lg  w \rg_{\ell}^r
\end{equation*}
where one has
\begin{equation*}
	\lg  w \tau_1\rg_{\ell} = \sum_{k}  {\ell \choose k}B_{\tau_1,k}( \lg  w \rg_{\ell-k} )
\end{equation*}
One has
\begin{equation*}
	\Pi_x \lg \tau_1 u\rg_{\ell} = 
	\sum_{k} {\ell \choose k}	\P_{k}(Z_{\tau_1}, \Pi_x \lg \tau_1 u\rg_{\ell-k}  ) - \sum_{n} \partial^k \P_{k}
\end{equation*}

\begin{equation*}
	\PPi \lg  w \tau_1\rg_{\ell} = \sum_{k} {\ell \choose k} \P_{k}(  \PPi  \lg  w \rg_{\ell-k}, Z_{\tau_1} )
\end{equation*}

\begin{equation*}
	\begin{aligned}
		\Pi_x \lg  w \tau_1\rg_{\ell} & = \sum_{k} {\ell \choose k} \left(  \P_{k}(  \Pi_x  \lg  w \rg_{\ell-k}, Z_{\tau_1} )- \sum_{| \lg w \rg_{\ell}^n| >0} \frac{(\cdot-x)^n}{n!}
		\right.	\\ & \left. \partial^n \P_{k}(  \Pi_x  \lg  w \rg_{\ell-k}, Z_{\tau_1} )(x) \right)
	\end{aligned}
\end{equation*}

\begin{equs}
	\Pi_x \lg  w \tau_1\rg_{\ell} = \left( \PPi \otimes g_x^{-1} \right) \Delta \lg  w \tau_1\rg_{\ell}
\end{equs}

\begin{equs}
	g_{x}^{-1}( \lg  w \tau_1\rg_{\ell}^n ) = \sum_{k} \frac{(-x)^k}{k!} \partial^{n+k} \P_{k}(  \Pi_x  \lg  w \rg_{\ell-k}, Z_{\tau_1} )(x)
\end{equs}

One has 
\begin{equation*}
	\lg  w \tau_1\rg_{\ell} = \sum_{k}  {\ell \choose k}B_{\tau_1,k}( \lg  w \rg_{\ell-k} )
\end{equation*}

------------------------------------------------------------------------------------------------------------------------------------------------------------------------------------

	We now suppose $|k|\geq 2 $ and write $k=e_i+k'$ for some $i\in\bbrack{1,d}$. From Lemma \ref{lem_derivModifModel}
\begin{align*}
	\partial^k_* u_\tau &=\widetilde{g}^{\mcZ,u}(D^{e_i}D^{k'}\Psi(F_\tau))
	\\
	&=\partial^{e_i} \widetilde{g}(D^{k'}\Psi(F_\tau)) - \sum_{\substack{\sigma \succ\tau \\ q\geq 0 } } \widetilde{g}^{\mcZ,u} (D^q \Psi(F_\sigma)) \partial^{e_i}_y\Big( \Pi_x^\mcZ ( D^{k'} \Psi_q(\sigma/\tau)  ) (y)\Big)_{|y=x},
\end{align*}
where we used Lemma \ref{lem_DeltaPsitau} and \eqref{eq_commutDerivationDelta} to compute the coproduct of $D^{k'}\Psi(F_\tau)$.
On the other hand, from the same computations
\begin{align*}
	&\widetilde{g}^{\mcZ,u}(D^{e_i}\Psi(D^{k'}F_\tau))
	\\
	&=\partial^{e_i} \widetilde{g}^{\mcZ,u}(\Psi(D^{k'}F_\tau)) - \sum_{\substack{\sigma \succ \tau \\ q\geq 0 } } \widetilde{g}^{\mcZ,u} (D^q \Psi(F_\sigma)) \partial^{e_i}_y\Big( \Pi_x^\mcZ (  \Psi_q(D^{k'}(\sigma/\tau))  ) (y)\Big)_{|y=x}.
\end{align*}
In both sums above, the integer $q$ is such that $ |q|< |k|-|\sigma/\tau| $, that is $|q|\leq |k|-1$, we can therefore use the induction hypothesis to write $\widetilde{g}^{\mcZ,u} (D^q \Psi(F_\sigma)) = \widetilde{g}^{\mcZ,u} ( \Psi(D^qF_\sigma))$. The same induction hypothesis gives also $\widetilde{g}^{\mcZ,u}(D^{k'}\Psi(F_\tau)) = \widetilde{g}^{\mcZ,u}(\Psi(D^{k'}F_\tau))$. Then from Lemma \ref{lem_SnDnreindex}
\begin{align*}
	\widetilde{g}^{\mcZ,u}(D^{e_i}D^{k'}\Psi(F_\tau))
	&=\partial^{e_i} \widetilde{g}^{\mcZ,u}(\Psi(D^{k'}F_\tau)) 
	\\
	&\quad- \sum_{\substack{\sigma \succ\tau  } } \widetilde{g}^{\mcZ,u} ( \Psi(F_\sigma)) \partial^{e_i}_y\Big( \Pi_x^\mcZ ( D^{k'} \Psi(\sigma/\tau)  ) (y)\Big)_{|y=x},
\end{align*}
and
\begin{align*}
	\widetilde{g}^{\mcZ,u}(D^{e_i}\Psi(D^{k'}F_\tau))
	&=\partial^{e_i} \widetilde{g}^{\mcZ,u}(\Psi(D^{k'}F_\tau))
	\\
	&\quad- \sum_{\substack{\sigma \succ \tau  } } \widetilde{g}^{\mcZ,u} ( \Psi(F_\sigma)) \partial^{e_i}_y\Big( \Pi_x^\mcZ (  \Psi(D^{k'}(\sigma/\tau))  ) (y)\Big)_{|y=x}.
\end{align*}

It follows from Proposition \ref{Prop_CommutDerivativePsi} that
$\Pi_x^\mcZ ( D^{k'} \Psi(\sigma/\tau)  ) =\Pi_x^\mcZ (  \Psi(D^{k'}(\sigma/\tau)))$, then 
$$
\widetilde{g}^{\mcZ,u}(D^{e_i+k'}\Psi(F_\tau)) =  \widetilde{g}^{\mcZ,u}(D^{e_i}\Psi(D^{k'}F_\tau)).
$$
And finally 
\begin{align*}
	\partial^k_* u_\tau = \widetilde{g}^{\mcZ,u}(D^{e_i}\Psi(D^{k'}F_\tau)) = \partial^{e_i}_* u_{ \downarrow^{k'} \tau } =  u_{ \downarrow^{e_i}\downarrow^{k'} \tau } =  u_{ \downarrow^{k} \tau }.
\end{align*}

------------------------------------------------------------------------------------------------------------------------------------------------------------------------------------

	We first treat the case $k=0$ by noting that $\partial^0_*\P\big(u_{\nu_r}^\#, Z_{\nu_r/\nu_{r-1}},\dots, Z_{\nu_1/\tau}\big) $ is equal to $\P\big(u_{\nu_r}^\#, Z_{\nu_r/\nu_{r-1}},\dots, Z_{\nu_1/\tau}\big) $ as every term in the  paraproduct is assigned to a positive homogeneity. The identity to prove is then just a rewriting of \eqref{eq_gPamSystem_to_IteratedParap}.

We suppose now that $k=e_i$ for some $i\in\bbrack{1,d}$. From the definition of $\partial^k_*\P$ given in \eqref{eqdef_starderivative}
\begin{align*}
	\partial_*^{e_i} u_\tau &= \partial^{e_i} u_\tau  - \sum_{\substack{\sigma\succ\tau\\ e_i=k_1+k_2}}\sum_{\substack{|r|+|k_1|<\gamma-|\mcI(\sigma)| \\|\sigma/\tau|+|r|-|k_2|<0}} \frac{1}{r!} \binom{k}{k_1} \partial^{k_1+r}_* u_\sigma  \\
	&\qquad\qquad\quad \times\sum_{\sigma\succ\sigma_1\succ\dots\succ\tau} \partial_*^{k_2} \P_r\Big(Z_{\sigma/\sigma_1},\dots,Z_{\sigma_q/\tau}\Big). 
\end{align*}
In the sum above, we have $|k_2|\in\{0,1\}$ and $|\sigma/\tau|>0$, then the condition $|\sigma/\tau|+|r|-|k_2|<0$ is fulfilled  only when $r=0$ and $k_2=e_i$ , then
\begin{align*}
	\partial_*^{e_i} u_\tau &= \partial^{e_i} u_\tau  - \sum_{\substack{\sigma\succ\tau}}\sum_{\substack{|\mcI(\sigma)|<\gamma \\|\sigma/\tau|-1<0}}   u_\sigma  \sum_{\sigma\succ\sigma_1\succ\dots\succ\tau} \partial_*^{e_i} \P\Big(Z_{\sigma/\sigma_1},\dots,Z_{\sigma_q/\tau}\Big). 
\end{align*}
We have $|\sigma/\tau|\in(0,1)$, then $\downarrow^m\sigma/\tau=0$ for any $m\ne 0$, then from Proposition \ref{prop_derivetoile_Ztau_Neg} 
\begin{align*}
	&\sum_{\sigma\succ\sigma_1\succ\dots\succ\tau} \partial_*^{e_i} \P\Big(Z_{\sigma/\sigma_1},\dots,Z_{\sigma_q/\tau}\Big) = \partial^{e_i}_y\Big( \Pi_x \Psi(\sigma/\tau) (y)\Big)_{|y=x}
	\\
	&= \sum_{j_0=1}^n \prod_{j\ne j_0}\Pi_x \Psi(\mcI_{b_j}(\mu_j)) (x) \, \partial_y^{e_i} \Big(\Pi_x \Psi(\mcI_{b_{j_0}}(\mu_{j_0}))(y)\Big)(x),
\end{align*}
where we have set $\sigma/\tau =\prod_{j=1}^n \mcI^+_{b_j}(\mu_j)$.
As $|\mcI^+_{b_j}(\mu_j)|>0$, we have $$\Pi_x \Psi\big(\mcI^+_{b_j}(\mu_j)\big) (x)=0$$ for any $j\in\bbrack{1;n}$. Then the sum above is non zero only when $n=1$, that is  $\sigma/\tau=\mcI_b^+(\mu)$ with $|\mcI^+_b(\mu)|<1$. We obtain
\begin{align*}
	\partial_*^{e_i} u_\tau  &= \partial^{e_i} u_\tau - \sum_{|\mcI_b(\mu)|\in(0,1)} u_{\mcI_b^+(\mu) \star \tau} \overline Z_{\mcI_{b+e_i}(\mu) }\\
	&=\partial^{e_i} u_\tau - \sum_{|\mcI_b(\mu)|\in(0,1)} \Upsilon_F[\mu]D_b \Upsilon_F[\tau] \overline Z_{\mcI_{b+e_i}(\mu) }
\end{align*}
where we used \eqref{eq_derivUpsilon}. From the chain rule
$$
\partial^{e_i}u_\tau=\partial^{e_i}\Upsilon_F[\tau](v_*) = \sum_{b\in\bfN^{d+1}} D_b\Upsilon_F[\tau](v_*) \partial^{e_i} \partial^b_*u 
$$
and from \eqref{eq_defUpsilon}
\begin{align*}
	\partial_*^{e_i} u_\tau  = \sum_{b\in\bfN^{d+1}} D_b\Upsilon_F[\tau] \partial^{e_i} \partial^b_*u - \sum_{|\mcI_b(\mu)|\in(0,1)} \Upsilon_F[\mu](v_*)D_b \Upsilon_F[\tau](v_*) \overline Z_{\mcI_{b+e_i}(\mu) }.
\end{align*}
From \eqref{eq_derivUpsilon}, we have also
$
u_{(\downarrow^{e_i})^*\tau} = \sum_{b\in\bfN^d} D_b \Upsilon_F[\tau] \partial^{b+e_i}_* u.
$

		\\
	